\numberwithin{equation}{section}
\newtheorem{theorem}{Theorem}[section]
\newtheorem{proposition}[theorem]{Proposition}
\newtheorem{lemma}[theorem]{Lemma}
\newtheorem{cor}[theorem]{Corollary}
\newtheorem{conjecture}[theorem]{Conjecture}
\newtheorem*{claim*}{Claim}
\theoremstyle{definition}
\newtheorem{definition}[theorem]{Definition}
\newtheorem{example}[theorem]{Example}
\newtheorem{remark}[theorem]{Remark}
\author{Kazuo Habiro}
\address{Research Institute for Mathematical Sciences, Kyoto University, Kyoto 606-8502, Japan}
\curraddr{Department of Mathematics, Kyoto University, Kyoto 606-8502, Japan}
\email{habiro@math.kyoto-u.ac.jp}
\author{Gw\'ena\"el Massuyeau}
\address{Institut de Math\'ematiques de Bourgogne, UMR 5584, CNRS, 
Universit\'e Bourgogne Franche-Comt\'e, 21000 Dijon, France
\newline  \& \ {Institut de Recherche Math\'ematique Avanc\'ee, UMR 7501, CNRS, Universit\'e de Strasbourg,   67084 Strasbourg, France
 }}
\email{{gwenael.massuyeau@u-bourgogne.fr}}
\subjclass[2010]{57M27, 18D10, 19D23, 17B37}
\keywords{$3$-manifold, bottom tangle, cobordism, Kontsevich integral, LMO invariant, finite type invariant, TQFT,
Drinfeld associator, Hopf algebra, quasi-triangular quasi-Hopf
algebra, braided monoidal category, symmetric monoidal category}
\date{September 26, 2020 (First version: January 28, 2017)}
\newcommand\nc{\newcommand}
\newcommand\rnc{\renewcommand}
\nc{\bgm}{\color[rgb]{.7,0,.7}}
\nc{\bkh}{\color[rgb]{0,0,.7}}
\nc{\ekh}{\normalcolor{}}
\nc{\egm}{\normalcolor{}}
\nc\NOTEgm[1]{\marginpar{\bgm \tiny #1 \egm}}
\nc\NOTEkh[1]{\marginpar{\bkh \tiny #1 \ekh}}
\nc\NOTE{\NOTEkh}
\nc{\CUTgm}[1]{\marginpar{\bgm \textbf{\footnotesize CUT}: \Tiny #1  \egm}}
\nc{\CUTkh}[1]{\marginpar{\bkh \textbf{\footnotesize CUT}: \Tiny #1  \ekh}}
\nc\CUT{\CUTkh}
\nc\KH[1]{\bkh #1\ekh}
\nc\GM[1]{\bgm #1\egm}
\nc\blue[1]{{\color{blue}#1}}
\nc\red{\color[rgb]{1,0,0}}
\nc\fign[1]{{\color{red}{\mathrm{[[#1]]}}}}
\nc\figbn[2]{\color{red}{\mathrm{[[#1]]}}}
\nc\redd[1]{{\red #1}}
\nc\TEMP[1]{{\red #1}}
\nc\id{\operatorname{id}}
\nc\ad{{\operatorname{ad}}}
\nc\op{{\operatorname{op}}}
\nc\End{\operatorname{End}}
\nc\Span{\operatorname{Span}}
\nc\ev{{\operatorname{ev}}}
\nc\coev{{\operatorname{coev}}}
\nc\Mag{\operatorname{Mag}}
\nc\Mon{\operatorname{Mon}}
\nc\Gr{\operatorname{Gr}}
\nc\Hom{\operatorname{Hom}}
\nc\gr{\operatorname{gr}}
\nc\conv{\operatorname{conv}}
\nc\Ob{\operatorname{Ob}}
\nc\dbl{\mathsf{d}}
\nc\sfH{\mathsf{H}}
\nc\sfI{\mathsf{I}}
\nc\grp{{\mathrm{grp}}}
\nc\sfP{\mathsf{P}}
\nc\Aut{\operatorname{Aut}}
\nc\Homeo{\operatorname{Homeo}}
\nc\K {{\mathbb K}}
\nc\Q {{\mathbb Q}}
\nc\Z {{\mathbb Z}}
\nc\R {{\mathbb R}}
\nc\N {{\mathbb N}}
\nc\NZ{\N}
\nc\bfH{\mathbf{H}}
\nc\bfC{\mathbf{C}}
\nc\HH{\mathbf{H}}
\nc\Hrc{\hat{\bfH}}
\nc\Set{\mathbf{Set}}
\nc\Mod{\mathbf{Mod}}
\nc\HMod{\Mod_H}
\nc\bD{\boldsymbol{\Delta }}
\nc\AB{\mathbf{A}}
\nc\hAqp{\wh{\AB}_q^\varphi}
\nc\hA{\widehat{\AB}}
\nc\hAg{\hA^{\mathrm{grp}}}
\nc\hAq{\hA_q}
\nc\Aq{\AB_q}
\nc\bfr{\mathbf{r}}
\nc\W{\mathbf{W}}
\nc\bfA{{\mathbf{A}}}
\nc\w{\mathbf{w}}
\nc\bfP{\mathbf{P}}
\nc\CC{\mathbf{CC}}
\nc\bfF{\mathbf{F}}
\nc\cc{\mathbf{c}}
\nc\VectK{\mathbf{Vect}_\K}
\nc\EH {\mathcal{H}}
\nc\B {\mathcal{B}}
\nc\KB{\K\B}
\nc\KBq{\K\Bq}
\nc\Bq {\mathcal{B}_q}
\nc\hKBq{\widehat{\K\Bq}}
\nc\A {\mathcal{A}}
\nc\sA{{}^{s}\!\!\A}
\nc\T {{\mathcal{T}}}
\nc\Tq{\T_q}
\nc\F {{\mathcal{F}}}
\nc\AT{\A}
\nc\ATS{\A^{S}}
\nc\ATT{\A^{T}}
\nc\calC {{\mathcal C}}
\nc\calK {{\mathcal{K}}}
\nc\calF {{\mathcal{F}}}
\nc\calI {{\mathcal{I}}}
\nc\calJ {{\mathcal{J}}}
\nc\calV {{\mathcal{V}}}
\nc\calD {{\mathcal{D}}}
\nc\C{\calC}
\nc\Cob{\mathcal{C}ob}
\nc\LCob{\mathcal{LC}ob}
\nc\sLCob{{}^s\!\mathcal{LC}ob}
\nc\Vect{\mathbf{Vect}}
\nc\tsA{{}^{ts}\!\!\A }
\nc\tiT{\ti\T }
\nc\tiTq{\ti\T _q}
\nc\modA {\mathcal{A}}
\nc\modB {\mathcal{B}}
\nc\modD {\mathcal{D}}
\nc\D{\calD}
\nc\hD{\widehat{\D}}
\nc\modF {\mathcal{F}}
\nc\modT {\mathcal{T}}
\nc\J{\calJ}
\nc\I{\calI}
\nc\hI{\widehat{\I}}
\nc\wAq{\widehat{{\Aq}}}
\nc\wAqD{\wAq^{\calD}}
\nc\Jeq{\underset{\calJ}{\equiv}}
\nc\BT{\mathcal{BT}}
\nc\Mg{\mathcal{M}_{\g}}
\nc\V{\calV}
\nc\fS{\mathfrak{S}}
\nc\g{{\mathfrak g}}
\nc\Wg{W_\g }
\nc\Ug{U(\g)}
\nc\Ugh{\Ug[[h]]}
\nc\UgMod{\Mod_{\Ug}}
\nc\UgModh{\UgMod[[h]]}
\nc\UghMod{\Mod_{\Ugh}}
\nc{\centre}[1]{\begin{array}{c} #1 \end{array}}
\nc{\figtotext}[3]{\begin{array}{c}\includegraphics[width=#1pt,height=#2pt]{#3}\end{array}}
\nc\fig[1]{\raisebox{-3.0ex}{\includegraphics[height=9.5ex]{#1}}}
\nc\figb[2]{\raisebox{-3.6ex}{\includegraphics[height=#2ex]{#1}}}
\nc\nstrands{\downarrow^n}
\nc\capl{\!\figtotext{8}{8}{capleft}\!\!}
\nc\capx[1]{\capl_1 \cdots\!\capl_{#1}}
\nc\caplX{\!\!\figtotext{10}{9}{capleft}\!\!}
\nc\capn{\capx{n}}
\nc\CAP[1]{C_{#1}(\caplX)}
\nc\figz[1]{\raisebox{-2.7ex}{\includegraphics[height=8.8ex]{#1}}}
\nc\vhi[1]{\vcenter{\hbox{\includegraphics[scale=0.2]{#1}}}}
\nc\ott{\ot\cdots\ot}
\nc\lala{\la\negthinspace\la}
\nc\rara{\ra\negthinspace\ra}
\nc\ho{{\hat\otimes }}
\nc\half{\frac12}
\nc\xto[1]{\overset{#1}{\longrightarrow}}
\nc\yto[1]{\underset{#1}{\longrightarrow}}
\nc\np{\newpage}
\nc\ul{\underline}
\nc\simeqto{\overset{\simeq}{\rightarrow }}
\nc\ct{\overset{\cong}{\to}}
\nc\ti{\tilde}
\nc\bu{\bullet}
\nc\ra{\rangle}
\nc\la{\langle}
\nc\ot{\otimes}
\nc{\by}[1]{\stackrel{\eqref{#1}}{=}}
\nc\congto{\overset{\cong}{\longrightarrow}}
\nc\plim{\varprojlim}
\nc\tw{\tilde{\w}}
\nc\II{I}
\nc\tW{\tilde{W}}
\nc\KXY{\K\lala X,Y\rara}
\nc\ttau{\ti\tau}
\nc\ep{\epsilon}
\nc\al{\alpha}
\nc\be{\beta}
\nc\ga{\gamma}
\nc\et{\eta}
\nc\De{\Delta}
\nc\trr{\triangleright}
\nc\wt{\widetilde}
\nc\tZ{\wt{Z}}
\nc\wh{\widehat}
\nc\wtr{\widetilde{r}}
\nc\vp{\varphi}
\nc\lto{\longrightarrow}
\nc\tiZ{\widetilde{Z}}
\def\varpi{f}
\nc\tbe{\tilde{\beta}}
\nc\Xx[1]{{X_{#1}}}
\nc\Xn{\Xx{n}}
\nc\surface{F}
\nc\surf[1]{\Sigma_{#1,1}}
\newcommand\leftmapsto{\mathrel{\reflectbox{\ensuremath{\mapsto}}}}
\newcommand\rightmapsto{\mathrel{\reflectbox{\reflectbox{\ensuremath{\mapsto}}}}}
\newcommand{\free}[1]{F_{#1}}
\nc\vn{\varnothing}
\nc\bV{\bar{V}}
\nc\Zq{Z_q}
\nc\Zqphi{\Zq^{\varphi}}
\nc\hW{\widehat{W}}
\nc\GG{G}
\nc\bZ{\bar{Z}}
\nc\SKIP[1]{#1}
\nc\para[2]{\noindent [[#1]]\\#2}
\rnc\para[2]{\marginpar{\Small* #1}#2}
\nc\action{\rho}
\nc\bLCob{{}^b\!\LCob}
\nc\bA{{}^b\!\AB}
\title[The Kontsevich integral for bottom tangles]
{The Kontsevich integral\\for bottom tangles in handlebodies}
\begin{document}

\begin{abstract}
Using an extension of the Kontsevich integral to tangles in handlebodies similar to a construction given by Andersen, Mattes and Reshetikhin, we construct a functor $Z:\B\to \hA$, where $\B$ is the category of bottom tangles in handlebodies and $\hA$ is the degree-completion of the category $\AB$ of Jacobi diagrams in handlebodies.
{As a symmetric monoidal linear category, $\AB$ is the {linear} PROP governing ``Casimir Hopf algebras'', 
which are cocommutative Hopf algebras equipped with a primitive invariant symmetric $2$-tensor.}
The functor $Z$ induces a canonical isomorphism {$\gr\B\cong\AB$, where $\gr\B$ is
the associated graded of the Vassiliev--Goussarov filtration on $\B$.}
{To each Drinfeld associator $\varphi$ we associate a ribbon quasi-Hopf algebra $H_\varphi$ in $\hA$,
and we prove that the braided Hopf algebra {resulting from $H_\varphi$ by  ``transmutation''} 
is precisely the image by $Z$ of a canonical Hopf algebra in the braided category~$\B$.} 
Finally, we explain how~$Z$ refines the LMO functor, which is a TQFT-like functor extending the Le--Murakami--Ohtsuki invariant.
\end{abstract}

\maketitle
{ \setcounter{tocdepth}{1} \tableofcontents}

%
%
\section{Introduction}   \label{sec:intro}
\subsection{Background}\label{sec:background}

The {\emph{Kontsevich integral}} is a powerful knot
invariant, taking values {in the space of \emph{chord diagrams} or \emph{Jacobi diagrams}},
which are {unitrivalent} graphs encoding Lie-algebraic structures
\cite{Kontsevich,BN2}.  {It is universal among rational-valued
Vassiliev--Goussarov finite type invariants
\cite{Vassiliev,Gusarov}, and dominates various quantum link
  invariants such as the colored Jones polynomials}.
Le and Murakami~\cite{LM_combinatorial} and Bar-Natan~\cite{BN1} extended the Kontsevich integral to a functor
\begin{gather*}
  Z^\T:\T_q\longrightarrow\A
\end{gather*}
from the category $\T_q$ of framed oriented \emph{$q$-tangles} to the
category $\A$ of Jacobi diagrams.   {The} Kontsevich integral was
generalized to links and tangles in {thickened surfaces}
by Andersen, Mattes and Reshetikhin \cite{AMR} and by Lieberum~\cite{Lieberum}.

Le, Murakami and Ohtsuki \cite{LMO} {constructed a closed   $3$-manifold invariant by using the Kontsevich integral.}
After attempts of extending the {Le--Murakami--Ohtsuki (LMO)}
invariant to TQFTs {by Murakami and Ohtsuki \cite{MO} and by    Cheptea and Le \cite{CL},
Cheptea and the authors \cite{CHM} constructed a functor
\begin{gather*}
  \tiZ:\LCob_q\longrightarrow \tsA,
\end{gather*}
called the \emph{LMO functor}.
Here  $\tsA$ is the category of  \emph{top-substantial Jacobi diagrams}
and $\LCob_q$ is the {``non-strictification''} of the {braided}
strict monoidal category $\LCob$ of \emph{Lagrangian cobordisms}.
{The category $\LCob$} is a 
subcategory of the category $\Cob$ of cobordisms between
once-punctured surfaces, studied by Crane and Yetter~\cite{CY} and   Kerler~\cite{Kerler}.
The LMO functor gives representations of the monoids of
  homology cylinders and, in particular, the Torelli groups,  which were studied in~\cite{HM1,Massuyeau}. 
{(Other representations of the monoids of homology cylinders have also been derived from the LMO invariant
by Andersen, Bene, Meilhan and Penner \cite{ABMP}.)}

\subsection{The category $\B$ of bottom tangles in handlebodies}\label{sec:category-b-bottom-1}

We consider here the \emph{category $\B$ of bottom tangles in handlebodies}  \cite{Habiro}, 
{which we may regard} as a braided monoidal subcategory of $\LCob$ \cite{CHM}.
The objects of $\B$ are non-negative integers.  
For~$m\ge0$, let $V_m\subset \R^3$ denote the cube with $m$ handles:
\begin{gather*}
V_m:=\centre{
{\labellist \scriptsize \hair 2pt
\pinlabel{$1$} at 206 285
\pinlabel{$m$} at 403 284
\pinlabel{$\cdots$} at 305 380
 \pinlabel{$S$}  at 64 23
 \pinlabel{$\ell$} [l] at 536 50
\endlabellist
\includegraphics[scale=0.25]{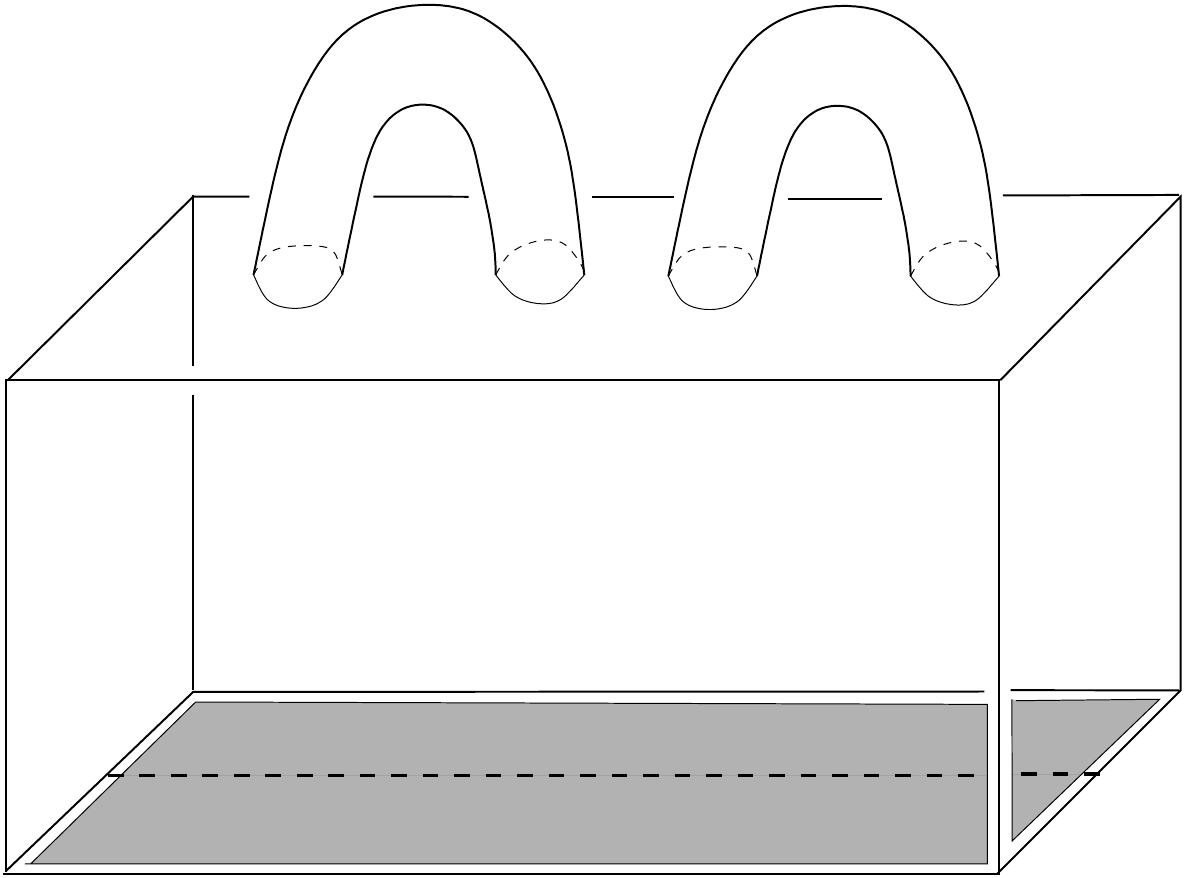}}}
\end{gather*}
The morphisms from $m$ to $n$
in $\B$ are the isotopy classes of $n$-component \emph{bottom tangles}
in~$V_m$, which are framed tangles, each consisting of $n$ arc
components whose endpoints are placed on a ``bottom line'' $\ell \subset \partial V_m$,
 in such a way that the two endpoints of each
component are adjacent on $\ell$. Here are
an example of a bottom tangle {and}  its projection diagram,
for $m=2$ and $n=3$:
\begin{equation}  \label{btt}
\centre{\labellist
\small\hair 2pt
\pinlabel{$\leadsto$}  at 115 27
\endlabellist
\centering
\includegraphics[scale=1.1]{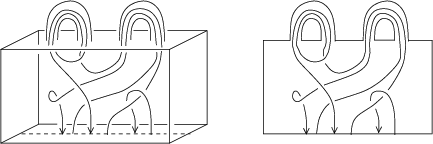}}
\end{equation}
 As another example, observe that $\B(0,1)$ is essentially the set of knots in $\R^3$. 
We associate to each $T:m\to n$ in $\B$ an 
embedding $i_T:V_n\hookrightarrow V_m$ {which fixes} {the} ``bottom square'' $S$
{and identifies $V_n$ with} a regular neighborhood in $V_m$ of the union  of $S$ with the $n$ components of $T$.
Then the composition  of $m\xto{T}n\xto{T'}p$ in $\B$ is
represented by the image $i_T(T')\subset V_m$. (See Section \ref{sec:categories} for further details.)

We can also define composition
in $\B$ using ``cube presentations'' of bottom tangles.  Each morphism
$T$ in $\B$ is represented by a bottom tangle which can be decomposed
into a tangle $U$, called a \emph{cube presentation} of $T$, and
parallel families of cores of the $1$-handles of the handlebody. For instance, 
the bottom tangle \eqref{btt}  has the following cube presentation:
$$
\centre{
\labellist
\small\hair 2pt
 \pinlabel{$\leadsto$}  at 110 22
 \pinlabel{$T=$} [r] at -2 22
 \pinlabel{$=U$} [l] at 219 22
\endlabellist
\centering
\includegraphics[scale=1.0]{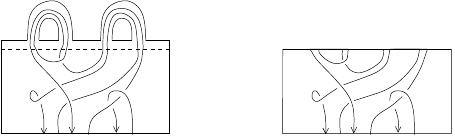}
}
$$
We can define the composition $T'\circ T$ of
two morphisms $T$ and $T'$ in $\B$ as the bottom tangle obtained by
putting a suitable cabling of $T$ on the top of a cube presentation of $T'$. For example,
$$
\centre{\labellist
\small \hair 2pt
 \pinlabel{$\circ$} at 55 19
 \pinlabel{$=$}  at 164 19
\endlabellist
\centering
\includegraphics[scale=1.0]{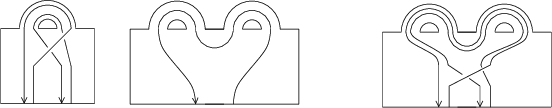}}
$$

We may identify $\B$ with the opposite $\mathcal{H}^\op$ of the
category $\mathcal{H}$ of isotopy classes of embeddings of
handlebodies rel $S$, via the above correspondence $T \mapsto i_T$.
The category~$\B$ is also isomorphic to the \emph{category $\sLCob$ of
special Lagrangian cobordisms} introduced in \cite{CHM}: each bottom
tangle $T:m\to n$ in $\B$ corresponds to the cobordism obtained as
the exterior of the embedding $i_T:V_n\hookrightarrow V_m$.
The category $\sLCob$, and hence $\B$, inherit
from $\LCob \subset \Cob$ a braided strict monoidal structure.
In $\B$, tensor product on objects is addition and tensor product on morphisms is juxtaposition;
the braiding $\psi=\psi_{1,1}:2\to2$, which determines all braidings in $\B$, is
$$
\psi  = \figtotext{45}{45}{psi_B} 
$$

The first author \cite{Habiro} (see also forthcoming \cite{Habiro2})
{introduced} the category $\B$ in order to study universal quantum
invariants of links and tangles
\cite{Hennings,Lawrence,Reshetikhin,Ohtsuki1,Kauffman} unifying
the Reshetikhin--Turaev quantum invariants associated with each ribbon Hopf algebra \cite{Drinfeld3,RT}.
Indeed, for each ribbon Hopf algebra $H$, there is a braided monoidal functor
\begin{gather}
  \label{e40}
  J^H: \B \lto \Mod_H
\end{gather}
extending the universal quantum link invariant to bottom tangles in handlebodies, 
where $\Mod_H$ denotes the category of left $H$-modules.

The  category $\B$ admits a Hopf algebra {object} $H^{\B}$,
whose counterpart in $\Cob$ was introduced by Crane and Yetter
\cite{CY} and Kerler \cite{Kerler}.  
{This Hopf algebra structure in $\B$ and $\Cob$ may
  be identified with the Hopf-algebraic structure for claspers
  observed in \cite{Habiro_claspers} (see \cite{Habiro,Habiro2}).}
The {braided monoidal} category
$\B$ is generated by the Hopf algebra $H^\B$ together with a few other
morphisms (see Section \ref{sec:generators_B}).  \emph{Transmutation}
introduced by Majid \cite{Majid:algebras-and-Hopf-algebras,Majid} is a
process of transforming each quasi-triangular Hopf algebra $H$ into a
braided Hopf algebra $\ul{H}$ in $\Mod_H$.  The functor $J^H$ maps the
Hopf algebra $H^\B$ in $\B$ to the transmutation $\ul{H}$ of $H$.

In {the present} {paper}, using the Kontsevich integral $Z^\T$, we construct
and study a functor 
$$\Zq^\varphi:\Bq \lto \hA_q^\varphi,$$
which is a refinement of the LMO functor $\tiZ$ on the category $\B \cong \sLCob \subset\LCob$,
{and which may be considered as a ``Kontsevich
integral version'' of the functor $J^H$ in \eqref{e40}.}
{The target category $\hA_q^\varphi$} of $\Zq^\varphi$ is constructed from the
\emph{category $\AB$ of Jacobi diagrams in handlebodies}, 
described below.    (See Section~\ref{sec:Jacobi} for further details.)

\subsection{The category $\AB$ of Jacobi diagrams in handlebodies}\label{sec:category-ab-jacobi-1}

We work over a fixed field $\K$ of characteristic $0$.
For $m\geq 0$,  let $\bV_m$ denote  the \emph{square with $m$ handles},
which is constructed by attaching $m$ $1$-handles on the top of a square
and can be regarded  as the image of the handlebody $V_m$ under the
projection $\R^3\twoheadrightarrow\R^2$.
Let $X_n := \capn$ be the $1$-manifold consisting of $n$ arc components.

The objects in $\AB$ are non-negative integers.
The morphisms from $m$ to $n$ in $\AB$ are linear
combinations of $(m,n)$-Jacobi diagrams, which are Jacobi diagrams on   $X_n$ mapped {into} $\bV_m$.
Specifically, an \emph{$(m,n)$-Jacobi diagram} $D$ consists of
\begin{itemize}
\item a unitrivalent graph $D$ such that each
trivalent vertex is oriented, and such that the set of univalent
vertices is embedded into the interior of $X_n$,
\item a map $X_n \cup D\to\bV_m$ that maps $\partial X_n$ into the
  ``bottom edge'' of $\bV_m$ in a way similar to how the endpoints of
  a bottom tangle are mapped into the bottom line of a handlebody.
\end{itemize}
Here is an example of a {$(2,3)$-Jacobi diagram}:
\begin{gather}
  \label{e4}
  D=\centre{\includegraphics[width=20ex]{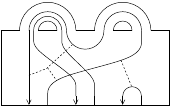}}: 2 \longrightarrow 3.
\end{gather}
As usual, the Jacobi diagrams obey the STU relations
$$
\labellist \small \hair 2pt
 \pinlabel{$=$}  at 127 39
 \pinlabel{$-$}  at 266 37
\endlabellist
\centering
\includegraphics[scale=0.35]{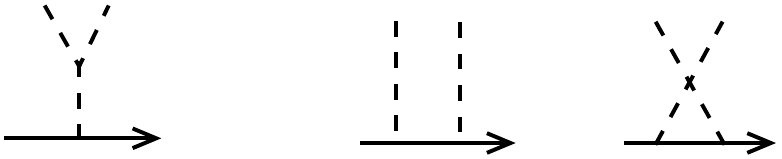}.
$$
Moreover, we identify Jacobi
diagrams that are homotopic in $\bV_m$ relative to the endpoints of $X_n$.
Since $V_m$ deformation retracts to $\bV_m$,
we could equivalently give the same definitions with $\bV_m$ replaced by $V_m$.
Thus the diagrams of the above kind are also referred to as \emph{Jacobi diagrams in handlebodies}.

A \emph{square presentation} of {an $(m,n)$-Jacobi diagram $D$}
is a  usual Jacobi diagram~$U$ ({i.e.,} a morphism in the target
category $\A$ of the Kontsevich integral $Z^\T$) which
yields $D$ by attaching parallel copies of cores of the $1$-handles in
$\bV_m$.  For example, here is a square presentation of $D$ in \eqref{e4}:
\begin{gather}
  \label{e4U}
  U=\centre{\includegraphics[width=20ex]{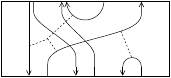}}.
\end{gather}
Although \emph{not} every {$(m,n)$-Jacobi diagram} admits a square presentation,
the STU relation implies that every morphism $m\to n$ in $\AB$ is a linear combination of such
diagrams admitting square presentations.

Composition in $\AB$ is defined by using  square presentations,
similarly to how composition in $\B$ is defined by using cube presentations.  
For $l\xto{D'}m\xto{D}n$ in $\AB$ and  a square presentation $U$ of $D$,
the composition $D\circ D':l\to n$ is the stacking of a suitable cabling
$C_U(D')$ on the top of $U$.  Here the \emph{cabling} $C_U(D')$ is obtained
from $D'$ by replacing each component of $X_m$ with its parallel
copies so that the target of $C_U(D')$ matches the source of $U$;
we also replace each univalent vertex attached to a component of $X_m$ 
with the sum of all ways  of attaching it (with signs) to the parallel copies of this component.
For example, if $D:2\to 3$ and $U$ are as in \eqref{e4}
and~\eqref{e4U}, respectively,  and~if 
\begin{gather*}
   D'=\centre{\includegraphics[height=13ex]{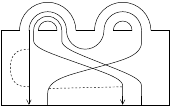}}:2 \longrightarrow 2,
\end{gather*}
then we have\\[-0.7cm]
\begin{gather*}
   D\circ D'=\centre{\includegraphics[height=22ex]{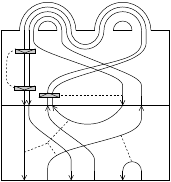}} :2 \longrightarrow 3.
\end{gather*}
(Here we  use  ``boxes'' to denote the above-mentioned {operation on univalent vertices};
this notation is explained in  Example \ref{ex:box}.)

The category $\AB$ has a structure of a linear symmetric strict
monoidal category.  Tensor product on objects is addition, and tensor
product on morphisms is juxtaposition. The symmetry in $\AB$ is
  determined by
$$
P=P_{1,1}= \figtotext{40}{40}{P_AB} :2\longrightarrow 2.
$$
{Moreover,} the morphism spaces $\AB(m,n)$ are graded with the usual degree of
Jacobi diagrams (i.e., half the number of vertices), and their degree
completions $\hA(m,n)$ form a linear category $\hA$, called the \emph{degree-completion} of $\AB$.

We remark that Jacobi diagrams in surfaces, such as
squares with handles, were considered earlier in the
above-mentioned works \cite{AMR,Lieberum}.
In Section~\ref{sec:Jacobi}, we will define $\AB(m,n)$ in a rather different way as a
space of \emph{colored} Jacobi diagrams.  The latter are essentially the same
as {$(m,n)$-Jacobi diagrams, i.e., Jacobi diagrams on $X_n$ mapped {into}
$\bV_m\simeq V_m$}, but the maps in $\bV_m \simeq V_m$ are
specified by decorating the components of $X_n$   and the dashed part of the diagram
with some  \emph{beads}.  These beads are labeled by  elements~of
$$
\pi_1(\bV_m) \cong \pi_1(V_m)  =F(x_1,\dots,x_m) =:F_m,
$$
the free group {on the elements} 
$x_1,\dots,x_m$ corresponding to the $1$-handles {of $V_m$}.
Colored Jacobi diagrams  appeared in \cite{GL,GR} for instance.

\subsection{Construction of a functor $Z^\B$}\label{sec:constr-funct-zb}

The \emph{non-strictification} $\C_q$ of a strict mo\-noidal category
$\C$ ({whose object monoid is free}) is the non-strict monoidal
category obtained from $\C$ by forcing the tensor product
to be not strictly associative but associative up to canonical
isomorphisms; see Section \ref{sec:tangles} for the definition.
For example, the category $\T_q$ of $q$-tangles, which is the source category of the Kontsevich integral $Z^\T$,
is the non-strictification of the
strict monoidal category $\T$ of tangles.  The object set
$\Ob(\T)$ of $\T$ is the free monoid $\Mon(\pm)$ on two letters $+,-$
corresponding to downward and upward strings; correspondingly, the set
$\Ob(\T_q)$ is the free unital magma $\Mag(\pm)$ on $+,-$, consisting of
fully-parenthesized words in $+,-$
such as $+$, $-$, $(-+)$, $(-(++))$, including the empty word $\vn$.

Non-strictification is applied to strict monoidal categories such as
$\B$ and $\hA$ to produce non-strict monoidal categories $\Bq$ and $\hAq$.
{Since $\Ob(\B)= \Ob(\hA) = \{0,1,\dots\}$ can be identified with $\Mon(\bu)$, the free monoid on one
letter $\bu$, we may set $\Ob(\Bq)= \Ob(\hA_q) = \Mag(\bu)$, the free unital magma on~$\bu$.
The latter consists of parenthesized words in $\bu$ such as
$\vn,\bu,(\bu\bu),((\bu\bu)\bu)$.  The length of $w\in\Mag(\bu)$ is
denoted by $|w|$.
The morphisms {in} $\Bq$ are called \emph{bottom $q$-tangles in handlebodies}.}

Recall that a Drinfeld associator $\varphi = \varphi(X,Y)$ is a group-like
element of $\KXY$ satisfying the so-called \emph{pentagon} and \emph{hexagon} equations \cite{Drinfeld2};
see Section \ref{sec:drinfeld-associators}.  
{Here} is the main construction of {the present} paper.

\begin{theorem}[see Theorem \ref{r16}]
  \label{r31}
  For each Drinfeld associator $\varphi$, there is a braided monoidal
  functor
  \begin{gather}
  \label{e57}
  \Zq^\varphi:\Bq \longrightarrow \hA_q^\varphi
  \end{gather}
  from $\Bq$, the non-strictification of the category $\B$, to
  $\hAqp$, a ``deformation'' of the non-strictification of $\hA$
  which is determined by $\varphi$.
\end{theorem}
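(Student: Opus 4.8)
The plan is to build $\Zqphi$ geometrically, using the ordinary Kontsevich integral $Z^\T$ as a bridge between cube presentations of bottom $q$-tangles and square presentations of Jacobi diagrams in handlebodies. Given a morphism $T\colon w\to w'$ in $\Bq$ and a cube presentation $U$ of its underlying bottom tangle, I would regard $U$ as a $q$-tangle, its endpoints inheriting the parenthesizations recorded by $w$ and $w'$, apply the Kontsevich integral to obtain $Z^\T(U)\in\A$, and then form the square presentation built from $Z^\T(U)$ by reattaching parallel copies of the handle cores; this yields a morphism of $\hAqp$, which I declare to be $\Zqphi(T)$. On objects, $\Zqphi$ is the identity of $\Mag(\bu)=\Ob(\Bq)=\Ob(\hAq)$, and the entire point of passing to the non-strictifications is that the reparenthesization (associativity) isomorphisms of $\Bq$ should be sent to those of $\hAqp$. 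Since the latter are, by the very definition of the $\varphi$-deformation of the target, the images under $Z^\T$ of the corresponding associativity $q$-tangles, this is exactly the step at which the Drinfeld associator $\varphi$ enters and pins down the monoidal structure of $\hAqp$.

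The main obstacle is \emph{well-definedness}: the class $\Zqphi(T)\in\hAqp(w,w')$ must not depend on the chosen cube presentation $U$. Two cube presentations of the same bottom tangle are related by isotopies supported inside the cube --- harmless by the isotopy invariance of $Z^\T$ --- together with moves in which strands are pushed across the $1$-handles of $V_m$. Under such a handle move the $q$-tangle $U$ changes, and $Z^\T(U)$ changes accordingly, but the reinterpretation as a Jacobi diagram in the handlebody must absorb the change. I would prove this by reducing it to the defining relations of $\hA$, namely the STU relation together with the identification of diagrams that are homotopic in $\bV_m$ (equivalently, the invariance of the $F_m$-colorings under the relations of $\pi_1(\bV_m)$), and to standard properties of $Z^\T$, in particular its group-likeness and its behaviour under the elementary moves. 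Organizing this verification is cleanest if one first establishes it on the generators of $\B$ as a braided monoidal category --- the morphisms of the Hopf algebra object $H^\B$ together with the few remaining generators --- so that presentation-independence follows from compatibility of $\Zqphi$ with the finite list of defining relations of $\B$.

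Next I would check functoriality, i.e.\ $\Zqphi(D\circ T)=\Zqphi(D)\circ\Zqphi(T)$. Composition in $\Bq$ is defined by cabling a bottom tangle on the top of a cube presentation, and composition in $\hAqp$ is defined by the entirely parallel operation of cabling $C_U$ on the top of a square presentation, including the ``box'' operation that sums (with signs) over the ways of distributing univalent vertices among parallel copies (Example~\ref{ex:box}). The compatibility therefore follows from the cabling (doubling) formula for the Kontsevich integral: $Z^\T$ intertwines the geometric doubling of a strand with the coproduct-type operation on Jacobi diagrams, and it is precisely this operation that the box notation encodes. Identity morphisms are handled by $Z^\T(\id)=\id$, so $\Zqphi$ is a functor.

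Finally I would upgrade $\Zqphi$ to a \emph{braided monoidal} functor. The tensor product on both sides is juxtaposition of diagrams and tangles, and the multiplicativity of $Z^\T$ under juxtaposition supplies the natural isomorphisms $\Zqphi(w)\otimes\Zqphi(w')\to\Zqphi(w\otimes w')$; their coherence (the associativity and unit axioms of a monoidal functor) again reduces to the pentagon and hexagon equations satisfied by $\varphi$, which hold by definition of an associator, so nothing remains beyond bookkeeping. For the braiding, the generating braiding $\psi_{1,1}\colon 2\to 2$ of $\B$ is sent to $Z^\T$ of a single positive crossing, i.e.\ to the corresponding $R$-matrix Jacobi diagram in $\hAqp$; the two hexagon axioms for a braided functor then follow, once more, from the hexagon equations for $\varphi$ together with the naturality already established. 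Assembling these verifications yields the braided monoidal functor of the statement.
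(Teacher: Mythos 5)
Your overall architecture coincides with the paper's: define the functor on a morphism $T$ of $\Bq$ by choosing a cube presentation $U$, applying the usual Kontsevich integral $Z^\T$, and reading the result as a square presentation of a Jacobi diagram in a handlebody; then deform the non-strictification of $\hA$ so that its associativity isomorphisms and braidings are the images of those of $\Bq$ (this is exactly how $\hAqp$ and Theorem \ref{r16} are set up, using Proposition \ref{prop:Z_generators}). However, your definition of the value on $T$ has a genuine gap: you take the square presentation to be $Z^\T(U)$ itself, whereas Definition \ref{r25} takes it to be
\[
Z^\T(U)\circ\big(a_{v_1}\ot\id_{v_1^*}\ot\cdots\ot a_{v_m}\ot\id_{v_m^*}\big),
\]
where the $a_{v_i}$ are the cabling anomalies of \eqref{e61}. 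These corrections are not a convenience but a necessity. The Kontsevich integral commutes with cabling and with $\pi$-rotation only \emph{up to anomalies}: by Lemma \ref{lem:cabling}, $Z^\T C_\varpi(\gamma)=c(w',\varpi_t)\circ C_\varpi Z^\T(\gamma)\circ c(w,\varpi_s)^{-1}$, and the factors $c(-,-)$ are nontrivial (e.g.\ $a_{(++)}$, computed in \eqref{eq:a=}, involves $\varphi$; no choice of normalization in Theorem \ref{th:Kontsevich} makes them disappear). Consequently your assertion that ``$Z^\T$ intertwines the geometric doubling of a strand with the coproduct-type operation encoded by the box notation'' is false as stated, and your functoriality argument collapses: composition in $\B$ is cabling, and the anomaly factors do not cancel unless the $a_{v_i}$ are built into the definition (the paper's functoriality proof in Section \ref{sec:functoriality-z} needs both Lemma \ref{lem:cabling} and the recurrence Lemma \ref{lem:induction_cabling} precisely for this cancellation). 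The same anomaly obstructs well-definedness: sliding a subtangle through a handle replaces $U$ by a presentation $U'$ whose Kontsevich integral differs from that of $U$ by a rotated factor, and with the normalization of Section \ref{sec:usual_Kontsevich} one has $rZ^\T r\neq Z^\T$; the discrepancy is exactly conjugation by anomalies, and it is not absorbed by the STU relation or by homotopy invariance in $\bV_m$, contrary to what you claim.

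Your fallback route --- defining the functor on the generators of $\B$ and checking ``the finite list of defining relations'' --- is not executable with the material at hand: the paper only quotes a generating set of $\B$ (announced in \cite{Habiro}, with proof deferred to \cite{Habiro2}), and no presentation of $\B$ by generators and relations as a braided monoidal category is available here; obtaining one is a substantial theorem in its own right. So you must repair the geometric route by inserting the anomaly corrections into the definition and then prove well-definedness and functoriality as in Sections \ref{sec:construction_Z}--\ref{sec:functoriality-z}; once that is done, your final step (deforming the target so that the computed values of the associativity isomorphisms and braidings, namely \eqref{eq:Z(alpha)} and \eqref{eq:Z(r+)}, become the structural isomorphisms of $\hAqp$) does yield Theorem \ref{r31} just as in the paper.
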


To prove Theorem \ref{r31},
we will construct a tensor-preserving functor
\begin{gather}
  \label{e58}
  Z^\B: \Bq\longrightarrow \hA.
\end{gather}
If we ignore the monoidal structures, the categories $\hAqp$ and $\hA$
are equivalent in a natural way and, under this equivalence,
the functors $\Zq^\varphi$ and $Z^\B$ are essentially the same for each $\varphi$.
We construct the functor
$Z^\B$ by using
the usual Kontsevich integral
$Z^\T:\T_q\to\A$ as follows.
Here  $Z^\T$ is defined from the Drinfeld associator~$\varphi$, using the normalization 
\begin{gather*}
\begin{array}{rcll}
{Z^\T}\Big(\!\!\! \begin{array}{c}   \figtotext{18}{9}{capleft} \\ {}^{(+-)}  \end{array}\!\!\!\Big)
& = &\!\!\!
\begin{array}{c}{
\labellist  \scriptsize \hair 2pt
\pinlabel{$1$} at 122 115
\endlabellist
\includegraphics[scale=0.15]{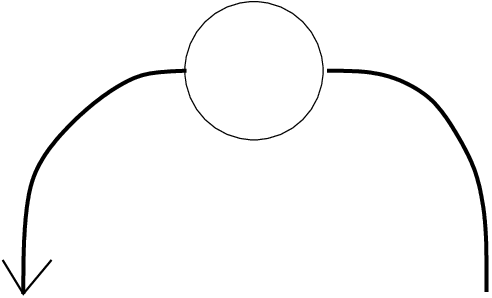} 
} \end{array}  &
:\ \vn\lto {(+-)}\quad \text{in $\AT$},\\
{Z^\T}\Big(\!\!\! \begin{array}{c} {}_{(+-)} \\ \figtotext{18}{9}{cupright} \end{array} \!\!\!\Big) & = &\!\!\!
\begin{array}{c}
\labellist \scriptsize  \hair 2pt
\pinlabel{$\nu$} at 115 35
\endlabellist
\includegraphics[scale=0.15]{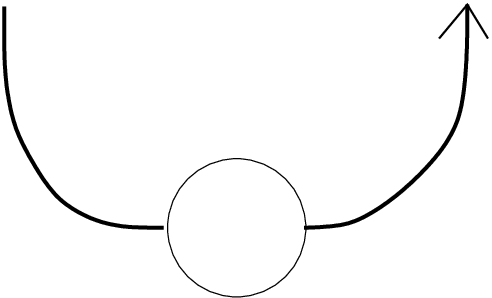} 
\end{array}
 &
:\ {(+-)} \lto\vn\quad \text{in $\AT$},
\end{array}
\end{gather*}
where $\nu$ is the usual normalization factor. 
(In the literature, one often uses the normalization {with}
both $1$ and $\nu$ in the above identities {being} 
replaced with  $\nu^{1/2}$, so that the invariant behaves well under $\pi$-rotation of
tangles.  In our case, like in~\cite{CHM},  it is more important  to have a simple value on $\capl$.)

Consider $T:v\to w$ in $\Bq$ with $|v|=m$ and $|w|=n$.
In order to define $Z^\B(T)$, we choose a projection diagram of  $T$
\begin{gather}
  \label{e100}
  \def\tmpA{$T_0$}
  \def\tmpB{$T_1$}
  \def\tmpC{$T_m$}
  T=\centre{\begin{picture}(0,0)%
\includegraphics{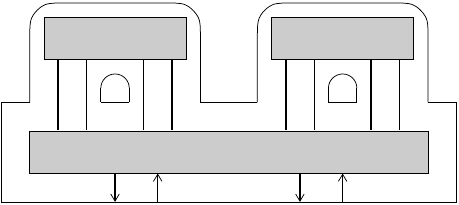}%
\end{picture}%
\setlength{\unitlength}{3158sp}%
\begingroup\makeatletter\ifx\SetFigFont\undefined%
\gdef\SetFigFont#1#2#3#4#5{%
  \reset@font\fontsize{#1}{#2pt}%
  \fontfamily{#3}\fontseries{#4}\fontshape{#5}%
  \selectfont}%
\fi\endgroup%
\begin{picture}(4576,2025)(889,-8334)
\put(2324,-7316){\makebox(0,0)[lb]{\smash{{\SetFigFont{7}{8.4}{\rmdefault}{\mddefault}{\updefault}{\color[rgb]{0,0,0}$\;\cdots$}%
}}}}
\put(4571,-7475){\makebox(0,0)[lb]{\smash{{\SetFigFont{7}{8.4}{\rmdefault}{\mddefault}{\updefault}{\color[rgb]{0,0,0}$\;u_m'$}%
}}}}
\put(3715,-7322){\makebox(0,0)[lb]{\smash{{\SetFigFont{7}{8.4}{\rmdefault}{\mddefault}{\updefault}{\color[rgb]{0,0,0}$\;\cdots$}%
}}}}
\put(3716,-7468){\makebox(0,0)[lb]{\smash{{\SetFigFont{7}{8.4}{\rmdefault}{\mddefault}{\updefault}{\color[rgb]{0,0,0}$\;u_m$}%
}}}}
\put(1470,-7316){\makebox(0,0)[lb]{\smash{{\SetFigFont{7}{8.4}{\rmdefault}{\mddefault}{\updefault}{\color[rgb]{0,0,0}$\;\cdots$}%
}}}}
\put(1470,-7459){\makebox(0,0)[lb]{\smash{{\SetFigFont{7}{8.4}{\rmdefault}{\mddefault}{\updefault}{\color[rgb]{0,0,0}$\;u_1$}%
}}}}
\put(2324,-7459){\makebox(0,0)[lb]{\smash{{\SetFigFont{7}{8.4}{\rmdefault}{\mddefault}{\updefault}{\color[rgb]{0,0,0}$\;u_1'$}%
}}}}
\put(3078,-6748){\makebox(0,0)[lb]{\smash{{\SetFigFont{7}{8.4}{\rmdefault}{\mddefault}{\updefault}{\color[rgb]{0,0,0}$\cdots$}%
}}}}
\put(3097,-8235){\makebox(0,0)[lb]{\smash{{\SetFigFont{7}{8.4}{\rmdefault}{\mddefault}{\updefault}{\color[rgb]{0,0,0}$\cdots$}%
}}}}
\put(4572,-7328){\makebox(0,0)[lb]{\smash{{\SetFigFont{7}{8.4}{\rmdefault}{\mddefault}{\updefault}{\color[rgb]{0,0,0}$\;\cdots$}%
}}}}
\put(3035,-7885){\makebox(0,0)[lb]{\smash{{\SetFigFont{7}{8.4}{\rmdefault}{\mddefault}{\updefault}{\color[rgb]{0,0,0}\tmpA}%
}}}}
\put(1871,-6721){\makebox(0,0)[lb]{\smash{{\SetFigFont{7}{8.4}{\rmdefault}{\mddefault}{\updefault}{\color[rgb]{0,0,0}\tmpB}%
}}}}
\put(4144,-6725){\makebox(0,0)[lb]{\smash{{\SetFigFont{7}{8.4}{\rmdefault}{\mddefault}{\updefault}{\color[rgb]{0,0,0}\tmpC}%
}}}}
\end{picture}%
},
\end{gather}
composed of $q$-tangles
\begin{gather*}
  T_0:\ti v\lto w(+-),\quad T_i:\vn\lto u_iu_i'\quad (i=1,\dots,m) ,
\end{gather*}
where
\begin{itemize}
\item $u_1,u_1',\dots,u_m,u_m'\in\Mag(\pm)$,
\item $\ti{v} := v(u_1u_1',\dots,u_mu_m')$ is obtained from the
non-associative word $v$ in $\bu$ by substituting
$u_1u_1',\dots,u_mu_m'$ into the $m$ $\bu$'s, and
$w(+-):=w(+-,\dots,+-)$ is defined similarly.
\end{itemize}
Then we define $Z^\B(T):m\to n$ in $\hA$ by
\begin{gather}
  \label{e25}
  \def\tmpA{${\!\!\!Z^\T}(T_0)$}
  \def\tmpB{${\!\!\!Z^\T}(T_1)$}
  \def\tmpC{${\!\!\!Z^\T}(T_m)$}
  Z^\B(T)= \centre{}.
\end{gather}

We remark that the above definition of $Z^\B(T)$, {simply} as an invariant of
tangles in handlebodies, is  similar to the definition
of the Kontsevich integral of links in {thickened surfaces}
given by Andersen, Mattes and Reshetikhin~\cite{AMR}; see also Lieberum~\cite{Lieberum}.

\begin{theorem}[see Theorem \ref{th:extended_Kontsevich}]
  \label{r46}
  {There is a functor
  $Z^\B: \Bq \to \hA$ {such that}
\begin{itemize}
\item on objects $w\in\Mag(\bu)$, we have ${Z^\B}(w)=|w|$,
\item on morphisms $T:v\to w$ in $\Bq$ decomposed as \eqref{e100}, we have \eqref{e25}.
\end{itemize}
Furthermore, the functor $Z^\B$ is tensor-preserving, i.e.,
${Z^\B}(T \otimes T') = {{Z^\B}(T) \otimes {Z^\B}(T')}$
  for  morphisms $T$ and $T'$ in $\Bq$.}
\end{theorem}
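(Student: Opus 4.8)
The plan is to treat the right-hand side of \eqref{e25} as a single application of the Kontsevich integral $Z^\T$, and then to check in turn that $Z^\B(T)$ depends only on the morphism $T$ (not on the chosen decomposition \eqref{e100}), that $Z^\B$ preserves identities and composition, and that it is tensor-preserving. Geometrically, unfolding the handlebody $V_m$ into a cube turns a bottom $q$-tangle into an ordinary $q$-tangle, and \eqref{e25} computes $Z^\T$ of this unfolded tangle together with the closure, provided by each $T_i$, of the strand-pair $u_i u_i'$ through the $i$-th handle; the closure through handle $i$ is recorded by the generator $x_i$ of $\pi_1(\bV_m) = F_m$, so that $Z^\B(T)$ is manifestly a morphism $m \to n$ in $\hA$. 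As a preliminary one notes that every morphism of $\Bq$ does admit a presentation of the form \eqref{e100}, obtained by isotoping the handle-passages of $T$ to the top; the real content of the theorem is therefore the independence of \eqref{e25} on the decomposition, after which functoriality and monoidality are comparatively formal.

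For well-definedness I would argue that any two presentations of a fixed $T \in \Bq(v,w)$ in the form \eqref{e100} are connected by a finite sequence of elementary moves of three kinds, and check invariance of \eqref{e25} under each. First, $q$-tangle isotopies of the pieces $T_0, T_1, \dots, T_m$ and the Reidemeister and planar moves relating two diagrams inside the cube: here invariance is immediate, since $Z^\T$ is an isotopy invariant of $q$-tangles. Second, reparenthesization moves, which alter the non-associative words $u_i, u_i', \ti v, w(+-)$: these are governed by the associator $\varphi$ exactly as for $Z^\T$ itself, passing between $\T$ and $\Tq$. Third, and most importantly, the handle moves, which change how the $i$-th handle is cut: sliding the level of the turn-back $T_i$, exchanging the roles of $u_i$ and $u_i'$, or pushing a strand through the $i$-th handle. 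Invariance under these last moves is the main obstacle, and it is precisely here that the normalization giving $Z^\T$ the value $1$ on the cap enters: because the cap contributes trivially, re-cutting or sliding a handle produces no spurious framing or $\nu$-factor, and the compensating changes in $Z^\T(T_0)$ and in $Z^\T(T_i)$ cancel after the handle-closure. Each such move I would settle by a local computation, using the multiplicativity and group-likeness of $Z^\T$ and the identification of diagrams that are homotopic in $\bV_m$.

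Granting well-definedness, the remaining assertions follow from the structural properties of $Z^\T$. The identity $Z^\B(\id_w) = \id_{|w|}$ results from the evident decomposition of $\id_w$ together with $Z^\T(\id) = \id$ and the value $1$ on the cap. For composition, one uses that $T' \circ T$ in $\Bq$ is formed from cube presentations by cabling and stacking, in exact parallel with the definition of composition in $\hA$ via square presentations and the cabling operation $C_U(-)$; applying $Z^\T$ to the stacked diagram and invoking its multiplicativity under stacking, together with its compatibility with the doubling of strands — which is the diagrammatic counterpart, via the coproduct, of the box operation on univalent vertices — gives $Z^\B(T' \circ T) = Z^\B(T') \circ Z^\B(T)$. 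Finally, juxtaposing decompositions of $T$ and $T'$ yields a decomposition of $T \ot T'$, and since $Z^\T$ sends juxtaposition to juxtaposition one obtains $Z^\B(T \ot T') = Z^\B(T) \ot Z^\B(T')$, as claimed.
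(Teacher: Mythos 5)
Your strategy inverts the paper's: you take \eqref{e25} as the \emph{definition} of $Z^\B$ and prove independence of the decomposition \eqref{e100} by a moves argument, whereas the paper (Definition \ref{r25}, Theorem \ref{th:extended_Kontsevich}) defines $Z^\B$ via cube presentations with explicit anomaly corrections $a_{v_1}\otimes\id_{v_1^*}\otimes\cdots\otimes a_{v_m}\otimes\id_{v_m^*}$ built into the formula, and only afterwards (Section \ref{sec:proof-theorem-refr46-1}) derives \eqref{e25}. Your well-definedness step is in fact viable, and for a simpler reason than the one you give: invariance under shifting the cuts between $T_0$ and the pieces $T_i$ (including sliding a subtangle through a handle, which is a composite of two such shifts, and reparenthesization, which is the same move applied to an associativity $q$-tangle) follows from functoriality, tensor-preservation and isotopy invariance of $Z^\T$ alone; the normalization $Z^\T(\mathrm{cap})=1$ plays no role there, and is needed only to get $Z^\B(\id_w)=\id_{|w|}$.

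The genuine gap is in your composition step. You invoke the ``compatibility of $Z^\T$ with the doubling of strands'', but the Kontsevich integral does \emph{not} commute with cabling: by Lemma \ref{lem:cabling}, $Z^\T C_\varpi(\gamma) = c(w',\varpi_t)\circ C_\varpi Z^\T(\gamma)\circ c(w,\varpi_s)^{-1}$, where the correction terms $c(\cdot,\cdot)$ are built from the cabling anomalies $a_w$, and $a_w$ is nontrivial in general (it involves the associator; see \eqref{eq:a=} for $a_{(++)}$). Since composition in $\Bq$ is defined precisely by cabling a presentation of $T$ and stacking it onto one of $T'$, any computation of $Z^\B(T'\circ T)$ from \eqref{e25} produces these correction factors, and the heart of the paper's functoriality proof (Section \ref{sec:functoriality-z}) is showing that they telescope away, using the recurrence formula for anomalies of Lemma \ref{lem:induction_cabling}; in the paper's setup this is exactly what the factors $a_{v_i}$ in Definition \ref{r25} are designed to absorb. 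Relatedly, your guiding intuition that ``the cap contributes trivially'' fails for cabled caps: by \eqref{e61}, the value of $Z^\T$ on the $w$-cabled cap is the cabled cap decorated by $a_w$, and such cabled caps occur as pieces $T_i$ whenever a component of $T$ runs through a handle more than once, and systematically under composition. Until you prove (some form of) Lemmas \ref{lem:cabling} and \ref{lem:induction_cabling} and show that the resulting anomalies cancel, your argument for $Z^\B(T'\circ T)=Z^\B(T')\circ Z^\B(T)$ does not go through.
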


The functor $Z^\B$ is \emph{not} monoidal since
{it does not preserve the associativity isomorphisms.}
The braided monoidal category $\hA_q^\varphi$ mentioned in Theorem
\ref{r31} is constructed from the non-strictification $\hA_q$ of $\hA$
by redefining the associativity isomorphisms and braidings to be the
images by $Z^\B$ of those of $\Bq$.  Then Theorem~\ref{r31} follows
from Theorem \ref{r46}.

\subsection{Basic properties of $Z^\B$}		\label{sec:basic-properties-zb-1}

{Here are some basic properties of $Z^\B$.}

The functor $Z^\B$ extends the usual Kontsevich integral for
bottom $q$-tangles in a cube, i.e., for each $T:\vn\to w$ in $\Bq$, regarded also as $T:\vn\to w(+-)$ in $\T_q$,
we have $Z^\B(T)=Z^\T(T)$. 	

{We can enrich $\AB$ {and $\hA$} over cocommutative coalgebras}, {i.e.,
the morphism spaces in $\AB$ and $\hA$ {have} cocommutative coalgebra structures,
and the compositions and tensor products on them are coalgebra maps}
(see Proposition \ref{r8}).  It follows that $Z^\B$ takes values
in the group-like part of {$\hA$} (see Proposition~\ref{prop:group-like}).

Let $\bfF$ denote the category of finitely generated free groups.
Consider the functor $$h:\B\cong\mathcal{H}^\op \lto \bfF^{\op}$$ that maps
each bottom tangle $T:m\to n$ to the homomorphism $(i_T)_*:F_n\to F_m$
between free groups.  This functor gives an $\bfF^\op$-grading of the
category~$\B$.  Similarly, we have an $\bfF^\op$-grading of the linear
category $\AB$ and its completion $\hA$, where the $\bfF^\op$-degree
of each $(m,n)$-Jacobi diagram $D$ is the homotopy class of the {underlying}
map $X_n\to \bV_m$. It follows that $Z^\B$ preserves
$\bfF^\op$-grading (see Proposition \ref{prop:homotopy_types}).

The degree $0$ part of $Z^\B(T)$, which {belongs to} $\AB_0\cong\K \bfF^{\op}$, 
is given by the homotopy class {$h(T)$} of  {the components of $T$ in the handlebody}. 
The degree $1$ part of $Z^\B(T)$, which we do not study
in the present paper, is given by equivariant linking numbers of {the} components
of $T$ in the handlebody.  We give the values of $Z^\B$ up to degree $2$ {on} the generators of the monoidal
category $\Bq$ (see Proposition~\ref{r29}).

\subsection{$Z^\B$ as {a} universal finite type invariant}  \label{sec:zb-as-universal} 

{The} main property of the invariant~$Z^\B$ is the universality
among Vassiliev--Goussarov {finite type} invariants, {formulated functorially}.
{Similarly to the case of usual tangles in a cube \cite{KT}},
we define the \emph{Vassiliev--Goussarov filtration}
$$
\KBq = \V^0 \supset \V^1 \supset \V^2 \supset \cdots
$$
on the linearization $\K \B_q$ of the non-strict monoidal category  $\B_q$,
and  we consider the same filtration  on the  linear strict monoidal category $\K\B$.
The braiding {in}~$\B$  induces a {symmetry in} 
the associated graded $\Gr \K\B$ of  $\K\B$.
{We give $\hA^\varphi_q$ the degree filtration.}

\begin{theorem}[see   Theorem \ref{r20} and Theorem \ref{th:universality}]
  \label{r45}
  The functor $\Zq^\varphi:\hKBq\to\hAqp$ is an isomorphism of filtered
  linear braided monoidal categories. Consequently, $Z^\B$ induces an
  isomorphism $\Gr\KB\cong \AB$ of graded linear symmetric monoidal categories.
\end{theorem}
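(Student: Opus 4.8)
The plan is to deduce both assertions from a single statement about associated graded categories. First I would observe that $\Zq^\varphi$, equivalently $Z^\B$ via the non-monoidal equivalence $\hAqp\simeq\hA$ explained after Theorem~\ref{r46}, is a \emph{filtered} functor: it sends the Vassiliev--Goussarov filtration $\V^k$ on $\hKBq$ into the degree-$\ge k$ part of $\hAqp$. Since the source $\hKBq$ is by construction complete and separated for the Vassiliev--Goussarov filtration, and the target $\hAqp$ is complete and separated for the degree filtration, the standard criterion for complete filtered vector spaces—applied Hom-space by Hom-space—shows that such a filtered functor is an isomorphism if and only if the induced functor on associated graded categories is an isomorphism. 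Passing to associated graded is insensitive both to completion and to non-strictification, so this associated-graded functor is identified with the map $\Gr Z^\B\colon\Gr\KB\to\AB$ of the second assertion, using $\Gr\hAqp\cong\AB$. The braided/symmetric and monoidal compatibilities then come for free: by the very construction of $\hAqp$ (Theorem~\ref{r31}) the associators and braiding of the target are the $Z^\B$-images of those of $\Bq$, and on associated graded the braiding becomes the symmetry of $\AB$. Thus the theorem reduces to proving that $\Gr Z^\B$ is an isomorphism of graded linear symmetric monoidal categories.

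Next I would verify the filtration compatibility. Using that every morphism of $\Bq$ has a projection diagram as in~\eqref{e100} built from $q$-tangles, and that $Z^\B$ is assembled from the usual Kontsevich integral $Z^\T$ by the cabling-and-stacking recipe~\eqref{e25}, the claim reduces to the corresponding property of $Z^\T$ on $\Tq$, namely that a $q$-tangle with $k$ transverse double points has Kontsevich integral of degree $\ge k$ \cite{BN1,KT}; since the cabling operation does not lower degree, $Z^\B(\V^k)$ lands in degree $\ge k$ and $\Gr Z^\B$ is well defined.

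The heart of the proof is to construct an inverse to $\Gr Z^\B$. For this I would define a \emph{realization} (or symbol) map $\sigma\colon\AB\to\Gr\KB$ sending a degree-$k$ $(m,n)$-Jacobi diagram in the handlebody to the class in $\V^k/\V^{k+1}$ of the bottom tangle obtained by surgery along a corresponding degree-$k$ graph clasper in $V_m$, in the spirit of Habiro's clasper calculus \cite{Habiro_claspers,Habiro}. Three things must then be checked: (a) $\sigma$ is well defined, i.e.\ the STU relation and the homotopy identification of diagrams in $\bV_m\simeq V_m$ correspond, modulo $\V^{k+1}$, to clasper moves; (b) $\Gr Z^\B\circ\sigma=\id_{\AB}$, the statement that the leading degree-$k$ term of the Kontsevich integral of a degree-$k$ clasper surgery is the associated Jacobi diagram, now decorated by its homotopy class in $F_m=\pi_1(V_m)$; and (c) $\sigma$ is surjective, i.e.\ $\V^k/\V^{k+1}$ is spanned by classes of degree-$k$ clasper surgeries, which is the ``fundamental theorem'' part and follows by adapting the clasper calculus of \cite{Habiro,Habiro2} and the comparison with $\LCob$ in \cite{CHM} to the handlebody setting. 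Granting (b) and (c), $\sigma$ is injective and surjective, hence an isomorphism whose inverse is $\Gr Z^\B$, completing the reduction.

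I expect step (b), the leading-term computation, together with the well-definedness~(a), to be the main obstacle. In the cube case this is classical, but here one must track the $F_m$-coloring: one has to show that the degree-$k$ part of $Z^\B$ on a clasper surgery reproduces not merely the underlying tree or graph but also the correct beads in $\pi_1(V_m)$, so that $\Gr Z^\B$ respects the symmetric monoidal and $\bfF^\op$-graded structure of $\AB$ recorded in Proposition~\ref{prop:homotopy_types}. Verifying that the cabling recipe~\eqref{e25} transports the handlebody decorations correctly through $Z^\T$, in a way compatible with the coalgebra enrichment (Proposition~\ref{r8}) and the group-likeness (Proposition~\ref{prop:group-like}), is where the bulk of the work lies.
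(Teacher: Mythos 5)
Your proposal is correct, and its skeleton coincides with the paper's: the reduction of the filtered statement to the associated-graded one by completeness and induction on filtration degree (this is exactly the proof of Theorem \ref{th:universality}), the identification $\Gr\hAqp\cong\Aq$ (Proposition \ref{r24}) and its de-strictification to $\bZ:\Gr\KB\to\AB$, and the formal scheme ``produce a surjective one-sided inverse $\AB\to\Gr\KB$ to $\bZ$'' are all present in the paper. Where you genuinely diverge is in how that inverse is built. You construct it topologically, as a clasper-realization map $\sigma$; the paper confirms (in the remark following Theorem \ref{th:universality}, citing \cite[\S 8.2]{Habiro_claspers} for the case $m=0$) that this route is viable and yields the same map. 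The paper itself proceeds algebraically instead: by Theorem \ref{AB-pres}, $\AB$ is the free linear symmetric monoidal category on a Casimir Hopf algebra, so to obtain a symmetric monoidal functor $\GG:\AB\to\Gr\KB$ it suffices to check that the Hopf algebra $H^{\Gr\KB}$ together with $-\wt{\cc}=-(\cc-\eta^{\ot2})$ is a Casimir Hopf algebra in $\Gr\KB$ (a handful of identities in $\B$, verified modulo $\V^2$ by crossing-change arguments); well-definedness, functoriality and monoidality of $\GG$ --- your step (a) --- are then automatic. Fullness of $\GG$ --- your step (c) --- follows from Proposition \ref{prop:V=J} (the ideal $\J=\V^1$ is generated by $r_+-\eta$) together with $\GG(r)=-\wtr_+$, and $\bZ\GG=\id_{\AB}$ --- your step (b) --- is checked only on generators, using the single degree-one computation $\Zqphi(r_+-\eta)=-r+(\deg\geq 2)$. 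So the presentation theorem collapses your three hard steps, in particular the leading-term computation for clasper surgeries carrying $F_m$-beads that you rightly identify as the bulk of the work, to finitely many low-degree evaluations on generators; what your route buys in exchange is a more geometric, $\bfF^\op$-grading-compatible description of the inverse and a direct link to Goussarov--Habiro finite-type theory. One small caveat on your filtration-preservation step: the plots defining $\V^k$ include positive curls as well as crossings, so the classical ``$k$ double points implies degree $\geq k$'' bound must be supplemented by the corresponding estimate for framing changes --- or avoided altogether, as in Proposition \ref{r18}, by using that $\V^1$ is generated by $r_+-\eta$ and that $\Zqphi$ is a monoidal functor.
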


It is hoped that Vassiliev--Goussarov invariants distinguish knots in
  $S^3$ \cite{Vassiliev}; since the usual Kontsevich integral  is universal among such
invariants, the hope is that  the functor $Z^\T$ is faithful.  More generally, we expect  the following.

\begin{conjecture}
  \label{r35}
  The functor $Z^\B$ (resp.~$\Zq^\vp$) is faithful.  In other
  words, $Z^\B$ (resp.~$\Zq^\vp$)  is a complete
  invariant of bottom tangles in handlebodies.
\end{conjecture}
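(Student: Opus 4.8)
The plan is to reduce the faithfulness of $Z^\B$ to a separation property of the Vassiliev--Goussarov filtration, and then to isolate the genuinely hard core of the problem. First I would invoke the universality result, Theorem~\ref{r45}: since $\Zqphi$ is an isomorphism $\hKBq\cong\hAqp$ of filtered linear categories (and $Z^\B$ is, up to the natural equivalence $\hAqp\simeq\hA$ that forgets the monoidal structure, the same functor), the assignment $T\mapsto Z^\B(T)$ on a hom-set factors as the inclusion $\Bq(v,w)\hookrightarrow\KBq(v,w)$ followed by the canonical map $\KBq(v,w)\to\hKBq(v,w)=\plim_k\KBq/\V^k(v,w)$ and the isomorphism onto $\hA$. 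Hence $Z^\B(T)=Z^\B(T')$ holds precisely when $T-T'\in\bigcap_k\V^k$. Thus \emph{faithfulness of $Z^\B$ is equivalent to the assertion that finite type invariants separate bottom tangles in handlebodies}, i.e.\ that the canonical map $\Bq(v,w)\to\hKBq(v,w)$ is injective on isotopy classes for all $v,w$; the same reduction applies verbatim to $\Zqphi$.

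Next I would recast this separation property through the Goussarov--Habiro theory of $C_k$-equivalence. Two bottom tangles should agree modulo $\V^k$ exactly when they are $C_k$-equivalent, so the filtration is separating if and only if $C_k$-equivalence for all $k$ forces isotopy. The working strategy would then be clasper calculus: given distinct $T\neq T'$, one wants a finite $k$ for which $T$ and $T'$ are \emph{not} $C_k$-equivalent, equivalently a finite type invariant of degree $k$ distinguishing them. The natural tool is the grading isomorphism $\Gr\KB\cong\AB$ from Theorem~\ref{r45}: it identifies the obstruction to promoting a $C_k$-equivalence between $T$ and $T'$ to a $C_{k+1}$-equivalence with a definite element of the space $\AB(m,n)$ of Jacobi diagrams in the handlebody, so that the task becomes showing this sequence of obstructions is eventually nonzero.

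The hard part, and the reason this remains a conjecture, is that the statement subsumes a well-known open problem. Taking $v=\vn$ and $w=\bu$, a one-component bottom tangle in the handle-free cube $V_0$ is a framed (long) knot, and on this hom-set $Z^\B$ coincides with the usual Kontsevich integral $Z^\T$; hence faithfulness of $Z^\B$ on $\Bq(\vn,\bu)$ is exactly the long-standing open question of whether the Kontsevich integral, or equivalently the family of Vassiliev invariants, separates knots in $S^3$. No approach to this is presently known, so a complete proof is out of reach. The realistic goal is therefore partial faithfulness: on homologically trivial tangles, on the submonoids generated by the Hopf algebra $H^\B$ together with the distinguished morphisms of Section~\ref{sec:generators_B}, or in low $\bfF^\op$-degree, where $\AB(m,n)$ is more tractable. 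Even granting the classical case, I expect the real difficulty to lie in controlling the interaction between the internal knotting of the components and their winding around the handles, encoded by the free-group decorations of Section~\ref{sec:Jacobi}; one would have to rule out any cancellation among the resulting obstructions inside $\AB(m,n)$.
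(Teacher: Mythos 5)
The statement you were asked to prove is a \emph{conjecture} in the paper: no proof is given or claimed, precisely because --- as you correctly identify --- faithfulness of $Z^\B$ subsumes the long-standing open problem of whether Vassiliev invariants separate knots. Your analysis matches the paper's own motivation for leaving it open. In particular, your reduction is sound: by Theorem~\ref{r45} the functor $Z^\B$ factors through the canonical map $\KBq(v,w)\to\hKBq(v,w)$ followed by an isomorphism, so $Z^\B(T)=Z^\B(T')$ if and only if $T-T'\in\bigcap_k\V^k(v,w)$, and faithfulness is equivalent to the separation property of the Vassiliev--Goussarov filtration on bottom tangles in handlebodies; likewise for $\Zq^\vp$. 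Your observation that restricting to $\Bq(\vn,\bu)$ recovers the classical question is also correct, since $Z^\B$ agrees with $Z^\T$ on bottom tangles in the cube. The only step worth flagging is your middle paragraph: the asserted equivalence between agreement modulo $\V^k$ and $C_k$-equivalence is itself not established in the paper for bottom tangles in handlebodies (the paper only identifies $\V$ with the $\calJ$-adic filtration in Proposition~\ref{prop:V=J}), and over $\Q$ rather than $\Z$ even the classical Goussarov--Habiro correspondence requires care; you hedge with ``should,'' but any actual argument along these lines would first have to prove that correspondence in this setting.
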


\subsection{The functor $Z^\B$ as a refinement of the LMO functor}	\label{sec:functor-zb-as}

{The functor $Z^\B$ refines} the LMO functor $\tiZ$ in the following way.

\begin{theorem}[see
Theorem \ref{th:Kontsevich_to_LMO} and Remark~\ref{r38}]
  \label{r32}
  We have a commutative diagram of functors:
  \begin{gather}
    \label{e17}
    \centre{\xymatrix{
      \Bq\ar[r]^{Z^\B}\ar[d]_{E}&
      \hA\ar[d]^\kappa\\
      \LCob_q\ar[r]_{\tiZ}&
      \tsA.
  }}
  \end{gather}
\end{theorem}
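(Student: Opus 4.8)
The plan is to exploit that both $Z^\B$ and $\tiZ$ are built from the same ingredient, the Kontsevich integral $Z^\T\colon\T_q\to\A$, and to compare the two composites $\kappa\circ Z^\B$ and $\tiZ\circ E$ directly on a morphism $T\colon v\to w$ of $\Bq$. First I would pin down the connecting functors. The functor $E\colon\Bq\to\LCob_q$ is the non-strictified inclusion coming from the identification $\B\cong\sLCob\subset\LCob$, sending a bottom tangle to the exterior of the embedding $i_T\colon V_n\hookrightarrow V_m$. The functor $\kappa\colon\hA\to\tsA$ is the \emph{handle-opening} functor: it is the identity on objects and on the unitrivalent graph part of a Jacobi diagram, but it replaces each bead labeled by a group element $g\in F_m=\pi_1(V_m)$ by the image of $g$ under a group-like Magnus-type expansion valued in $\tsA$, turning each handle $x_i$ into a ``top'' strand of the top-substantial target. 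The essential point to be proved is that this expansion is precisely the diagrammatic shadow of the formal Gaussian integration $\int^{LMO}$ used in \cite{CHM} to define $\tiZ$.

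Concretely, fix $T\colon v\to w$ in $\Bq$ with $|v|=m$, $|w|=n$, together with a decomposition as in \eqref{e100} into $q$-tangles $T_0\colon\ti v\to w(+-)$ and $T_i\colon\vn\to u_iu_i'$ for $i=1,\dots,m$. This same decomposition also furnishes a surgery presentation $\gamma$ of the cobordism $E(T)$: the caps $T_i$ carry the strands encoding the $1$-handles of $V_m$, which are the components integrated over, while $T_0$ carries the $n$ tangle components and the handle co-cores. Applying the definition of $\tiZ$ from \cite{CHM}, the value $\tiZ(E(T))$ is obtained by computing $Z^\T(\gamma)$---which reuses exactly the factors $Z^\T(T_0),Z^\T(T_1),\dots,Z^\T(T_m)$ entering the definition \eqref{e25} of $Z^\B(T)$---and then applying $\int^{LMO}$ to the handle strands. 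Since $Z^\B(T)$ is assembled from the very same Kontsevich data, the whole theorem reduces to the single identity
\[
\int^{LMO}Z^\T(\gamma)\;=\;\kappa\big(Z^\B(T)\big),
\]
a statement purely about how $\int^{LMO}$ acts on the handle factors.

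To establish this identity I would analyze $\int^{LMO}$ one handle at a time. For each $i$, the piece $Z^\T(T_i)$ is the Kontsevich integral of an elementary handle $q$-tangle, and applying $\int^{LMO}$ to the associated pair of strands performs a formal Gaussian integration whose effect, by the computation of the elementary LMO integral in \cite{CHM}, is exactly to open the handle into a top strand colored $i$ while recording the induced leg-attachments. Comparing this with the definition of $\kappa$---expanding the bead $x_i$ into legs on the $i$-th top strand via the Magnus expansion---shows that the two operations agree term by term. Group-likeness of $Z^\B(T)$ (Proposition \ref{prop:group-like}) guarantees that all bead labels are genuine group-like elements of $\K F_m$, so the expansion defining $\kappa$ is well defined and compatible with the multiplicative structure of $\int^{LMO}$; the special nature of morphisms of $\sLCob$ is what ensures top-substantiality and hence convergence of the integration.

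The main obstacle is the bookkeeping of normalizations together with the two different monoidal \mbox{(non-)}structures. Neither $Z^\B$ nor $\tiZ$ is strictly monoidal---both absorb the associativity data of $\T_q$ through the associator $\varphi$---so I must verify that $\kappa$ and $E$ intertwine these associativity isomorphisms, not merely on ``straight'' morphisms but compatibly with the non-strictification. Because both functors are built from the \emph{same} $Z^\T$ and hence the \emph{same} $\varphi$, I expect this associator-distortion to match automatically; the delicate part is rather reconciling the $\nu$-normalization on the caps and cups of $Z^\T$ with the LMO normalization and framing/anomaly constants in the construction of $\tiZ$. Once the elementary handle integral is computed and these normalizations are matched, commutativity on a general $T$ follows from multiplicativity of $\int^{LMO}$ over the handle factors and functoriality of $Z^\T$. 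Alternatively, having fixed $\kappa$, one may reduce the square to the braided-monoidal generators of $\Bq$ (the structure morphisms of the Hopf algebra $H^\B$ together with the braiding, see Section~\ref{sec:generators_B}), checking both composites against the explicit low-degree values of $Z^\B$ and the known values of $\tiZ$ on the corresponding generators of $\LCob$.
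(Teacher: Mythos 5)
Your overall strategy (compare $\kappa\circ Z^\B$ and $\tiZ\circ E$ on a presentation of $T$, exploiting that both functors are assembled from the same Kontsevich integral $Z^\T$) is indeed the paper's, but the key mechanism you propose is not how $\tiZ$ is computed on special Lagrangian cobordisms, and the identity you reduce the theorem to is not the right one. In the construction of \cite{CHM}, the Aarhus/formal Gaussian integration is performed \emph{only} over the components of a surgery link presenting the ambient \emph{homology} cube. For $E(T)$, with $T$ a bottom tangle in the genuine handlebody $V_m$, the presenting bottom-top tangle lives in the standard cube: its top components are the trivial $\cap$-shaped cores of the handles and its bottom components are those of $T$, so the surgery link may be taken empty and no Gaussian integration occurs at all. (This is exactly why $\tiZ$ restricted to $\sLCob$ takes values in $\sA$, and why the paper can say that $Z^\B$ avoids surgery techniques.) The handle strands are instead processed by the diagrammatic PBW map $\chi^{-1}$: the legs that $Z^\T$ places on the trivial top components are symmetrized into $i^+$-labelled legs, and the $\exp(i^+)$ ``hair'' on the strands of $T$ passing through handle $i$ records their linking with the $i$-th top component. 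Had one genuinely Gaussian-integrated over those components, the $i^+$ labels would be contracted away against an inverse linking matrix and would not survive, contradicting the fact that $\tiZ(E(T))\in\tsA(m,n)$ retains its top labels. So the ``elementary handle integral'' you plan to compute is not a step of the construction, and the identity $\int^{LMO}Z^\T(\gamma)=\kappa\big(Z^\B(T)\big)$ is not the statement to prove.

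The correct reduction, which is the paper's actual proof, runs as follows. One first defines $\kappa$ on $\AB(m,n)$ by an explicit formula (replace each passage through handle $i$ by an $\exp(i^+)$ box and apply $\chi^{-1}$ to the bottom components) and proves, via square presentations and Lemma \ref{ex:restricted}, that $\kappa$ is a well-defined monoidal functor; this step cannot be skipped, and note that it has nothing to do with group-likeness of $Z^\B(T)$ — the beads of a diagram in $\AB(m,n)$ are labelled by elements of the group $F_m$ by definition, and well-definedness of $\kappa$ rests on the STU and IHX relations, not on coalgebra considerations. Then, for $T$ with a cube presentation $U$, one computes $\kappa(Z^\B(T))$ directly from Definition \ref{r25} and recognizes the outcome as $\tiZ(E(T))$ by citing \cite[Lemma 5.5]{CHM}, which gives precisely this hairy-exponential formula for the LMO functor on exteriors of bottom tangles; no intertwining of anomalies with an integration step is needed. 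Your fallback route (checking both composites on the generators of $\Bq$) is viable in principle, since both composites are tensor-preserving and $\tiZ$ was computed on generators in \cite{CHM}, but it would require the values in all degrees; the degree-$\le 2$ computations of Proposition \ref{r29} would not suffice.
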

Here the functor $E:\Bq\to\LCob_q$, with the image being $\sLCob_q$, is the faithful functor that maps
each bottom tangle in a handlebody to its exterior viewed as a Lagrangian cobordism.
The linear functor $\kappa:\hA\to\tsA$ is a
variant of the ``hair map'' defined in \cite{GK,GR},
and {we may also regard it}  as a diagrammatic enhancement of the ``Magnus expansion'':
$$
F_m \hookrightarrow
\K \langle\!\langle X_1,\dots, X_m \rangle\!\rangle, \quad
x_i \longmapsto \exp(X_i) = 1+ X_i + \frac{X_i^2}{2!} + \cdots.
$$

\begin{theorem}[see Theorem \ref{prop:non-injectivity}]
  \label{r34}
  The ``hair functor'' $\kappa:\hA\to \tsA$ is not faithful.  In
 fact, if $m,n\ge1$, then the map $\kappa:\hA(m,n)\to \tsA(m,n)$ is
 not injective.
\end{theorem}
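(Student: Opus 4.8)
The plan is to exhibit, for every $m,n\ge 1$, an explicit nonzero element in the kernel of $\kappa\colon\hA(m,n)\to\tsA(m,n)$. First I would reduce to the case $m=n=1$. Since $\kappa$ is a tensor-preserving linear functor (it underlies the commuting square of monoidal functors \eqref{e17}, and $\tsA$ is symmetric monoidal), and since over the field $\K$ the external juxtaposition product $\hA(a,b)\ot_\K\hA(a',b')\to\hA(a+a',b+b')$ is injective on diagrams, we have: if $D\in\hA(1,1)$ is a nonzero element with $\kappa(D)=0$, and $E$ is any nonzero element of $\hA(m-1,n-1)$ (such $E$ exists, as this space contains the class of a trivial diagram), then $D\ot E\in\hA(m,n)$ is nonzero and $\kappa(D\ot E)=\kappa(D)\ot\kappa(E)=0$. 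Thus it suffices to produce a single nonzero $D\in\ker\big(\kappa\colon\hA(1,1)\to\tsA(1,1)\big)$.

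To find $D$, I would work with $(1,1)$-diagrams on the arc $X_1$ whose dashed part contains a loop and whose beads lie in $F_1=\langle x\rangle$, and I would exploit the following mechanism. The map $\kappa$ replaces a bead $x^a$ on an edge by the exponential $\exp(aX)$ of its Magnus image, i.e.\ by a weighted sum of configurations of univalent vertices (``hairs'') all carrying the single color $1$; these hairs are therefore symmetric among themselves. Consequently the antisymmetry relation at the trivalent vertices of a loop imposes strong identifications on the resulting hair-diagrams that have no counterpart among the beaded diagrams in $\hA$. For instance, on a beaded self-loop, antisymmetry only gives the weak relation $(\text{loop},x^a)=-(\text{loop},x^{-a})$, whereas after applying $\kappa$ the same reversal, acting on mono-colored symmetric hairs, forces all even hair-numbered wheels to vanish. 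Conceptually, $\kappa$ records only the image of the beads under the (injective) Magnus expansion \emph{followed by the symmetrization inherent in passing from ordered beads to unordered hairs}; it thus retains only the infinitesimal (associated-graded) holonomy and discards the genuinely group-theoretic information carried by beads around closed loops. The concrete $D$ will be a finite linear combination of beaded loop-diagrams arranged so that the surviving hair-wheels cancel in $\tsA$ (using the antisymmetry and IHX relations among wheels) while the beaded diagrams do not cancel in $\hA$.

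To establish that such a $D$ is nonzero in $\hA(1,1)$, I would invoke Theorem~\ref{r45}, which identifies $\AB\cong\Gr\KB$, to realize $D$ by genuine bottom tangles in the solid torus, and then detect it by a weight system arising from a universal quantum invariant $J^H$ as in \eqref{e40} for a suitable ribbon Hopf algebra $H$. The point is that $J^H$ evaluates the beads as honest holonomy (group-like) elements via the $R$-matrix, rather than through their Magnus logarithms, so it is precisely sensitive to the data that $\kappa$ discards; hence it separates $D$ from $0$ without factoring through $\kappa$.

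The hard part will be the construction in the second step: writing down the precise cancelling combination of beaded loop-diagrams and verifying both that its image vanishes in $\tsA$ (a bookkeeping computation with the antisymmetry, IHX, and the defining relations of $\tsA$) and, more delicately, that it is nonzero in $\hA$. The latter is the true obstacle, since the $\bfF^\op$-grading is blind to dashed components detached from $X_n$; resolving it requires pinning down an explicit non-infinitesimal weight system (equivalently, a finite-type invariant of bottom tangles in the solid torus) that detects $D$ and does not factor through the hair reduction $\kappa$.
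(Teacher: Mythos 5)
Your reduction to the case $m=n=1$ is fine and matches the paper (which tensors its kernel element with $\eta^{\ot(n-1)}\ot\epsilon^{\ot(m-1)}$), but both of the substantive steps have genuine gaps. First, the \emph{construction}: your ``mechanism'' does not produce a kernel element. Whatever the configuration, the parity constraints you describe on one-colored hair diagrams (vanishing of wheels of a given parity by AS) are precisely the term-by-term expansion, in powers of $a$, of the beaded AS relation $(\mathrm{loop},x^a)=-(\mathrm{loop},x^{-a})$, which already holds in $\hA$; so these relations correspond exactly under $\kappa$ and nothing is lost through ``symmetrization''. (Also, for a single color it is the \emph{odd} wheels that die, not the even ones.) What the paper's proof of Theorem \ref{prop:non-injectivity} actually requires is a deep external input with no elementary substitute: Vogel's theorem \cite{Vogel} that the algebra $\Lambda$ of totally antisymmetric connected three-legged diagrams has non-trivial zero divisors, and Patureau-Mirand's consequence \cite{Patureau-Mirand} that there is a non-zero $r\in\Lambda$ of degree $17$ satisfying \eqref{eq:r1} (a closure of $r$ is non-zero in $\A(\varnothing)$) and \eqref{eq:r2} (attaching one extra leg to the circle through two legs of $r$ gives $0$). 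The kernel element is then $u=$ ($r$ capped by a loop through the handle, bead $x_1$) $-$ (the same with the bead erased): under $\kappa$ the bead becomes $\exp(\mathrm{hairs})$, the hairless terms cancel, and every term with $\geq 1$ hair dies by \eqref{eq:r2}.

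Second, and fatally, the \emph{detection}: no weight system coming from a ribbon Hopf algebra as in \eqref{e40} can certify that an element of $\ker\kappa$ is non-zero. By Proposition \ref{alpha_kappa}, $\ker\kappa=\ker\alpha=\bigcap_k F_k\A(X_n,\K[\free{m}])$, i.e.\ every element of $\ker\kappa$ is, for each $k$, a combination of diagrams carrying at least $k$ beads colored by the augmentation ideal. A Lie-algebraic weight system $W_{(\g,hc)}$ (Section \ref{sec:weight_systems}) evaluates such beads by inserting reduced comultiplication factors of the inputs, and $U(\g)$ is conilpotent, so $W_{(\g,hc)}$ annihilates $\bigcap_k F_k$; equivalently, these weight systems factor through the Magnus/hair expansion, i.e.\ through $\kappa$ itself. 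Passing through Theorem \ref{r45} to realize $D$ by alternating sums of tangles does not help: extracting the relevant degree of $J^{U_h(\g)}$ on such sums reproduces exactly $W_{(\g,hc)}$, and for ribbon Hopf algebras not of quantum-group type $J^H$ gives no functional on $\hA(1,1)$ at all. This is not an accident of your choice of detector: Patureau-Mirand's $r$ is built from Vogel's element precisely so as to be invisible to Lie-theoretic weight systems, and the non-vanishing \eqref{eq:r1} is established by Vogel's structure theory of $\Lambda$, not by any weight system. The paper sidesteps all of this with an elementary device you are missing: the bead-forgetting projection $p:\A(X_1,\free{1})\to\A(X_1)$ of Lemma \ref{lem:usual_to_colored}, which kills the beaded term of $u$ (non-trivial holonomy) and fixes the unbeaded one, reducing $u\neq 0$ directly to \eqref{eq:r1}.
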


Thus the functor $Z^\B$ {\emph{properly}} refines
the restriction of the LMO functor~$\tiZ$ to~$\sLCob$.  We prove the above
theorem by adapting 
Patureau-Mirand's proof \cite{Patureau-Mirand} of
the non-injectivity of the ``hair map'', {which itself uses Vogel's results \cite{Vogel}.}
The authors do not know
whether $Z^\B$ is strictly stronger than $\tiZ$
as an invariant of bottom tangles in handlebodies.  In fact, we
conjecture that the LMO functor $\tiZ:\LCob_q\to \tsA$ itself is faithful.

{Recall that} the construction of the LMO functor $\wt Z$ involves surgery
presentations of Lagrangian cobordisms. 
{Here surgery} translates into {the \emph{Aarhus integral},}
which Bar-Natan, Garoufalidis, Rozansky and Thurston \cite{BGRT} introduced
in their reconstruction of the LMO invariant.
The construction of the functor $Z^\B$ in the present paper is simpler than that of $\wt Z$ 
since it does not involve these surgery techniques.

\subsection{Presentation of $\AB$}\label{sec:presentation-ab-1}

The category $\bfF$ of finitely generated free groups is a symmetric
monoidal category, and it is well known that it is  freely generated
as such by a commutative Hopf algebra~\cite{Pirashvili}.
By generalizing {another combinatorial} proof of this {fact}
given in \cite{Habiro3}, we obtain the following presentation of $\AB$. 

\begin{theorem}[see Theorem \ref{AB-pres}]
  \label{r36}
  The linear symmetric strict monoidal category~$\AB$ is freely
  generated by a ``Casimir Hopf algebra''.
\end{theorem}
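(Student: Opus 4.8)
The plan is to produce an explicit Casimir Hopf algebra inside $\AB$, invoke the universal property of the free such category to obtain a comparison functor, and then show this functor is an isomorphism by extending the combinatorial argument of \cite{Habiro3} for $\bfF$ so as to account for the dashed part of Jacobi diagrams.

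First I would pin down the notion of a \emph{Casimir Hopf algebra} in a linear symmetric monoidal category: a Hopf algebra object $(H,\mu,\eta,\Delta,\ep,S)$ equipped with a \emph{Casimir element}, i.e.\ a morphism $c:\vn\to H\ot H$ that is symmetric ($P\circ c=c$), primitive in each factor, and invariant under the adjoint action; the precise axioms are dictated by the STU relation, the antisymmetry of trivalent vertices, and the homotopy relation in a handlebody. Writing $\mathcal{A}^{\mathrm{Cas}}$ for the free linear symmetric strict monoidal category generated by a Casimir Hopf algebra, I would realize such a structure in $\AB$ by taking $H$ to be the object $1$, the Hopf-algebra morphisms to be the Jacobi diagrams underlying $H^\B$, and $c:0\to 2$ to be a single strut joining two strands. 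Verifying that these diagrams satisfy the Casimir Hopf algebra axioms in $\AB$ then furnishes, via the universal property, a tensor-preserving functor $\Phi:\mathcal{A}^{\mathrm{Cas}}\to\AB$ that is the identity on objects.

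Next I would check that $\Phi$ is full. Because every morphism of $\AB$ is a linear combination of $(m,n)$-Jacobi diagrams admitting square presentations, it is enough to generate those. The attaching of parallel cores of the $1$-handles, together with the beads labelling the components by elements of $F_m$, is produced by the comultiplications, the antipode and the handle generators; and the STU relation rewrites each trivalent vertex incident to a strand as a difference of strut configurations, so that, together with the derived bracket $\mu-\mu\circ P$ on the primitive part, the entire dashed graph is expressed through $c$ and the Hopf structure. Hence $\Phi$ is surjective on every $\Hom$-space.

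The main work, and the step I expect to be the chief obstacle, is faithfulness: one must prove that the Casimir Hopf algebra axioms generate \emph{all} relations among Jacobi diagrams in handlebodies, so that $\Phi$ is injective. Here I would exploit the $\bfF^\op$-grading of $\AB$ (Proposition \ref{prop:homotopy_types}) together with the degree filtration. In degree $0$ one has $\AB_0\cong\K\bfF^\op$, which is freely generated by the underlying Hopf algebra; this is precisely the content of \cite{Habiro3} (see also \cite{Pirashvili}) and fixes the Hopf-algebraic part of the presentation. The Casimir supplies the degree-$1$ generator, and I would set up a PBW-type normal form for morphisms of $\mathcal{A}^{\mathrm{Cas}}$ — a Hopf-algebraic skeleton decorated by a symmetric product of Casimirs and derived trivalent vertices — and match it with an explicit spanning family of $\AB(m,n)$ given by Jacobi diagrams in a fixed normal position. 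The delicate point, where the argument of \cite{Habiro3} must genuinely be extended, is to confirm that the primitivity and ad-invariance of $c$, combined with the Hopf relations, reproduce exactly the defining relations of Jacobi diagrams in handlebodies — the STU relation, the antisymmetry of trivalent vertices, and the homotopy relation — with no further collapse, so that this normal form is simultaneously a basis on both sides. Equivalently, one may try to build the inverse functor $\AB\to\mathcal{A}^{\mathrm{Cas}}$ directly and verify that it respects these relations; either way the crux is the completeness of the relations. Granting this, $\Phi$ is bijective on morphisms, hence the desired isomorphism of linear symmetric strict monoidal categories asserted in Theorem \ref{AB-pres}.
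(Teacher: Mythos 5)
Your overall architecture---exhibit the Casimir Hopf algebra $(H,c)$ in $\AB$ (the paper's Proposition \ref{r14}), invoke the universal property of the free linear symmetric monoidal category $\bfP$ on a Casimir Hopf algebra to get a comparison functor $F\colon\bfP\to\AB$, argue fullness by reducing Jacobi diagrams to chord-type configurations, and isolate faithfulness as the crux---matches the paper's. The gap is in how you propose to resolve the crux. Your plan is to produce a PBW-type spanning family in the free category and match it with diagrams ``in a fixed normal position'' so that the family is ``simultaneously a basis on both sides.'' This cannot work as stated: $\AB(m,n)$ is a space of Jacobi diagrams modulo STU (and homotopy), and no bases of such spaces are known---their dimensions are unknown beyond low degrees---so there is no way to verify linear independence of your family either in $\AB(m,n)$ or in $\bfP(m,n)$. ``Confirming no further collapse'' is exactly the statement to be proved, and the $\bfF^{\op}$-grading plus the degree-$0$ case from \cite{Habiro3} does not supply it in positive degrees; your fallback remark (``build the inverse functor and verify it respects the relations'') is the right instinct but is unusable until you possess a presentation of $\AB(m,n)$ by generators and relations, which you never produce.

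What closes the gap in the paper is precisely such a presentation, together with a triangle argument that makes injectivity of $F$ formal. First, Theorem \ref{th:chord_Jac}---the group-colored extension of Bar-Natan's STU-versus-4T theorem, whose colored case is new and needs a different treatment of the ``exceptional case'' of \cite[Theorem~6]{BN2}---shows that $\AB(m,n)$ is presented by \emph{restricted colored chord diagrams}; concretely, the space $W(m,n)$ of tensor words modulo four explicit relations (chord orientation, cancellation, bead slide, 4T) maps isomorphically onto $\AB(m,n)$ by a map $\tau$. Second, one defines $\alpha\colon W(m,n)\to\bfP(m,n)$, well defined because those four relations follow respectively from the symmetry of $c$, the antipode axiom, ad-invariance \eqref{e:inv2}, and the diagrammatic 4T relation of Proposition \ref{r6}; and one proves $\alpha$ surjective by a normal-form lemma in $\bfP$ (Lemma \ref{r13}, which is where \cite[Lemma~2]{Habiro3} is genuinely extended to accommodate the extra generator $c$). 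Then commutativity $F\alpha=\tau$ finishes the proof without any independence statement: $\tau=F\alpha$ injective forces $\alpha$ injective, hence $\alpha$ is an isomorphism and so is $F=\tau\alpha^{-1}$. Routing the faithfulness question through the presented space $W(m,n)$ is the missing idea in your proposal.
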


{In other words, $\AB$ is the linear PROP (see \cite{MacLane,Markl}) governing Casimir Hopf  algebras.}
Here a \emph{Casimir Hopf algebra} in a linear symmetric monoidal
category $\C=(\C,\ot,I)$ is a cocommutative Hopf algebra $H$ in $\C$
equipped with a \emph{Casimir $2$-tensor}, i.e., a morphism $c:I\to
H\ot H$ which is \emph{primitive}, \emph{symmetric} and \emph{ad-invariant}.
(See Definition \ref{r11}.)
The Casimir Hopf algebra~$(H,c)$ in $\AB$ alluded to in Theorem~\ref{r36} is defined in \eqref{e51}.

{To illustrate this {kind of structure}, 
consider a Lie algebra  $\g$  with an ad-invariant, symmetric element $t\in\g^{\ot2}$.}
Then the universal enveloping algebra $U(\g)$ together
with $t\in\g^{\ot2}\subset U(\g)^{\ot2}$ is a Casimir Hopf algebra in
the category $\VectK$ of $\K$-vector spaces.  Thus, by Theorem \ref{r36},
there is a unique linear symmetric monoidal functor
$$
W_{(\g,t)}:\AB \lto  \VectK
$$ 
which maps the Casimir Hopf algebra $(H,c)$ in $\AB$ to the Casimir Hopf algebra $(U(\g),t)$.
Following the usual terminology, we call $W_{(\g,t)}$ the \emph{weight system}
associated to the pair~$(\g,t)$.

\subsection{Ribbon quasi-Hopf algebras in $\hA$}\label{sec:ribbon-quasi-hopf-2}

Recall that a \emph{quasi-Hopf algebra} $H$ \cite{Drinfeld1} (see also
\cite{Kassel}) is a variant of a Hopf algebra, where coassociativity
does not hold {strictly}, but is controlled by {a} $3$-tensor
$\varphi\in H^{\ot3}$; see Section \ref{sec:ribbon-quasi-hopf-1} for
the definition.  
The notions of quasi-triangular {and ribbon} Hopf algebras,  used in the construction of quantum 
link invariants \cite{RT}, {admit}  quasi-Hopf versions, {using which one can}
construct link invariants {as well}~\cite{AC}.
{One can also consider quasi-Hopf algebras in symmetric monoidal categories.}

{As is well known, if} $t$ is an ad-invariant, symmetric $2$-tensor for a Lie algebra $\g$ as above,
then each Drinfeld associator $\vp\in\KXY$ induces a
ribbon quasi-Hopf algebra structure on $U(\g)[[h]]$. {Here is a universal version of this fact.}

\begin{theorem}[see Theorem \ref{prop:r3}]
  \label{r37}
{For each Drinfeld associator $\varphi$, {the Casimir Hopf algebra $(H,c)$ in $\AB$ induces} a canonical ribbon quasi-Hopf
  algebra $H_\varphi$ in $\hA$.}
\end{theorem}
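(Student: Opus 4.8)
The plan is to carry out, at the universal (diagrammatic) level, Drinfeld's construction which turns an invariant symmetric $2$-tensor $t$ on a Lie algebra together with an associator $\varphi$ into a ribbon quasi-Hopf structure on $U(\g)[[h]]$; here $(U(\g),t)$ is replaced by the universal Casimir Hopf algebra $(H,c)$ of Theorem~\ref{r36}, defined in~\eqref{e51}. We keep the algebra, unit, counit and comultiplication $\De$ of $H$ unchanged, and only deform coassociativity.

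First I would record the algebraic framework. For each $n$, the morphism space $\hA(0,n)=\hA(I,H^{\ot n})$ is a complete filtered associative algebra under the convolution product induced by the multiplication of $H$ (multiplying the factors of $H^{\ot n}$ pairwise), with unit $\eta^{\ot n}$ and the degree filtration; moreover, by the coalgebra enrichment of Proposition~\ref{r8}, it is a complete cocommutative Hopf algebra, so its group-like elements form a group and are invertible. Write $c_{ij}\in\hA(0,n)$ for the morphism placing the Casimir $2$-tensor $c$ on the factors $i,j$ and the unit elsewhere. Since $c$ has positive degree, each $c_{ij}$ lies in the augmentation ideal, so substitution into a formal power series converges in the completion. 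I then set the coassociator $\Phi:=\varphi(c_{12},c_{23})\in\hA(0,3)$, the $R$-matrix $R:=\exp(c/2)\in\hA(0,2)$, and let $\nu\in\hA(0,1)$ be the group-like twist produced from $\varphi$. As $\varphi$ and $\exp$ are group-like and $c$ is primitive, all of $\Phi$, $R$, $\nu$ are group-like, hence invertible with group-like inverses; the antipode of $H$ together with the group-like elements $\alpha,\beta$ of the quasi-antipode completes the structure data.

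The heart of the proof — and the main obstacle — is the key lemma that the elements $c_{ij}$ satisfy the \emph{infinitesimal pure braid relations}
\begin{gather*}
[c_{ij},c_{kl}]=0 \quad (\{i,j\}\cap\{k,l\}=\varnothing),\\
[c_{ij},c_{ik}+c_{jk}]=0 \quad (i,j,k \text{ distinct}).
\end{gather*}
The first relation is immediate from the interchange law, since $c_{ij}$ and $c_{kl}$ are supported on disjoint tensor factors. The second is the infinitesimal form of the ad-invariance of $c$: expanding the ad-invariance axiom of Definition~\ref{r11} and using that $c$ is a $2$-tensor yields $[c_{12},c_{13}+c_{23}]=0$ in $\hA(0,3)$, and symmetry of $c$ (so that $c_{ij}=c_{ji}$) puts the relations into the stated form. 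This is precisely where the specific Casimir axioms — primitivity, symmetry and ad-invariance — are genuinely used; everything after this step is formal. Equivalently, the lemma says that $c$ defines, for each $n$, a morphism from the Drinfeld--Kohno algebra $\mathfrak{t}_n$ into $\hA(0,n)$ compatible with the cabling maps, the compatibility relying on the primitivity identities $(\De\ot\id)(c)=c_{13}+c_{23}$ and $(\id\ot\De)(c)=c_{12}+c_{13}$, which express how struts behave when strands are doubled.

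Granting the lemma, the ribbon quasi-Hopf axioms become formal consequences of the defining equations of $\varphi$, exactly as in the Lie-algebraic case. The pentagon axiom for $\Phi$ in $\hA(0,4)$ is the image of the pentagon equation for $\varphi$ under $X\mapsto c_{12},\ Y\mapsto c_{23}$ together with the cabling maps, the mixed terms $c_{12,3},c_{1,23}$ being handled by the braid relations and primitivity; the two quasi-triangularity hexagon axioms relating $R$ and $\Phi$ are the images of the two hexagon equations for $\varphi$; and the quasi-bialgebra and quasi-antipode conditions follow from group-likeness. Finally, the ribbon element is built from $\nu$ and $R$ by the standard formula, and the ribbon axioms reduce to compatibilities already contained in the hexagon equations. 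The construction is canonical in that $\Phi,R,\nu$ are given by universal formulas in $c$ and $\varphi$; applying the weight system $W_{(\g,t)}$ (sending a degree-$k$ diagram to $h^k$, so that $c\mapsto ht$) recovers Drinfeld's ribbon quasi-Hopf structure on $U(\g)[[h]]$, which serves as a consistency check and justifies the name.
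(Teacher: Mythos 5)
Your skeleton is the same as the paper's proof of Theorem \ref{prop:r3}: define $\Phi=\varphi(c_{12},c_{23})$ and $R=\exp_*(c/2)$ in the convolution algebras $\hA(0,n)$, and deduce the pentagon and hexagon axioms from those of the Drinfeld associator. The one structural difference is how the infinitesimal pure braid relations are obtained: you derive them abstractly from the Casimir axioms (this is exactly Proposition \ref{r6}, the 4T relation, so this step is sound), whereas the paper identifies the convolution algebra $\hA(0,n)$ with the algebra $\A(\downarrow^{\ot n})$ of Jacobi diagrams on $n$ strands via the explicit isomorphism $\iota$, so that $c_{ij}=\iota(t_{ij})$ and the relations are inherited from the embedding $U(\mathfrak{t}_n)\hookrightarrow\A(\downarrow^{\ot n})$; the isomorphism $\iota$ also lets the paper carry out the remaining verifications diagrammatically.

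However, your claim that ``everything after this step is formal'' conceals two genuine gaps. First, the quasi-bialgebra, quasi-triangularity and quasi-antipode conditions do \emph{not} ``follow from group-likeness''. Quasi-coassociativity \eqref{e45} says $\Phi*\Delta^{[3]}=\Delta^{[3]}*\Phi$, and axiom \eqref{e54} (given cocommutativity) says $R*\Delta=\Delta*R$: these are convolution-centrality statements, which group-likeness cannot supply. The paper obtains them, together with \eqref{e50}, from \eqref{eq:general_fact} --- a fact special to $\hA$, namely that \emph{every} $x:0\to n$ convolution-commutes with $\Delta^{[n]}$ --- and in your abstract setting they must be rederived from the ad-invariance \eqref{e:inv2} of $c$; this is possible, but it is a further, essential use of the Casimir axioms that you do not make. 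Likewise, axiom \eqref{e52} requires a concrete definition of the twist, namely $\nu^{-1}=\mu^{[3]}(\id\ot S\ot\id)\varphi$ as in \eqref{eq:nu_def}, together with the identity $\varphi(Y,X)=\varphi(X,Y)^{-1}$; ``the group-like twist produced from $\varphi$'' is not a definition, and the axiom is not automatic. Second, the ribbon element is not ``built from $\nu$ and $R$ by the standard formula'', and the ribbon axioms are not ``contained in the hexagon equations''. In the paper, $\bfr=\exp_*(\mu c/2)$ is the exponential of the Casimir element; $S\bfr=\bfr$ follows from the symmetry of $c$, and $\Delta\bfr=R_{21}*R*(\bfr\ot\bfr)$ (axiom \eqref{e201}) follows from the primitivity of $c$ together with the centrality \eqref{e1} of the Casimir element (the paper checks this by a cabling computation via $\iota^{-1}$); the hexagons play no role there. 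Both gaps are repairable inside your framework --- the needed commutation and ribbon identities do follow from the Casimir axioms --- but as written those steps would fail.
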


Specifically, the weight system $W_{(\g,t)}$ associated to the above
pair $(\g,t)$ maps~$H_\varphi$ to the quasi-triangular quasi-Hopf
structure on $U(\g)[[h]]$ considered in \cite{Drinfeld2}.

Klim \cite{Klim} generalized Majid's transmutation to  quasi-Hopf algebras.
We can perform transmutation in {arbitrary} symmetric monoidal categories.  In
particular, by transmutation, {the quasi-triangular quasi-Hopf algebra}
$H_\varphi$ yields a Hopf algebra $\ul{H_\varphi}$ in the braided
monoidal category $\Mod_{H_\vp}$ of left $H_\vp$-modules in $\hA$.  
On the other hand, by Theorem \ref{r31}, the Hopf algebra $H^{\Bq}$ in $\Bq$
(corresponding to the Hopf algebra $H^\B$ in $\B$) is mapped by the
braided monoidal functor $\Zq^\varphi:\Bq \to \hA_q^\varphi$ into a Hopf algebra
$\Zq^\varphi(H^{\Bq})$ in $\hA_q^\varphi$.

\begin{theorem}[see  Theorem \ref{r19}]
  \label{r43}
  The Hopf algebra $\Zq^\varphi(H^{\Bq})$ in $\hAq^\vp$ and 
  {the transmutation} $\ul{H_\varphi}$ in $\Mod_{H_\vp}$ coincide, through
  a canonical embedding {$\hA_q^\varphi\to\Mod_{H_\vp}$.}
\end{theorem}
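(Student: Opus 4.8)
The plan is to exhibit both Hopf algebras as images of the single braided Hopf algebra $H^{\Bq}$ and to match the two routes. First I would fix the canonical braided monoidal embedding $\iota\colon\hAq^\vp\to\Mod_{H_\vp}$, which sends the generating object to $H_\vp$ endowed with the adjoint action (the $H_\vp$-module underlying the transmutation) and each Jacobi diagram to the $H_\vp$-module map read off from the quasi-triangular quasi-Hopf structure of $H_\vp$ by feeding the $R$-matrix, coproduct, antipode and associator into its univalent vertices and beads; this is nothing but the universal weight system attached to $H_\vp$ via Theorem \ref{prop:r3}. Since $\iota$ is braided monoidal and $\Zqphi$ is braided monoidal by Theorem \ref{r16}, the composite $\iota\circ\Zqphi\colon\Bq\to\Mod_{H_\vp}$ is braided monoidal; hence both $\iota(\Zqphi(H^{\Bq}))$ and the transmutation $\ul{H_\vp}$ are braided Hopf algebras in $\Mod_{H_\vp}$ with the same underlying object $H_\vp$, and it suffices to identify them.

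The second step is to identify the composite $\iota\circ\Zqphi$ with the universal quantum invariant functor $J^{H_\vp}\colon\Bq\to\Mod_{H_\vp}$ attached to the ribbon quasi-Hopf algebra $H_\vp$, in the sense of \eqref{e40} (now requiring the non-strictified source $\Bq$, since $H_\vp$ is only a quasi-Hopf algebra). The point is that evaluating a Jacobi diagram produced by $Z^\B$ against the structure tensors of $H_\vp$ reproduces exactly the Reshetikhin--Turaev-type assignment defining $J^{H_\vp}$: the associator $\varphi$ used to normalize the Kontsevich integral is the very $3$-tensor controlling the coassociativity of $H_\vp$. I would verify the identity $\iota\circ\Zqphi=J^{H_\vp}$ by checking it on a generating set of morphisms of the braided monoidal category $\Bq$---the Hopf-algebra structure morphisms of $H^{\Bq}$ together with the braiding---where both functors are given by the same elementary tensors built from $R$, $\Delta$, $S$ and $\varphi$.

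The third step is purely algebraic. For the universal quantum invariant one has $J^{H_\vp}(H^{\Bq})=\ul{H_\vp}$: for an ordinary ribbon Hopf algebra $H$ this is the classical fact, recalled in the introduction, that $J^H$ carries $H^\B$ to Majid's transmutation $\ul H$, and for the quasi-Hopf algebra $H_\vp$ it follows from Klim's extension of transmutation to quasi-Hopf algebras \cite{Klim} together with the naturality of the braided Hopf-algebra structure of $H^{\Bq}$. Combining the three steps yields $\iota(\Zqphi(H^{\Bq}))=J^{H_\vp}(H^{\Bq})=\ul{H_\vp}$, which is the assertion.

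The main obstacle is the coherent appearance of the associator $\varphi$ on the two sides in the second step. The functor $\Zqphi$ takes values in the deformed non-strictification $\hAq^\vp$, whose associativity isomorphisms are by construction the $Z^\B$-images of those of $\Bq$, while the coassociativity of $H_\vp$ is controlled by the same $\varphi$ through Theorem \ref{prop:r3}; one must check that $\iota$ intertwines these two occurrences so that the Reshetikhin--Turaev evaluation is a well-defined functor on $q$-tangles. Tracking this through the braided coproduct of $\ul{H_\vp}$---the one structure map in which the $R$-matrix, the adjoint action and the associator all enter at once---is the delicate point; once it is settled, the matchings of the product, unit, counit and antipode are routine from the definitions.
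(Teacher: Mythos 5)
Your first step is sound and in fact coincides with the paper's: the paper defines a fully faithful braided monoidal functor $F:\hAqp\to\Mod_{H_\vp}$ sending $w\in\Mag(\bu)$ to $w(\ul{H_\vp})$ (parenthesized tensor powers of $H_\vp$ with the adjoint action) and acting as the identity on morphisms, the point being that the constraints \eqref{eq:Alpha} and \eqref{eq:Psi} of $\hAqp$ agree with the associativity and braiding constraints of $\Mod_{H_\vp}$. (Note that $F$ is \emph{not} a weight-system evaluation: morphisms of $\hAqp$ are already morphisms of $\hA$ and are sent to themselves; they are module maps by the presentation of $\AB$, cocommutativity, and ad-invariance of $c$.) The gap lies in your second and third steps. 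The functor $J^{H_\vp}$ you invoke does not exist in the paper or in its cited literature: \eqref{e40} is Habiro's universal invariant for honest \emph{ribbon Hopf algebras}, and extending it to a ribbon quasi-Hopf algebra internal to $\hA$ is itself a nontrivial construction (associators must be inserted coherently, and $\Mod_{H_\vp}$ has no duals, so no Reshetikhin--Turaev-type definition is available). Worse, your third step --- that $J^{H_\vp}(H^{\Bq})=\ul{H_\vp}$ ``follows from Klim'' --- is unfounded: Klim \cite{Klim} only defines the transmutation algebraically and proves it is a braided Hopf algebra; he proves nothing relating it to a universal invariant of bottom tangles. For ordinary ribbon Hopf algebras that statement is a computation done in \cite{Habiro}, and its quasi-Hopf analogue is essentially \emph{equivalent} to the theorem you are trying to prove. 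So the composite of your steps 2 and 3 is circular: it repackages the assertion rather than proving it.

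What is missing is precisely the computational core that the paper supplies. One must compare the explicit values of $Z$ on the structure morphisms of $H^{\Bq}$ --- computed via cube presentations and the cabling anomaly in Proposition \ref{prop:Z_generators}, notably \eqref{eq:Z(mu)} and \eqref{eq:Z(Delta)} --- with Klim's formulas \eqref{e15}--\eqref{e16}, conveniently rewritten as \eqref{e35}--\eqref{e34} in Lemma \ref{r21}. This matching is not a formality: for $\ul\Delta$ it hinges on showing that Klim's element $g$ equals the cabling anomaly $a$ of \eqref{eq:a=}, which uses the centrality property \eqref{eq:general_fact} and the quasi-Hopf axiom \eqref{e52}; your phrase ``the same elementary tensors built from $R$, $\Delta$, $S$ and $\varphi$'' is exactly what has to be established. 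Finally, the antipode is not ``routine from the definitions'': Klim's $\ul{S}$ in \eqref{e18} is the most complicated structure map, and the paper avoids checking it altogether by appealing to the uniqueness of the antipode of a braided bialgebra, so that only $\ul\eta$, $\ul\epsilon$, $\ul\mu$, $\ul\Delta$ need to be verified. A correct write-up along your lines would therefore collapse back to: construct $F$ (your $\iota$), then do the generator-by-generator computation against Lemma \ref{r21} --- which is the paper's proof.
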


To prove {Theorem \ref{r43}}, we compute {the values of} $\Zq^\varphi$ on a generating system of~$\B_q$
{including} the {structure} morphisms of $H^{\Bq}$; see Proposition \ref{prop:Z_generators}.

\subsection{Organization of the paper}\label{sec:organization-paper}

{We organize the rest of the paper} as follows.
In Section~\ref{sec:categories}, we define the categories $\B$, $\mathcal{H}$ and $\sLCob$.
In Section~\ref{sec:Kontsevich}, we recall the definition of the usual Kontsevich integral ${Z:=Z^\T}$.
In Section~\ref{sec:Jacobi}, we define the category $\bfA$ of Jacobi diagrams in
handlebodies and we start studying its algebraic structure.
In Section~\ref{sec:presentation}, we go further in this study
by giving a presentation of $\bfA$ as a linear  symmetric monoidal category.
In Section~\ref{sec:quasi-Hopf}, we show that {each} Drinfeld associator $\varphi=\varphi(X,Y)$ {yields} a ribbon quasi-Hopf algebra $H_{\varphi}$ in the
degree-completion $\hA$ of $\bfA$
and, in Section~\ref{sec:weight_systems}, we consider the weight system functors on $\AB$
 associated to Lie algebras with symmetric ad-invariant $2$-tensors.
The construction of the  functor ${Z:=}Z^\B:\B_q\to\hA$
is done in Section~\ref{sec:extended_Kontsevich}, where we also give
{some of its basic} properties.
In Section~\ref{sec:construction_Zq}, we define the braided monoidal functor $\Zq^\varphi:{\B_q\to\hAq^\varphi}$:
thanks to this variant of $Z^\B$, we  interpret the values of~$Z^\B$ on a generating system of $\B_q$
as the result of applying Majid's transmutation to the ribbon quasi-Hopf algebra~$H_{\varphi}$.
In Section~\ref{sec:universality}, we show that $\Zq^\varphi$ induces an isomorphism
of braided monoidal categories between the completion of $\K\B_q$
with respect to the Vassiliev--Goussarov filtration and~$\hAq^\varphi$.
In Section~\ref{sec:LMO_functor}, we explain how the functor $Z^\B:\B_q\to\hA$ refines the LMO
functor $\widetilde{Z}:\LCob_q\to\tsA$.
Finally, in Section~\ref{sec:perspectives}, we explain some applications that we expect from our results.

\subsection{Conventions}\label{sec:conventions}

In what follows, we fix a field $\K$ of characteristic $0$.
By a ``vector space'' (resp.\ a ``linear map''), we always mean  a
``$\K$-vector space'' (resp.\ a ``$\K$-linear map'').

Let $\NZ=\{0,1,2,\ldots\}$ be the set of non-negative integers.  The
unit interval is denoted by $\II:=[-1,1]\subset\R$, and we denote by
$(\vec{x},\vec{y},\vec{z})$ the usual frame of $\R^3$ given by
$\vec{x}=(1,0,0)$, $\vec{y}=(0,1,0)$, $\vec{z}=(0,0,1)$.  

By a ``monoidal functor'' between {(strict or non-strict)} monoidal categories, we always mean
a \emph{strict} monoidal functor.

\subsection{Acknowledgements}\label{sec:acknowledgements}

The work of K.H. is partly supported by JSPS KAKENHI Grant Number 15K04873;
{the work of G.M. is  supported in part by the project ITIQ-3D, funded by the ``R\'egion Bourgogne Franche-Comt\'e.''}
{The authors are grateful to {Mai Katada and} Jean-Baptiste
Meilhan for helpful comments on the {previous versions} of the manuscript.}

%
%
\section{The category $\B$ of bottom tangles in handlebodies} \label{sec:categories}

In this section, we define three strict monoidal categories
\begin{itemize}
\item $\B$ of bottom tangles in handlebodies \cite{Habiro},
\item $\EH$ of embeddings of handlebodies \cite{Habiro4},
\item $\sLCob$ of special Lagrangian cobordisms \cite{CHM},
\end{itemize}
with the same object monoid $\Ob(\B)=\Ob(\EH)=\Ob(\sLCob)=\NZ$.  They
are essentially the same structures since we have isomorphisms of
strict monoidal categories
\begin{gather*}
  \B \cong \EH^\op\cong \sLCob.
\end{gather*}
The categories $\B$ and $\EH$ will be studied in more detail in \cite{Habiro2}.

Let $m,n,p$ be non-negative integers throughout this section.

\subsection{Bottom tangles in handlebodies}   \label{sec:bott-tangl-handl}

Let $V_m \subset \R^3$ denote the handlebody of genus $m$ that is
obtained from the cube $\II^3 \subset \R^3$ by attaching $m$ handles
on the top square $\II^2 \times \{1\}$:
\begin{gather}
  \label{e3}
V_m:=\centre{
{\labellist \scriptsize \hair 2pt
\pinlabel{$\vec x$} [l] at 656 90
\pinlabel{$\vec y$} [bl] at 636 127
\pinlabel{$\vec z$} [b] at 599 148
\pinlabel{$1$} at 206 285
\pinlabel{$m$} at 403 284
\pinlabel{$\cdots$} at 305 380
 \pinlabel{$S$}  at 64 23
 \pinlabel{$\ell$} [l] at 536 50
  \pinlabel{$A_1$} [l] at 144 166
    \pinlabel{$\cdots$} [l] at 285 166
 \pinlabel{$A_m$} [l] at 340 167
\endlabellist
\includegraphics[scale=0.28]{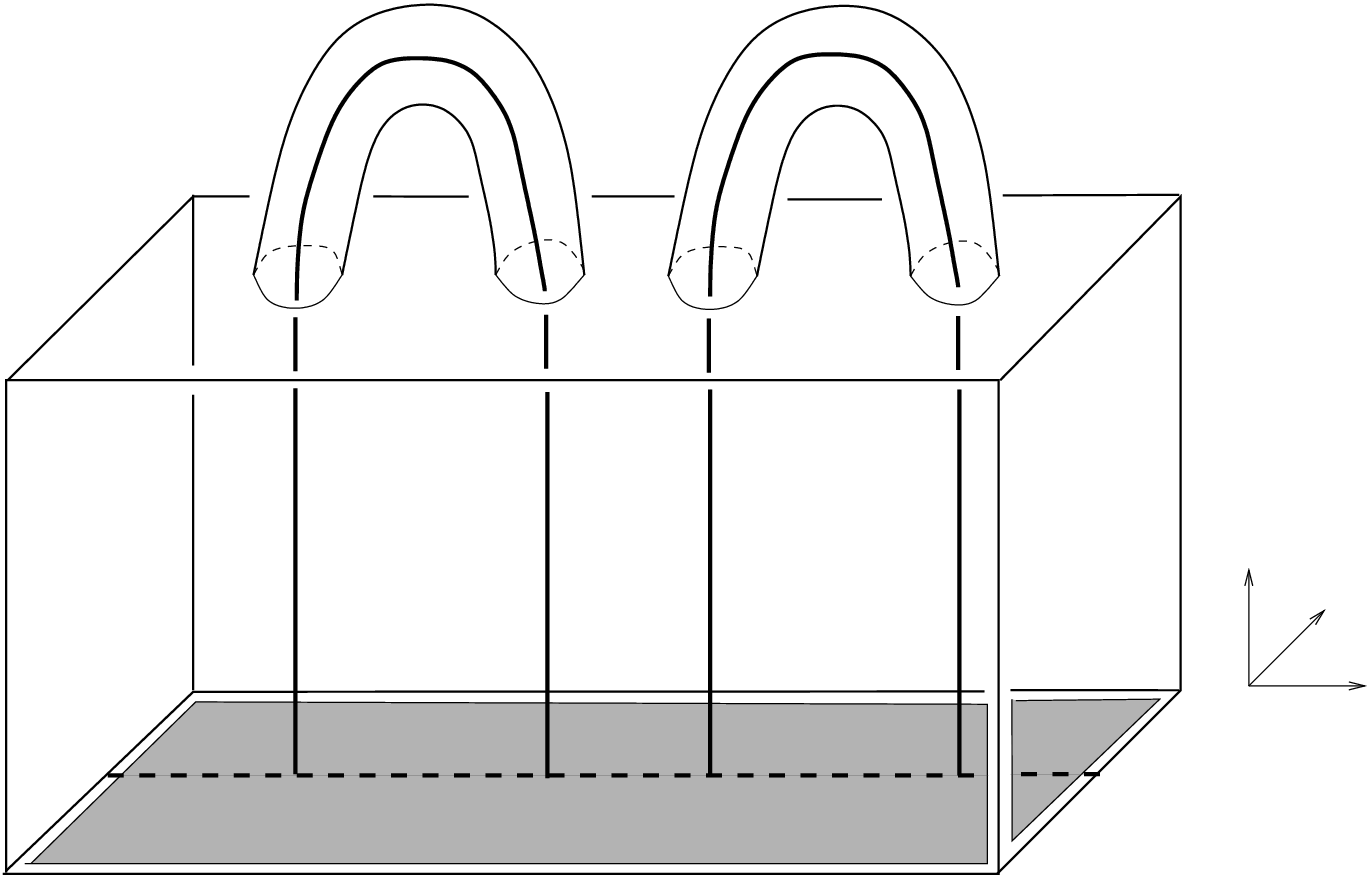}}}
\end{gather}
We call $S:= \II^2 \times \{-1\}$ the \emph{bottom square} of
$V_m$ and $\ell:=\II \times \{0\} \times \{-1\}$ the \emph{bottom
line} of $V_m$.  Let $A_1,\dots,A_m$ denote the arcs obtained from the
cores of the handles by ``stretching'' the ends down to $\ell$.

An $n$-component \emph{bottom tangle} $T=T_1\cup\dots\cup T_n$ in
$V_m$ is a framed, oriented tangle consisting of $n$ arc components
$T_1,\dots,T_n$ such that
\begin{enumerate}
\item the endpoints of $T$ are uniformly distributed along $\ell$,
\item for $i=1,\ldots,n$, the $i$-th component $T_i$ runs
from the $2i$-th endpoint to the $(2i-1)$-st endpoint, where we  count the endpoints of $T$  from the left.
\end{enumerate}

We usually depict bottom tangles by drawing their orthogonal
projections onto the plane $\R\times \{1\} \times \R$ and assuming the
\emph{blackboard framing convention}; i.e., the framing is given by
the vector field $\vec{y}$.  For example, here is a $3$-component
bottom tangle in $V_2$ together with a projection diagram:
\begin{gather}
\label{e13}
\centre{\labellist
\small\hair 2pt
\pinlabel{$\leadsto$}  at 115 27
\endlabellist
\centering
\includegraphics[scale=1.3]{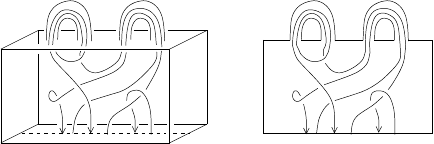}}
\end{gather}

\subsection{The category $\B$ of bottom tangles in handlebodies}  \label{sec:category-b-bottom}

{Morphisms from $m$ to $n$ in $\B$ are}
isotopy classes of $n$-component bottom
tangles in $V_m$.  {Define} the composition of two bottom
tangles $m\xto{T}n\xto{T'}p$
by 
$$
  T' \circ  T = i_T(T') :m \lto p,
$$
where
$$
i_T: V_n \hookrightarrow V_m
$$ is an embedding which maps $S\subset V_n$ identically onto
$S\subset V_m$ and maps $A_i$ {onto}~$T_i$ {in a framing-preserving way} for all $i=1,\dots,n$.  Here is
an example of the composition of morphisms {$2\to1\to2$}:
\begin{gather}
  \label{e36}
\centre{\labellist
\small \hair 2pt
 \pinlabel{$\circ$} at 55 19
 \pinlabel{$=$}  at 164 19
\endlabellist
\centering
\includegraphics[scale=1.0]{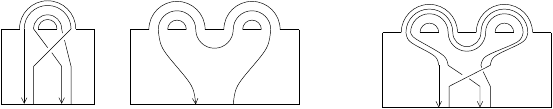}}
\end{gather}
The identity morphism $\id_m: m\to m$ in $\B$ is the union $A:=A_1 \cup \cdots \cup A_m$ of the ``stretched'' cores of the handles of $V_m$:
\begin{gather}
  \label{e53}
  \id_m =
\centre{\labellist
\scriptsize \hair 2pt
 \pinlabel{$1$} [rb] at 26 93
 \pinlabel{$m$} [rb] at 100 94
  \pinlabel{$\cdots$}  at 83 87
 \pinlabel{$\cdots$}  at 85 33
\endlabellist
\centering
\includegraphics[scale=0.6]{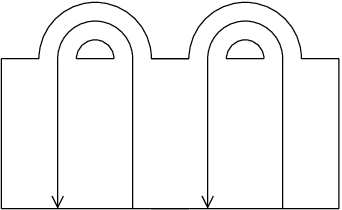}}
\end{gather}
The tensor product in $\B$ is juxtaposition.

\subsection{The category $\EH$ of embeddings of handlebodies}  \label{sec:categ-eh-embedd}

{Morphisms from $n$ to~$m$ in $\EH$ are}
isotopy classes rel $S$ of embeddings
$V_n \hookrightarrow V_m$ restricting to $\id_S$. {Define} the composition and
the identity in $\EH$ in the obvious way.

We have an isomorphism $\B \cong \EH^{\op}$ of categories given by
$$
\begin{array}{rcl}
\EH(n,m) &  \stackrel{\cong }{\longrightarrow} & \B(m,n) \\
\hbox{  $(V_n \stackrel{i}{\hookrightarrow} V_m)$ }&\longmapsto & \hbox{  $ \big(i(A) \subset V_m\big)$} \\
\hbox{  $(V_n \stackrel{i_T}{\hookrightarrow} V_m) $} &\mathrel{\reflectbox{\ensuremath{\longmapsto}}} & \hbox{  $(T \subset V_m)$},
\end{array}
$$
transporting the strict monoidal structure of $\B$ to $\EH$.

\subsection{The category $\sLCob$ of special Lagrangian cobordisms} \label{sec:sLCob}

Here we will define the category $\sLCob$ of special Lagrangian
cobordisms.  We will not need it until Section \ref{sec:LMO_functor};
we define it here for comparison with $\B$.

Let $\surf m$ be the compact, connected, oriented
surface of genus $m$ with one boundary component, located at the top
of $V_m \subset \R^3$:\\[0.2cm]
$$
 \surf m := \centre{\labellist \scriptsize  \hair 2pt
 \pinlabel{$1$} [b] at 229 219
 \pinlabel{$m$} [b] at 487 218
\pinlabel{$\cdots$} at 360 120
\pinlabel{$\circlearrowleft$} at 80 43
\endlabellist
\includegraphics[scale=0.25]{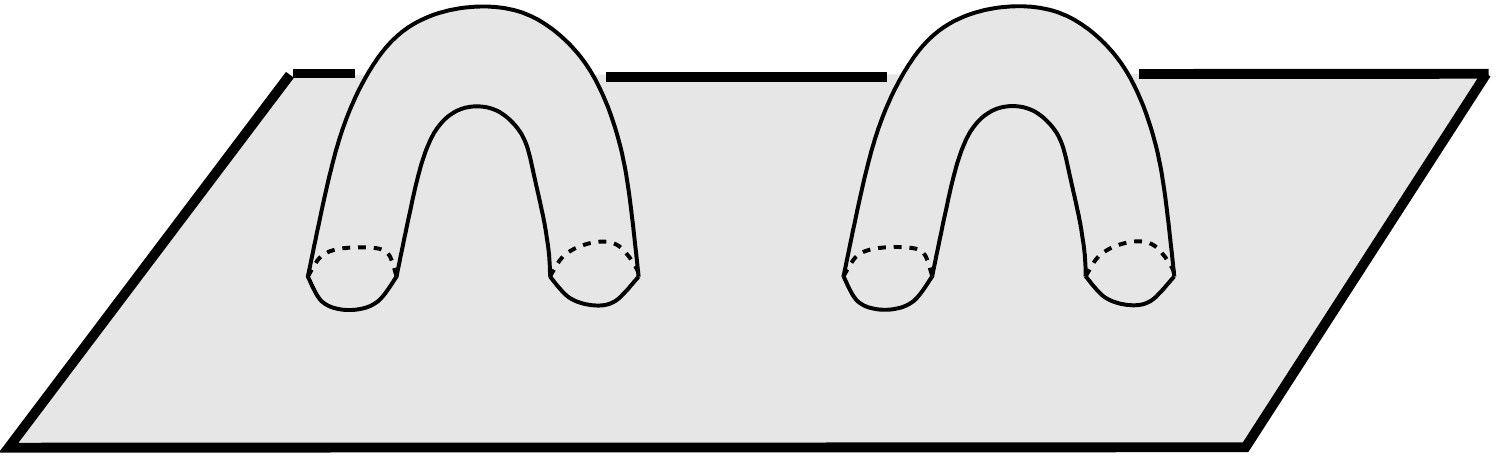}}
$$
We identify $\partial \surf m$ with $\partial\II^2$.

A \emph{cobordism} from $\surf m$ to $\surf n$ is an equivalence
class of pairs $(C, c)$  {of} a compact, connected, oriented
3-manifold {$C$} and  an orientation-preserving homeomorphism
\begin{gather*}
  c: \left((- \surf n )\cup_{\partial \II^2 \times
  \lbrace -1 \rbrace} \left(\partial \II^2 \times \II \right)
\cup_{\partial \II^2 \times \lbrace 1 \rbrace} \surf m\right)
 \longrightarrow \partial {C}.
\end{gather*}
Here, two cobordisms $(C, c)$ and $(C', c')$ are 
\emph{equivalent} if there is a homeomorphism $f : C \rightarrow
C'$ such that $c' = f|_{\partial C} \circ c$.  For instance, the
handlebody $V_m$ (with the obvious boundary parametrization) defines a
cobordism from $\surf m$ to $\surf 0$.  More generally, every
$n$-component bottom tangle $T \subset V_m$ defines a cobordism
$$
E_T := (E_T,e_T)
$$ from $\surf m$ to $\surf n$ by considering the exterior $E_T$ of $T$ in
$V_m$, together with the boundary parametrization $e_T$ induced by the framing of $T$.

Define the category $\Cob$ of $3$-dimensional cobordisms introduced by
Crane \& Yetter \cite{CY} and Kerler \cite{Kerler} as follows.  Set
$\Ob(\Cob) = \NZ$.  {Morphisms from $m$ to $n$ in $\Cob$ are} 
equivalence
classes of cobordisms from $\surf m$ to $\surf n$.
We  obtain the
composition $C' \circ C :m\to p $ of $C' = (C',c') :n\to p $ and $C = (C,c) :m\to n$
from $C'$ and $C$
by identifying the target surface of $C$ with the source surface of $C'$ using the
boundary parametrizations.  The identity morphism $\id_m :m\to m $
is the cylinder $\surf m \times \II$ with the boundary
parametrization defined by the identity maps.

We equip $\Cob$ with a strict monoidal structure such that $m\otimes
m'=m+m'$, and {we obtain} the tensor product $C \ot C'$ of $C = (C,c)$ and $C' =
(C',c')$  from $C$ and $C'$ by identifying the right square
$c(\{1\} \times \II \times \II)$ of $\partial C$ with the left square
$c'(\{-1\} \times \II \times \II)$ of $\partial C'$.

A cobordism $C$ from $\surf m$ to $\surf n$ is said to be
\emph{special Lagrangian} if we have
$$
V_n \circ C = V_m \ :\ m \longrightarrow 0.
$$
The special
Lagrangian cobordisms form a monoidal subcategory $\sLCob$ of $\Cob$.
We have an isomorphism $\B \cong \sLCob$ of strict monoidal
categories given by
$$
\begin{array}{rcl}
\B(m,n) &  \stackrel{\cong }{\longrightarrow} & \sLCob(m,n) \\
\hbox{ $(T \subset V_m)$} &\longmapsto &  \hbox{ $E_T$} \\
\hbox{ $\big(A \subset (V_n \circ C) \big)$} &\mathrel{\reflectbox{\ensuremath{\longmapsto}}} & \hbox{ $C$}.
\end{array}
$$

%
%
\section{Review of the Kontsevich integral}		\label{sec:Kontsevich}

In this section, we briefly review the combinatorial construction of
the Kontsevich integral of tangles in the cube.
See \cite{BN1,LM_combinatorial,KT,Ohtsuki} for further details.

\subsection{Free monoids and magmas} \label{sec:free-monoids-magmas}

For a finite set $\{s_1,\dots,s_r\}$, let $ \Mon(s_1,\dots,s_r) $
denote the free monoid on $s_1,\dots,s_r$, consisting of words in
the letters $s_1,\dots,s_r$.  For $w \in\Mon(s_1,\dots,s_r)$, let
$\vert w\vert$ denote the length of $w$, and $w_1,\dots, w_{\vert w
\vert}$ the consecutive letters in $w$.

Let also $\Mag(s_1,\dots,s_r)$ denote the free unital magma on
$s_1,\dots,s_r$, consisting of non-associative words in
$s_1,\dots,s_r$. Let
\begin{gather*}
  U: \Mag(s_1,\dots,s_r) \longrightarrow \Mon(s_1,\dots,s_r)
\end{gather*}
be the (surjective) map forgetting parentheses.  Sometimes the word
$U(w)$ for $w\in\Mag(s_1,\dots,s_r)$ will be simply denoted by $w$.

\subsection{The category $\T$ of tangles in the cube}	\label{sec:cat-tangles}

By a \emph{tangle} in the cube $\II^3$ we mean a framed, oriented
tangle $\gamma$ in $\II^3$, whose boundary points are on the
intervals $\II \times \{0\} \times \{-1,1\}$.  We assume that the
framing at each endpoint is the vector $\vec y$.  In figures we use
the blackboard framing convention as before.

The \emph{source} $s(\gamma)\in\Mon(\pm):= \Mon(+,-)$ of a tangle
$\gamma$ is the word in $+$ and~$-$ that are read along the oriented
interval $\II \times \{0\} \times \{+1 \}$, where each boundary point
of $\gamma$ is given the sign $+$ (resp.\ $-$) when the orientation of
$\gamma$ at that point is downwards (resp.\ upwards).  The
\emph{target} $t(\gamma) \in\Mon(\pm)$ of $\gamma$ is defined
similarly.  The tangle $\gamma$ is said to be \emph{from} $s(\gamma)$
\emph{to} $t(\gamma)$.

We define the strict monoidal \emph{category $\T$ of tangles} (in the cube)
 as follows.  Set $\Ob(\T) = \Mon(\pm)$.
{Morphisms from $w$ to $w'$ in $\T$ are}
the isotopy classes of
tangles from $w$ to $w'$.  {We obtain the} composition $\gamma \circ \gamma'$
of two tangles $\gamma$ and $\gamma'$ such that $t(\gamma')=s(\gamma)$
 by gluing $\gamma'$ on the top of $\gamma$.  The identity
$\id_w : w\to w$ of $w\in\Ob(\T)$ is the trivial tangle
with straight vertical components.  The tensor product in the
strict monoidal category $\T$ is juxtaposition.

\subsection{The category $\T_q$ of $q$-tangles in the cube} \label{sec:tangles}

Here we define the category $\T_q$ of $q$-tangles in the cube as the
``non-strictification''
of the strict monoidal category~$\T$.  Since we use this construction also for
other categories, we first give a general definition.

Let $\C$ be a strict monoidal category such that the object monoid
$\Ob(\C)$ is a free monoid $\Mon(S)$ on a set $S$.  Then the
\emph{non-strictification} of $\C$ is the (non-strict) monoidal
category $\C_q$ defined as follows.  Set $\Ob(\C_q)=\Mag(S)$, the free
unital magma on $S$.  Let $U:\Mag(S)\to\Mon(S)$ be the canonical map,
forgetting parentheses.  Set $\C_q(x,y)=\C(U(x),U(y))$ for
$x,y\in\Ob(\C_q)=\Mag(S)$.  The compositions, identities and tensor
products in $\C_q$ are given by those of $\C$. 
We define the associativity
isomorphism by
\begin{gather*}
  \alpha_{x,y,z}=\id_{x\ot y\ot z}
  \in\C_q((x\ot y)\ot z, x\ot(y\ot z))=\C(x\ot y\ot z,x\ot y\ot z).
\end{gather*}
Note that the tensor product in $\C_q$ is strictly left and right
unital, i.e., $\varnothing\ot x=x=x\ot\varnothing$ for
$x\in\Ob(\C_q)$, where $\varnothing\in\Mag(S)$ is the unit.
Then $\C_q$ is a monoidal category, which is not strict if $S$ is not
empty.  The map $U:\Ob(\C_q)\to\Ob(\C)$ extends to an equivalence of
categories
\begin{gather*}
  U:\C_q   \overset{\simeq}{\longrightarrow}  \C
\end{gather*}
such that $U(f)=f$ for all $f\in\C_q(x,y)=\C(U(x),U(y))$.
If $\C$ is a braided (resp.\ symmetric) strict monoidal category, then the
non-strictification $\C_q$ naturally has the structure of a braided
(resp.\ symmetric) non-strict monoidal category.

Now, define the non-strict braided monoidal {\emph{category $\T_q$ of $q$-tangles} 
(in the cube)} to be the non-strictification of $\T$.  Since
$\Ob(\T)=\Mon(\pm)$, we have $\Ob(\T_q)=  \Mag(\pm):= \Mag(+,-)$.

\subsection{Cabling} \label{sec:cabling}

Here we review the definition of the ``cabling'' operations for
$q$-tangles in the cube.

Define the duality involution $w \mapsto w^*$ on $\Mag(\pm)$
inductively by $\varnothing^*=\varnothing$, $\pm^*=\mp$ and
$(ww')^*=(w')^* w^*$.
For $w\in\Mag(\pm) $ and  $\varpi: \{1, \dots,
\vert w\vert \} \to \Mag(\pm)$, we obtain $C_\varpi(w)\in\Mag(\pm)$
from $w$ by replacing each of its consecutive letters $w_i$ with the
subword $\varpi(i)$ (resp.\ $\varpi(i)^*$) if $w_i=+$ (resp.\ $w_i=-$).
For {every} $q$-tangle $\gamma:w\to w'$ and
{every} map $\varpi: \pi_0(\gamma) \to \Mag(\pm)$,\footnote{{Here
the reader is warned that $\gamma$ should not be thought of as a
\emph{morphism} in $\T_q$, especially if $\gamma$ has (more than one) closed components.  Note
that if $\gamma$ denotes a morphism in $\T_q$, i.e., an isotopy class of
$q$-tangles, then ``$\pi_0(\gamma)$'' is not well-defined.}} let $C_\varpi(\gamma)$
be the {$q$-}tangle obtained from $\gamma$ by replacing each connected
component $c \subset \gamma$ with the $\varpi(c)$-cabling of $c$.
(For instance, if $\varpi(c)=-$, then the $\varpi(c)$-cabling of $c$
is obtained by reversing the orientation of $c$ and, if
$\varpi(c)=(++)$, then the $\varpi(c)$-cabling of $c$ is obtained by
doubling $c$ using the given framing.)  We call $C_\varpi(\gamma)$ the
\emph{$\varpi$-cabling} of $\gamma$, and we regard it as a morphism
$$
C_\varpi(\gamma):
C_{\varpi_s}(w)\longrightarrow C_{\varpi_t}(w')
$$ in $\T_q$.  Here $\varpi_s: \{1, \dots, \vert w\vert \} \to
\Mag(\pm)$ denotes the composition of $\varpi$ and the map $\{1,\dots,
\vert w \vert\} \to \pi_0(\gamma)$ relating the top boundary points of
$\gamma$ to its connected components, and $\varpi_t: \{1, \dots, \vert
w'\vert \} \to \Mag(\pm)$ is defined similarly.

One can easily verify the following lemma explaining the behavior of
the cabling operation on compositions.

\begin{lemma} \label{rem:functoriality_cabling}
For $q$-tangles $\gamma$ and $\gamma'$ with $s(\gamma)=t(\gamma')$ and
maps $\varpi: \pi_0(\gamma) \to \Mag(\pm)$ and $\varpi':
\pi_0(\gamma') \to \Mag(\pm)$ with $\varpi_s = \varpi_t'$, we have
\begin{equation}
C_{\varpi \cup \varpi'}(\gamma \circ \gamma') = C_\varpi(\gamma) \circ C_{\varpi'}(\gamma'),
\end{equation}
where $\varpi \cup \varpi'$ denotes the unique map $\pi_0(\gamma \circ
\gamma') \to\Mag(\pm)$ compatible with $\varpi$ and $\varpi'$ through
the canonical maps $\pi_0(\gamma) \to \pi_0(\gamma \circ \gamma')$ and
$\pi_0(\gamma') \to \pi_0(\gamma \circ \gamma')$.
\end{lemma}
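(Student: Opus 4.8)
The plan is to reduce the identity to a purely local statement: the cabling operation acts independently on each connected component, while composition merely glues components along the interface $s(\gamma)=t(\gamma')$; hence the two operations commute as soon as the cabling labels agree at that interface, which is exactly the content of the hypothesis $\varpi_s=\varpi_t'$.

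First I would check that both sides are defined. The set $\pi_0(\gamma\circ\gamma')$ is the quotient of $\pi_0(\gamma)\sqcup\pi_0(\gamma')$ by the equivalence relation generated by identifying $c\subset\gamma$ with $c'\subset\gamma'$ whenever, for some index $i$, the $i$-th top boundary point of $\gamma$ lies on $c$ and the $i$-th bottom boundary point of $\gamma'$ lies on $c'$ (these two points being glued in $\gamma\circ\gamma'$). For such a pair one has $\varpi(c)=\varpi_s(i)=\varpi_t'(i)=\varpi'(c')$ by hypothesis, so $\varpi$ and $\varpi'$ agree on every identified pair and descend to a well-defined map $\varpi\cup\varpi'\colon\pi_0(\gamma\circ\gamma')\to\Mag(\pm)$ through the canonical maps. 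The same hypothesis, together with $s(\gamma)=t(\gamma')$, yields $s(C_\varpi(\gamma))=C_{\varpi_s}(s(\gamma))=C_{\varpi_t'}(t(\gamma'))=t(C_{\varpi'}(\gamma'))$ in $\Mag(\pm)$, so the right-hand composite $C_\varpi(\gamma)\circ C_{\varpi'}(\gamma')$ is defined and has the same source and target as the left-hand side.

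Next I would establish the equality of $q$-tangles componentwise. Because $C_\varpi$ replaces each component by parallel copies---with orientation reversals and framing-doublings dictated solely by the label of that component, inside a small neighbourhood of it---it is enough to compare the two tangles one component $C$ of $\gamma\circ\gamma'$ at a time. Writing $\ell:=(\varpi\cup\varpi')(C)$, the component $C$ is a concatenation along interface points of pieces coming from components $c\subset\gamma$ and $c'\subset\gamma'$, each necessarily carrying the label $\ell$ by construction of $\varpi\cup\varpi'$; its $\ell$-cabling is thus the gluing of the $\ell$-cablings of these pieces, and the latter are precisely the corresponding pieces of $C_\varpi(\gamma)$ and $C_{\varpi'}(\gamma')$. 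Summing over all components gives $C_{\varpi\cup\varpi'}(\gamma\circ\gamma')=C_\varpi(\gamma)\circ C_{\varpi'}(\gamma')$.

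The step needing the most care is the coherence of the parallel strands across the interface: the cabling on the $\gamma$-side and on the $\gamma'$-side must insert the same number of strands, in the same order and with compatible orientations, at each interface point. This is forced by $\varpi_s=\varpi_t'$, since the word-level cabling (defined through the duality involution $w\mapsto w^*$) inserts at the $i$-th point the identical (sub)word $\ell$ or $\ell^*$---the choice depending only on the sign read there, and these signs coincide because the interface boundary points of $\gamma$ and $\gamma'$ are identified. Once this coherence is recorded, the component-by-component comparison is routine, which is why the lemma is a matter of direct verification.
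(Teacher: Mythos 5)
Your proof is correct, and it is exactly the direct verification the paper has in mind: the paper states this lemma with the remark that ``one can easily verify'' it, and your argument---checking that $\varpi\cup\varpi'$ is well defined via $\varpi_s=\varpi_t'$, that sources and targets match, and that cabling commutes with gluing component by component with coherence at the interface---is precisely that routine check carried out in full.
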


\subsection{Spaces of Jacobi diagrams}  \label{sec:spac-jacobi-diagr}

Let $X$ be a compact, oriented $1$-manifold. A \emph{chord diagram}
$D$ on $X$ is a disjoint union of unoriented arcs, called
\emph{chords}, and whose set of endpoints is embedded in the interior
of $X$.  We identify two chord diagrams $D$ and $D'$ on $X$ if there
is a homeomorphism $(X \cup D,X) \to (X \cup D',X)$ preserving the
orientations and connected components of $X$.  Let $\A(X)$ be the
vector space generated by chord diagrams on $X$ modulo the 4T relation:
\begin{equation}   \label{eq:4T}
\labellist
\small\hair 2pt
 \pinlabel{$+$}  at 185 55
 \pinlabel{$=$}  at 423 55
 \pinlabel{$+$}  at 653 57
 \pinlabel{ 4T} [t] at 415 3
\endlabellist
\centering
\includegraphics[scale=0.2]{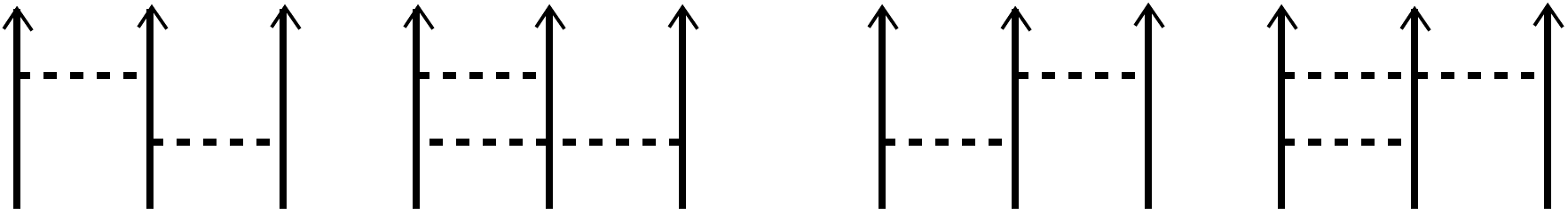}
\end{equation}

\vspace{0.2cm}\noindent Here the dashed lines represent chords, and
the solid lines are intervals in $X$ with the orientation inherited
from $X$.

Bar-Natan \cite{BN2} gave an alternative definition of $\A(X)$ as
follows.  A \emph{Jacobi diagram} $D$ on $X$ is a unitrivalent graph
such that each trivalent vertex is oriented (i.e., equipped with a
cyclic ordering of the incident half-edges), the set of univalent
vertices is embedded in the interior of $X$, and such that each
connected component of $D$ contains at least one univalent vertex.
We identify two Jacobi diagrams $D$ and $D'$ on $X$ if there is
a homeomorphism $(X \cup D,X) \to (X \cup D',X)$ preserving the
orientations and connected components of $X$ and respecting the
vertex-orientations.  In pictures, we draw the $1$-manifold part $X$
with solid lines, and the graph part $D$ with dashed lines, and the
vertex-orientations are counterclockwise.  For instance, we can view
chord diagrams as Jacobi diagrams without trivalent vertices.  The
vector space $\A(X)$ is isomorphic to, hence identified
with, the vector space generated by Jacobi diagrams on $X$ modulo the STU relation:
\begin{equation}  \label{eq:STU} 
\labellist \small \hair 2pt
\pinlabel{STU} [t] at 150 0
 \pinlabel{$=$}  at 127 39
 \pinlabel{$-$}  at 266 37
\endlabellist
\centering
\includegraphics[scale=0.3]{STU}
\end{equation}

\vspace{0.2cm}\noindent As proved in \cite[Theorem 6]{BN2}, the STU
relation implies the AS and IHX relations:
\begin{equation} \label{eq:AS_IHX}
\labellist \small \hair 2pt
\pinlabel{AS} [t] at 102 0
\pinlabel{IHX} [t] at 552 0
\pinlabel{$= \ -$}  at 102 46
\pinlabel{$-$} at 444 46
\pinlabel{$+$} at 566 46
\pinlabel{$=0$} at 679 46
\endlabellist
\centering
\includegraphics[scale=0.3]{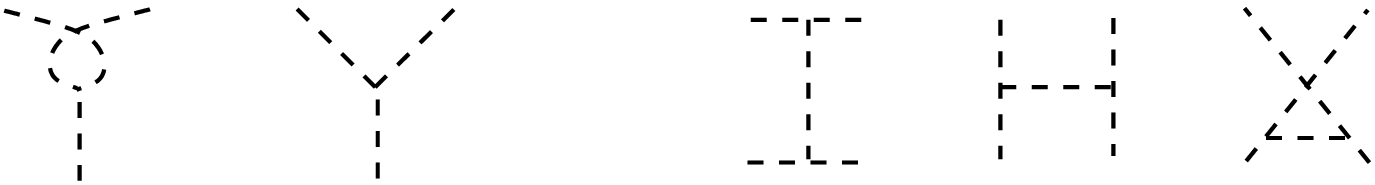}
\end{equation}
\vspace{0.cm}

Note that $\A(X)$ is a graded vector space, where we define the
\emph{degree} of a Jacobi diagram to be half the
total number of vertices.  Let $\A(X)$ also denote its degree-completion.

\begin{example} \label{ex:box}
The \emph{box notation} is a useful way to represent
certain linear combinations of Jacobi diagrams:
$$
\labellist
\small\hair 2pt
 \pinlabel{$:=$} at 193 68
 \pinlabel{$-$}  at 402 70
  \pinlabel{\scriptsize $\cdots$}  at 103 101
 \pinlabel{\scriptsize $\cdots$}  at 325 101
 \pinlabel{\scriptsize $\cdots$}  at 515 101
  \pinlabel{\scriptsize $\cdots$}  at 715 101
  \pinlabel{\scriptsize $\cdots$}  at 992 101
  \pinlabel{$\pm \cdots +$} at  830 69
 \pinlabel{$+$}  at 605 69
\endlabellist
\centering
\includegraphics[scale=0.33]{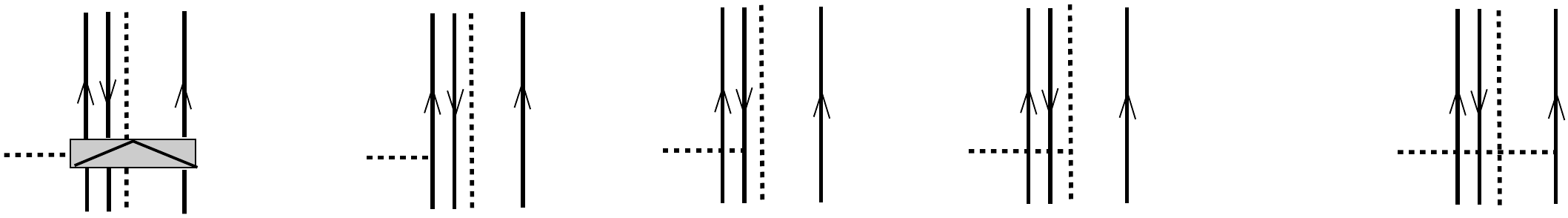}
$$
Here, dashed edges and solid arcs are allowed to go through the box,
and each of them contributes to one summand in the box notation.
A solid arc contributes with a plus or minus sign,
depending on the compatibility of its orientation with the direction
of the box.  A dashed edge always contributes with a plus
sign, the orientation of the new trivalent vertex being determined by
the direction of the box.  We also define\\[0.cm]
$$
\labellist
\small\hair 2pt
 \pinlabel{\scriptsize $\cdots$}  at 103 111
 \pinlabel{$:=$}  at 182 44
 \pinlabel{\scriptsize $\cdots$}   at 310 115
 \pinlabel{so that }   at 430 46
 \pinlabel{$=-$}  at 684 42
 \pinlabel{\scriptsize $\cdots$}  at 622 119
 \pinlabel{\scriptsize $\cdots$}  at 822 114
 \pinlabel{.}  at 884 42
\endlabellist
\centering
\includegraphics[scale=0.31]{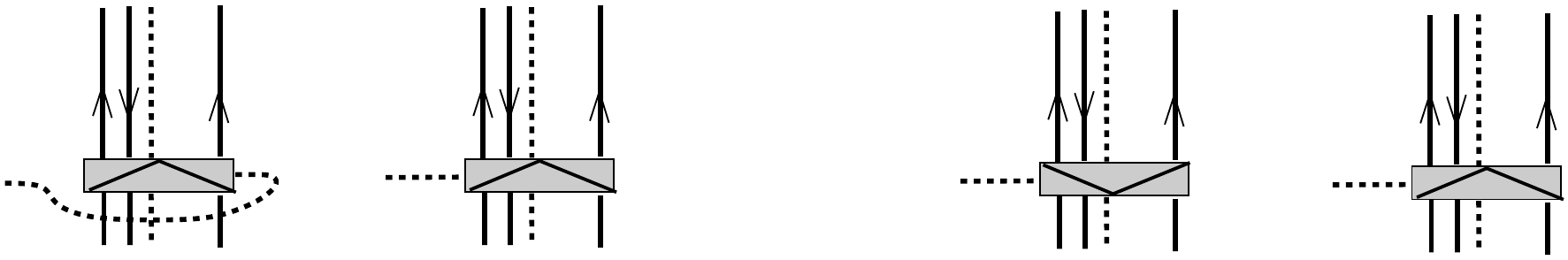}
$$
\end{example}

\subsection{The category $\AT$ of Jacobi diagrams} \label{sec:cat-Jacobi}

A compact, oriented $1$-manifold $X$ is said to be \emph{polarized} if
$\partial X$ is decomposed into a \emph{top part} $\partial_+ X$ and a
\emph{bottom part} $\partial_-X$ with each of them totally ordered.
The \emph{target} $t(X)\in\Mon(\pm)$ of $X$ is the word obtained from
$\partial_- X $ by replacing each positive (resp.\ negative) point
with $+$ (resp.~$-$).  The \emph{source} $s(X)\in\Mon(\pm)$ of $X$ is
defined similarly using $\partial_+X$, but the rule for the signs
$+,-$ is reversed.

\begin{example} \label{ex:polarized_1}
  Every $q$-tangle  is naturally regarded as a polarized $1$-manifold.
\end{example}

\begin{example} \label{ex:polarized_2}
  For $w\in\Mon(\pm)$, let $\downarrow  \stackrel{w}{\cdots}  \downarrow$
  denote the identity tangle $\id_w$ as a polarized $1$-manifold.
\end{example}

We define the \emph{category $\AT$ of Jacobi diagrams} as follows.
Set $\Ob(\AT)=\Mon(\pm)$, and for $w, w' \in\Mon(\pm)$ {set} 
\begin{gather}
  \label{e27}
 \A(w,w') = \coprod_X \A(X)_{\fS_{c(X)}}
\end{gather}
where $X$ runs over  homeomorphism classes of polarized $1$-manifolds
with ${s(X)=w}$ and $t(X)=w'$, $c(X)$ is the number of closed components of $X$,
the symmetric group $\fS_{c(X)}$ acts on $\A(X)$ by permutation of closed components
and $ \A(X)_{\fS_{c(X)}}$ denotes the {space of coinvariants}.
The composition $D \circ D'$ of a Jacobi diagram $D$ on a polarized
$1$-manifold $X$ with a Jacobi diagram $D'$ on a polarized
$1$-manifold $X'$ with $s(X)=t(X')$ is the Jacobi diagram $D
\sqcup D'$ on  $X \cup_{s(X)=t(X')} X'$.
The identity $\id_w$ of $w\in\Ob(\AT)$ is the empty Jacobi
diagram on ~$\downarrow\stackrel{w}{\cdots}\downarrow$.

The category $\AT$ admits a strict monoidal structure such that
the tensor product on objects is concatenation of words, and the
tensor product on morphisms is juxtaposition of Jacobi diagrams.

\begin{remark}
  \label{r30}
  Note that the category $\A$ is \emph{not} linear, since we can not add up
  two Jacobi diagrams with the same source and target {but {with different underlying polarized $1$-manifolds}.}
  {However, by setting}
  \begin{gather*}
    \A(w,w')  =  \bigoplus_X {\A(X)_{\fS_{c(X)} }}
  \end{gather*}
  instead of \eqref{e27}, we obtain a \emph{linear} strict  monoidal
  category $\A$.  We  sometimes need this {linear} version of $\A$.
\end{remark}

Finally, we have the following  analogs of the cabling
operations for $q$-tangles recalled in Section \ref{sec:tangles}.
We define the \emph{duality} $w\mapsto w^*$ on $\Mon(\pm)$
similarly to that on $\Mag(\pm)$.  For $w\in\Mon(\pm)$ and $\varpi: \{1,\dots, \vert  w \vert\} \to \Mon(\pm) $, we define
$C_\varpi(w)$ as in the non-associative case.  For {every}
{$D\in\A(X)$ representing a morphism in $\A(w,w')$}
with $w,w' \in\Mon(\pm) $ and {every} map $\varpi: \pi_0(X) \to \Mon(\pm) $, we define the \emph{$\varpi$-cabling} of $D$
{as an element $C_f(D)\in\A(C_f(X))$ representing a morphism}
$$
C_\varpi(D): C_{\varpi_s}(w) \longrightarrow
C_{\varpi_t}(w') \quad \text{in $\AT$}
$$ as follows. Let $\varpi_s$ be the obvious map $\{1, \dots, \vert w
\vert\}\to \pi_0(X)$ composed with $\varpi$, and define $\varpi_t$
similarly.  Then we obtain $C_\varpi(D)$ from $D$ by applying, to
every connected component $c\subset X$, the usual ``deleting
operation'' $\epsilon$ if $\vert \varpi(c) \vert=0$, or the usual
``doubling operation'' $\Delta$ repeatedly to get
$\vert \varpi(c) \vert$ new solid components if $\vert \varpi(c) \vert>0$,
and then the usual ``orientation-reversal operation'' $S$ to every new solid
component corresponding to a letter $-$ in the word~$\varpi(c)$.  (The
definitions of the operations $\epsilon$, $\Delta$ and $S$
{appear}  in \cite[\S 6.1]{Ohtsuki} for instance.)

We can easily verify the following analog of Lemma
  \ref{rem:functoriality_cabling}.

\begin{lemma} \label{rem:functoriality_cabling_bis}
  Let $D$ and $D'$ be Jacobi diagrams on polarized $1$-manifolds
  $X$ and $X'$, respectively, with $s(X)=t(X')$.  Let $\varpi:  \pi_0(X) \to \Mon(\pm)$, $\varpi': \pi_0(X') \to
  \Mon(\pm)$ be maps
  with $\varpi_s = \varpi_t'$.  Then we have
\begin{equation}
C_{\varpi \cup \varpi'}(D \circ D') = C_\varpi(D) \circ C_{\varpi'}(D'),
\end{equation}
where $\varpi \cup \varpi'$ denotes the unique map
$\pi_0(X\cup_{s(X)=t(X')}X')\to\Mon(\pm)$ compatible with $\varpi$ and
$\varpi'$.
\end{lemma}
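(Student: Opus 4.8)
The plan is to deduce the identity from the observation that both the cabling $C_\varpi$ and the composition $D \circ D' = D \sqcup D'$ are assembled from local, per-component data, so that everything can be checked one connected component of $X \cup_{s(X)=t(X')} X'$ at a time. The argument runs exactly parallel to the proof of Lemma~\ref{rem:functoriality_cabling} for $q$-tangles, with the operations $\epsilon$, $\Delta$, $S$ on Jacobi diagrams playing the role of the geometric cabling of tangle components.

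First I would verify that the hypothesis $\varpi_s = \varpi_t'$ makes the map $\varpi \cup \varpi'$ well defined. Under the gluing, each point of $s(X)$ is identified with the corresponding point of $t(X')$; if such a point lies on a component $c \in \pi_0(X)$ and its partner lies on $c' \in \pi_0(X')$, then $c$ and $c'$ have the same image in $\pi_0(X \cup X')$. The equality $\varpi_s = \varpi_t'$ says precisely that $\varpi(c) = \varpi'(c')$ for every such pair, so the common value may be assigned to the glued component, yielding the unique map $\varpi \cup \varpi'$ compatible with $\varpi$ and $\varpi'$. The same equality also gives $C_{\varpi_s}(s(X)) = C_{\varpi_t'}(t(X'))$, so that the source of $C_\varpi(D)$ matches the target of $C_{\varpi'}(D')$ and the right-hand side is a legal composition in $\A$.

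Next I would reduce to a single component $C$ of $X \cup X'$. Since cabling is performed by applying $\epsilon$, $\Delta$, and $S$ independently to each component according to its $\varpi$-value, and since $D \circ D'$ is carried by $X \cup X'$ as the disjoint union $D \sqcup D'$, it suffices to compare the two sides over each $C$ separately. If $C$ lies entirely in the image of $X$ (resp.\ of $X'$), then $\varpi \cup \varpi'$ restricts to $\varpi$ (resp.\ $\varpi'$) there and the two sides agree tautologically. The only substantive case is a component $C$ crossing the gluing locus, obtained by gluing arcs of $X$ to arcs of $X'$ end-to-end along points of $s(X)=t(X')$; here I would check that the elementary operations commute with gluing, namely that applying $\Delta$, $\epsilon$, or $S$ to all of $C$ according to $(\varpi \cup \varpi')(C)$ yields the same linear combination of Jacobi diagrams as applying the corresponding operation to each arc of $C$ inside $X$ and inside $X'$ and then regluing. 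This works because $\Delta$ acts on a univalent vertex by summing over the parallel copies located where that vertex sits, so the piecewise and global distributions of the dashed part coincide once the solid strands match up.

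I expect the main obstacle — which is bookkeeping rather than a genuine difficulty — to be tracking the interaction of $S$ with the duality $w \mapsto w^*$ across the boundary. At a gluing point on $C$ the boundary sign is the same on the $X$-side and the $X'$-side (as $s(X) = t(X')$ as words), and $\varpi_s = \varpi_t'$ forces the inserted subwords to be identical; hence the number of parallel copies produced by $\Delta$ and the orientation reversals dictated by $S$ agree on the two sides, and the parallel strands glue end-to-end into the parallel strands of $C$. Once the three operations are seen to be compatible with gluing point by point, summing the contributions of all components yields the claimed equality.
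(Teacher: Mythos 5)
Your proof is correct, and it is exactly the component-by-component verification that the paper has in mind: the paper gives no argument at all for this lemma (it is stated with ``We can easily verify the following analog of Lemma \ref{rem:functoriality_cabling}''), so your reduction to single glued components and the check that $\epsilon$, $\Delta$, $S$ commute with gluing (with matching copy counts, orderings, and orientation reversals forced by $\varpi_s=\varpi'_t$) supplies precisely the omitted routine verification.
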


\subsection{The Kontsevich integral $Z$} \label{sec:usual_Kontsevich}

Let $\Phi \in\A(\downarrow \downarrow \downarrow)$ be an
\emph{associator}.  In other words, $\Phi$ is the exponential of a
series of connected Jacobi diagrams on $\downarrow \downarrow
\downarrow$ which trivializes if any of the three strings is deleted,
and $\Phi$ is solution of one ``pentagon'' equation and two
``hexagon'' equations; see \cite[(6.11)--(6.13)]{Ohtsuki}. Define
\begin{gather}
  \label{e19}
\nu  =  \left(
\begin{array}{c}
\labellist
\small\hair 2pt
 \pinlabel{$S_2(\Phi)$}   at 147 227
\endlabellist
\centering
\includegraphics[scale=0.1]{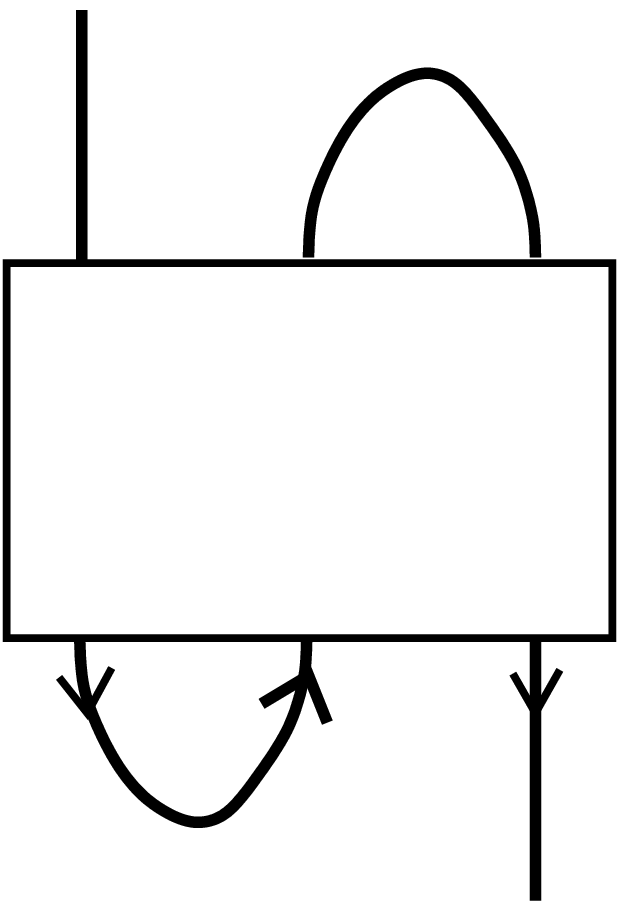} \end{array}\right)^{-1}\!\! = \figtotext{5}{40}{one_strand} +\frac{1}{48} \figtotext{20}{40}{phi_on_strand} +(\deg >2)
\ \in\A(\downarrow),
\end{gather}
where $S_2: \A(\downarrow \downarrow \downarrow) \to  \A(\downarrow \uparrow \downarrow)$
is the diagrammatic ``orientation-reversal operation'' applied to the second string.

\begin{theorem}[See \cite{BN1,Cartier,LM_combinatorial,Piunikhin,KT}] \label{th:Kontsevich}
Fix $a,u \in\Q$ with ${a+u=1}$.  There is a unique
  tensor-preserving functor $Z: \T_q \rightarrow \AT$ such that
\begin{itemize}
\item[(i)] $Z$ is the canonical map $U:\Mag(\pm) \to \Mon(\pm)$ on objects,
\item[{(ii)}] for {$\gamma:w\to w'$ in $\Tq$}, we have $Z(\gamma)\in {\A(\gamma)_{\fS_{c(\gamma)} }}  \subset \AT( w, w')$,
\item[{(iii)}] for {$\gamma:w\to w'$ in $\Tq$} and $\ell \in\pi_0(\gamma)$,
the value of $Z$ on the $q$-tangle obtained from $\gamma$ by reversing
the orientation of $\ell$ is $S_\ell(Z(\gamma))$,
\item[{(iv)}] $Z$ takes the following values on elementary $q$-tangles:\\[-0.5cm]
\end{itemize}
$$
\ \begin{array}{rcll}
Z \bigg( \!\! \begin{array}{c} {}_{(++)} \\ \figtotext{12}{12}{pcrossing} \\ {}^{(++)} \end{array} \!\! \bigg)
&= & \qquad \begin{array}{c}
\labellist \scriptsize \hair 2pt
\pinlabel{$\exp\big(\frac{1}{2}$} [r] at  -5 111
\pinlabel{$\big)$} [l] at  75 111
\endlabellist
\includegraphics[scale=0.2]{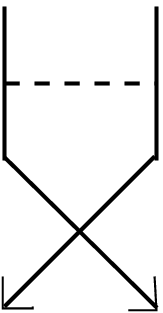}
\end{array}
&  \in\A\Big(\!\!\! \begin{array}{c} {}_{++} \\ \figtotext{12}{12}{crossing} \\ {}^{++} \end{array} \!\!\!\Big)  \subset \AT(++,++),\\
\ Z\bigg(\!\! \begin{array}{c} {}_{(w (w'w''))} \\ \figtotext{20}{15}{associator} \\ {}^{((ww') w'')}\end{array}
\!\! \bigg) & = &  C_{w,w',w''}(\Phi)  &  \in\A\Big(\!\! \downarrow \stackrel{\, ww'w''}{\cdots \cdots } \downarrow \!\! \Big)\subset \AT(ww'w'',ww'w'')\\
&&& \hbox{for} \ w,w',w'' \in\Mag(\pm),\\
Z\Big(\!\!\! \begin{array}{c}   \figtotext{20}{10}{capleft} \\ {}^{(+-)}  \end{array}\!\!\!\Big) & = &\!\!\!
\begin{array}{c}{
\labellist  \scriptsize \hair 2pt
\pinlabel{$\nu^a$} at 122 115
\endlabellist
\includegraphics[scale=0.2]{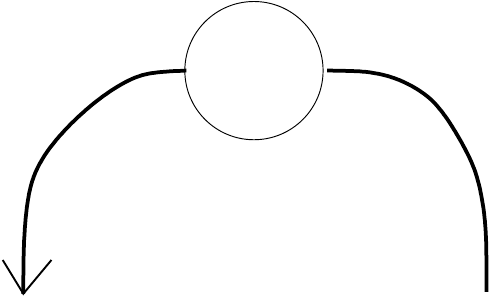}
} \end{array}  &  \in\A\Big(\!\!\!\! \begin{array}{c}   \figtotext{15}{10}{capleft} \\ {}^{+-}  \end{array} \!\!\!\! \Big) \subset \AT(\varnothing,+-),\\
Z\Big(\!\!\! \begin{array}{c} {}_{(+-)} \\ \figtotext{20}{10}{cupright} \end{array} \!\!\!\Big) & = &\!\!\!
\begin{array}{c}
\labellist \scriptsize  \hair 2pt
\pinlabel{$\nu^u$} at 115 35
\endlabellist
\includegraphics[scale=0.2]{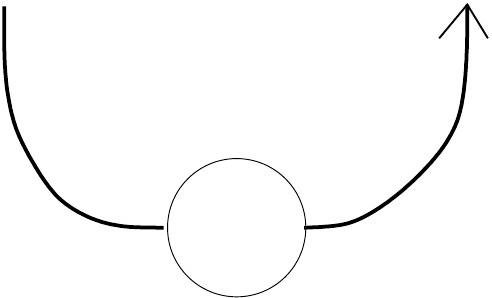} \end{array}
 &  \in\A\Big(\!\!\!\!\begin{array}{c} {}_{+-} \\ \figtotext{15}{10}{cupright} \end{array}\!\!\!\!\Big) \subset \AT(+-,\varnothing).
\end{array}
$$
\end{theorem}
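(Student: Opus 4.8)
The plan is to present $\Tq$ as a monoidal category by generators and relations and to define $Z$ through its values on the generators, checking compatibility with each relation. Recall (see \cite{KT,Ohtsuki}) that every morphism of $\Tq$ is a composite of tensor products of \emph{elementary} $q$-tangles: positive and negative crossings, cups and caps, the reassociation and unit $q$-tangles (which are identity tangles carrying distinct parenthesizations on their two ends), and identity strands of either orientation. Granting such a presentation, \emph{uniqueness} is immediate: a tensor-preserving functor is determined by its values on the elementary $q$-tangles, and these are fixed by (i) and (iv) on the downward generators and on the reassociation $q$-tangles, while (iii), together with the functoriality of the orientation-reversal operations $S_\ell$, propagates the values to all orientation variants. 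Note that, although $Z$ is merely tensor-preserving and not monoidal, this causes no ambiguity: the reassociation $q$-tangles are honest (nonidentity) morphisms of $\Tq$, and (iv) assigns to them the value $\Phi$, suitably cabled.

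For \emph{existence}, I would assign to each elementary generator the value prescribed by (iv), extended to all orientations via (iii), and verify that the two sides of every defining relation of $\Tq$ have the same image in $\AT$. The relations fall into families, each matched to one property of the associator $\Phi$ and the element $\nu$. The coherence (pentagon and triangle) relations among the reassociation and unit $q$-tangles correspond precisely to the pentagon equation for $\Phi$ and to the fact that $\Phi$ trivializes when any strand is deleted. The relations expressing the compatibility of a crossing with reassociation correspond to the two hexagon equations. The Reidemeister~II move holds because the value on a positive crossing, the exponential of one-half of the single chord connecting the two strands, is inverted by the value on the negative crossing; and the Reidemeister~III (braid) move follows from the hexagons together with the symmetry of that single chord. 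Finally, the rigidity (snake) moves and the framed Reidemeister~I move, which involve cups and caps, are governed by $\nu$, by the distribution of the factors $\nu^a$ and $\nu^u$ across caps and cups, and by the constraint $a+u=1$; the latter is exactly what forces the value on the closed unknot to equal $\nu$, and different admissible pairs $(a,u)$ yield functors related by a harmless gauge transformation.

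The hard part will be this last family: the relations that mix the duality morphisms (cups and caps) with crossings and reassociation. Whereas the pentagon and hexagon axioms directly control the behaviour of $\Phi$ under reassociation and under braiding of like-oriented strands, relating crossings of different orientation types---by rotating a crossing past a cup or cap---and correctly accounting for the framing twist require the precise interplay between $\nu$ and $\Phi$. Concretely, one needs the identities expressing how $\nu$ is produced by $\Phi$ (such as the computation of $Z$ on a curl), and one uses (iii) to reduce the relations among arbitrarily oriented cups, caps and crossings to a small set of standard ones. A conceptually cleaner alternative is to establish existence analytically, via the Knizhnik--Zamolodchikov connection whose monodromy realizes $\Phi$ and for which isotopy invariance is manifest, and then to identify the resulting invariant with the combinatorial prescription of (iv); but within the combinatorial framework set up above, the verification of these duality relations is the crux, and it is there that the defining properties of $\Phi$ and $\nu$ are used in full.
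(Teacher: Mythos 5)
Your proposal is correct and follows essentially the same route as the paper: the paper gives no argument of its own beyond citing Ohtsuki's Theorem 6.7 and Proposition 6.8(2) (the case $a=u=1/2$) and remarking that the proof applies to general $a+u=1$, and that cited proof is precisely the generators-and-relations verification you sketch (pentagon/hexagon equations for $\Phi$, Reidemeister-type moves, and the duality/snake relations, which are exactly where the constraint $a+u=1$ enters). The only point worth flagging, which the paper does, is that the composition conventions for $\T_q$ and $\A$ in Ohtsuki's book are opposite to the ones used here.
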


The proof of the case $a=u=1/2$ \cite[Theorem 6.7, Proposition 6.8(2)]{Ohtsuki}
apply to the general case.  (The reader should,
however, be aware that the composition laws for the categories
$\T_q$ and $\A$ adopted in \cite{Ohtsuki} are opposite to ours.)

Now we review the behavior of the Kontsevich integral under cabling.
For
${w\in\Mag(\pm)}$, define
$a_w, a'_w,u_w, u'_w \in\A(\downarrow \stackrel{w}{\cdots} \downarrow) \subset \AT(w,w)$ by
$$
ZC_w\Big(\!\!\!\begin{array}{c}   \figtotext{20}{10}{capleft} \\ {}^{(+-)}  \end{array}\!\!\!\Big)  =
\centre{\labellist
\small\hair 2pt
 \pinlabel{$\stackrel{w^*}{\cdots}$}   at 363 65
 \pinlabel{$\stackrel{w}{\cdots}$}  at 104 67
 \pinlabel{$a_w$}   at 107 184
 \pinlabel{$C_wZ\Big($} [r] at 225 467
 \pinlabel{$\Big)$} [l] at 380 469
 \pinlabel{\scriptsize $(+-)$} [t] at 302 450
\endlabellist
\centering
\includegraphics[scale=0.15]{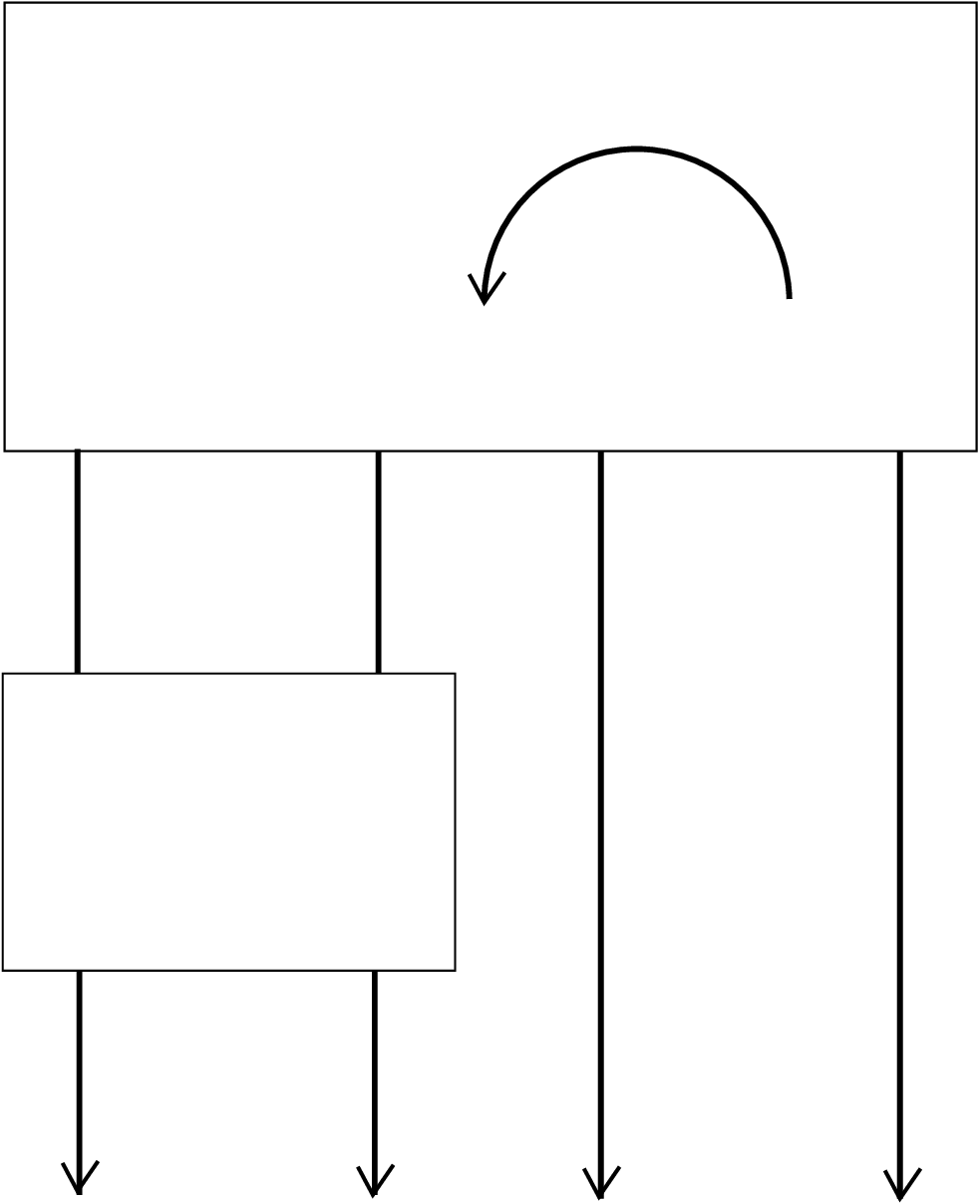}} , \quad
ZC_w\Big(\!\!\!\begin{array}{c}   \figtotext{20}{10}{capright} \\ {}^{(-+)}  \end{array}\!\!\!\Big)  =
\centre{\labellist
\small\hair 2pt
 \pinlabel{$\stackrel{w}{\cdots}$}   at 363 65
 \pinlabel{$\stackrel{w^*}{\cdots}$}  at 104 67
 \pinlabel{$a'_w$}   at 367 184
 \pinlabel{$C_wZ\Big($} [r] at 225 467
 \pinlabel{$\Big)$} [l] at 375 469
 \pinlabel{\scriptsize  $(-+)$} [t] at 302 450
\endlabellist
\centering
\includegraphics[scale=0.15]{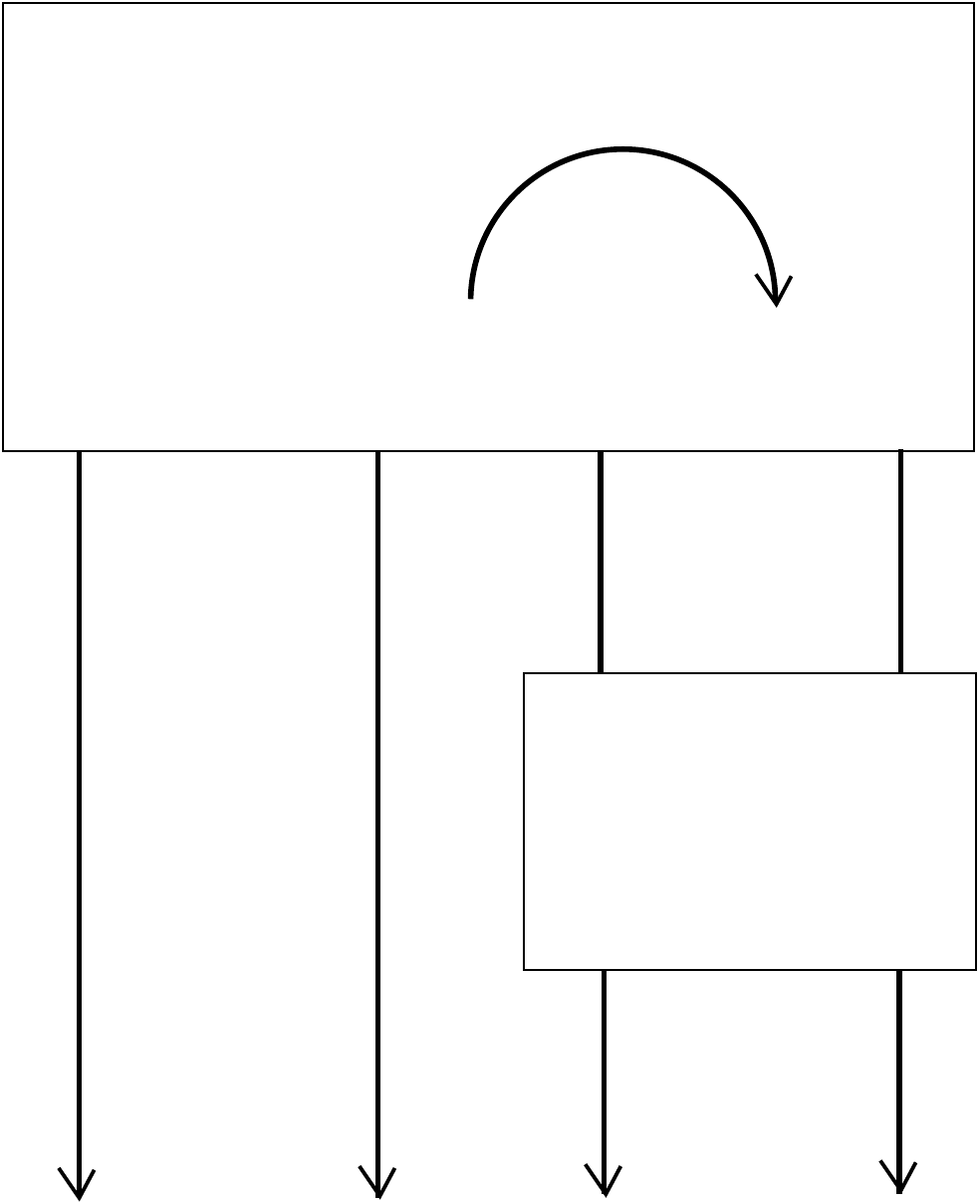}} , \
$$
$$
ZC_w\Big(\!\!\! \begin{array}{c} {}_{(+-)} \\ \figtotext{20}{10}{cupright} \end{array}\!\!\! \Big) =
\centre{\labellist
\small\hair 2pt
 \pinlabel{$\stackrel{w}{\cdots}$} [b] at 114 240
 \pinlabel{$\stackrel{w^*}{\cdots}$} [b] at 361 240
 \pinlabel{$u_w$}  at 104 397
 \pinlabel{$C_wZ\Big($} [r] at 211 98
 \pinlabel{$\Big)$} [l] at 384 94
 \pinlabel{\scriptsize  $(+-)$} [b] at 298 120
\endlabellist
\centering
\includegraphics[scale=0.15]{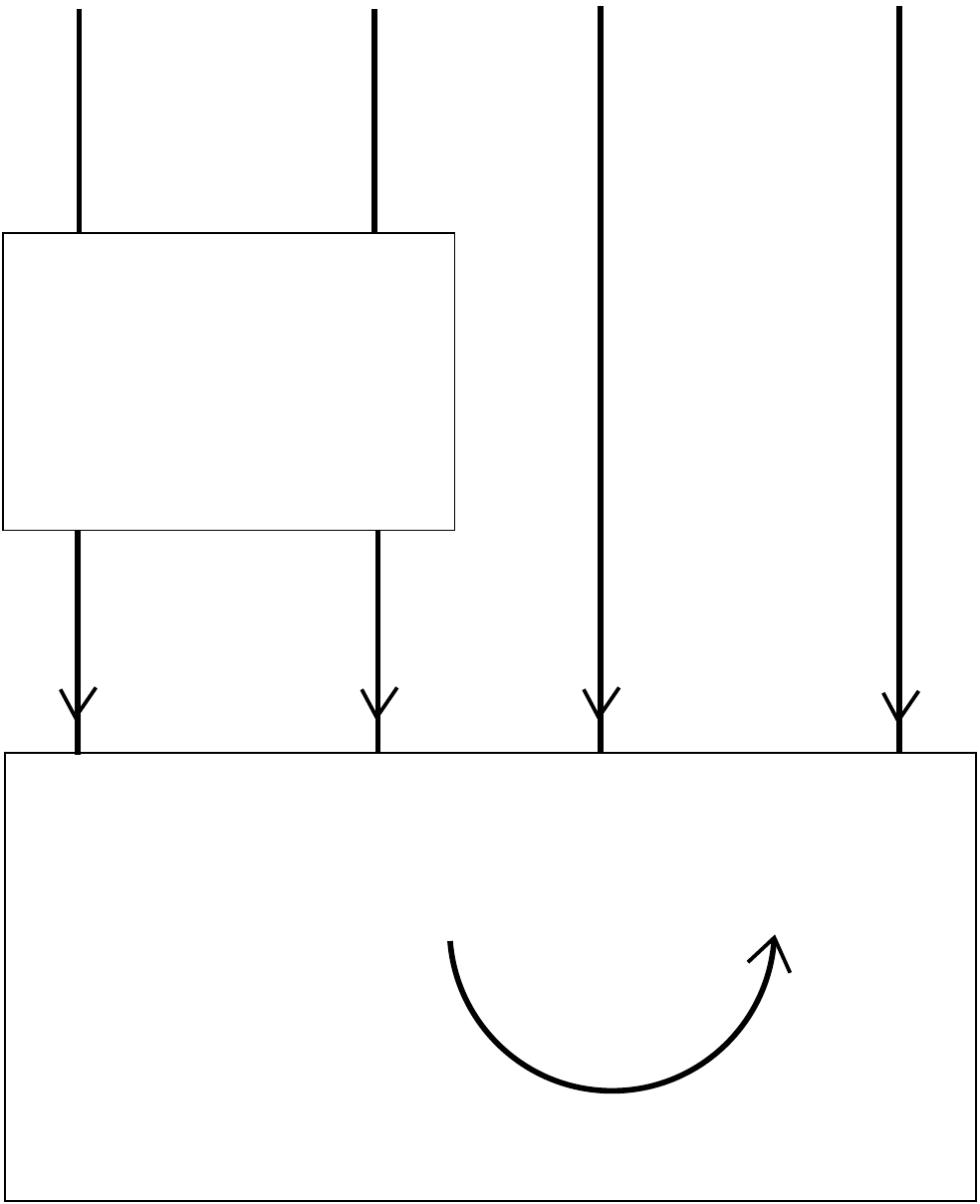}}
, \quad
ZC_w\Big(\!\!\! \begin{array}{c} {}_{\ (-+)} \\ \figtotext{20}{10}{cupleft} \end{array}\!\!\! \Big) =
\centre{\labellist
\small\hair 2pt
 \pinlabel{$\stackrel{w^*}{\cdots}$} [b] at 114 247
 \pinlabel{$\stackrel{w}{\cdots}$} [b] at 361 246
 \pinlabel{$u'_w$}  at 364 397
 \pinlabel{$C_wZ\Big($} [r] at 211 98
 \pinlabel{$\Big)$} [l] at 384 94
 \pinlabel{\scriptsize  $(-+)$} [b] at 298 120
\endlabellist
\centering
\includegraphics[scale=0.15]{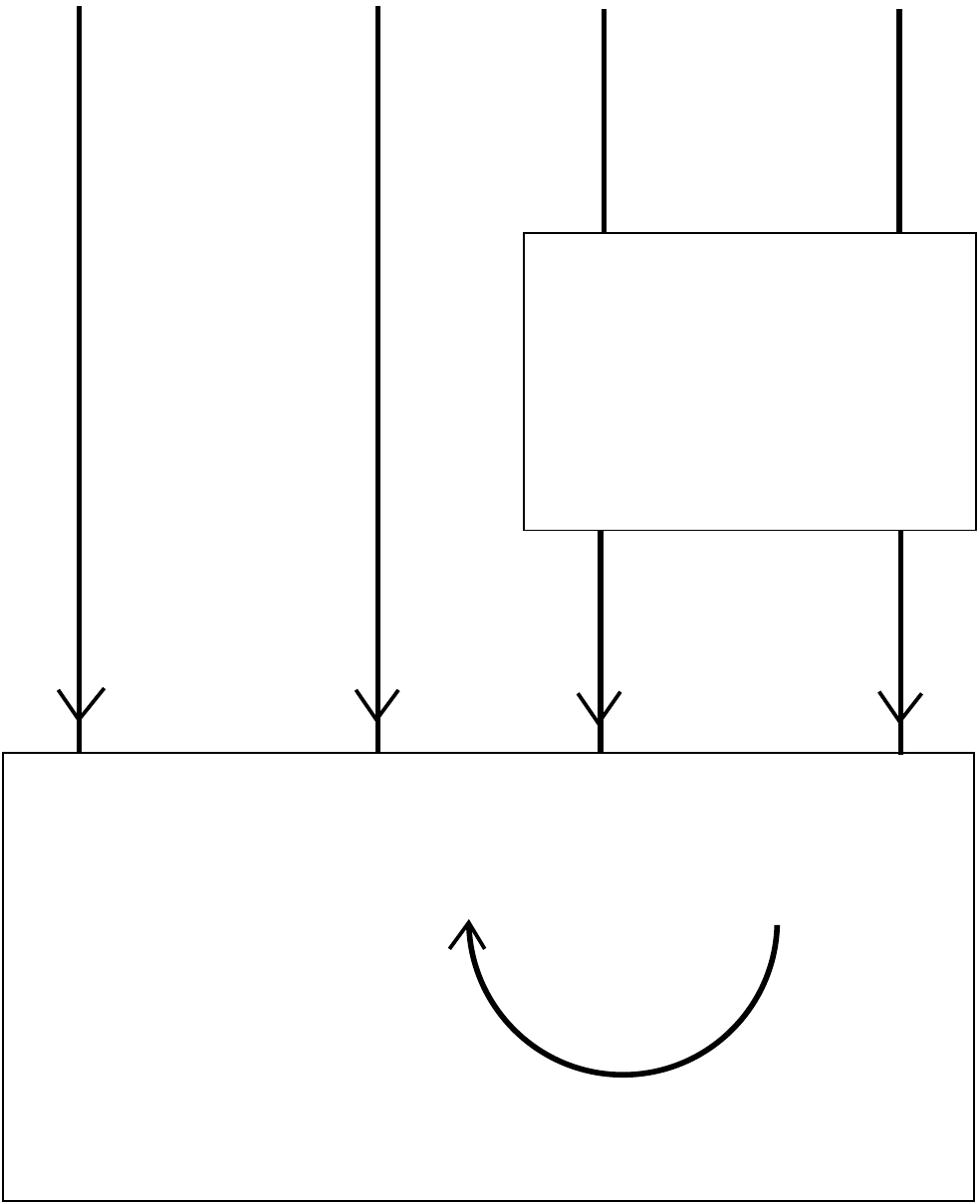}}.
$$

\begin{lemma} \label{lem:a_u_a'_u'}
For $w\in\Mag(\pm)$, we have $a_w=a'_w$, $u_w=u'_w$  and $a_w= (u_w)^{-1}$.
\end{lemma}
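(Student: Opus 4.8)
The plan is to reduce the three stated identities to two elementary inputs, fed through the functoriality of the Kontsevich integral $Z$ and of the cabling operation (Lemmas~\ref{rem:functoriality_cabling} and~\ref{rem:functoriality_cabling_bis}). The starting observation is that the two caps are related by a single orientation reversal, and likewise the two cups: the cap with target $(-+)$ is obtained from the cap with target $(+-)$ by reversing its unique component, and the cup with source $(-+)$ is obtained from the cup with source $(+-)$ in the same way. Accordingly, I would prove $a_w=a'_w$ and $u_w=u'_w$ by orientation reversal, and $a_w=(u_w)^{-1}$ by a zigzag (``snake'') isotopy.

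For $a_w=a'_w$ and $u_w=u'_w$, I would invoke property~(iii) of Theorem~\ref{th:Kontsevich}, which expresses $Z$ under reversal of a component through the diagrammatic operation $S$. Reversing all components of the $w$-cabled cap with target $(+-)$ yields exactly the $w$-cabled cap with target $(-+)$, so property~(iii) gives that $Z$ of the latter equals $S$ applied to $Z$ of the former. Comparing this with the defining equations of $a_w$ and $a'_w$ expresses $a'_w$ through the action of $S$ on $a_w$. The essential point is that the only nontrivial ingredient of the normalization, the factor $\nu$ of~\eqref{e19}, is invariant under $S$ (being a series of even wheels); granting this and matching the position of the anomaly along the parallel strands, the comparison collapses to $a_w=a'_w$. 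The same argument applied to the cups gives $u_w=u'_w$.

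For $a_w=(u_w)^{-1}$, I would use the zigzag relation. Stacking the cap with target $(+-)$ and the cup with source $(-+)$, tensored with a straight strand on the appropriate side, produces a $q$-tangle isotopic to the identity. Cabling by $w$ and applying Lemma~\ref{rem:functoriality_cabling} writes this as a composite of the two cabled extremal tangles; applying $Z$, using Lemma~\ref{rem:functoriality_cabling_bis} to slide $Z$ past $C_w$, and substituting the definitions of $a_w$ and $u'_w$, the two inserted factors come to lie on one and the same strand and must compose to the identity. Hence $a_w\,u'_w=\id$, and combined with $u'_w=u_w$ from the previous step this gives $a_w=(u_w)^{-1}$.

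The step I expect to be the main obstacle is the orientation-reversal comparison. One must track carefully how $S$ interacts with the $w$-cabling---in particular the order-reversing duality $w\mapsto w^*$ and whether the anomaly sits on the left or on the right family of parallel strands---so that, after using $S(\nu)=\nu$, the primed and unprimed elements are identified on the nose rather than merely up to a reversal of the family of strands. By contrast, once $u_w=u'_w$ is in hand, the snake relation renders $a_w=(u_w)^{-1}$ essentially immediate.
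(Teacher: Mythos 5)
Your reduction of $a_w=(u_w)^{-1}$ to a cabled snake identity is sound, and it is essentially what the paper does for that relation (the paper deduces $a'_w u_w=1$ from the third of three displayed tangle isotopies, cabled via Lemma~\ref{rem:functoriality_cabling} and compared with the $C_wZ$-side via Lemma~\ref{rem:functoriality_cabling_bis}, adapting Ohtsuki's proof of the case $a=u=1/2$, $w=(++)$). However, your route to $a_w=a'_w$ and $u_w=u'_w$ has a genuine gap, exactly at the step you flag as the main obstacle, and it is not a bookkeeping issue: the claim that reversing all components of the $w$-cabled cap of type $(+-)$ yields the $w$-cabled cap of type $(-+)$ is false in general. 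Orientation reversal flips the sign of each boundary letter \emph{in place}, so it turns the boundary word $ww^*$ into $\overline{w}\,\overline{w^*}=\widetilde{w}^{\,*}\,\widetilde{w}$, where $\widetilde{w}$ denotes $w$ with the order of its letters reversed; hence the reversed tangle is the $\widetilde{w}$-cabling of the cap of type $(-+)$, not the $w$-cabling. Concretely, for $w=(+-)$ one has $w^*=(+-)$, so the $w$-cabled cap of type $(+-)$ has boundary word $+-+-$ and its reversal has boundary word $-+-+$, whereas the $w$-cabled cap of type $(-+)$ has boundary word $w^*w=+-+-$. Thus property (iii) of Theorem~\ref{th:Kontsevich} relates $a_w$ to $a'_{\widetilde{w}}$, not to $a'_w$. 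Worse, even that relation is not immediate: $S^{\otimes |w|}(a_w)$ still sits on the \emph{left} group of strands while $a'_{\widetilde w}$ is by definition on the \emph{right} group, and transporting an element around the nested caps reverses the order of its strands once more. What actually comes out is an identity of the shape $a'_{\widetilde{w}}=\rho(a_w)$, where $\rho$ reverses the order of the strands; upgrading this to $a_w=a'_w$ would require knowing in addition that $\rho(a_w)=a_{\widetilde{w}}$, a statement of the same nature and difficulty as the lemma itself. (A smaller point: your justification that $S(\nu)=\nu$ because $\nu$ is ``a series of even wheels'' invokes the wheels theorem, which is much heavier than needed; the paper treats $S(\nu)=\nu$ as an elementary well-known identity.)

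The paper avoids orientation reversal altogether: all three relations are deduced uniformly from three planar isotopy identities of tangles, each cabled by $w$ using Lemma~\ref{rem:functoriality_cabling} and then compared with the corresponding cabling of the uncabled Kontsevich integral using Lemma~\ref{rem:functoriality_cabling_bis}; the associator contributions coming from re-parenthesization appear identically on both sides and cancel, leaving $a_w=a'_w$, $u_w=u'_w$ and $a'_wu_w=1$ respectively. If you wish to keep your overall strategy, the fix is to replace the orientation-reversal step by isotopy arguments of this kind for the first two relations as well -- your snake argument for the third relation already contains all the technique needed.
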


\begin{proof}
Using Lemmas \ref{rem:functoriality_cabling} and \ref{rem:functoriality_cabling_bis},
we can deduce $a_w=a'_w$, $u_w=u'_w$ and $a'_w  u_w=1$ from
$$
\labellist
\small\hair 2pt
 \pinlabel{,}  at 509 110
 \pinlabel{$=$}  at 169 110
 \pinlabel{$=$}  at 783 110
  \pinlabel{,}  at 1110 110
 \pinlabel{$=$}  at 1530 110
\endlabellist
\centering
\includegraphics[scale=0.2]{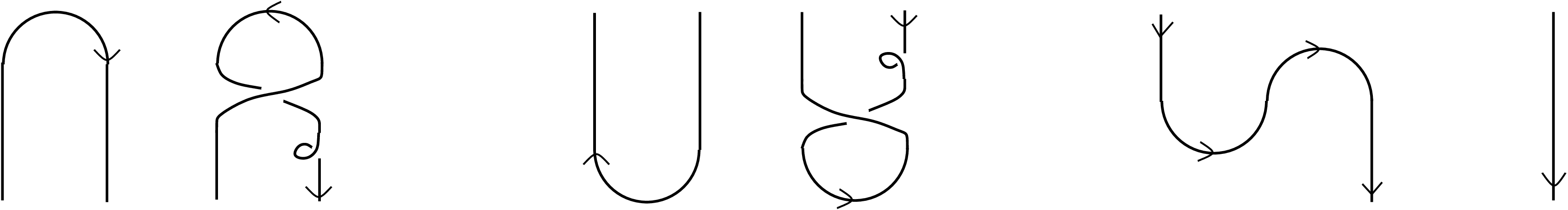}
$$ respectively.  See the proof of \cite[Proposition 6.8(1)]{Ohtsuki}
for the case $a=u=1/2$ and $w=(++)$.
We can easily adapt the arguments given there  to the general case.
\end{proof}

For  $w\in\Mon(\pm)$ and  $\varpi: \{1, \dots, \vert w \vert\} \to \Mag(\pm) $, we obtain
$$
c(w,\varpi):C_\varpi(w)\lto C_\varpi(w)\quad \text{in $\AT$}
$$
from $\id_w:w\to w$ by replacing, for each $i\in\{1, \dots, \vert w \vert\} $,
the $i$-th string with $a_{\varpi(i)}$ if $w_i=+$ and with
$\id_{\varpi(i)^*}$ if $w_i=-$.

\begin{lemma} \label{lem:cabling}
{For a $q$-tangle {$\gamma:w\to w'$} and $\varpi:\pi_0(\gamma) \to  \Mag(\pm)$, we have}
\begin{equation*}
Z C_\varpi(\gamma) = c(w', \varpi_t) \circ C_\varpi Z(\gamma) \circ c(w,\varpi_s)^{-1}.
\end{equation*}
Here, $\varpi_s$ is the composition of $\varpi$ with the map
$\{1,\dots, \vert w\vert \} \to \pi_0(\gamma)$ relating the top
boundary points of $\gamma$ to its connected components, and
$\varpi_t$ is defined similarly.
\end{lemma}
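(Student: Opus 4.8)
The plan is to use that $Z\colon\Tq\to\AT$ is a tensor-preserving functor and that both cabling operations---on $q$-tangles and on Jacobi diagrams---are compatible with composition and tensor product, by Lemmas~\ref{rem:functoriality_cabling} and~\ref{rem:functoriality_cabling_bis}. Every $q$-tangle is a composite of tensor products of the elementary $q$-tangles of Theorem~\ref{th:Kontsevich}(iv), together with identity strands of either orientation. I would therefore argue by induction on the length of such a decomposition: first check the identity on the generators, then show that it propagates through composition and tensor product. The point is that the left-hand side $ZC_\varpi(\gamma)$ is manifestly functorial, being a composite of functorial operations, whereas $C_\varpi Z(-)$ alone is \emph{not} functorial because of the anomaly; the conjugation by the $c$-factors is exactly what restores functoriality of the right-hand side.

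For the composition step, suppose $\gamma=\gamma_1\circ\gamma_2$ with $s(\gamma_2)=w$, $t(\gamma_2)=s(\gamma_1)=:v$ and $t(\gamma_1)=w'$, and split $\varpi=\varpi_1\cup\varpi_2$ accordingly, so that $(\varpi_1)_s=(\varpi_2)_t$ along the shared boundary $v$ (Lemma~\ref{rem:functoriality_cabling}). Applying $Z$ and assuming the formula for $\gamma_1$ and $\gamma_2$, one obtains a composite whose two innermost correction terms are $c(v,(\varpi_1)_s)^{-1}\circ c(v,(\varpi_2)_t)$; these cancel precisely because $(\varpi_1)_s=(\varpi_2)_t$. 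Lemma~\ref{rem:functoriality_cabling_bis} then recombines $C_{\varpi_1}Z(\gamma_1)\circ C_{\varpi_2}Z(\gamma_2)$ into $C_\varpi Z(\gamma)$, and since $(\varpi_1)_t=\varpi_t$ and $(\varpi_2)_s=\varpi_s$ this yields exactly the claimed identity for $\gamma$. The tensor-product step is easier: $C_\varpi$ distributes over juxtaposition, $Z$ is tensor-preserving, and $c(w,\varpi)$ is defined strand by strand, hence distributes over concatenation of words; so the formula for two factors gives it for their tensor product.

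For the base cases, the caps and cups are handled essentially by definition: unwinding $c((+-),\varpi_t)=a_w\ot\id_{w^*}$ (and its cup analog) shows that the asserted identity on a left cap or right cup is exactly the defining equation of $a_w$ or $u_w$, while the remaining orientations reduce to these via property~(iii) of Theorem~\ref{th:Kontsevich} together with the equalities $a_w=a'_w$, $u_w=u'_w$ and $a_w=u_w^{-1}$ of Lemma~\ref{lem:a_u_a'_u'}. On the associator, $Z$ takes a cabling of $\Phi$, which is insensitive to the endpoints, so the identity holds with the $c$-factors accounting only for orientation reversals through $S$. The crossing is the one genuinely computational case: $Z$ of the crossing is the exponential of a single chord, its $\varpi$-cabling is governed by the doubling operation $\Delta$, and one must check that the $a_w$-factors collected in $c(w',\varpi_t)$ slide through the (strand-permuting) crossing to match those in $c(w,\varpi_s)$.

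The main obstacle I expect is precisely this bookkeeping on the crossing, namely controlling the interplay of the doubling operation $\Delta$, the orientation-reversal operation $S$, and the asymmetric placement of the $a_w$-factors in $c(w,\varpi)$ (on positive strands only). One has to confirm that this asymmetry is exactly compatible with the orientation behavior of $Z$ recorded in Theorem~\ref{th:Kontsevich}(iii). Since the computation for the case $a=u=1/2$ with a doubled strand is carried out in \cite[Proposition~6.8]{Ohtsuki}, I would model the generator verifications on that reference; the only genuinely new ingredient is the formal propagation of the preceding paragraphs, which carries the $c$-corrections through arbitrary composites.
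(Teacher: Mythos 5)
Your proposal is correct and is essentially the paper's own argument: the paper proves this lemma by adapting Le--Murakami's Lemma~4.1 in \cite{LM_parallel}, which is precisely the induction you describe --- decompose the $q$-tangle into elementary pieces, check the identity on generators, and propagate it through compositions and tensor products, with the cap/cup orientations handled by Lemma~\ref{lem:a_u_a'_u'}. The one ingredient you should name explicitly (it is needed in your associator case just as much as in the crossing case, where you do gesture at it) is the standard ``sliding'' lemma that an element supported on a cable commutes with the total chord joining that cable to any other strand; this is what lets the $c$-factors pass through $C_\varpi$ of $\Phi$ and of $\exp(t_{12}/2)$, and it is the same commutation fact underlying the Le--Murakami and Ohtsuki computations you cite.
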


\begin{proof}
This {lemma} is proved by adapting the arguments of \cite[Lemma~4.1]{LM_parallel}, and by using Lemma \ref{lem:a_u_a'_u'}.
\end{proof}

To conclude this section, we emphasize that there are several ``good''
choices of $a$ and $u$ in Theorem \ref{th:Kontsevich}.  The most
common choice is to take $a =u=1/2$. However, for technical
convenience, we set
$$
a= 0, \quad u=1.
$$ Thus, in what follows, the ``cabling anomaly'' $a_w \in\A(\downarrow \stackrel{w}{\cdots} \downarrow) \subset \AT(w,w)$
 assigned to $w\in\Mag(\pm)$ satisfies
\begin{gather}
  \label{e61}
ZC_w\Big(\!\!\!\begin{array}{c}   \figtotext{20}{10}{capleft} \\ {}^{(+-)}  \end{array}\!\!\!\Big)  =
\centre{\labellist
\small\hair 2pt
 \pinlabel{$\stackrel{w^*}{\cdots}$}   at 363 65
 \pinlabel{$\stackrel{w}{\cdots}$}  at 104 60
 \pinlabel{$a_w$}   at 107 184
\endlabellist
\centering
\includegraphics[scale=0.1]{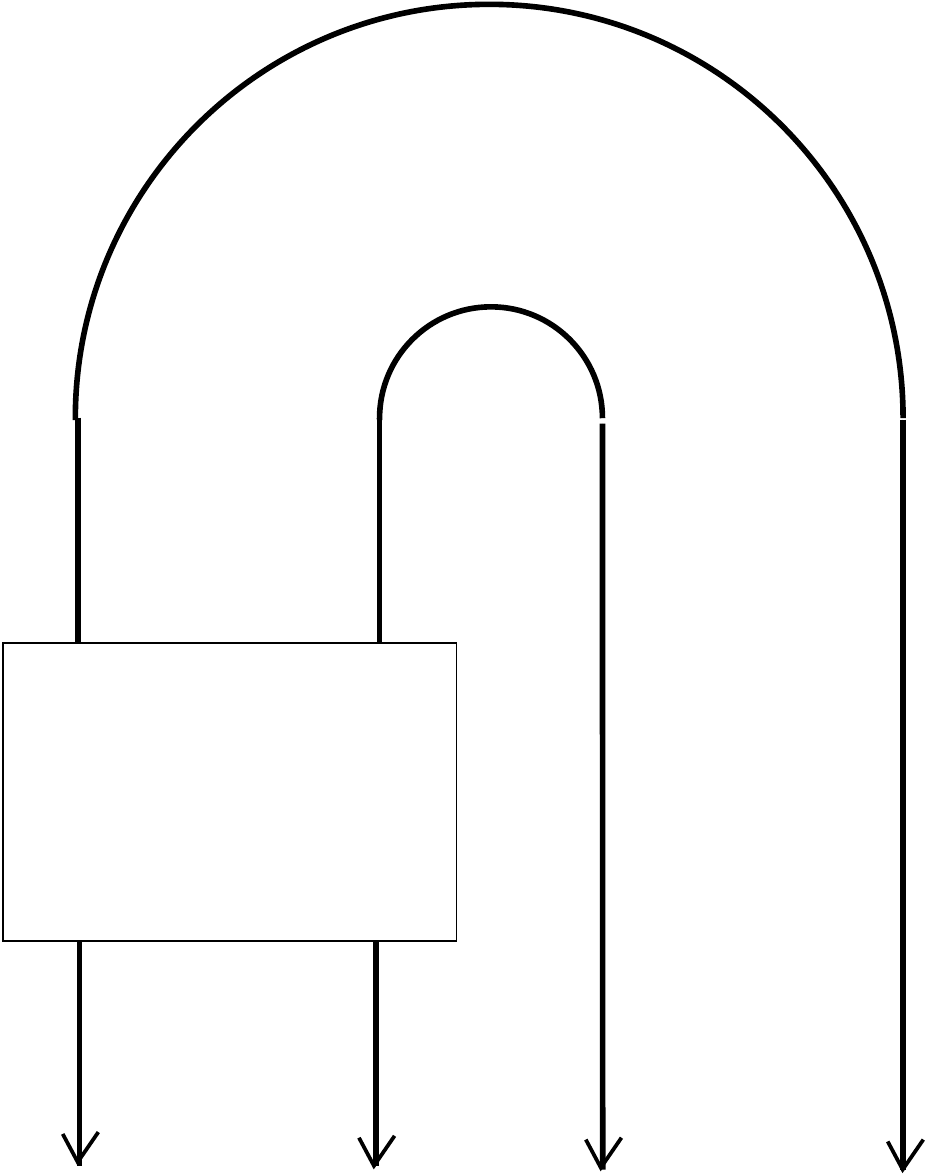}} .
\end{gather}

%
%
\section{The category $\AB$ of Jacobi diagrams in handlebodies}  \label{sec:Jacobi}

In this section, we introduce the linear symmetric strict
monoidal category $\AB$ of Jacobi diagrams in handlebodies.

\subsection{Spaces of colored Jacobi diagrams} \label{sec:colored_Jd}

Here we define the notion of Jacobi diagrams \emph{colored by
elements of a group} \cite{GL,Lieberum}, and define the space
$\mathcal{A}(X,\pi)$ of $\pi$-colored Jacobi diagrams on a
$1$-manifold $X$, where $\pi$ is a group.

Let $S$ be a set, and $D$ a Jacobi diagram on a compact, oriented
$1$-manifold $X$.  An \emph{$S$-coloring} of $D$ consists of an
orientation of each edge of $D$ and an $S$-valued function on a
(possibly empty) finite subset of $(\operatorname{int}X\cup D) \setminus
\operatorname{Vert}(D)$.  In figures, the $S$-valued function is
encoded by ``beads'' colored with elements of $S$. We identify two
$S$-colored Jacobi diagrams $D$ and $D'$ on $X$ if there is a
homeomorphism $(X \cup D,X) \cong (X \cup D',X)$ preserving the
orientations and the connected components of $X$, respecting the
vertex-orientations and compatible with the $S$-colorings.  These
definitions for Jacobi diagrams restrict to chord diagrams.

Now, let $S=\pi$ be a group.  Two $\pi$-colorings of a chord
diagram $D$ on $X$ are said to be \emph{equivalent} if they are
related by a sequence of the following local moves:
\begin{equation} \label{eq:moves_ccd}
\labellist
\scriptsize \hair 2pt
 \pinlabel{$x$} [t] at 26 219
 \pinlabel{$y$} [t] at 68 219
 \pinlabel{$xy$} [t] at 196 219
 \pinlabel{$1$} [t] at 414 220
 \pinlabel{$\leftrightarrow$}   at 126 226
 \pinlabel{$,$}   at 307 226
 \pinlabel{$,$} [l] at 614 226
 \pinlabel{$\leftrightarrow$}   at 127 118
 \pinlabel{$,$}   at 311 118
 \pinlabel{$\leftrightarrow$}   at 128 7
 \pinlabel{$,$}   at 312 13
 \pinlabel{$,$} [l] at 614 117
 \pinlabel{}   at 613 10
 \pinlabel{$\leftrightarrow$}   at 487 225
 \pinlabel{$x$} [t] at 26 113
 \pinlabel{$y$} [t] at 69 113
 \pinlabel{$xy$} [t] at 192 112
 \pinlabel{$1$} [t] at 411 112
 \pinlabel{$\leftrightarrow$}   at 486 118
 \pinlabel{$x$} [t] at 49 5
 \pinlabel{$\overline{x}$} [t] at 192 5
 \pinlabel{$x$} [t] at 378 4
 \pinlabel{$x$} [b] at 570 43
 \pinlabel{$x$} [t] at 572 5
 \pinlabel{$\leftrightarrow$} at 486 10
  \pinlabel{$.$} [l] at 614 10
  \pinlabel{\small $\forall x,y\in\pi$,\quad}  at -80 119
\endlabellist
\centering
\includegraphics[scale=0.39]{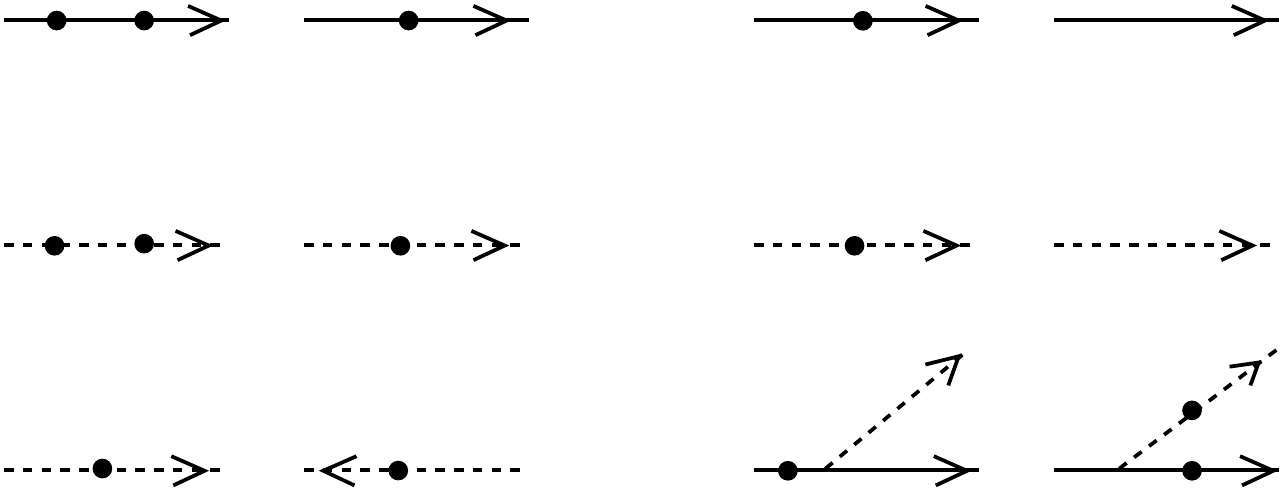}
\end{equation}
Here and in what follows, we use the notation $\overline{x}=x^{-1}$.
(In the fifth relation above, {it is understood that,} if there are several beads on the
reversed edge, then the colors at all the beads on it should be inverted.)

\begin{example}
Here are several equivalent $\pi$-colored chord
diagram on $\uparrow\, \uparrow$, where $x,y\in\pi$.
$$
\labellist
\footnotesize\hair 2pt
 \pinlabel{$=$}   at 141 70
 \pinlabel{$=$}   at 321 70
 \pinlabel{$=$}   at 498 71
 \pinlabel{$x$} [r] at 1 68
 \pinlabel{$\overline{x}$} [b] at 50 109
 \pinlabel{$xy$} [t] at 40 27
 \pinlabel{$y$} [l] at 85 73
 \pinlabel{$x$} [r] at 188 99
 \pinlabel{$x$} [b] at 221 99
 \pinlabel{$\overline{x}$} [b] at 239 108
 \pinlabel{$xy$} [t] at 224 25
 \pinlabel{$y$} [l] at 269 74
 \pinlabel{$x$} [r] at 366 95
 \pinlabel{$1$} [b] at 405 101
 \pinlabel{$xy$} [t] at 400 25
 \pinlabel{$y$} [l] at 448 69
 \pinlabel{$y$} [b] at 583 102
 \pinlabel{$xy$} [t] at 581 25
 \pinlabel{$1$} [r] at 627 23
 \pinlabel{$y$} [l] at 627 126
 \pinlabel{$x$} [r] at 542 100
\endlabellist
\centering
\includegraphics[scale=0.35]{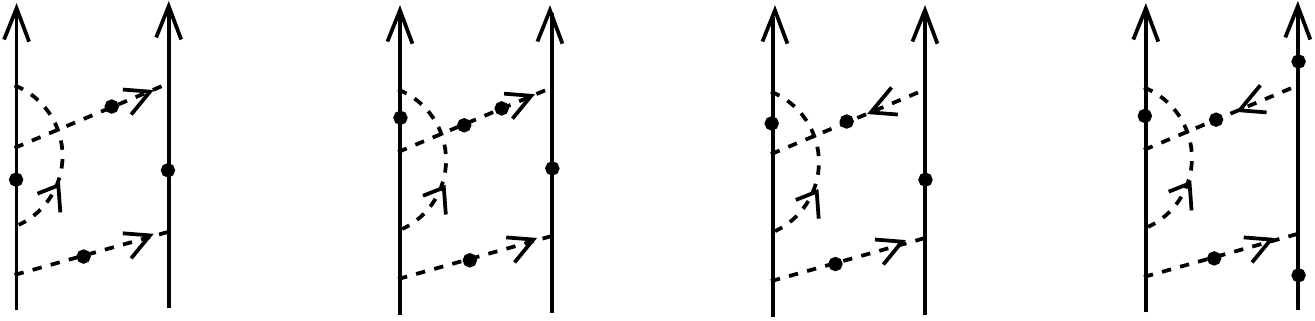}
$$
\end{example}

Similarly, two $\pi$-colorings of a Jacobi diagram $D$ on $X$ are
said to be \emph{equivalent} if they are related by a  sequence of the local moves in \eqref{eq:moves_ccd} and
$$
\qquad \qquad
\labellist
\normalsize \hair 2pt
 \pinlabel{$\leftrightarrow$}   at 191 55
 \pinlabel{\small $x$} [b] at 49 62
 \pinlabel{\small $x$} [b] at 349 84
 \pinlabel{\small $x$} [t] at 350 32
 \pinlabel{ $\forall x\in\pi$,} [r] at -40 56
  \pinlabel{$.$} [l] at 399 58
\endlabellist
\centering
\includegraphics[scale=0.3]{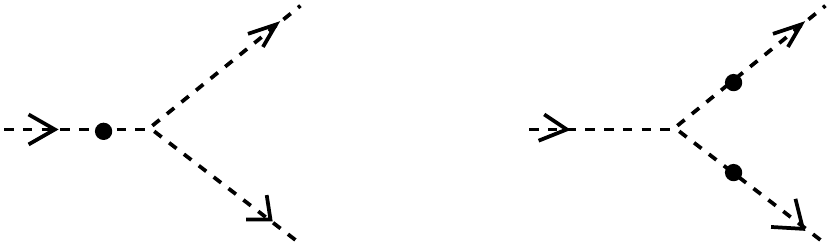}
$$ Thus, $\pi$-colored Jacobi diagrams generalize $\pi$-colored chord
diagrams.  Here is  a topological interpretation of $\pi$-colorings.

\begin{lemma} \label{lem:colorings}
Let $D$ be a Jacobi diagram on $X$ with no closed component.
Then there is a bijection between the set of equivalence classes
of $\pi$-colorings of $D$ and $\Hom\big(\pi_1((X\cup D)/\partial X,\{\partial X\}), \pi\big)$.
\end{lemma}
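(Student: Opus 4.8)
The plan is to identify a $\pi$-coloring of $D$ with a discrete flat $\pi$-connection on the finite graph $G:=(X\cup D)/\partial X$, to recognize the equivalence moves as exactly the gauge transformations fixing the basepoint $*:=\{\partial X\}$, and then to let the holonomy of such a connection produce the bijection with $\Hom(\pi_1(G,*),\pi)$. First I would record that $G$ is connected: since $X$ has no closed component, $X/\partial X$ is a union of circles glued at $*$, and by definition of a Jacobi diagram every connected component of $D$ carries a univalent vertex lying in the interior of $X$, so the whole of $D$ hangs off $X/\partial X$. As $G$ is then a finite connected graph, $\pi_1(G,*)$ is a finitely generated free group. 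Note that the vertices of $G$ are $*$ together with the trivalent vertices of $D$ and the univalent vertices of $D$ (each of the latter has valence $3$ in $G$, with two half-edges along $X$ and one dashed half-edge); crucially, all of these are distinct from $*$.

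Next I would define the holonomy map $\Psi$ from colorings to $\Hom(\pi_1(G,*),\pi)$. Fixing the edge orientations of a coloring, a based loop in $G$ crosses a finite sequence of beads, and I set $\Psi(\text{coloring})$ to send its homotopy class to the ordered product of the crossed beads, each factor inverted when the loop runs against the orientation carrying it. This is well defined on homotopy classes because homotopy of loops in a graph is generated by backtracking, which cancels a factor $x\,\overline{x}=1$, and it is a homomorphism because concatenation of loops multiplies the products. I would then check that $\Psi$ descends to equivalence classes by examining the generating moves. The bead-combination move $x,y\leftrightarrow xy$, the deletion of a trivial bead $1$, and the orientation-reversal convention $x\leftrightarrow\overline{x}$ simply express that the product $h(e)\in\pi$ read along each edge $e$ is well defined with $h(\overline{e})=h(e)^{-1}$. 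The remaining moves in \eqref{eq:moves_ccd} together with the trivalent-vertex move for Jacobi diagrams are precisely the gauge transformations supported at a single interior vertex $v\neq *$ (pushing a bead $x$ across $v$ inserts $x^{\pm1}$ on all half-edges leaving $v$); such a transformation conjugates by $x$ only loops based \emph{at} $v$, so for any loop based at $*$ the two crossings of $v$ contribute $x$ and then $x^{-1}$, leaving every based holonomy unchanged. The absence of any move located at $\partial X$ means there is no gauge freedom at $*$, which is exactly why the target is $\Hom(\pi_1(G,*),\pi)$ itself and not its quotient by conjugation.

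Finally I would prove that $\Psi$ is bijective by fixing a spanning tree $\Theta\subset G$, so that $\pi_1(G,*)$ is free on the standard generators $\gamma_e$ associated to the non-tree edges $e$ (the loop running through $\Theta$ to the tail of $e$, along $e$, and back through $\Theta$). For surjectivity, given $\rho$, I orient each non-tree edge $e$, place on it the single bead $\rho(\gamma_e)$, and leave all tree edges bare; a direct reading shows $\Psi$ of this coloring is $\rho$. For injectivity I would show every coloring is equivalent to one of this normal form: using the vertex moves one sweeps all beads off the tree edges (a bead may traverse any interior vertex, so clearing $\Theta$ leaf-edge by leaf-edge by induction), and using the combination and deletion moves one reduces the beads accumulated on each non-tree edge to a single group element. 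Since this normal form is completely determined by $\Psi$, two colorings with the same holonomy share a normal form and hence are equivalent.

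The main obstacle is concentrated in the two ``move equals gauge transformation'' verifications, which require getting the orientation bookkeeping right for a loop that enters a trivalent vertex along one half-edge and exits along either of the other two, and in the normalization step, namely arguing that the explicitly listed moves genuinely suffice to transport \emph{every} bead across \emph{every} interior vertex and thereby to empty a whole spanning tree. I expect the induction clearing $\Theta$ from its leaves inward to be the delicate point to write carefully, together with confirming that no move is available at $*$ so that honest homomorphisms, rather than conjugacy classes, are recovered.
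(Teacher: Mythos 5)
Your proof is correct and takes essentially the same route as the paper's: the paper defines the same based holonomy homomorphism $\varphi_c$, observes that it depends only on the equivalence class of the coloring, and constructs the inverse map using a maximal tree of the graph $(X\cup D)/\partial X$, citing \cite[Lemma~4.3]{GL} for exactly the spanning-tree normal form you spell out. The one repair needed is the sweep order in your normal-form step: clear the spanning tree from the root $\{\partial X\}$ outward, always pushing beads away from the basepoint (equivalently, apply at every interior vertex $v$, simultaneously, the gauge transformation by the inverse of the tree holonomy from the basepoint to $v$), because the leaves-inward induction you propose lets a later move at an inner vertex re-deposit beads on tree edges that were already cleared.
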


\begin{cor}
  \label{r15}
  Let $D$ and $X$ be as in Lemma \ref{lem:colorings}.  Let
  $\pi = \pi_1(M,\star)$ for a {pointed} space $(M,\star)$, and  assume
  that $\partial X$ is embedded in a contractible neighborhood of
  $\star$ in $M$.  Then the equivalence classes of
  $\pi$-colorings of $D$ correspond bijectively to the homotopy
  classes (rel $\partial X$) of continuous maps $(X \cup D, \partial
  X) \to (M,\partial X)$.
\end{cor}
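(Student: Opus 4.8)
The plan is to deduce the statement from Lemma \ref{lem:colorings} by translating the algebraic $\Hom$-set into homotopy classes of maps. Write $Y := X \cup D$, a finite $1$-dimensional CW complex in which $\partial X$ is a subcomplex (a finite set of $0$-cells), and set $Q := Y/\partial X$ with basepoint $q_0 := [\partial X]$. By Lemma \ref{lem:colorings} the equivalence classes of $\pi$-colorings of $D$ are in bijection with $\Hom(\pi_1(Q, q_0), \pi)$, so it suffices to produce a bijection
\[
\Hom(\pi_1(Q, q_0), \pi) \;\cong\; \big\{\,\text{homotopy classes rel } \partial X \text{ of maps } (Y, \partial X) \to (M, \partial X)\,\big\}.
\]
I would factor this through the set $[(Q, q_0), (M, \star)]_*$ of pointed homotopy classes of maps, by means of the two steps below.

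First, I would identify $\Hom(\pi_1(Q, q_0), \pi)$ with $[(Q, q_0), (M, \star)]_*$. Since $D$ has no closed component and each component of $D$ carries a univalent vertex lying on $X$, every connected component of $Y$ meets $\partial X$; hence $Q$ is a \emph{connected} $1$-dimensional CW complex. Collapsing a maximal tree then gives a pointed homotopy equivalence $Q \simeq \bigvee_\alpha S^1$, so $\pi_1(Q, q_0)$ is free and the assignment $[g] \mapsto g_*$ is a bijection: on each circle factor it reduces to the standard identification $[(S^1, \ast), (M, \star)]_* = \pi_1(M, \star) = \pi$, and a pointed map out of a wedge is precisely a tuple of pointed maps out of the circles.

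Second, I would identify $[(Q, q_0), (M, \star)]_*$ with homotopy classes rel $\partial X$ of maps $(Y, \partial X) \to (M, \partial X)$, and this is where the hypothesis enters. Let $\iota \colon \partial X \hookrightarrow M$ be the given embedding and $c \colon \partial X \to \{\star\}$ the constant map; both have image in the contractible neighborhood $U$ of $\star$, hence are homotopic through some $h \colon \partial X \times I \to U \subset M$. Because $\partial X \hookrightarrow Y$ is a cofibration, the restriction map $\operatorname{Map}(Y, M) \to \operatorname{Map}(\partial X, M)$ is a fibration, and the path $t \mapsto h_t$ provides a homotopy equivalence between the fibers over $\iota$ and over $c$. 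Taking $\pi_0$, the fiber over $\iota$ yields exactly the homotopy classes rel $\partial X$ of maps extending $\iota$, while the fiber over $c$ is the space of maps $Y \to M$ sending $\partial X$ to $\star$, i.e.\ $\operatorname{Map}_*(Q, M)$, whose $\pi_0$ is $[(Q, q_0), (M, \star)]_*$. Composing the two bijections with Lemma \ref{lem:colorings} gives the claim.

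I expect the main obstacle to be the bookkeeping in the second step: one must check that the fibrewise equivalence induced by $h$ descends to a well-defined bijection on $\pi_0$ (independent of the chosen contraction of $U$) and is compatible with the identification furnished by Lemma \ref{lem:colorings}. The elementary but essential point that every component of $Y$ meets $\partial X$ — so that $Q$ is connected and its free $\pi_1$ sees all colorings — should also be recorded explicitly, as it is exactly what the ``no closed component'' hypothesis guarantees.
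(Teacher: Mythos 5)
Your proof is correct and follows the route the paper intends: Corollary \ref{r15} is stated there without proof as an immediate consequence of Lemma \ref{lem:colorings}, and your two steps --- identifying $\Hom(\pi_1(Q,q_0),\pi)$ with the pointed homotopy classes $[(Q,q_0),(M,\star)]_*$ because $Q$ is a connected $1$-complex, then using the contractible neighborhood of $\star$ and the cofibration/fibration transport to convert pointed classes into classes rel $\partial X$ --- supply exactly the standard details being suppressed. One small correction: the connectivity of $Q$ requires not only that every component of $D$ has a univalent vertex, but also that $X$ itself has no closed component (the hypothesis of Lemma \ref{lem:colorings} refers to $X$, as the remark following Lemma \ref{lem:usual_to_colored} makes clear), so that every component of $X$ is an arc meeting $\partial X$.
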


\begin{proof}[Proof of Lemma \ref{lem:colorings}]
Let $c$ be a $\pi$-coloring of $D$, and let $\alpha$ be a loop in
$(X\cup D)/\partial X$ based at $\{\partial X\}$.  Let
$\varphi_c(\alpha) \in\pi$ be the product of the contributions of all
the consecutive beads along $\alpha$, where each bead contributes
either by its color or its inverse depending on compatibility of
$\alpha$ with the orientation at the bead.  This clearly defines a
homomorphism $\varphi_c: \pi_1\big((X\cup D)/\partial X,\{\partial
X\}\big) \to \pi$, depending only on the equivalence class of $c$.
Thus, we obtain a map $\{c\} \mapsto \varphi_c$, from the set of
equivalence classes of $\pi$-colorings of $D$ to
$\operatorname{Hom}\big(\pi_1\big((X\cup D)/\partial X,\{\partial
X\}\big), \pi\big)$.  One can construct the inverse map by using a
maximal tree of the graph $(X\cup D)/\partial X$; see
\cite[Lemma~4.3]{GL} for a very similar result.
\end{proof}

A $\pi$-colored Jacobi diagram $D$ on $X$ is said to be
\emph{restricted} if $D$ has no bead (but there may be beads on $X$).
Two restricted $\pi$-colorings of a Jacobi diagram $D$ on $X$ are said
to be \emph{equivalent} if they are related by a sequence of the first
two moves in \eqref{eq:moves_ccd} and
$$
\qquad
\labellist
\small\hair 0pt
 \pinlabel{ \quad $\forall x\in\pi$,} [r] at -40 100
 \pinlabel{a  component of $D$}  at 210 172
 \pinlabel{$\cdots$} at 300 23
 \pinlabel{$\leftrightarrow$}  at 540 127
 \pinlabel{a  component of $D$}  at 859 179
 \pinlabel{$x$} [br] at 668 48
 \pinlabel{$\overline{x}$} [bl] at 740 48
 \pinlabel{$x$} [br] at 809 48
 \pinlabel{$\overline{x}$} [bl] at  885 48
 \pinlabel{$x$} [br] at 990 45
 \pinlabel{$\overline{x}$} [bl] at 1075 48
 \pinlabel{$\cdots$}  at 941 27
\endlabellist
\centering
\includegraphics[scale=0.22]{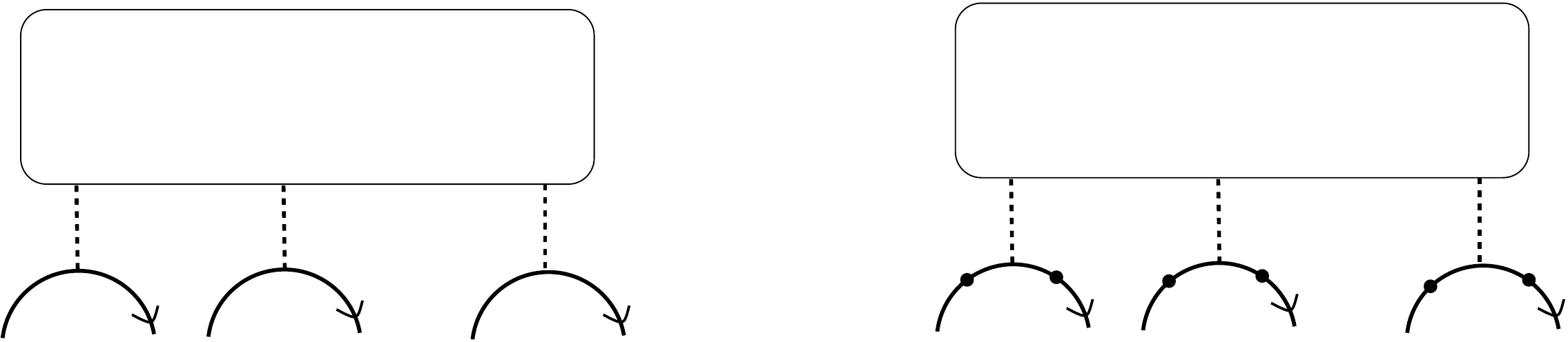}
$$ The above figure shows all the univalent vertices (and their
neighborhoods in $X$) of the same connected component of $D$.
For instance, if $D$ is a chord diagram, then there are exactly two such vertices.

Let $\A^{\operatorname{ch}}(X,\pi)$
(resp.\ $\A^{\operatorname{Jac}}(X,\pi)$) denote the vector space
generated by equivalence classes of $\pi$-colored chord (resp.\ Jacobi)
diagrams on $X$, modulo the 4T (resp.\ STU) relation.  There are also
``restricted'' versions $\A^{\operatorname{ch,r}}(X,\pi)$ and
$\A^{\operatorname{Jac,r}}(X,\pi)$ of $\A^{\operatorname{ch}}(X,\pi)$
and $\A^{\operatorname{Jac}}(X,\pi)$, respectively.  We have a
commutative diagram of canonical maps:
\begin{gather}
  \label{e26}
  \vcenter{\xymatrix{
\A^{\operatorname{ch,r}}(X,\pi)  \ar[r]^-{u^{\operatorname{ch}}} \ar[d]_-{\phi^{\operatorname{r}}} & \A^{\operatorname{ch}}(X,\pi)  \ar[d]^-{\phi} \\
\A^{\operatorname{Jac,r}}(X,\pi) \ar[r]_-{u^{\operatorname{Jac}}}  & \A^{\operatorname{Jac}}(X,\pi).
}}
\end{gather}

The special case $\pi=\{1\}$ of Theorem \ref{th:chord_Jac} below is
due to Bar-Natan \cite[Theorem~6]{BN2}.  The general case seems to be
new.

\begin{theorem}  \label{th:chord_Jac}
All the maps in \eqref{e26} are isomorphisms. Furthermore, the AS
and IHX relations hold in $\A^{\operatorname{Jac,r}}(X,\pi)$ and $\A^{\operatorname{Jac}}(X,\pi)$.
\end{theorem}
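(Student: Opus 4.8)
The plan is to generalize Bar-Natan's proof of the special case $\pi=\{1\}$ \cite[Theorem~6]{BN2}, treating the beads as passive decorations that can always be slid out of the small region in which a local relation is applied. The organizing observation is that, thanks to the first two moves of \eqref{eq:moves_ccd}, any bead lying on $X$ near a univalent vertex can be moved away along $X$; hence the STU relation \eqref{eq:STU}, the 4T relation \eqref{eq:4T}, and the derivations of AS and IHX \eqref{eq:AS_IHX} can all be carried out exactly as in the uncolored theory, with the beads merely transported along. This immediately yields the last assertion of Theorem~\ref{th:chord_Jac}: the colored STU relation implies the colored AS and IHX relations in both $\A^{\operatorname{Jac,r}}(X,\pi)$ and $\A^{\operatorname{Jac}}(X,\pi)$, and in particular STU implies 4T, so that the vertical maps $\phi^{\operatorname{r}}$ and $\phi$ are well defined.

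For the horizontal maps I would first treat $u^{\operatorname{ch}}$. A bead on a chord can be slid to one of its endpoints and then onto $X$ by the third row of moves in \eqref{eq:moves_ccd}; since a chord carries no internal cycle, this pushes every interior bead onto $X$ and defines a candidate inverse $\A^{\operatorname{ch}}(X,\pi)\to\A^{\operatorname{ch,r}}(X,\pi)$. That this inverse is well defined—both for the coloring equivalence and for the 4T relation, since the beads stay out of the 4T picture—follows from the topological interpretation of colorings (Lemma~\ref{lem:colorings} and Corollary~\ref{r15}): the equivalence classes of $\pi$-colorings of a fixed chord diagram depend only on the fundamental group of the underlying graph, and for chord diagrams this group is unaffected by forbidding interior beads. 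Hence $u^{\operatorname{ch}}$ is an isomorphism.

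Next I would prove that $\phi^{\operatorname{r}}$, and by the same argument $\phi$, is an isomorphism. Surjectivity is obtained by repeatedly resolving trivalent vertices adjacent to $X$ via the STU relation, rewriting any colored Jacobi diagram as a linear combination of colored chord diagrams; before each resolution one clears the beads off the edge to be resolved using the extra Jacobi move, so no bead obstructs the process. Injectivity is the heart of the matter and where I would invest the most effort: one must show that the STU-resolution into chord diagrams is well defined modulo 4T, thereby producing an inverse to $\phi^{\operatorname{r}}$ (resp.\ $\phi$). I would run Bar-Natan's argument verbatim, the only new point being to verify that the local ambiguities of the resolution reduce, after carrying the beads along, to precisely the colored 4T relation; as every bead can be slid out of the relevant local pictures, the bookkeeping is identical to the case $\pi=\{1\}$. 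This identifies $\A^{\operatorname{Jac,r}}(X,\pi)$ with $\A^{\operatorname{ch,r}}(X,\pi)$ and $\A^{\operatorname{Jac}}(X,\pi)$ with $\A^{\operatorname{ch}}(X,\pi)$.

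Finally, with $\phi^{\operatorname{r}}$, $\phi$ and $u^{\operatorname{ch}}$ established as isomorphisms, the remaining map is an isomorphism because the square \eqref{e26} commutes, giving $u^{\operatorname{Jac}}=\phi\circ u^{\operatorname{ch}}\circ(\phi^{\operatorname{r}})^{-1}$. The main obstacle is thus the injectivity of $\phi^{\operatorname{r}}$, i.e.\ the colored refinement of Bar-Natan's statement that the only relations STU imposes among chord diagrams are the 4T relations. The one feature specific to the colored setting is that a cycle of dashed edges in a Jacobi diagram can carry a holonomy in $\pi$ that is invisible to restricted colorings; I would note that such cycles are resolved by STU before they can cause trouble, so that this difficulty never survives into the chord level and is confined to checking that the bead-sliding moves genuinely commute with the local relations.
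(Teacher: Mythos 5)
Your overall strategy---sliding beads off chords to invert $u^{\operatorname{ch}}$, STU-resolution for surjectivity, and an inductive Bar-Natan-style construction of a left inverse for injectivity---is the same as the paper's, and those parts are sound. The gap is concentrated exactly where you chose to ``run Bar-Natan's argument verbatim.'' The well-definedness condition \eqref{eq:i_and_j} for the inductively defined maps $\psi_k$ splits into two cases: $v_i\neq v_j$, where Bar-Natan's commutation argument does carry over, and $v_i=v_j$, where both univalent vertices are legs of the same trivalent vertex. In the second case the paper states explicitly that Bar-Natan's treatment---the ``exceptional case'' alluded to in the third paragraph of the proof of \cite[Theorem~6]{BN2}---does not apply when $\pi\neq\{1\}$, and it supplies a genuinely new argument there. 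The reason your bead-sliding heuristic fails at this point is that the obstructing beads sit on the solid $1$-manifold $X$ between and around the two legs of the hook, not on dashed cycles: a bead on $X$ can be moved past a univalent vertex only at the cost of transferring it onto the dashed graph, and by Lemma \ref{lem:colorings} the resulting holonomy is a genuine invariant of the colored diagram, so the sliding/telescoping cancellation underlying Bar-Natan's exceptional case is simply unavailable. Your closing remark therefore misdiagnoses the colored-specific difficulty: it is not holonomy on dashed cycles (those are indeed killed by STU-resolution) but beads on $X$ blocking the one local computation that Bar-Natan could do by hand.

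What the paper does instead, and what your proposal is missing, is the following. One first checks that the already-constructed maps $\psi_0,\dots,\psi_{k-1}$ commute with the diagrammatic doubling operation $\Delta$ and the orientation-reversal operation $S$ (the diagrams \eqref{eq:2_properties}). One then replaces the hook of $D^S$ by a single leg on an auxiliary oriented interval, obtaining a diagram $R$ with $k-1$ trivalent vertices, expands $\psi_{k-1}(R)$ as a combination of $\pi$-colored chord diagrams, and uses the $\Delta$- and $S$-compatibility to express both $\psi_{k-1}(D^T_i)-\psi_{k-1}(D^U_i)$ and $\psi_{k-1}(D^T_j)-\psi_{k-1}(D^U_j)$ in terms of that single expansion. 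The identity \eqref{eq:i_and_j} then reduces to a local relation among $\pi$-colored chord diagrams that is equivalent to the 4T relation. Without this (or some substitute for the $v_i=v_j$ case), your inductive construction of the inverse to $\phi$ and $\phi^{\operatorname{r}}$ is incomplete, and the injectivity claim---which you yourself identify as the heart of the matter---remains unproved.
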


\begin{proof}
By the STU relation, $\phi^{\operatorname{r}}$ is
surjective.  The map $\phi$ is also surjective by the same reason and
the following observation: {each} bead in the neighborhood of a univalent
vertex of a $\pi$-colored Jacobi diagram on $X$ can be displaced from
$D$ using the last move of \eqref{eq:moves_ccd} (without changing the
equivalence class of the $\pi$-coloring).  Therefore, it suffices to
prove that
\begin{itemize}
\item[(i)] $\phi$ is injective,
\item[(ii)] $u^{\operatorname{ch}}$ is an isomorphism,
\item[(iii)] the AS and IHX relations hold in $\A^{\operatorname{Jac}}(X,\pi)$.
\end{itemize}

The AS and IHX relations in $\A^{\operatorname{Jac}}(X,\pi)$ reduce
to the STU relation by using the above observation and the arguments of the last two paragraphs in the proof of
\cite[Theorem 6]{BN2}. This proves (iii).

To prove (ii) we construct an inverse to $u^{\operatorname{ch}}$. Applying the operation\\[0.2cm]
$$
\labellist
\scriptsize \hair 2pt
 \pinlabel{$x_1$} [b] at 98 120
 \pinlabel{$x_2$} [b] at 119 120
 \pinlabel{$\cdots$} [b] at 145 120
 \pinlabel{$x_r$} [b] at 169 120
 \pinlabel{$\overline{x}$} [br] at 580 35
 \pinlabel{$x:= \prod_{i=1}^r x_i$} [bl] at 652 35
 \pinlabel{\small $\longmapsto$}  at 346 65
\endlabellist
\centering
\includegraphics[scale=0.4]{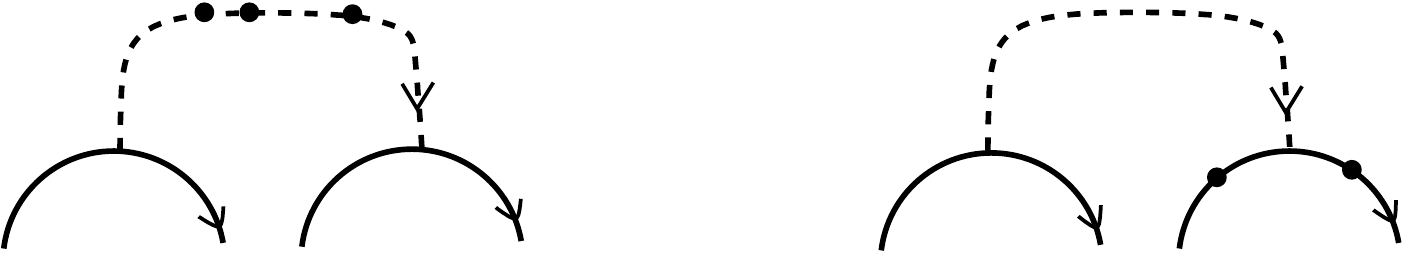}
$$
to all the chords transforms each $\pi$-colored chord diagram on $X$ into a restricted one.
It is easy to check that this operation maps equivalent $\pi$-colorings to equivalent $\pi$-colorings and
defines an inverse to $u^{\operatorname{ch}}$.

To prove (i), we partly follow the proof of \cite[Theorem~6]{BN2}.
 Let $Y$ be a compact, oriented $1$-manifold.  Let
 $\mathcal{D}^{\operatorname{Jac}}(Y,\pi)$ denote the set of
 equivalence classes of $\pi$-colored Jacobi diagrams on $Y$.  For
 $k\ge0$, let
 $\mathcal{D}^{\operatorname{Jac}}_k(Y,\pi)\subset\mathcal{D}^{\operatorname{Jac}}(Y,\pi)$
 consist of diagrams with exactly $k$ trivalent vertices.  Let
 $\psi_0: \mathcal{D}^{\operatorname{Jac}}_0(Y,\pi) \to
 \A^{\operatorname{ch}}(Y,\pi)$ be the canonical map.

\begin{claim*}
There are maps $\psi_k:\mathcal{D}^{\operatorname{Jac}}_k(Y,\pi) \to
\A^{\operatorname{ch}}(Y,\pi)$ for $k\ge1$ such that we have
$$
\psi_k(D^S) =  \psi_{k-1}(D^T_i) - \psi_{k-1}(D^U_i)
$$ for $k\ge1$ and $D^S \in\mathcal{D}^{\operatorname{Jac}}_k(Y,\pi)$, where $i$ denotes a
univalent vertex of $D^S$ adjacent to a trivalent vertex $v_i$ and
where $D^T_i,D^U_i \in\mathcal{D}^{\operatorname{Jac}}_{k-1}(Y,\pi)$
differ from $D^S$ around $i$ as shown in the STU relation
\eqref{eq:STU}.
\end{claim*}

Applying this claim to $Y=X$, we obtain a left inverse
$\psi:\A^{\operatorname{Jac}}(X,\pi) \to
\A^{\operatorname{ch}}(X,\pi)$ to $\phi$.  This proves (i) and
concludes the proof of Theorem \ref{th:chord_Jac}.
\end{proof}

\begin{proof}[Proof of Claim]
By the 4T relation, $\psi_1$ is well defined from $\psi_0$.  Let
$k>1$ and suppose $\psi_1,\dots,\psi_{k-1}$ have been defined for
all compact, oriented $1$-manifolds $Y$.  To have $\psi_k$
well defined, we need to check
\begin{equation} \label{eq:i_and_j}
\psi_{k-1}(D^T_i) - \psi_{k-1}(D^U_i) = \psi_{k-1}(D^T_j) - \psi_{k-1}(D^U_j)
\end{equation}
for {all} $D^S \in\mathcal{D}^{\operatorname{Jac}}_k(Y,\pi)$ and {all}
univalent vertices $i$ and $j$ of $D^S$ adjacent to some trivalent
vertices $v_i$ and $v_j$, respectively.  If $v_i \neq v_j$, then we
can apply the argument in the second paragraph of the proof of
\cite[Theorem~6]{BN2}.  If $v_i=v_j$, then the arguments provided in
\cite{BN2} for this situation do not fully apply when $\pi\not=\{1\}$,
because of the ``exceptional case'' alluded to in the third paragraph
of the proof of \cite[Theorem~6]{BN2}.  Thus, we need a different
proof.

First, observe that the maps $\psi_0,\dots,\psi_{k-1}$ defined so far
have the following properties: for every oriented $1$-manifold
$Y'\!\uparrow$ with a distinguished component $\uparrow$, the diagrams
\begin{equation} \label{eq:2_properties}
\centre{\xymatrix{
 \K
 \mathcal{D}^{\operatorname{Jac}}_i(Y'\!\uparrow,\pi) \ar[r]^-{\psi_i} \ar[d]_-\Delta & \A^{\operatorname{ch}}(Y'\!\uparrow ,\pi) \ar[d]_-\Delta \\
 \K
 \mathcal{D}^{\operatorname{Jac}}_i(Y'\!\uparrow \uparrow,\pi) \ar[r]_{\psi_i} & \A^{\operatorname{ch}}(Y'\!\uparrow\uparrow,\pi)
 }}
 \ \hbox{and} \ \centre{\xymatrix{
 \K
 \mathcal{D}^{\operatorname{Jac}}_i(Y'\!\uparrow,\pi) \ar[r]^-{\psi_i} \ar[d]_-S & \A^{\operatorname{ch}}(Y'\!\uparrow ,\pi) \ar[d]_-S \\
 \K
 \mathcal{D}^{\operatorname{Jac}}_i(Y'\downarrow,\pi) \ar[r]_{\psi_i} & \A^{\operatorname{ch}}(Y'\downarrow,\pi)
 }}
\end{equation}
commute for  $i\in\{0,\dots,k-1\}$.  (Here the doubling
operations $\Delta$ and the orientation-reversal operations $S$ for
colored chord/Jacobi diagrams are defined in the same way as for
uncolored chord/Jacobi diagrams, except that beads of a duplicated
component should be repeated on each new component, and beads of a
reversed component should be transformed into their inverses.)  Next, we
draw $D^S$ as follows:\\[0.2cm]
$$
D^S=\centre{\labellist
\scriptsize\hair 2pt
 \pinlabel{$\cdots$}  at 218 273
 \pinlabel{$i$} [bl] at 107 95
 \pinlabel{$j$} [br] at 227 95
 \pinlabel{$\overbrace{\hphantom{aaaaaaaaaaaaaaaaaaaaaa}}^{X'}$} [b] at 169 284
  \pinlabel{$X_i$} [r] at 4 20
 \pinlabel{$X_j$} [l] at 329 2
\endlabellist
\centering
\includegraphics[scale=0.3]{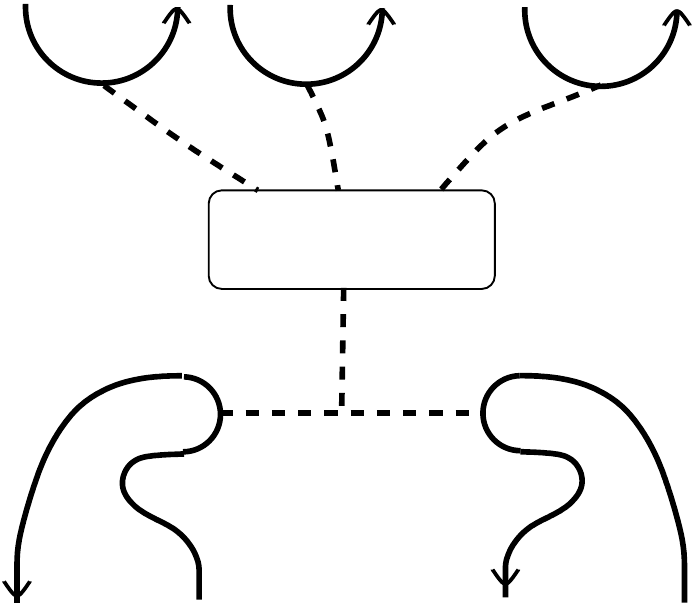}}
$$ Here the arcs $X_i,X_j$ are neighborhoods in $X$ of the vertices
$i,j$, and $X' \subset X$ is a neighborhood of the remaining univalent
vertices of $D^S$.  From this local picture of $D^S$, we define
$$
R=\centre{\labellist
\scriptsize\hair 2pt
 \pinlabel{$\cdots$}  at 210 190
\endlabellist
\centering
\includegraphics[scale=0.25]{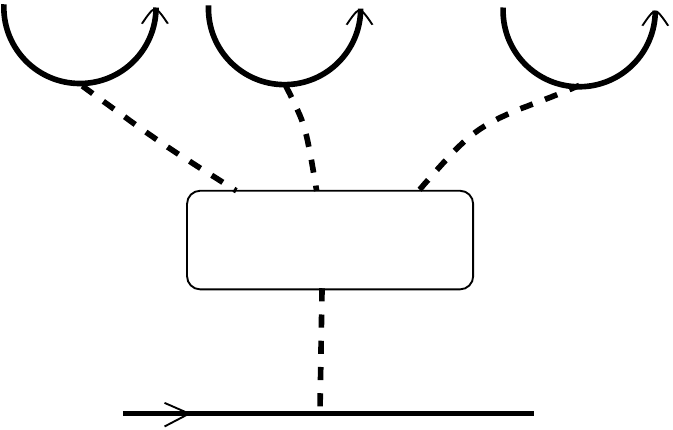}} \in\mathcal{D}_{k-1}^{\operatorname{Jac}}(X'\!\rightarrow,\pi)
$$
and expand
$$
\psi_{k-1}(R)  = \sum_l \varepsilon_l \cdot \centre{\labellist
\small \hair 2pt
 \pinlabel{$\cdots$}  at 210 190
  \pinlabel{$l$}  at 160 90
\endlabellist
\centering
\includegraphics[scale=0.25]{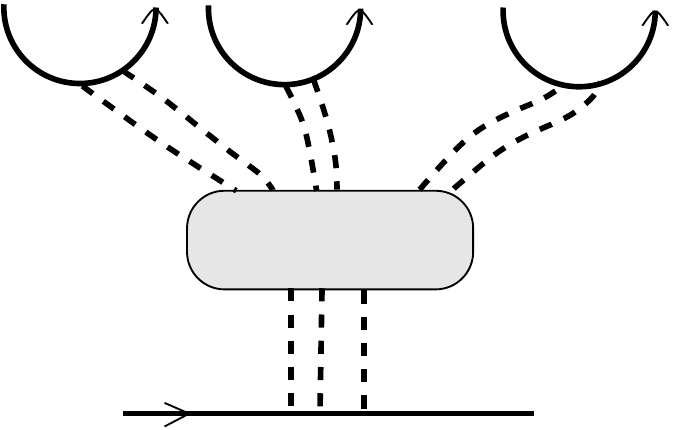}} \in\A^{\operatorname{ch}}(X'\!\rightarrow,\pi).
$$
Since
$$
D^T_i - D^U_i =
\centre{\labellist
\scriptsize\hair 2pt
 \pinlabel{$\cdots$} at 226 291
\endlabellist
\centering
\includegraphics[scale=0.22]{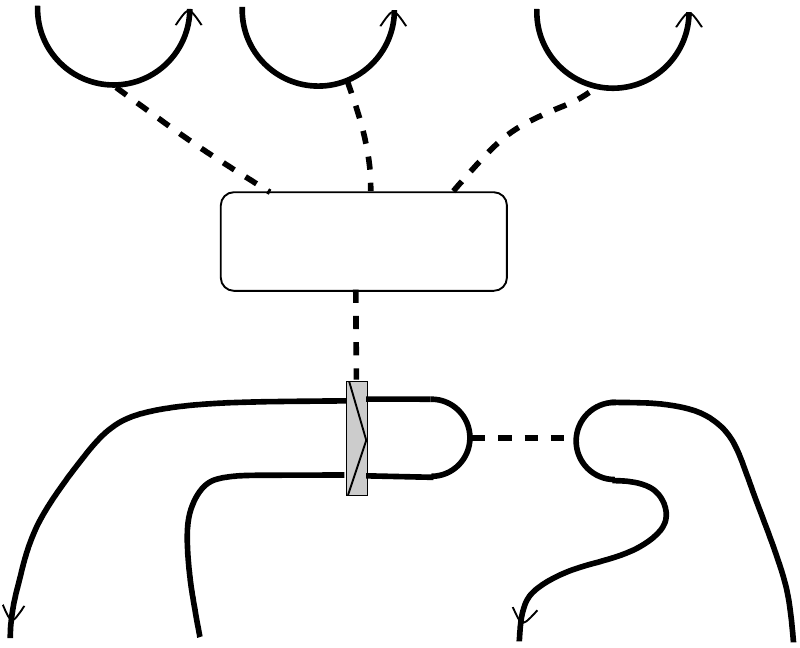}}
\quad \hbox{and} \qquad
D^T_j - D^U_j = \centre{\labellist
\scriptsize \hair 2pt
 \pinlabel{$\cdots$} at 271 291
\endlabellist
\centering
\includegraphics[scale=0.22]{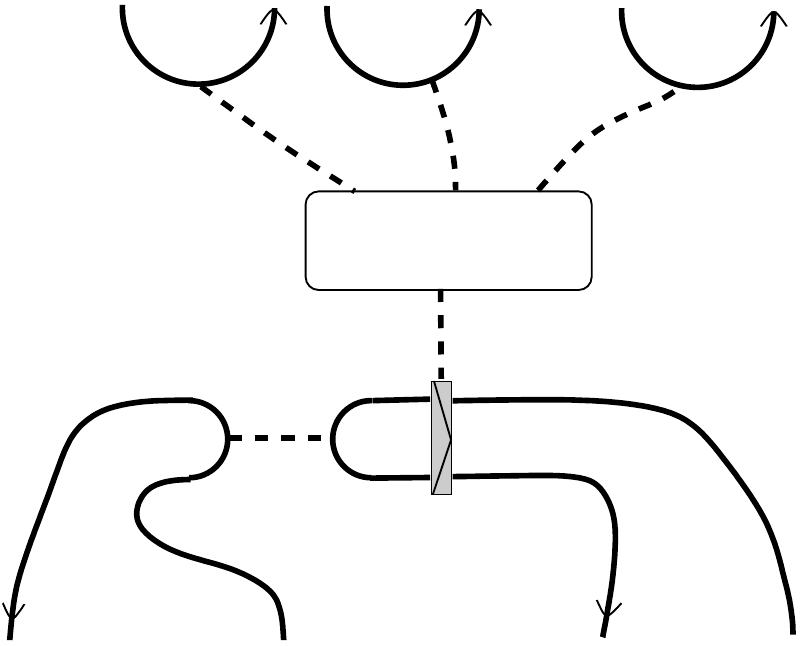}}
$$
we deduce from \eqref{eq:2_properties} that
$$
\psi_{k-1}(D^T_i)-\psi_{k-1}(D^U_i) =  \sum_l \varepsilon_l \cdot \!\!\!\!\!
\centre{\labellist
\small\hair 2pt
 \pinlabel{$l$}  at 171 194
 \pinlabel{$\cdots$} at 226 291
\endlabellist
\centering
\includegraphics[scale=0.25]{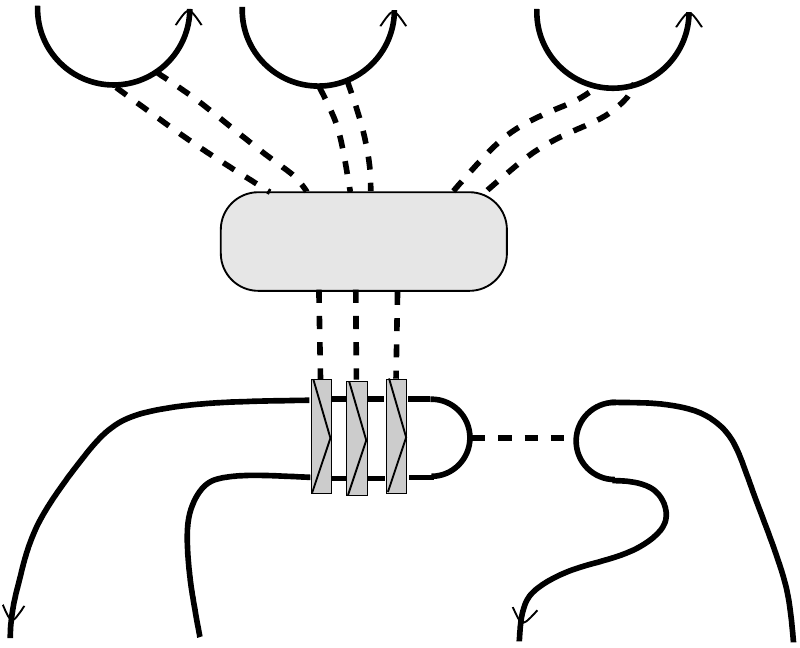}}
\in\A^{\operatorname{ch}}(X,\pi)
$$
and
$$
\psi_{k-1}(D^T_j)-\psi_{k-1}(D^U_j) =  \sum_l \varepsilon_l \cdot \!\!\!\!\!
\centre{\labellist
\small\hair 2pt
 \pinlabel{$l$}  at 210 194
 \pinlabel{$\cdots$} at 271 291
\endlabellist
\centering
\includegraphics[scale=0.25]{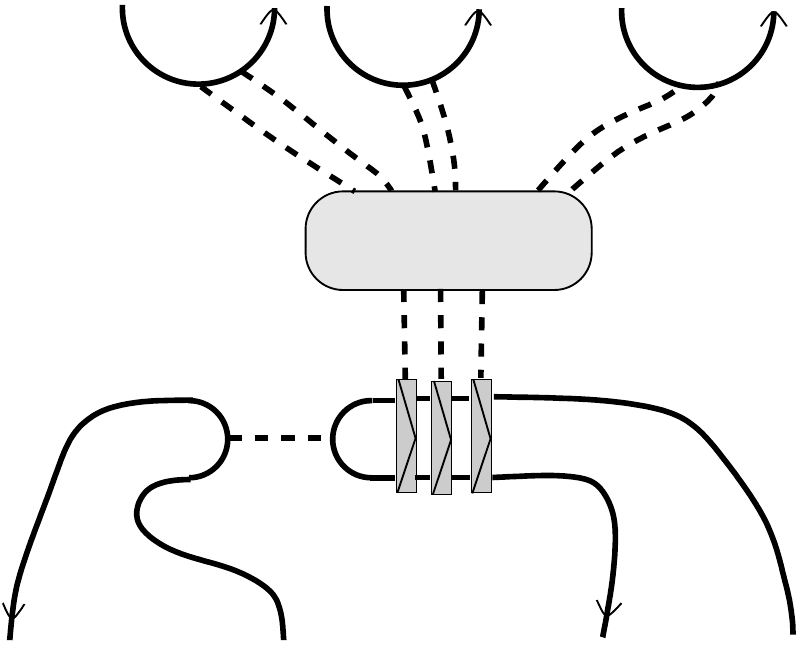}}
\in\A^{\operatorname{ch}}(X,\pi).
$$
Thus, the identity \eqref{eq:i_and_j} follows from the local relation
$$
\forall x \in\pi, \qquad
\centre{\labellist
\small\hair 2pt
 \pinlabel{$x$} [r] at 74 106
 \pinlabel{$x$}  [l] at 413 106
 \pinlabel{$=$}  at 242 80
\endlabellist
\centering
\includegraphics[scale=0.3]{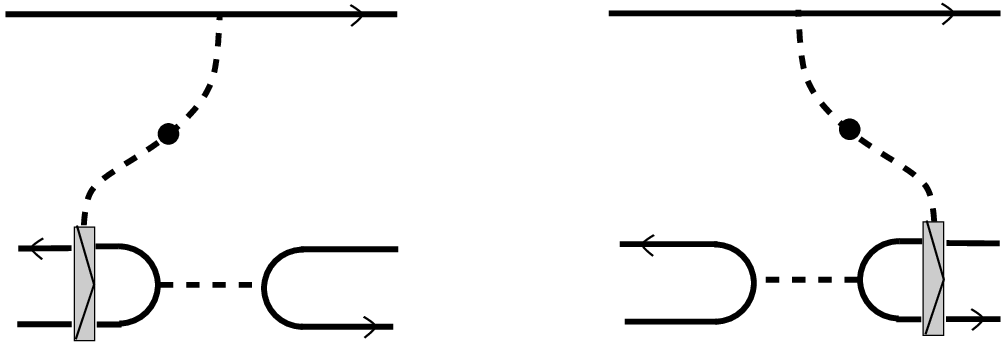}}
$$ in spaces of $\pi$-colored chord diagrams, which is equivalent to
the 4T relation.
\end{proof}

In what follows, let $\A(X,\pi)$ denote the isomorphic spaces
$$\A^{\operatorname{ch,r}}(X,\pi)\cong\A^{\operatorname{ch}}(X,\pi)\cong\A^{\operatorname{Jac,r}}(X,\pi)\cong\A^{\operatorname{Jac}}(X,\pi).$$
  For instance, if $\pi=\{1\}$, then we have $\A(X)\cong\A(X,\{1\})$.
In general, $\A(X)$ embeds into $\A(X,\pi)$ by the following lemma.

\begin{lemma}  \label{lem:usual_to_colored}
If $X$ has no closed component,
then the canonical map from $\A(X)$ to $\A(X,\pi)$ is injective.
\end{lemma}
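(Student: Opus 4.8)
The plan is to exhibit the canonical map $\A(X)\to\A(X,\pi)$ as a split injection, by producing an explicit left inverse induced by the trivial group homomorphism $\pi\to\{1\}$.

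First I would record the functoriality of the colored diagram spaces in the group variable: any group homomorphism $f\colon\pi\to\pi'$ induces a linear map $f_*\colon\A(X,\pi)\to\A(X,\pi')$ relabelling every bead color $x$ by $f(x)$. The point to verify here is that $f_*$ is well defined. On generators this is immediate, and on relations I would check that $f_*$ sends equivalent $\pi$-colorings to equivalent $\pi'$-colorings: each move in \eqref{eq:moves_ccd}, together with the edge-reversal move (and, in the restricted model, the last move), is preserved because $f$ is a homomorphism. For instance the concatenation move $x,y\leftrightarrow xy$ maps to $f(x),f(y)\leftrightarrow f(x)f(y)$; the move $1\leftrightarrow\varnothing$ is preserved since $f(1)=1$; and the cancellation of $x,\overline{x}$ maps to that of $f(x),\overline{f(x)}$. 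Moreover $f_*$ carries the 4T and STU relations to the 4T and STU relations, since these do not involve the colors. One then has $(\mathrm{id}_\pi)_*=\mathrm{id}$ and $(g\circ f)_*=g_*\circ f_*$.

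Next I would apply this to the two homomorphisms $j\colon\{1\}\hookrightarrow\pi$ and $\varepsilon\colon\pi\to\{1\}$. Under the identification $\A(X,\{1\})=\A(X)$ (the case $\pi=\{1\}$), the canonical map of the statement is exactly $j_*$, while $\varepsilon_*$ is the augmentation that sets every bead color to $1$ and erases it via the move $1\leftrightarrow\varnothing$. Since $\varepsilon\circ j=\mathrm{id}_{\{1\}}$, functoriality gives $\varepsilon_*\circ j_*=\mathrm{id}_{\A(X)}$, so $j_*$ is injective.

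Finally, the hypothesis that $X$ has no closed component is what guarantees, through Theorem \ref{th:chord_Jac}, that the four models of $\A(X,\pi)$ agree, so that both the target $\A(X,\pi)$ and the identification $\A(X,\{1\})=\A(X)$ are unambiguous; the retraction argument itself then needs no further input. I expect the only genuine work to be the well-definedness of $f_*$ on colorings, which is the main (though routine) obstacle. Alternatively, one may phrase the same argument as the observation that $\A(X)$ is the direct summand of $\A(X,\pi)$ cut out by the trivial homotopy class in the decomposition of Corollary \ref{r15}, with $\varepsilon_*$ the corresponding projection.
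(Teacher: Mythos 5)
Your main argument is correct, but it is genuinely different from the paper's. The paper does not use functoriality in the group variable: it defines a left inverse $p:\A(X,\pi)\to\A(X)$ by sending a colored diagram $(D,c)$ to the underlying uncolored diagram if the homomorphism $\varphi_c$ of Lemma \ref{lem:colorings} is trivial, and to $0$ otherwise; well-definedness modulo STU is checked via a homotopy-equivalence argument, and this is exactly where the no-closed-component hypothesis enters (it is needed for Lemma \ref{lem:colorings}). Your bead-relabelling map $\varepsilon_*$ is a different left inverse: it never kills a diagram, it merely erases its beads. Your route is more elementary --- no homotopy-theoretic input, and in fact it uses neither the hypothesis on closed components nor Theorem \ref{th:chord_Jac} beyond fixing which model of $\A(X,\pi)$ one works in --- and it establishes the functoriality $\pi\mapsto\A(X,\pi)$ that the paper only alludes to in the remark following the lemma. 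What the paper's finer projection buys is its later reuse: in the proof of Theorem \ref{prop:non-injectivity}, the element $u$ is a difference $D_1-D_0$ of a nontrivially colored diagram and its uncolored companion, and that argument needs $p(u)=-D_0\neq 0$; your $\varepsilon_*$ would give $\varepsilon_*(u)=D_0-D_0=0$ and would be useless there.

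One sentence of your proposal is wrong, although it does not affect the validity of the proof: $\varepsilon_*$ is \emph{not} ``the projection onto the summand cut out by the trivial homotopy class.'' That projection is precisely the paper's map $p$ of \eqref{eq:p}, and it differs from $\varepsilon_*$ on every diagram carrying a homotopically nontrivial coloring: there $p$ gives $0$, while $\varepsilon_*$ gives the underlying uncolored diagram. So the two left inverses must not be identified, and your ``alternative phrasing'' is not a rephrasing of your argument but a description of the paper's different one.
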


\begin{proof}
For every Jacobi diagram $D$ on $X$ with $\pi$-coloring $c$, define $p(D,c)\in\A(X)$ by
\begin{equation} \label{eq:p}
p(D,c) = \left\{\begin{array}{ll}
D \hbox{ with $c$ deleted} &  \hbox{if $ \varphi_c$ is trivial,} \\
0 & \hbox{otherwise,}
\end{array}\right.
\end{equation}
where $\varphi_c:  \pi_1\big((X\cup D)/\partial X,\{\partial X\}\big) \to \pi$ is the  homomorphism corresponding to $c$ by
Lemma~\ref{lem:colorings}.  Observe that, for {all} $\pi$-colored
Jacobi diagrams $(D,c)$ and $(D',c')$ on $X$ involved in an STU relation, there is a homotopy equivalence $h: X\cup D {\overset{\simeq}{\lto}}
X\cup D'$ rel $\partial X$ such that $\varphi_{c'} \circ h_* =
\varphi_c$.  Hence \eqref{eq:p} induces a linear map $p: \A(X,\pi)
\to \A(X)$. Clearly, $p \circ i =\id_{\A(X)}$, where
$i: \A(X) \to \A(X,\pi)$ is the canonical map.
\end{proof}

\begin{remark}
  There are analogs of Lemmas~\ref{lem:colorings} and
    \ref{lem:usual_to_colored} for compact, oriented $1$-manifolds
    with closed components.  Moreover, we can extend Lemma
    \ref{lem:usual_to_colored} as follows: if $X$ has no closed
    components, then the map $\A(X,\pi')\to\A(X,\pi)$ induced by an
    injective group homomorphism $\pi'\to\pi$ is injective.  We will
    not need these generalizations in what follows.
\end{remark}

\subsection{The category $\AB$ of Jacobi diagrams in handlebodies} \label{sec:category-ab-jacobi}

Now we introduce the linear \emph{category $\AB$ of Jacobi
diagrams in handlebodies}.  Set $\Ob(\AB)=\NZ$.

For $m\ge0$, let $\free{m}=F(x_1,\dots,x_m)$ be the free group on
$\{x_1,\dots,x_m\}$.  We identify $\free{m}$ with $\pi_1(V_m,\ell)$
(see Section \ref{sec:categories}).  Here $x_1,\dots,x_m$ are
represented by the ``stretched cores'' $A_1,\dots,A_m$ of the handles
of $V_m$.  For $n\ge0$, let
\begin{gather*}
  \Xn = \capn
\end{gather*}
be an oriented $1$-manifold consisting of $n$ arc components.

For $m,n\ge0$, set
$$
\AB(m,n) =\A(\Xn,\free{m}),
$$ which is generated by $\free{m}$-colored Jacobi diagrams on $\Xn$.  We
will call them \emph{$(m,n)$-Jacobi diagrams} for brevity.
Using Corollary \ref{r15},
we may regard {an} $(m,n)$-Jacobi diagram as a homotopy class rel $\partial \Xn$ of maps
\begin{equation} \label{eq:cJd_as_hcm}
\Xn\cup D \longrightarrow V_m.
\end{equation}
Here we assume that the $2n$ boundary points of $\Xn$ are uniformly
distributed along the line $\ell$.  Since $V_m$ deformation-retracts
onto a square with $m$ handles, we can present an $(m,n)$-Jacobi
diagram $D$ by a projection diagram of the corresponding homotopy
class of maps \eqref{eq:cJd_as_hcm}.

\begin{example}
  \label{r23}
Here are a $(2,3)$-Jacobi diagram and its projection diagram in
the square with handles:
\begin{gather}
  \label{e59}
\centre{\labellist
\scriptsize\hair 2pt
 \pinlabel{\small $\leadsto$}  at 106 24
 \pinlabel{$x_2$}  at 55 16
 \pinlabel{$x_1^2$}  at 42 8
 \pinlabel{$x_2$}  at 18 10
 \pinlabel{$\overline{x_1}$}  at 30 51
 \pinlabel{$1$} [t] at 27 2
 \pinlabel{$2$} [t] at 59 1
 \pinlabel{$3$} [t] at 92 1
\endlabellist
\centering
\includegraphics[scale=1.3]{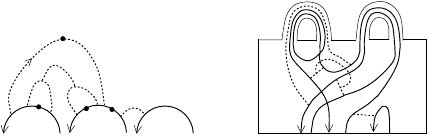}}
\end{gather}
\end{example}

We will use the following convention for presenting the morphisms in
$\AB$.  A~\emph{square presentation} of a restricted $(m,n)$-Jacobi
diagram $D$ is a projection diagram of $D$ in the square with $m$
handles, such that the dashed part of $D$ does not appear in the
handles.  Thus a square presentation of $D$ consists of  words
$w_1,\dots,w_n \in\Mon(\pm)$ and a Jacobi diagram
$$
S :w_1 w_1^*\cdots  w_mw_m^*\lto (+-)^n\quad \text{in $\AT$}
$$
 such that
$$
\labellist
\scriptsize \hair 1pt
 \pinlabel{$\cdots$}  at 363 26
 \pinlabel{$\cdots$}  at 360 306
 \pinlabel{$x_1$} [b] at 181 296
 \pinlabel{$x_1$} [b] at 181 367
 \pinlabel{$x_m$} [b] at 541 294
 \pinlabel{$x_m$} [b] at 542 366
 \pinlabel{$\stackrel{w_1}{\cdots}$} [b] at 110 260
 \pinlabel{$\stackrel{w_m}{\cdots}$} [b] at 473 260
 \pinlabel{\normalsize $S$}  at 362 126
  \pinlabel{\normalsize $\cdot$}  at 762 126
 \pinlabel{\normalsize $D=$} [r] at -40 126
 \pinlabel{$+$} [t] at 144 3
 \pinlabel{$-$} [t] at 253 2
 \pinlabel{$+$} [t] at 469 2
 \pinlabel{$-$} [t] at 576 2
\endlabellist
\centering
\includegraphics[scale=0.23]{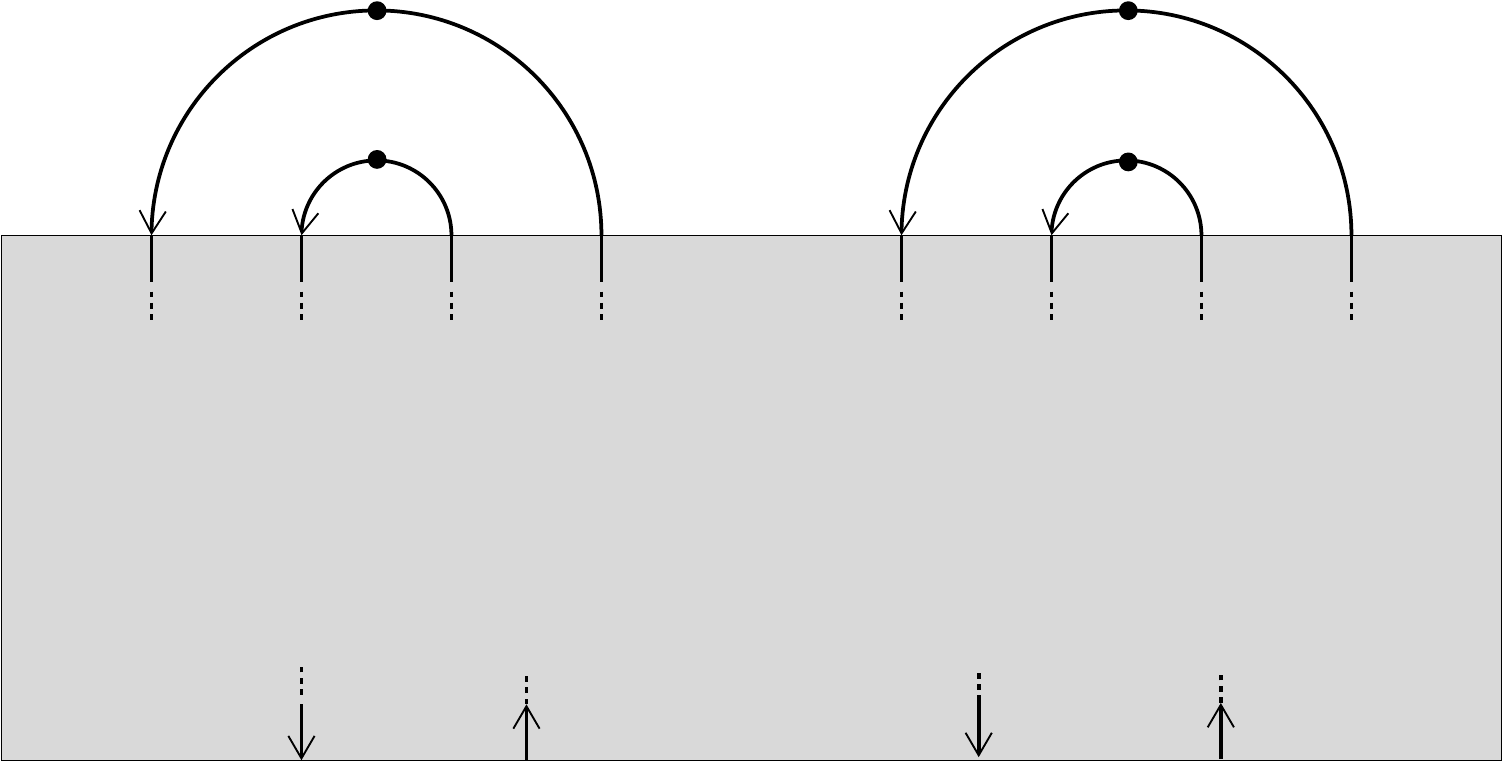}
$$
\vspace{0.2cm}

\noindent
In what follows, we write $\dbl(w) =  w w^*\in\Mon(\pm)$ for
 $w\in\Mon(\pm)$.  Since $\AB(m,n)$ is spanned by restricted
$(m,n)$-Jacobi diagrams, we can regard its elements as
linear combinations of square presentations.

\begin{example}
Here is a restricted $(2,3)$-Jacobi diagram $D$, together with a
square presentation $S$ such that $w_1=w_2=++$:
$$
\centre{
\labellist
\scriptsize\hair 2pt
 \pinlabel{\small $\leadsto$}  at 106 24
 \pinlabel{\small {$D$}}  at 5 26
  \pinlabel{\small {$S$}}  at 135 26
 \pinlabel{$x_2$}  at 55 16
 \pinlabel{$x_1^2$}  at 42 8
 \pinlabel{$x_2$}  at 18 10
 \pinlabel{$1$} [t] at 27 2
 \pinlabel{$2$} [t] at 59 1
 \pinlabel{$3$} [t] at 92 1
\endlabellist
\centering
\includegraphics[scale=1.1]{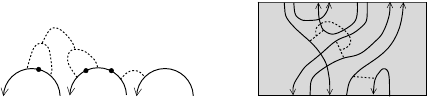}}
\hbox{$:\dbl(w_1)\dbl(w_2)\lto (+-)^3$.}
$$
\end{example}

Now we define the composition in $\AB$.  We compose an
$(n,p)$-Jacobi diagram $D'$ with an $(m,n)$-Jacobi diagram $D$ as
follows.  First, we may assume that each bead of $D'$ is colored by
$x_i^{\pm1}$ for some $i$, by using the moves in
\eqref{eq:moves_ccd}.  For each $j\in\{1,\dots,n\}$, let $k_j$ be
the number of beads colored by $x_j^{\pm 1}$ in $D'$ and number them
from $1$ to $k_j$ in an arbitrary way.  This defines a word $\kappa(j)
\in\Mon(\pm)$ of length $k_j$ by assigning a letter $+$ to each
$x_j$-colored bead and a letter $-$ to each $x_j^{-1}$-colored
bead. Let
$$
C_\kappa (D) \in\A(\Xx{k_1+\dots+k_n},\free{m})
$$ be the linear combination of $(m,k_1+\cdots+k_n)$-Jacobi diagrams obtained from
$D$ by $\kappa$-cabling, i.e., by repeated applications of the
deleting operation $\epsilon$, the doubling operation $\Delta$
and the orientation-reversal operation $S$.  By using the
correspondence between the beads of $D'$ and the solid components of
$C_\kappa (D)$ induced by their numberings, we can identify some
local neighborhoods of the former with the latter in an
orientation-preserving way.  Thus, by ``gluing'' $C_\kappa (D)$ to
$D'$ accordingly, we obtain a linear combination of $(m,p)$-Jacobi diagrams
$$
D' \mathop{\tilde\circ}  D \in\A{(\Xx{p},\free{m})} = \AB(m,p).
$$
Clearly, $D' \mathop{\tilde\circ} D$
depends only on the equivalence
class of $D$, but not on the numbering of the beads of $D'$.  By the
STU relation, $D' \mathop{\tilde\circ} D$ depends only on the equivalence class of $D'$.

\begin{example}
  \label{r26}
We can describe the operation $\tilde{\circ}$ in terms of projection diagrams in squares with handles,
using the box notation recalled in Example \ref{ex:box}.  For
instance, let $m=n=p=2$ and
$$
\labellist
\scriptsize \hair 2pt
 \pinlabel{\small $D':=$} [r] at 0 41
 \pinlabel{$x_1x_2$} [b] at 65 72
 \pinlabel{$\overline{x_1}$} [b] at 228 69
 \pinlabel{$1$} [l] at 120 13
 \pinlabel{$2$} [l] at 283 11
 \pinlabel{,}  at 360 35
 \pinlabel{\small $D:=$} [r] at 498 42
 \pinlabel{$1$} [l] at 626 7
 \pinlabel{$2$} [l] at 794 9
 \pinlabel{$x_1$} [b] at 572 68
 \pinlabel{$x_1$} [b] at 227 132
\endlabellist
\centering
\includegraphics[scale=0.22]{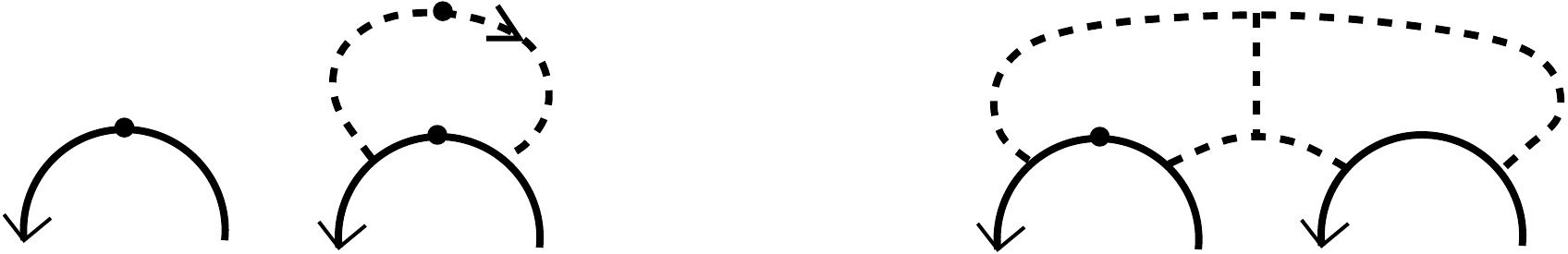}
$$
with the projection diagrams
\begin{gather*}
\labellist
\small\hair 2pt
 \pinlabel{$D'=$} [r] at 0 28
 \pinlabel{,} at 101 25
 \pinlabel{$D=$} [r] at 143 28
 \pinlabel{{.}} at 245 25
\endlabellist
\centering
\includegraphics[scale=0.9]{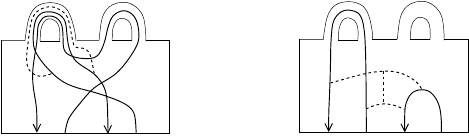}
\end{gather*}
Then, setting $x=x_1$, we have
$$
\labellist
\tiny\hair 1pt
 \pinlabel{$-$}   at 198 332
 \pinlabel{$+$}   at 407 336
 \pinlabel{$-$} [r] at 8 196
  \pinlabel{\small $D' \mathop{\tilde\circ} D= $} [r] at -20 332
 \pinlabel{$+$}   at 218 199
 \pinlabel{$-$}   at 424 199
 \pinlabel{$+$} [r] at 4 37
 \pinlabel{$-$}   at 211 36
 \pinlabel{$+$}   at 427 35
 \pinlabel{$x\,$} [b] at 24 326
 \pinlabel{$x$} [b] at 138 365
 \pinlabel{$\overline{x}$} [b] at 142 333
 \pinlabel{$x$} [b]   at 143 221
 \pinlabel{$\overline{x}$} [b] at 151 194
 \pinlabel{$x\,$} [b]  at 33 186
 \pinlabel{$x\,$} [b]   at 30 31
 \pinlabel{$x$} [b]   at 143 68
 \pinlabel{$\overline{x}$} [b] at 147 40
 \pinlabel{$x\,$} [b]  at 234 326
 \pinlabel{$x$} [b]   at 348 365
 \pinlabel{$\overline{x}$} [b] at 352 334
 \pinlabel{$x\,$} [b]  at 241 187
 \pinlabel{$x$} [b]   at 356 223
 \pinlabel{$\overline{x}$} [b] at 359 194
 \pinlabel{$x\,$} [b]   at 241 33
 \pinlabel{$x$} [b] at 356 70
 \pinlabel{$\overline{x}$} [b] at 359 41
 \pinlabel{$x\,$} [b]  at 457 35
 \pinlabel{$\overline{x}$} [b] at 575 44
 \pinlabel{$x$} [b]  at 571 73
 \pinlabel{$x\,$} [b]   at 454 188
 \pinlabel{$\overline{x}$} [b] at 573 196
 \pinlabel{$x$} [b]  at 568 226
 \pinlabel{$x\,$} [b]   at 448 328
 \pinlabel{$\overline{x}$} [b] at 564 336
 \pinlabel{$x$} [b]  at 559 366
\endlabellist
\centering
\includegraphics[scale=0.3]{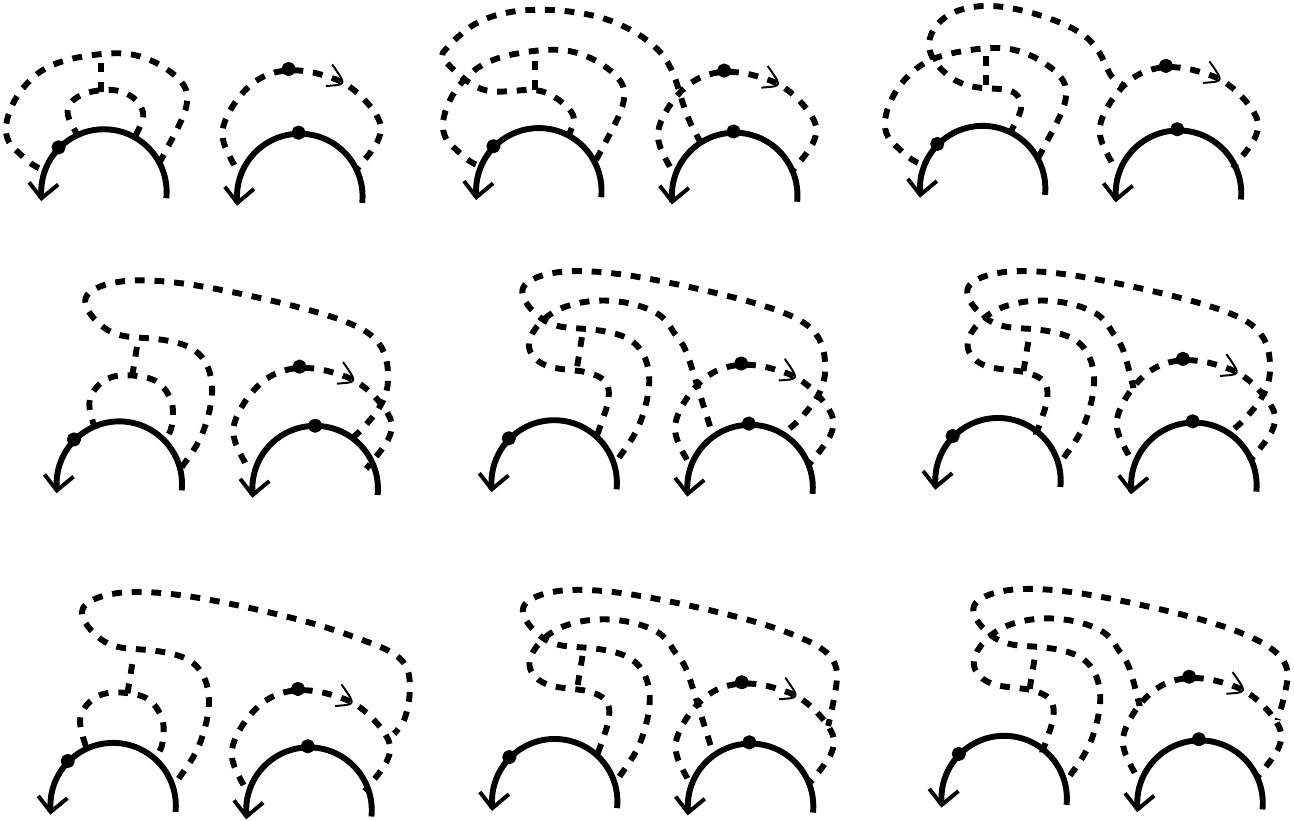}
$$
with the projection diagram
\begin{gather*}
\labellist
\small\hair 2pt
 \pinlabel{$D' \mathop{\tilde\circ} D=$} [r] at -5 28
\pinlabel{{.}} at 91 28
\endlabellist
\centering
\includegraphics[scale=1.1]{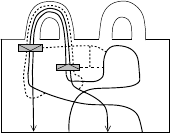}
\end{gather*}
\end{example}

One can easily verify the following lemma.

\begin{lemma} \label{ex:restricted}
Let $D$ be a restricted $(m,n)$-Jacobi diagram and let $D'$ be a
restricted $(n,p)$-Jacobi diagram, with square presentations
$$
S :  \dbl(w_1) \,\cdots\, \dbl(w_m) \lto (+-)^n \quad  \hbox{and} \quad
S' :  \dbl(w'_1)  \,\cdots\, \dbl(w'_n)  \lto (+-)^p,
$$
respectively. Then
$$
 S' \circ C_\varpi(S) :C_{\varpi_t}(  \dbl(w_1)
 \,\cdots\, \dbl(w_m) ) \lto (+-)^p
$$
is a square presentation of $D'\mathop{\tilde\circ} D$,
where $\circ$ denotes the composition in  $\AT$,
$$
\varpi:  \pi_0(\hbox{{$1$}-manifold underlying $S$}) \longrightarrow \Mon(\pm)
$$
is defined in the obvious way from $w'_1,\dots,w'_n$ and the polarized oriented $1$-manifold underlying $S$,
and $\varpi_t:  \{1,\dots, 2\vert w_1\vert+\cdots + 2\vert w_m\vert\} \to \Mon(\pm)$ is induced by $\varpi$.
\end{lemma}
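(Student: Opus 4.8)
The plan is to unwind both the intrinsic definition of $D'\mathop{\tilde\circ} D$ and the proposed square-presentation formula, and to check that they yield the same $(m,p)$-Jacobi diagram once the $m$ handles carrying the colors $x_1,\dots,x_m$ are attached. The guiding principle is that attaching a handle is a local operation---it merely identifies certain top endpoints of a diagram in the square according to the pattern $\dbl(w_i)=w_iw_i^*$---and therefore commutes both with the cabling operations $\epsilon,\Delta,S$ and with the gluing used to define $\mathop{\tilde\circ}$. First I would record the dictionary relating the data of $S'$ to the cabling data $\kappa$: by definition of a square presentation, attaching the $j$-th handle to $S'$ routes the $|w'_j|$ strands of the block $\dbl(w'_j)$ through handle $j$, producing exactly one bead colored $x_j^{\pm1}$ on $D'$ per letter of $w'_j$, the sign recording the orientation of the bead. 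Hence, for a suitable numbering of the beads, $k_j=|w'_j|$ and $\kappa(j)=w'_j$, so that the $\kappa$-cabling of $D$ is precisely the one governed by $w'_1,\dots,w'_n$; this is how $\varpi$ is read off ``in the obvious way'' from $w'_1,\dots,w'_n$ and the oriented $1$-manifold underlying $S$, the orientation of each component dictating whether the letter $+$ or $-$ is used.

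Next I would show that $C_\varpi(S)$ is a square presentation of $C_\kappa(D)$. The map $\varpi$ assigns to each component of the $1$-manifold underlying $S$ the word $w'_j$, where $j$ is the index of the arc of $X_n$ to which that component belongs after the handles of $S$ are attached. Since the $\kappa$-cabling of $D$ replaces the $j$-th component of $X_n$ by $|w'_j|$ parallel copies with orientations prescribed by $w'_j$, and since the operations $\epsilon,\Delta,S$ are performed componentwise and are unaffected by the subsequent handle identifications, attaching the $m$ handles of $S$ to $C_\varpi(S)$ reproduces $C_\kappa(D)$ on the nose. In particular, cabling the $j$-th target block $(+-)$ of $S$ gives $w'_j(w'_j)^*=\dbl(w'_j)$, so that the target of $C_\varpi(S)$ is $\dbl(w'_1)\cdots\dbl(w'_n)$, which matches the source of $S'$; thus $S'\circ C_\varpi(S)$ is defined in $\AT$ and has the stated source $C_{\varpi_t}(\dbl(w_1)\cdots\dbl(w_m))$ and target $(+-)^p$.

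Finally I would identify the gluing step with this composition. By construction $D'\mathop{\tilde\circ} D$ is obtained by inserting the cabled components of $C_\kappa(D)$ into $D'$ along its beads; after passing to square presentations this insertion becomes the stacking of $S'$ on top of $C_\varpi(S)$ along the common boundary word $\dbl(w'_1)\cdots\dbl(w'_n)$, that is, the composition $S'\circ C_\varpi(S)$ in $\AT$, while the $m$ handles of $S$ are left untouched and still carry the colors $x_1,\dots,x_m$. Attaching these handles to $S'\circ C_\varpi(S)$ therefore recovers exactly $D'\mathop{\tilde\circ} D$, which proves the claim. The one step that genuinely requires care is the middle one: one must track orientations through the doubling, deleting and orientation-reversal operations and verify that performing them on $S$ and then attaching handles agrees with cabling $D$ directly. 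This is precisely the kind of coherence recorded by the behaviour of cabling under composition in Lemma~\ref{rem:functoriality_cabling_bis}; the remaining verifications are routine bookkeeping of signs and of the bead-to-strand correspondence.
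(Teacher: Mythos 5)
The paper does not actually prove this lemma---it is introduced with ``One can easily verify the following lemma''---and your argument supplies precisely the routine verification the authors have in mind: the dictionary $\kappa(j)=w'_j$ coming from the passages of $S'$ through the $j$-th handle, the fact that $C_\varpi(S)$ is a square presentation of $C_\kappa(D)$ whose target word $\dbl(w'_1)\cdots\dbl(w'_n)$ matches the source of $S'$, and the identification of the bead-gluing defining $D'\mathop{\tilde\circ}D$ with composition in $\A$. The proof is correct; the only slip is directional: with the paper's conventions ($\gamma\circ\gamma'$ is obtained by gluing $\gamma'$ on \emph{top} of $\gamma$, sources drawn at the top), the composite $S'\circ C_\varpi(S)$ stacks $C_\varpi(S)$ on top of $S'$, so the handles of the resulting square presentation attach along the top word $C_{\varpi_t}(\dbl(w_1)\cdots\dbl(w_m))$ (which is again of the form $\dbl(u_1)\cdots\dbl(u_m)$ because cabling respects the duality pairing within each block), rather than $S'$ sitting on top of $C_\varpi(S)$ as you wrote; since you pin the composition down symbolically and glue along the correct word, this wording does not affect the argument. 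One further nuance: Lemma~\ref{rem:functoriality_cabling_bis} records compatibility of cabling with composition in $\A$, whereas what your middle step really uses is that cabling, being defined componentwise, commutes with the gluing of components effected by attaching handles---a statement of the same nature, and equally routine, but not literally that lemma.
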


Using Lemma \ref{ex:restricted}, we can easily prove the following.

\begin{lemma}
For $m,n,p\ge0$, there is a unique bilinear map
\begin{gather*}
\circ:\AB(n,p) \times \AB(m,n)  \longrightarrow \AB(m,p)
\end{gather*}
such that $D'\circ D = D' \mathop{\tilde\circ}  D$ for {each}
$(m,n)$-Jacobi diagram $D$ and {each} $(n,p)$-Jacobi diagram  $D'$.
\end{lemma}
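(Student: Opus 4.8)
The strategy is to set $D'\circ D:=D'\mathop{\tilde\circ}D$ on diagrams and extend bilinearly, the whole difficulty being that this extension is well defined. Uniqueness needs no argument: the equivalence classes of $\free{m}$-colored Jacobi diagrams span $\AB(m,n)$, so any bilinear map agreeing with $\mathop{\tilde\circ}$ on diagrams is forced. For existence, recall that $\AB(m,n)=\A(\Xn,\free{m})$ is by definition the quotient of the free vector space on equivalence classes of colored Jacobi diagrams by the STU relation, and likewise for $\AB(n,p)$. The two observations recorded just above the statement already show that $D'\mathop{\tilde\circ}D$ depends only on the equivalence classes of $D$ and of $D'$, so $\mathop{\tilde\circ}$ descends to a map on pairs of equivalence classes, which I extend bilinearly to the two free vector spaces. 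By the universal property of the quotient it then remains only to check that this extension annihilates the STU relations in each variable separately.

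Since $\AB(m,n)$ is spanned by restricted diagrams, it suffices to verify STU-invariance on restricted representatives, where I can pass to their square presentations and invoke Lemma~\ref{ex:restricted}. That lemma identifies $D'\mathop{\tilde\circ}D$ with the honest composition $S'\circ C_\varpi(S)$ taken in $\AT$, where $S,S'$ are square presentations of $D,D'$ and the cabling datum $\varpi$ is read off from the beads of $D'$. This reduces both checks to standard properties of $\AT$: composition of Jacobi diagrams with fixed underlying polarized $1$-manifolds is bilinear and compatible with STU, while the cabling $C_\varpi$, being assembled from the operations $\epsilon,\Delta,S$ recalled in Section~\ref{sec:cat-Jacobi}, is linear and STU-respecting.

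For the inner variable I fix $D'$; then $\varpi$ is fixed, and $S\mapsto S'\circ C_\varpi(S)$ is the composite of the linear STU-preserving maps $C_\varpi(-)$ and $S'\circ(-)$, so an STU relation among the $S$'s maps to a relation that vanishes in $\AB(m,p)$. For the outer variable I fix $D$; an STU relation on $D'$ is supported at a univalent vertex of its dashed graph, and, sliding the beads out of a neighborhood of that vertex by coloring equivalence, it becomes an STU relation on $S'$ that leaves the beads of $D'$—and hence $\varpi$ and the fixed diagram $C_\varpi(S)$—untouched. The assignment $S'\mapsto S'\circ C_\varpi(S)$ is then left composition with a fixed element of $\AT$, again linear and STU-preserving. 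Together these two checks produce the desired bilinear map $\circ$.

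The only real obstacle is the outer-variable case: one must be sure that resolving an STU relation inside $D'$ does not disturb the gluing of $C_\varpi(S)$, since both the univalent vertex and the beads live on the solid part of the diagram. Restricting to restricted diagrams is exactly what removes this difficulty, as it confines the STU move to the bead-free dashed graph and keeps the cabling data fixed, so that one and the same $C_\varpi(S)$ is composed into all three terms of the relation.
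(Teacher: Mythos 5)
Your ingredients (STU-compatibility of cabling and of composition with a fixed diagram, and locality of the STU move) are the right ones, but the proof as written has a genuine gap at the reduction step: ``Since $\AB(m,n)$ is spanned by restricted diagrams, it suffices to verify STU-invariance on restricted representatives.'' What must be shown is that the bilinear extension of $\mathop{\tilde\circ}$ annihilates \emph{every} STU relation in each slot, and these relations live among arbitrary colored diagrams. Knowing that restricted diagrams span the quotient $\AB(m,n)$ does not reduce this check: to rewrite a general diagram (or a general STU relation) as a combination of restricted ones, one must already invoke STU relations, i.e.\ exactly the compatibility being proved, so the reduction is circular. (Annihilation of a relation subspace of a free vector space cannot be verified only on relations among a spanning set of the quotient.) Moreover, even granting the restricted-case verification, you would obtain a bilinear map that is only known to agree with $\mathop{\tilde\circ}$ on restricted diagrams, whereas the lemma demands $D'\circ D=D'\mathop{\tilde\circ}D$ for \emph{all} Jacobi diagrams $D$, $D'$; upgrading the agreement to arbitrary diagrams again requires the unrestricted STU-compatibility you skipped.

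The repair is to drop the square-presentation formulation for this step and run your two locality arguments directly on the definition of $\mathop{\tilde\circ}$ (the cabling $C_\kappa$ followed by gluing), where they work for arbitrary diagrams. For the inner variable: with $D'$ fixed, the word map $\kappa$ is fixed, the operations $\epsilon$, $\Delta$, $S$ building $C_\kappa$ carry STU relations to sums of STU relations, and gluing into the fixed $D'$ preserves them. For the outer variable: the local disk of an STU relation in $D'$ contains no beads by definition of the relation, so the three terms admit bead representatives with identical bead data; hence $\kappa$ and $C_\kappa(D)$ are literally the same for all three, the gluing takes place away from the disk, and the three composites differ by a local STU relation in $\AB(m,p)$. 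Note also that the worry motivating your restriction is unfounded: restrictedness is needed only in order to have square presentations, not to keep the STU move away from the beads, since beads never occur inside an STU disk in the first place.
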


Finally, the following lemma shows that we have a well-defined linear
  category $\bfA$ with the above composition $\circ$ and the identity
\begin{gather*}
  \id_n:=\labellist \scriptsize\hair 2pt
 \pinlabel{$1$} [l] at 115 9
 \pinlabel{$n$} [l] at 355 7
 \pinlabel{\ $\cdots$\ }  at 200 31
 \pinlabel{$x_1$} [b] at 66 58
 \pinlabel{$x_n$} [b] at 304 58
\endlabellist
\centering
\includegraphics[scale=0.15]{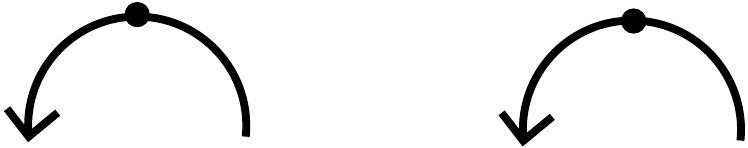}
\quad =
\centre{\labellist
\scriptsize \hair 2pt
 \pinlabel{$1$} [rb] at 26 93
 \pinlabel{$n$} [rb] at 100 94
  \pinlabel{$\cdots$}  at 83 87
 \pinlabel{$\cdots$}  at 85 33
\endlabellist
\centering
\includegraphics[scale=0.6]{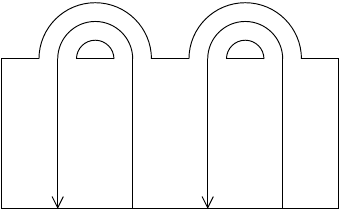}}.
\end{gather*}

\begin{lemma}
If $m\xto{D}n\xto{D'}p\xto{D''}q$ in $\AB$, then we have
\begin{eqnarray}
\label{eq:id_on_left}
 \big(\labellist \scriptsize\hair 2pt
 \pinlabel{$1$} [l] at 115 9
 \pinlabel{$n$} [l] at 355 7
 \pinlabel{\ $\cdots$\ }  at 200 31
 \pinlabel{$x_1$} [b] at 66 58
 \pinlabel{$x_n$} [b] at 304 58
\endlabellist
\centering
\includegraphics[scale=0.15]{id_in_AB} \ \  \ \big)  \circ D &=& D, \\[0.1cm]
\label{eq:id_on_right} D'  \circ \big(\labellist \scriptsize\hair 2pt
 \pinlabel{$1$} [l] at 115 9
 \pinlabel{$n$} [l] at 355 7
 \pinlabel{\ $\cdots$\ }  at 200 31
 \pinlabel{$x_1$} [b] at 66 58
 \pinlabel{$x_n$} [b] at 304 58
\endlabellist
\centering
\includegraphics[scale=0.15]{id_in_AB} \ \ \ \big) &=& D', \\
\label{eq:associativity}
D'' \circ (D' \circ D) & =&   (D'' \circ D') \circ D.
\end{eqnarray}
\end{lemma}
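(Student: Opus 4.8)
The plan is to reduce the three identities to the corresponding facts in $\AT$, where composition is already associative and unital, by passing to square presentations and applying Lemma \ref{ex:restricted}. Since $\AB(m,n)$ is spanned by restricted $(m,n)$-Jacobi diagrams and $\circ$ is bilinear, it suffices to treat the case in which $D$, $D'$ and $D''$ are restricted and carry chosen square presentations $S$, $S'$ and $S''$.

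For the unit laws I would argue directly from the definition of $D'\mathop{\tilde\circ}D$. In \eqref{eq:id_on_left}, the $j$-th arc of $\id_n$ bears a single bead colored $x_j$, so $k_j=1$ and $\kappa(j)=+$ for every $j$; hence the cabling $C_\kappa(D)$ leaves $D$ unchanged, and gluing $C_\kappa(D)=D$ onto the trivial arcs of $\id_n$ returns $D$. In \eqref{eq:id_on_right}, cabling $\id_n$ according to the beads of $D'$ replaces the $x_j$-colored $j$-th arc of $\id_n$ by parallel copies colored by precisely the letters $x_j^{\pm1}$ occurring as beads of $D'$; gluing these onto $D'$ restores its beads and returns $D'$.

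For associativity \eqref{eq:associativity} I would apply Lemma \ref{ex:restricted} twice in each bracketing. Writing $S\colon\dbl(u_1)\cdots\dbl(u_m)\to(+-)^n$, $S'\colon\dbl(v_1)\cdots\dbl(v_n)\to(+-)^p$ and $S''\colon\dbl(w_1)\cdots\dbl(w_p)\to(+-)^q$, one obtains for $D'\circ D$ the square presentation $S'\circ C_\varpi(S)$, with $\varpi$ determined by $v_1,\dots,v_n$, and for $D''\circ D'$ the presentation $S''\circ C_{\varpi'}(S')$, with $\varpi'$ determined by $w_1,\dots,w_p$. A second application then yields the presentation $S''\circ C_{\varpi''}\big(S'\circ C_\varpi(S)\big)$ for $D''\circ(D'\circ D)$ and the presentation $\big(S''\circ C_{\varpi'}(S')\big)\circ C_{\hat\varpi}(S)$ for $(D''\circ D')\circ D$. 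To identify the two, I would use associativity of $\circ$ in $\AT$ together with the functoriality of cabling under composition (Lemma \ref{rem:functoriality_cabling_bis}), which splits $C_{\varpi''}\big(S'\circ C_\varpi(S)\big)$ as a composition of a cabling of $S'$ and a cabling of $C_\varpi(S)$, and the fact that iterated cablings compose. Both bracketings then collapse to the single $\AT$-expression $S''\circ C_{\varpi'}(S')\circ C_{\hat\varpi}(S)$.

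The main obstacle is exactly this last bookkeeping: one must check that the cabling $\varpi''$, restricted to the $S'$-part, coincides with $\varpi'$, and that its $S$-part, once composed with the cabling $\varpi$ already present, reproduces $\hat\varpi$. Conceptually these compatibilities are transparent through the description of an $(m,n)$-Jacobi diagram as a homotopy class of maps $X_n\cup D\to V_m$ (Corollary \ref{r15}): composition in $\AB$ substitutes, into each $x_j$-colored bead, the element of $\pi_1(V_m)$ carried by the $j$-th component of the lower diagram, and associativity merely records the associativity of composing the induced maps on fundamental groups together with that of $\AT$. The delicate point is the purely combinatorial statement that cabling a cabling is again a cabling with composed index data---an iterated analogue of Lemma \ref{rem:functoriality_cabling_bis}---which I would isolate as a short auxiliary lemma and prove from the definitions of the operations $\epsilon$, $\Delta$ and $S$ before assembling the final identity.
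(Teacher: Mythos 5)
Your proposal is correct and follows essentially the same route as the paper: reduce to restricted diagrams with square presentations, deduce the unit laws from the triviality of the relevant cablings, and obtain associativity by combining Lemma \ref{ex:restricted}, the functoriality of cabling (Lemma \ref{rem:functoriality_cabling_bis}) and associativity of composition in $\AT$. The only inessential differences are that you verify the unit laws directly from the definition of $\tilde\circ$ rather than via square presentations, and that you make explicit the iterated-cabling bookkeeping that the paper compresses into the phrase ``for some maps $\varpi,\varpi',\varpi_0,\varpi'_0$''.
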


\begin{proof}
We may assume that $D$, $D'$ and $D''$ are restricted.
Let
\begin{gather*}
S : \dbl(w_1) \,\cdots\, \dbl(w_m) \lto (+-)^n,\\
S' : \dbl(w'_1) \,\cdots\, \dbl(w'_n) \lto (+-)^p,\\
S'' :    \dbl(w''_1) \,\cdots\, \dbl(w''_p) \lto (+-)^q.
\end{gather*}
be square presentations of $D$, $D'$ and $D''$, respectively.

\medskip
First, by Lemma \ref{ex:restricted}, a square presentation of
$\big(\labellist \scriptsize\hair 2pt
 \pinlabel{$1$} [l] at 115 9
 \pinlabel{$n$} [l] at 355 7
 \pinlabel{\ $\cdots$\ }  at 200 45
 \pinlabel{$x_1$} [b] at 66 61
 \pinlabel{$x_n$} [b] at 304 61
\endlabellist
\centering
\includegraphics[scale=0.12]{id_in_AB} \ \  \ \big)  \circ D $ is
$$
(\downarrow \uparrow \cdots \downarrow \uparrow ) \circ S = S.
$$
This proves \eqref{eq:id_on_left}.

Next, a square presentation of $D'  \circ \big(\labellist \scriptsize\hair 2pt
 \pinlabel{$1$} [l] at 115 9
 \pinlabel{$n$} [l] at 355 7
 \pinlabel{\ $\cdots$\ }  at 200 45
 \pinlabel{$x_1$} [b] at 66 61
 \pinlabel{$x_n$} [b] at 304 61
\endlabellist
\centering
\includegraphics[scale=0.12]{id_in_AB} \ \ \ \big)$ is
$$
S' \circ C_\varpi(\downarrow \uparrow \cdots \downarrow \uparrow )= S',
$$
where $\varpi: \pi_0(\downarrow \uparrow \cdots \downarrow \uparrow)\to \Mon(\pm)$
is the unique map such that $C_{\varpi_t}(+- \cdots +-) =
\dbl(w'_1) \,\cdots\, \dbl(w'_n) $.  This proves
\eqref{eq:id_on_right}.

Finally, a square presentation of $D'' \circ (D' \circ D)$ is
$$
 S'' \circ C_{\varpi'} \big(S' \circ C_{\varpi}(S)\big)
 = S'' \circ \big(C_{\varpi'_0}(S') \circ  C_{\varpi_0}(S) \big)
 $$ for some maps $\varpi, \varpi',\varpi_0$ and $\varpi'_0$.  By
 associativity of the composition in $\A$, the latter is a square
 presentation of $(D'' \circ D') \circ D$.  This proves
 \eqref{eq:associativity}.
\end{proof}

\subsection{A symmetric monoidal structure on $\AB$} \label{sec:symmetry_A}

We define a symmetric monoidal structure on the linear category $\AB$ as follows.
The tensor product on objects is addition.  The monoidal unit is $0$.
The tensor product on morphisms is juxtaposition followed by
relabelling the solid arcs and the beads.  More precisely, we
  obtain the tensor product $D\ot D'$ of an
$(m,n)$-Jacobi diagram $D$ and an $(m',n')$-Jacobi diagram $D'$
from the juxtaposition of $D$ and
$D'$ by renaming $\capl_j$ in $D'$ with $\capl_{n+j}$ for
$j=1,\dots,{n'}$, and replacing $x_i$ with $x_{m+i}$ for
$i=1,\dots,{m'}$.

\begin{lemma}
\label{r27}
The strict monoidal category $\AB$ admits a symmetry defined by\\[0.cm]
\begin{gather}
  \label{e6}
  P_{m,n} =
 \centre{ \labellist
\scriptsize \hair 2pt
 \pinlabel{$\cdots$}  at 29 3
 \pinlabel{$\cdots$}  at 84 4
  \pinlabel{$\underbrace{\hphantom{aaaaaaaa}}_n$} [t] at 29 3
 \pinlabel{$\underbrace{\hphantom{aaaaaaaa}}_m$} [t] at 84 4
 \pinlabel{$\cdots$}  at 30 39
 \pinlabel{$\cdots$}  at 85 39
   \pinlabel{$\overbrace{\hphantom{aaaaaaaa}}^n$} [b] at 85 41
 \pinlabel{$\overbrace{\hphantom{aaaaaaaa}}^m$} [b] at  30 41
\endlabellist
\centering
\includegraphics[scale=1.0]{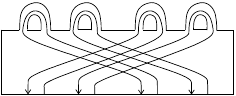}} :m+n\lto n+m.
\end{gather}
\end{lemma}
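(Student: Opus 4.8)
The plan is to verify directly that the family $P_{m,n}$ satisfies the three defining axioms of a symmetry on a strict monoidal category: \textbf{(1)} naturality, i.e.\ $(D'\ot D)\circ P_{m,n}=P_{m',n'}\circ(D\ot D')$ for all $D\colon m\to m'$ and $D'\colon n\to n'$ in $\AB$; \textbf{(2)} the two coherence (``hexagon'') identities $P_{m,n+p}=(\id_n\ot P_{m,p})\circ(P_{m,n}\ot\id_p)$ and $P_{m+n,p}=(P_{m,p}\ot\id_n)\circ(\id_m\ot P_{n,p})$; and \textbf{(3)} the symmetry relation $P_{n,m}\circ P_{m,n}=\id_{m+n}$ (the unit conditions $P_{m,0}=\id_m=P_{0,m}$ being trivial, as swapping past the empty block does nothing). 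Since each morphism space $\AB(m,n)$ is spanned by restricted $(m,n)$-Jacobi diagrams, it suffices to check every identity on such diagrams, which I would present by square presentations. Observe that $P_{m,n}$ is itself restricted, with an evident square presentation given by a strand-permutation diagram (empty dashed part) carrying the block $\dbl(w_1)\cdots\dbl(w_m)$ past $\dbl(w_1')\cdots\dbl(w_n')$. By Lemma \ref{ex:restricted}, each composition involved is then computed as a composition in $\A$ preceded by a cabling $C_\varpi$, so all the identities reduce to elementary statements about permuting solid strands in ordinary Jacobi diagrams.

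Naturality is the substantial point, so I would treat it first. Fixing restricted $D,D'$ with square presentations $S,S'$, both composites $(D'\ot D)\circ P_{m,n}$ and $P_{m',n'}\circ(D\ot D')$ are obtained by juxtaposing $S$ and $S'$, cabling $P$ as prescribed by Lemma \ref{ex:restricted}, stacking, and relabelling: the handle generators $x_i$ and the arc components of the first block are moved behind those of the second. Using Lemma \ref{ex:restricted} together with Lemma \ref{rem:functoriality_cabling_bis} (the behavior of cabling under composition), I would show that both sides yield the \emph{same} element of $\A$ after relabelling. The underlying dashed-and-solid diagrams agree because permuting solid strands commutes past an arbitrary Jacobi diagram; and the colorings agree because the block swap renames $x_i\mapsto x_{n+i}$ and $x_{m+j}\mapsto x_j$ uniformly on every bead. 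The one genuine verification is that this renaming is compatible with the cabling $C_\kappa$ used to define $\circ$ in $\AB$, i.e.\ that cabling a strand and then swapping blocks produces, up to the coloring-equivalence moves \eqref{eq:moves_ccd}, the same colored diagram as swapping and then cabling.

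The coherence identities and the relation $P_{n,m}\circ P_{m,n}=\id_{m+n}$ are geometrically transparent from the projection-diagram picture: sliding one block past two adjacent blocks factors as two successive slides, and performing two opposite block swaps restores every strand, handle, and bead to its original position. Formally, each such identity reduces via Lemma \ref{ex:restricted} to the corresponding identity for strand permutations in $\A$ (where the squared permutation is the identity and the hexagons hold because $\A$-morphisms are insensitive to crossings), together with the observation that the induced relabellings of the $x_i$ compose correctly. I expect the main obstacle to be exactly the bookkeeping in the naturality step, namely the compatibility between the bead-relabelling that defines the block swap and the cabling operation $C_\kappa$ that defines composition; once this compatibility is pinned down at the level of coloring-equivalence classes, the remaining axioms follow immediately by the same reduction to $\A$.
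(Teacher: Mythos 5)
Your proposal is correct and takes essentially the same approach as the paper: the paper's proof likewise rests on the observation that post-composing with $P_{m,n}$ merely relabels beads while pre-composing merely relabels solid arcs, so that both sides of $(V\ot U)\circ P_{m,n}=P_{m',n'}\circ(U\ot V)$ are the same relabelled diagram, and it then dismisses the remaining axioms as routine. Your square-presentation and cabling bookkeeping (via Lemma \ref{ex:restricted} and Lemma \ref{rem:functoriality_cabling_bis}) is just a more explicit justification of those two relabelling facts, which the paper asserts directly.
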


\vspace{0.1cm}
\begin{proof}
We show that the $P_{m,n}$ are natural in $m$ and $n$.
To post-compose a Jacobi diagram in $\AB(n + m,k)$ with
$P_{m,n}$, one transforms the labels of the beads by
$$
x_1 \mapsto x_{n+1}, \dots, x_m \mapsto x_{n+m}, \quad x_{m+1} \mapsto x_1, \dots, x_{m+n} \mapsto x_n.
$$
To pre-compose a Jacobi diagram in $\AB(k,m + n)$ with
$P_{m,n}$, one transforms the labels of the arcs by
$$
1 \mapsto n+1, \dots, m \mapsto n+m, \quad  m+1\mapsto 1, \dots, m+n \mapsto n.
$$
It follows that, for {$U:m\to m'$ and $V:n\to n'$}, we have
$$(V \otimes U) \circ P_{m,n} = P_{m',n'}  \circ (U \otimes V),$$
i.e., $P_{m,n}$ is natural.

One can easily check the other axioms of symmetric monoidal category.
\end{proof}

Before giving a presentation of the category $\bfA$ in
the next section, we describe some additional structures in $\AB$.

\subsection{Two gradings on $\AB$}  \label{sec:two_gradings}

{We first define} an $\NZ$-grading {on} $\AB$.  We have
\begin{gather*}
  \AB(m,n)=\bigoplus_{{k  \in \N}} \AB_k(m,n)
\end{gather*}
for $m,n\ge0$, where $\AB_k(m,n)$ is spanned by Jacobi diagrams
of degree $k$.  (Recall that the \emph{degree} of a Jacobi diagram is
half the total number of its vertices.)
It is easy to check that $\AB$ has the structure of an $\NZ$-graded linear strict monoidal category.
In what follows, $\NZ$-gradings are simply referred to as ``gradings''.

Let $\hA$ denote the degree-completion of $\bfA$ with respect to
  the above-defined grading on $\bfA$.  Thus, we set
  $\Ob(\hA)=\Ob(\AB)=\N$, and $\hA(m,n)$ is the degree-completion of $\AB(m,n)$.

{Before defining the second grading on $\AB$, we define the
  notion of a linear strict monoidal category graded over a strict
  monoidal category.  This generalizes the notion of a linear category
  graded over a category considered in \cite{Lowen,Tamaki}.}
Let~$\calD$ be a strict monoidal category.  A \emph{$\calD$-grading}
on a linear, strict monoidal category~$\calC$ consists of a monoid
homomorphism $i:\Ob(\C) \to \Ob(\D)$ and a direct sum decomposition
$$
\C(m,n) = \bigoplus_{d\, :\, i(m) \to i(n)} \C(m,n)_d 
$$
for {each pair of objects $m,n$ in $\C$}, such that
\begin{itemize}
\item $\id_m \in \C(m,m)_{\id_{i(m)}}$ for each $m \in \Ob(\C)$,
\item $ \C(n,p)_e  \circ  \C(m,n)_d \subset \C(m,p)_{e \circ d}  $ for all $m,n,p \in \Ob(\C)$
and all morphisms $i(m)\xto{d}i(n)\xto{e}i(p)$  in $\D$,
\item $\C(m,n)_d \otimes \C(m',n')_{d'} \subset \C(m\otimes m',n\otimes n')_{d\otimes d'}$
for all $m,n,m',n' \in \Ob(\C)$
and all morphisms  $d : i(m) \to i(n),d' : i(m') \to i(n')$ in $\calD$.
\end{itemize}
Then we say that the linear strict monoidal category $\C$ is \emph{$\D$-graded}, or that $\C$ is \emph{graded over $\D$}.

{For instance, we may regard the $\N$-grading of $\AB$ defined above as {a}
  grading over the commutative monoid $\N$, viewed as a strict
  monoidal category with one object.}

Now we define another grading of $\bfA$.  Let $\bfF$ be the full
subcategory of the category of groups with $\Ob(\bfF)
:=\{\free{n}\;|\;n \ge0\}${, and identify $\Ob(\bfF)$ with $\NZ$ in
the natural way}.  The category $\bfF$ has a symmetric strict monoidal
structure given by free product.  We define an $\bfF^\op$-grading on
$\AB$ as follows.  The \emph{homotopy {class}} of an $(m,n)$-Jacobi
diagram {$D$} is the homomorphism $h(D): \free{n} \to \free{m}$ that
maps each generator $x_j$ to the product of the beads along the
oriented component $\!\figtotext{8}{8}{capleft}\!\!_j$; {we emphasize
that $h(D)$ is independent of the dashed part of $D$.}  Then we have
\begin{gather*}
  \AB(m,n) = \bigoplus_{d\in\bfF^\op(m,n)}\AB(m,n)_d,
\end{gather*}
where $\AB(m,n)_d$ is   spanned by
  Jacobi diagrams of homotopy {class} $d$.   {It is easy to check that} $\AB$ has the structure
  of an $\bfF^\op$-graded linear strict monoidal category.

Let $\AB_0$ denote the degree $0$ part of $\AB$, which is a
linear, symmetric strict monoidal subcategory of~$\AB$.  The
morphisms in $\AB_0$ are linear combinations of Jacobi diagrams in
handlebodies without dashed part, which are fully determined by their
homotopy {classes}.  Thus, there is an isomorphism of linear
symmetric strict monoidal categories
\begin{gather*}
  h: \AB_0 \congto \K\bfF^\op,
\end{gather*}
where $\K\bfF^\op$ denotes the linearization of $\bfF^\op$.  The
isomorphism $h$ extends to a full linear functor $h: \AB \to
\K\bfF^\op$ vanishing on morphisms of positive degree.

\begin{remark}
The $\bfF^\op$-grading of $\AB$ induces a (completed)
  $\bfF^\op$-grading on the degree-completion $\hA$ in the obvious  way.  We have
\begin{gather*}
  \hA(m,n) = \mathop{\widehat{\bigoplus}}_{d\in\bfF^\op{(m,n)}}\hA(m,n)_d
\end{gather*}
where $\hA(m,n)_d$ is the degree-completion of $\AB(m,n)_d$, and
$\widehat{\bigoplus}$ denotes the completed direct sum.
\end{remark}

\subsection{Coalgebra enrichment of $\AB$}   \label{sec:coalgebras}

Here we define coalgebra structures on the spaces $\AB(m,n)$
($m,n\ge0$) by generalizing the usual coalgebra structures of the
spaces of Jacobi diagrams \cite{BN2}.  Moreover, we show that the
category $\AB$ is enriched over cocommutative coalgebras.
{(See \cite{Kelly} for the definitions in enriched category theory.)}

Define a linear map $$\Delta :  \AB(m,n) \longrightarrow \AB(m,n)\otimes \AB(m,n)$$ by
\begin{gather*}
  \Delta (D) = \sum_{D= D' \sqcup D''} D' \otimes D''
\end{gather*}
for every $(m,n)$-Jacobi diagram $D$, where the sum is over all
splittings of $D$ as the disjoint union of two parts $D'$
and $D''$.  Define also a linear map
\begin{gather*}
  \epsilon :  \AB(m,n)\longrightarrow \K
\end{gather*}
by $\ep(D)=1$ if $D$ is the empty diagram, and $\ep(D)=0$ otherwise.
It is easy to see that $(\AB(m,n),\Delta,\ep)$ is a cocommutative
coalgebra.

\begin{proposition}
\label{r8}
The symmetric monoidal category $\AB$ is enriched over the
{symmetric} monoidal category of cocommutative coalgebras.  In
other words, the linear maps
\begin{gather*}
  \circ=\circ_{m,n,p}:\AB(n,p)\ot\AB(m,n)  \longrightarrow   \AB(m,p)\quad (m,n,p\ge0),\\
\K \longrightarrow  \AB(m,m),\quad 1 \longmapsto   \id_m\quad
  (m\ge0),\\
  \otimes:\AB(m,n)\otimes\AB(m',n')\longrightarrow\AB(m+m',n+n')\quad (m,n,m',n'\ge0),\\
  \K\longrightarrow \AB(m+n,n+m),\quad
1\longmapsto P_{m,n}\quad (m,n\ge0)
\end{gather*}
are coalgebra maps.
\end{proposition}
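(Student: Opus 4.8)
The plan is to verify directly that each of the four structure maps intertwines the comultiplications $\Delta$ and respects the counits $\epsilon$, checking each identity on the spanning set of Jacobi diagrams (using restricted representatives when a composition is involved) and extending by linearity. Everything rests on one structural fact about $\Delta$: it splits a diagram by distributing the connected components of its \emph{dashed} part, while the full beaded skeleton $X_n$ is kept in both tensor factors. Consequently a diagram $D$ is group-like, in the sense that $\Delta(D) = D \otimes D$ and $\epsilon(D) = 1$, exactly when its dashed part is empty. Since $\id_m$ and $P_{m,n}$ consist only of beaded skeleton with no dashed edges, they are group-like, which immediately settles the unit map $\K \to \AB(m,m)$ and the symmetry map $\K \to \AB(m+n,n+m)$.

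For the tensor product, I would observe that the juxtaposition $D \otimes D'$ has, as its set of dashed connected components, the disjoint union of those of $D$ and of $D'$, because juxtaposition introduces no new dashed edges. Hence a splitting of $D \otimes D'$ corresponds precisely to a pair consisting of a splitting of $D$ and a splitting of $D'$, and the matching term is $(D_1 \otimes D_1') \otimes (D_2 \otimes D_2')$. This is exactly the assertion that $\Delta$ of the juxtaposition agrees with the juxtaposition-wise product of $\Delta(D)$ and $\Delta(D')$ after interchanging the two inner tensor factors, i.e.\ the condition for $\otimes$ to be a coalgebra map for the tensor-product coalgebra structure on $\AB(m,n) \otimes \AB(m',n')$. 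The counit identity $\epsilon(D \otimes D') = \epsilon(D)\epsilon(D')$ holds since $D \otimes D'$ is dashed-free if and only if both $D$ and $D'$ are.

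The heart of the argument is the composition map $\circ \colon \AB(n,p)\otimes\AB(m,n) \to \AB(m,p)$, and this is where I expect the only genuine work. For restricted diagrams $D' \colon n \to p$ and $D \colon m \to n$, the composite $D' \circ D$ is formed by cabling $D$ according to the beads of $D'$ and then gluing the resulting solid arcs into $D'$. The crucial point is that this gluing joins only solid skeleton arcs and never connects dashed edges, while cabling merely replaces each univalent vertex by a signed sum over its parallel copies and so leaves the connected-component structure of the dashed graph intact. Thus the dashed components of $D' \circ D$ are exactly those coming from $D'$ together with those coming from the cabled $D$. Because a splitting retains the entire beaded skeleton, hence all beads and therefore the same cabling data, in each factor, distributing these components factor-wise produces precisely $D_1' \circ D_1$ and $D_2' \circ D_2$. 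I would conclude that
\begin{gather*}
  \Delta(D' \circ D) = \sum_{D' = D_1' \sqcup D_2',\ D = D_1 \sqcup D_2} (D_1' \circ D_1) \otimes (D_2' \circ D_2),
\end{gather*}
which is exactly compatibility with $\Delta$; likewise $D' \circ D$ is dashed-free if and only if both $D'$ and $D$ are, giving $\epsilon(D' \circ D) = \epsilon(D')\epsilon(D)$.

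The subtle point to get right is that cabling is a \emph{linear} operation producing a signed sum of diagrams, with individual terms possibly vanishing when a deleting operation leaves a univalent vertex nowhere to attach. The component bookkeeping above must therefore be carried out at the level of this expansion rather than on a single diagram. I would handle this by noting that the cabling expansion is performed independently at each univalent vertex, so it factors through any partition of the dashed components into two parts; this is what makes the displayed identity hold term by term, and hence after summation. With the four compatibilities in hand, and with $(\AB(m,n),\Delta,\epsilon)$ already granted to be a cocommutative coalgebra, the enrichment of $\AB$ over cocommutative coalgebras follows.
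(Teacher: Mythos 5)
Your proof is correct, and its combinatorial core coincides with the paper's: a splitting retains the entire beaded skeleton (hence the same cabling data), the cabling expansion is performed independently at each univalent vertex and so commutes with splitting, and -- for restricted $D'$ -- composition yields the disjoint union of the dashed parts of $D'$ and of the cabled $D$. The difference is where the computation is carried out. The paper transfers everything to square presentations: by Lemma \ref{ex:restricted}, a square presentation of $D'\circ D$ is obtained by composing square presentations in $\AT$ after cabling, and by Lemma \ref{ex:coalgebra_structures}, applying the usual $\Delta$ and $\epsilon$ to a square presentation produces square presentations of $\Delta(D)$ and $\epsilon(D)$; the splitting argument is then run on ordinary Jacobi diagrams, where no bead ever sits on a dashed edge. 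You instead run the same argument directly on colored diagrams from the cabling-and-gluing definition of $\circ$. For that route your parenthetical ``restricted representatives'' is essential rather than cosmetic: if $D'$ had a bead on a dashed edge, the gluing would attach legs of the cabled $D$ to that dashed edge, merging dashed components of $D'$ with those of $D$, and your displayed formula for $\Delta(D'\circ D)$ would fail for such a representative. Since restricted diagrams span the morphism spaces and all maps involved are (bi)linear, the reduction is legitimate, and you correctly identify and justify the term-by-term factorization of the signed cabling sum, which the paper leaves implicit in the identity $\Delta(C_\varpi(S))=(C_\varpi\otimes C_\varpi)\Delta(S)$. The paper's route buys brevity by reusing established lemmas; yours is self-contained and makes the underlying mechanism explicit.
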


To prove this {proposition}, we need the {lemma below}, which
one can easily verify.

\begin{lemma} \label{ex:coalgebra_structures}
Let $S$
be a square presentation of a restricted  $(m,n)$-Jacobi diagram~$D$.
Then $\Delta(S)$ (resp.\ $\epsilon(S)$), the usual comultiplication
(resp.\ counit) of Jacobi diagrams applied to $S$,
is a square presentation of $\Delta(D)$ (resp.\ $\ep(D)$).
\end{lemma}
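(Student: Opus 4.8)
The plan is to reduce both assertions to a single combinatorial fact about the handle-attaching operation, and then to match summands. Write $\widehat{S}$ for the $(m,n)$-Jacobi diagram obtained from a square presentation $S$ by attaching parallel copies of the cores of the $1$-handles, so that $\widehat{S}=D$ by hypothesis. First I would verify the key point that this operation alters only the solid $1$-manifold part of the diagram: it reroutes the top endpoints forming $\dbl(w_1)\cdots\dbl(w_m)$ through the handles, producing the beads $x_1^{\pm 1},\dots,x_m^{\pm 1}$ along $\Xn$, while leaving the dashed unitrivalent graph and its incidences with the solid arcs combinatorially untouched. Consequently the connected components of the dashed part of $S$ are in canonical bijection with those of $D$, and each dashed subdiagram of $S$ closes up to the corresponding dashed subdiagram of $D$.

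Granting this, I would treat the comultiplication case by a bijection of summands. By definition $\Delta(S)=\sum_{S=S'\sqcup S''}S'\otimes S''$, the sum running over the ways of partitioning the connected components of the dashed part of $S$ into an ordered pair of subsets, with the solid $1$-manifold shared by $S'$ and $S''$. Applying $\widehat{(\cdot)}$ to each tensor factor and invoking the component bijection from the first step, every term $\widehat{S'}\otimes\widehat{S''}$ is exactly a term $E'\otimes E''$ of $\Delta(D)=\sum_{D=E'\sqcup E''}E'\otimes E''$, and this assignment is a bijection between the two indexing sets. Hence $\Delta(S)$, read through $\widehat{(\cdot)}\otimes\widehat{(\cdot)}$ as a linear combination of pairs of square presentations, is a square presentation of $\Delta(D)$.

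For the counit, recall that $\ep(S)=1$ when $S$ has empty dashed part and $\ep(S)=0$ otherwise, and similarly for $\ep(D)$. Since $\widehat{(\cdot)}$ preserves emptiness of the dashed part, $S$ has no dashed component if and only if $D$ has none, so $\ep(S)=\ep(D)$, which is the desired statement at the level of scalars.

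The hard part will be the first step, i.e.\ checking carefully, from the definition of a square presentation, that attaching the handle cores is a purely solid-side operation and that a univalent vertex of the dashed graph stays attached to the rerouted image of the same solid arc. This is elementary but is where the real content sits; once it is established, both claims collapse to the summand-matching described above.
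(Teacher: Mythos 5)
Your proposal is correct. The paper itself offers no argument for this lemma (it is introduced with ``one can easily verify''), and your verification—observing that attaching the handle cores is a purely solid-side operation inducing a bijection between dashed components of $S$ and of $D$, then matching the summands of $\Delta(S)$ and $\Delta(D)$ termwise and noting that emptiness of the dashed part is preserved for $\epsilon$—is exactly the intended easy check.
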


\begin{proof}[Proof of Proposition \ref{r8}]
 We will check that $\circ_{m,n,p}$ is a coalgebra map; clearly,
{so are the other maps listed in the proposition.
   Consider restricted Jacobi diagrams $D:m\to n$ and $D':n\to p$ in $\AB$}
   with square presentations $S$ and $S'$, respectively.  By Lemma
 \ref{ex:restricted}, $D' \circ D$ admits a square presentation of the
 form $S' \circ C_\varpi(S)$ for a map $\varpi$ determined by $S'$.
 The connected components of the dashed part of $C_\varpi(S)$ are in
 one-to-one correspondence with those of $S$.  Hence we have
\begin{gather*}
  \begin{split}
     &\Delta(S' \circ C_\varpi(S))\\
    =& \sum_{S = S_* \sqcup S_{**}}   \sum_{S' = S'_* \sqcup S'_{**}}
    \big(S'_* \circ C_{\varpi}(S_*)\big) \otimes   \big(S'_{**} \circ C_{\varpi}(S_{**})\big)\\
     =& (\circ\ot\circ)\Big(\sum_{S = S_* \sqcup S_{**}}   \sum_{S' = S'_* \sqcup S'_{**}}
     \big(S'_* \ot C_{\varpi}(S_*)\big) \otimes   \big(S'_{**} \ot C_{\varpi}(S_{**})\big)\Big)\\
    =& (\circ\ot\circ)(\id\ot P\ot\id)\Big(
    \sum_{S' = S'_* \sqcup S'_{**}}\big(S'_* \ot S'_{**}\big) \otimes
    \sum_{S = S_* \sqcup S_{**}}  \big( C_{\varpi}(S_*) \ot C_{\varpi}(S_{**})\big)\Big)\\
    =& (\circ\ot\circ)(\id\ot P\ot\id)\big(
    \Delta(S') \otimes (C_f\ot C_f)\De(S)\big)\\
    =& \big((-\circ C_f(-))\ot(-\circ C_f(-))\big)(\id\ot P\ot\id)
    (\De\ot\De)(S'\ot S),
  \end{split}
\end{gather*}
where $P$ is the linear map $x\ot y\mapsto y\ot x$.
We deduce from Lemma \ref{ex:coalgebra_structures} that
\begin{gather*}
  \De(D'\circ D)=(\circ\ot\circ)(\id\ot P\ot\id)(\De\ot\De)(D'\ot D),
\end{gather*}
i.e., $\circ=\circ_{m,n,p}$ preserves comultiplication.  Clearly,
$\circ_{m,n,p}$  preserves counit, i.e.,
we have $\ep(D'\circ D)=\ep(D')\, \ep(D)$.
Hence $\circ_{m,n,p}$ is a coalgebra map.
\end{proof}

\begin{cor}
  For $m\ge0$ the coalgebra structure of $\AB(m,m)$ and the
  endomorphism algebra structure of $\AB(m,m)$ makes $\AB(m,m)$ a
  cocommutative bialgebra.
\end{cor}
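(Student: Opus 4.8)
The plan is to verify that the two structures on $\AB(m,m)$ --- its cocommutative coalgebra structure $(\Delta,\epsilon)$ and its endomorphism algebra structure $(\circ,\id_m)$ --- satisfy the bialgebra compatibility axioms. Recall that a cocommutative bialgebra is a vector space carrying simultaneously an associative unital algebra structure and a cocommutative coassociative counital coalgebra structure, subject to the requirement that the multiplication and the unit be morphisms of coalgebras (equivalently, that the comultiplication and the counit be morphisms of algebras). Here the tensor square $\AB(m,m)\otimes\AB(m,m)$ is given its natural coalgebra structure, with comultiplication $(\id\otimes P\otimes\id)(\Delta\otimes\Delta)$, where $P$ denotes the flip $x\otimes y\mapsto y\otimes x$, and $\K$ is regarded as the trivial coalgebra.

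First I would recall that $(\AB(m,m),\Delta,\epsilon)$ is a cocommutative coalgebra, as already observed just before Proposition \ref{r8}, and that $(\AB(m,m),\circ,\id_m)$ is an associative unital algebra, since $\circ=\circ_{m,m,m}$ is the composition of the linear category $\AB$ and $\id_m$ is the identity endomorphism (associativity and unitality being guaranteed by \eqref{eq:id_on_left}--\eqref{eq:associativity}). It then remains only to check the compatibility between the two structures.

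This last point is precisely the content of Proposition \ref{r8} specialized to $n=p=m$: that result asserts that the composition map $\circ_{m,m,m}\colon\AB(m,m)\otimes\AB(m,m)\to\AB(m,m)$ and the unit map $\K\to\AB(m,m)$, $1\mapsto\id_m$, are both coalgebra morphisms. These two statements are exactly the bialgebra axioms. The first says that $\Delta(D'\circ D)=(\circ\otimes\circ)(\id\otimes P\otimes\id)(\Delta\otimes\Delta)(D'\otimes D)$ together with $\epsilon(D'\circ D)=\epsilon(D')\,\epsilon(D)$, i.e.\ that $\Delta$ and $\epsilon$ respect $\circ$; the second says that $\Delta(\id_m)=\id_m\otimes\id_m$ and $\epsilon(\id_m)=1$, i.e.\ that the algebra unit is group-like. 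Combined with cocommutativity of $(\Delta,\epsilon)$, this yields that $\AB(m,m)$ is a cocommutative bialgebra. Since no further verification is needed beyond invoking Proposition \ref{r8}, there is no genuine obstacle here: the entire content was already established in the proof of that proposition, and the corollary is simply its reformulation in the single-object case $m=n=p$.
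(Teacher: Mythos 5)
Your proposal is correct and is exactly the argument the paper intends: the corollary is stated without proof precisely because it is the specialization of Proposition \ref{r8} to $n=p=m$, where the statements that $\circ_{m,m,m}$ and $1\mapsto\id_m$ are coalgebra maps are the bialgebra compatibility axioms. Your additional verification that the algebra structure is associative and unital (via the category axioms) and that the coalgebra is cocommutative matches the implicit content of the paper.
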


The coalgebra structure on $\AB(m,n)$ induces a coalgebra structure on
$\hA(m,n)$.  By Proposition \ref{r8}, $\hA$ also is enriched over
cocommutative coalgebras.
Let
\begin{gather*}
\hAg(m,n)= \big \{f\in\hA(m,n)  \;|\; \Delta(f)=f\ot f,\;\ep(f)=1 \big\}
\end{gather*}
be the group-like part of $\hA(m,n)$.  Then the sets $\hAg(m,n)$
for $m,n\ge0$ form a symmetric monoidal subcategory of $\hA$, which we
call the \emph{group-like part of~$\hA$}.

%
%
\section{Presentation of the category $\AB$}  \label{sec:presentation}

In this section, we give a presentation of the category $\AB$ of
Jacobi diagrams in handlebodies.

\subsection{Hopf algebras in symmetric monoidal categories} \label{sec:hopf-algebr-symm}

Let $\calC$ be a symmetric strict monoidal category, with monoidal
unit $I$ and symmetry $P_{X,Y}: X\ot Y \to Y\ot X$.

Let $H$ be a Hopf algebra in $\calC$ with the multiplication, unit,
comultiplication, counit and antipode
\begin{gather*}
  \mu: H\ot H\to H,\quad \eta: I\to H,\quad \Delta: H\to H\ot
  H,\quad \epsilon: H\to I,\quad
  S: H\to H.
\end{gather*}
 The axioms for a \emph{Hopf algebra} in $\calC$ are
\begin{gather}
  \label{h1}
  \mu (\mu \ot \id)=\mu (\id\ot \mu ),\quad
  \mu (\eta \ot \id)=\id=\mu (\id\ot \eta ),\\
  \label{h2}
  (\Delta \ot \id)\Delta =(\id\ot \Delta )\Delta ,\quad
  (\epsilon \ot \id)\Delta=\id =(\id\ot \epsilon )\Delta ,\\
  \label{h3}
    \epsilon \eta =\id_I,\quad
    \epsilon \mu =\epsilon \ot \epsilon ,\quad
    \Delta \eta =\eta \ot \eta ,\quad
    \Delta \mu =(\mu \ot \mu )(\id\ot P\ot \id)(\Delta \ot \Delta ),\\
    \label{h4}
    \mu (\id\ot S)\Delta =\mu (S\ot \id)\Delta =\eta \epsilon.
\end{gather}
Here and in what follows, we write $\id=\id_H$ and $P=P_{H,H}$ for simplicity.
In the following, we assume that $H$ is \emph{cocommutative}, i.e., we have
\begin{gather}
  \label{h5}
  P\Delta=\Delta.
\end{gather}

We will also use the notions of algebras and coalgebras in symmetric
monoidal categories, defined by axioms \eqref{h1} and \eqref{h2},
respectively.  For $m\ge0$, define $\mu^{[m]}: H^{\ot m}\to H$ and
$\Delta^{[m]}: H\to H^{\ot m}$ inductively by
\begin{gather*}
  \mu^{[0]}=\eta,\quad \mu^{[1]}=\id,\quad
  \mu^{[m]}=\mu(\mu^{[m-1]}\ot\id)\quad (m\ge2),\\
  \Delta^{[0]}=\epsilon,\quad \Delta^{[1]}=\id,\quad
  \Delta^{[m]}=(\Delta^{[m-1]}\ot\id)\Delta\quad (m\ge2).
\end{gather*}

A (left) \emph{$H$-module} in $\calC$ is an object $M$ with a morphism
$\action: H \otimes M \to M$, called a \emph{(left) action}, such that 
\begin{gather*}
  \action (\mu \otimes \id_M) = \action (\id_H \otimes
  \action),\quad
  \action (\eta \otimes \id_M) = \id_M.
\end{gather*}
For $H$-modules $(M,\action)$ and $(M',\action')$, a
morphism $f:M\to M'$ is a \emph{morphism of $H$-modules} if
\begin{gather*}
f \action = \action' (\id_H \otimes f).
\end{gather*}
Since $H$ is a cocommutative Hopf algebra, the category $\HMod$ of
$H$-modules inherits from $\calC$ a symmetric strict monoidal
structure. Specifically, the tensor product of two $H$-modules
$(M,\action)$ and $(M',\action')$ is $M \otimes M'$ with the
action
$$
(\action \otimes \action') ( \id_H \otimes P_{H,M} \otimes \id_{M'} )
(\Delta \otimes \id_M \otimes \id_{M'}) :H \otimes M \otimes M'\to
M \otimes M'.
$$
The monoidal unit in $\HMod$ is the \emph{trivial} $H$-module $(I,\epsilon)$.

Define the (left) \emph{adjoint action} $\ad: H\ot H\to H$ by
\begin{gather*}
  \ad  =   \mu^{[3]}(\id\ot \id\ot S)(\id\ot P)(\Delta\ot\id).
\end{gather*}
Since $H$ is cocommutative, all the structure morphisms
$\mu,\eta,\Delta,\epsilon,S$ of $H$ as well as the symmetry $P_{H,H}$
are $H$-module morphisms with respect to the adjoint action.  Thus,
the $H$-module $(H,\ad)$ is a cocommutative Hopf algebra in $\HMod$.

\subsection{Convolutions}    \label{sec:convolutions}

Let $\calC$ be a symmetric strict monoidal category.  Let
$(A,\mu_A,\eta_A)$ be an algebra and $(C,\Delta_C,\epsilon_C)$ a
coalgebra in $\C$.  We define the \emph{convolution product} on $\C(C,A)$
\begin{gather}
  *: \C(C,A)\times \C(C,A) \longrightarrow \C(C,A)
\end{gather}
 by
\begin{gather*}
  f * g = \mu_A (f\ot g)\Delta_C
\end{gather*}
for {$f,g:C\to A$}.  This operation is associative with unit $\eta_A\epsilon_C$.

A morphism {$f:C\to A$} is \emph{convolution-invertible} if there is
{$g:C\to A$} such that $f*g=g*f=\eta_A\epsilon_C$.  In this case,
we call $g$  the \emph{convolution-inverse} to $f$,
and it is denoted by $f^{-1}$ if there is no fear of confusing it with
the inverse of $f$.

In what follows, we mainly use convolutions when $A=H^{\ot n}$
and $C=H^{\ot m}$ ($m,n\ge0$) for a Hopf algebra $H$ in $\C$.
For example, the convolution on $\C(H,H^{\ot 2})$ is given by $$f*g =
\mu_2 (f\ot g)\Delta,$$ where $\mu_2:=(\mu\ot\mu)(\id\ot P\ot \id)$, and
the convolution on $\C(I,H^{\ot n})$ is given by
\begin{gather}
  \label{e12}
  f*g = \mu_n (f\ot g),
\end{gather}
where we define $\mu_n: H^{\ot n}\ot H^{\ot n}\to H^{\ot n}$
inductively by $\mu_0  =\id_I$, $\mu_1 = \mu$ and
\begin{gather*}
  \mu_n = (\mu_{n-1}\ot \mu)(\id^{\ot n-1}\ot P_{H,H^{\ot (n-1) }}\ot\id)\quad (n\ge2).
\end{gather*}
This convolution product is defined whenever $(H,\mu,\eta)$ is an algebra in $\C$.

\subsection{Casimir Hopf algebras}\label{sec:casim-hopf-algebr}

Let $H$ be a cocommutative Hopf algebra in a linear
  symmetric strict monoidal category $\C$.

\begin{definition}
  \label{r11}
  A \emph{Casimir $2$-tensor} for $H$ is a morphism $ c: I\to H^{\ot2}$
  which is \emph{primitive}, \emph{symmetric} and \emph{invariant}:
\begin{gather}
  \label{e:add}(\Delta\ot \id)c=c_{13}+c_{23},\\
  \label{e:sym}Pc=c,\\
  \label{e:inv}(\ad\ot\ad)(\id\ot P\ot \id)(\Delta\ot c)=c\epsilon,
\end{gather}
where $c_{13} := (\id\ot\eta\ot\id)c$ and $c_{23} := \eta\ot c$.

 By a \emph{Casimir Hopf algebra} in $\calC$, we mean a cocommutative
Hopf algebra in $\calC$ equipped with a Casimir $2$-tensor.
\end{definition}

The condition \eqref{e:inv} means that $c:I\to H^{\ot2}$ is a
morphism of $H$-modules.  Thus a Casimir Hopf algebra $(H,c)$ in
$\calC$ is also a Casimir Hopf algebra in $\HMod$.

Here are elementary properties of Casimir $2$-tensors:
\begin{eqnarray}
  \label{e13'} & &    (\id\ot\Delta)c=c_{12}+c_{13}, \\
  \label{e13''} && (\epsilon\ot\id)c=(\id\ot\epsilon)c=0, \\
  \label{e13'''} &&  (S\ot\id)c=(\id\ot S)c=-c.
\end{eqnarray}

\begin{lemma}
  \label{r4}
  For $c:I\to H^{\ot2}$, the identity \eqref{e:inv} is equivalent to
  \begin{gather}
    \label{e:inv2}
    \Delta*  c\ep  =  c\ep  *\Delta.
  \end{gather}
\end{lemma}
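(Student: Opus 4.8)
The plan is to recognize \eqref{e:inv} as the statement that $c$ is invariant under conjugation by $\De$ in the convolution algebra $\calC(H,H^{\ot2})$, and to exploit the fact that $\De$ is convolution-invertible there. First I would regard $H^{\ot2}$ as an algebra in $\calC$ with multiplication $\mu_2=(\mu\ot\mu)(\id\ot P\ot\id)$ and unit $\eta\ot\eta$, so that $\calC(H,H^{\ot2})$ carries the convolution product $f*g=\mu_2(f\ot g)\De$ with unit $(\eta\ot\eta)\ep$. Viewed as a morphism $\De\colon H\to H^{\ot2}$, the comultiplication is an algebra map by \eqref{h3}; hence it is convolution-invertible in $\calC(H,H^{\ot2})$, with convolution-inverse $\De^{-1}=\De S=(S\ot S)\De$. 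Indeed $\De*(\De S)=\mu_2(\De\ot\De)(\id\ot S)\De=\De\,\mu(\id\ot S)\De=\De\eta\ep=(\eta\ot\eta)\ep$ by \eqref{h3} and \eqref{h4}, and symmetrically for $(\De S)*\De$; the identity $\De S=(S\ot S)\De$ uses cocommutativity \eqref{h5}.

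The key step is to rewrite the left-hand side of \eqref{e:inv} as the convolution conjugate
\[
(\ad\ot\ad)(\id\ot P\ot\id)(\De\ot c)=\De*(c\ep)*\De^{-1}.
\]
I would verify this by expanding $\ad=\mu^{[3]}(\id\ot\id\ot S)(\id\ot P)(\De\ot\id)$ on each tensor factor of the left-hand side, and by expanding $\De^{-1}=(S\ot S)\De$ on the right-hand side. Using coassociativity, both sides reduce to expressions built from $\De^{[4]}$, the two copies of $S$, and $c$; the only discrepancy is that the four ``legs'' produced by $\De^{[4]}$ are grouped as $(1,2\,|\,3,4)$ on one side and as $(1,3\,|\,2,4)$ on the other. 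These two groupings are exchanged by transposing the two middle legs of $\De^{[4]}$, which is permissible precisely because $H$ is cocommutative: \eqref{h5} makes $\De^{[4]}$ invariant under permutation of its outputs. This reconciliation is the main obstacle, since it is the one place where the hypotheses on $H$ (cocommutativity, together with the precise form of $\ad$) genuinely enter; everything else is formal manipulation of the convolution product.

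Granting the rewriting, the conclusion is immediate. The right-hand side of \eqref{e:inv} is $c\ep$, so \eqref{e:inv} becomes $\De*(c\ep)*\De^{-1}=c\ep$. Since $\De$ is convolution-invertible, I would convolve on the right with $\De$ to obtain the equivalent identity $\De*(c\ep)=(c\ep)*\De$, which is exactly \eqref{e:inv2}; conversely, convolving \eqref{e:inv2} on the right with $\De^{-1}$ recovers \eqref{e:inv}. Hence \eqref{e:inv} and \eqref{e:inv2} are equivalent.
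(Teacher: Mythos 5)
Your proposal is correct and takes essentially the same approach as the paper's proof: both identify the left-hand side of \eqref{e:inv} with the convolution conjugate $\Delta * c\epsilon * \Delta^{-1}$, where $\Delta^{-1}=(S\ot S)\Delta$ is the convolution-inverse of $\Delta$, and then use convolution-invertibility of $\Delta$ to pass back and forth to \eqref{e:inv2}. The only difference is that you spell out (correctly, including the role of cocommutativity in regrouping the legs of $\Delta^{[4]}$) the identification that the paper dismisses as ``easy to see.''
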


\begin{proof}
  It is easy to see that \eqref{e:inv} is equivalent to $\Delta*  c\epsilon *\Delta^-=c\epsilon$,
  where $\Delta^-:=(S\ot S)\Delta$ is the convolution-inverse to
  $\Delta$.  Thus \eqref{e:inv} is equivalent to
  \eqref{e:inv2}$*\Delta^-$.  Since $\Delta$ is convolution-invertible, \eqref{e:inv} and \eqref{e:inv2} are equivalent.
\end{proof}

\begin{proposition}
  \label{r6}
  Let $(H,c)$ be a Casimir Hopf algebra.  Then we have a
  version of the 4T relation in $\C(I,H^{\ot 3})$:
  \begin{gather}
    \label{e:4T}
    (c_{12}+c_{13})*c_{23} = c_{23}*(c_{12}+c_{13}).
  \end{gather}
\end{proposition}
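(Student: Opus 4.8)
The plan is to reduce the $4$T relation to the invariance of $c$ in its convolution form \eqref{e:inv2}, by isolating the ``inert'' first tensor factor of $H^{\ot3}$. Throughout, I keep in mind that the convolution on $\C(I,H^{\ot3})$ is, by \eqref{e12}, simply $f*g=\mu_3(f\ot g)$, i.e.\ multiplication in the algebra $H^{\ot3}$.

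First I would rewrite the left-hand factor of each side using the elementary property \eqref{e13'}, namely $c_{12}+c_{13}=(\id_H\ot\Delta)c$. Thus $c_{12}+c_{13}$ appears as a single morphism $I\to H\ot H^{\ot2}$ whose first leg is the first leg of $c$ and whose last two legs are $\Delta$ applied to the second leg of $c$, whereas $c_{23}=\eta\ot c$ has the unit $\eta$ in its first leg. The point is that, since $\mu_3$ multiplies $H^{\ot3}$ factorwise and one of the two first legs is $\eta$, the unit axiom $\mu(\id\ot\eta)=\id=\mu(\eta\ot\id)$ makes the first factor of both products $(c_{12}+c_{13})*c_{23}$ and $c_{23}*(c_{12}+c_{13})$ equal to the (untouched) first leg of $c$; the remaining two factors then carry a convolution in $\C(I,H^{\ot2})$.

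Concretely, writing $L:=\Delta*(c\ep)=\mu_2(\Delta\ot c)$ and $R:=(c\ep)*\Delta=\mu_2(c\ot\Delta)$ in $\C(H,H^{\ot2})$, I expect to establish the factorizations
\begin{gather*}
  (c_{12}+c_{13})*c_{23}=(\id_H\ot L)\circ c,\qquad
  c_{23}*(c_{12}+c_{13})=(\id_H\ot R)\circ c,
\end{gather*}
in which the second leg of the spectator copy of $c$ is fed into the $H$-slot of $L$ (resp.\ $R$), while its first leg rides along in the first tensor factor. Granting these, Lemma~\ref{r4} tells us that the invariance \eqref{e:inv} of $c$ is equivalent to \eqref{e:inv2}, that is, to $L=R$; substituting $L=R$ into the two factorizations yields $(c_{12}+c_{13})*c_{23}=c_{23}*(c_{12}+c_{13})$, as required.

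The one delicate step is the middle one: turning the informal ``factorwise'' description of $\mu_3$ into an honest identity of morphisms requires care with the reshuffling braidings $(\id^{\ot2}\ot P_{H,H^{\ot2}}\ot\id)$ built into the definitions of $\mu_2$ and $\mu_3$, and with sliding the unit $\eta$ past them. This is routine using the coherence of the symmetric monoidal structure (braidings involving the unit object $I$ are trivial) together with the Hopf algebra axioms, but it is the part that must be verified carefully rather than asserted. I would note that the only substantive structural inputs to the argument are the primitivity of $c$ (through \eqref{e13'}) and its invariance (through Lemma~\ref{r4}), everything else being bookkeeping.
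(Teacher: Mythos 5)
Your proposal is correct and is essentially the paper's own proof: both rewrite $c_{12}+c_{13}=(\id\ot\Delta)c$ via \eqref{e13'}, observe that the first tensor factor is a spectator so that $(c_{12}+c_{13})*c_{23}=(\id\ot(\Delta*c\ep))c$ and $c_{23}*(c_{12}+c_{13})=(\id\ot(c\ep*\Delta))c$, and then conclude by the invariance identity \eqref{e:inv2}. The "delicate" bookkeeping step you flag is exactly the middle equality the paper asserts directly, so your verification of it only makes the same argument more explicit.
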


\begin{proof}
  Using \eqref{e13'} and \eqref{e:inv2}, we have
\begin{eqnarray*}
     (c_{12}+c_{13})*c_{23}
     \ = \ (\id\ot\Delta)c*c_{23}
     &=& (\id\ot(\Delta*c\ep))c \\
     & = & (\id\ot (c\ep*\Delta))c \\
     &= & c_{23}*(\id\ot\Delta)c
     \ = \ c_{23}*(c_{12}+c_{13}).
  \end{eqnarray*}
\end{proof}

\begin{example}
  \label{r3}
  (1) In Section \ref{sec:weight_systems}, we consider \emph{Casimir Lie algebras} (including semi-simple Lie algebras)
  and observe that their universal enveloping algebras are instances of Casimir Hopf algebras.

  (2) Every linear combination of Casimir $2$-tensors is a Casimir
  $2$-tensor.  In particular, $0: I\to H\ot H$ is a Casimir
  $2$-tensor.
\end{example}

\subsection{Casimir Hopf algebras and infinitesimal braidings}  \label{sec:casimir-hopf-algebra}

The {above} notion of Casimir Hopf algebra 
{is a Hopf-algebraic} version of the notion of infinitesimal braiding
for {symmetric monoidal categories, 
introduced by Cartier \cite{Cartier} (see also \cite{Kassel})}.  

Recall that an \emph{infinitesimal braiding}  {in} a linear symmetric strict monoidal
category $\C$ is a natural transformation
\begin{gather*}
  t_{x,y}:x\ot y\lto x\ot y
\end{gather*}
such that
\begin{gather}
  \label{e67}P_{x,y}t_{x,y}=t_{y,x}P_{x,y},\\
  \label{e68}t_{x,y\ot z }=t_{x,y}\ot \id_z +(\id_x\ot P_{z,y})(t_{x,z}\ot \id_y)( \id_x \ot P_{y,z})
\end{gather}
for $x,y,z\in\Ob(\C)$.  {For instance,} {the linear version of} the
category $\A$ of Jacobi diagrams {(see Remark \ref{r30})} admits an
infinitesimal braiding; see \cite[Section~XX.5]{Kassel}.  Note that
\eqref{e67} and \eqref{e68} imply
\begin{gather}
  \label{e8}
  t_{x\ot y,z}= ( P_{y,x} \ot  \id_z) (\id_y\ot t_{x,z})( P_{x,y} \ot \id_z )+ \id_x \ot t_{y,z}.
\end{gather}

Let $H$ be a cocommutative Hopf algebra in $\C$.  A  $H$-module
$(x,\action)$ is said to be \emph{trivial} if $\action=\ep\ot\id_x$.
An infinitesimal braiding $t_{x,y}$ in  {the symmetric monoidal category $\HMod$ of   $H$-modules} is called \emph{strong}
if it vanishes whenever $x$ or $y$ is a trivial $H$-module.
{The following shows that {strength of infinitesimal braiding in
    module categories} is automatic {for} some {underlying} symmetric monoidal categories~$\C$, such as the category $\Vect_\K$ of vector spaces.

\begin{proposition}
Let $\C$ be a linear symmetric strict monoidal category. We assume that the
functor $\C(I,-): \C \to \Vect_\K$ is faithful, and the tensor
product map $\C(I,x) \otimes  \C(I,y) \to \C(I,x\ot y)$ is surjective for {each} $x,y \in \Ob(\C)$.
Then, for {every} cocommutative Hopf algebra $H$, {every} infinitesimal braiding $t$ in $\Mod_H$ is strong.
\end{proposition}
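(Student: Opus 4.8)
The plan is to prove that $t_{x,y}=0$ whenever the first factor $x$ is a trivial $H$-module; the case where $y$ is trivial then follows from the symmetry relation \eqref{e67}, for if the trivial module sits in the second slot we already have $t_{y,x}=0$ (trivial factor on the left), and since $P_{x,y}$ is invertible in $\Mod_H$, \eqref{e67} gives $P_{x,y}t_{x,y}=t_{y,x}P_{x,y}=0$, whence $t_{x,y}=0$. So fix a trivial module $x=(V,\ep\ot\id_V)$ and an arbitrary module $y=(W,\rho)$, and view $t_{x,y}$ as a morphism $V\ot W\to V\ot W$ of $\C$. By the faithfulness of $\C(I,-)$ it suffices to show $t_{x,y}\circ c=0$ for every $c\colon I\to V\ot W$ in $\C$, and by the surjectivity hypothesis together with $\K$-linearity it suffices to treat $c=a\ot b$ with $a\colon I\to V$ and $b\colon I\to W$ in $\C$.

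I would first record that $t$ vanishes on the monoidal unit $\mathbf{1}=(I,\ep)$ of $\Mod_H$. Substituting $x=\mathbf{1}$ into the identity \eqref{e8}, which holds in any linear symmetric strict monoidal category and hence in $\Mod_H$ (well defined since $H$ is cocommutative), and using $\mathbf{1}\ot y=y$ together with $P_{\mathbf{1},y}=P_{y,\mathbf{1}}=\id$, one obtains $t_{y,z}=(\id_y\ot t_{\mathbf{1},z})+t_{y,z}$, so that $\id_y\ot t_{\mathbf{1},z}=0$ for all $y$; taking $y=\mathbf{1}$ gives $t_{\mathbf{1},z}=0$ for every $H$-module $z$. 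This is the only place where the defining relations of an infinitesimal braiding are used.

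The crucial point is that, whereas $b\colon I\to W$ need not be a morphism of $H$-modules (the module $y$ being arbitrary), every $a\colon I\to V$ in $\C$ is automatically a morphism of $H$-modules from $\mathbf{1}=(I,\ep)$ to the trivial module $x=(V,\ep\ot\id_V)$: the interchange law gives $(\ep\ot\id_V)(\id_H\ot a)=\ep\ot a=a\circ\ep$, which is exactly the required compatibility with the trivial actions. Hence naturality of $t$ in $\Mod_H$ applies to the pair $(a,\id_y)$ and yields $t_{x,y}\circ(a\ot\id_W)=(a\ot\id_W)\circ t_{\mathbf{1},y}=0$ by the vanishing on the unit. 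Factoring $a\ot b=(a\ot\id_W)\circ b$ then gives $t_{x,y}\circ(a\ot b)=0$, and the conclusion $t_{x,y}=0$ follows, which is precisely the asserted strongness of $t$. I expect the main obstacle to be exactly this step: naturality is available only along $H$-module morphisms, so $b$ cannot be inserted directly, and the proof turns on the observation that morphisms into a trivial module are automatically module morphisms, combined with the factorization that isolates the trivial tensor factor.
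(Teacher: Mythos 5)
Your proof is correct and takes essentially the same approach as the paper's: both reduce, via the faithfulness and surjectivity hypotheses, to evaluating $t_{x,y}$ on tensor products of points $I\to x$, $I \to y$, observe that a point of a trivial module is automatically an $H$-module morphism out of the monoidal unit $(I,\ep)$, and conclude by naturality combined with the vanishing of $t$ when one argument is the unit. The only immaterial differences are that you trivialize the first tensor factor and deduce the second-factor case from the symmetry relation \eqref{e67}, obtaining the vanishing on the unit from \eqref{e8}, whereas the paper treats the second factor directly using $t_{x,I}=0$ from \eqref{e68}.
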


\begin{proof}
The assumptions on $\C$ imply that the map
\begin{gather*}
  \tau_{x,y}: \C(x \ot y, x \ot y) \lto \Hom_\K\big( \C(I,x) \otimes
 \C(I,y) , \C(I,x\ot y)\big)
\end{gather*}
defined by $ \tau_{x,y}(a) := (b\ot c \mapsto a  (b\ot c))$ is injective for  $x,y\in \Ob(\C)$.
Let $x,y \in \Ob(\Mod_H)$ {with $y$ being}
a trivial $H$-module. Then, for {each} $b:I\to x$, $c:I\to y$ in $\C$, we have
  \begin{gather*} 
    \tau_{x,y}(t_{x,y})(b\ot c) = t_{x,y}(b\ot c) = t_{x,y} (\id_x \ot c) b     =(\id_x\ot c)t_{x,I} b =0,
  \end{gather*}
  since $c$ is {an} $H$-module morphism and \eqref{e68} implies that $t_{x,I}=0$ for {every} infinitesimal braiding.
  Since $\tau_{x,y}$ is injective, we have $t_{x,y}=0$.
  Thus $t$ is strong.
\end{proof}
}

{
We now prove that,  given a cocommutative Hopf algebra~$H$ in $\C$,
there is a one-to-one correspondence between Casimir $2$-tensors for $H$ and strong infinitesimal braidings {in} $\Mod_H$.
This {result} generalizes \cite[Proposition XX.{4.2}]{Kassel}, where $\C=\Vect_\K$.
{Let $H^l:=(H,\mu)\in\Mod_H$, the regular representation of $H$.

\begin{proposition}
  \label{r49}
  Let $H$ be a cocommutative Hopf algebra in a linear symmetric {strict} monoidal  category $\C$.
  \begin{enumerate}
  \item[(a)]
    {Every} Casimir $2$-tensor for $H$ induces a strong infinitesimal braiding in  $\HMod$  defined by 
  \begin{equation}  \label{t_from_c}
    t_{x,y}= (\action_x\ot\action_y)( \id_H \ot P_{H,x}\ot \id_y)(c\ot \id_x \ot \id_y ):x\ot
    y\lto x\ot y
  \end{equation}
  for  $H$-modules $x=(x,\action_x)$ and  $y=(y,\action_y)$.
  \item[(b)] {Every} strong infinitesimal braiding $t$ in~$\Mod_H$ induces a Casimir $2$-tensor 
    \begin{gather}
      \label{e277}
    c:= t_{H^l,H^l}(\et\ot\et): I\lto H\ot H 
  \end{gather}
  for $H$ in $\C$ and $t_{x,y}$ is of the form \eqref{t_from_c} for {each $x,y\in\Mod_H$}.
  \end{enumerate}
\end{proposition}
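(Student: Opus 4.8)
The plan is to treat the two parts separately: for (a) I would check the infinitesimal–braiding axioms one at a time against the Casimir axioms, and for (b) I would reduce everything to the regular representation $H^l=(H,\mu)$ by means of the free--forgetful adjunction of $\Mod_H$.

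For part (a), write $t_{x,y}$ for the morphism \eqref{t_from_c} built from a Casimir $2$-tensor $c$. Each defining property of an infinitesimal braiding will come from exactly one Casimir axiom. Naturality in $x$ and $y$ is immediate, since \eqref{t_from_c} only involves $c$ and the actions, so any $H$-module map slides through; that $t_{x,y}$ is itself a morphism of $H$-modules is the invariance \eqref{e:inv}, i.e.\ the statement that $c\colon I\to H^{\ot2}$ is an $H$-module morphism. The symmetry axiom \eqref{e67} is a direct consequence of $Pc=c$ \eqref{e:sym}, and the additivity axiom \eqref{e68} follows from the co-additivity \eqref{e13'}, the two summands of \eqref{e68} matching those of $(\id\ot\Delta)c=c_{12}+c_{13}$ once the action on $y\ot z$ unfolds its defining $\Delta$. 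Finally strength follows from \eqref{e13''}: if $y$ (resp.\ $x$) is trivial the corresponding leg of $c$ is capped by $\ep$, and $(\id\ot\ep)c=0$ (resp.\ $(\ep\ot\id)c=0$) kills $t_{x,y}$. These are all routine diagram chases.

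For part (b) the key device is the free module $\mathcal F(a)$ on an object $a$. I would first record three facts: $\mathcal F(a)=H^l\ot a_{\mathrm{triv}}$ (a short computation using $(\id\ot\ep)\Delta=\id$); the counit $\action_x\colon\mathcal F(x)\to x$ on the underlying object of $x$ is a morphism in $\Mod_H$ that is split in $\C$ by $\eta\ot\id_x$; and the tensor action on $H^l\ot H^l$ is $\action^\otimes=\mu_2(\Delta\ot\id)$ with $\action^\otimes(\id_H\ot\eta\ot\eta)=\Delta$. Using \eqref{e8}, \eqref{e68} and strength (every leg that becomes a trivial module drops out), $t$ on free modules is controlled by the single morphism $T:=t_{H^l,H^l}$, via $t_{\mathcal F(a),\mathcal F(b)}=(P_{a,H}\ot\id)(\id_a\ot T\ot\id_b)(P_{H,a}\ot\id)$. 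Naturality of $t$ along the counits $\action_x,\action_y$ then gives $t_{x,y}=(\action_x\ot\action_y)\,t_{\mathcal F(x),\mathcal F(y)}\,\bigl((\eta\ot\id_x)\ot(\eta\ot\id_y)\bigr)$; substituting the previous formula and pushing the unit insertions through $T$ collapses $T$ to $T(\eta\ot\eta)=c$ as in \eqref{e277}, and the result is exactly \eqref{t_from_c}. This establishes the ``of the form \eqref{t_from_c}'' assertion, and crucially uses only the value $T(\eta\ot\eta)$, so no ``spanning by simple tensors'' hypothesis on $\C$ is needed.

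It remains to verify that $c=T(\eta\ot\eta)$ is Casimir. Symmetry $Pc=c$ is \eqref{e67} at $x=y=H^l$ together with $P(\eta\ot\eta)=\eta\ot\eta$. Primitivity \eqref{e:add} comes from applying \eqref{e8} to $t_{H^l\ot H^l,H^l}$, evaluating at $\eta^{\ot3}$, and matching with $(\Delta\ot\id)c$ via naturality along the $H$-linear map $\Delta\colon H^l\to H^l\ot H^l$: the two terms of \eqref{e8} reproduce $c_{13}$ and $c_{23}$. Invariance is the delicate step and I expect it to be the main obstacle, since the element-wise argument of \cite{Kassel} for $\C=\Vect_\K$ is unavailable; instead I would use that right multiplication is, categorically, the $H$-linear map $\mu\colon\mathcal F(H)=H^l\ot H_{\mathrm{triv}}\to H^l$. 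Naturality of $t$ along $\mu$ in both variables, with strength, yields $T=\mu_2(c\ot\id_{H\ot H})$, i.e.\ $T$ is left multiplication by $c$; combining this with the $H$-linearity of $T$ (so that $T$ commutes with $\action^\otimes=\mu_2(\Delta\ot\id)$) and evaluating at $\eta\ot\eta$ produces $\Delta*c\ep=c\ep*\Delta$, which is invariance by Lemma \ref{r4}. As (a) and (b) are mutually inverse by construction, the one-to-one correspondence follows.
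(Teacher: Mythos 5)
Your proposal is correct and takes essentially the same route as the paper's proof: part (a) by matching each infinitesimal-braiding axiom with the corresponding Casimir axiom (\eqref{e:sym} for \eqref{e67}, \eqref{e13'} for \eqref{e68}, \eqref{e13''} for strength, \eqref{e:inv2} for $H$-linearity), and part (b) by using that each action $\action_x: H^l\ot x^\ep\to x$ is a morphism in $\Mod_H$ (your free-module counit), reducing $t$ via \eqref{e68}, \eqref{e8} and strength to $T=t_{H^l,H^l}$, and then obtaining invariance by evaluating the $H$-linearity of $T$ at units and comparing with $T=\mu_2(c\ot\id_{H\ot H})$. The only differences are cosmetic: the free--forgetful adjunction phrasing, and re-deriving $T=\mu_2(c\ot\id_{H\ot H})$ by naturality along $\mu$ instead of citing the already-established formula \eqref{t_from_c} at $x=y=H^l$.
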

}

\begin{proof}
{We} {only} sketch the proof {of (a),} leaving the details to the
  reader.  It is easy to check \eqref{e67} and \eqref{e68}.
  Naturality of {$t$}, i.e., $t_{x',y'}(f\ot g)=(f\ot g)t_{x,y}$
  for $f:x\to x'$ and $g:y\to y'$ in $\Mod_H$, follows from
  the definition of $H$-module morphisms.  We can check that $t_{x,y}$
  is an $H$-module morphism by using \eqref{e:inv2} and the definition
  of $H$-modules.  Using $(\ep\ot\id)c=0=(\id\ot\ep)c$, we see
  that $t_{x,y}$ is a strong infinitesimal braiding.
  
{We now prove (b).} 
  We first verify \eqref{t_from_c}.  Note that for each $H$-module
  $x=(x,\action_x)$, the action $\action_x$ gives a 
  morphism $\action_x:H^l\ot x^\ep\to x$ {in $\Mod_H$}, where $x^\ep:=(x,\ep\ot\id_x)$ is the trivial $H$-module.
  {Therefore, the naturality of $t$ implies that}
  \begin{eqnarray}
    \notag t_{x,y}     &=&t_{x,y}\, (\action_x\ot\action_y)\, (\et\ot\id_x\ot\et\ot\id_y)\\
    \label{ttt} &=&(\action_x\ot\action_y)\, t_{H^l\ot x^\ep,H^l\ot y^\ep}\, (\et\ot\id_x\ot\et\ot\id_y).
  \end{eqnarray}
  Using \eqref{e68} and \eqref{e8},
  we can express $t_{H^l\ot x^\ep,H^l\ot y^\ep}$ 
  as a sum of four {morphisms} involving  $t_{H^l,H^l}$, $t_{H^l,y^\ep}$, $t_{x^\ep,H^l}$ and
  $t_{x^\ep,y^\ep}$, with the last three being $0$ since $t$ is
  strong.  Hence,
  \begin{gather*}
    t_{H^l\ot x^\ep,H^l\ot y^\ep}= (\id_{{H}} \ot P_{H,x}\ot\id_y)\, (t_{H^l,H^l}\ot\id_x\ot\id_y)\, (\id_{{H}} \ot P_{x,H}\ot\id_{{y}}).
  \end{gather*}
  This and \eqref{ttt} imply \eqref{t_from_c}.
  
  { We now check the axioms of a Casimir $2$-tensor for $c$. We easily
    obtain \eqref{e:sym} from \eqref{e67}.  The identity \eqref{e:add}
    follows from \eqref{e8} since $\Delta: H^l \to H^l \otimes H^l$ is
    a morphism of $H$-modules.  It remains to} verify \eqref{e:inv}
    or, equivalently, \eqref{e:inv2}.  Since $t_{H^l,H^l}$ is a
    morphism in $\Mod_H$, we have
  \begin{eqnarray*}
    &&(\mu\ot\mu)(\id\ot P\ot\id)(\De\ot\id\ot\id)(\id\ot t_{H^l,H^l})\\
    & =&t_{H^l,H^l}(\mu\ot\mu)(\id\ot P\ot\id)(\De\ot\id\ot\id)
  \end{eqnarray*}
  {and, by pre-composing with} $\id\ot\et\ot\et$, we obtain $\De* c\ep
  =t_{H^l,H^l}\De$.  {Moreover, \eqref{t_from_c} with $x=y=H^l$
  implies} $t_{H^l,H^l}\De=c\ep*\De$.  Hence \eqref{e:inv2}.
\end{proof}

\begin{remark}
  \label{r50}
  It is \emph{not} possible to generalize Proposition \ref{r49}(b) to
  infinitesimal braidings that are not strong.  Here is a
  counterexample.  Let $\C$ be a linear symmetric strict monoidal
  category  equipped with a non-zero infinitesimal braiding $t$.
  Consider the {\emph{trivial} Hopf algebra  in $\C$, defined by} 
  $H=I$ with $\mu=\et=\De=\ep=S=\id_I$.  Then $t$ is an infinitesimal braiding in
  $\Mod_{I}$ via the canonical isomorphism ${\Mod_{I}\cong\C}$.  
  Since every {$I$-module is trivial,}   $t$ is not strong in $\Mod_{I}$. 
  {However}, the  Casimir $2$-tensor $c$ for {$I$} given in \eqref{e277} is zero since
  $I^l=(I,\id_I)$ {is the monoidal unit  of  ${\Mod_{I}\cong\C}$}.  
  Therefore, $c$ and $t$ are not related by \eqref{t_from_c}.
\end{remark}  

\subsection{Casimir elements} \label{sec:casimir-elements}

Now we give an alternative viewpoint on Casimir $2$-tensors.
Let $H$ be a cocommutative Hopf algebra in a linear symmetric strict monoidal category $\calC$.

\begin{definition}
  A \emph{Casimir element} for $H$ is a morphism $r: I\to H$
    which is \emph{central} and \emph{quadratic}:
    \begin{gather}
      \label{e1}
      \mu(\id\ot r)=\mu(r\ot \id),\\
      \label{e9}
      r_{123}-r_{12}-r_{13}-r_{23}+r_1+r_2+r_3=0,\\
      \label{e0}
      S r = r,
    \end{gather}
    where
    \begin{gather*}
      r_{123}:=\Delta^{[3]}r,\quad  r_{12} :=\Delta r\ot \eta,\quad
    r_{13}:=(\id\ot\eta\ot\id)\Delta r,\quad  r_{23}:=\eta \ot \Delta r,\\
    r_1:=r\ot\eta\ot\eta,\quad  r_2:=\eta\ot r\ot\eta,\quad   r_3:=\eta\ot\eta\ot r.
    \end{gather*}
\end{definition}

The notion of a Casimir Hopf
algebra is equivalent to that of a cocommutative Hopf algebra with a Casimir element, as follows.

\begin{proposition}
  \label{r1}
 There is a one-to-one correspondence
  \begin{equation} \label{correspondence}
\xymatrix{
\left\{\hbox{Casimir $2$-tensors for $H$} \right\}
\ar@<.7ex>[rr]^-{c\;\rightmapsto\;r_c}
&& \ar@<.7ex>[ll]^-{c_r\;\leftmapsto\;r}
\left\{\hbox{Casimir elements for $H$} \right\},
}
\end{equation}
associating to a Casimir $2$-tensor $c$ a Casimir element
    \begin{gather}
      \label{e22}
      r_c:=\frac12\mu c \ :I\lto H,
    \end{gather}
and  to a Casimir element $r$ a Casimir $2$-tensor
    \begin{gather}
      \label{e14}
      c_r  := \Delta r -r\ot\eta-\eta\ot r \ :I\lto H\ot H.
    \end{gather}
\end{proposition}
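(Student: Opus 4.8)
The plan is to verify that the two assignments $c \mapsto r_c$ and $r \mapsto c_r$ are well-defined (i.e.\ each lands in the claimed set) and are mutually inverse. I would organize the argument around four verification blocks. First, I would check that $c_r$ defined by \eqref{e14} is a Casimir $2$-tensor whenever $r$ is a Casimir element. The primitivity \eqref{e:add} should follow by applying $\Delta \ot \id$ to \eqref{e14} and using coassociativity \eqref{h2} together with the ``quadratic'' relation \eqref{e9}, which is precisely the linearized/logarithmic form of the statement that $\Delta r$ differs from a primitive element by the boundary terms $r \ot \eta + \eta \ot r$; expanding $(\Delta\ot\id)\Delta r = \Delta^{[3]}r = r_{123}$ and collecting terms should reproduce $c_{13}+c_{23}$ exactly. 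Symmetry \eqref{e:sym} of $c_r$ follows from cocommutativity \eqref{h5} of $H$ applied to $\Delta r$ (the terms $r\ot\eta$ and $\eta\ot r$ are swapped by $P$, and $P\Delta r = \Delta r$). For invariance I would use Lemma \ref{r4} and verify \eqref{e:inv2}, i.e.\ $\Delta * c_r\epsilon = c_r\epsilon * \Delta$; since $c_r\epsilon = \Delta\, r\epsilon - (r\ot\eta)\epsilon - (\eta\ot r)\epsilon$, centrality \eqref{e1} of $r$ should be exactly what is needed to commute $r\epsilon$ past $\Delta$ under the convolution product.

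Second, I would check that $r_c := \tfrac12 \mu c$ is a Casimir element whenever $c$ is a Casimir $2$-tensor. Centrality \eqref{e1} should follow from the invariance \eqref{e:inv} of $c$ (equivalently \eqref{e:inv2}) by applying $\mu$, since invariance says $c$ is an $H$-module morphism and $\mu c$ is then central for the multiplication. The relation \eqref{e0}, namely $S r_c = r_c$, should follow from \eqref{e13'''}, the antipode property $(S\ot\id)c = -c$, combined with the Hopf-algebra identity $S\mu = \mu(S\ot S)P$ and cocommutativity. The quadratic relation \eqref{e9} is the one I expect to be the main obstacle: I would derive it by applying $\mu_{?}$ and comultiplications to $c$ and carefully combining primitivity \eqref{e:add}, its dual \eqref{e13'}, and the counit vanishing \eqref{e13''}. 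The cleanest route is probably to compute $\Delta^{[3]} r_c$ directly in terms of $\tfrac12 \mu c$ and expand using $\Delta\mu = \mu_2(\Delta\ot\Delta)$ from \eqref{h3} together with the known comultiplications of $c$, then check that the alternating sum $r_{123}-r_{12}-r_{13}-r_{23}+r_1+r_2+r_3$ telescopes to $0$.

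Third, I would verify the two round-trip identities. For $r \mapsto c_r \mapsto r_{c_r}$, I compute
\begin{gather*}
  r_{c_r} = \tfrac12 \mu c_r = \tfrac12\mu(\Delta r - r\ot\eta - \eta\ot r) = \tfrac12\big(\mu\Delta\, r - r - r\big),
\end{gather*}
using the unit axioms \eqref{h1}, and I would need $\mu\Delta\, r = 2r$; this should come from the quadratic relation \eqref{e9} after applying a suitable counit, or more directly from the fact that $\mu\Delta r - r - r = \mu(\Delta r - r\ot\eta-\eta\ot r)$ and that $c_r$ has $(\epsilon\ot\id)c_r = 0$, forcing $\mu c_r$ to recover $2r$. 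For the reverse composite $c \mapsto r_c \mapsto c_{r_c}$, I compute
\begin{gather*}
  c_{r_c} = \Delta r_c - r_c\ot\eta - \eta\ot r_c = \tfrac12\big(\Delta\mu\, c - (\mu c)\ot\eta - \eta\ot(\mu c)\big),
\end{gather*}
and I must show this equals $c$. Expanding $\Delta\mu\, c$ via \eqref{h3} as $\mu_2(\Delta\ot\Delta)c$ and substituting the values of $(\Delta\ot\id)c$ and $(\id\ot\Delta)c$ from \eqref{e:add} and \eqref{e13'}, together with the vanishing \eqref{e13''}, should collapse the expression to $c$. This final collapse is where cocommutativity and the symmetry \eqref{e:sym} of $c$ enter to merge the cross terms.

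The main obstacle, as flagged, is the bookkeeping in establishing the quadratic relation \eqref{e9} for $r_c$ and, symmetrically, the identity $\mu\Delta\, r = 2r$ needed for the round trip. These are genuinely diagrammatic computations in the symmetric monoidal category $\C$, so I would phrase them abstractly using the inductively defined iterated products $\mu^{[m]}$ and coproducts $\Delta^{[m]}$ from Section \ref{sec:hopf-algebr-symm}, rather than element-chasing, to avoid being misled by the absence of a forgetful functor to $\Vect_\K$. Everything else reduces to the Hopf axioms \eqref{h1}--\eqref{h5}, the defining properties \eqref{e:add}--\eqref{e:inv} of a Casimir $2$-tensor, and their elementary consequences \eqref{e13'}--\eqref{e13'''} already recorded in the excerpt.
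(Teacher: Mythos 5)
Most of your outline coincides with the paper's proof: primitivity \eqref{e:add} of $c_r$ is equivalent to \eqref{e9}, symmetry comes from cocommutativity, and invariance of $c_r$ (in the form \eqref{e:inv2}, via Lemma \ref{r4}) follows from centrality \eqref{e1}; conversely, $S r_c = r_c$ follows from \eqref{e:sym} and \eqref{e13'''}, quadraticity of $r_c$ follows by expanding $\Delta \mu c$ (equivalently, from $r_{ij}=r_i+r_j+c_{ij}$ and $r_{123}=r_1+r_2+r_3+c_{12}+c_{13}+c_{23}$, after which \eqref{e9} telescopes), and $c_{(r_c)}=c$ follows from \eqref{e:add} and \eqref{e:sym}. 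Your centrality step is stated too loosely---post-composing \eqref{e:inv2} with $\mu$ alone gives nothing useful---but the idea behind your parenthetical is sound: $\mu$ is an $H$-module morphism because $H$ is cocommutative, so $r_c$ is ad-invariant, and ad-invariant elements are central (the paper instead post-composes \eqref{e:inv2} with $\mu(\id\ot S)$ and then convolves with $\id$).

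The genuine gap is in your round trip $r\mapsto c_r\mapsto r_{(c_r)}$. By your own display, $r_{(c_r)}=\frac12(\mu\Delta r-2r)$, so the identity you claim to need, $\mu\Delta r=2r$, would give $r_{(c_r)}=0$, not $r$; what is actually needed is $\mu c_r=2r$, i.e.\ $\mu\Delta r=4r$. Moreover, neither of your proposed derivations can produce it: applying counits to \eqref{e9} only yields $\ep r=0$ (every term collapses to $\ep r$), and ``$(\ep\ot\id)c_r=0$ forces $\mu c_r=2r$'' is a non sequitur, since any primitive symmetric $2$-tensor has vanishing counits on both legs while $\mu$ applied to it is unconstrained. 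In fact no argument using only centrality and quadraticity can work: in $H=\K[x]$ with $x$ primitive, $r:=x$ is central and satisfies \eqref{e9} (since $c_r=0$), yet $r_{(c_r)}=0\neq r$. The missing ingredient is precisely \eqref{e0}, which your proposal never invokes in this step. The paper's argument uses it essentially: since $c_r$ is already known to be a Casimir $2$-tensor, \eqref{e13'''} gives $(\id\ot S)c_r=-c_r$, whence
\begin{gather*}
  \mu c_r=-\mu(\id\ot S)c_r
  =-\big(\mu(\id\ot S)\Delta r - r - Sr\big)
  =-(\eta\ep r - r - Sr)=2r,
\end{gather*}
using \eqref{h4}, $\ep r=0$ (which is exactly what your counit computation does give), and $Sr=r$. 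Replacing your third block by this antipode computation closes the proof.
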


\begin{proof}
  Let $c$ be a Casimir $2$-tensor.  Let $r= r_c$.  We have
    \eqref{e0} by \eqref{e:sym} and \eqref{e13'''}.
    Post-composing $\mu(\id\ot S)$ to \eqref{e:inv2} gives
    $r\epsilon=\id* r\ep *S$;  taking $(-)*\id$, we obtain
    $r\ep*\id=\id* r\ep$, equivalent to \eqref{e1}.  Finally, \eqref{e9} follows from
    \begin{gather*}
      r_{123}=r_1+r_2+r_3+c_{12}+c_{13}+c_{23},\\
      r_{ij}=r_i+r_j+c_{ij}\quad (1\le i<j\le3).
    \end{gather*}
    Therefore $r=r_c$ is a Casimir element.

  Now let $r$ be a Casimir element.  Set $c=c_r$.  Then
  \eqref{e:add} follows from \eqref{e9}, and \eqref{e:sym} follows
  from the cocommutativity of $H$.  Moreover, \eqref{e:inv2} follows
  from \eqref{e1}.  Hence $c=c_r$ is a Casimir $2$-tensor.

  If $c$ is a Casimir $2$-tensor, then  $c_{(r_c)}=c$
  follows from \eqref{e:add} and \eqref{e:sym}.  If $r$ is a
  Casimir element, then
  \begin{eqnarray*}
    r_{(c_r)} &\by{e13'''}& r_{(-(\id \otimes S) c_r)} \\
    & \by{e22}& - \frac{1}{2}\mu (\id \otimes S) c_r \\
    &\by{e14}& - \frac{1}{2}\Bigl(\mu (\id \otimes S) \Delta r
    -\mu (\id \otimes S)(r\ot\eta)  -\mu (\id \otimes S)(\eta\ot r)\Bigr) \\
    &=& - \frac{1}{2}( \eta \epsilon r - r - Sr)
    \ \by{e7} \ \frac{1}{2} (r+Sr) \ \by{e0} \ r .
  \end{eqnarray*}
 \end{proof}

\subsection{Presentation of $\AB$}   \label{sec:presentation-ab}

 Recall from Section \ref{sec:symmetry_A} that $\AB$ is a
linear symmetric monoidal category.  Define morphisms in $\AB$
\begin{gather}
  \label{e51}
  \begin{aligned}
  \eta=\figz{eta}:0\to1,\quad
  \mu=\figz{mu}:2\to1,\quad
  \epsilon=\figz{epsilon}:1\to0,\\
  \Delta=\figz{Delta}:1\to2,\quad
  S=\figz{S}:1\to1,\quad
  c=\figz{r}:0\to2.
  \end{aligned}
\end{gather}

\begin{proposition}
  \label{r14}
  We have a Casimir Hopf algebra $(1,\eta,\mu,\epsilon,\Delta,S,c)$ in $\AB$.
\end{proposition}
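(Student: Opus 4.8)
The plan is to split the six structure morphisms of \eqref{e51} according to their degree. All of $\eta,\mu,\epsilon,\Delta,S$ have degree $0$, hence lie in the subcategory $\AB_0$, which by Section~\ref{sec:two_gradings} is isomorphic, via the homotopy-class functor $h$, to $\K\bfF^\op$. Under $h$ they are sent to the group homomorphisms
\begin{gather*}
  h(\mu)\colon x_1\mapsto x_1x_2,\qquad h(\Delta)\colon x_1,x_2\mapsto x_1,\qquad h(S)\colon x_1\mapsto \overline{x_1},\\
  h(\eta)\colon x_1\mapsto 1,\qquad h(\epsilon)\colon \free 0\to \free 1\ \text{(necessarily trivial)},
\end{gather*}
i.e.\ to the structure morphisms of the cocommutative Hopf algebra generating $\bfF^\op$ (the opposite of the commutative Hopf algebra of \cite{Pirashvili}). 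Since $h$ is an isomorphism of linear symmetric strict monoidal categories and every identity in \eqref{h1}--\eqref{h5} involves only compositions and tensor products of these degree-$0$ morphisms, hence stays inside $\AB_0$, each such identity is equivalent to the corresponding identity in $\K\bfF^\op$. The latter are checked at once on the free generators $x_i$; for example cocommutativity \eqref{h5} holds because $h(\Delta)$ is the diagonal $x_1,x_2\mapsto x_1$, which is fixed by post-composition with the swap $h(P)$. Alternatively, all of \eqref{h1}--\eqref{h5} may be verified directly from the pictures in \eqref{e51}.

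It remains to show that $c\colon 0\to 2$, the single chord joining the two solid arcs with no bead, is a Casimir $2$-tensor, that is, satisfies \eqref{e:sym}, \eqref{e:add} and \eqref{e:inv}. Symmetry \eqref{e:sym}, $Pc=c$, is immediate, as interchanging the two arcs carries the chord to itself. For primitivity \eqref{e:add}, I would compute $(\Delta\ot\id)c$ by doubling the first arc: by the cabling (box) rule of Section~\ref{sec:category-ab-jacobi} the univalent vertex of $c$ sitting on that arc is replaced by the sum of the two ways of attaching it to the two new copies, the other endpoint being untouched. This is exactly $c_{13}+c_{23}$, both summands carrying a plus sign because the doubling operation reverses no orientation.

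The one substantial point is invariance \eqref{e:inv}. Unwinding it, $(\ad\ot\ad)(\id\ot P\ot\id)(\Delta\ot c)$ conjugates both endpoints of $c$ by one and the same group-like input, and \eqref{e:inv} asserts that this returns $c$ (up to $\epsilon$); in other words $c$ is ad-invariant. The plan is to reduce this, via Lemma~\ref{r4}, to the convolution identity \eqref{e:inv2}, $\Delta * c\epsilon = c\epsilon * \Delta$, expand both sides with $\mu_2=(\mu\ot\mu)(\id\ot P\ot\id)$, and draw the resulting $\free m$-colored diagrams; each side then carries a single chord whose endpoints differ only by pushing one past a bead, so that the equality is precisely the colored $4$T relation established at the end of the proof of Theorem~\ref{th:chord_Jac} (equivalently the STU relation \eqref{eq:STU} built into $\AB(m,n)=\A(\Xn,\free m)$). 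This last verification is the main obstacle, since one must track carefully how the two endpoints of the chord are carried past the comultiplication and the beads before the colored $4$T relation can be applied; once both sides are reduced to colored chord diagrams on the same underlying $1$-manifold, $4$T closes the argument.
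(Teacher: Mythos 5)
Most of your proposal is sound and runs parallel to the paper's own proof: the reduction of the Hopf-algebra axioms to $\K\bfF^\op$ via the degree-$0$ isomorphism $h:\AB_0\to\K\bfF^\op$ is a valid, more explicit version of what the paper dismisses with ``one can easily verify'' (together with its pointer to algebraic arguments in the category $\bfF$); your verifications of \eqref{e:sym} and \eqref{e:add}, and the reduction of \eqref{e:inv} to \eqref{e:inv2} via Lemma~\ref{r4}, are exactly what the paper does.

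However, the step you yourself call ``the main obstacle'' is justified by the wrong relation, and this is a genuine error. Both sides of \eqref{e:inv2}, namely $\Delta\ast c\ep=(\mu\ot\mu)(\id\ot P\ot \id)(\Delta\ot c)$ and $c\ep\ast\Delta$, are \emph{degree-one} elements of $\AB(1,2)=\A(X_2,\free{1})$: each consists of a single chord joining the two arcs, with one bead $x_1$ on each arc, and the two sides differ only in whether the chord legs sit before or after the beads. The 4T relation \eqref{eq:4T} relates diagrams having at least two chords, and STU \eqref{eq:STU} involves a trivalent vertex; in degree one there are simply no 4T or STU relations available, so neither the colored 4T relation from the proof of Theorem~\ref{th:chord_Jac} nor ``the STU relation built into $\AB(m,n)$'' can identify these two diagrams. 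What actually proves the equality is the \emph{equivalence of colorings} built into the definition of $\A(X_2,\free{1})$: push each bead $x_1$ through the adjacent chord leg onto the chord (the last move of \eqref{eq:moves_ccd}), where the two resulting beads cancel by the merging moves. Equivalently, and this is the paper's one-picture proof, by homotopy invariance in the handlebody the chord simply slides through the handle past the beads. (In the tensor-word model of Section~\ref{sec:space-wm-n} this is literally the ``bead slide'' relation, which is listed there as a relation \emph{separate} from 4T.) Your argument goes through once this step is re-justified by the coloring-equivalence/homotopy relation rather than by 4T or STU.
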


\begin{proof}
  One can easily verify the axioms of a Hopf algebra.  (In fact,
  this can also be checked by reducing up to homotopy the topological
  arguments given in \cite{Habiro2} for the category~$\B$.  See also
  \cite{Habiro3} for related algebraic arguments in the symmetric
  monoidal category $\bfF$ of finitely generated free groups.)
  The cocommutativity  follows from
  \begin{gather*}
    P\Delta=\figb{PDelta}{10}=\Delta
  \end{gather*}
  where we write $P=P_{1,1}$.

  Now we check for $c$ the relations \eqref{e:add},
  \eqref{e:sym} and \eqref{e:inv2} of a Casimir  $2$-tensor. We have~\eqref{e:add}:
  \begin{gather*}
    \begin{split}
      &(\Delta\ot\id)c=\fig{Delta-r}
      =\fig{r13}+\fig{r23}
      =c_{13}+c_{23}.
    \end{split}
  \end{gather*}
  We have \eqref{e:sym}:
  \begin{gather*}
    Pc=\figb{Pr}{10}=c.
  \end{gather*}
  We have \eqref{e:inv2}:
  \begin{eqnarray*}
    \Delta*  c\ep  &=& (\mu\ot \mu)(\id\ot P\ot \id)(\Delta\ot c) \\
      &= & \figb{conv-Delta-r}{12}    \  = \ \figb{conv-r-Delta}{12}\\
      &= & (\mu\ot \mu)(\id\ot P\ot \id)(c\ot \Delta)  \ = \  c\ep  *\Delta.
  \end{eqnarray*}
\end{proof}

Let $H$ denote the cocommutative Hopf algebra
  $(1,\mu,\eta,\Delta,\epsilon,S)$ in $\AB$.
We prove the following theorem in the rest of this section.

\begin{theorem}
  \label{AB-pres}
  As a linear symmetric strict monoidal category, $\AB$ is
  free on the Casimir Hopf algebra $(H,c)$.
\end{theorem}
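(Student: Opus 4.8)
The plan is to realize Theorem \ref{AB-pres} as the assertion that a canonical comparison functor from the free PROP on a Casimir Hopf algebra to $\AB$ is an isomorphism, and to prove bijectivity by a combinatorial normal-form argument that refines the treatment of $\bfF$ in \cite{Habiro3}. Let $\mathcal{P}$ denote the free linear symmetric strict monoidal category generated by a Casimir Hopf algebra, i.e.\ the linear PROP for Casimir Hopf algebras. Since $(H,c)$ is a Casimir Hopf algebra in $\AB$ by Proposition \ref{r14}, the universal property of $\mathcal{P}$ yields a unique linear symmetric monoidal functor $\Psi\colon\mathcal{P}\to\AB$ sending the universal Casimir Hopf algebra to $(H,c)$, and the theorem is exactly the statement that each $\Psi\colon\mathcal{P}(m,n)\to\AB(m,n)$ is a linear isomorphism. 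Both categories are graded by degree, with $\deg c=1$ and all Hopf-structure morphisms in degree $0$; note that every defining relation of a Casimir Hopf algebra, namely \eqref{e:add}, \eqref{e:sym} and \eqref{e:inv2}, is homogeneous of positive degree. They are also $\bfF^\op$-graded (in $\AB$ this is the homotopy-class grading of Section \ref{sec:two_gradings}), and $\Psi$ respects both gradings, so it suffices to treat each bidegree.

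First I would prove surjectivity. By Theorem \ref{th:chord_Jac} the space $\AB(m,n)=\A(X_n,F_m)$ is spanned by restricted $F_m$-colored chord diagrams on $X_n$, i.e.\ diagrams whose beads all sit on the solid arcs. Such a diagram splits into its degree-$0$ ``skeleton'', a morphism of $\AB_0\cong\K\bfF^\op$, together with $k$ chords, where $k$ is the degree. The skeleton lies in the image of $\Psi$ because $\K\bfF^\op$ is generated as a linear symmetric monoidal category by the cocommutative Hopf algebra $H$; this is the linearized theorem of Pirashvili \cite{Pirashvili}, reproved combinatorially in \cite{Habiro3}. Each chord is the image of $c\colon 0\to 2$, whose two legs are routed to their endpoints by using comultiplications $\Delta$ to create attachment points and the symmetry to permute them, while a bead sliding past a chord-foot is governed by the $\ad$-invariance \eqref{e:inv}, which says precisely that $c$ is $H$-linear. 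Hence every restricted colored chord diagram lies in the image of $\Psi$, and $\Psi$ is surjective.

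Injectivity is the main obstacle, and here I would generalize the combinatorial argument of \cite{Habiro3}. The goal is to show that the Casimir axioms together with the Hopf-algebra axioms are a \emph{complete} set of relations, so that no additional relation holds in $\AB$. I would establish this by exhibiting an explicit basis of each fixed bidegree $\mathcal{P}(m,n)_d$ and checking that $\Psi$ maps it bijectively onto a basis of $\AB(m,n)_d$. In degree $0$ this is exactly the freeness of $\K\bfF^\op$ on the cocommutative Hopf algebra $H$, supplied by \cite{Pirashvili,Habiro3}. In positive degree, primitivity \eqref{e:add} pushes all branchings of the chord-legs through comultiplications into a standard position, $\ad$-invariance \eqref{e:inv2} strips beads off the legs, and symmetry \eqref{e:sym} together with cocommutativity of $H$ fixes an ordering. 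This reduces every element of $\mathcal{P}(m,n)_d$ to a normal form indexed by a degree-$0$ skeleton and an unordered placement of $k$ chord-feet on the output strands, subject to the single relation \eqref{e:4T} derived in Proposition \ref{r6}. These normal forms are precisely the restricted colored chord diagrams modulo the $4$T relation, which by definition of $\A(X_n,F_m)$ form a basis of $\AB(m,n)_d$; comparing the two descriptions shows $\Psi$ is bijective in each bidegree.

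The crux, and the step I expect to be most delicate, is verifying that the normal form in $\mathcal{P}$ is genuinely well defined: one must show that the Hopf-algebraic reduction on the skeleton and the chord-reduction governed by the Casimir axioms are confluent and interact correctly, so that the only surviving relation among chord placements is exactly $4$T — no more and no fewer than what holds in $\AB$. This is the point at which the combinatorics of \cite{Habiro3} must be extended from the purely Hopf-algebraic (degree-$0$) setting to accommodate the primitive, symmetric, $\ad$-invariant generator $c$. Once this confluence is in hand, the identification of the normal forms with the standard basis of $\AB(m,n)$ given by restricted colored chord diagrams modulo $4$T is routine, completing the proof that $\AB$ is free on $(H,c)$.
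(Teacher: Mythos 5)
Your setup (the comparison functor $\Psi\colon\mathcal{P}\to\AB$ from the free PROP, the reduction of surjectivity to Theorem \ref{th:chord_Jac}, and the identification of the relevant relations \eqref{e:add}, \eqref{e:sym}, \eqref{e:inv2} and \eqref{e:4T}) matches the paper's, but your injectivity argument has a genuine gap exactly where you flag it: you never prove that your normal forms, taken modulo 4T, are linearly independent in $\mathcal{P}(m,n)_d$, i.e.\ that the rewriting system is confluent and no further relations collapse them. Given the spanning and surjectivity statements you do sketch, this claim is \emph{equivalent} to the injectivity of $\Psi$ you are trying to prove, so as written the proposal defers its entire content to the one step you acknowledge is missing; a diamond-lemma analysis of how the Hopf moves interact with the Casimir moves inside the abstractly presented category $\mathcal{P}$ would be a substantial piece of work, and it is not how the paper proceeds.

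The paper's proof shows this step can be bypassed entirely. It introduces an auxiliary vector space $W(m,n)$ of \emph{tensor words} modulo the relations (chord orientation, cancellation, bead slide, 4T) --- essentially your normal forms, but taken as formal generators of a quotient of a free module rather than as elements of $\mathcal{P}$ --- and builds the triangle $F\alpha=\tau$, where $\tau\colon W(m,n)\to\AB(m,n)$ is an isomorphism (Lemma \ref{r7}, deduced from Theorem \ref{th:chord_Jac}) and $\alpha\colon W(m,n)\to\bfP(m,n)$ is merely \emph{surjective} (Lemmas \ref{r5} and \ref{r12}; the factorization Lemma \ref{r13} is the analogue of your ``push branchings through comultiplications, strip beads off the legs, fix an ordering'' reduction, carried out inside $\bfP$). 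Commutativity of the triangle then gives everything at once: $F$ is surjective because $\tau$ is, and if $F(x)=0$, one writes $x=\alpha(w)$ by surjectivity of $\alpha$, so $\tau(w)=F\alpha(w)=0$, hence $w=0$ and $x=0$. No linear independence of normal forms inside $\bfP$ --- no confluence --- is ever needed; one-sided surjectivity of $\alpha$ together with injectivity of $\tau$ does the job. If you reorganize your argument this way, treating the normal forms as a formal quotient space mapped onto $\mathcal{P}$ and isomorphically onto $\AB$, your proof closes the gap and becomes the paper's.
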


{
\begin{remark}
Hinich and Vaintrob \cite{HV} proved that the  algebra $\A(\circlearrowleft)$ of chord diagrams on a circle 
(i.e., the target of the usual Kontsevich integral of knots)
is in some sense the ``universal enveloping algebra'' of the generating object in the linear PROP governing ``Casimir Lie algebras''. 
This gives a universal property for the space $\AB(0,1)\cong \A(\circlearrowleft)$,
whereas Theorem \ref{r36} gives a universal property for the entire category $\AB$.
\end{remark}
}

\subsection{The category $\bfP$ generated by a Casimir Hopf algebra}   \label{sec:categ-bfp-gener}

Let $\bfP$ be the free linear symmetric strict monoidal
category on a Casimir Hopf algebra
$(\sfP,c)=(\sfP,\mu,\eta,\Delta,\epsilon,S,c)$.  Thus, as a
linear symmetric strict monoidal category, $\bfP$ is generated by
the object $\sfP$ and the morphisms $\mu$, $\eta$, $\Delta$,
$\epsilon$, $S$ and $c$, and
all the relations in $\bfP$ are derived from the axioms of a
linear symmetric monoidal category and the relations
\eqref{h1}--\eqref{h5} and \eqref{e:add}--\eqref{e:inv}.
In other words, $\bfP$ is the linear PROP (see \cite{Markl}) governing Casimir Hopf algebras.
Define a grading of $\bfP$ by
\begin{gather*}
  \deg(\mu)=\deg(\eta)=\deg(\Delta)=\deg(\epsilon)=\deg(S)=0,\quad \deg(c)=1.
\end{gather*}
For $m\ge0$, we identify the object $\sfP^{\ot m}$ with $m$.

The category $\bfP$ has the following universal property.  If $\calC$
is a linear symmetric strict monoidal category and $(H,c)$ is a
Casimir Hopf algebra in $\calC$, then there is a unique linear
symmetric monoidal functor $F=F_{(H,c)}: \bfP\to \calC$ which
maps the Casimir Hopf algebra $(\sfP,c)$ in $\bfP$ to the Casimir Hopf
algebra $(H,c)$ in $\calC$.

Consequently, since $\AB$ has a Casimir Hopf algebra $(H,c)$ by
Proposition \ref{r14}, there is a unique ({graded})
linear symmetric  monoidal functor
\begin{gather*}
  F=F_{(H,c)}:\bfP \longrightarrow \AB
\end{gather*}
mapping $(\sfP,c)$ to $(H,c)$.  To prove Theorem~\ref{AB-pres},
we need to show that $F$ is an isomorphism.

\subsection{The space $W(m,n)$ of tensor words}  \label{sec:space-wm-n}

{Let $m,n\ge0$ be integers.} 
A \emph{tensor word} from $m$ to $n$ of
\emph{degree} $k$ is an expression of the form
\begin{gather*}
  \w= w_1\ott w_n,
\end{gather*}
where
\begin{itemize}
\item for each $i\in\{1,\ldots,n\}$, $w_i$ is a word in the symbols
  \begin{gather*}
    \{x_j,x_j^{-1}\;|\;1\le j\le m\}\cup\{c'_p,c''_p\;|\;1\le p\le  k\},
  \end{gather*}
\item each of $c'_p,c''_p$ ($1\le p\le k$) appears in the concatenated
  word $w_1\cdots w_n$ exactly once.
\end{itemize}
In this case we write $\w: m\to n$.
For example,
\begin{gather}
  \label{e5}
  \w = x_1c'_1c'_2 \ot c'_3c''_1c''_3x_2 \ot x_1^{-1}c''_2x_2x_1:  2 \longrightarrow 3
\end{gather}
is a tensor word of degree $3$.
As we will see below, the symbols $x_j^{\pm1}$ may be considered as
elements of the free  group $\free{n}$ on $x_1,\dots,x_n$.

Two tensor words $\w,\w': m\to n$ are \emph{equivalent} if  they have the same degree $k$ and
they are related by a permutation of $\{1,\ldots,k\}$.  For example,
the above $\w$ is equivalent to the tensor word
\begin{gather*}
  (12)\w :=  x_1c'_2c'_1 \ot c'_3c''_2c''_3x_2 \ot x_1^{-1}c''_1x_2x_1:  2 \longrightarrow  3
\end{gather*}
obtained from $\w$ by exchanging $(c'_1,c''_1)$ and $(c'_2,c''_2)$.
Let $[\w]$ denote the equivalence class of $\w$.  Let $\tW(m,n)$
denote the vector space with basis consisting of equivalence classes of
tensor words from $m$ to $n$.

Let $W(m,n)$ denote the quotient space of $\tW(m,n)$ by the subspace
generated by the following elements:
\begin{itemize}
\item (\emph{chord orientation}) \ $[\w]-[\w']$, where $\w$ and $\w'$
  differ by interchanging $c'_p$ and $c''_p$ for some $p$,
\item (\emph{cancellation}) \ $[\w]-[\w']$, where $\w$ and $\w'$ differ  locally as
  \begin{align*}
    \w&=(\cdots x_i^{\pm1}x_i^{\mp1} \cdots),\\
    \w'&=(\cdots \phantom{x_i^{\pm1}x_i^{\mp1}} \cdots),
  \end{align*}
  for some $i$,
\item (\emph{bead slide}) \ $[\w]-[\w']$, where $\w$ and $\w'$ differ locally as
  \begin{align*}
    \w&=(\cdots x_i c'_p \cdots x_i c''_p \cdots),\\
    \w'&=(\cdots c'_p x_i\cdots c''_p x_i \cdots),
  \end{align*}
      for some $i$ and $p$,
\item (\emph{4T}) \ $[\w_1]-[\w_2]-[\w_3]+[\w_4]$ and
  $[\w_1]-[\w_2]-[\w_5]+[\w_6]$, where $\w_1,\ldots,\w_6$
  differ locally as
  \begin{align*}
    \w_1&= (\cdots c'_i\cdots c''_ic'_{i'}\cdots c''_{i'}\cdots),\\
    \w_2&= (\cdots c'_i\cdots c'_{i'}c''_i\cdots c''_{i'}\cdots),\\
    \w_3&= (\cdots c'_i\cdots c'_{i'}\cdots c''_{i'}c''_i\cdots),\\
    \w_4&= (\cdots c'_i\cdots c'_{i'}\cdots c''_ic''_{i'}\cdots),\\
    \w_5&= (\cdots c'_ic'_{i'}\cdots c''_{i'}\cdots c''_i\cdots),\\
    \w_6&= (\cdots c'_{i'}c'_i\cdots c''_{i'}\cdots c''_i\cdots).
  \end{align*}
  for some $i,i'$ with $i\neq i'$.
\end{itemize}
In the above expressions, each  $\cdots$ means a subexpression of a tensor word possibly
containing the tensor signs.

\subsection{The isomorphism $\tau: W(m,n)\to\AB(m,n)$} \label{sec:admiss-chord-diagr}

By an \emph{admissible chord diagram} from $m$ to $n$ of degree $k$
we mean a restricted $\free{m}$-colored chord diagram $D$ on $\Xn = \capn$ with $k$ chords,
with each bead in $D$  labelled by one of $x_1^{\pm1},\ldots,x_m^{\pm1}$.

We define a linear map $$ \ttau: \tW(m,n) \longrightarrow \AB(m,n)$$ as follows.
Given a tensor word $\w=w_1\ott w_n: m\to n$ of degree $k$, put the
symbols appearing in each $w_i$ on the $i$th strand $\capl_i$ (in the order inverse to the orientation) and, for
each $j=1,\ldots,k$, connect the two points labelled by $c'_j$ and
$c''_j$ with a chord.  Then we obtain an admissible chord diagram
$\ttau(\w)$, regarded as an element of $\AB_k(m,n)$.
For example, for $\w$ in \eqref{e5} we have
\begin{gather}
  \label{e2}
  \def\zza{\small $x_1$}\def\zzb{\small $x_2$}\def\zzc{\small $x_1^{-1}$}\def\zzd{\small $x_2$}\def\zze{\small $x_1$}
  \ttau(\w)=
\scalebox{0.85}{\raisebox{-8.5ex}{\begin{picture}(0,0)%
\includegraphics{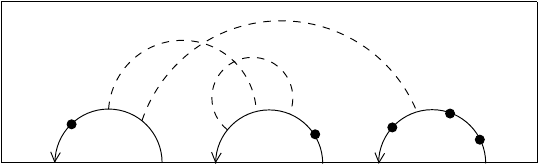}%
\end{picture}%
\setlength{\unitlength}{3947sp}%
\begingroup\makeatletter\ifx\SetFigFont\undefined%
\gdef\SetFigFont#1#2#3#4#5{%
  \reset@font\fontsize{#1}{#2pt}%
  \fontfamily{#3}\fontseries{#4}\fontshape{#5}%
  \selectfont}%
\fi\endgroup%
\begin{picture}(4309,1313)(-1061,-9044)
\put(1819,-8667){\makebox(0,0)[lb]{\smash{{\SetFigFont{10}{12.0}{\familydefault}{\mddefault}{\updefault}{\color[rgb]{0,0,0}\zzc}%
}}}}
\put(-674,-8611){\makebox(0,0)[lb]{\smash{{\SetFigFont{10}{12.0}{\familydefault}{\mddefault}{\updefault}{\color[rgb]{0,0,0}\zza}%
}}}}
\put(751,-8911){\makebox(0,0)[lb]{\smash{{\SetFigFont{10}{12.0}{\familydefault}{\mddefault}{\updefault}{\color[rgb]{0,0,0}$c'_3$}%
}}}}
\put(-74,-8836){\makebox(0,0)[lb]{\smash{{\SetFigFont{10}{12.0}{\familydefault}{\mddefault}{\updefault}{\color[rgb]{0,0,0}$c'_2$}%
}}}}
\put(-299,-8761){\makebox(0,0)[lb]{\smash{{\SetFigFont{10}{12.0}{\familydefault}{\mddefault}{\updefault}{\color[rgb]{0,0,0}$c'_1$}%
}}}}
\put(1135,-8822){\makebox(0,0)[lb]{\smash{{\SetFigFont{10}{12.0}{\familydefault}{\mddefault}{\updefault}{\color[rgb]{0,0,0}$c''_3$}%
}}}}
\put(2845,-8819){\makebox(0,0)[lb]{\smash{{\SetFigFont{10}{12.0}{\familydefault}{\mddefault}{\updefault}{\color[rgb]{0,0,0}\zze}%
}}}}
\put(2593,-8558){\makebox(0,0)[lb]{\smash{{\SetFigFont{10}{12.0}{\familydefault}{\mddefault}{\updefault}{\color[rgb]{0,0,0}\zzd}%
}}}}
\put(906,-8787){\makebox(0,0)[lb]{\smash{{\SetFigFont{10}{12.0}{\familydefault}{\mddefault}{\updefault}{\color[rgb]{0,0,0}$c''_1$}%
}}}}
\put(1450,-8690){\makebox(0,0)[lb]{\smash{{\SetFigFont{10}{12.0}{\familydefault}{\mddefault}{\updefault}{\color[rgb]{0,0,0}\zzb}%
}}}}
\put(2202,-8799){\makebox(0,0)[lb]{\smash{{\SetFigFont{10}{12.0}{\familydefault}{\mddefault}{\updefault}{\color[rgb]{0,0,0}$c''_2$}%
}}}}
\end{picture}%
}}.
\end{gather}

\begin{lemma}
  \label{r7}
  The map $\ttau$ is surjective, and induces an isomorphism
  \begin{gather*}
    \tau: W(m,n) \longrightarrow \AB(m,n).
  \end{gather*}
\end{lemma}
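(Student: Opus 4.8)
The plan is to realize both $W(m,n)$ and $\AB(m,n)$ as quotients of the free vector space on \emph{admissible chord diagrams}, and to match their relations. By Theorem \ref{th:chord_Jac} I may identify $\AB(m,n)=\A^{\mathrm{ch},\mathrm{r}}(\Xn,\free m)$, the space generated by equivalence classes of restricted $\free m$-colored chord diagrams on $\Xn$ modulo $4T$, where equivalence is generated by homeomorphism, the moves of \eqref{eq:moves_ccd} and the restricted relation. A homeomorphism of $(\Xn\cup D,\Xn)$ preserves the linear order of the beads and chord-endpoints along each component, and this ordered data is exactly what a tensor word $\w=w_1\ott w_n$ records: the symbols $x_j^{\pm1}$ are the beads and each pair $c'_p,c''_p$ is a chord. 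Since $\ttau$ is constant on the $\fS_k$-orbits (relabellings of chords) already quotiented out in $\tW(m,n)$, and the chord-orientation relator encodes that chords are unoriented, $\ttau$ induces a linear isomorphism from $\tW(m,n)$ modulo chord orientation onto the free vector space on admissible chord diagrams.

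For surjectivity of $\ttau$, I would note that every restricted colored chord diagram becomes admissible after splitting each multi-letter bead into single-letter beads by the first move of \eqref{eq:moves_ccd} and deleting trivial beads by the second move; thus every generator of $\AB(m,n)$ is equivalent to some $\ttau(\w)$, and $\ttau$ is onto. Next I would check that the four families of relators defining $W(m,n)$ map to $0$, so that $\tau$ is well defined. Chord orientation vanishes because chords are unoriented; cancellation is the coloring move $x_ix_i^{-1}=1=\varnothing$; and the bead-slide relator is precisely the restricted relation, since applying it with colour $x_i$ at both endpoints of the chord $p$ inserts $x_i$ before and $\bar x_i$ after each of $c'_p,c''_p$, and cancelling the adjacent $\bar x_i x_i$ turns $c'_p x_i\cdots c''_p x_i$ into $x_i c'_p\cdots x_i c''_p$. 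The $4T$ relators are instances of the $4T$ relation in $\A^{\mathrm{ch},\mathrm{r}}$.

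To obtain injectivity I would build a two-sided inverse $\sigma$: send a restricted colored chord diagram to the class in $W(m,n)$ of the tensor word read off after splitting its beads into single letters. Independence of the splitting is the cancellation relation; invariance under the coloring moves and the restricted relation follows from the identifications just made; and invariance under $4T$ reduces the general relation to the listed local relators. Then $\sigma$ descends to a map $\AB(m,n)\to W(m,n)$ satisfying $\sigma\tau=\id$ and $\tau\sigma=\id$ by construction, completing the proof.

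The hard part will be the last verification for $4T$: showing that the two listed four-term relators $[\w_1]-[\w_2]-[\w_3]+[\w_4]$ and $[\w_1]-[\w_2]-[\w_5]+[\w_6]$, stated only for the specific placements of $c'_i,c''_i,c'_{i'},c''_{i'}$, generate---modulo cancellation and bead slide---the $4T$ relation for an arbitrary configuration of the four incident strands and of the beads, including the bead-decorated version of $4T$ appearing in the proof of Theorem \ref{th:chord_Jac}. Concretely one must conjugate the listed relators by bead slides in order to move beads across the chord endpoints and to realize every relative position of the two chords, and one must confirm that, once all beads are single letters, no coloring equivalence other than cancellation and bead slide survives. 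This bookkeeping is the only genuinely delicate step.
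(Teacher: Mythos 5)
Your proposal is correct and follows essentially the same route as the paper: the paper's entire proof of this lemma is the single sentence that it ``follows directly from the isomorphism $u^{\operatorname{ch}}:\A^{\operatorname{ch,r}}(\Xn,\free{m})\to\A^{\operatorname{ch}}(\Xn,\free{m})$ obtained in Theorem~\ref{th:chord_Jac}'', which is exactly the reduction to the colored chord-diagram model that you carry out in detail (surjectivity by splitting beads, matching of the cancellation/bead-slide/4T relators with the restricted-coloring moves and 4T, and an explicit two-sided inverse). The configuration bookkeeping for the 4T relators that you flag as the delicate final step is precisely what the paper leaves implicit in the word ``directly''.
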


\begin{proof}
  The lemma follows directly from the isomorphism
  $u^{\mathrm{ch}}:\mathcal{A}^{\mathrm{ch,r}}(X_n,\free{m})\to\mathcal{A}^{\mathrm{ch}}(X_n,\free{m})$
  obtained in Theorem~\ref{th:chord_Jac}.
\end{proof}

\subsection{The map $\alpha: W(m,n)\to\bfP(m,n)$}    \label{sec:map-alpha:-wm}

We will assign to every tensor word $\w: m\to n$ of degree $k$
a morphism $\tilde\alpha(\w): m\to n$ in $\bfP$ of degree $k$.
For example, for the tensor word $\w:2\to3$ of degree $3$ in \eqref{e5},
corresponding to the admissible chord diagram $\tilde\tau(\w)$ in
\eqref{e2}, we have graphically
\begin{gather*}
\tilde\alpha(\w)=
\raisebox{-12.5ex}{\begin{picture}(0,0)%
\includegraphics{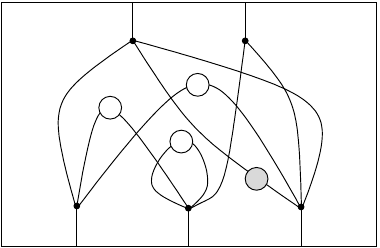}%
\end{picture}%
\setlength{\unitlength}{3947sp}%
\begingroup\makeatletter\ifx\SetFigFont\undefined%
\gdef\SetFigFont#1#2#3#4#5{%
  \reset@font\fontsize{#1}{#2pt}%
  \fontfamily{#3}\fontseries{#4}\fontshape{#5}%
  \selectfont}%
\fi\endgroup%
\begin{picture}(3024,1974)(-161,-11323)
\put(935,-9586){\makebox(0,0)[lb]{\smash{{\SetFigFont{10}{12.0}{\familydefault}{\mddefault}{\updefault}{\color[rgb]{0,0,0}\small $x_1$}%
}}}}
\put(1832,-9598){\makebox(0,0)[lb]{\smash{{\SetFigFont{10}{12.0}{\familydefault}{\mddefault}{\updefault}{\color[rgb]{0,0,0}\small $x_2$}%
}}}}
\put(674,-10238){\makebox(0,0)[lb]{\smash{{\SetFigFont{10}{12.0}{\familydefault}{\mddefault}{\updefault}{\color[rgb]{0,0,0}\small $c$}%
}}}}
\put(1386,-10057){\makebox(0,0)[lb]{\smash{{\SetFigFont{10}{12.0}{\familydefault}{\mddefault}{\updefault}{\color[rgb]{0,0,0}\small $c$}%
}}}}
\put(1245,-10506){\makebox(0,0)[lb]{\smash{{\SetFigFont{10}{12.0}{\familydefault}{\mddefault}{\updefault}{\color[rgb]{0,0,0}\small $c$}%
}}}}
\put(1844,-10818){\makebox(0,0)[lb]{\smash{{\SetFigFont{10}{12.0}{\familydefault}{\mddefault}{\updefault}{\color[rgb]{0,0,0}\small $S$}%
}}}}
\end{picture}%
}.
\end{gather*}
In general, the diagram representing $\tilde\alpha(\w)$ has $m$
edges $e_1,\ldots,e_m$ at the top (corresponding to the generators
$x_1,\dots,x_m$) and $n$ edges $e'_1,\ldots,e'_n$ at the bottom. For
each $j=1,\ldots,n$, the bottom edge $e'_j$ is locally attached to
$l'_j$ input edges, where $l'_j$ is the length of $w_j$.  In our
example, we have $(l'_1,l'_2,l'_3)=(3,4,4)$.  For each $i=1,\dots,m$,
the top edge $e_i$ is locally attached to as many output edges as the
number $l_i$ of occurrences of $x_i^{\pm 1}$ in $\w$.  In our
example, we have $(l_1,l_2)=(3,2)$.  Moreover, the diagram contains
$k$ ``caps'' labelled by $c$ encoding $k$ copies of $c: 0\to2$.
Each ``cap'' has two output ends.  The outputs of top edges, the
inputs of bottom edges and the outputs of ``caps'' are connected by
using the following rules:
\begin{itemize}
\item
If the $r$th symbol in $w_j$ ($1\le j\le n$, $1\le r\le l'_j$) is
$x_i^\epsilon$ ($\epsilon=\pm1$), then the $r$th input at $e'_j$ is
connected by an arc to one of the outputs of $e_i$.  If $\epsilon=-1$
here, then a label $S$ is added to the arc to encode the antipode
$S: 1 \to 1$.
\item
If the $r$th symbol in $w_j$ ($1\le j\le n$, $1\le r\le l'_j$) is
$c'_p$ (resp.\ $c''_p$), with $1\le p\le k$, then the $r$th input
at $e'_j$ is connected by an arc to the left (resp.\ right)
output of the $p$th ``cap''.
\end{itemize}
We interpret the diagram thus obtained  as a morphism in $\bfP$ in
the usual way.  At the top we have the tensor product of $m$
multi-output comultiplications, and at the bottom we have the
tensor product of $n$ multi-input multiplications.

These rules yield a well-defined {morphism $\tilde\alpha(\w):m\to n$ in $\bfP$}.  
Indeed, the only possible ambiguities
are the ordering of the outputs at each top edge, the positions
of the ``caps'' between the top and bottom, and the choices of the
connecting arcs.  Independence of $\tilde\alpha(\w)$ from those
choices follows from the cocommutativity of $\sfP$ and the general
properties of symmetric monoidal categories.  Thus we obtain a linear map
$$
\tilde\alpha: \tW(m,n)    \longrightarrow  \bfP(m,n)
$$
defined by $\w \mapsto \alpha(\w)$ on generators.

\begin{lemma}
  \label{r5}
	The map $\tilde\alpha$ induces a  linear map $\alpha: W(m,n) \to \bfP(m,n)$.
\end{lemma}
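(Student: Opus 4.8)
The plan is to show that $\tilde\alpha$ annihilates the four families of relations generating the kernel of the projection $\tW(m,n)\twoheadrightarrow W(m,n)$, so that it descends to $\alpha$. Independence of $\tilde\alpha(\w)$ from the auxiliary choices (orderings of the outputs at each top edge, placement of the ``caps'', and relabelling of the chords) was already built into its definition, so it remains only to check the \emph{chord orientation}, \emph{cancellation}, \emph{bead slide} and \emph{4T} relations. Each of these mirrors one of the defining relations of a Casimir Hopf algebra, so the verification is a direct diagrammatic translation.

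First I would dispatch the two easy cases. For \emph{chord orientation}, interchanging $c'_p$ and $c''_p$ amounts to post-composing the $p$th ``cap'' with the symmetry, and the symmetry axiom \eqref{e:sym}, $Pc=c$, shows $\tilde\alpha$ is unchanged. For \emph{cancellation}, an adjacent pair $x_i^{\pm1}x_i^{\mp1}$ produces two consecutive arcs issuing from the top edge $e_i$ and meeting at a common multiplication vertex, exactly one of them carrying the label $S$; this local configuration is $\mu(\id\ot S)\Delta$ or $\mu(S\ot\id)\Delta$, which equals $\eta\epsilon$ by the antipode axiom \eqref{h4}, and the resulting $\epsilon$ then collapses one output of the comultiplication at $e_i$ by the counit axiom \eqref{h2}, reproducing $\tilde\alpha(\w')$.

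The remaining two relations use the Casimir structure of $c$. For \emph{bead slide}, moving the bead $x_i$ simultaneously past the two ends $c'_p,c''_p$ of a single chord corresponds to conjugating both legs of the ``cap'' $c$ by $x_i$; the ad-invariance \eqref{e:inv} (equivalently \eqref{e:inv2}) asserts precisely that this conjugation fixes $c$, so $\tilde\alpha(\w)=\tilde\alpha(\w')$. For \emph{4T}, the six words $\w_1,\dots,\w_6$ differ only in the relative positions of the four chord ends of two chords along the strands, and translating these configurations into convolution products yields exactly the two identities packaged in the 4T relation of Proposition \ref{r6}, $(c_{12}+c_{13})*c_{23}=c_{23}*(c_{12}+c_{13})$; hence both $[\w_1]-[\w_2]-[\w_3]+[\w_4]$ and $[\w_1]-[\w_2]-[\w_5]+[\w_6]$ map to $0$.

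I expect the \emph{4T} case to be the main obstacle: one must set up carefully the dictionary translating the positions of chord ends on the strands into the indices $12,13,23$ and into the convolution ordering $*$, and confirm that the six configurations assemble into Proposition \ref{r6}. The \emph{bead slide} case is the next most delicate, since faithfully encoding ``conjugation of both legs of $c$'' requires unwinding the comultiplications and antipodes hidden in the adjoint action $\ad$; once the diagrammatic meaning of \eqref{e:inv} is made explicit, however, the identification is immediate.
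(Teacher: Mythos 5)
Your proposal is correct and takes essentially the same route as the paper: the paper's proof is precisely the four-point dictionary you describe, namely that the chord orientation relation dies by the symmetry axiom \eqref{e:sym}, cancellation by the antipode axiom \eqref{h4}, bead slide by \eqref{e:inv2}, and 4T by the relation \eqref{e:4T} of Proposition \ref{r6}. The paper records this correspondence as a bare list, so your diagrammatic elaborations (including the equivalence of \eqref{e:inv} and \eqref{e:inv2} used for the bead slide) merely fill in details the authors left implicit.
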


\begin{proof}
  It suffices to check that each of the relations defining the vector space  $W(m,n)$ as a quotient of  $\tW(m,n)$ in Section \ref{sec:space-wm-n}
 is mapped to $0$ in $\bfP$.  Indeed,
  \begin{itemize}
  \item the ``chord orientation'' relation is mapped to $0$ because of the symmetry axiom~\eqref{e:sym},
  \item the ``cancellation'' relation  is mapped to $0$ because of the antipode relation~\eqref{h4},
  \item the ``bead slide'' relation  is mapped to $0$ because of \eqref{e:inv2},
  \item the ``4T'' relation is mapped to $0$ because of \eqref{e:4T}.
  \end{itemize}
\end{proof}

\subsection{Surjectivity of $\alpha$}   \label{sec:surjectivity-alpha}

For $n \ge 0$, let $\fS_n$ denote the symmetric group of order~$n$.
Define a homomorphism
\begin{gather}
  \label{e10}
  \fS_n\longrightarrow \bfP(n,n),\quad \sigma\longmapsto P_\sigma
\end{gather}
by $P_{(i,i+1)} = \id_{i-1}\ot P_{1,1}\ot \id_{n-i-1}$ for  $i\in\{1,\dots, n-1\}$. Set
\begin{gather*}
  \mu^{[q_1,\ldots,q_n]}  =  \mu^{[q_1]}\ott\mu^{[q_n]},\quad
  \Delta^{[p_1,\ldots,p_m]}   =  \Delta^{[p_1]}\ott\Delta^{[p_m]},
\end{gather*}
for $q_1,\ldots,q_n,p_1,\ldots,p_m \ge 0$.

\begin{lemma}
  \label{r13}
  Let $m,n \ge0$.  Every homogeneous element of $\bfP(m,n)$ of
  degree $k$ is a linear combination of morphisms of the form
  \begin{gather}
    \label{e42}
    \begin{split}
   &\mu ^{[q_1,\ldots,q_n]}P_\sigma(S^{e_1}\otimes \cdots\otimes S^{e_s}\ot\id_{2k})
    (\id_s\ot  c^{\ot k})\Delta^{[p_1,\dots,p_m]}
    \\
    = \ &\mu ^{[q_1,\ldots,q_n]}P_\sigma
    \big((S^{e_1}\otimes \cdots\otimes S^{e_s})\Delta^{[p_1,\dots,p_m]}\ot c^{\ot k}\big),
    \end{split}
  \end{gather}
  where $s,p_1,\ldots,p_m,q_1,\ldots,q_n\ge 0$ with
  $s=p_1+\cdots+p_m=q_1+\cdots+q_n-2k$, $e_1,\ldots,e_s\in\{0,1\}$ and
  $\sigma \in\fS_{s+2k}$.
\end{lemma}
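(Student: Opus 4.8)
The plan is to show that the linear span $N(m,n)\subseteq\bfP(m,n)$ of the morphisms \eqref{e42} is closed under post-composition with the elementary layers, and then to conclude by induction. First I would invoke the standard fact that, since $\bfP$ is the free linear symmetric strict monoidal category on the generators $\mu,\eta,\Delta,\epsilon,S,c$, every morphism $m\to n$ is a linear combination of composites of \emph{elementary layers}, i.e.\ morphisms of the form $\id_a\ot\gamma\ot\id_b$ with $\gamma$ one of the generators or the symmetry $P_{1,1}$; this is the usual horizontal slicing of string diagrams, together with the identity $f\ot g=(f\ot\id)(\id\ot g)$. Since $\id_m$ is itself of the form \eqref{e42} (take $k=0$, all $p_i=q_j=1$, all $e_i=0$, $\sigma=\id$), it then suffices to prove that $(\id_a\ot\gamma\ot\id_b)\circ N$ lies in $N(m,\cdot)$ for every normal form $N$ and every generator-layer $\gamma$; the statement follows by induction on the number of layers, the degree being tracked automatically since only $c$ raises it.

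The routine cases concern $\gamma\in\{\eta,\epsilon,\mu,c,P_{1,1}\}$, where the layer acts on the bottom multiplication part $\mu^{[q_1,\ldots,q_n]}$ of $N$. Post-composing with $\mu$ on two adjacent outputs merges $\mu^{[q_j]}$ and $\mu^{[q_{j+1}]}$ into $\mu^{[q_j+q_{j+1}]}$ by associativity \eqref{h1}; with $\eta$ it inserts a new output carrying $\mu^{[0]}=\eta$; with $P_{1,1}$ it swaps $q_j\leftrightarrow q_{j+1}$ and composes $\sigma$ with a transposition; and with $c$ it enlarges $c^{\ot k}$ to $c^{\ot(k+1)}$, the new copy being transported up to the Casimir layer by naturality of the symmetry while its two legs are routed through $\sigma$ to two new outputs with $q=1$. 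Post-composing with $\epsilon$ gives $\epsilon\mu^{[q_j]}=\epsilon^{\ot q_j}$ by \eqref{h3}; one then pushes the resulting counits upward through $P_\sigma$ and the antipodes (using $\epsilon S=\epsilon$), where a counit meeting a leg of $c$ kills the term by \eqref{e13''} and a counit reaching the top contracts $\Delta^{[p_i]}$ to $\Delta^{[p_i-1]}$ by the counit axiom \eqref{h2}. In each case the result is again of the form \eqref{e42}.

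The substantive cases are $\gamma=\Delta$ and $\gamma=S$, whose effect on an output $\mu^{[q_j]}$ must be propagated all the way up to the comultiplication layer. For $\Delta$ one first uses the bialgebra relation \eqref{h3} iteratively to write $\Delta\mu^{[q_j]}=(\mu^{[q_j]}\ot\mu^{[q_j]})\circ(\text{shuffle})\circ\Delta^{\ot q_j}$, replacing the single output by two and sprouting $q_j$ new comultiplications; these are then pushed upward, commuting through $P_\sigma$ (relabelling $\sigma$), through the antipodes via $\Delta S=(S\ot S)\Delta$ (cocommutativity \eqref{h5}), through $c^{\ot k}$ by means of \eqref{e13'}, which is exactly what turns the expression into a genuine linear combination, and finally absorbed into the top layer by coassociativity $\Delta\,\Delta^{[p_i]}=\Delta^{[p_i+1]}$. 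The case $\gamma=S$ is dual: since $S$ is an anti-homomorphism one has $S\mu^{[q_j]}=\mu^{[q_j]}S^{\ot q_j}P_{\mathrm{rev}}$, and the new antipodes are pushed up, combining with the existing $S^{e_i}$ via $S^2=\id$ (valid since $H$ is cocommutative), producing signs on Casimir strands via \eqref{e13'''}, and commuting with the top comultiplications by cocommutativity, the extra crossings being absorbed into $\sigma$. I expect the $\Delta$ case to be the main obstacle, as it is the only step producing nontrivial linear combinations and demands careful bookkeeping of the permutation $\sigma$ as the new comultiplications are transported through every layer of the normal form; note that the invariance relation \eqref{e:inv2} and the $4$T relation \eqref{e:4T} are \emph{not} needed here, since the ordering of factors within each multiplication is recorded by $\sigma$ rather than rearranged—these relations will instead enter only when comparing distinct normal forms.
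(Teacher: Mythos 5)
Your proposal is correct, and at the level of individual rewriting moves it uses exactly the same identities as the paper: comultiplications, counits and antipodes created at the bottom are pushed upward past multiplications (bialgebra axiom \eqref{h3}), symmetries (naturality), antipodes (via $\Delta S=(S\ot S)\Delta$ and $S^2=\id$, both relying on cocommutativity) and the Casimir legs (via \eqref{e:add}, \eqref{e13'}, \eqref{e13''}, \eqref{e13'''}); your closing remark that \eqref{e:inv2} and \eqref{e:4T} are not needed is also accurate, as the paper's proof of this lemma uses neither (they enter only in Lemma \ref{r5}, for well-definedness of $\alpha$). Where you genuinely differ is in the organization and the key intermediate statement. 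The paper, adapting \cite[Lemma 2]{Habiro3}, proves the set-level factorization $\bfP=\bfP^+\,\bfP^0\,\bfP^c\,\bfP^-$ of \eqref{e30}, where $\bfP^+,\bfP^0,\bfP^c,\bfP^-$ are the subcategories generated by $\{\mu,\eta\}$, $\{P_{1,1},S\}$, $\{c\}$, $\{\Delta,\epsilon\}$: it establishes the pairwise commutation relations \eqref{e32}--\eqref{e39} (citing Habiro3 for the $c$-free ones), deduces that the product is a subcategory containing all generators, and only then reads off the normal form \eqref{e42} from the shape of homogeneous elements of $\bfP^0\bfP^c$. You instead keep the explicit normal form \eqref{e42} as an inductive invariant and prove it stable under post-composition with a single elementary layer, inducting on the number of layers. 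Your route is more self-contained (you re-derive the Hopf-algebra-only straightening that the paper outsources to Habiro3) and requires less than the paper proves (closure under post-composition by generators, rather than closure of the four-fold product under arbitrary composition); the paper's route is more modular and obtains the stronger factorization statement as a byproduct. One point you should make explicit in your $\Delta$-case: after applying \eqref{e13'} or \eqref{e:add} to a Casimir leg, each surviving term contains a unit $\eta$ (as in $c_{13}=(\id\ot\eta\ot\id)c$) on one of the doubled strands, and this $\eta$ must be slid down through $P_\sigma$ and absorbed into the bottom layer by unitality, $\mu^{[q]}$ becoming $\mu^{[q-1]}$; this is routine, but it is precisely the step that returns the term to the form \eqref{e42} (it is also why the paper's relation \eqref{e39} needs the factor $\bfP^+$ on the right-hand side).
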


\begin{proof}
  We adapt the proof of \cite[Lemma 2]{Habiro3}.  The main difference
  here is that our category is a linear category, and we have
  an extra morphism~$c$.

  Let $\bfP ^0$ (resp.\ $\bfP ^+$, $\bfP ^-$, $\bfP^c$) denote the
  linear monoidal subcategory of $\bfP $ generated by the object
  $1$ and the set of morphisms $\{P_{1,1},S\}$
  (resp.\ $\{\mu ,\eta \}$, $\{\Delta ,\epsilon \}$, $\{c\}$).  We also use
  the symbol $\bfP^*$ (with $*=0,+,-,c$) to denote the set
  $\coprod_{m,n \ge 0}\bfP^*(m,n)$.

  We will consider compositions of such spaces.  For instance,
  $\bfP^+\bfP^0$ denotes the subset of $\bfP$ consisting of all
  well-defined linear combinations of compositions $f^+f^0$ of
  composable pairs of morphisms $f^+\in\bfP^+$ and $f^0\in\bfP^0$.

  First, we will prove
  \begin{gather}
    \label{e30}
    \bfP=\bfP^+\, \bfP^0\, \bfP^c\, \bfP^-.
  \end{gather}
  For $i\ge0$, let $\bfP_i$ denote the degree $i$ part of $\bfP$.
   If $i>0$, then $\bfP_i$ is the product $\bfP_1\cdots\bfP_1$ of $i$ copies of $\bfP_1$.
  Set $\bfP^c_i=\bfP^c\cap\bfP_i$.
  Note that $\bfP_0$ is the linear  symmetric monoidal subcategory of $\bfP$ generated by $\mu,\eta,\Delta,\epsilon,S$.
  Thus, the proof of \cite[Lemma 2]{Habiro3} gives
  \begin{gather}
    \label{e32}
    \bfP^-\bfP^+\subset\bfP^+\bfP^0\bfP^-,\quad
    \bfP^0\bfP^+\subset\bfP^+\bfP^0,\quad
    \bfP^-\bfP^0\subset\bfP^0\bfP^-,\\
    \label{e37}
    \bfP_0=\bfP^+\bfP^0\bfP^-.
  \end{gather}
  For $\bfP^c$, we have
  \begin{gather}
    \label{e31}
    \bfP^c\bfP^+\subset\bfP^+\bfP^c,\quad
    \bfP^c\bfP^0\subset\bfP^0\bfP^c,\\
    \label{e39}
    \bfP^-\bfP^c\subset\bfP^+\bfP^0\bfP^c\bfP^-.
  \end{gather}
  Here \eqref{e31} easily follows.   To prove \eqref{e39}, we use
  \begin{gather*}
    \bfP^-\bfP^c_1\subset\bfP^+\bfP^0\bfP^c_1\bfP^-,
  \end{gather*}
  which we can check using \eqref{e:add} and \eqref{e13'}--\eqref{e13''}.
  Then, proceeding by induction on $i\geq 1$ and using
  \eqref{e32}--\eqref{e31}, we obtain  $ \bfP^-\bfP^c_i
  \subset\bfP^+\bfP^0\bfP^c_i\bfP^-$.  This implies~\eqref{e39}.

 Using the inclusions obtained so far, we can check that $\bfP^+\bfP^0\bfP^c\bfP^-$ is closed under  composition, i.e.,
\begin{gather*}
  (\bfP^+\bfP^0\bfP^c\bfP^-)\, (\bfP^+\bfP^0\bfP^c\bfP^-)\subset\bfP^+\bfP^0\bfP^c\bfP^-.
\end{gather*}
Since $\bfP^+\bfP^0\bfP^c\bfP^-$ contains the identity morphisms, it is a
  linear subcategory of~$\bfP$.  Since $\bfP^+\bfP^0\bfP^c\bfP^-$
  contains $\bfP^+$, $\bfP^0$, $\bfP^c$ and $\bfP^-$, we obtain  \eqref{e30}.

  Let $k\ge0$.
  Homogeneous elements of $\bfP^0\bfP^c$ of degree $k$ are linear
  combinations of morphisms of the form
  $$
  \big(S^{e_1}\otimes \cdots\otimes S^{e_{s+2k}}\big)P_\sigma (\id_s\ot  c^{\ot k}),
  $$
  where $s \ge0 $, $e_1,\ldots,e_{s+2k}\in\{0,1\}$ and  $\sigma \in\fS_{s+2k}$.
  Thus, by \eqref{e30}, every homogeneous element of
  $\bfP(m,n)$ of degree $k$ is a linear combination of morphisms of
  the form
  \begin{gather}
    \label{e20}
     \mu ^{[q_1,\ldots,q_n]}P_\sigma(S^{e_1}\otimes \cdots\otimes S^{e_{s+2k}})
    (\id_s\ot  c^{\ot k})\Delta^{[p_1,\dots,p_m]},
  \end{gather}
  where  $p_1,\ldots,p_m,q_1,\ldots,q_n \ge0$ are such that   $p_1+\cdots+p_m=q_1+\cdots+q_n-2k$,
  $s := p_1+\cdots +p_m$, $e_1,\ldots,e_{s+2k}\in\{0,1\}$ and  $\sigma \in\fS_{s+2k}$.
    Since  \eqref{e13'''} gives
  \begin{gather*}
    (S^{e_1}\otimes \cdots\otimes S^{e_{s+2k}})(\id_s\ot c^{\ot k})
    = \pm  (S^{e_1}\otimes \cdots\otimes
    S^{e_s}\ot \id_{2k})(\id_s\ot c^{\ot k}),
  \end{gather*}
  a morphism of the form  \eqref{e20} is, up to sign, also of the form \eqref{e42}.
\end{proof}

\begin{lemma}
  \label{r12}
  The linear map  $\alpha: W(m,n)\to\bfP(m,n)$ is surjective.
\end{lemma}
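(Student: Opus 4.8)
The plan is to combine the normal form for morphisms in $\bfP$ provided by Lemma \ref{r13} with the explicit graphical recipe defining $\tilde\alpha$. Since $\alpha$ is induced by $\tilde\alpha$, and since by Lemma \ref{r13} every homogeneous element of $\bfP(m,n)$ of degree $k$ is a linear combination of morphisms of the form \eqref{e42}, it suffices to exhibit, for each choice of data $s,p_1,\dots,p_m,q_1,\dots,q_n\ge0$ with $s=p_1+\cdots+p_m=q_1+\cdots+q_n-2k$, of signs $e_1,\dots,e_s\in\{0,1\}$, and of a permutation $\sigma\in\fS_{s+2k}$, a tensor word $\w:m\to n$ of degree $k$ such that $\tilde\alpha(\w)$ equals the corresponding morphism \eqref{e42}. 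I would read this tensor word directly off the normal form.

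Concretely, I would label the $s+2k$ ``middle'' strands of \eqref{e42}: strands $1,\dots,s$ are the outputs of $\Delta^{[p_1,\dots,p_m]}$, so that strand $t$ issues from the generator $x_{i(t)}$, where $i(t)$ is determined by the block decomposition $s=p_1+\cdots+p_m$, and carries the antipode $S^{e_t}$; strands $s+1,\dots,s+2k$ are the outputs of $c^{\otimes k}$, the pair $(s+2p-1,s+2p)$ being the left and right legs of the $p$th copy of $c$. Dually, the $s+2k$ inputs of $\mu^{[q_1,\dots,q_n]}$ group into blocks of sizes $q_1,\dots,q_n$, the $r$th input of the $j$th block being the $r$th letter of the word $w_j$, and $P_\sigma$ matches middle strands to these inputs. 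I would then set the $r$th letter of $w_j$ to be $x_{i(t)}$ or $x_{i(t)}^{-1}$ (according as $e_t=0$ or $e_t=1$) when the corresponding middle strand is the $t$th $\Delta$-output with $t\le s$, and to be $c'_p$ or $c''_p$ when it is the left or right leg of the $p$th copy of $c$.

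With $\w$ defined this way, bijectivity of $\sigma$ guarantees that each of $c'_p,c''_p$ occurs exactly once in $w_1\cdots w_n$, so $\w$ is a genuine tensor word of degree $k$; moreover the number of occurrences of $x_i^{\pm1}$ equals $p_i$ and the length of $w_j$ equals $q_j$. Unwinding the recipe for $\tilde\alpha$, the top of the diagram of $\tilde\alpha(\w)$ is then $\Delta^{[p_1,\dots,p_m]}$, its bottom is $\mu^{[q_1,\dots,q_n]}$, it contains $k$ caps realizing $c^{\otimes k}$, and the connecting arcs (carrying $S$ exactly on the letters $x_i^{-1}$) realize precisely $P_\sigma$ together with $S^{e_1}\otimes\cdots\otimes S^{e_s}$. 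Hence $\tilde\alpha(\w)$ is the morphism \eqref{e42}, and $\alpha$ is surjective.

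I expect the only genuinely delicate point to be bookkeeping: pinning down, once and for all, the conventions relating $\sigma$, the block decompositions of $\Delta^{[p_1,\dots,p_m]}$ and $\mu^{[q_1,\dots,q_n]}$, and the placement of the antipodes on the arcs versus on the $\Delta$-outputs. The latter is harmless because $P_\sigma$ is a composite of symmetries, so by naturality $P_\sigma(S^{e_1}\otimes\cdots\otimes S^{e_s}\otimes\id_{2k})$ may be rewritten with the antipodes carried onto the corresponding arcs; no further relations in $\bfP$ are needed, and the cocommutativity of $\sfP$ (already used to make $\tilde\alpha$ well defined) absorbs any remaining ambiguity in the ordering of the $\Delta$-outputs.
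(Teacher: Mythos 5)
Your proposal is correct and follows essentially the same route as the paper: both invoke Lemma \ref{r13} to reduce to morphisms of the form \eqref{e42}, then read off a tensor word from that normal form (your assignment of letters via $\sigma$ and the block decompositions is exactly the paper's definition $u_j := v_{\sigma^{-1}(j)}$), and verify $\alpha([\w])=f$ by unwinding the definition of $\tilde\alpha$.
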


\begin{proof}
  Let {$f:m\to n$ in $\bfP$ be as in \eqref{e42}.}
  Define a tensor
  word $\w: m\to n$ by
  \begin{gather*}
    \w=
    u_1\cdots u_{q_1}\ot
    u_{q_1+1}\cdots u_{q_1+q_2}\ott
    u_{q_1+\cdots+q_{n-1}+1}\cdots u_{q_1+\cdots+q_{n-1}+q_n},
  \end{gather*}
  where $u_j :=v_{\sigma^{-1}(j)}$ with
  \begin{gather*}
    v_j:=
    \begin{cases}
      x_{a(j)}^{(-1)^{e_j}}&
      (j=1,\dots, s),
      \\
      c'_{(j-s+1)/2}&
      (j=s+1,s+3,\dots,s+2k-1),
      \\
      c''_{(j-s)/2}&
      (j=s+2,s+4,\dots,s+2k)
    \end{cases}
  \end{gather*}
  for $j=1,\dots,q_1+\dots+q_n$.
  Here we define the map $a:\{1,\dots,s\}\to\{1,\dots,m\}$ by
  \begin{gather*}
    a(j)  =  \max  \big\{a\in\{1,\dots,m\}\;|\;j\le p_1+\dots+p_a  \big\}.
  \end{gather*}
  Then one can check  $\alpha([\w])=f$.  Hence, by Lemma \ref{r13}, $\alpha$ is surjective.
\end{proof}

\subsection{Proof of Theorem \ref{AB-pres}}    \label{sec:proof-theorem-refab}

Let $m,n \ge0$.   Consider the diagram
\begin{gather}
  \label{e11}
  \vcenter{\xymatrix{
    \bfP(m,n)
    \ar[rr]^F
    &&
    \AB(m,n){.}
    \\
    &
    W(m,n)
    \ar[ru]^{\tau}_{\cong }
    \ar@{->>}[lu]_{\alpha}
    &
   }}
\end{gather}
By Lemma \ref{r7}, $\tau$ is an isomorphism and, by Lemma
\ref{r12}, $\alpha$ is surjective.  Thus, to prove that  $F$ is an
isomorphism it suffices to prove that the diagram \eqref{e11} commutes.

We have factorization of morphisms in $\AB$ similar to Lemma
\ref{r13} for $\bfP$.

\begin{lemma}
  \label{r17}
   Every homogeneous element of $\AB(m,n)$ of degree $k$
 is a linear combination of morphisms of the form
  \begin{gather}
    \label{e44}
    \begin{split}
    f&=\mu ^{[q_1,\ldots,q_n]}P_\sigma(S^{e_1}\otimes \cdots\otimes S^{e_s}\ot\id_{2k})
    (\id_s\ot  c^{\ot k})\Delta^{[p_1,\dots,p_m]},
    \end{split}
  \end{gather}
  where $s,p_1,\ldots,p_m,q_1,\ldots,q_n\ge 0$ with
  $s=p_1+\cdots+p_m=q_1+\cdots+q_n-2k$, $e_1,\ldots,e_s\in\{0,1\}$ and
  $\sigma \in\fS_{s+2k}$.
\end{lemma}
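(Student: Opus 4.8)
The plan is to deduce Lemma \ref{r17} from the isomorphism $\tau$ of Lemma \ref{r7}, thereby reducing the statement to a combinatorial fact about admissible chord diagrams and avoiding any appeal to the functor $F$ or to the commutativity of \eqref{e11} (which will be established afterwards). By Lemma \ref{r7} the degree $k$ part of $\AB(m,n)$ is spanned by the admissible chord diagrams $\ttau(\w)$, where $\w$ runs over tensor words from $m$ to $n$ of degree $k$; so it suffices to show that each $\ttau(\w)$ is a morphism of the form \eqref{e44}.

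Given $\w=w_1\ott w_n$, I would first read off the data occurring in \eqref{e44} in the same way as in the proof of Lemma \ref{r12}: for $1\le i\le m$ let $p_i$ be the number of occurrences of $x_i^{\pm1}$ in $\w$; for $1\le j\le n$ let $q_j$ be the length of $w_j$; let $e_1,\dots,e_s\in\{0,1\}$ (with $s=p_1+\cdots+p_m$) record which beads carry an inverse label; and let $\sigma\in\fS_{s+2k}$ be the permutation ordering the $s$ beads and the $2k$ chord-endpoints as they appear along the strands.

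The core step is then the purely diagrammatic identification that, for these data, the composite \eqref{e44} evaluated in $\AB$ is exactly $\ttau(\w)$. Reading \eqref{e44} from the handles down to $\Xn$: the comultiplications $\Delta^{[p_1,\dots,p_m]}$ split the $i$-th handle-generator into $p_i$ parallel beads; the antipodes $S^{e_l}$ orient the beads labelled $x_i^{-1}$; the $k$ copies of the Casimir $2$-tensor $c:0\to2$ supply the $k$ chords; the symmetry $P_\sigma$ moves every bead and chord-endpoint into its prescribed position; and the multiplications $\mu^{[q_1,\dots,q_n]}$ assemble them, in the order inverse to the orientation, along the $n$ components of $\Xn$. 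Since this is precisely the recipe defining $\ttau$, the two morphisms coincide in $\AB(m,n)$, and the lemma follows.

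I expect the only genuine difficulty to be bookkeeping rather than conceptual: one must respect the convention that $\ttau$ reads the symbols of $w_j$ in the order inverse to the orientation, and one must use the identity \eqref{e13'''} for $c$ in $\AB$ (valid by Proposition \ref{r14}) to absorb any antipode sitting on a chord-endpoint into a sign, so that the $S^{e_l}$ act only on the $s$ bead-strands --- exactly as in the passage from \eqref{e20} to \eqref{e42}. Once these conventions are fixed, the matching of $\ttau(\w)$ with \eqref{e44} is routine.
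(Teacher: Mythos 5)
Your proposal is correct and follows essentially the same route as the paper, whose entire proof of Lemma \ref{r17} is the one-line appeal to the surjectivity of $\tau\colon W(m,n)\to\AB(m,n)$ (Lemma \ref{r7}); what you spell out---reading off $p_i,q_j,e_l,\sigma$ from a tensor word $\w$ and identifying the composite \eqref{e44} in $\AB$ with the admissible chord diagram $\ttau(\w)$---is precisely the content left implicit there. One minor simplification: the appeal to \eqref{e13'''} is unnecessary in this direction, since a tensor word never places an antipode on a chord leg, so the form \eqref{e44} is attained without any sign absorption.
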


\begin{proof}
  This follows from the surjectivity of $\tau: W(m,n)\to  \AB(m,n)$.
\end{proof}

Now one can easily see that, for {every} $f\in\AB(m,n)$ of degree $k$ decomposed as in \eqref{e44},
we have $F\alpha\tau^{-1}(f)=f$.  Hence the diagram \eqref{e11} commutes.  This completes the proof of  Theorem \ref{AB-pres}.

%
%
\section{A ribbon quasi-Hopf algebra in $\widehat{\AB}$} \label{sec:quasi-Hopf}

In this section, we construct a ribbon quasi-Hopf algebra in
$\widehat{\AB}$ for each choice of a Drinfeld associator.

\subsection{Ribbon quasi-Hopf algebras}    \label{sec:ribbon-quasi-hopf-1}

We recall the notions of quasi-triangular and ribbon quasi-Hopf
algebras in symmetric monoidal categories.  See \cite{Kassel} for an
introduction to quasi-triangular quasi-Hopf algebras, and see
\cite{AC,Sommerhauser} for their ribbon versions.

Let $\calC$ be a (possibly linear) symmetric strict monoidal
category with monoidal unit $I$ and symmetry $P_{X,Y}: X\ot Y \to
Y\ot X$.  Let $(H,\mu,\eta)$ be an algebra in $\calC$.  We defined the
convolution product $*$ on $\C(I,H^{\ot n})$ in \eqref{e12}.  For
$X\in\Ob(\C)$, we can extend $*$ to
\begin{gather*}
  *: \C(I,H^{\ot n})\times\C(X,H^{\ot n})\longrightarrow \C(X,H^{\ot n}),\quad
 g*f:=\mu_n(g\ot f), \\
  *: \C(X,H^{\ot n})\times\C(I,H^{\ot n})\longrightarrow  \C(X,H^{\ot n}),\quad
   f*h :=\mu_n(f\ot h).
\end{gather*}
Thus, the convolution monoid $\C(I,H^{\ot n})$ acts on $\C(X,H^{\ot n})$ from both left and right.
These actions commute, i.e., $(g*f)*h=g*(f*h)$.

A \emph{quasi-bialgebra} $H$ in $\calC$ is an algebra
$(H,\mu,\eta)$ equipped with morphisms of algebras
\begin{gather*}
  \Delta: H\longrightarrow H\ot H,\quad \epsilon: H\longrightarrow  I,
\end{gather*}
and a convolution-invertible morphism
\begin{gather*}
  \varphi: I\longrightarrow H^{\ot3}
\end{gather*}
such that
\begin{gather}
  (\epsilon \otimes \id ) \Delta =  \id = (\id \otimes \epsilon  ) \Delta, \\
  \label{e45}
   (\id\ot\Delta)\Delta  =    \varphi * (\Delta\ot\id)\Delta * \varphi^{-1},\\
    \label{e47}
	  (\id\ot\varphi)*  (\id\ot\Delta\ot\id)\varphi *(\varphi\ot\id)
       =
       (\id\ot\id\ot\Delta)\varphi *  (\Delta\ot\id\ot\id)\varphi ,\\
    \label{e48}
    (\id\ot\epsilon\ot\id)\varphi=\eta\ot\eta.
\end{gather}
 A quasi-bialgebra $H$ is  \emph{cocommutative} if
$P_{H,H} \Delta=\Delta$, and \emph{special} if we~have
\begin{gather}
  \label{e29}
  \mu(\id\ot x)=\mu(x \ot\id)\quad \text{for every $x: I \to H$}.
\end{gather}

A \emph{quasi-Hopf algebra} is a quasi-bialgebra $H$
equipped with an algebra anti-automorphism
\begin{gather*}
  S : H\longrightarrow H
\end{gather*}
and convolution-invertible morphisms
\begin{gather*}
  \alpha,\beta: I\longrightarrow H
\end{gather*}
such that
\begin{gather}
  \label{e50}
  \mu^{[3]}(S\ot\alpha\ot\id)\Delta=\alpha\epsilon,\quad
  \mu^{[3]}(\id\ot\beta\ot S)\Delta=\beta\epsilon,\\
  \label{e52}
  \mu^{[5]}(\id\ot\beta\ot S\ot\alpha\ot\id)\varphi=\eta,\quad
  \mu^{[5]}(S\ot\alpha\ot\id\ot\beta\ot S)\varphi^{-1}=\eta.
\end{gather}

A quasi-Hopf algebra $H$ is \emph{quasi-triangular} if it
is equipped with a convolution-invertible morphism
\begin{gather*}
  R: I\longrightarrow H\ot H
\end{gather*}
such that
\begin{gather}
  \label{e54}
  R * \Delta * R^{-1}=  P_{H,H}\Delta, \\
  \label{e55}
  (\Delta\ot\id)R=\varphi_{321}* R_{13} * \varphi^{-1}_{132} * R_{23} * \varphi_{123},\\
  \label{e56}
  (\id\ot\Delta)R=\varphi_{231}^{-1}* R_{13} * \varphi_{213} * R_{12} * \varphi_{123}^{-1},
\end{gather}
where we set
\begin{gather*}
  R_{12}=R\ot\eta,\quad
  R_{13}=(\id\ot\eta\ot\id)R,\quad
  R_{23}=\eta\ot R
\end{gather*}
and $\varphi^{\pm1}_{ijk}=P_{\binom{1,2,3}{i,j,k}}\varphi^{\pm1}$.
Here {$P_\sigma:H^{\ot3}\to H^{\ot3}$} for $\sigma\in\fS_3$ is the
permutation morphism defined similarly to \eqref{e10}.
A quasi-triangular quasi-Hopf algebra $H$ is
\emph{triangular} if $R_{21}=R$, where we set $R_{21}=P_{H,H}R:I\to H\ot H$.

{We can view every} cocommutative Hopf algebra $(H,\mu,\eta,\Delta,\epsilon,S)$ 
as a quasi-triangular quasi-Hopf algebra by setting $\varphi=
\eta^{\otimes 3}$, $\alpha = \beta= \eta$ and $R=\eta^{\otimes  2}$.

A quasi-triangular quasi-Hopf algebra $H$  is
\emph{ribbon} if it admits a convolution-invertible morphism
$$
\bfr:I \longrightarrow H
$$
such that
\begin{eqnarray}
\label{e200} S \bfr &=& \bfr, \\
\label{e201} \Delta \bfr &=&  R_{21} \ast R \ast (\bfr \otimes \bfr).
\end{eqnarray}

\subsection{Kohno--Drinfeld Lie algebras and associators} \label{sec:drinfeld-associators}

We recall the definition of Drinfeld associators.

For $n\ge 0$, the \emph{Kohno--Drinfeld Lie algebra}
$\mathfrak{t}_n$ is the Lie algebra over $\K$ generated by $t_{ij}$ ($i,j \in\{1,\dots, n\}$, $i\neq j$) with relations
\begin{gather*}
  t_{ij}=t_{ji},\quad [t_{ij}, t_{ik}+t_{jk}]=0\quad
  (\text{$i,j,k$ distinct}), \quad
  [t_{ij},t_{kl}]=0\quad \text{($i,j,k,l$ distinct)}.
\end{gather*}

\renewcommand{\nstrands}{\downarrow^{\otimes n}}

We regard the universal enveloping algebra $U(\mathfrak{t}_n)$
of $\mathfrak{t}_n$ as a subalgebra of the algebra $\A(\nstrands)\subset \A(+^{\otimes n},+^{\otimes n})$
of Jacobi diagrams on $\nstrands:={\downarrow\cdots\downarrow}$,
via the injective algebra homomorphism
\begin{equation} \label{eq:hcd}
  U(\mathfrak{t}_n)\longrightarrow \A(\nstrands)
\end{equation}
that maps each $t_{ij}$ to the chord diagram with a chord connecting the $i$-th and $j$-th strings.
(See \cite[Cor\. 4.4]{BN3} or \cite[Rem\. 16.2]{HM} for the injectivity of  \eqref{eq:hcd}.)

Let $\KXY$ denote the algebra of formal power series in two
non-commuting generators.  As usual, $\KXY$ is a
complete Hopf algebra, with $X$ and $Y$ primitive.  A \emph{Drinfeld
associator} is a group-like element $\varphi(X,Y)\in\KXY$ such that
\begin{gather}
  \label{e21}
  \varphi(t_{12},t_{23}+t_{24})\,
  \varphi(t_{13}+t_{23},t_{34})
  =
  \varphi(t_{23},t_{34})\,
  \varphi(t_{12} +t_{13},t_{24}+t_{34})\,
  \varphi(t_{12},t_{23}),
  \\
  \label{e23}
  \exp\big(\frac{t_{13}+t_{23}}2\big)
  =
  \varphi(t_{13},t_{12})\,
  \exp\big(\frac{t_{13}}2\big)\,
  \varphi(t_{13},t_{23})^{-1}
  \exp\big(\frac{t_{23}}2\big)\,
  \varphi(t_{12},t_{23}),
  \\
  \label{e24}
  \exp\big(\frac{t_{12}+t_{13}}2\big)
  =
  \varphi(t_{23},t_{13})^{-1}
  \exp\big(\frac{t_{13}}2\big)\,
  \varphi(t_{12},t_{13})\,
  \exp\big(\frac{t_{12}}2\big)\,
  \varphi(t_{12},t_{23})^{-1}.
\end{gather}

\begin{remark} \label{rem:associator}
A Drinfeld associator $\varphi(X,Y)$
gives rise to an associator  $\Phi:=  \varphi(t_{12},t_{23})^{-1} \in\A(\downarrow \downarrow \downarrow)$
 in the sense of Section \ref{sec:usual_Kontsevich}.
\end{remark}

\subsection{A ribbon quasi-Hopf algebra in $\hA$}   \label{sec:ribbon-quasi-hopf}

We consider $\AB(0,n)$ ($n\ge0$) as {an} algebra with the
  convolution product $*$.  We have an algebra isomorphism
$$
\iota: \A(\nstrands) \longrightarrow  \AB(0,n), \quad
\centre{\labellist
\scriptsize\hair 2pt
 \pinlabel{$\cdots$}  at 84 192
 \pinlabel{$\cdots$}  at 83 30
  \pinlabel{$D$}  at 81 110
\endlabellist
\centering
\includegraphics[scale=0.15]{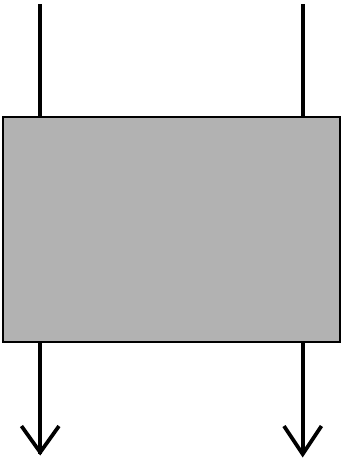}}
\longmapsto
\centre{\labellist
\scriptsize\hair 2pt
 \pinlabel{$\cdots$}  at 79 101
 \pinlabel{$D$}  at 76 179
 \pinlabel{$\cdots$}  at 80 274
 \pinlabel{$\cdots$}  at 310 179
\endlabellist
\centering
\includegraphics[scale=0.15]{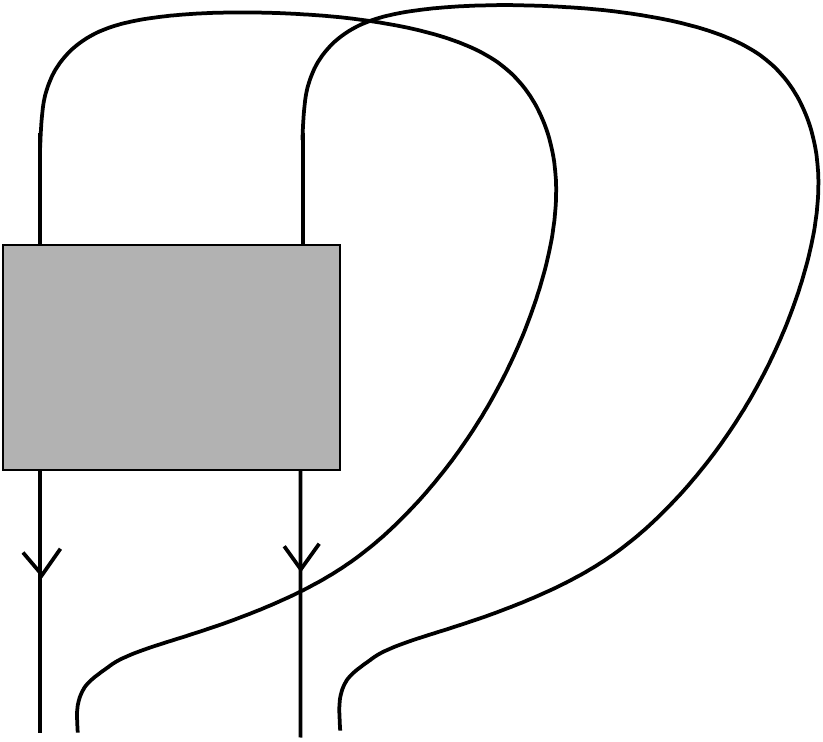}}.
$$
For  $1\le i<j\le n$, set
$$
c_{ij} :=  \iota(t_{ij})  = \big(\eta^{\ot (i-1)}\ot\id\ot\eta^{\ot (
  i-j-1)}\ot\id\ot\eta^{\ot (n-j)}\big)\, c \;\; :0\lto n.
$$

Clearly, the cocommutative Hopf algebra structure $(1,\mu,\eta,\Delta,
\epsilon, S)$ in $\AB$ given in Proposition \ref{r14} induces a
cocommutative Hopf algebra structure in the degree-completion
$\widehat{\AB}$ of $\AB$.

Let $\varphi(X,Y)\in\KXY$ be a Drinfeld associator.  {Define morphisms in $\hA$:}
\begin{eqnarray}
\label{eq:phi*} \varphi & = &  \iota\big(\varphi(t_{12},t_{23}) \big)
  = \varphi(c_{12},c_{23}) \ :0\lto3, \\
\label{eq:R*} R & = & \iota\big(\exp \big( \frac{1}{2} t_{12}\big)
\big) = \exp_\ast( c/2)  \ :0\lto2, \\
\label{eq:r*}  \bfr &=& \iota\big(  \exp\big(\frac{1}{2}
\figtotext{10}{20}{t_11} \big) \big)=  \exp_\ast(  \mu c/2 ) \ :0\lto1.
\end{eqnarray}
Set $\Phi = \varphi(t_{12},t_{23})^{-1} \in\A(\downarrow
\downarrow \downarrow)$, and define {$\nu :0\to 1$}
by
\begin{equation} \label{eq:nu_def}
\nu  =  \iota \left(
\begin{array}{c}
\labellist
\scriptsize\hair 2pt
 \pinlabel{$S_2(\Phi^{-1})$}   at 147 227
\endlabellist
\centering
\includegraphics[scale=0.11]{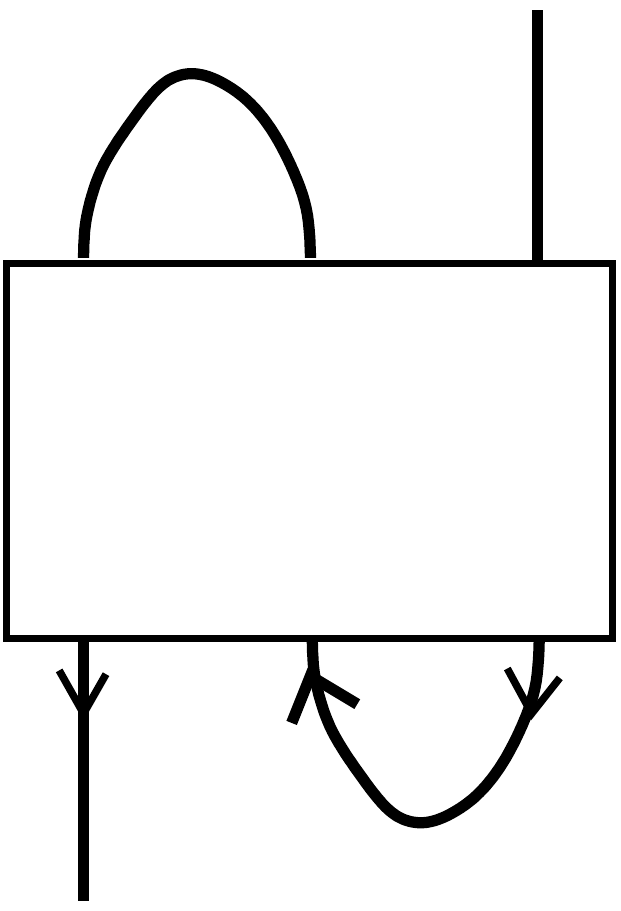} \end{array}\right)^{-1}
\quad \hbox{or, equivalently, by} \quad
\nu  =  \iota \left(
\begin{array}{c}
\labellist
\scriptsize\hair 2pt
 \pinlabel{$S_2(\Phi)$}   at 147 227
\endlabellist
\centering
\includegraphics[scale=0.11]{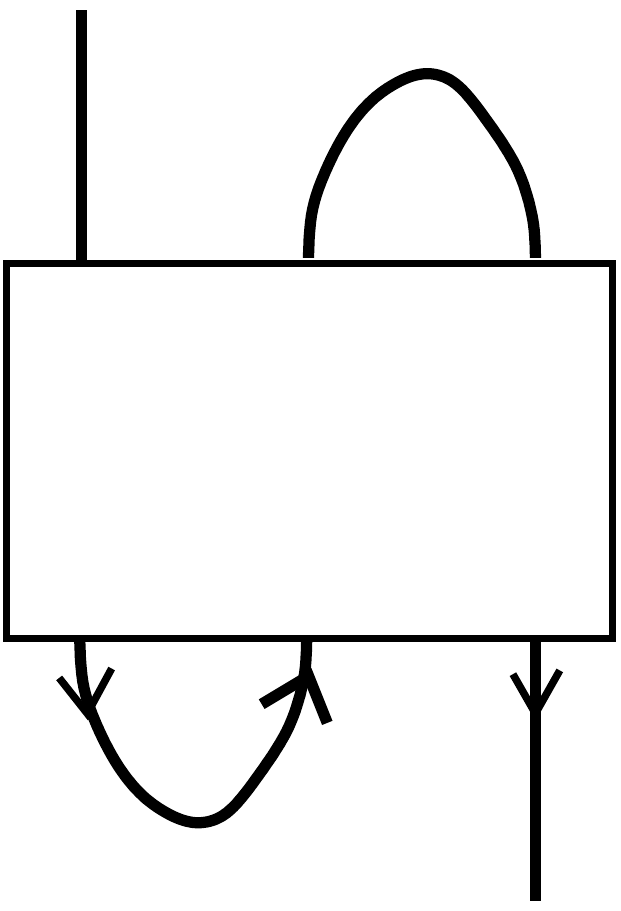} \end{array}\right)^{-1}
\end{equation}
where $S_2: \A(\downarrow \downarrow \downarrow) \to \A(\downarrow
\uparrow \downarrow)$ is the diagrammatic ``orientation-reversal
operation'' applied to the second string.  (The equivalence of
those two definitions of $\nu$ follows from
$\varphi(t_{23},t_{12}) = \varphi(t_{12},t_{23})^{-1}$, which is a
consequence of \eqref{e23}--\eqref{e24}; see \cite[Proposition~3.7]{BN1}.)

Let {$\be:0\to1$ in $\hA$} be convolution-invertible (equivalently, $\ep\be\neq0$),
and let $\alpha=\nu*\be^{-1}$. {We denote}
\begin{gather*}
  H_{\varphi,\be}=(1,\mu,\eta,\Delta,\epsilon,\varphi,S,\alpha,\beta,R,\bfr).
\end{gather*}
{and, for $\be =\eta$, we set $H_\varphi = H_{\varphi,\eta}$.}

\begin{theorem}  \label{prop:r3}
  For each Drinfeld associator $\varphi=\varphi(X,Y)$ and each
  convolution-invertible {$\be:0\to1$}, $H_{\varphi,\be}$ is a
  (triangular, cocommutative,  special) ribbon quasi-Hopf algebra in
  $\widehat{\AB}$.
\end{theorem}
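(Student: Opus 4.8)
The plan is to reduce every axiom to an identity in the convolution algebras $\AB(0,n)$ and then transport it, through the algebra isomorphism $\iota\colon\A(\nstrands)\to\AB(0,n)$, to the defining relations of the Drinfeld associator and of the Casimir $2$-tensor. The starting point is Proposition~\ref{r14}, which supplies the genuine cocommutative Hopf algebra $(1,\mu,\eta,\Delta,\ep,S)$ together with its Casimir $2$-tensor $c$; by the definitions \eqref{eq:phi*}--\eqref{eq:nu_def} the morphisms $\vp,R,\bfr,\nu$ are precisely the $\iota$-images of $\vp(t_{12},t_{23})$, $\exp(t_{12}/2)$, the single-strand Casimir exponential of \eqref{eq:r*}, and the $\Phi$-factor defining $\nu$. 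Since $\iota$ is an algebra map for the convolution product $\ast$ and sends $t_{ij}$ to $c_{ij}$, the whole verification amounts to checking that the quasi-Hopf, quasi-triangular and ribbon axioms, once rewritten via $\ast$, become the associator equations \eqref{e21}--\eqref{e24} together with the Casimir relations \eqref{e:add}--\eqref{e:inv}.

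Carrying this out axiom by axiom, I would argue as follows. For the quasi-bialgebra structure, $\Delta$ and $\ep$ are honest algebra maps and $\Delta$ is coassociative because $H$ is a Hopf algebra, so the quasi-coassociativity \eqref{e45} collapses to the statement that $\vp$ convolution-commutes with $\Delta^{[3]}$; this is the diagrammatic ad-invariance of $\vp$, a consequence of \eqref{e:inv} (equivalently \eqref{e:inv2}). The pentagon \eqref{e47} is exactly the $\iota$-image of the associator pentagon \eqref{e21}, and the counit normalization \eqref{e48} follows from $(\ep\ot\id)c=(\id\ot\ep)c=0$ in \eqref{e13''}, which forces $\vp$ to collapse to $\eta^{\ot3}$ after counitting the middle tensorand. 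For quasi-triangularity, \eqref{e54} holds because $R$ is symmetric and $H$ is cocommutative, whence $R\ast\Delta\ast R^{-1}=\Delta=P\Delta$; the two hexagons \eqref{e55}--\eqref{e56} are the $\iota$-images of \eqref{e23}--\eqref{e24} (using Remark~\ref{rem:associator}); and triangularity $R_{21}=R$ follows from the symmetry \eqref{e:sym} of $c$, since $R=\exp_\ast(c/2)$. The antipode $S$ is the genuine Hopf antipode, and with $\alpha=\nu\ast\be^{-1}$ and $\beta$ the mixed relations \eqref{e50}--\eqref{e52} follow from the standard computation relating $\nu$, $\Phi$ and $S$, where \eqref{e13'''} and the defining equation \eqref{eq:nu_def} of $\nu$ enter. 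Finally the ribbon relation \eqref{e200}, $S\bfr=\bfr$, follows from $\bfr=\exp_\ast(\mu c/2)=\exp_\ast(r_c)$ and the Casimir-element identity $Sr_c=r_c$ of \eqref{e0}, while \eqref{e201}, $\Delta\bfr=R_{21}\ast R\ast(\bfr\ot\bfr)$, is the universal twist identity, reducing via the additivity $(\Delta\ot\id)c=c_{13}+c_{23}$ of \eqref{e:add} to an identity among exponentials of the $c_{ij}$. The cocommutativity of $H_{\varphi,\be}$ is already recorded in Proposition~\ref{r14}, and the \emph{special} property \eqref{e29} follows from the commutativity of the stacking algebra of Jacobi diagrams on a single strand.

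The hard part will be the bookkeeping that makes the ``transport via $\iota$'' legitimate: $\iota$ is defined for each fixed $n$, yet the axioms relate different tensor powers through the comultiplication-induced maps ($\Delta\ot\id$, $\id\ot\Delta$, the $R_{ij}$ and the $\vp_{ijk}$) and through the permutations $P_\sigma$. I would therefore first set up a dictionary showing that $\iota$ intertwines these convolution-level operations with the diagrammatic doubling, orientation-reversal and strand-permutation operations on $\A(\nstrands)$ --- that is, that $\Delta\ot\cdots$ corresponds to cabling a strand and $P_\sigma$ to permuting strands, compatibly with $t_{ij}\mapsto c_{ij}$. Once this naturality is established, the pentagon and hexagon identities descend verbatim from \eqref{e21}--\eqref{e24} (and the $4$T relation of Proposition~\ref{r6} guarantees consistency of the $c_{ij}$), after which the remaining axioms are routine consequences of cocommutativity and of the three defining properties of $c$.
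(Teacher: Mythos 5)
Your overall strategy---transporting everything through the algebra isomorphism $\iota$ and reducing the axioms to the pentagon/hexagon equations together with the Casimir relations---is essentially the paper's own proof, and most of your individual reductions (pentagon for \eqref{e47}, hexagons for \eqref{e55}--\eqref{e56}, $Pc=c$ for triangularity, $S\bfr=\bfr$ via \eqref{e0}/\eqref{e13'''} for \eqref{e200}, additivity of $c$ for \eqref{e201}) match the paper's. The difference is the organizing lemma: the paper isolates the single fact \eqref{eq:general_fact}, namely $\Delta^{[n]}\ast x = x\ast\Delta^{[n]}$ for \emph{every} $x:0\to n$ in $\widehat{\AB}$, and derives \eqref{e45}, \eqref{e50}, \eqref{e52} and \eqref{e54} from it; you instead invoke ad-invariance of $c$ (i.e.\ \eqref{e:inv2}). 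That substitution is fine for the morphisms built from $c$ (such as $\varphi$, $R$, $\bfr$, $\nu$), but it cannot work verbatim for the axioms involving the \emph{arbitrary} convolution-invertible $\be$, and this is where your proposal has a genuine gap.

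Concretely, \eqref{e50} for $\be$ reads $\mu^{[3]}(\id\ot\be\ot S)\Delta = \be\epsilon$, i.e.\ $x_{(1)}\,\be\, S(x_{(2)}) = \epsilon(x)\,\be$ in Sweedler notation, which is exactly the statement that $\be$ is ad-invariant (equivalently, by cocommutativity, central); the same centrality is needed to pull $\be$ and $\alpha=\nu\ast\be^{-1}$ out of $\varphi$ in \eqref{e52}. Since $\be$ is arbitrary, no property of $c$ alone can deliver this: what is needed is precisely the special property \eqref{e29} of $\widehat{\AB}$, i.e.\ \eqref{eq:general_fact}. You do assert the special property, but your justification---``commutativity of the stacking algebra of Jacobi diagrams on a single strand''---is not the right fact: commutativity of $\A(\downarrow)$ says that two \emph{beadless} diagrams stacked on a strand commute with each other, whereas specialness requires a beadless diagram to slide past the bead $x_1$ created by the handle (the strand of $\id_1$ passes through the handle; the diagram of $x$ does not). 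The correct justifications are either diagrammatic (a beadless dashed component of $x$ can be homotoped through the handle, which is how \eqref{eq:general_fact} is seen in $\AB$), or algebraic within your own framework: by Lemma \ref{r17} with $m=0$, every element of $\AB(0,n)$ is a linear combination of morphisms $\mu^{[q_1,\dots,q_n]}P_\sigma(c^{\ot k})$, and these are composites of $H$-module maps (for the adjoint action, using cocommutativity) applied to the ad-invariant element $c^{\ot k}$, hence ad-invariant; this rescues your ``reduce everything to the Casimir axioms'' program, but it must be said explicitly. A second, more minor slip: for \eqref{e54} you cite ``symmetry of $R$ and cocommutativity'', but symmetry $PR=R$ only gives triangularity; after using $P\Delta=\Delta$, what \eqref{e54} requires is $R\ast\Delta=\Delta\ast R$, i.e.\ once again ad-invariance/centrality of $R$, not its symmetry.
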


\begin{proof}
To prove that $H_{\varphi,\be}$ is a ribbon quasi-Hopf algebra,
  it suffices to check \eqref{e45}--\eqref{e201} since
  $(1,\mu,\eta,\Delta,\epsilon,S)$ is a Hopf algebra.  First,
  note that
\begin{equation} \label{eq:general_fact}
\Delta^{[n]}\ast x = x \ast \Delta^{[n]}\quad \text{for $n\ge0$ and  $x:0\to n$}.
\end{equation}

We obtain \eqref{e45} and \eqref{e50} from \eqref{eq:general_fact}.
We obtain \eqref{e47} from the pentagon equation \eqref{e21}.
We obtain \eqref{e48} from $\varphi(0,0)=1$, which holds
since $\varphi{(X,Y)}$ is group-like.
We obtain \eqref{e52} from \eqref{eq:general_fact}, the well-known
  identity $S( \iota^{-1}(\nu) )={\iota^{-1}(\nu)} \in\A(\downarrow)$    and
$$
\mu^{[3]}(\id \otimes S \otimes \id)\varphi = \nu^{-1} = \mu^{[3]}(\id \otimes S \otimes \id) \varphi^{-1}_{321},
$$
which follows from \eqref{eq:nu_def}.
We obtain \eqref{e54} from \eqref{eq:general_fact} and cocommutativity of~$\Delta$.
We obtain \eqref{e55} and \eqref{e56} from the hexagon equations
\eqref{e23} and \eqref{e24}.
We obtain \eqref{e200} as follows:
$$
S \bfr = S  \exp_\ast(  \mu c/2 ) = \exp_\ast S (  \mu c/2 ) = \exp_\ast (  \mu (S \otimes S) P_{1,1} c/2 ) = \bfr.
$$

To obtain \eqref{e201}, let us apply to it the algebra isomorphism $\iota^{-1}$.
We have
\begin{gather*}
  \begin{split}
&\iota^{-1}(\Delta \bfr) \;= \;C_{++} (\iota^{-1}( \bfr) )\; =\; \exp C_{++} \left(\frac{1}{2} \figtotext{10}{20}{t_11} \right) 
=\exp \left( \frac12  \figtotext{25}{25}{t_1122} \right) \\
=&\exp \left(
 \frac12 \figtotext{15}{25}{t_110}
+\frac12 \figtotext{15}{25}{t_011}
+        \figtotext{12}{25}{t_12}\right)
=
\exp\left(          \!\figtotext{12}{25}{t_12}\!\right)
\exp\left( \frac12  \!\figtotext{15}{25}{t_110}\!\right)
\exp\left( \frac12  \!\figtotext{15}{25}{t_011}\!\right),
  \end{split}
\end{gather*}
where the second equality follows from Lemma
\ref{rem:functoriality_cabling_bis}, and the last equality follows
since $\figtotext{11}{16}{t_110}$, $\figtotext{11}{16}{t_011}$, $\figtotext{10}{16}{t_12}$ mutually commute.
Since $H_{\varphi,\be}$ is triangular, we have
\begin{eqnarray*}
\iota^{-1}\big( R_{21} \ast R \ast (\bfr \otimes \bfr) \big) &=& \big(\iota^{-1}(R)\big)^2 \, \big(\iota^{-1}(\bfr) \otimes \iota^{-1}(\bfr) \big) \\
&=& \exp\left(\figtotext{8}{16}{t_12}\right)\  \left( \exp\left(\frac{1}{2} \figtotext{8}{16}{t_11} \right)  \otimes \exp\left(\frac{1}{2} \figtotext{8}{16}{t_11} \right) \right).
\end{eqnarray*}
Hence we have \eqref{e201}.
\end{proof}

The universal property of $\AB$ (Theorem \ref{AB-pres})
implies the following generalization of Theorem \ref{prop:r3}.
Let $\C$ be a linear symmetric strict monoidal category equipped
with a filtration $\C= \calF^0 \supset \calF^1 \supset \calF^2 \supset
\cdots$.  Let $\widehat{\C}^\calF=\plim_k \C/\calF^k$ be the
completion of $\C$ with respect to $\calF$, and let
$$
j: \C  \longrightarrow \widehat{\C}^\calF
$$
be the canonical functor.
(See Section \ref{sec:terminology} for a brief review of filtrations and completions.)

\begin{cor}
  \label{r2}
 Let  $(H,c)$ be a Casimir Hopf algebra in $\C$ and assume that   $c\in\calF^1(H^{\ot0},H^{\ot2})$.
  Then there is a  unique {continuous} linear symmetric monoidal functor
  \begin{gather*}
    F_{(H,c)}:\widehat{\AB}\longrightarrow \widehat{\C}^\calF
    \end{gather*}
   that maps the Casimir Hopf algebra in $\widehat{\AB}$ to  $j(H,c)$.
  Therefore, $ F_{(H,c)} $ maps the ribbon quasi-Hopf algebra   in
  $\widehat{\AB}$ to a  ribbon quasi-Hopf algebra in $\widehat{\C}^\calF$.
\end{cor}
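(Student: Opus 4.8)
The plan is to deduce Corollary \ref{r2} directly from the universal property of $\widehat{\AB}$, which in turn comes from Theorem \ref{AB-pres} (the freeness of $\AB$ on a Casimir Hopf algebra) together with the fact that $c$ has degree $1$. The key observation is that the degree filtration on $\AB$ is precisely governed by the generator $c$, so a degree-raising condition on the image of $c$ is exactly what is needed to make the functor extend continuously to the completion.

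First I would recall that, by Theorem \ref{AB-pres}, the category $\AB$ is freely generated as a linear symmetric strict monoidal category by the Casimir Hopf algebra $(H,c)$. Hence, given any Casimir Hopf algebra $(H,c)$ in $\C$, there is a unique linear symmetric monoidal functor $G := F_{(H,c)}:\AB\to\C$ sending the universal Casimir Hopf algebra in $\AB$ to $(H,c)$. Composing with $j:\C\to\widehat{\C}^\calF$ gives a functor $j\circ G:\AB\to\widehat{\C}^\calF$. The task is to check that this functor factors through the degree-completion $\widehat{\AB}$, i.e.\ that it extends (uniquely) to a continuous functor $F_{(H,c)}:\widehat{\AB}\to\widehat{\C}^\calF$.

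The crux is the filtration compatibility. Recall that in $\AB$ the generators $\mu,\eta,\Delta,\epsilon,S$ have degree $0$ and $c$ has degree $1$, and that (by the grading on $\bfP\cong\AB$) any morphism of degree $k$ is, up to linear combination, a composite built from the structure morphisms together with exactly $k$ copies of $c$; this is exactly the content of Lemma \ref{r17} (factorization through $\mu^{[\cdots]}P_\sigma(\cdots)(\id\ot c^{\ot k})\Delta^{[\cdots]}$). Since the degree-$0$ generators are sent by $G$ into $\calF^0=\C$ while $c$ is sent into $\calF^1$ by hypothesis, and since $\calF$ is a filtration compatible with composition and tensor product ($\calF^a\circ\calF^b\subset\calF^{a+b}$ and $\calF^a\ot\calF^b\subset\calF^{a+b}$), a morphism of degree $k$ in $\AB$ is mapped by $G$ into $\calF^k$. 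Therefore $G$ sends $\AB_{\ge k}:=\bigoplus_{l\ge k}\AB_l$ into $\calF^k$, so $j\circ G$ kills $\AB_{\ge k}$ modulo $\calF^k$ for each $k$ and hence extends uniquely to the inverse limit $\widehat{\AB}=\plim_k \AB/\AB_{\ge k}$, landing in $\widehat{\C}^\calF=\plim_k\C/\calF^k$. Uniqueness of the extension follows from density of $j(\AB)$ in $\widehat{\AB}$ and from the uniqueness clause in the universal property.

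The main obstacle, such as it is, lies in verifying carefully that the filtration-raising property propagates through composites and tensor products, i.e.\ in making precise the degree-counting step via Lemma \ref{r17}: one must ensure that the degree-$0$ part of the factorization genuinely lands in $\calF^0$ and that each of the $k$ copies of $c$ contributes one unit to the $\calF$-filtration, so that the total composite lands in $\calF^k$. Once this is established, the final sentence of the corollary is immediate: applying Theorem \ref{prop:r3}, the ribbon quasi-Hopf algebra $H_\varphi$ in $\widehat{\AB}$ is built functorially from the universal Casimir Hopf algebra using only $\mu,\eta,\Delta,\epsilon,S$ and convolution-exponentials in $c$; since $F_{(H,c)}$ is a continuous linear symmetric monoidal functor preserving all these structures, its image is a ribbon quasi-Hopf algebra in $\widehat{\C}^\calF$, as claimed.
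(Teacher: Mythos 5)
Your proposal is correct and follows exactly the route the paper intends: the paper gives no detailed proof, merely invoking the universal property of $\AB$ (Theorem \ref{AB-pres}), and your argument fills in precisely the expected details — the factorization of degree-$k$ morphisms via Lemma \ref{r17}, the observation that $c\in\calF^1$ together with $\calF^a\circ\calF^b\subset\calF^{a+b}$ and $\calF^a\ot\calF^b\subset\calF^{a+b}$ forces degree-$k$ morphisms into $\calF^k$, and the continuous extension to the completions. The final step, that the ribbon quasi-Hopf structure of Theorem \ref{prop:r3} is transported because all its data are built from the Casimir Hopf algebra by compositions, tensor products, symmetries and convergent series in $c$, is likewise the intended argument.
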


\begin{remark} \label{deformation}
  We can consider the quasi-triangular quasi-Hopf algebra
  $$
  {H_\varphi = (1,\mu,\eta,\Delta,\epsilon,\varphi,S,\nu,\eta,R)}
  $$
  as a deformation of the cocommutative Hopf
  algebra $H_0:=(1,\mu,\eta,\Delta,\epsilon,S)$ {in the following way}.
   Let $s\in\K$.  An \emph{$s$-associator} is a group-like element
  $\varphi(X,Y)\in\KXY$ satisfying
  the pentagon relation \eqref{e21}
  and the following two hexagon relations:
  \begin{gather}
    \label{e23-s}
    \exp\big(\frac{s(t_{13}+t_{23})}2\big)
    =
    \varphi(t_{13},t_{12})\,
    \exp\big(\frac{st_{13}}2\big)\,
    \varphi(t_{13},t_{23})^{-1}
    \exp\big(\frac{st_{23}}2\big)\,
    \varphi(t_{12},t_{23}),
    \\
    \label{e24-s}
    \exp\big(\frac{s(t_{12}+t_{13})}2\big)
    =
    \varphi(t_{23},t_{13})^{-1}
    \exp\big(\frac{st_{13}}2\big)\,
    \varphi(t_{12},t_{13})\,
    \exp\big(\frac{st_{12}}2\big)\,
    \varphi(t_{12},t_{23})^{-1}.
  \end{gather}
  Note that a $1$-associator is a Drinfeld associator in the sense of
  Section \ref{sec:drinfeld-associators}, and that $0$-associators
  constitute the so-called \emph{Grothendieck--Teichm\"uller group}.
  (In fact, Furusho \cite{Furusho} proved that if $\varphi(X,Y)$
  satisfies \eqref{e21}, then it  satisfies \eqref{e23-s}
  and \eqref{e24-s} for some $s$ in the algebraic closure of $\K$.)
  Given an $s$-associator $\varphi_s(X,Y)$, define {$\varphi_s:0\to3$
  and $\nu_s:0\to1$} by \eqref{eq:phi*} and \eqref{eq:nu_def},
  respectively, and define {$R_s:0\to2$}  by \eqref{eq:R*}
  with $c$ replaced with $sc$.  Then, by a proof completely parallel to that of
  Theorem \ref{prop:r3}, it follows that
  $$
  H_s := (1,\mu,\et,\De,\ep,\varphi_s,S,\nu_s,\et,R_s)
  $$
  is a quasi-triangular quasi-Hopf algebra  in $\hA$.
  Assume now that $\varphi$ is a Drinfeld associator.
  Then  $\varphi_s(X,Y):= {\varphi(sX,sY)}$
 is an $s$-associator for {every} {$s\in\mathbb K$},
  so that {$\{H_s\}_{s\in\mathbb K}$} is a one-parameter family of quasi-triangular quasi-Hopf algebras.
{We have $H_1=H_\varphi$ and $H_0$ is a cocommutative Hopf algebra.}
\end{remark}

%
%
\section{Weight systems} \label{sec:weight_systems}

We illustrate the results of the previous two sections by considering
weight systems, which transform Jacobi diagrams into linear maps.

\subsection{Casimir Lie algebras and weight systems} \label{sec:casimir-lie-algebras}

A \emph{Casimir Lie algebra} is a Lie algebra $\g$ (over $\K$)
equipped with an {ad-}invariant, symmetric $2$-tensor $c\in\g\ot \g$.  Then
the universal enveloping algebra $U(\g)$ of $\g$ together with
$c\in\g^{\ot2}\subset U(\g)^{\ot2}$ is a Casimir Hopf algebra in the
category of vector spaces.

Consequently, $(U(\g),c)$ is also a Casimir Hopf algebra in ${\mathbf{M}_{\g}} := \UgMod$, 
the linear symmetric strict monoidal category of
$U(\g)$-modules.  The universal property of~$\AB$ gives a unique
linear symmetric monoidal functor
$$
W_{(\g,c)}: \AB \longrightarrow \mathbf{M}_{\g}
$$
mapping the Casimir Hopf algebra $(H,c)$ in $\AB$ to the Casimir
Hopf algebra $(U(\g),c)$.
We call $W_{(\g,c)}$ the \emph{weight system} of the Casimir Lie algebra $(\g,c)$.

\begin{example}
  (1) A \emph{quadratic Lie algebra} is a pair $(\g,\kappa)$ of a
  finite-dimensional Lie algebra $\g$ and a non-degenerate symmetric
  {ad-}invariant bilinear form $\kappa : \g \times \g \to \K$.
  Let $c_\kappa \in\g \otimes \g$ be the $2$-tensor
  corresponding to $\kappa$ via
  $$
 \Hom(\g \otimes \g, \K)
 \cong \Hom(\g , \K) \otimes \Hom(\g , \K) \cong \g \otimes \g,
  $$
 with {the second isomorphism}  induced by~$\kappa$.  Then $(\g,c_\kappa)$ is a Casimir Lie
 algebra, and hence $(U(\g),c_\kappa)$ is a Casimir Hopf algebra in
 the category of vector spaces.

(2) The \emph{Cartan trivector} $T_\kappa \in\g^{\ot3}$ of
$(\g,\kappa)$ is the skew-symmetric, {ad-}invariant $3$-tensor
corresponding to the trilinear form
$$
\g \otimes \g \otimes \g \longrightarrow \K, \quad (x,y,z) \longmapsto \kappa([x,y],z).
$$
{We have}
$$
T_\kappa  =  W_{(\g,{c_\kappa})}\Big( \figtotext{60}{40}{Y}\Big) \  \in U(\g)^{\otimes 3}.
$$
\end{example}

\subsection{Continuous weight systems} \label{sec:cont-weight-syst}

Let $\K[[h]]$ be the formal power series algebra.  For a
  vector space $V$, let $V[[h]]$ denote the $h$-adic completion
  of $V\ot\K[[h]]$.

We fix a Casimir Lie algebra $(\g,c)$.
Let $\mathbf{M}_{\g}[[h]]$ be  the $\K[[h]]$-linear symmetric strict monoidal category such that
 $\Ob(\mathbf{M}_{\g}[[h]])=\Ob(\mathbf{M}_{\g})$ and
\begin{gather*}
  \mathbf{M}_{\g}[[h]](V,W ) = \mathbf{M}_{\g}( V,W )[[h]]
\end{gather*}
for $V,W\in{\Ob(\mathbf{M}_{\g}[[h]])}$.  The
composition in the category $\mathbf{M}_{\g}[[h]]$ and its symmetric strict monoidal
structure are inherited from $\mathbf{M}_{\g}$ in the obvious way.

Since $(\Ug,c)$ is a Casimir Hopf algebra in $\mathbf{M}_{\g}$, so is $(\Ug,hc)$ in $\mathbf{M}_{\g}[[h]]$.  By the
universal property of~$\AB$, there is a unique linear symmetric
monoidal functor
\begin{gather*}
  W_{(\g,hc)}:\AB \longrightarrow \mathbf{M}_{\g}[[h]]
\end{gather*}
such that $W_{(\g,hc)}(m)=\Ug^{\ot m}$ for $m \ge 0$ and which maps the
Casimir Hopf algebra $(H,c)$ in $\AB$ to the Casimir Hopf algebra
$(\Ug,hc)$ in $\mathbf{M}_{\g}[[h]]$.
By continuity, the functor $W_{(\g,hc)}$ above extends uniquely as
\begin{gather*}
  \hW_{(\g,hc)}:\hA \longrightarrow \mathbf{M}_{\g}[[h]].
\end{gather*}

\begin{remark}
  \label{rem1}
  (1)  Let $\g$ be a Lie algebra. One could also work within  $\UghMod$, the category of
  $\Ugh$-modules, instead of $\mathbf{M}_{\g}[[h]]$.
  In fact, there is a canonical linear functor
  \begin{gather*}
    i:\mathbf{M}_{\g}[[h]] \longrightarrow \UghMod
  \end{gather*}
  which maps each $U(\g)$-module $V$ to $V[[h]]$ and maps each $f:V\to
   W$ in $\mathbf{M}_{\g}[[h]]$, i.e., $f\in\mathbf{M}_{\g}(V,W)[[h]]$, to the map
   $i(f):V[[h]]\to W[[h]]$ induced by $f$.  Since the functor $i$ is
   fully faithful, we may regard $\mathbf{M}_{\g}[[h]]$ as a subcategory of
   $\UghMod$.

   (2) Let $(\g,c)$ be a Casimir Lie algebra.  Then, by Corollary \ref{r2}, the composition
   \begin{gather*}
     i\circ \hW_{(\g,hc)}: \widehat{\AB} \longrightarrow \UghMod
   \end{gather*}
   maps the ribbon quasi-Hopf algebra in $\hA$ (see Theorem
    \ref{prop:r3}) to a ribbon quasi-Hopf algebra structure on
    $U(\g)[[h]]$.  This structure is known from Drinfeld's {work \cite{Drinfeld2}.}  See
    \cite[Theorem XIX.4.2]{Kassel} for the description of the
    underlying quasi-triangular quasi-bialgebra structure.
\end{remark}

%
%
\section{Construction of the functor $Z$} \label{sec:extended_Kontsevich}

In this section, we construct  a functor
\begin{gather}
  Z:\Bq  \longrightarrow \hA,
\end{gather}
from the non-strictification $\Bq$ of $\B$ to the degree-completion
$\hA$ of $\AB$.

\subsection{The category $\Bq$ of bottom $q$-tangles in handlebodies} \label{sec:category-bq-bottom}

 Define the category $\Bq$ of bottom $q$-tangles in handlebodies
to be the non-strictification (see Section~\ref{sec:tangles}) of
the strict monoidal category $\B$.  Here we identify $\Ob(\B)=\NZ$
with the free monoid $\Mon(\bu)$ on an element $\bu$.
Hence we have $\Ob(\Bq)=\Mag(\bu)$, the free unital magma on $\bu$.

\begin{example}
For  $w\in\Mag(\bullet)$, we can regard  $\Bq(\varnothing,w)$ as a subset of
$\T_q\big(\varnothing,w(+-)\big)$, where $ w(+-)\in\Mag(\pm) $
is obtained from $w$ by substituting $\bu=(+-)$.
\end{example}

\subsection{The extended Kontsevich integral $Z$}  \label{sec:extend-konts-integr}

The rest of this section is devoted to the proof of following result.

\begin{theorem} \label{th:extended_Kontsevich}
 There is a functor $Z: \Bq \to \hA$ such that
\begin{itemize}
\item[(i)] for $w\in\Mag(\bu)$, we have $Z(w)=|w|$,
\item[(ii)] if $T\in\Bq(\varnothing,w) \subset
\T_q\big(\varnothing,w(+-)\big)$, $w\in\Mag(\bullet)$, then the
value of $Z$ on $T$ is the usual Kontsevich integral $Z(T)$, as
defined in Section \ref{sec:usual_Kontsevich},
\item[(iii)] $Z$ is tensor-preserving, i.e., we have
  \begin{gather*}
    Z(T \otimes T') = Z(T) \otimes Z(T')
  \end{gather*}
  for  morphisms $T$ and $T'$ in $\Bq$.
\end{itemize}
\end{theorem}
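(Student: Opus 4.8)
The plan is to first give meaning to the diagram \eqref{e25} for one fixed decomposition \eqref{e100} of $T$, and then to prove that the resulting element of $\hA(m,n)$ is independent of all choices. Concretely, given $q$-tangles $T_0:\ti v\to w(+-)$ and $T_i:\vn\to u_iu_i'$ ($1\le i\le m$), I would apply the usual Kontsevich integral to obtain Jacobi diagrams $Z^\T(T_0)$ and $Z^\T(T_1),\dots,Z^\T(T_m)$ on the underlying polarized $1$-manifolds, stack each cap $Z^\T(T_i)$ onto the corresponding bundle of source strands of $Z^\T(T_0)$, and read the $i$-th capped-off bundle as a passage through the $i$-th handle of $V_m$ by decorating the relevant solid arcs and dashed edges with the generator $x_i\in\free m$. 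By Corollary \ref{r15} this decoration records exactly the homotopy class of the underlying map into $V_m$, so the result is an $\free m$-colored Jacobi diagram on $\Xn$, i.e.\ an element of $\AB(m,n)$; completing in degree yields $Z^\B(T)\in\hA(m,n)$. Property (i) then holds by construction.

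The crux — and the step I expect to be the main obstacle — is \emph{well-definedness}: two decompositions of the form \eqref{e100} representing the same bottom $q$-tangle $T$ must give the same element of $\hA(m,n)$. I would organize the ambiguities into three types: (a) the choice of projection diagram of $T$; (b) the choice of where to cut the diagram into the body $T_0$ and the handle-caps $T_i$, together with the cabling of the strands running through each handle; and (c) the parenthesizations of the words $u_iu_i'$ (equivalently, of $\ti v$). For (a), since $Z^\T$ is an isotopy invariant, any move supported inside a single piece leaves the corresponding $Z^\T(T_j)$ unchanged, while a move crossing a cut is absorbed using the functoriality and tensor-preservation of $Z^\T$. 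For (b), changing the cabling through a handle alters the pieces by the cabling operations, whose interaction with $Z^\T$ is controlled by Lemma \ref{lem:cabling}; the point is that the cabling anomalies $a_w$ of \eqref{e61} produced on the two sides of a cut are mutually inverse and cancel, so that the bead-recording is unaffected. For (c), re-bracketing $u_iu_i'$ composes $Z^\T(T_i)$ with an associator morphism and simultaneously composes $Z^\T(T_0)$ with its inverse along the same strands, so these cancel upon stacking; this is exactly the place where one uses that the target $\hA$ is \emph{strict} and that $Z^\B$ is only required to be tensor-preserving, not monoidal.

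With well-definedness established, functoriality would follow by matching the two parallel descriptions of composition. In $\Bq$, the composite $T'\circ T$ is formed by stacking a suitable cabling of $T$ on a cube presentation of $T'$; in $\AB$, composition is formed by the analogous stacking of a cabling $C_\kappa(D)$ on a square presentation, as recorded in Lemma \ref{ex:restricted}. I would select compatible decompositions of $T$, of $T'$ and of $T'\circ T$, and then deduce $Z^\B(T'\circ T)=Z^\B(T')\circ Z^\B(T)$ from the functoriality of $Z^\T$ on $\Tq$ together with its cabling behavior (Lemma \ref{lem:cabling}); the agreement of the two cabling conventions — one for bottom tangles, one for Jacobi diagrams — is precisely what makes the identification work. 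Preservation of identities is immediate, as $\id_m$ admits the trivial decomposition.

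Finally, properties (ii) and (iii) are straightforward. When $v=\vn$ there are no handle-caps, $\ti v=\vn$, and the decomposition collapses to $T=T_0:\vn\to w(+-)$; hence $Z^\B(T)=Z^\T(T_0)=Z^\T(T)$ with no beads inserted, which is (ii). For (iii), tensor product is juxtaposition in both $\Bq$ and $\hA$, so choosing the juxtaposed decomposition of $T\ot T'$ and invoking the tensor-preservation of $Z^\T$ gives $Z^\B(T\ot T')=Z^\B(T)\ot Z^\B(T')$ after the disjoint relabelling of arcs and beads built into the tensor product of $\AB$.
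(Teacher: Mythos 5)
Your route genuinely differs from the paper's: the paper does \emph{not} take \eqref{e100}--\eqref{e25} as the definition of $Z$. Instead (Definition \ref{r25}) it defines $Z(T)$ from a \emph{cube presentation} $U:\dbl^v(v_1,\dots,v_m)\to w(+-)$ of $T$, explicitly pre-composing $Z^\T(U)$ with the cabling anomalies $a_{v_1}\ot\id_{v_1^*}\ot\cdots\ot a_{v_m}\ot\id_{v_m^*}$ of \eqref{e61}; the formula \eqref{e25} is only derived afterwards (it is Theorem \ref{r46}, proved in Section \ref{sec:proof-theorem-refr46-1} as a consequence of Theorem \ref{th:extended_Kontsevich}). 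Taking \eqref{e25} as the definition, as you do, is a legitimate alternative, and it even simplifies well-definedness: since your handle contents $T_i$ are arbitrary caps rather than standardized parallel cores, the paper's delicate ``sliding through the handle'' move (which, for cube presentations, requires an identity intertwining $a_{v_i}$ with $Z^\T(T_i)$) decomposes into cut-crossing moves, each absorbed by functoriality of $Z^\T$ together with the homotopy interpretation of beads (Corollary \ref{r15}); your points (a) and (c) are fine, and (ii), (iii) are as immediate as you say.

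The gap is in your handling of the anomalies: you place them in the wrong step and describe the wrong mechanism. Comparing two decompositions \eqref{e100} of the \emph{same} morphism involves no cabling operations at all, so Lemma \ref{lem:cabling} and the anomalies $a_w$ are irrelevant to your point (b); what is needed there is exactly the mechanism of your point (a). Cabling genuinely enters only in \emph{functoriality}: in $T'\circ T$ the content of handle $i$ of $V_m$ becomes a cabled cap $C_{\varpi_i}(T_i)$, and $Z^\T(C_{\varpi_i}(T_i))$ differs from the diagrammatic cabling $C_{\varpi_i}Z^\T(T_i)$ (which is what composition in $\AB$ produces, by Lemma \ref{ex:restricted}) by the anomaly factors $c(\cdot,\cdot)$ of Lemma \ref{lem:cabling}. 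It is \emph{not} true that all of these cancel in mutually inverse pairs: the factors at the handle feet do cancel against the factor $c(\ti v,\varpi_s)^{-1}$ coming from the cabled $T_0$, but the factor $c\big(w(+-),\varpi_t\big)=\bigotimes_j\big(a_{w_j}\ot\id_{w_j^*}\big)$ at the bottom of the cabled $T_0$ survives, and the proof closes only because this surviving factor equals, by the normalization \eqref{e61}, exactly the value of $Z^\T$ on plain cabled caps -- i.e., the anomalies hidden inside $Z^\T(T_j')$ once the decomposition of $T'$ is normalized so that its handle contents are plain cabled caps. You must either make this matching explicit (first isotope $T'$ into that normal form, using well-definedness), or prove a statement like the paper's Lemma \ref{lem:induction_cabling}, the recurrence $a_{C_\varpi(w)}=\big(r^{[w_1]}(a_{\varpi(1)})\ot\cdots\ot r^{[w_n]}(a_{\varpi(n)})\big)\circ C_\varpi(a_w)$, which is the dedicated (and nontrivial) ingredient of the paper's own functoriality proof. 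As written, ``functoriality of $Z^\T$ together with Lemma \ref{lem:cabling}'' stalls exactly at this point.
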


 The functor $Z:\Bq\to \hA$ is \emph{not} a monoidal functor.  By
replacing the target monoidal category $\hA$ with an appropriate
``parenthesized'' version $\hAq^\varphi$, we can make $Z$ into a
braided monoidal functor $\Zqphi:\Bq\to\hAq^\varphi$; see Section \ref{sec:A_q}.

\subsection{Notations}  \label{sec:notations}

Let $\calC$ be a monoidal category with {(left)} duals. The dual of $x\in\Ob(\calC)$ is denoted by $x^*$.
For $x\in\Ob(\calC)$, set
\begin{gather*}
  \dbl (x) = x\otimes x^* \in\Ob(\calC).
\end{gather*}
We extend this definition to finite sequences of objects of $\calC$ as follows.

First, assume that the monoidal category $\calC$ is strict.  For   $\ul{x}=(x_1,\ldots ,x_k) \in\Ob(\calC)^k$ ($k\ge1$), set
\begin{gather*}
  \dbl (\ul{x}) = \dbl (x_1,\ldots ,x_k) :=\dbl (x_1)\otimes \dots \otimes \dbl (x_k) \in\Ob(\calC ).
\end{gather*}
Now, assume that the monoidal category $\calC$ is non-strict.
For   $w \in\Mag(\bullet)$ of length $k \ge1$
and   $\ul{y}=(y_1,\ldots ,y_k)\in\Ob(\calC )^k$,
let $w (\ul{y}) =  w (y_1,\ldots ,y_k)\in\Ob(\calC )$ be the object obtained from $w $
by replacing its $k$ consecutive letters by $y_1,\ldots ,y_k$  in this order.
(For instance, if $k=3$ and $w =((\bu\bu)\bu)$, then
$w (y_1,y_2,y_3)=(y_1\otimes y_2)\otimes y_3$.)   Set
\begin{gather*}
  \dbl ^w (\ul{x})=\dbl ^w (x_1,\ldots ,x_k) := w (\dbl (x_1),\ldots ,\dbl (x_k))
\end{gather*}
for $\ul{x}=(x_1,\ldots ,x_k)\in\Ob(\calC )^k$.

{Moreover}, we will need the following notation when the monoidal
category $\calC$ is non-strict.  Let $w \in\Mag(\bullet)$ of length
$k$, let $\underline{x}=(x_1,\dots,x_k), \underline{y}=(y_1,\dots,y_k)
\in\Ob(\calC )^k$ and let
{$f_1:x_1\to y_1, \dots, f_k:x_k\to y_k$ be morphisms in $\calC$.}
Then
$$
w(f_1,\ldots,f_k) :w(\underline{x})\lto w(\underline{y})
$$
denotes the ``$w$-parenthesized'' tensor product of $f_1,\dots,f_k$.

\subsection{Construction of $Z$} \label{sec:construction_Z}

Let $m,n\ge0$.  We decompose the handlebody $V_m$ as
\begin{equation}  \label{eq:handlebody}
V_m = \underbrace{([-1,1]^2 \times [0,7/8] )}_{\hbox{\small a ``lower'' copy of the cube}}
 \cup\hspace{10pt} \underbrace{ \big( ([-1,1]^2 \times [7/8,1] ) \cup (\hbox{$m$ $1$-handles})\big)}_{\hbox{\small an ``upper'' copy of $V_m$}}.
\end{equation}
{For every} {$T:m\to n$ in $\B$} {there is} 
an $n$-component  bottom tangle in $V_m$ such that
\begin{itemize}
\item it intersects transversally the square $[-1,1]^2 \times \{7/8\}$
in finitely many points uniformly distributed along the line $[-1,1] \times \{0\} \times \{7/8\}$,
\item its intersection with the ``upper'' part of \eqref{eq:handlebody} consists of finitely many parallel copies of the cores of the $1$-handles.
\end{itemize}
Then $T$ is determined by the intersection of this representative
tangle with the ``lower'' part of \eqref{eq:handlebody}, which defines
a tangle
$$
U :\dbl(v_1,\dots,v_m) \lto (+-)^n\quad \text{in $\T$}
$$
for some $v_1,\dots,v_m \in\Mon(\pm)$.  We call $U$ a \emph{cube presentation} of  $T$.

\begin{example}
  \label{r22}
Here is  a $3$-component bottom tangle $T$ in $V_2$, together with a cube presentation $U$  where  $v_1=v_2=(++)$:
\begin{gather}
  \label{e63}
\centre{
\labellist
\small\hair 2pt
 \pinlabel{$\leadsto$}  at 110 22
 \pinlabel{$T=$} [r] at -2 22
 \pinlabel{$=U$} [l] at 219 22
\pinlabel{\scriptsize $++$} at 151 46
\pinlabel{\scriptsize $--$} at 168 46
\pinlabel{\scriptsize $++$} at 187 46
\pinlabel{\scriptsize $--$} at 203 46
\pinlabel{\scriptsize $+-$} [t] at 196 2
\pinlabel{\scriptsize $+-$} [t] at 160 2
\pinlabel{\scriptsize $+-$} [t] at 175 2
\endlabellist
\centering
\includegraphics[scale=1.0]{cube_presentation}
}
\end{gather}

\end{example}

If $T$ is  upgraded to {a morphism $T:v\to w$ in $\Bq$ with $|v|=m$, $|w|=n$,}
and if
$v_1,\dots,v_m$ are upgraded to  $v_1,\dots,v_m \in\Mag(\pm)$, then we call
$$
U:\dbl^v(v_1,\dots,v_m)\lto w(+-)\quad \text{in $\Tq$}
$$
a \emph{cube presentation} of the bottom $q$-tangle $T$.

\begin{definition}
\label{r25}
Let $T:v\to w$ in $\Bq$ with $m=|v|$, $n=|w|$, and   let
$$U:\dbl^v(v_1,\dots,v_m)\lto w(+-)\quad \text{in $\Tq$}$$  be a cube presentation of $T$.
The \emph{extended Kontsevich integral} of $T$ is the {morphism}
$$
Z(T): m\lto n\quad \text{in $\hA$}
$$
with  square presentation
$$
Z(U) \circ (a_{v_1} \otimes \id_{v_1^*} \otimes \cdots \otimes a_{v_m} \otimes \id_{v_m^*})
:\dbl(v_1,\dots,v_m)\lto(+-)^n\quad \text{in $\AT$}.
$$
Thus, diagrammatically, we have\\
\begin{gather}
  \label{e7}
\quad \ \ \labellist
\scriptsize\hair 2pt
 \pinlabel{$U$}  at 140 50
 \pinlabel{$Z(U)$}  at 503 50
 \pinlabel{$Z\Bigg($} [r] at 0 61
 \pinlabel{$\Bigg)$} [l] at 289 60
 \pinlabel{$=$}  at 324 59
 \pinlabel{$\underbrace{\hphantom{aaaaaaaaaaaaaaaaaa}}_w$} [t] at 144 4
 \pinlabel{$\overbrace{\hphantom{aaaaaaaaaaaaaaaaaaaaaaaaaaaa}}^{v}$} [b] at 146 150
 \pinlabel{$\cdots$}  at 143 8
 \pinlabel{$\cdots$}  at 143 133
 \pinlabel{\tiny $\stackrel{v_1}{\cdots}$} at 40 98
 \pinlabel{\tiny $\stackrel{v_m}{\cdots}$}  at 193 98
 \pinlabel{$a_{v_1}$}  at 398 99
 \pinlabel{$a_{v_m}$}  at 550 100
 \pinlabel{$\cdots$}  at 507 130
 \pinlabel{$\cdots$}  at 507 10
 \pinlabel{$\underbrace{\hphantom{aaaaaaaaaaaaaaaaaa}}_n$} [t] at 505 4
 \pinlabel{$\overbrace{\hphantom{aaaaaaaaaaaaaaaaaaaaaaaaaaaa}}^{m}$} [b] at 505 150
 \pinlabel{.} at 655 59
\endlabellist
\centering
\includegraphics[scale=0.53]{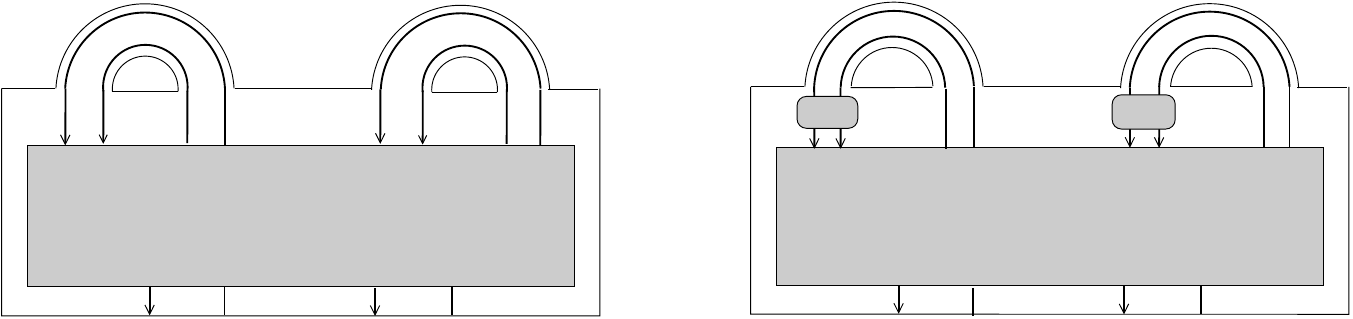}
\end{gather}
\end{definition}

\vspace{0.4cm}
The next lemma shows that the extended Kontsevich integral  is well defined.

\begin{lemma}
Let {$T:v\to w$ in $\Bq$},  $m=|v|$, $n=|w|$. For {all} cube presentations
$$
U:\dbl^v(v_1,\dots, v_m)\lto w(+-), \quad
U':\dbl^v(v'_1,\dots, v'_m)\lto w(+-)\quad \text{in $\Tq$}
$$
of $T$, the {morphisms}
$$
Z(U)\! \circ\! (a_{v_1}\! \otimes \id_{v_1^*} \otimes \cdots\otimes  a_{v_m}\! \otimes \id_{v_m^*}),
\ Z(U')\! \circ\! (a_{v'_1}\! \otimes \id_{(v'_1)^*} \otimes \cdots \otimes  a_{v'_m}\! \otimes \id_{(v'_m)^*})
$$
are square presentations of the same {morphism $m\to n$ in $\hA$.}
\end{lemma}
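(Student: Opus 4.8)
The plan is to relate any two cube presentations of $T$ by a finite sequence of elementary moves and to check that the square presentation appearing in Definition~\ref{r25} represents the same morphism of $\hA(m,n)$ before and after each move. The starting observation is that morphisms of $\Tq$ are \emph{isotopy classes}: if two representatives of $T$ satisfying the two bullet conditions attached to the decomposition~\eqref{eq:handlebody} are isotopic through tangles satisfying these conditions, inducing the same words $v_1,\dots,v_m$ \emph{with the same parenthesizations} (the word $v$ being fixed since $T:v\to w$), then they determine the very same $q$-tangle $U\in\Tq$, so that $Z(U)$ and the anomaly factors $a_{v_1},\dots,a_{v_m}$ literally coincide and there is nothing to prove. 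Hence I may assume that $U$ and $U'$ differ only near the collar $[-1,1]^2\times[7/8,1]$ and the attached handles, i.e.\ in how the components of $T$ traverse the handles and in the chosen parenthesizations $v_i,v_i'\in\Mag(\pm)$. It is also useful to note that the colored diagram underlying a square presentation splits into its bead/coloring data, which records the homotopy class $h(T)$ (an isotopy invariant, hence presentation-independent, via Corollary~\ref{r15}), and its dashed graph, on which the moves below genuinely act.

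Second, I would reduce to a short list of generating moves. Adapting the general-position analysis of tangles in a handlebody relative to~\eqref{eq:handlebody} carried out in~\cite{CHM}, any two such presentations are connected by: (i) a \emph{reparenthesization} of a single $v_i$ keeping its underlying word in $\Mon(\pm)$ fixed; (ii) a \emph{detour move}, in which a component performs a trivial excursion up through the $i$-th handle and back, inserting a canceling pair into $v_i$ and modifying $U$ by a local cup-cap pair; and (iii) an \emph{orientation reversal} of one core copy. Together with lower-cube isotopy, the moves (i)--(iii) relate any two cube presentations of the fixed isotopy class $T$.

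Third, I would verify invariance of the square presentation under each move, using three inputs: the functoriality and the explicit values of $Z$ on elementary $q$-tangles and associators (Theorem~\ref{th:Kontsevich}); the cabling formula $ZC_\varpi(\gamma)=c(w',\varpi_t)\circ C_\varpi Z(\gamma)\circ c(w,\varpi_s)^{-1}$ of Lemma~\ref{lem:cabling}, whose correction terms $c(w,\varpi)$ are built precisely from the anomalies $a_{\varpi(i)}$; and the identities $a_w=a'_w=(u_w)^{-1}$ of Lemma~\ref{lem:a_u_a'_u'}. For move~(i), reparenthesizing the source of $U$ composes $Z(U)$ with the image under $Z$ of an associativity isomorphism, i.e.\ with a localized cabling of $\Phi$; by Lemma~\ref{lem:cabling} this factor is exactly compensated by replacing $a_{v_i}\ot\id_{v_i^*}$ with $a_{v_i'}\ot\id_{(v_i')^*}$, so the two square presentations agree already as morphisms of $\AT$ (their sources and targets coincide in $\Mon(\pm)$). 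For move~(ii), the created cup-cap pair contributes a factor which, cabled by $v_i$, is trivial by $a_w=(u_w)^{-1}$; the inserted canceling pair $x_i\overline{x_i}$ in $v_i$ is then removed by the ``cancellation'' move of~\eqref{eq:moves_ccd} once the square presentation is routed through the handles into the $\free m$-colored Jacobi diagram in $\AB(m,n)=\A(X_n,\free m)$, so the class in $\hA(m,n)$ is unchanged. Move~(iii) follows from property~(iii) of $Z$ in Theorem~\ref{th:Kontsevich} (orientation reversal is implemented by the diagrammatic operator $S_\ell$) together with $a_w=a'_w$.

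The main obstacle is the bookkeeping in move~(i): one must check that, after applying Lemma~\ref{lem:cabling}, the residual cabled-associator factors land on the $v_i$-strands rather than the $v_i^*$-strands and assemble exactly into the change $a_{v_i}\rightsquigarrow a_{v_i'}$, which rests on the pentagon/hexagon-derived properties of the $a_w$ packaged in $c(w,\varpi)$. A secondary point requiring care is that the passage from a Jacobi-diagram square presentation to its class in $\AB(m,n)$ intertwines the moves~(ii)--(iii) with the defining equivalences of $\free m$-colored Jacobi diagrams (the moves~\eqref{eq:moves_ccd} and the STU relation), which is guaranteed by Theorem~\ref{th:chord_Jac}; and that completeness of the move list rests on the handlebody general-position argument imported from~\cite{CHM}.
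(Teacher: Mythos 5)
The crux of your argument — and where it breaks — is the second paragraph, where you claim that any two cube presentations of $T$ are connected by cube isotopy together with (i) reparenthesizations, (ii) detour moves (trivial excursions through a handle), and (iii) orientation reversals of core copies. The move that actually has to be handled is the \emph{general} slide move: an arbitrary subtangle $T_i:v_i'\to v_i$ sitting just below one foot of the $i$-th handle can be pushed through the handle and re-emerge as its $\pi$-rotation $r(T_i)$ below the other foot. Your moves are only very special instances of this (a detour is the slide of a bundle of cups/caps; a reparenthesization is the slide of an identity tangle viewed between two parenthesizations), and you give no argument that they generate, say, the slide of a single crossing through a handle — note that a crossing slide permutes letters of $v_i$, which no combination of canceling-pair insertions/deletions and relabelings does, and that during the isotopy realizing a general slide the intersection of the tangle with the upper handlebody is \emph{not} a union of parallel cores, so such an isotopy cannot be subdivided into cube-presentation events at all: the slide must be taken as a primitive move. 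The completeness claim is also misattributed: no such general-position analysis of cube presentations is carried out in \cite{CHM} (the paper's own supporting reference for this topological step is \cite{Lieberum}). Finally, move (iii) is not well posed as stated, since the orientations of the core copies are inherited from the oriented tangle $T$ and are not free data; what can differ between presentations is the direction in which a strand traverses a handle, and changing that is again a slide, not a relabeling.

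For comparison, the paper reduces precisely to the general slide: without loss of generality $U=T_0\circ v\big(\id_{v_1}\ot r(T_1),\dots,\id_{v_m}\ot r(T_m)\big)$ and $U'=T_0\circ v\big(T_1\ot\id_{(v_1')^*},\dots,T_m\ot\id_{(v_m')^*}\big)$ for some $T_0$ and arbitrary $q$-tangles $T_i:v_i'\to v_i$. It then computes $Z(U)\circ(a_{v_1}\ot\id_{v_1^*}\ot\cdots)$ and $Z(U')\circ(a_{v_1'}\ot\id_{(v_1')^*}\ot\cdots)$ and reduces their equality, handle by handle, to a single identity in $\AT$ obtained by applying $Z$ to the $q$-tangle isotopy
$$(\id_{v_i}\ot r(T_i))\circ C_{v_i}(\capl)\;=\;(T_i\ot\id_{(v_i')^*})\circ C_{v_i'}(\capl)\,:\,\varnothing\lto v_i(v_i')^*,$$
which, via \eqref{e61}, simultaneously relates $a_{v_i}$, $a_{v_i'}$, $Z(T_i)$ and $Z(r(T_i))$. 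Your per-move verifications in the third paragraph use the right ingredients (Theorem \ref{th:Kontsevich}, Lemma \ref{lem:cabling}, Lemma \ref{lem:a_u_a'_u'}) and would survive as special cases of this computation, but they cannot close the argument until the reduction itself is repaired — either by actually proving your generation claim (which seems to require establishing the slide move anyway) or by replacing your move list with the general slide and verifying invariance for arbitrary $T_i$, as the paper does.
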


\begin{proof}
For {$V:x\to y$ in $\Tq$}, let {$r(V): y^*\to x^*$}
denote the $\pi$-rotation of $V$ around the $\vec y$ axis of $\R^3$.

We can realize an isotopy of bottom tangles with cube presentations as
a sequence of isotopies of cube presentations and ``sliding subtangles
through the handles''. (Similar arguments appear in \cite{Lieberum}.)
Thus, without loss of generality, we can assume that there are
{morphisms $T_0:v\big(v_1(v'_1)^*,\ldots,v_m(v'_m)^*\big)\lto w(+-)$
and $T_1:v'_1\to v_1,\; \dots,\; T_m:v'_m\to v_m$ in $\Tq$}
such that $$
\begin{cases}
U = T_0  \circ v\big(\id_{v_1} \otimes r(T_1),\dots,\id_{v_m} \otimes r(T_m)  \big),\\
U' =  T_0  \circ v\big(T_1 \otimes \id_{(v'_1)^*},\dots, T_m \otimes \id_{(v'_m)^*}\big).
\end{cases}
$$
It follows that
$$
\begin{cases}
Z(U) \circ A= Z(T_0)  \circ \big(  a_{v_1} \otimes Z(r(T_1)) \otimes  \cdots \otimes   a_{v_m} \otimes Z( r(T_m) ) \big), \\
Z(U') \circ A' =  Z(T_0)  \circ \big(Z(T_1) a_{v'_1}\otimes \id_{(v'_1)^*} \otimes \cdots \otimes  Z(T_m)a_{v'_m} \otimes \id_{(v'_m)^*} \big),
\end{cases}
$$
where  $A:=(a_{v_1}\! \otimes \id_{ v_1^*} \otimes \cdots\otimes  a_{v_m}\! \otimes \id_{ v_m^* })$,
and $A'$ is defined similarly from the words $v'_1,\dots, v'_m$.
Thus, it suffices to prove that
$$
\labellist
\scriptsize\hair 2pt
 \pinlabel{ $\cdots$}  at 146 92
 \pinlabel{ $a_{v_1}$}  at 40 36
 \pinlabel{ $Z(r(T_1))$}  at 100 35
 \pinlabel{$a_{v_m}$}  at 193 37
 \pinlabel{ $Z(r(T_m))$}  at 252 36
 \pinlabel{$\stackrel{v_1}{\cdots}$}  at 40 55
 \pinlabel{$\stackrel{v_1}{\cdots}$}  at 39 17
 \pinlabel{$\stackrel{v_m}{\cdots}$}  at 192 17
 \pinlabel{$\stackrel{v_m}{\cdots}$}  at 191 59
\endlabellist
\centering
\includegraphics[scale=0.88]{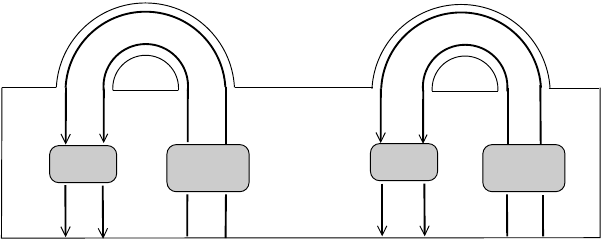}
$$
is equal to
$$
\labellist
\scriptsize\hair 2pt
 \pinlabel{$\cdots$}  at 148 108
 \pinlabel{$a_{v'_1}$}  at 40 62
 \pinlabel{$Z(T_1)$}  at 40 28
 \pinlabel{$a_{v'_m}$}  at 192 62
 \pinlabel{$Z(T_m)$}  at 191 28
 \pinlabel{$\stackrel{v'_1}{\cdots}$}  at 40 79
 \pinlabel{$\stackrel{v'_1}{\cdots}$}  at 41 47
 \pinlabel{$\stackrel{v_1}{\cdots}$}  at 42 11
 \pinlabel{$\stackrel{v'_m}{\cdots}$}  at 192 79
 \pinlabel{$\stackrel{v'_m}{\cdots}$}  at 192 47
 \pinlabel{$\stackrel{v_m}{\cdots}$}  at 193 11
\endlabellist
\centering
\includegraphics[scale=0.9]{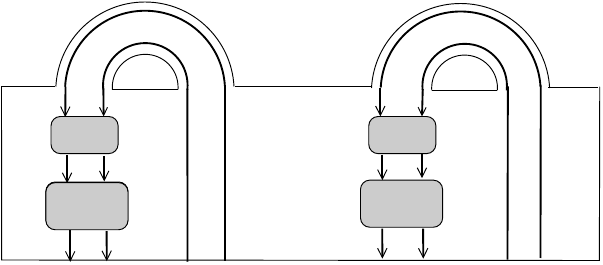}
$$
in the space of $F(x_1,\dots,x_m)$-colored Jacobi diagrams on the appropriate oriented $1$-manifold.
For this, it suffices to show that
$$
\labellist
\small\hair 2pt
 \pinlabel{$rZr(T_i)$}  at 20 61
 \pinlabel{$a_{v_i}$}  at 20 27
 \pinlabel{$a_{v'_i}$}  at 91 62
 \pinlabel{$Z(T_i)$}  at 91 28
 \pinlabel{$=$}  at 55 45
  \pinlabel{ \normalsize $: {v'_i} \lto v_i$ in $\AT$,} [l] at 115 45
 \pinlabel{$\stackrel{v'_i}{\cdots}$}  at 20 80
 \pinlabel{$\stackrel{v_i}{\cdots}$}  at 20 43
 \pinlabel{$\stackrel{v_i}{\cdots}$}  at 20 11
 \pinlabel{$\stackrel{v_i}{\cdots}$}  at 91 11
 \pinlabel{$\stackrel{v'_i}{\cdots}$}  at 92 47
 \pinlabel{$\stackrel{v'_i}{\cdots}$}  at 92 80
\endlabellist
\centering
\includegraphics[scale=1.1]{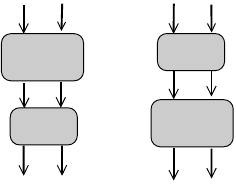}
$$
{which}  follows by applying the usual Kontsevich integral to the following identity of $q$-tangles:
$$
\!\!\!\!\!\!\!\!\!\!\!\!\!\!\!\!\!\!\!\!\!\!\!\!
\labellist
\small\hair 2pt
 \pinlabel{$r(T_i)$}  at 70 24
 \pinlabel{$=$}  at 116 25
 \pinlabel{$T_i$} at 163 26
 \pinlabel{\normalsize$:\varnothing\lto v_i(v'_i)^*$ in $\Tq$.}  [l] at 238 26
  \pinlabel{$\cdots$}  at 13 9
 \pinlabel{$\cdots$}  at 71 9
 \pinlabel{$\cdots$}  at 165 9
 \pinlabel{$\cdots$}  at 225 9
\endlabellist
\centering
\includegraphics[scale=0.8]{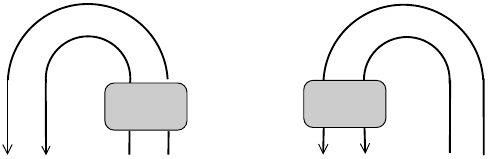}
$$
\end{proof}

Obviously, we have  (ii) in Theorem
\ref{th:extended_Kontsevich}.  We have  (iii) since the usual
Kontsevich integral itself is tensor-preserving.  Therefore, it
remains to prove that $Z$ is functorial.

\subsection{Functoriality of $Z$}   \label{sec:functoriality-z}

To prove that $Z$ is a functor, we need a recurrence formula on the
cabling  anomalies  {$a_w:w\to w$ in $\AT$.}

\begin{lemma} \label{lem:induction_cabling}
For {each} $w\in\Mag(\pm)$ of length $n$ and {each} map $$\varpi:{ \pi_0(\downarrow
\stackrel{w}{\cdots} \downarrow)} =\{1,\dots,n\} \longrightarrow \Mag(\pm),$$ we
have
$$
a_{C_\varpi(w)} = \big( r^{[w_1]}(a_{\varpi(1)})\otimes \cdots \otimes r^{[w_n]}(a_{\varpi(n)}) \big) \circ C_\varpi(a_w)
\ \in\A\big(\downarrow \stackrel{C_\varpi(w)}{\cdots} \downarrow\big),
$$
where {$r^{[+]}=\id$ and $r^{[-]}=r$ with $r$ being the $\pi$-rotation.}
\end{lemma}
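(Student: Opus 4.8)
The plan is to apply the Kontsevich integral to the ``cabling-composes'' identity for the left cap and then peel off the outer cabling with Lemma~\ref{lem:cabling}. Regard the left cap as a morphism $\vn\to(+-)$ in $\Tq$; its single component is cabled by $w$ to the $q$-tangle $C_w(\mathrm{cap}):\vn\to ww^*$ with $n=|w|$ arc components, where component $i$ is oriented according to the letter $w_i$. Since doubling operations compose, cabling each of these components by $\varpi$ replaces component $i$ by $\varpi(i)$ when $w_i=+$ and by $\varpi(i)^*$ when $w_i=-$, which is exactly the word $C_\varpi(w)$. Hence, under the canonical identification of components,
\begin{equation*}
  C_\varpi\big(C_w(\mathrm{cap})\big)=C_{C_\varpi(w)}(\mathrm{cap})\quad\text{in }\Tq .
\end{equation*}
First I would apply $Z$ to both sides. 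By the defining identity \eqref{e61} of the anomaly, $ZC_{C_\varpi(w)}(\mathrm{cap})$ is the nested cap on $C_\varpi(w)\,C_\varpi(w)^*$ carrying $a_{C_\varpi(w)}$ on its top $C_\varpi(w)$-strands.

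For the other side, apply Lemma~\ref{lem:cabling} to $\gamma=C_w(\mathrm{cap}):\vn\to ww^*$. As the source word is empty, the factor $c(\vn,\cdot)^{-1}$ is the identity, so
\begin{equation*}
  ZC_\varpi\big(C_w(\mathrm{cap})\big)=c(ww^*,\varpi_t)\circ C_\varpi\big(ZC_w(\mathrm{cap})\big),
\end{equation*}
where $\varpi_t$ assigns $\varpi(i)$ to both endpoints of the $i$-th nested arc. Next I would expand $ZC_w(\mathrm{cap})$ via \eqref{e61} as the geometric nested cap on $ww^*$ with $a_w$ on top, and distribute $C_\varpi$ over this composition using the functoriality of Jacobi-diagram cabling (Lemmas~\ref{rem:functoriality_cabling} and \ref{rem:functoriality_cabling_bis}): the geometric nested cap cables to the geometric nested cap on $C_\varpi(w)\,C_\varpi(w)^*$, while $a_w$ cables to $C_\varpi(a_w)$. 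Thus $ZC_\varpi(C_w(\mathrm{cap}))$ equals $c(ww^*,\varpi_t)$ post-composed with $\big(C_\varpi(a_w)\otimes\id\big)$ and the nested cap diagram.

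It then remains to identify the anomaly surviving on the top $C_\varpi(w)$-strands. By the definition of $c(ww^*,\varpi_t)$ (built from the identity by inserting $a_{\varpi(i)}$ on each positive strand and $\id$ on each negative strand), a positive letter $w_i$ places $a_{\varpi(i)}$ on the top $C_\varpi(w)$-strand of index $i$, whereas a negative letter $w_i$ places $a_{\varpi(i)}$ on the corresponding top $C_\varpi(w)^*$-strand; the complementary strands carry only identities. The last step is to slide each $C_\varpi(w)^*$-contribution around its nested arc onto the matching top $C_\varpi(w)$-strand. Because this U-turn reverses the induced orientation, the transported anomaly reappears as its $\pi$-rotation, so a negative letter contributes $r(a_{\varpi(i)})=r^{[-]}(a_{\varpi(i)})$ and a positive letter contributes $a_{\varpi(i)}=r^{[+]}(a_{\varpi(i)})$ directly. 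Collecting these factors shows that the surviving anomaly is $\big(\bigotimes_i r^{[w_i]}(a_{\varpi(i)})\big)\circ C_\varpi(a_w)$, and comparing with $ZC_{C_\varpi(w)}(\mathrm{cap})$ yields the asserted formula.

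I expect the cap-sliding step to be the main obstacle: one must justify carefully that transporting an anomaly across a cap component produces \emph{precisely} the $\pi$-rotation $r$ and not merely a bare orientation-reversal, and that the $C_\varpi(w)$-side and $C_\varpi(w)^*$-side contributions combine cleanly without interference. This is most naturally handled by the same orientation-reversal reasoning used in the well-definedness proof of $Z$ above, together with the relations $a_w=a'_w$ and $a_w=(u_w)^{-1}$ of Lemma~\ref{lem:a_u_a'_u'}, which encode exactly how the left-cap anomaly behaves under reversal and rotation.
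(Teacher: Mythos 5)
Your proposal is correct and follows essentially the same route as the paper's proof: identify $C_{C_\varpi(w)}(\capl)$ with $C_\varpi C_w(\capl)$, apply Lemma \ref{lem:cabling} (with trivial source correction), expand via \eqref{e61}, analyze the placement of anomalies in $c(ww^*,\varpi_t)$ according to the signs of the letters $w_i$, and slide the $w^*$-side anomalies around the caps — which the paper justifies by the STU relation — to obtain the $\pi$-rotated factors $r^{[w_i]}(a_{\varpi(i)})$. The final concern you raise is handled in the paper exactly by that STU-based sliding argument, with no need for Lemma \ref{lem:a_u_a'_u'}.
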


\begin{proof}
Setting $x = C_\varpi(w) \in\Mag(\pm)$, we have
$$
\centre{ \labellist
\small \hair 2pt
 \pinlabel{\;$\stackrel{x^*}{\cdots}$}   at 363 65
 \pinlabel{\;$\stackrel{x}{\cdots}$}  at 104 60
 \pinlabel{\;$a_x$}   at 107 184
\endlabellist
\centering
\includegraphics[scale=0.12]{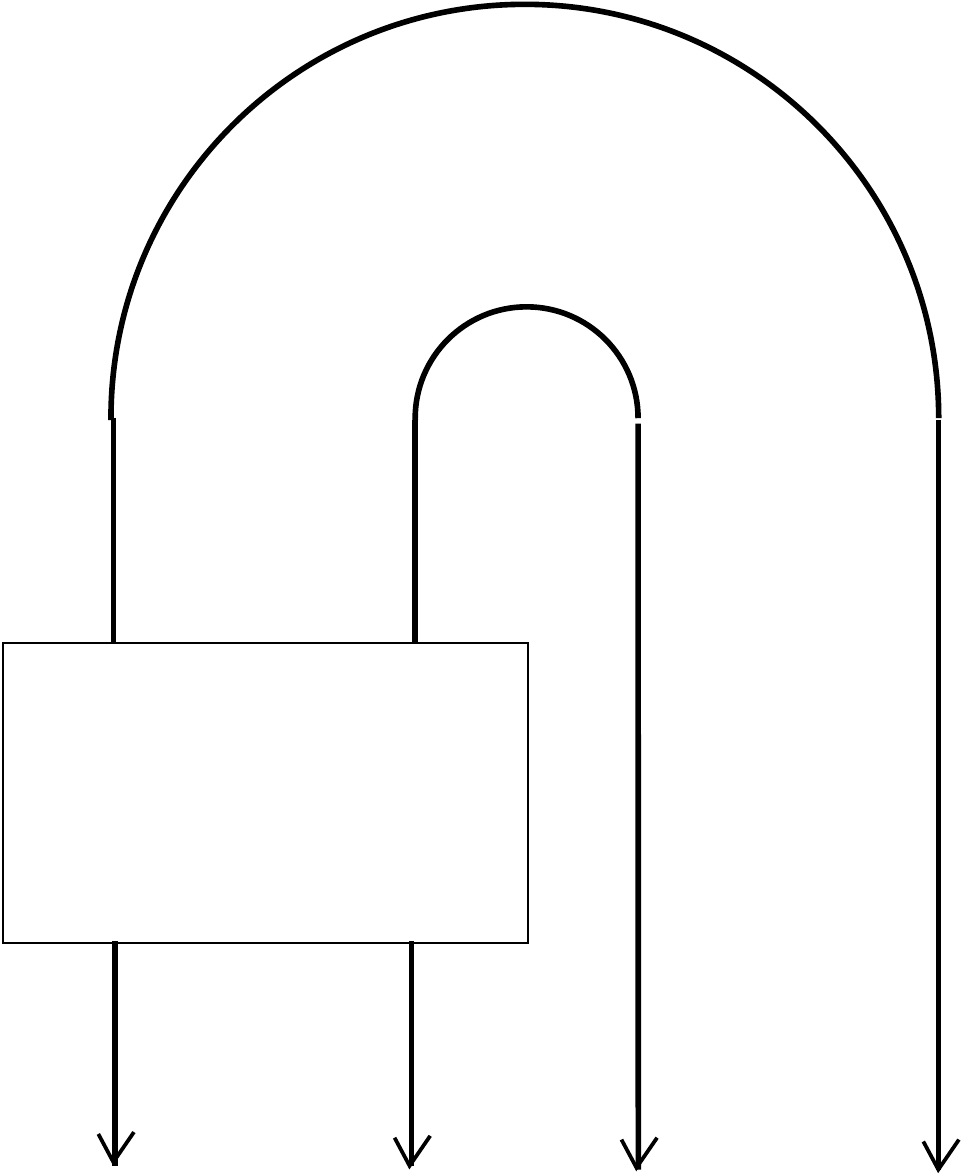}}
= ZC_x\Big(\!\!\!\begin{array}{c}   \figtotext{20}{10}{capleft} \\ {}^{(+-)}  \end{array}\!\!\!\Big)
= Z C_\varpi C_w\Big(\!\!\!\begin{array}{c}   \figtotext{20}{10}{capleft} \\ {}^{(+-)}  \end{array}\!\!\!\Big).
$$
By Lemma \ref{lem:cabling}, we deduce that 
\begin{eqnarray*}
 \centre{ \labellist
\scriptsize \hair 2pt
 \pinlabel{\;$\stackrel{x^*}{\cdots}$}   at 363 65
 \pinlabel{\;$\stackrel{x}{\cdots}$}  at 104 60
 \pinlabel{\;$a_x$}   at 107 184
\endlabellist
\centering
\includegraphics[scale=0.12]{anomaly2}} 
 & = &  c(w w^*,{\varpi_t}) \circ C_\varpi  Z  C_w\Big(\!\!\!\begin{array}{c}   \figtotext{20}{10}{capleft} \\ {}^{(+-)}  \end{array}\!\!\!\Big) \\
 &=& c(w w^*,{\varpi_t}) \circ C_\varpi \Bigg(
\centre{ \labellist
\scriptsize \hair 2pt
 \pinlabel{\;$\stackrel{w^*}{\cdots}$}   at 363 65
 \pinlabel{\;$\stackrel{w}{\cdots}$}  at 104 60
 \pinlabel{\;$a_w$}   at 107 184
\endlabellist
\centering
\includegraphics[scale=0.12]{anomaly2}} \Bigg)\\
&=& c(w w^*,{\varpi_t}) \circ
\centre{ \labellist
\scriptsize \hair 2pt
 \pinlabel{\;$\stackrel{x^*}{\cdots}$}   at 363 65
 \pinlabel{\;$\stackrel{x}{\cdots}$}  at 104 60
 \pinlabel{\;\;$C_\varpi\!(a_w\!)$}   at 107 184
\endlabellist
\centering
\includegraphics[scale=0.12]{anomaly2}} .
\end{eqnarray*}
The series of diagrams $c(w w^*,{\varpi_t})$ is obtained from $\id_w \otimes \id_{w^*}$
by replacing the $i$-th string of $\id_w$ by $a_{\varpi(i)}$ if $w_i=+$  or by {$\id_{\varpi(i)^*}$} if $w_i=-$
and, next, by replacing the $i$-th string of $\id_{w^*}$ by $a_{\varpi({n-i+1})}$ if $w_{n-i+1}=-$ or by 
{$\id_{\varpi(n-i+1)^*}$} if $w_{n-i+1}=+$.
Thus, using the STU relation, we obtain
\begin{eqnarray*}
\centre{ \labellist
\small \hair 2pt
 \pinlabel{\;$\stackrel{x^*}{\cdots}$}   at 363 65
 \pinlabel{\;$\stackrel{x}{\cdots}$}  at 104 60
 \pinlabel{$a_{x}$}   at 107 184
\endlabellist
\centering
\includegraphics[scale=0.12]{anomaly2}}
&=&  \big( r^{[w_1]}(a_{\varpi(1)})\otimes \cdots \otimes r^{[w_n]}(a_{\varpi(n)}) \otimes \id_{x^*} \big) \circ
\centre{ \labellist
\scriptsize \hair 2pt
 \pinlabel{\;$\stackrel{x^*}{\cdots}$}   at 363 65
 \pinlabel{\;$\stackrel{x}{\cdots}$}  at 104 60
 \pinlabel{\;\;$C_\varpi\!(a_w\!)$}   at 107 184
\endlabellist
\centering
\includegraphics[scale=0.12]{anomaly2}}
\end{eqnarray*}
and {we have the conclusion}.
\end{proof}

Let $v\overset{T}{\lto}w\overset{T'}{\lto}x$ in $\Bq$ with $|v|=m$, $|w|=n$, $|x|=p$, and let
$$
U:\dbl^v(v_1,\dots, v_m)\lto w(+-),
\quad
U':\dbl^w(w_1,\dots, w_n)\lto x(+-)\quad \text{in $\Tq$}
$$
be cube presentations of $T$ and $T'$, respectively. Then
$$
U' \circ C_\varpi(U) :\dbl^v(v'_1,\dots,v'_m)\lto x(+-)\quad
  \text{in $\Tq$}
$$
is a cube presentation of {$T' \circ T :v\to x$},
where $\varpi: \pi_0(U) \to \Mag(\pm)$ is an appropriate map
and $v'_1,\dots, v'_m\in\Mag(\pm)$ are such that $C_{{\varpi_s}}(\dbl^v(v_1,\dots,v_m)) = \dbl^v(v'_1,\dots,v'_m)$.
Therefore, $Z(T' \circ T)$ has the following square presentation:
\begin{eqnarray*}
&& Z\big(U' \circ C_\varpi(U)\big) \circ (a_{v'_1}\! \otimes \id_{(v'_1)^*} \otimes \cdots\otimes  a_{v'_m}\! \otimes \id_{(v'_m)^*}\big) \\
&=& Z(U') \circ Z\big(C_\varpi(U)\big)   \circ \big(a_{v'_1}\! \otimes \id_{(v'_1)^*} \otimes \cdots\otimes  a_{v'_m}\! \otimes \id_{(v'_m)^*}\big) \\
&=& Z(U') \circ c\big(w(+-),{\varpi_t}\big) \circ C_{ \varpi} Z(U) \\
&& \circ \, c\big(\dbl^v(v_1,\dots, v_m),{\varpi_s}\big)^{-1} \circ \big(a_{v'_1}\! \otimes \id_{(v'_1)^*} \otimes \cdots\otimes  a_{v'_m}\! \otimes \id_{(v'_m)^*}\big) \\
&=& Z(U') \circ \big(a_{w_1}\! \otimes \id_{ w_1 ^*} \otimes \cdots\otimes  { a_{w_n}\! \otimes \id_{w_n^*}}
  \big) \circ C_{ \varpi} Z(U) \\
&& \circ\, c\big(\dbl^v(v_1,\dots, v_m),{\varpi_s}\big)^{-1} \circ \big(a_{v'_1}\! \otimes \id_{(v'_1)^*} \otimes \cdots\otimes  a_{v'_m}\! \otimes \id_{(v'_m)^*}\big).
\end{eqnarray*}
Here the second identity is given by Lemma \ref{lem:cabling}.
By $m$ applications of Lemma~\ref{lem:induction_cabling}  and using
the STU relation, we obtain the following square presentation of $Z(T' \circ T)$:
\begin{eqnarray*}
&& Z(U') \circ (a_{w_1}\! \otimes \id_{ w_1^* } \otimes \cdots\otimes { a_{w_n}\! \otimes \id_{w_n^*}} ) \circ C_{ \varpi} Z(U) \\
&& \circ\, C_{{\varpi_s}}\big(a_{v_1}\! \otimes \id_{ v_1^* } \otimes \cdots\otimes  a_{v_m}\! \otimes \id_{ v_m^* }\big)\\
&=& Z(U') \circ (a_{w_1}\! \otimes \id_{ w_1^* } \otimes \cdots\otimes { a_{w_n}\! \otimes \id_{w_n^*}}  )  \\
&& \circ\, C_\varpi\big(Z(U) \circ (a_{v_1}\! \otimes \id_{ v_1^* } \otimes \cdots\otimes  a_{v_m}\! \otimes \id_{ v_m^* })\big).
\end{eqnarray*}
By Lemma \ref{ex:restricted}, we have $Z(T' \circ T)= Z(T') \circ Z(T)$.

\subsection{Proof of Theorem \ref{r46}}  \label{sec:proof-theorem-refr46-1}

Consider a morphism $T:v\to w$ in $\Bq$ with a decomposition into  
$q$-tangles $T_0,T_1,\dots,T_m$ as {shown} in \eqref{e100}, {where $m:= \vert v\vert$, $n:= \vert w \vert$ and }
\begin{gather*}
  T_0:v(u_1u_1',\dots,u_mu_m')\lto w(+-),\quad T_i:\vn\lto u_iu_i'\quad
  (i=1,\dots,m)\quad \text{in $\Tq$}.
\end{gather*}
{To deduce Theorem \ref{r46} from Theorem \ref{th:extended_Kontsevich}, it suffices to show that the functor
$ Z:\Bq\to\hA$ resulting from the latter  satisfies \eqref{e25} with $Z^\B:=Z$.}
Let us write
\begin{gather*}
  T = \big[T_0;\; T_1,\dots,T_m \big] = \big[T_0;\; (T_i)_{i=1,\dots,m}\big]
\end{gather*}
and extend this notation $[-;\;-,\dots,-]$ to  other compatible
sequences of $q$-tangles $T'_0,T'_1,\dots,T'_m$.  
{{We use the} same kind of notation for Jacobi diagrams.}
In these notations, what we have to prove is the following:
\begin{gather}
  \label{e62}
  Z(T)=\big[Z(T_0);\;Z(T_1),\dots,Z(T_m)\big].
\end{gather}

For each $i=1,\dots,m$, let $\ti T_i:u_i^*\to u'_i$ be the
unique morphism {in $\Tq$} such that
\begin{gather}
  \label{e65}
  T_i = (\id_{u_i}\ot\ti T_i)\circ \CAP{u_i}.
\end{gather}
Then, we have
\begin{eqnarray*}
   &&[Z(T_0);\;Z(T_1),\dots,Z(T_m)]\\
    &\by{e65} & \left[Z(T_0);\;\big(Z\big((\id_{u_i}\ot\ti T_i)\circ    \CAP{u_i}\big)\big)_{i=1,\dots,m}\right]\\
    &=&\left[Z(T_0);\;\big(Z(\id_{u_i}\ot\ti T_i)\circ Z(\CAP{u_i})\big)_{i=1,\dots,m}\right]\\
   & =&
    \left[Z(T_0)\circ
      \bigotimes_{i=1}^m Z(\id_{u_i}\ot \ti T_i)
      ;\;\left(Z(\CAP{u_i})\right)_{i=1,\dots,m}\right]
    \\
   & =&
    \left[Z\left(T_0\circ
      \bigotimes_{i=1}^m(\id_{u_i}\ot \ti T_i)\right)
      ;\;\left(Z(\CAP{u_i})\right)_{i=1,\dots,m}\right].
\end{eqnarray*}
By Definition \ref{r25} and \eqref{e61}, the right hand side is equal to
\begin{eqnarray*}
    &&Z\left(\left[T_0\circ
      \bigotimes_{i=1}^m\left(\id_{u_i}\ot \ti T_i\right)
      ;\;\left(\CAP{u_i}\right)_{i=1,\dots,m}\right]\right)\\
    &=&Z\left(\left[T_0
      ;\;\big((\id_{u_i}\ot \ti T_i)\circ\CAP{u_i}\big)_{i=1,\dots,m}\right]\right)\\
    & \by{e65} &Z\left(\big[T_0      ;\;\left(T_i\right)_{i=1,\dots,m}\big]\right) \ = \ Z(T).
\end{eqnarray*}
Hence we have \eqref{e62},  {which} completes the proof of Theorem \ref{r46}.

\subsection{Group-like property of $Z$}  \label{sec:group-like-property}

Recall from Section \ref{sec:coalgebras} that the category $\hA$ is
enriched over cocommutative coalgebras, and that there is a monoidal
subcategory $\hAg$ of~$\hA$, the group-like part of $\hA$.

\begin{proposition} \label{prop:group-like}
The extended Kontsevich integral $Z$ takes group-like values,
  i.e., {for} $T:v\to w$ in $\Bq$,  we have
\begin{gather*}
  Z(T)\in\hAg(|v|,|w|).
\end{gather*}
Thus we have a (tensor-preserving) functor $Z:\Bq \to \hAg$.
\end{proposition}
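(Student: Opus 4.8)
The plan is to transport group-likeness through the square-presentation description of $Z(T)$ given in Definition \ref{r25}. Recall from Lemma \ref{ex:coalgebra_structures} that if $S$ is a square presentation of a restricted $(m,n)$-Jacobi diagram $D$, then the usual comultiplication $\Delta(S)$ and counit $\epsilon(S)$ of Jacobi diagrams are square presentations of $\Delta(D)$ and $\epsilon(D)$, respectively. Consequently a square presentation that is group-like in $\A$ (in the sense $\Delta(S)=S\otimes S$ and $\epsilon(S)=1$) necessarily presents a group-like morphism of $\hA$. Thus it suffices to show that the particular square presentation
$$Z(U) \circ (a_{v_1} \otimes \id_{v_1^*} \otimes \cdots \otimes a_{v_m} \otimes \id_{v_m^*})$$
of $Z(T)$ is a group-like element of $\A$.

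Here I would use that, exactly as for $\hA$ in Proposition \ref{r8} (and as is classical for the usual spaces of Jacobi diagrams, cf.\ Section \ref{sec:coalgebras}), the category $\A$ is enriched over cocommutative coalgebras. Hence the group-like elements of $\A$ are closed under composition and tensor product, and the identity morphisms, being empty Jacobi diagrams, are group-like ($\Delta(\id)=\id\otimes\id$, $\epsilon(\id)=1$). It therefore remains to check that each factor in the displayed presentation is group-like: the usual Kontsevich integral $Z(U)$ of the cube presentation, the cabling anomalies $a_{v_i}$, and the identities $\id_{v_i^*}$ (the last being immediate).

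The group-likeness of $Z(U)$ is the classical group-like property of the Kontsevich integral $Z:\Tq\to\AT$: by Theorem \ref{th:Kontsevich} its values on the elementary $q$-tangles---the crossing, the associator $\Phi$, and the cap and cup decorated by powers of $\nu$---are all group-like, since $\Phi$, $\exp(\tfrac12\,\cdot)$ and $\nu$ are exponentials of series of primitive (connected) diagrams; group-likeness then propagates to every $q$-tangle because $Z$ is built from these values by composition and tensor product. For the anomaly $a_w$, I would invoke its defining relation \eqref{e61}: the left-hand side $ZC_w(\text{cap})$ is the Kontsevich integral of a cabled cap, hence group-like, and the cabling operations $\epsilon,\Delta,S$ preserve group-likeness; since $a_w$ is isolated from this group-like element as a factor and is invertible by Lemma \ref{lem:a_u_a'_u'}, it is itself group-like.

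Combining these observations, the square presentation of $Z(T)$ is a composition and tensor product of group-like elements of $\A$, hence group-like, so that $Z(T)\in\hAg(|v|,|w|)$. Finally, since $\hAg$ is a monoidal subcategory of $\hA$ and $Z$ is tensor-preserving by Theorem \ref{th:extended_Kontsevich}(iii), the factorization through $\hAg$ is compatible with the monoidal structure, yielding the functor $Z:\Bq\to\hAg$. The one point requiring genuine (though still routine) care is the extraction of $a_w$ from \eqref{e61}: one must argue within the relevant convolution/composition structure that the relation exhibits $a_w$ as a product of group-like elements, so that group-likeness is indeed inherited; this is where I expect the main obstacle to lie.
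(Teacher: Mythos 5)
Your proposal is correct and follows essentially the same route as the paper: the paper's (one-sentence) proof invokes exactly the ingredients you use, namely the group-likeness of the usual Kontsevich integral, Lemma \ref{ex:coalgebra_structures} transporting the coalgebra structure through square presentations, and the cube-presentation definition of $Z(T)$. Your extra care about the anomaly $a_w$ is a detail the paper leaves implicit; it is handled as you suspect, since attaching $a_w$ to the strands of the cabled cap in \eqref{e61} is a coalgebra isomorphism (the underlying $1$-manifolds are homeomorphic), so group-likeness of $ZC_w(\capl)$ passes to $a_w$.
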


\begin{proof}
Since the usual Kontsevich integral takes group-like values, this
follows from Lemma \ref{ex:coalgebra_structures} and the
definition of $Z(T)$ using a cube presentation of~$T$.
\end{proof}

\subsection{$\bfF$-grading on $Z$}   \label{sec:bff-grading-z}

We recall from Section
\ref{sec:two_gradings} that the linear category $\AB$ is
graded over the opposite of the category $\bfF$ of finitely generated
free groups, and that this grading corresponds to homotopy {classes} of Jacobi diagrams in handlebodies. 
Similarly, we define the   \emph{homotopy {class}} of {$T:m\to n$ in $\B$}
to be the group homomorphism $h(T): \free{n}\to \free{m}$ induced by
$i_T:V_n \to V_m$ on fundamental groups.

\begin{proposition} \label{prop:homotopy_types}
The extended Kontsevich integral $Z$ preserves the homotopy {class}: if
{$T:v\to w$ in $\Bq$,} then we have
$$
Z(T) \in\hA(|v|,|w|)_{h(T)}.
$$
\end{proposition}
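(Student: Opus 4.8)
The plan is to reduce the statement about the $\bfF^\op$-grading of the extended Kontsevich integral $Z$ to the corresponding statement about the usual Kontsevich integral $Z^\T$ together with the compatibility of the construction in Definition \ref{r25} with homotopy classes. Recall that the $\bfF^\op$-degree of an $(m,n)$-Jacobi diagram $D$ is the homomorphism $h(D):\free{n}\to\free{m}$ sending each generator $x_j$ to the product of the beads along the $j$-th strand, and that this degree is independent of the dashed part of $D$. So for a morphism $T:v\to w$ in $\Bq$ with $m=|v|$, $n=|w|$, the task is to show that the beads appearing on the strands of the square presentation of $Z(T)$ spell out exactly the words $h(T)(x_j)$ prescribed by the embedding-induced homomorphism $(i_T)_*:\free{n}\to\free{m}$.

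First I would observe that the $\bfF^\op$-degree of any element of $\hA(m,n)$ depends only on its image under the ``homotopy functor'' $h:\AB\to\K\bfF^\op$ of Section \ref{sec:two_gradings}, which vanishes on positive degree and records precisely the bead words on the solid strands. Applying $h$ kills the dashed part of every diagram, so all the Jacobi-diagrammatic complications of $Z(T)$ become irrelevant. Concretely, starting from a cube presentation $U:\dbl^v(v_1,\dots,v_m)\to w(+-)$ of $T$, the square presentation of $Z(T)$ is $Z(U)\circ(a_{v_1}\otimes\id_{v_1^*}\otimes\cdots\otimes a_{v_m}\otimes\id_{v_m^*})$ by Definition \ref{r25}. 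The cabling anomalies $a_{v_i}$ carry only dashed chords, hence contribute trivially to $h$; and $h$ applied to $Z(U)$ records only how the underlying $1$-manifold of the tangle $U$ connects the doubled handle-endpoints to the bottom strands. This combinatorial connection pattern is exactly the data encoded by which handles each component of $T$ runs through, i.e., by the topological embedding $i_T$.

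The key step, then, is to match the bead word read off the $j$-th strand of the solid part of $Z(U)$ with $(i_T)_*(x_j)\in\free{m}$. Here I would use the topological interpretation from Corollary \ref{r15}: an $(m,n)$-Jacobi diagram corresponds to a homotopy class rel $\partial X_n$ of maps $X_n\cup D\to V_m$, and for the solid part alone this homotopy class is the homotopy class of the arcs of $T$ in $V_m$. Since $h(T)=(i_T)_*$ is by definition the homomorphism induced on $\pi_1$ by the embedding $i_T:V_n\hookrightarrow V_m$, and since the generator $x_j\in\free{n}=\pi_1(V_n,\ell)$ is represented by the stretched core $A_j$ which $i_T$ sends to $T_j$, the class of $T_j$ in $\pi_1(V_m,\ell)=\free{m}$ is precisely $(i_T)_*(x_j)$. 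The reading of beads along the $j$-th strand of the square presentation is, by construction of the correspondence between square presentations and colored diagrams in Section \ref{sec:category-ab-jacobi}, nothing but this word in the $x_i^{\pm1}$. This gives $h(Z(T))=(i_T)_*=h(T)$, which is the desired identity $Z(T)\in\hA(|v|,|w|)_{h(T)}$.

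The main obstacle I expect is bookkeeping rather than conceptual: one must verify that the doubling-and-orientation-reversal operations implicit in passing from the cube presentation $U$ (on $\dbl^v(v_1,\dots,v_m)$) to the colored diagram on $X_n$ translate the tangle's passage through the $1$-handles into exactly the right signed generators $x_i^{\pm1}$, with the correct order matching the orientation convention for beads. To handle this cleanly I would invoke the well-definedness already established (the lemma following Definition \ref{r25}, which shows independence of the cube presentation), so that it suffices to check the grading statement for a single convenient cube presentation and then note invariance under isotopy and handle-slides — each of which visibly preserves the $\pi_1$-class of the components. Finally, since both $h$ and the composition in the $\bfF^\op$-graded setting are functorial, and since $Z$ is already known to be a functor (Theorem \ref{th:extended_Kontsevich}) and tensor-preserving, it would suffice in principle to verify the homotopy-class identity on the generating morphisms of $\Bq$; but the direct topological argument via Corollary \ref{r15} seems cleaner and avoids enumerating generators.
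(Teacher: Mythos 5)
Your proof is correct and follows essentially the same route as the paper, whose entire proof reads: ``This follows from the definition of $Z(T)$ using a cube presentation of~$T$.'' Your write-up simply unpacks that one-liner — observing that the anomalies $a_{v_i}$ and the dashed part of $Z(U)$ are invisible to the homotopy functor $h$, and that the remaining solid-strand connection pattern of the cube presentation spells out $(i_T)_*$ — which is exactly the implicit content of the paper's argument.
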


\begin{proof}
This follows from the definition of $Z(T)$ using a cube presentation of~$T$.
\end{proof}

%
%
\section{The braided monoidal functor $\Zqphi$ and computation of $Z$}   \label{sec:construction_Zq}

In this section, we assume that the associator $\Phi \in\A(\downarrow\downarrow\downarrow) $ used in the construction of
$Z:\Bq \to \hA$ arises from a Drinfeld associator $\varphi(X,Y)\in\K\langle\!\langle X,Y \rangle\!\rangle$ as explained in Remark
\ref{rem:associator}.  We compute $Z$ on a generating set of $\Bq$
and construct a braided monoidal functor $\Zqphi:\Bq \to \hA_q^\varphi$, which is a variant of $Z$ with values in a
deformation of the non-strictification of $\hA$.

\subsection{Generators of $\Bq$}  \label{sec:generators_B}

As announced in \cite[\S 14.5]{Habiro} and will be proved in
\cite{Habiro2}, the strict monoidal category $\B$ is generated by the morphisms
\begin{gather}
  \label{e64}
\centre{\small
\begin{array}{c|ccc|c}
\!\!\!\!\psi  := \figtotext{29}{35}{psi_B} &
\mu := \figtotext{29}{35}{mu_B} &
\Delta := \figtotext{29}{35}{Delta_B} &
S := \figtotext{29}{35}{S_B} &
r_+:= \figtotext{25}{25}{v+_B} \\
\!\!\!\!\psi^{-1}  := \figtotext{29}{35}{psi-_B} &
\eta := \figtotext{29}{35}{eta_B} &
\epsilon := \figtotext{29}{35}{epsilon_B} &
S^{-1} := \figtotext{29}{35}{S-_B} &
r_- := \figtotext{25}{25}{v-_B}
\end{array}}
\end{gather}

The monoidal category $\B$ has a unique braiding
$$\psi_{p,q}: p +  q \longrightarrow q + p,\quad p,q\ge0$$
such that $\psi_{1,1}=\psi$.  The object $1$ is a Hopf algebra in the
braided category $\B$, with multiplication~$\mu$, unit $\eta$,
comultiplication $\Delta$, counit $\epsilon$ and invertible
antipode~$S$.  The canonical functor $\B\to\Cob$ (see Section
\ref{sec:sLCob}) maps this Hopf algebra to the Hopf algebra in
$\Cob$ given by Crane \& Yetter \cite{CY} and Kerler \cite{Kerler},
and it maps the morphisms $r_\pm $ to the ``ribbon elements'' in the
sense of \cite{Kerler_towards}.

\begin{example}    \label{ex:ad_c}
We can use the Hopf algebra structure of $1$ in $\B$  to define some additional morphisms.
 The \emph{adjoint action} is the morphism
$$
\ad := \mu^{[3]} (\id_1 \otimes \psi)  (\id_1 \otimes S \otimes  \id_1 ) (\Delta \otimes \id_1)
=  \figtotext{60}{40}{ad_B}   : 2 \lto 1.
$$
Using the ribbon elements $r_\pm$ and following \cite{Kerler_towards},
we define
$$
\cc := \big(\mu  (r_-\otimes \id_1) \otimes \mu  ( \id_1 \otimes r_- )
\big)   \Delta  r_+=   \figtotext{60}{40}{c+_B}
:0\lto2.
$$
\end{example}

The above generating set for the strict monoidal category $\B$
induces a  generating set for the non-strict monoidal category $\Bq$:
\begin{gather*}
\psi^{\pm1}:\bu \bu\lto \bu\bu,\quad
\mu:\bu\bu\lto \bu,\quad
\eta,r_{\pm}:\varnothing\lto \bu,\\
\Delta :\bu\lto  \bu \bu,\quad
\epsilon :\bu\lto \varnothing,\quad
S^{\pm1}:\bu\lto  \bu.
\end{gather*}
The associativity  isomorphisms of $\Bq$ are denoted by
{$\alpha_{u,v,w} :(uv)w\to u(vw)$.}

\subsection{Values of $Z$ on the generators}  \label{sec:values-z-generators}

We compute the values of $Z$ on the generators of the monoidal
category $\Bq$ given in the previous subsection.  Our formulas will
be expressed only in terms of the chosen Drinfeld associator
$\varphi(X,Y)$, and they will involve the structural morphisms of the
Casimir Hopf algebra $(H,c)=(1,\eta,\mu,\epsilon,\Delta,S,c)$
in~$\AB$. (See Proposition \ref{r14}.)

As in Section \ref{sec:ribbon-quasi-hopf}, we equip $\hA(0,m)$
  ($m\ge0$) with its convolution product $\ast$ and
 consider the {following} morphisms {in $\hA$:}
\begin{eqnarray*}
  \varphi &= &\varphi(c_{12},c_{23}) \  :0\lto 3, \\
  R & = & \exp_*(c/2)  \   :0\lto 2, \\
  \bfr &=& \exp_*(\mu c /2) \  :0\lto 1.
\end{eqnarray*}
Set
\begin{gather*}
\nu = \big(\mu^{[3]} (\id_1 \otimes S \otimes \id_1) \varphi\big)^{-1}
:0\lto1\quad \text{in $\hA$},
\end{gather*}
where $(\ )^{-1}$ denotes convolution-inverse.  Note that  $\nu$
corresponds to the element \eqref{eq:nu_def} of $\A(\downarrow)$
through the isomorphism $\iota$ of Section
\ref{sec:ribbon-quasi-hopf}.

In what follows, we use the usual graphical calculus for morphisms
in $\hA$, where morphisms run downwards.  The antipode $S$, the
iterated multiplication $\mu^{[n]}$ and the iterated comultiplication
$\Delta^{[n]}$ ($n\geq 0$) are depicted by
$$
{\labellist
\small\hair 2pt
 \pinlabel{$,$}  at 52 23
 \pinlabel{$\cdots$}  at 107 42
 \pinlabel{and} at 167 23
 \pinlabel{$\cdots$} at 223 5
\endlabellist
\centering
\includegraphics[scale=0.8]{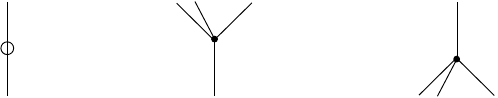}}
$$
respectively. For instance, the  adjoint action of the Hopf algebra $H$
$$
\ad = \mu^{[3]}   (\id_1 \otimes   P_{1,1})  (\id_1 \otimes S \otimes
\id_1 ) (\Delta \otimes \id_1) \  :2   \lto1
$$
is depicted by
$$
\centre{\labellist
\small\hair 2pt
 \pinlabel{$:=$}  at 42 24
\endlabellist
\centering
\includegraphics[scale=1.0]{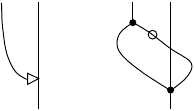}}.
$$

\begin{proposition} \label{prop:Z_generators}
We have
\begin{equation}   \label{eq:Z(r+)}
Z(\psi^{\pm1}) =
\centre{\labellist
\scriptsize\hair 2pt
 \pinlabel{$R^{\pm1}$}  at 20 117
\endlabellist
\centering
\includegraphics[scale=.52]{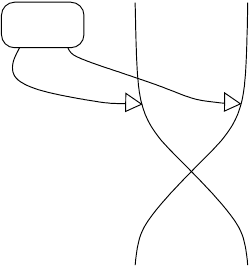}},
\quad Z(r_\pm) =
\centre{\labellist
\scriptsize \hair 2pt
 \pinlabel{$ \bfr^{\mp 1}$}  at 22 64
\endlabellist
\centering
\includegraphics[scale=0.4]{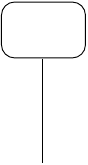}} ,
\quad Z(\eta) = \centre{\includegraphics[scale=.52]{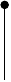}} ,
\quad Z(\epsilon) = \centre{\includegraphics[scale=.52]{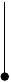}} ,
\end{equation}
\begin{equation}   \label{eq:Z(alpha)}
Z(\alpha_{u,v,w}^{\pm 1}) =
\centre{\labellist\scriptsize\hair 2pt
 \pinlabel{$\varphi^{\pm 1}$}   at 35 109
 \pinlabel{$\cdots$}  at 123 115
 \pinlabel{$\cdots$}  at 211 115
 \pinlabel{$\cdots$} at 303 115
 \pinlabel{$\cdots$}  at 304 7
 \pinlabel{$\cdots$}  at 212 7
 \pinlabel{$\cdots$}  at 122 8
 \pinlabel{$\underbrace{\hphantom{aaa}}_{\vert u \vert}$} [t] at 119 0
 \pinlabel{$\underbrace{\hphantom{aaa}}_{\vert v \vert}$}  [t] at 209 1
 \pinlabel{$\underbrace{\hphantom{aaa}}_{\vert w \vert}$}  [t] at 301 1
\endlabellist
\centering
\includegraphics[scale=0.35]{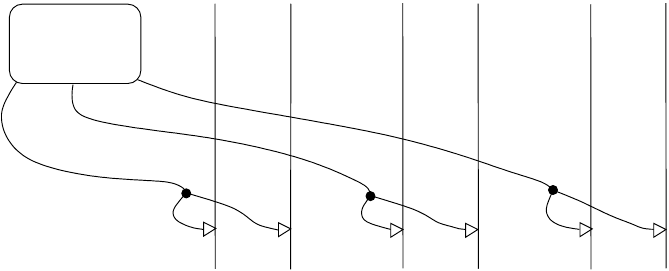}}
\quad \hbox{for  $u,v,w \in\Mag(\bullet)$,}
\end{equation}
\vspace{0.5cm}
\begin{equation}   \label{eq:Z(S)}
Z(S^{\pm 1}) =
\centre{
\labellist
\small\hair 2pt
 \pinlabel{$\bfr^{\pm 1/2}$}  at 17 55
\endlabellist
\centering
\includegraphics[scale=0.8]{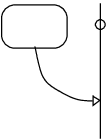}
},
\end{equation}
\begin{equation}  \label{eq:Z(mu)}
Z(\mu) = \centre{\labellist
\scriptsize \hair 2pt
 \pinlabel{$\nu$}  at 107 35
 \pinlabel{$\varphi^{-1}$}  at 157 97
 \pinlabel{$\varphi$}  at 20 97
\endlabellist
\centering
\includegraphics[scale=0.7]{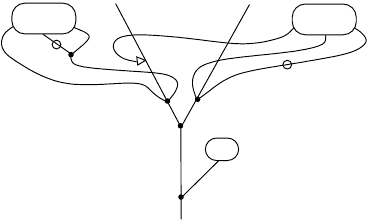}},
\end{equation}
\begin{equation}  \label{eq:Z(Delta)}
Z(\Delta) = \centre{\labellist
\scriptsize \hair 2pt
 \pinlabel{$\varphi$} at 13 200
 \pinlabel{$\varphi^{-1}$}  at 60 200
 \pinlabel{$\varphi$}  at 213 162
 \pinlabel{$\varphi^{-1}$}  at 211 108
 \pinlabel{$R$}  at 49 90
 \pinlabel{$\varphi^{-1}$} at 50 54
\endlabellist
\centering
\includegraphics[scale=0.85]{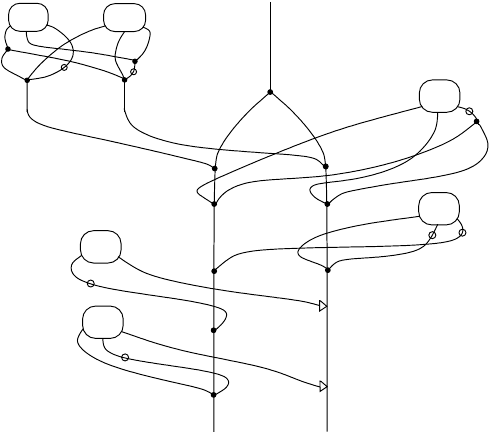}}.
\end{equation}
\end{proposition}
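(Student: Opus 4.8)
The plan is to establish each formula separately by the same three-step recipe: choose an explicit cube presentation $U$ of the generator as a $q$-tangle, apply the known values of the usual Kontsevich integral $Z^\T$ on elementary $q$-tangles (Theorem \ref{th:Kontsevich}), and then rewrite the resulting element of $\AT\big(\dbl(\dots),(+-)^n\big)$ as a morphism in $\hA$ by composing with the cabling anomalies $a_{v_i}$ according to Definition \ref{r25} and transporting the outcome through the algebra isomorphism $\iota$ of Section \ref{sec:ribbon-quasi-hopf}. Throughout I would exploit the chosen normalization $a=0$, $u=1$: with it, $Z^\T$ of the elementary cap is trivial while $Z^\T$ of the elementary cup carries the factor $\nu$, and this is precisely what produces the single $\nu$ appearing in \eqref{eq:Z(mu)}.

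For the structurally simple generators $\eta$, $\epsilon$, $\psi^{\pm1}$, $r_\pm$ and $\alpha^{\pm1}_{u,v,w}$, I would pick cube presentations whose underlying $q$-tangles are built from a single cap, cup, crossing or associator, with the words $v_i$ either empty or single letters, so that the anomalies $a_{v_i}$ are trivial. Then Theorem \ref{th:Kontsevich}(iv) supplies the $Z^\T$-values directly: a positive crossing contributes $\exp(c/2)$, which $\iota$ turns into $R^{\pm1}=\exp_*(\pm c/2)$; the associator $\Phi=\varphi(t_{12},t_{23})^{-1}$ contributes $\varphi^{\pm1}$ after $\iota$ and after recording the reparenthesization; and $\eta,\epsilon$ follow from the cap/cup normalization (after checking that the $\nu$-factors cancel, which is the point of the choice $a=0$, $u=1$). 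For $\psi^{\pm1}$ the comultiplications visible in \eqref{eq:Z(r+)} arise from the way each thickened object $\bu=(+-)$ is organized when two handles cross. For $r_\pm$, whose cube presentation realizes a full twist, the framing contribution is $\bfr^{\mp1}$ by the definition \eqref{eq:r*}.

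The remaining generators $S^{\pm1}$, $\mu$ and $\Delta$ require the cabling behaviour of $Z^\T$ (Lemma \ref{lem:cabling}) together with the recurrence for the anomalies $a_w$ (Lemma \ref{lem:induction_cabling}), since their cube presentations carry non-trivial doubled words $\dbl^v(v_1,\dots)$ and hence non-trivial $a_{v_i}$. For $S^{\pm1}$ the orientation reversal is governed by Theorem \ref{th:Kontsevich}(iii) and by the identities $a_w=a'_w=(u_w)^{-1}$ of Lemma \ref{lem:a_u_a'_u'}, while the accompanying half-twist accounts for the factor $\bfr^{\pm1/2}$; the $\pi$-rotation $r$ of Lemma \ref{lem:induction_cabling} enters here. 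For $\Delta$ the doubling of the single handle introduces a crossing between the two parallel copies, yielding the $R$-factor of \eqref{eq:Z(Delta)} flanked by the associators forced by the reparenthesization of the cabled strands. For $\mu$ the fusion of two handles passes through a cap/cup closure, producing the central $\nu$ of \eqref{eq:Z(mu)} between two associators $\varphi$, $\varphi^{-1}$. In each case I would finally use Lemma \ref{ex:restricted} to read off the square presentation.

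The main obstacle I anticipate is the associator bookkeeping in \eqref{eq:Z(mu)} and \eqref{eq:Z(Delta)}: tracking exactly where each $\varphi^{\pm1}$ is inserted as strands are cabled and reparenthesized, and verifying that the anomalies $a_{v_i}$ recombine with these associators as dictated by Lemma \ref{lem:induction_cabling}. Bringing the raw output of $Z^\T$ into the displayed normal form will require the pentagon and hexagon relations for $\Phi$, equivalently the defining equations \eqref{e21}--\eqref{e24} of the Drinfeld associator, exactly as they are used in the proof of Theorem \ref{prop:r3} to verify \eqref{e47}, \eqref{e55} and \eqref{e56}. This is a careful diagrammatic computation rather than a conceptual difficulty, and I expect the verification of \eqref{eq:Z(mu)} to be the most laborious single step.
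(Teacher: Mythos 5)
You follow essentially the same route as the paper: cube presentations of the generators, the values of $Z^\T$ from Theorem \ref{th:Kontsevich} with the normalization $a=0$, $u=1$, and transport into $\hA$ via Definition \ref{r25} and the isomorphism $\iota$. For $\eta$, $\epsilon$, $\psi^{\pm1}$, $r_\pm$ and $\alpha_{u,v,w}^{\pm1}$ this is exactly what the paper does (and, like you, it leaves those details to the reader). Two corrections are needed, one of which is a real gap. First, you place $S^{\pm1}$ and $\mu$ among the generators whose cube presentations ``carry non-trivial doubled words and hence non-trivial $a_{v_i}$'': in fact their handle words are single letters, so all the anomalies there are identities; $Z(S^{\pm1})$ is a short direct computation with chord exponentials (note that no associator appears in \eqref{eq:Z(S)}), and $Z(\mu)$ follows from the decomposition of its cube presentation alone, the cup producing the $\nu$ in \eqref{eq:Z(mu)}. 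The only generator with a non-trivial anomaly is $\Delta$, whose handle word is $(++)$.

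Second, the genuine gap: for $\Delta$ you propose to control the anomaly with the recurrence of Lemma \ref{lem:induction_cabling}, but that lemma cannot evaluate $a_{(++)}$ --- applied with $w=+$ and $\varpi(1)=(++)$ (or with $w=(++)$ and $\varpi$ constant equal to $+$) it returns the tautology $a_{(++)}=a_{(++)}$; the recurrence is what makes $Z$ functorial, not a computational device for anomalies. What the paper does instead is compute $a_{(++)}$ directly from its defining property \eqref{e61}: $a_{(++)}$ sits above the $(++)$-cabled cap, whose Kontsevich integral is evaluated by decomposing this doubled cap into associator tangles and elementary caps; this yields the closed formula \eqref{eq:a=} for $a=\iota(a_{(++)})$ in terms of $\varphi^{\pm1}$, and \eqref{eq:Z(Delta)} then follows by combining \eqref{eq:a=} with the decomposition of the cube presentation of $\Delta$, whose crossing of the two components contributes the $R$. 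Without this explicit evaluation your plan cannot pin down the $\varphi$-factors in \eqref{eq:Z(Delta)}. Finally, the pentagon and hexagon equations you anticipate needing are not invoked in the paper's proof of this proposition: the displayed normal forms come straight out of the decompositions.
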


\begin{proof}
First, we briefly explain how to compute $Z(\psi)$,
$Z(r_+)$, $Z(\eta)$, $Z(\epsilon)$ and $Z(\alpha_{u,v,w})$.
One can compute $Z(\psi^{-1})$, $Z(r_-)$ and $Z(\alpha_{u,v,w}^{-1})$ similarly.
We only indicate  the decompositions into $q$-tangles of some cube presentations
leading to \eqref{eq:Z(r+)} and \eqref{eq:Z(alpha)}:
$$
\psi = \centre{\includegraphics[scale=.5]{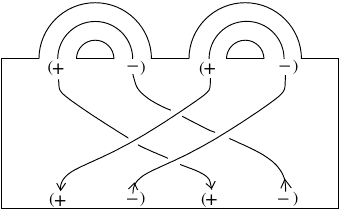}}, \;
r_+= \centre{\includegraphics[scale=.18]{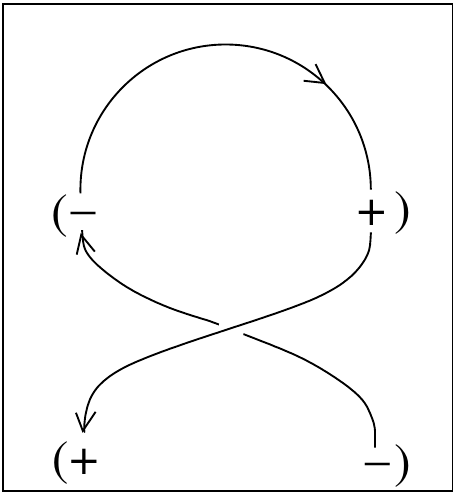}}, \;
\eta =  \centre{\includegraphics[scale=.5]{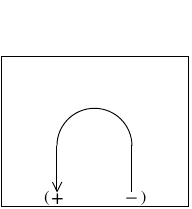}}, \;
\epsilon =  \centre{\includegraphics[scale=.5]{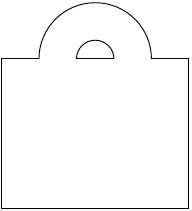}},
$$
\vspace{-0.1cm}
$$
\alpha_{u,v,w}=\centre{\labellist
\scriptsize\hair 2pt
 \pinlabel{$\cdots$}  at 71 33
 \pinlabel{$\cdots$}  at 75 82
 \pinlabel{$\cdots$}  at 200 82
 \pinlabel{$\cdots$}  at 215 38
 \pinlabel{$\cdots$}  at 363 81
 \pinlabel{$\cdots$}  at 360 36
  \pinlabel{$\underbrace{\hphantom{aaaaaaaaaaaaa}}_{u(+-)}$} [t] at 73 4
 \pinlabel{$\underbrace{\hphantom{aaaaaaaaaaaaa}}_{v(+-)}$}  [t]  at 237 4
 \pinlabel{$\underbrace{\hphantom{aaaaaaaaaaaaa}}_{w(+-)}$} [t] at 361 4
\endlabellist
\centering
\includegraphics[scale=0.6]{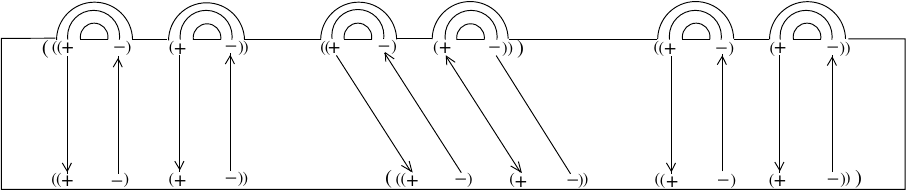}}
$$
\\\medskip
{\noindent}
We leave the details to the interested  reader.

Now we compute $Z(S)$. Since
$$
S = \figtotext{70}{70}{S_dec},
$$
we have
\begin{eqnarray*}
Z(S) \ = \ \centre{
\labellist
\scriptsize\hair 2pt
 \pinlabel{$\exp\!\Big(\!-\!\frac{1}{2}$} [r] at 44 32
 \pinlabel{$\Big)$}  [l] at 82 32
 \pinlabel{$\exp\!\Big($} [r] at 76 58
 \pinlabel{$\frac{1}{2} \Big)$} [l]  at 82 58
\endlabellist
\centering
\includegraphics[scale=0.95]{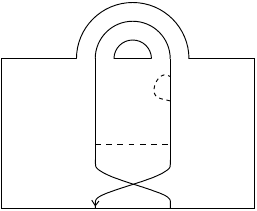}
} &= &\centre{
\labellist
\scriptsize\hair 2pt
 \pinlabel{$\exp\!\Big(\!-\!\frac{1}{2}$} [r] at 44 32
 \pinlabel{$\Big)$}  [l] at 82 32
 \pinlabel{$\exp\!\Big($} [r] at 76 58
 \pinlabel{$\frac{1}{4} \Big)$} [l]  at 82 58
 \pinlabel{$\exp\!\Big($} [r] at 40 58
 \pinlabel{$\frac{1}{4} \Big)$} [l]  at 44 58
\endlabellist
\centering
\includegraphics[scale=0.95]{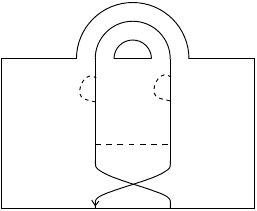}
} \\
&=& \centre{\labellist
\small\hair 2pt
 \pinlabel{$\Bigg)$} [l] at 107 43
 \pinlabel{$\exp\Bigg( \frac{1}{4}$} [r] at 43 43
\endlabellist
\centering
\includegraphics[scale=0.95]{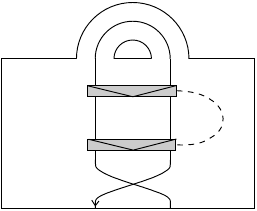}}
\end{eqnarray*}
which implies \eqref{eq:Z(S)}.  The computation of $Z(S^{-1})$ is similar.

One can easily derive \eqref{eq:Z(mu)}
from the following decomposition into $q$-tangles of a cube
presentation of $\mu$:
$$
\mu = \figtotext{100}{80}{mu_d}
$$

Finally, let us consider \eqref{eq:Z(Delta)}.
We need to compute  $a:=\iota(a_{(++)})\in\hA(0,2)$, where $a_{(++)}
\in\A(\downarrow \downarrow)$ is the cabling anomaly.
Since we have
$$
\centre{\labellist
\scriptsize\hair 2pt
 \pinlabel{$a_{(++)}$}  at 15 29
\endlabellist
\centering
\includegraphics[scale=0.8]{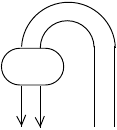}
}
= Z\Bigg(\!\!\!\figtotext{50}{50}{a_d_2}\!\!\!\Bigg) = Z\Bigg(\!\!\!\figtotext{50}{50}{a_d_3}\!\!\!\Bigg),
$$
 we obtain
\begin{equation}  \label{eq:a=}
a=\centre{\labellist
\scriptsize\hair 2pt
 \pinlabel{$\varphi$}  at 15 53
 \pinlabel{$\varphi^{-1}$}  at 68 54
\endlabellist
\centering
\includegraphics[scale=0.9]{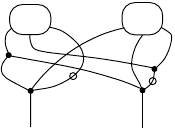}}.
\end{equation}
Then \eqref{eq:Z(Delta)} follows from \eqref{eq:a=} and the following
decomposition into $q$-tangles of a cube presentation of $\Delta$:
$$
\Delta = \figtotext{100}{100}{Delta_d}
$$
\end{proof}

\subsection{The braided monoidal functor $\Zqphi:\Bq\to\hA^\varphi_q$}  \label{sec:A_q}

Using the above computations of $Z$ on the braiding and
associativity isomorphisms of $\Bq$, we define a non-strict
{braided} monoidal category
$\hA_q^\varphi$ as follows.

Let $\hA_q$ denote the non-strictification of the linear strict
monoidal category~$\hA$, see Section \ref{sec:tangles}.  (The
non-strictification defined there extends to linear strict
monoidal categories in the obvious way.)  We identify $\Ob(\hA)=\NZ$
with $\Mon(\bu)$: consequently, $\Ob(\hA_q)=\Mag(\bu)$.  The symmetry
in $\hA$ gives one in~$\hA_q$:
\begin{gather*}
  P_{v,w} := P_{|v|,|w|}\in\hA_q(v w,w v)={\hA}(|v|+|w|,|w|+|v|)\quad \text{for $v,w\in\Mag(\bu)$.}
\end{gather*}
Thus $\hA_q$ is a linear symmetric non-strict monoidal category.

Using the Drinfeld associator $\varphi =\varphi(X,Y)$, we deform $\hA_q$ {into} a linear
  braided non-strict monoidal category ${\hA_q}^\varphi$ as  follows.
  The underlying category, the tensor product {functor}  and the
  monoidal unit of $\hA_q^\varphi$ are the same as {those of} $\hA_q$.  The
  tensor product for $\hA_q^\varphi$ is strictly unital, and the
  left and right unitality  isomorphisms in $\hA_q^\varphi$ are
  the identities.  Define the associativity  isomorphism
  $\alpha_{u,v,w} : {(u v) w \to u (v w)}$  by
\begin{equation} \label{eq:Alpha}
\alpha_{u,v,w} :=
\centre{\labellist\scriptsize\hair 2pt
 \pinlabel{$\varphi$}   at 35 109
 \pinlabel{$\cdots$}  at 123 115
 \pinlabel{$\cdots$}  at 211 115
 \pinlabel{$\cdots$} at 303 115
 \pinlabel{$\cdots$}  at 304 7
 \pinlabel{$\cdots$}  at 212 7
 \pinlabel{$\cdots$}  at 122 8
 \pinlabel{$\underbrace{\hphantom{aaa}}_{\vert u \vert}$} [t] at 119 5
 \pinlabel{$\underbrace{\hphantom{aaa}}_{\vert v \vert}$}  [t] at 209 5
 \pinlabel{$\underbrace{\hphantom{aaa}}_{\vert w \vert}$}  [t] at 301 5
\endlabellist
\centering
\includegraphics[scale=0.35]{Z_alpha}},
\end{equation}
\vskip 5mm

\noindent
and define the braiding $\psi_{v,w}: v w \to w v$ by
\begin{equation}  \label{eq:Psi}
\psi_{v,w} := \centre{\labellist
\scriptsize\hair 2pt
 \pinlabel{$R$}  at 33 237
 \pinlabel{$\cdots$}  at 123 237
 \pinlabel{$\cdots$}  at 212 237
 \pinlabel{$\cdots$} at 122 9
 \pinlabel{$\cdots$}  at 214 9
 \pinlabel{$\underbrace{\hphantom{aaa}}_{\vert w \vert}$} [t] at 122 8
 \pinlabel{$\underbrace{\hphantom{aaa}}_{\vert v \vert}$} [t] at 210 8
\endlabellist
\centering
\includegraphics[scale=0.35]{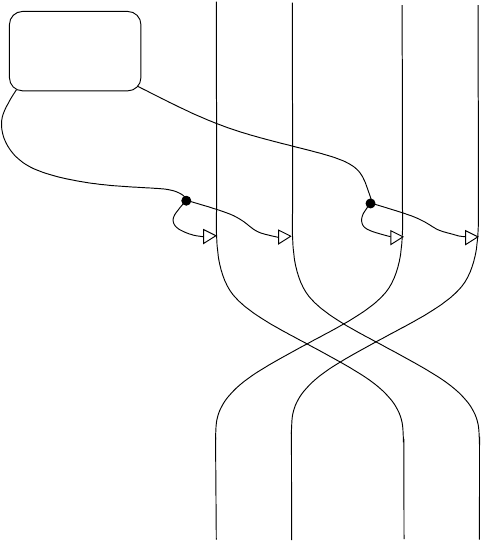}}.
\end{equation}
\vspace{0.5cm}

The tensor-preserving functor $Z:\Bq\to\hA$ is upgraded to a
braided monoidal functor as follows.

\begin{theorem}
  \label{r16}
  With the above description, the category $\hA_q^\varphi$ is
  braided monoidal and there is a (unique) braided monoidal   functor
  \begin{gather}
    \Zqphi:\Bq \longrightarrow \hA_q^\varphi
  \end{gather}
  which is the identity on objects, such that
  \begin{gather*}
    \Zqphi(f)=Z(f)\in\hA_q^\varphi(w,w')=\hA(|w|,|w'|)
  \end{gather*}
  for {morphisms} $f:w\to w'$ {in $\Bq$}.
\end{theorem}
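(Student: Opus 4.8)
The plan is to prove the two assertions of the theorem separately, namely that $\hA_q^\varphi$ is a braided monoidal category and that $\Zqphi$ is a braided monoidal functor, treating the second as essentially formal once the first is available. First I would observe that $\hA_q^\varphi$ and $\hA$ have the same morphism spaces (after identifying an object $w\in\Mag(\bu)$ with $|w|\in\N$) and the same composition and tensor product on morphisms; hence the assignment $w\mapsto w$ on objects together with $f\mapsto Z(f)$ on morphisms is a well-defined functor $\Zqphi\colon\Bq\to\hA_q^\varphi$, since $Z$ is a functor by Theorem \ref{th:extended_Kontsevich}. It is the identity on objects and, as $Z$ is tensor-preserving, it preserves tensor products and the monoidal unit strictly. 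That $\Zqphi$ preserves the associativity and braiding constraints is precisely the content of the computations \eqref{eq:Z(alpha)} and \eqref{eq:Z(r+)} in Proposition \ref{prop:Z_generators}, which identify $Z(\alpha_{u,v,w})$ and $Z(\psi^{\pm1})$ with the morphisms \eqref{eq:Alpha} and \eqref{eq:Psi} defining the constraints of $\hA_q^\varphi$. Uniqueness is then immediate, since these requirements determine $\Zqphi$ on every object and morphism.

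The remaining task is to show that $\hA_q^\varphi$ is braided monoidal, i.e.\ that \eqref{eq:Alpha} and \eqref{eq:Psi} are natural isomorphisms satisfying the pentagon, triangle and hexagon axioms. I would verify the coherence equations diagrammatically: each is an equality of two composites built from the spread-out associator $\varphi$ and the $R$-matrix, and unravelling the definitions reduces the pentagon to the pentagon equation \eqref{e47} (equivalently \eqref{e21}), the two hexagons to the hexagon equations \eqref{e55}--\eqref{e56} (equivalently \eqref{e23}--\eqref{e24}), and the triangle to \eqref{e48}. All of these hold because $H_\varphi$ is a (triangular, cocommutative, special) ribbon quasi-Hopf algebra in $\hA$ by Theorem \ref{prop:r3}; invertibility of the constraints is clear, their inverses being given by $\varphi^{-1}$ and $R^{-1}$.

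The hard part will be the naturality of \eqref{eq:Alpha} and \eqref{eq:Psi} with respect to \emph{all} morphisms of $\hA$, not merely those in the image of $Z$ (so that applying $Z$ to the coherence identities of $\Bq$, while it does yield the pentagon/hexagon/triangle equations, does not by itself give full naturality). The most economical route I would take is to realise $\hA_q^\varphi$ as a full braided monoidal subcategory of the category $\Mod_{H_\varphi}$ of $H_\varphi$-modules in $\hA$: an object $w$ of length $n$ is sent to the $n$-fold parenthesised tensor power of the regular representation $H^l$, and a morphism in $\hA_q^\varphi(v,w)=\hA(|v|,|w|)$ to the induced module map. Since $H_\varphi$ is quasi-triangular quasi-Hopf, $\Mod_{H_\varphi}$ is braided monoidal with associativity and braiding given exactly by the action of $\varphi$ and of $R$, which agree with \eqref{eq:Alpha} and \eqref{eq:Psi} on these objects; naturality of the constraints of $\hA_q^\varphi$ is then inherited from that of $\Mod_{H_\varphi}$.

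The technical heart of this last step is the identification of Hom-spaces, namely that every element of $\hA(m,n)$ defines an $H_\varphi$-module morphism between the corresponding tensor powers of $H^l$; I expect this to rest on the cocommutative Hopf algebra structure of $H$ in $\hA$ (Proposition \ref{r14}) together with the convolution-centrality \eqref{eq:general_fact} of the iterated comultiplications. Alternatively, one can prove naturality directly by the same convolution-centrality argument, checking that left convolution by the spread-out $\varphi$ and $R$ commutes with the block morphisms that arise, but I anticipate the module-theoretic formulation to be cleaner and to dovetail with the embedding $\hA_q^\varphi\to\Mod_{H_\varphi}$ used later.
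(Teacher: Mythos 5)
Your high-level architecture (formal functoriality and uniqueness, coherence from the associator axioms, naturality via an embedding into a module category) is reasonable, and you are right that naturality of \eqref{eq:Alpha} and \eqref{eq:Psi} against \emph{all} morphisms of $\hA$ is the real content. But the technical claim your embedding rests on is false: it is not true that every element of $\hA(m,n)$ is an $H_\varphi$-module morphism $(H^l)^{\ot m}\to (H^l)^{\ot n}$ for the \emph{regular} representation $H^l=(H,\mu)$. Already for $m=0$, $n=1$ and the morphism $\eta:I\to H$, the module-map condition reads $\mu(\id_H\ot \eta)=\eta\ep$, i.e.\ $\id_1=\eta\ep$ in $\hA(1,1)$, which fails: the two diagrams differ already in degree $0$, since $h(\id_1)=\id_{F_1}$ while $h(\eta\ep)$ sends $x_1\mapsto 1$. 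So the proposed identification of Hom-spaces breaks down and naturality cannot be inherited this way; the convolution-centrality \eqref{eq:general_fact} cannot rescue it, since that identity concerns constants $0\to n$ against iterated coproducts, not arbitrary morphisms.

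The repair is to replace $H^l$ by the \emph{adjoint} representation $\ul{H}=(H,\ad)$: this gives exactly the functor $F:\hAqp\to\Mod_{H_\varphi}$, $F(w)=w(\ul{H})$, $F(f)=f$, that the paper itself constructs in Section \ref{sec:transm-funct-z} in order to prove Theorem \ref{r19}. Well-definedness of $F$ then follows from the presentation of $\AB$ (Theorem \ref{AB-pres}) together with continuity: every morphism of $\hA$ is a limit of linear combinations of composites of tensor products of $\mu,\eta,\Delta,\ep,S,c$ and the symmetry, and each of these is a morphism of $H$-modules for the adjoint action --- the Hopf structure maps and $P$ because $H$ is cocommutative (Section \ref{sec:hopf-algebr-symm}), and $c$ by ad-invariance \eqref{e:inv}. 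Only well-definedness and faithfulness of $F$ are needed to pull back naturality, so with this change your argument goes through. Note finally that the paper's own proof is terser and takes a different route: it either verifies the axioms of $\hAqp$ directly from the Drinfeld associator equations, or transports the braided monoidal structure of $\Bq$ along $Z$, using the universality result of Section \ref{sec:universality} together with $Z(\psi_{v,w})=\psi_{v,w}$ and $Z(\alpha_{u,v,w})=\alpha_{u,v,w}$, so that naturality and coherence in $\hAqp$ are inherited from $\Bq$ rather than from a module category.
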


\begin{proof}
 We can check that $\hA_q^\varphi$ is a braided monoidal category
using the properties of a Drinfeld associator (see Section
\ref{sec:drinfeld-associators}).  Alternatively, using the
universality of $Z$ proved in the next section, this follows since
$\Bq$ itself is a braided monoidal category and we have
$Z(\psi_{v,w}) =\psi_{v,w}$, $Z(\alpha_{u,v,w})=\alpha_{u,v,w}$; see
\eqref{eq:Z(r+)} and \eqref{eq:Z(alpha)}.

Clearly, $\Zqphi$ is a well-defined functor.  Since $Z$ is
tensor-preserving, so is $\Zqphi$.  By \eqref{eq:Z(r+)} and
\eqref{eq:Z(alpha)}, $\Zqphi$ preserves the braidings and the
associativity  isomorphisms.  Both $\Bq$ and $\hA_q^\varphi$
have the identity left and right unitality  isomorphisms.
Hence we have the assertion.
\end{proof}

\begin{remark}
  \label{r41}
  The braided monoidal structure of $\hAqp$ descends to a braided
  monoidal structure on the category $\hA$, with the tensor product {functor}
  defined in Section \ref{sec:symmetry_A}, as follows. 
   For  $m,n,p\ge0$, the associativity isomorphism $\alpha_{m,n,p}: (m+n) +p\to
  m+(n+p)$ and the braiding $\psi_{n,p}:n+p\to p+n$ are defined to be
  the right hand sides of \eqref{eq:Alpha} and \eqref{eq:Psi},
  respectively, where we set $|u|=m$, $|v|=n$ and $|w|=p$.  (Note that
  $\alpha_{m,n,p}$ and $\psi_{n,p}$ depend only on the choices of $m,n,p$
  and not on $u,v,w$.)  Let $\hA^\vp$ denote the linear braided {non-strict}
  monoidal category thus obtained, with the identity left and right
  unitality isomorphisms.  There is a
  fully faithful, linear braided monoidal functor
  \begin{gather*}
    \pi:\hAqp\lto\hA^\vp
  \end{gather*}
  which maps each object $w\in\Mag(\bu)$ to its length, and maps the
  morphisms identically. 
  {Clearly},    $\pi$ is an equivalence of linear braided monoidal categories. 
\end{remark}

\subsection{Transmutation of quasi-triangular quasi-Hopf algebras}\label{sec:transm-quasi-triang}

In Section \ref{sec:transm-funct-z}, we will interpret the formulas for $Z$
in  Proposition \ref{prop:Z_generators} in terms of
transmutation.  For a quasi-triangular Hopf algebra $H$, Majid
introduced a Hopf algebra $\ul{H}$ in the braided monoidal category
$\HMod$ of $H$-modules, called the \emph{transmutation} of
$H$ \cite{Majid:algebras-and-Hopf-algebras,Majid}.  Here we consider transmutation of quasi-triangular
quasi-Hopf algebras introduced by Klim \cite{Klim}.

Let $H=(H,\eta,\mu,\ep,\Delta,\varphi,S,\alpha,\beta,R)$ be a
quasi-triangular quasi-Hopf algebra in a symmetric strict
monoidal category $\C$ with monoidal unit $I$.
Following \cite[Theorem 3.1]{Klim}, define morphisms
\begin{gather*}
  \ul{\eta}:I\to H,\quad
  \ul{\ep}:H\to I,\quad
  \ul{\mu}:H\ot H\to H,\quad
  \ul{\Delta}:H\to H\ot H,\quad
  \ul{S}:H\to H
\end{gather*}
in $\C$ by
\begin{gather}
  \label{e33}
  \ul{\eta}=\be,\quad \ul{\ep}=\ep,\\
  \label{e15}
  \ul{\mu}(b\ot b')=q^1(x^1\trr b)S(q^2)x^2b'S(x^3),\\
  \label{e16}
  \ul{\Delta}(b)=x^1X^1b_{(1)}g^1S(x^2R^2y^3{X^3}_{(2)})\ot
  x^3R^1\trr y^1X^2b_{(2)}g^2S(y^2{X^3}_{(1)}),\\
  \label{e18}
  \ul{S}(b)=X^1R^2x^2\be S(q^1(X^2R^1x^1\trr b)S(q^2)X^3x^3),
\end{gather}
where the adjoint action $\ad: H \otimes H \to H$ is denoted by
$l\otimes r \mapsto l \trr r$, we use Sweedler's notation $\Delta(z) =
z_{(1)} \otimes z_{(2)}$ for   $z\in H$, and we set
\begin{gather*}
  q=q^1\ot q^2 = X^1\ot S^{-1}(\alpha X^3)X^2,\\
  \varphi=X^1\ot X^2\ot X^3,\quad
  \varphi^{-1}=x^1\ot x^2\ot x^3=y^1\ot y^2\ot y^3,\\
  g=g^1\ot g^2 = (\Delta(S(x^1)\alpha x^2))\delta(S\ot S)(\Delta^{\op}(x^3)),\\
  \delta=\delta^1\ot\delta^2 = B^1\be S(B^4)\ot B^2\be S(B^3),\\
  B^1\ot B^2\ot B^3\ot B^4 =
  (\De\ot\id\ot\id)(\varphi)\, (\varphi^{-1}\ot 1),\\
  R=  R^1\ot R^2.
\end{gather*}
Here we use the notations for a quasi-triangular quasi-Hopf algebra
over a field, but the meaning of the above formulas in the category
$\C$ should be clear.  Let $\ul{H} = (H,\ad)$ denote the object $H$
with the adjoint action.  Klim proved that
$\ul{H}=(\ul{H},\ul{\eta},\ul{\mu},\ul{\ep},\ul{\Delta},\ul{S})$ is a
Hopf algebra in the braided monoidal category $\HMod$.  (Recall that
the monoidal category $\HMod$ is not strict in general, although we
assumed that $\C$ is strict monoidal.)

By straightforward computation, we can rewrite \eqref{e15} and \eqref{e16} as follows.

\begin{lemma}
  \label{r21}
We have
\begin{gather}
  \label{e35}
  \ul\mu = \mu\ga_2\ga_1,\\
  \label{e34}
  \ul\De=\theta_5\theta_4\theta_3\theta_2\theta_1\De,
\end{gather}
where  we define $\ga_1,\ga_2,\theta_1,\ldots,\theta_5:H^{\ot2}\to H^{\ot2}$  by
\begin{gather*}
    \ga_1(b\ot b') = (x^1\trr b)\ot x^2 b'S(x^3),\quad
    \ga_2(b\ot b') =X^1 b S(X^2)\alpha X^3 \ot b',\\
  \theta_1(b\ot b')=b g^1\ot b' g^2,\quad
  \theta_2(b\ot b')= X^1 b S({X^3}_{(2)})\ot X^2 b'S({X^3}_{(1)}),\\
  \theta_3(b\ot b')= b S(y^3) \ot y^1 b' S(y^2),\quad
  \theta_4(b\ot b')= b S(R^2)\ot (R^1\trr b'),\\
  \theta_5(b\ot b')= x^1 b S(x^2) \ot (x^3\trr b').
\end{gather*}
\end{lemma}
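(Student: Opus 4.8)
The plan is to verify both \eqref{e35} and \eqref{e34} by direct computation: expand the right-hand composites on a generic element $b\ot b'$ (resp.\ $b$) and check termwise that they reproduce Klim's formulas \eqref{e15} and \eqref{e16}. The only algebraic facts needed are that $\Delta$ is an algebra morphism (part of the quasi-bialgebra axioms), that $S$ is an algebra anti-automorphism, and the explicit form of the adjoint action $l\trr r = l_{(1)}\,r\,S(l_{(2)})$ coming from its definition $\ad=\mu^{[3]}(\id\ot\id\ot P)(\id\ot \ldots)$ recalled in Section~\ref{sec:hopf-algebr-symm}.

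For \eqref{e35}, I would first record that, since $q^2 = S^{-1}(\alpha X^3)X^2$ and $S$ reverses products, we have $S(q^2) = S(X^2)\,\alpha X^3$, while $q^1 = X^1$. Applying $\ga_1$, then $\ga_2$, then $\mu$ to $b\ot b'$ yields
\begin{gather*}
\mu\ga_2\ga_1(b\ot b') = X^1\,(x^1\trr b)\,S(X^2)\,\alpha X^3\,x^2\,b'\,S(x^3),
\end{gather*}
which is precisely $q^1(x^1\trr b)S(q^2)x^2 b' S(x^3) = \ul\mu(b\ot b')$. This settles \eqref{e35}.

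For \eqref{e34}, I would compute $\theta_5\theta_4\theta_3\theta_2\theta_1\De(b)$ one factor at a time, starting from $\De(b)=b_{(1)}\ot b_{(2)}$. The maps $\theta_1,\theta_2,\theta_3$ only multiply by structure constants and by antipode values, so after $\theta_3$ the two tensorands are $X^1 b_{(1)}g^1 S({X^3}_{(2)})S(y^3)$ and $y^1 X^2 b_{(2)}g^2 S({X^3}_{(1)})S(y^2)$; using that $S$ is an anti-automorphism, the strings of antipodes coalesce into $S(y^3{X^3}_{(2)})$ and $S(y^2{X^3}_{(1)})$. Applying $\theta_4$ and then $\theta_5$ to the left factor successively absorbs $S(R^2)$ and $x^1(\cdot)S(x^2)$ into $S(x^2 R^2 y^3 {X^3}_{(2)})$, producing exactly the first tensorand $x^1 X^1 b_{(1)} g^1 S(x^2 R^2 y^3{X^3}_{(2)})$ of \eqref{e16}; on the right factor it produces the nested adjoint action $x^3\trr(R^1\trr w)$ with $w = y^1 X^2 b_{(2)}g^2 S(y^2{X^3}_{(1)})$.

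The one point deserving care is this nesting. Klim's formula \eqref{e16} reads the right tensorand as $(x^3 R^1)\trr w$, so I must check $x^3\trr(R^1\trr w) = (x^3 R^1)\trr w$. Expanding both sides through $l\trr r = l_{(1)}\,r\,S(l_{(2)})$, this reduces to $\Delta(x^3 R^1)=\Delta(x^3)\Delta(R^1)$ together with $S(x^3_{(2)}R^1_{(2)}) = S(R^1_{(2)})S(x^3_{(2)})$: the former is multiplicativity of $\Delta$ and the latter is the anti-automorphism property, so no coassociativity or genuine module axiom is invoked. This is the crux of the argument, and it is precisely where the weaker quasi-bialgebra structure (rather than a Hopf-algebra axiom) suffices. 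With this identity the right tensorand matches $x^3 R^1\trr y^1 X^2 b_{(2)}g^2 S(y^2{X^3}_{(1)})$, completing \eqref{e34}. Beyond this step the proof is pure bookkeeping --- tracking which structure constant multiplies on which side and the order in which antipodes accumulate --- with no conceptual obstacle.
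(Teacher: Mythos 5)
Your proof is correct and takes essentially the approach the paper intends: the paper states Lemma \ref{r21} as following ``by straightforward computation'' from Klim's formulas \eqref{e15} and \eqref{e16}, omitting all details, and your computation supplies exactly those details. In particular, your identification of the crux --- that $x^3\trr(R^1\trr w)=(x^3R^1)\trr w$ requires only multiplicativity of $\Delta$ and anti-multiplicativity of $S$, not coassociativity --- is the right observation, and your bookkeeping of the antipode strings (e.g.\ $S(q^2)=S(X^2)\,\alpha X^3$ and the coalescence into $S(x^2R^2y^3{X^3}_{(2)})$) matches the stated formulas termwise.
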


\subsection{Transmutation and the functor $Z$}  \label{sec:transm-funct-z}

Consider {now} the quasi-triangular quasi-Hopf algebra {in $\hA$}
\begin{gather}
  \label{e49}
H := {H_{\varphi} }=(1,\eta,\mu,\ep,\De,\varphi,S,\alpha,\be,R)
\end{gather}
 given by Theorem \ref{prop:r3} with
$\be=\eta$ (and, hence, $\alpha=\nu$).  Let
$\ul{H}=(\ul{H},\ul\eta,\ul\mu,\ul\ep,\ul\Delta,\ul{S})$ be the
transmutation of $H$, which is a Hopf algebra in the braided
non-strict monoidal category $\HMod$ of $H$-modules in $\hA$.

Let $H^{\Bq}=(\bu,\eta,\mu,\ep,\De,S)$ denote the Hopf algebra in
$\Bq$ defined in Section \ref{sec:generators_B}.
It follows from Theorem \ref{r16} that
\begin{gather*}
  \Zqphi(H^{\Bq})=\big(\bu,\Zqphi(\eta),\Zqphi(\mu),\Zqphi(\ep), \Zqphi(\De),\Zqphi(S)\big)
\end{gather*}
is a  Hopf algebra in the braided non-strict monoidal category $\hA_q^\varphi$.

Next, we define a {fully faithful linear} functor
\begin{gather*}
  F:\hA_q^\varphi \longrightarrow\HMod
\end{gather*}
by $F(w)=w(\ul{H})$ for $w\in\Mag(\bu)$ and
\begin{gather*}
  F(f)=f\quad \text{for $f\in\hA_q^\varphi(v,w)=\hA(|v|,|w|)$\quad {with} $v,w\in\Mag(\bu)$}.
\end{gather*}
Then, by \eqref{eq:Alpha} and \eqref{eq:Psi}, $F$ is a braided
monoidal functor.  Hence $F(\Zqphi(H^{\Bq}))$ is a Hopf algebra in
$\HMod$.

\begin{theorem}
  \label{r19}
  The two braided Hopf algebras $F(\Zqphi(H^{\Bq}))$ and $\ul{H}$
  coincide.
\end{theorem}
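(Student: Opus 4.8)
The plan is to use that $F$ is the identity on morphisms and that $\Zqphi$ agrees with $Z$ there, so that $F(\Zqphi(f)) = Z(f)$ for every structure morphism $f$ of $H^{\Bq}$. Since two Hopf algebras in $\HMod$ coincide as soon as they share the same underlying object and the same structure morphisms, and since both have underlying module $F(\bu) = \ul{H} = (1,\ad)$, the theorem reduces to the five identities
\[
Z(\eta) = \ul\eta,\quad Z(\ep) = \ul\ep,\quad Z(\mu) = \ul\mu,\quad Z(\De) = \ul\De,\quad Z(S) = \ul S,
\]
in the appropriate hom-spaces of $\HMod$. The left-hand sides are read off from Proposition \ref{prop:Z_generators}, while the right-hand sides are computed from the transmutation formulas \eqref{e33}--\eqref{e18}, specialized to $H = H_\varphi$ (so that $\be = \eta$, $\alpha = \nu$, $\varphi = \varphi(c_{12},c_{23})$, $R = \exp_\ast(c/2)$ and $\bfr = \exp_\ast(\mu c/2)$, by Theorem \ref{prop:r3}).

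The unit and counit are immediate, since $\ul\eta = \be = \eta = Z(\eta)$ and $\ul\ep = \ep = Z(\ep)$. For the product and coproduct I would invoke Lemma \ref{r21}, which rewrites $\ul\mu = \mu\ga_2\ga_1$ and $\ul\De = \theta_5\theta_4\theta_3\theta_2\theta_1\De$. Substituting $\be = \eta$ and $\alpha = \nu$ trivializes several of the $\be$-twisted factors (in particular in $g$ and $\delta$), and the cocommutativity and speciality \eqref{e29} of $H_\varphi$ (Propositions \ref{r14} and Theorem \ref{prop:r3}) allow one to slide central and primitive pieces past one another. The aim is to collapse $\mu\ga_2\ga_1$, whose two halves contribute one $\varphi$ (from $\ga_2$) and one $\varphi^{-1}$ (from $\ga_1$) together with the central factor $\nu = \alpha$, onto the diagram \eqref{eq:Z(mu)}. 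For the antipode I would simplify \eqref{e18} using $\be = \eta$, centrality of $c$, and the identity $S\bfr = \bfr$ from \eqref{e200}; the expected normal form is $S$ convolution-twisted by $\bfr^{1/2}$, which is exactly \eqref{eq:Z(S)}.

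The main obstacle will be the comultiplication identity $Z(\De) = \ul\De$. In \eqref{e16} the input is threaded through the element $g = g^1 \ot g^2$, through the associators $\varphi,\varphi^{-1}$ occupying several tensor slots, and through the $R$-matrix, all evaluated on $H_\varphi$. Tracking these contributions through $\theta_1,\dots,\theta_5$ and reducing them to the comparatively compact picture \eqref{eq:Z(Delta)} is where essentially all of the bookkeeping lives. The crucial simplifications are the explicit exponential shapes of $R$ and $\bfr$, the primitivity \eqref{e:add}, symmetry \eqref{e:sym} and ad-invariance \eqref{e:inv} of the Casimir $2$-tensor $c$, and the hexagon/pentagon relations encoded in $\varphi$. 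Once these substitutions are performed the computation is careful but routine, and the remaining four identities follow by the same kind of diagrammatic rearrangement, completing the identification of $F(\Zqphi(H^{\Bq}))$ with $\ul H$.
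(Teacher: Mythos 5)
Your overall strategy coincides with the paper's: reduce the theorem to equality of structure morphisms in $\HMod$ (both sides having underlying module $(1,\ad)$), read off the left-hand sides from Proposition \ref{prop:Z_generators}, and compare with the transmutation formulas via Lemma \ref{r21}. The unit, counit and product cases go through exactly as you outline; for the product, the paper's computation is precisely your plan, namely $\mu\gamma_2(b\ot b')=X^1 b S(X^2)\nu X^3 b'=X^1 b S(X^2)X^3 b'\nu$ using speciality \eqref{eq:general_fact}, combined with \eqref{e35} and \eqref{eq:Z(mu)}. However, you commit to one piece of unnecessary work and leave one genuine gap. The unnecessary work: you propose to verify $Z(S)=\ul S$ directly from Klim's formula \eqref{e18}. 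The paper avoids this entirely by noting that the antipode of a bialgebra in a braided monoidal category is unique (it is the convolution-inverse of the identity), so once the two bialgebra structures agree, the antipodes agree automatically. Your direct reduction of \eqref{e18} to \eqref{eq:Z(S)} is nowhere substantiated, and it is not needed.

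The gap is in the identity $Z(\De)=\ul\De$, which you yourself single out as the main obstacle and then dispose of with ``once these substitutions are performed the computation is careful but routine.'' That is exactly where the idea lives, and your list of ingredients (primitivity, symmetry, ad-invariance of $c$, hexagons and pentagons) does not indicate how the computation closes. The paper's proof of this identity hinges on a specific collapse: the Klim element $g$ appearing in \eqref{e16} equals $\delta$, which in turn equals the cabling anomaly $a$ of \eqref{eq:a=}. Concretely, $g=\De(S(x^1)\nu x^2)\,\delta\,\De(S(x^3))$ is rewritten, using speciality \eqref{eq:general_fact}, as $\De(S(x^1)\nu x^2 S(x^3))\,\delta$, and the prefactor is killed by the quasi-Hopf antipode axiom \eqref{e52}; the remaining expression for $\delta$ is identified with $a$ by \eqref{eq:a=}. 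This gives $\theta_1\De(b)=\De(b)\,a=a\,\De(b)$, and then $\theta_5\cdots\theta_2(a\De(b))$ matches the diagram \eqref{eq:Z(Delta)} using triangularity $P_{1,1}R=R$. Without spotting that $g$ is the anomaly, a head-on expansion of \eqref{e16} --- with associators occupying several tensor slots, the element $g$, and the $R$-matrix all in play --- is not routine bookkeeping, and your proposal contains no mechanism for carrying it out. So the plan is the right plan, but the decisive step of the hardest identity is missing.
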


\begin{proof}
  Since the antipode of a braided bialgebra is unique, it suffices to
  prove
  \begin{gather*}
    Z(\eta) = \underline{\eta},\quad
    Z(\epsilon) = \underline{\epsilon},\quad
    Z(\mu) = \underline{\mu},\quad
    Z(\Delta) = \underline{\Delta}.
  \end{gather*}
  It is easy to check the first two identities.
  We can check  $Z(\mu)=\ul\mu$ by using
  \eqref{eq:Z(mu)}, \eqref{e35} and
  $$
  \mu\gamma_2(b\ot b')
    =X^1 b S(X^2)\nu X^3 b'
    \by{eq:general_fact}  X^1 b S(X^2) X^3 b'\nu,
  $$
    where, as before, $b$ and $b'$ are formal variables denoting virtual
    elements in the Hopf algebra.

  Let us now prove  $Z(\De)=\ul\De$.   We have
  \begin{eqnarray*}
      g  &=&   \De(S(x^1)\nu x^2)\, \delta\, \De(S(x^3)) \\
     & \by{eq:general_fact} & \De(S(x^1)\nu x^2)\, \De(S(x^3))\, \delta\\
      & =& \De(S(x^1)\nu x^2 S(x^3))\, \delta \\
     & \by{e52}& \delta \  = \ {X^1}_{(1)}x^1 S(X^3)\ot{X^1}_{(2)}x^2 S(x^3)S(X^2) \  \by{eq:a=} \  a.
  \end{eqnarray*}
  Hence,
  $\theta_1\Delta(b)=\De(b)g=\De(b)a\by{eq:general_fact}a\De(b)$.
  Therefore,
  \begin{gather*}
    \ul\De(b)  \by{e34} \theta_5\cdots\theta_1\De(b)=\theta_5\cdots\theta_2(a\De(b))\by{eq:Z(Delta)}Z(\De)(b)
  \end{gather*}
  where we use $P_{1,1}R =R$ in the last identity.  Hence $\ul\De=Z(\De)$.
\end{proof}

\subsection{Computations of $Z$ up to degree $2$} \label{sec:computations-z-up}

Here we give the values of $Z$ for the generators of $\Bq$ up to degree $2$.

\begin{proposition}
  \label{r29}
We have $Z(\eta) = \figtotext{30}{25}{eta}$, $Z(\epsilon) = \figtotext{30}{25}{epsilon}$
and the following identities hold true up to degree $2$:
\nc\fffa{\figtotext{70}{45}}
\nc\fffb{\figtotext{60}{45}}
\nc\fffc{\figtotext{33}{45}}
\nc\fffd{\figtotext{40}{30}}
\begin{eqnarray*}
Z(\mu)  & = & \fffa{mu_d2} + \frac{1}{24} \fffa{mu2_d2} +  \frac{1}{48} \fffa{mu3_d2} \\
&& -  \frac{1}{48} \fffa{mu4_d2} -  \frac{1}{48} \fffa{mu5_d2}, \\
Z(\Delta) &=&  \fffb{Delta_d2} - \frac{1}{2} \fffb{Delta2_d2} + \frac{1}{8} \fffb{Delta3_d2} \\
&&+  \frac{1}{48} \fffb{Delta8_d2}  - \frac{1}{12} \fffb{Delta4_d2} + \frac{1}{24} \fffb{Delta5_d2} \\
&&+ \frac{1}{24} \fffb{Delta6_d2} + \frac{1}{24} \fffb{Delta7_d2},\\
Z(S^{\pm 1})&=&  \fffc{S_d2} \pm \frac{1}{2}  \fffc{S2_d2} \mp \frac{1}{2}  \fffc{S3_d2}\\
&& +  \frac{1}{8}  \fffc{S4_d2} -  \frac{1}{4}  \fffc{S5_d2} +  \frac{1}{8}  \fffc{S6_d2} ,\\
Z({r_\pm}) &=& \fffd{r_d2} {\mp} \frac{1}{2} \fffd{r2_d2}  + \frac{1}{8} \fffd{r3_d2},\\
Z(\psi^{\pm 1}) &=& \fffa{psi_d2} \pm \frac{1}{2} \fffa{psi2_d2}  + \frac{1}{8} \fffa{psi3_d2}, \\[0.4cm]
Z(\alpha_{u,v,w}^{\pm 1}) &=&
\centre{\labellist\scriptsize\hair 2pt
 \pinlabel {$\overbrace{\hphantom{aaaaaa}}^{\vert u \vert}$} [b] at 77 95
 \pinlabel {$\overbrace{\hphantom{aaaaaa}}^{\vert v \vert}$} [b] a at 204 94
 \pinlabel {$\overbrace{\hphantom{aaaaaa}}^{\vert w\vert }$} [b] a [b] at 333 95
\endlabellist
\centering
\includegraphics[scale=0.3]{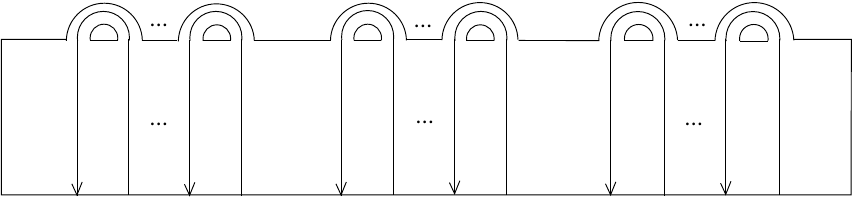}}
\mp
\frac{1}{24}  \centre{\labellist\scriptsize\hair 2pt
 \pinlabel {$\overbrace{\hphantom{aaaaaa}}^{\vert u \vert}$} [b] at 77 95
 \pinlabel {$\overbrace{\hphantom{aaaaaa}}^{\vert v \vert}$} [b] a at 204 94
 \pinlabel {$\overbrace{\hphantom{aaaaaa}}^{\vert w\vert }$} [b] a [b] at 333 95
\endlabellist
\centering
\includegraphics[scale=0.3]{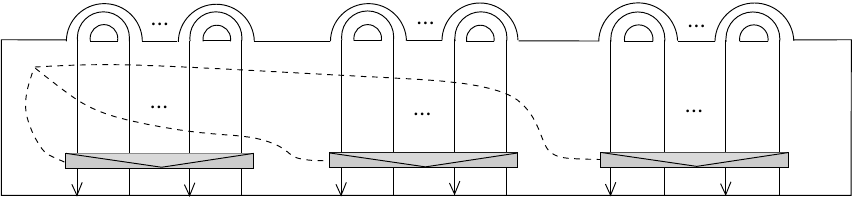}}.
\end{eqnarray*}
\end{proposition}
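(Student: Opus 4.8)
The plan is to deduce the proposition from the exact formulas of Proposition~\ref{prop:Z_generators} by expanding each of the building blocks $R^{\pm1}$, $\bfr^{\pm1}$, $\bfr^{\pm1/2}$, $\varphi^{\pm1}$ and $\nu$ up to degree~$2$, substituting these expansions into \eqref{eq:Z(r+)}--\eqref{eq:Z(Delta)}, and truncating the outcome. Since $Z(\eta)$ and $Z(\epsilon)$ are already given exactly by \eqref{eq:Z(r+)} (they are the bare unit and counit, of degree~$0$), nothing remains to prove for those; all the work concerns the morphisms carrying strictly positive-degree contributions.

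First I would record the low-degree expansions of the group-like factors. From \eqref{eq:R*} we have $R^{\pm1}=\exp_\ast(\pm c/2)=\eta^{\ot2}\pm\tfrac12 c+\tfrac18\,c^{\ast2}+(\deg\ge3)$, and from \eqref{eq:r*} likewise $\bfr^{\pm1}=\exp_\ast(\pm\mu c/2)$ and $\bfr^{\pm1/2}=\exp_\ast(\pm\mu c/4)$, each expanded through the convolution product using $\mu_n$; here $c$ is of degree~$1$, so the convolution squares $c^{\ast2}=\mu_2(c\ot c)$ and $(\mu c)^{\ast2}$ are precisely the degree-$2$ contributions. For the associator factor I would invoke the universal degree-$\le 2$ expansion of a Drinfeld associator, $\varphi(X,Y)=1-\tfrac1{24}[X,Y]+(\deg\ge3)$, which is forced by the hexagon equations \eqref{e23}--\eqref{e24}; applying $\iota$ gives $\varphi=\varphi(c_{12},c_{23})=1-\tfrac1{24}[c_{12},c_{23}]+(\deg\ge3)$ in $\hA$, so that $\varphi$ has no degree-$1$ part, which explains the absence of degree-$1$ terms in $Z(\mu)$ and $Z(\alpha_{u,v,w}^{\pm1})$. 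Finally, the degree-$2$ expansion of $\nu$ is read off directly from \eqref{e19} (equivalently \eqref{eq:nu_def}): $\nu=1+\tfrac1{48}\theta+(\deg\ge3)$, where $\theta$ is the degree-$2$ Jacobi diagram on a single strand appearing there.

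With these in hand, the cases $\bfr^{\pm1}$ and $\psi^{\pm1}$ are immediate: substituting the expansions of $\mu c$ and of $R^{\pm1}$ into \eqref{eq:Z(r+)} and reading off the degree-$0$, $1$ and $2$ diagrams yields the stated formulas. For $Z(S^{\pm1})$ one substitutes $\bfr^{\pm1/2}$ into \eqref{eq:Z(S)} and slides the resulting chords past the antipode using $(S\ot\id)c=-c$ from \eqref{e13'''}, which produces the mixed signs; for $Z(\alpha_{u,v,w}^{\pm1})$ one cables $1\mp\tfrac1{24}[c_{12},c_{23}]$ according to $u,v,w$. The remaining cases $Z(\mu)$ and $Z(\Delta)$ are where the real bookkeeping lies: I would substitute all the $\varphi^{\pm1}$ and $R$ factors of \eqref{eq:Z(mu)} and \eqref{eq:Z(Delta)}, expand the convolution products, and then collapse the many resulting degree-$2$ graphs into the listed ones using the STU, AS and IHX relations together with the Casimir relations --- primitivity \eqref{e:add}, symmetry \eqref{e:sym}, and the diagrammatic 4T relation \eqref{e:4T}. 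The main obstacle will be $Z(\Delta)$: its formula composes six associator factors $\varphi^{\pm1}$ with one $R$ around a comultiplication, so after expansion one obtains a sizeable linear combination of degree-$2$ diagrams that must be reorganized --- via IHX and the interaction of $c$ with $\Delta$ --- into exactly the eight diagrams of the statement, the seven nontrivial coefficients being $-\tfrac12,\tfrac18,\tfrac1{48},-\tfrac1{12},\tfrac1{24},\tfrac1{24},\tfrac1{24}$. Carrying out this reduction while tracking how each chord lands after the comultiplications and cablings is the only genuinely laborious step.
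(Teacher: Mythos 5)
Your strategy coincides with the paper's own proof, which is precisely a direct computation from Proposition \ref{prop:Z_generators} combined with the degree-$2$ expansion of the Drinfeld associator (the authors leave all details to the reader); your expansions of $R^{\pm1}$, $\bfr^{\pm1}$, $\bfr^{\pm1/2}$ and $\nu$, and your plan to reduce the resulting diagrams via STU/AS/IHX and the Casimir relations, agree with what the paper records in Remark \ref{r9}. There is, however, one concrete error that would derail the computation: you expand the associator as $\varphi(X,Y)=1-\frac{1}{24}[X,Y]+(\deg\ge 3)$, whereas the hexagon equations \eqref{e23}--\eqref{e24}, with the paper's conventions, force the \emph{positive} sign, $\varphi(X,Y)=1+\frac{1}{24}[X,Y]+(\hbox{degree}>2)$, which is exactly the identity \eqref{e28} used in the paper's proof. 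Indeed, write $\varphi=1+c\,[X,Y]+\cdots$ (the degree-$1$ part vanishes, as you note). Comparing degree-$2$ parts in \eqref{e23}, the exponential factors contribute $\tfrac18 t_{13}^2+\tfrac18 t_{23}^2+\tfrac14 t_{13}t_{23}$ on the right, so one is left with $-\tfrac18[t_{13},t_{23}]=c\,\bigl([t_{13},t_{12}]-[t_{13},t_{23}]+[t_{12},t_{23}]\bigr)$; the relations $[t_{13},t_{12}]=-[t_{13},t_{23}]$ and $[t_{12},t_{23}]=[t_{13},t_{12}]$ in $\mathfrak{t}_3$ turn the right-hand side into $-3c\,[t_{13},t_{23}]$, whence $c=+\tfrac1{24}$.

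Because of this sign, every degree-$2$ contribution originating from an associator factor would come out wrong in your computation: you would obtain $Z(\alpha_{u,v,w}^{\pm1})=\id\pm\tfrac1{24}(\cdots)$ instead of the stated $\mp\tfrac1{24}(\cdots)$, and the associator contributions to $Z(\mu)$ and $Z(\Delta)$ (arising from the $\varphi^{\pm1}$-factors in \eqref{eq:Z(mu)} and \eqref{eq:Z(Delta)}) would likewise flip sign, so the procedure as written would contradict the proposition rather than prove it. The parts of the statement not involving $\varphi$ --- namely $Z(\eta)$, $Z(\epsilon)$, $Z(\psi^{\pm1})$, $Z(\bfr^{\pm1})$ and $Z(S^{\pm1})$ --- are unaffected, and once the sign in the associator expansion is corrected, the remainder of your outline is exactly the intended argument.
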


\begin{proof}
One can check these formulas by direct computations using
Proposition \ref{prop:Z_generators} and the well-known identity
\begin{gather}
  \label{e28}
\varphi(X,Y) = 1+\frac{1}{24} [X,Y]+\hbox{(terms of degree\;$>2$)},
\end{gather}
which follows from \eqref{e23} and \eqref{e24}.  We leave the details to the interested reader.
\end{proof}

\begin{remark}
  \label{r9}
  The quasi-triangular quasi-Hopf algebra $H=H_{\varphi}$ in $\hA$
  given in \eqref{e49} has the following structure up to degree $2$.  The
  morphisms $\et,\mu,\ep,\De,S$ are as depicted in \eqref{e51} and
  concentrated in degree $0$.  Combining \eqref{eq:phi*} to
  \eqref{e28}, using \eqref{eq:R*} and using \eqref{eq:nu_def},
  respectively, we obtain the following identities up to degree $2$:
  \begin{gather*}
  \varphi^{\pm1}=
  \figtotext{60}{40}{Y0}
  \mp\frac1{24}\figtotext{60}{40}{Y},\\
  R^{\pm1}=\figtotext{60}{40}{r0}
  \pm\frac12\figtotext{60}{40}{r}
  +\frac18\figtotext{60}{40}{rr}
  ,\\
  \nu =
  \figtotext{60}{35}{r_d2}
    +\frac1{48} \figtotext{60}{35}{nu3}.
\end{gather*}
  We can also deduce Proposition \ref{r29} from these identities by  using {Theorem} \ref{r19}.
\end{remark}

%
%
\section{Universality of the extended Kontsevich integral}   \label{sec:universality}

In this section, we show that the extended Kontsevich integral
$\Zqphi:\Bq\to\hAqp$ (given by Theorem \ref{r16}) induces
an isomorphism $\Zqphi:\wh{\KBq}\to\hAqp$ of linear braided monoidal
categories, where $\wh{\KBq}$ is the completion of the
linearization $\KBq$ of $\Bq$ with respect to the Vassiliev--Goussarov
filtration.  This implies the universality of $Z$ among
Vassiliev--Goussarov invariants of bottom tangles in handlebodies.

\subsection{Ideals in monoidal categories}  \label{sec:terminology}

Let $\calC$ be a linear (possibly non-strict) monoidal category.
{We partly borrow from \cite[\S 3.3]{KT} the following terminology}.
An \emph{ideal} $\calI$ of $\calC$ consists of a family of
linear subspaces $\calI(v,w) \subset \calC(v,w)$ for all
$v,w\in\Ob(\calC)$ such that $f\otimes g,f\circ g\in\calI$ for morphisms $f,g\in\calC$ with either $f\in\calI$ or
$g\in\calI$.  For instance, the ideal \emph{generated} by a set $S$ of
morphisms of $\calC$ is the smallest ideal of $\calC$ containing
$S$.  Every ideal $\calI$ of $\calC$ defines a congruence relation
in $\calC$, and the quotient category~$\calC/\calI$ is a linear
monoidal category.

A \emph{filtration} $\calF$ in $\calC$ is a decreasing sequence
$\calC= \calF^0 \supset \calF^1 \supset \calF^2 \supset \cdots$ of
ideals of $\calC$ such that $\calF^k \circ \calF^l \subset \calF^{k+l}$ for
$k,l\geq 0$.  Then $\calF^k \otimes \calF^l \subset \calF^{k+l}$ follows.
The \emph{completion of $\calC$ with respect to $\mathcal{F}$}
$$
\widehat{\calC}^{\calF} := \varprojlim_k\,\calC / \calF^k
$$ inherits a structure of filtered linear monoidal
category from $\calC$.  Let $\widehat{\calF}$ denote the
filtration of $\widehat{\calC}^\calF$ induced by $\calF$.
Let $\Gr^{\calF}\!\calC$ denote the graded
linear monoidal category associated to $\calF$: we have
  $\Ob(\Gr^{\calF}\!\calC)=\Ob(\calC)$ and
$$
\Gr^{\calF}\!\calC(v,w)  = \bigoplus_{k \ge 0} \calF^k(v,w)/ \calF^{k+1}(v,w).
$$

The product $\calJ\calI$ of two ideals $\calI,\calJ\subset\C$ is the
ideal of $\C$ generated by $gf$ for all composable pairs of
$g\in\calJ$, $f\in\calI$.  For an ideal $\calI\subset\C$, the \emph{$\calI$-adic filtration}
$\calC=\calI^0\supset\calI^1\supset\calI^2\supset\cdots$ of $\C$
is defined inductively by $\calI^0=\C$ and
$\calI^{k+1}=\calI\calI^k$ for~$k\ge0$.  We write
$\widehat{\calC}=\widehat{\calC}^\calI$ and
$\Gr\calC=\Gr^{\calI}\calC$ if the ideal $\calI$ is clear from the
context.  Note that $\calI^k$ contains {all morphisms} of $\calC$
{that can be} obtained by taking compositions and tensor products of a finite number
of morphisms in $\C$ containing at least $k$ elements of $\calI$.

We define the tensor power $w^{\ot k}$ of an object $w$ in $\C$
inductively by $w^{\ot0}=I$, the monoidal unit, and $w^{\ot
(k+1)}=w^{\ot k}\ot w$.  The tensor power $f^{\ot k}:v^{\ot k} \to
w^{\ot k}$ of a morphism $f:v \to w$  is defined similarly.
(If $\C$ is a strict monoidal category, {then} these tensor powers coincide
with those we have already used.)

\subsection{The Vassiliev--Goussarov filtration} \label{sec:vass-gouss-filtr}

We now generalize the Vassiliev--Goussarov filtration for
links/tangles in a ball (see e.g.\ \cite{BN1}) to bottom tangles
in handlebodies.
In the definition of the Vassiliev--Goussarov filtration, 
one usually uses only crossing-change moves to form alternating sums of tangles that generate the filtration.  
We here also use framing-change moves since we work with framed tangles.

Let $\KBq$ denote the linearization of the category $\Bq$.
A \emph{plot} $P$ of a diagram $D$ of a bottom $q$-tangle
{$T:v\to w$} is a disk in which
$D$ appears as either a crossing or a positive curl:
\begin{gather*}
  \vhi{ms1}\ ,\quad \vhi{ms3}\ .
\end{gather*}
We get a new bottom $q$-tangle {$T_P:v\to w$} from $T$ by
  the following move at~$P$:
\begin{gather}
  \label{eq:elementary_moves}
  T=\vhi{ms1}\ \longmapsto\ T_P=\vhi{ms2}\ ,
  \quad\quad
  T=\vhi{ms3}\ \longmapsto\ T_P=\vhi{ms4}\ .
\end{gather}
More generally, if $P$ is a finite set of pairwise disjoint plots
of $D$, then we obtain a new bottom $q$-tangle $T_P$ from $T$ by
applying the above move in each plot of $P$.

For $k\ge0$, let $\V^k(v,w)$ denote the linear subspace of $\KBq(v,w)$ spanned by
$$
[T;P] := \sum_{S \subset P} (-1)^{\vert S \vert}\, T_S,
$$
where $T\in\Bq(v,w)$ and $P$ is a set of $k$ pairwise
disjoint plots of {an arbitrary} diagram of $T$.  The spaces
$\V^k(v,w)$ give the \emph{Vassiliev--Goussarov filtration} of $\KBq$:
$$
\KBq = \V^0 \supset \V^1 \supset \V^2 \supset \cdots .
$$
As we will see, $\V$ is a filtration of $\KBq$ in the sense of Section \ref{sec:terminology}.

Recall from Section \ref{sec:generators_B} the morphisms
{$\eta,r_+,r_-:\vn\to\bu$ and $\cc:\vn\to\bu\bu$ in $\Bq$}.  Let
$\calJ$ be the ideal of $\KBq$ generated by
\begin{gather*}
r_+-\eta   \in\K \Bq(\varnothing,\bullet).
\end{gather*}
We have
\begin{equation} \label{eq:c}
\cc -\eta^{\otimes 2} \in\calJ(\varnothing,\bullet \bullet).
\end{equation}
Indeed, since $r_-= \mu (r_- \otimes \eta)  \Jeq \mu( r_-\otimes r_+) =\eta$, we have
$$
\cc
=\big( \mu (r_- \ot \id )  \ot \mu ( \id  \ot r_-)\big) \De r_+
\Jeq \big( \mu (\et \ot \id )  \ot \mu ( \id  \ot \et)\big) \De \et
=\eta^{\ot2}.
$$
Now we give a categorical description of the Vassiliev--Goussarov filtration.

\begin{proposition} \label{prop:V=J}
The Vassiliev--Goussarov filtration coincides with the $\calJ$-adic
filtration; i.e., we have  $\V^k = \calJ^k$ for  $k\geq 0$.
\end{proposition}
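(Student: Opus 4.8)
The plan is to establish the two inclusions $\calJ^k\subseteq\V^k$ and $\V^k\subseteq\calJ^k$ separately, for every $k\geq0$. As a preliminary step, exactly as for tangles in a ball (see \cite{KT}), one checks that each $\V^k$ is an ideal of $\KBq$ and that $\V^a\circ\V^b\subseteq\V^{a+b}$, so that $\V$ is indeed a filtration in the sense of Section \ref{sec:terminology}. This is routine: composing or tensoring a generator $[T;P]$ with an arbitrary morphism, or composing $[T;P]$ with $[T';P']$, again produces an alternating sum over a set of pairwise disjoint plots (of cardinality $|P|$, respectively $|P|+|P'|$, the plots of $T$ and $T'$ lying in disjoint regions of the composite).

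For the inclusion $\calJ^k\subseteq\V^k$, I would first observe that the generator $r_+-\eta$ of $\calJ$ is itself a one-plot element: taking the positive-curl plot $P$ on a diagram of $r_+$, the move \eqref{eq:elementary_moves} sends $r_+$ to $\eta$, so $r_+-\eta=[r_+;\{P\}]\in\V^1$. Since $\V^1$ is an ideal, $\calJ\subseteq\V^1$, and the filtration property just established then gives $\calJ^k\subseteq(\V^1)^k\subseteq\V^k$.

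The substantial direction is $\V^k\subseteq\calJ^k$, which reduces to showing that each generator $[T;P]$ with $|P|=k$ lies in $\calJ^k$. Here I would exploit the disjointness of the plots: choosing a diagram of $T$ in which the $k$ plots occupy $k$ pairwise disjoint disks, one may regard $T$ as a fixed ``skeleton'' morphism into which $k$ elementary local tangles $E_1,\dots,E_k$ (each a crossing or a positive curl) are glued, so that $T_S$ is obtained by replacing $E_i$ with its moved version $E_i'$ for $i\in S$. Because the gluing is multilinear in the $k$ slots, the sum over subsets factorizes:
\begin{gather*}
[T;P]=\sum_{S\subseteq P}(-1)^{|S|}T_S=\Psi(E_1-E_1',\dots,E_k-E_k'),
\end{gather*}
where $\Psi$ denotes insertion into the skeleton at the disjoint disks. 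Thus $[T;P]$ is a composition-and-tensor expression containing the $k$ local differences $E_i-E_i'$ at disjoint positions, and by the description of the $\calJ$-adic filtration recalled in Section \ref{sec:terminology} it suffices to prove the single-plot lemma: each local difference $E-E'$ lies in $\calJ$.

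It remains to prove this single-plot lemma, and this is where the main difficulty lies. For a curl plot, $E-E'$ is, up to orientation reversal and cabling by the Hopf-algebra structure morphisms of $\Bq$, the element $r_\pm-\eta$; since $r_--\eta\in\calJ$ (because $r_-=\mu(r_-\otimes\eta)\Jeq\mu(r_-\otimes r_+)=\eta$) and $\calJ$ is an ideal, such differences lie in $\calJ$. For a crossing plot, $E-E'$ is, after the same orientation bookkeeping and cabling, a local difference built from the braiding $\psi^{\pm1}$, and the key is to rewrite it in terms of the Casimir morphism $\cc:0\to2$ of Example \ref{ex:ad_c}, using the braided Hopf-algebra structure of $\B$ on the generators \eqref{e64} together with the clasper/Hopf-algebraic interpretation of a crossing change; the relation $\cc-\eta^{\ot2}\in\calJ$ from \eqref{eq:c} then places $E-E'$ in $\calJ$. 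Establishing this algebraic identification of the crossing-change difference with an insertion of $\cc-\eta^{\ot2}$ is the technical crux; everything else follows formally from the ideal and filtration bookkeeping above.
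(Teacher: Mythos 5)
Your proposal has the same skeleton as the paper's proof (both directions rest on $r_+-\eta\in\V^1$, on $\cc-\eta^{\ot2}\in\calJ$, and on realizing the two elementary moves by insertions of $r_+-\eta$ and $\cc-\eta^{\ot 2}$), but it contains two genuine gaps. First, the preliminary claim that $\V$ is a filtration is \emph{not} routine, because composition in $\Bq$ is not stacking: $T'\circ T$ is obtained by putting a \emph{cabling} of (a cube presentation of) $T$ on top of a cube presentation of $T'$. Hence the plots of $P$ do not survive ``in disjoint regions of the composite'': a cabled crossing plot becomes a block of many crossings between parallel copies, and a cabled curl plot becomes a mixture of curls and crossings, neither of which is a plot in the sense of the definition. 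The paper repairs exactly this point: it decomposes the cabling of each move of \eqref{eq:elementary_moves} into a finite sequence of such moves, so that $C_\varpi([U;P])$ can be rewritten as a sum $\sum_i [V_i;P_i]$ in which each $P_i$ again consists of $k$ (smaller) plots, and only then concludes $\V^{k'}\circ\V^k\subset\V^{k+k'}$. Since your proof of $\calJ^k\subseteq\V^k$ invokes this filtration property, that direction is incomplete as written.

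Second, and more seriously, your inclusion $\V^k\subseteq\calJ^k$ reduces everything to the ``single-plot lemma'' for crossing plots --- the identification of a crossing-change difference with an insertion of $\cc-\eta^{\ot2}$ --- which you explicitly leave unproven, calling it the technical crux. But that identification \emph{is} the substance of this direction; deferring it means the inclusion is not established. The paper handles it concretely: using planar isotopy and Reidemeister moves, one drags all $k$ plots to the upper-right corner of the diagram, where a crossing change is realized by the replacement $\cc\mapsto\eta^{\ot2}$ and a curl removal by $r_+\mapsto\eta$, yielding
\begin{gather*}
[T;P]=\pm\, T'\circ\big(\id_v\ot\big((\cc-\eta^{\ot2})^{\ot k'}\ot(r_+-\eta)^{\ot k''}\big)\big)
\end{gather*}
for some morphism $T'$ of $\Bq$, after which membership in $\calJ^k$ is immediate from \eqref{eq:c}. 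Note also that your insertion operator $\Psi$ is not a priori a composition-and-tensor expression in $\Bq$ (a local crossing in the interior of a handlebody is not a morphism of $\Bq$), so even granting the single-plot lemma you would still need this ``move to the corner'' step before the description of $\calJ^k$ recalled in Section \ref{sec:terminology} can be applied. In short: you have the right ingredients, but the two steps you dismiss as routine or defer as a crux are precisely where the proof lives.
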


\begin{proof}
We first prove that $\V$ is a filtration.
It is easy to see that each $\V^k$ is an ideal.
To prove $\V^{k'} \circ \V^k \subset \V^{k+k'}$,
{consider morphisms $w\overset{T}{\lto}w'\overset{T'}{\lto}w''$ in $\Bq$,}
 and let $P$ (resp.\ $P'$)
consist of $k$ (resp.\ $k'$) pairwise disjoint plots of a diagram
of $T$ (resp.\ $T'$).  We will prove $[T';P'] \circ
[T;P]\in\V^{k+k'}$.  We can assume that the diagrams of $T$ and $T'$
arise from diagrams of some cube presentations $U$ and $U'$ of $T$ and
$T'$, respectively. Then
\begin{eqnarray*}
[T';P'] \circ [T;P] &=& \sum_{S\subset P} \sum_{S'\subset P'} (-1)^{\vert S \vert +\vert S' \vert}\, T'_{S'} \circ T_S
\end{eqnarray*}
has a cube presentation
\begin{eqnarray*}
 \sum_{S\subset P} \sum_{S'\subset P'} (-1)^{\vert S \vert +\vert S' \vert}\, U'_{S'} \circ C_\varpi(U_S)
 &=& [U';P'] \circ C_\varpi([U,P])
\end{eqnarray*}
for some map $\varpi: \pi_0(U) \to \Mag( \pm)$.  We can decompose the
cabling of each of the moves in \eqref{eq:elementary_moves} into a
finite sequence of moves in \eqref{eq:elementary_moves}.
Therefore we have
$$
C_\varpi([U,P]) = \sum_i [V_i,P_i],
$$
where each $V_i$ is a $q$-tangle differing from $C_\varpi(U)$
only in the plots of $P$, and each $P_i$ consists of $k$ smaller plots in a diagram of $V_i$. It follows that
$$
\sum_i [U';P'] \circ [V_i,P_i] = \sum_i [U' \circ V_i;P' \cup P_i]
$$
is a cube presentation of $[T';P'] \circ [T;P]$.  Hence {it}  belongs to $\V^{k+k'}$.

Now we prove $\calJ^k = \V^k$ for  $k\geq 1$.
We have $\calJ^k\subset \V^k$ since $\V$ is a filtration and we have $r_+-\eta\in\V^1$.
To prove $\V^k \subset \calJ^k$, consider an element
$[T;P] \in\V^k(v,w),$ where $T \in\Bq(v,w)$ and $P$ is a set of $k$ pairwise
disjoint plots of a diagram $D$ of $T$.  Assume that $P$ has $k'$
plots containing crossings and $k'':=k-k'$ plots containing
positive curls.  We can realize the  moves in \eqref{eq:elementary_moves} by the moves
$\eta^{\ot2}\mapsto\cc$ and $\eta\mapsto r_+$.  Thus, by
moving the plots of $P$ towards the upper right corner of $D$
using planar isotopy and Reidemeister moves, we obtain
$$
[T;P] = \pm T' \circ \big(\id_v \otimes ((\cc-\eta^{\otimes 2})^{\ot k'}
\otimes (r_+-\eta)^{\ot k''})\big),
$$
where $T'$ is a morphism in $\Bq$.
By \eqref{eq:c}, it follows that $[T;P]\in\calJ^k$.
\end{proof}

\begin{remark} \label{rem:strict_V_J}
  (1) One can define the filtrations $\V$ and $\calJ$ in  $\KB$ as well.
  Proposition \ref{prop:V=J} is valid in this setting, too.

  (2) A result similar to Proposition \ref{prop:V=J} is given in
  \cite{Habiro}.  The braided monoidal category $\mathsf{B}$ defined
  there is a subcategory of the category $\mathcal{T}$ of tangles, and
  there is a braided monoidal functor $\mathsf{B}\to\B$.  The result
  \cite[Theorem 9.19]{Habiro} essentially states that the
  Vassiliev--Goussarov filtration of the linearization $\Z\mathsf{B}$
  of $\mathsf{B}$ coincides with the $\calI_{\mathsf{B}}$-adic
  filtration, where $\calI_{\mathsf{B}}$ is the ideal in
  $\Z\mathsf{B}$ generated by the morphism corresponding to
  $\eta^{\ot2}-\cc$.

  (3) In Sections \ref{sec:terminology} and
  \ref{sec:vass-gouss-filtr}, one can work over a commutative,
  unital ring.  In particular, Proposition \ref{prop:V=J} holds for
  $\Z\Bq$ and $\Z\B$ as well.
\end{remark}

Let $\widehat{\KBq}:=\widehat{\KBq}^{\V}
=\widehat{\KBq}^{\calJ} $, the completion of $\KBq$ with respect to
the Vassiliev--Goussarov filtration or the $\J$-adic filtration.
 Let $\wh{\KB}=\wh{\KB}^{\V}=\wh{\KB}^{\J}$ be the completion
of $\KB$ similarly defined.  Then $\wh{\KBq}$ is naturally identified
with the non-strictification $(\wh{\KB})_q$ of $\wh{\KB}$.

Let $\Gr\KBq$ denote the graded linear braided monoidal category
associated to the filtration $\V=\J$ of $\KBq$.  The braidings
$\psi_{v,w}$ {in $\Gr\KBq$}
are actually symmetries, i.e., we
have $\psi_{w,v}\psi_{v,w}=\id_{v,w}$ in $\Gr\KBq$.  Indeed, since
$\psi_{w,v}\psi_{v,w}$ and $\id_{v,w}$ are related by finitely many
crossing changes, we have
$$\psi_{w,v}\psi_{v,w}-\id_{v \otimes w}\in\V^1.$$
Thus, $\Gr\KBq$ is a graded
linear symmetric (non-strict) monoidal category.  Similarly,
$\Gr\KB$ is a graded linear symmetric strict monoidal category.

\subsection{The degree filtration of $\Aq$}  \label{sec:degree-filtration}

Let $\Aq$ denote the non-strictification of the linear strict
monoidal category $\AB$.  The grading of $\AB$ induces that of $\Aq$.
Thus $\Aq$ has a \emph{degree filtration}
$$
\AB_q = \calD^0 \supset  \calD^1 \supset  \calD^2 \supset \cdots
$$
defined by
$$
\calD^k(v,w) = \bigoplus_{i\ge k} \AB_i(\vert v\vert,\vert w \vert ) \subset \AB_q(v,w)
\quad \quad \hbox{for } k\ge0,\ v,w\in\Mag(\bu).
$$

Now we give a categorical description of the degree filtration $\calD$. 
{Each} of the generators of the monoidal category $\AB$
provided by Theorem \ref{AB-pres},
say $f\in\AB(m,n)$, {have a lift in}  $f\in\Aq(\bu^{\ot m},\bu^{\ot n})$.
Let $\calI$ be the ideal of $\AB_q$
generated by the Casimir element $r\in\Aq(\vn,\bu)$.
Then $\I$ is also generated by the Casimir $2$-tensor
$c\in\Aq(\vn,\bu\bu)$ since
$$
r=\frac12\mu c \quad \hbox{and} \quad c=\De r-r\ot\et-\et\ot r.
$$

\begin{proposition} \label{prop:D=I}
The degree filtration coincides with the $\calI$-adic filtration;
i.e., we have $\calD^k = \calI^k$ for $k\geq 0$.
\end{proposition}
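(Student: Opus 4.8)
The plan is to prove the two inclusions $\calI^k \subset \calD^k$ and $\calD^k \subset \calI^k$ separately for each $k \ge 0$, mirroring the proof of Proposition \ref{prop:V=J}. The cases $k=0$ being trivial, I would assume $k \ge 1$. The crucial point throughout is degree bookkeeping: among the generators of $\AB$ furnished by Theorem \ref{AB-pres}, all of $\mu,\eta,\Delta,\epsilon,S$ have degree $0$, whereas $c$ (equivalently $r=\frac12\mu c$) has degree $1$, and the degree of a Jacobi diagram is additive under both composition and tensor product.

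First I would establish $\calI^k \subset \calD^k$. Since $c$ has degree $1$, every morphism in the ideal $\calI$ generated by $c$ has degree $\ge 1$, so $\calI \subset \calD^1$. Recalling from Section \ref{sec:terminology} that $\calI^k$ consists of linear combinations of morphisms obtained by composing and tensoring finitely many morphisms of $\AB_q$ among which at least $k$ lie in $\calI$, additivity of degree then yields $\calI^k \subset \calD^k$ at once.

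The reverse inclusion $\calD^k \subset \calI^k$ is where structural input is needed, but it is supplied directly by the normal form of Lemma \ref{r17}. Because $\calD^k = \bigoplus_{i \ge k}\AB_i$ and $\calI^i \subset \calI^k$ for $i \ge k$, it suffices to show that every homogeneous morphism of degree $i$ lies in $\calI^i$. By Lemma \ref{r17} such a morphism is a linear combination of morphisms of the shape \eqref{e44}, each of which factors as a composite of $\Delta^{[p_1,\dots,p_m]}$, then $(\id_s \otimes c^{\otimes i})$, then $(S^{e_1}\otimes\cdots\otimes S^{e_s}\otimes\id_{2i})$, then $P_\sigma$, then $\mu^{[q_1,\dots,q_n]}$. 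All factors other than $c^{\otimes i}$ have degree $0$, so each such morphism is built by composition and tensor product from degree-$0$ generators together with exactly $i$ copies of $c$; hence it lies in $\calI^i$. This gives $\calD^k \subset \calI^k$, and combining the two inclusions proves the proposition.

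I do not expect a genuine obstacle: once Lemma \ref{r17} is in hand, the argument is purely a matter of tracking degrees through the normal form \eqref{e44}, exactly as in the proof of Proposition \ref{prop:V=J}. The one point demanding a little care is the passage between $\AB$ and its non-strictification $\AB_q$: since non-strictification alters only the objects and leaves the morphism spaces $\AB_q(v,w)=\AB(|v|,|w|)$, their gradings, and the operations $\circ,\otimes$ on morphisms unchanged, both the degree filtration and the ideal $\calI$ generated by $c\in\AB_q(\vn,\bu\bu)$ correspond to their strict counterparts, so Lemma \ref{r17} applies verbatim.
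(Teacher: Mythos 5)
Your proof is correct. For the inclusion $\calI^k \subset \calD^k$ you argue exactly as the paper does (the generator of $\calI$ has degree $\ge 1$, and degrees add under $\circ$ and $\otimes$, i.e.\ $\calD$ is a filtration). For the harder inclusion $\calD^k \subset \calI^k$, however, you take a different route from the paper: you invoke the normal form of Lemma \ref{r17}, writing every homogeneous degree-$k$ element as a combination of morphisms $\mu^{[q_1,\ldots,q_n]}P_\sigma(S^{e_1}\otimes\cdots\otimes S^{e_s}\otimes\id_{2k})(\id_s\otimes c^{\ot k})\Delta^{[p_1,\dots,p_m]}$, each of which visibly contains $k$ tensor factors equal to $c$ and hence lies in $\calI^k$. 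The paper instead gives a short direct diagrammatic argument: a restricted chord diagram $D$ of degree $k$ has $k$ chords, and by sliding these chords towards the top-right corner of a projection diagram one factors $D = D' \circ (\id_v \otimes c^{\ot k})$ with $D'$ in $\Aq$. The two arguments are close cousins --- Lemma \ref{r17} is essentially the algebraic record of that same sliding manipulation, obtained via the surjectivity of $\tau: W(m,n)\to\AB(m,n)$ --- but your version leans on the machinery already built for the presentation Theorem \ref{AB-pres}, whereas the paper's is self-contained, needing only that $\AB(m,n)$ is spanned by restricted chord diagrams. Your closing remark about passing between $\AB$ and $\AB_q$ is also well taken and matches the paper's own remark that the proposition holds in both settings; since the associativity isomorphisms of $\Aq$ are identities on morphism spaces, the ideal generated by $c$ and the degree filtration are insensitive to non-strictification, so Lemma \ref{r17} indeed applies verbatim.
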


\begin{proof}
We have $\calI^k \subset \calD^k$ for $k\geq 0$ since $\calD$ is a
filtration and we have  $r\in\calD^1$.

To prove $ \calD^k \subset \calI^k$, consider a restricted chord
diagram $D \in\calD^k(v,w)$.  By moving the $k$ chords towards the
top-right corner of a projection diagram of $D$, we obtain $D =
D'\circ\big(\id_v \otimes\, c^{\ot k}\big)\in\I^k$, where $D'$ is a morphism in $\Aq$.
\end{proof}

\begin{remark}
We can define the filtrations $\calD$ and $\calI$ in the linear strict monoidal category $\AB$ as well.
Proposition \ref{prop:D=I} is valid in this setting, too.
\end{remark}

We can naturally identify $\wAq^{\calD}=\wAq^{\I}$,
namely the degree-completion or the $\I$-adic completion of $\AB_q$, with the
non-strictification $\hA_q$ of $\hA$ defined in Section~\ref{sec:A_q}.  It should not be confused with the monoidal
category $\hAqp$, which is a deformation of $\hAq$ whose associativity
 isomorphisms involve a Drinfeld associator $\varphi$.
However, $\hAq$ and $\hAqp$ have naturally identified underlying categories and
tensor product {functors}.
Thus we may regard the ideals $\hD^k$
of $\hAq=\wAqD$ as ideals of~$\hAqp$.

Consider the graded linear braided monoidal category $\Gr\hAqp$
associated to the filtration $\hD$ on $\hAq^\varphi$.  We have
the following.

\begin{proposition}
  \label{r24}
  The category $\Gr\hAqp$ is symmetric monoidal, and is isomorphic to
  $\Aq$ as a graded linear symmetric monoidal category.  (Thus,
  the structure of $\Gr\hAqp$ does not depend on the choice of
  $\varphi$.)
\end{proposition}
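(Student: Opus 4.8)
The plan is to compute $\Gr\hAqp$ directly, showing that the $\varphi$-dependent deformation of the monoidal structure of $\hAq$ lives entirely in strictly positive filtration degree, so that at the level of leading terms one recovers precisely the strict symmetric monoidal category $\Aq$. This is the exact analogue, on the diagrammatic side, of the fact (noted just before this proposition) that the braidings of $\Gr\KBq$ become symmetries because crossing changes raise the Vassiliev--Goussarov degree.

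First I would handle the underlying graded linear category together with its tensor product, where $\varphi$ plays no role. By Proposition \ref{prop:D=I} the filtration $\hD$ on $\hAqp$ (which coincides with that of $\hAq=\wAqD$, since the two categories share the same underlying category and the same tensor product functor) is the degree filtration; thus $\hD^k(v,w)$ is the degree completion of $\bigoplus_{i\ge k}\AB_i(|v|,|w|)$, and $\hD^k(v,w)/\hD^{k+1}(v,w)\cong\AB_k(|v|,|w|)$. Summing over $k$ yields a canonical identification $\Gr\hAqp(v,w)\cong\Aq(v,w)$ which is the identity on objects $\Mag(\bu)$. Since composition and tensor product in $\hAqp$ are degree-additive, i.e.\ $\hD^k\circ\hD^l\subset\hD^{k+l}$ and $\hD^k\ot\hD^l\subset\hD^{k+l}$, they descend to $\Gr\hAqp$ and, under this identification, become the composition and tensor product of $\Aq$. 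Hence $\Gr\hAqp$ and $\Aq$ already agree as graded linear monoidal categories with strictly unital tensor product.

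It remains to compare the associativity and braiding isomorphisms. Because $\hD$ is multiplicative and the structural isomorphisms of $\hAqp$ lie in $\hD^0$ with invertible degree-$0$ part, the associated graded category is automatically braided monoidal, its associator and braiding being the degree-$0$ parts of $\alpha_{u,v,w}$ and $\psi_{v,w}$. For the associator, formula \eqref{eq:Alpha} builds $\alpha_{u,v,w}$ from $\varphi=\varphi(c_{12},c_{23})$ (see \eqref{eq:phi*}); since $c$ has degree $1$ and $\varphi(X,Y)=1+(\deg>0)$ by \eqref{e28} and group-likeness, we have $\varphi-\eta^{\ot3}\in\hD^1$, so that $\alpha_{u,v,w}-\id\in\hD^1$ and the associator of $\Gr\hAqp$ is the identity, matching the strict associativity of $\Aq$. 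For the braiding, formula \eqref{eq:Psi} builds $\psi_{v,w}$ from $R=\exp_\ast(c/2)$ (see \eqref{eq:R*}); as $R-\eta^{\ot2}\in\hD^1$, the degree-$0$ part of $\psi_{v,w}$ is exactly the symmetry $P_{v,w}$ of $\Aq$, i.e.\ $\psi_{v,w}-P_{v,w}\in\hD^1$.

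Consequently the braiding of $\Gr\hAqp$ is $P_{v,w}$, which satisfies $P_{w,v}P_{v,w}=\id$ and is therefore a symmetry; this shows $\Gr\hAqp$ is symmetric monoidal, and the identification of the second paragraph then upgrades to an isomorphism $\Gr\hAqp\cong\Aq$ of graded linear symmetric monoidal categories. Since the leading terms $\id$ and $P_{v,w}$ of $\alpha_{u,v,w}$ and $\psi_{v,w}$ involve neither $\varphi$ nor $R$, the structure of $\Gr\hAqp$ is independent of the choice of Drinfeld associator. The main thing to be careful about is not a hard computation but the conceptual bookkeeping of the first two steps: one must check that passing to $\Gr$ genuinely replaces each structural isomorphism by its degree-$0$ part — equivalently, that $\hD$ is compatible with the full braided monoidal structure of $\hAqp$ — and that all higher corrections coming from $\varphi$ and $R$ land in strictly positive filtration, which is exactly ensured by $c\in\hD^1$.
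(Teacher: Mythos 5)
Your proposal is correct and follows essentially the same route as the paper: both arguments rest on the congruences $\alpha_{u,v,w}\equiv\id$ and $\psi_{v,w}\equiv P_{v,w}$ modulo $\hD^1$ (coming from $c\in\hD^1$, so that $\varphi$ and $R$ have trivial leading terms), and then identify $\Gr\hAqp$ with $\Gr\hAq=\Aq$. The only difference is organizational — the paper first deduces symmetry from $\psi_{w,v}\psi_{v,w}-\id\in\hD^1$ and then matches structures, while you match structures first and read off the symmetry from $P_{w,v}P_{v,w}=\id$ — which is an equivalent bookkeeping of the same facts.
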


\begin{proof}
  The braiding $\psi_{v,w}$ in $\hAqp$ defined in \eqref{eq:Psi}
  becomes symmetric in $\Gr\hAqp$, i.e.,
  $\psi_{w,v}\psi_{v,w}=\id_{v\ot w}$ in $\Gr\hAqp$, since
  $\psi_{w,v}\psi_{v,w}-\id_{v\ot w}\in\hD^1$. Thus, $\Gr\hAqp$ is symmetric monoidal.

  The associativity  isomorphism $\alpha_{u,v,w}:(u\ot v)\ot w\to
  u\ot(v\ot w)$ in $\hAqp$ defined in \eqref{eq:Alpha} is congruent
  modulo $\hD^1$ to the associativity  isomorphism
  \begin{gather*}
    \alpha_{u,v,w}= \id
    \in\widehat{\AB}(|u|+|v|+|w|,|u|+|v|+|w|)= \hAq((u\ot v)\ot w,u\ot(v\ot w))
  \end{gather*}
  in $\hAq$.  Similarly, the braiding $\psi_{u,v}:u\ot v\to v\ot u$
    in $\hAqp$ defined in \eqref{eq:Psi} is congruent modulo $\hD^1$ to the symmetry
    $P_{u,v}:u\ot v\to v\ot u$ in $\hAq$.
 Hence $\Gr\hAqp$ is isomorphic to $\Gr\hAq=\Aq$ as a linear
  symmetric monoidal category.
\end{proof}

In the following we identify $\Gr\hAqp$ with $\Aq$.

\subsection{Universality of $\Zqphi$}  \label{sec:universality-zq}

We first check that $\Zqphi$ is filtration-preserving.

\begin{proposition}
  \label{r18}
  The functor $\Zqphi:\KBq\to\hAqp$ induced by $\Zqphi:\Bq\to\hAqp$
  preserves filtrations; i.e., $\Zqphi(\V^k)\subset\hD^k$ for
  $k\ge0$.  Hence $\Zqphi$ induces a filtered linear
  braided monoidal functor
  \begin{gather}
    \Zqphi:\wh{\KBq}\longrightarrow\hAqp.
  \end{gather}
\end{proposition}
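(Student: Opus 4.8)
The plan is to reduce the statement $\Zqphi(\V^k)\subset\hD^k$ to the two key facts already established: the identification of the Vassiliev--Goussarov filtration $\V$ with the $\calJ$-adic filtration (Proposition \ref{prop:V=J}) and the identification of the degree filtration $\calD$ with the $\calI$-adic filtration (Proposition \ref{prop:D=I}). Since $\Zqphi$ is a tensor-preserving functor (by Theorem \ref{r16}), it sends the ideal $\calJ$ into the ideal $\hD=\hat\calI$ provided it sends the single generator $r_+-\eta$ of $\calJ$ into $\hD^1$. So the crux is a generator-level degree estimate.

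First I would recall that $\calJ$ is generated by $r_+-\eta\in\K\Bq(\vn,\bu)$, so that by Proposition \ref{prop:V=J} we have $\V^k=\calJ^k$, and the latter is spanned by morphisms obtained as compositions and tensor products of arbitrary morphisms of $\Bq$ with at least $k$ factors equal to $r_+-\eta$. Because $\Zqphi$ is a (braided) tensor-preserving functor, it commutes with $\otimes$ and $\circ$; hence it suffices to show
\begin{gather*}
  \Zqphi(r_+-\eta)=Z(r_+)-Z(\eta)\in\hD^1(\vn,\bu).
\end{gather*}
Then $\Zqphi(\calJ^k)\subset (\hD^1)^k\subset \hD^k$ follows formally, since $\hD$ is a filtration and $\hD^k$ contains every morphism built from compositions and tensor products involving at least $k$ elements of $\hD^1$ (as noted at the end of Section \ref{sec:terminology}).

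The main (and only nontrivial) step is therefore the degree-$1$ estimate on the generator. By Proposition \ref{prop:Z_generators} we have $Z(r_+)=\bfr^{-1}$ with $\bfr=\exp_*(\mu c/2)$, so $Z(r_+)=\exp_*(-\mu c/2)=\eta-\tfrac12\mu c+(\deg>1)$, while $Z(\eta)=\eta$ is concentrated in degree $0$. Since $c$ has degree $1$, the difference $Z(r_+)-Z(\eta)$ has vanishing degree-$0$ part and thus lies in $\calD^1=\hD^1$ (using Proposition \ref{prop:D=I} to identify the degree filtration with the $\calI$-adic one). The same computation shows explicitly that the degree-$1$ part is $-\tfrac12\mu c$, confirming membership in the ideal generated by $c$. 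I expect this computation to be completely routine given the explicit formulas of Proposition \ref{prop:Z_generators}; the only care needed is to confirm that $Z(r_+)$ and $Z(\eta)$ genuinely agree in degree $0$, which is immediate since both have underlying homotopy class the trivial map $F_1\to F_0$ and no dashed part in degree $0$.

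Finally, the functoriality conclusion is automatic: a tensor-preserving functor that respects the two filtrations in the above sense descends to the completions, yielding a filtered linear braided monoidal functor $\Zqphi:\wh{\KBq}\to\hAqp$, where braided monoidality is inherited from Theorem \ref{r16} and compatibility with the filtrations has just been verified. The genuinely delicate point — the passage from the topological filtration $\V$ to an algebraically controlled one — has already been handled in Proposition \ref{prop:V=J}, so the present proposition is essentially a formal consequence combined with one short generator computation.
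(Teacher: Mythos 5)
Your proposal is correct and follows essentially the same route as the paper: identify $\V^k=\calJ^k$ with the ideal generated by $r_+-\eta$, compute $\Zqphi(r_+-\eta)=\bfr^{-1}-\eta=-r+(\deg\geq 2)\in\hD^1$ via Proposition \ref{prop:Z_generators}, and then use monoidality of $\Zqphi$ to get $\Zqphi(\V^k)\subset(\hD^1)^k\subset\hD^k$. The only cosmetic difference is your invocation of Proposition \ref{prop:D=I}, which is not actually needed (membership in $\hD^1$ is immediate from the vanishing of the degree $0$ part).
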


\begin{proof}
  Since $r_+-\et$ generates the ideal $\J=\V^1\subset\KBq$, since we have
  \begin{equation}
    \label{eq:degree_1}  \Zqphi(r_+-\eta)
    \by{eq:Z(r+)} \bfr^{-1} - \eta = -r +(\deg \geq 2) \ \in\widehat{\calD}^1,
  \end{equation}
  and since $\Zqphi$ is a monoidal functor, it follows that
  $\Zqphi(\V^1)\subset\hD^1$.  Hence,
  $$
  \Zqphi(\V^k)\subset \Zqphi(\V^1)^k\subset(\hD^1)^k\subset\hD^k.
  $$
\end{proof}

By Proposition \ref{r18}, $\Zqphi:\hKBq\to\hAqp$ induces a graded
linear braided monoidal functor
\begin{gather*}
  \Gr\Zqphi:\Gr\KBq\longrightarrow\Gr\hAqp=\Aq.
\end{gather*}
We already know that both $\Gr\KBq$ and $\Gr\hAqp=\Aq$ are symmetric
monoidal.  Thus, $\Gr\Zqphi$ is a graded linear \emph{symmetric}
monoidal functor.  Recall that $\Gr\KBq$ and $\Aq$ are the
non-strictifications of $\Gr\KB$ and $\AB$, respectively.  It is
easy to see that the functor $\Gr\Zqphi$ is the non-strictification of a
unique graded linear symmetric monoidal functor
\begin{gather*}
  \bZ:\Gr\KB\longrightarrow \AB.
\end{gather*}
More concretely, we can define $\bZ$ by
$$
\bZ(t) := \big(\hbox{degree $k$ part of $\Zqphi(t_q)$}\big)
$$
for $t\in\V^k(m,n)$, $m,n,k\ge0$, where $t_q=t\in\V^k(\bu^{\ot m},\bu^{\ot n})\subset
  \KBq(\bu^{\ot m},\bu^{\ot n})$.

\begin{theorem}
  \label{r20}
  The functor $\Gr\Zqphi:\Gr\KBq\to\Aq$ is an isomorphism of graded
  linear symmetric (non-strict) monoidal categories.  The functor
  $\bZ:\Gr\KB\to\AB$ is an isomorphism of graded linear symmetric
  strict monoidal categories.
\end{theorem}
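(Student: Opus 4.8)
The plan is to prove that $\Gr\Zqphi$ is an isomorphism by constructing an explicit inverse, and then transferring this to $\bZ$ via non-strictification. The two statements are equivalent: since $\Gr\KBq$ and $\Aq$ are the non-strictifications of $\Gr\KB$ and $\AB$ respectively, and since non-strictification is an equivalence that is the identity on morphism spaces, $\Gr\Zqphi$ is an isomorphism if and only if $\bZ$ is. So it suffices to treat $\bZ:\Gr\KB\to\AB$.

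First I would observe that $\bZ$ is surjective. By Theorem~\ref{AB-pres}, $\AB$ is freely generated as a linear symmetric strict monoidal category by the Casimir Hopf algebra $(H,c)=(1,\eta,\mu,\epsilon,\Delta,S,c)$. Therefore it suffices to exhibit preimages under $\bZ$ of these generating morphisms. Since $\B$ is generated by $\psi^{\pm 1},\mu,\Delta,S^{\pm1},\eta,\epsilon,r_\pm$ (see Section~\ref{sec:generators_B}), the associated-graded category $\Gr\KB$ is generated by the graded pieces of these morphisms. The degree-$0$ parts of $\mu,\Delta,S,\eta,\epsilon$ map under $\bZ$ to the corresponding Hopf-algebra generators in $\AB$ (by Proposition~\ref{prop:Z_generators} and Proposition~\ref{r29}, their leading terms are precisely the diagrammatic generators in \eqref{e51}). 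The key input is the degree-$1$ computation \eqref{eq:degree_1}: $\Zqphi(r_+-\eta)=-r+(\deg\ge2)$, so the class of $r_+-\eta$ in $\Gr^1\KB$ is sent by $\bZ$ to the Casimir element $-r$, which together with the correspondence of Proposition~\ref{r1} recovers $c$. Hence all generators of $\AB$ are in the image, giving surjectivity of $\bZ$.

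For injectivity — which I expect to be the main obstacle — the clean strategy is a dimension/rank argument realized through the universal properties. The functor $\bZ:\Gr\KB\to\AB$ is a graded linear symmetric monoidal functor between graded linear symmetric monoidal categories; by Theorem~\ref{AB-pres}, $\AB$ is \emph{freely} generated by a Casimir Hopf algebra, so there is a canonical functor $\AB\to\Gr\KB$ provided one can verify that $\Gr\KB$ carries a Casimir Hopf algebra structure (the images of the generators of $\B$ in $\Gr\KB$, with $c$ coming from the class of $\eta^{\ot2}-\cc$, equivalently of $r_+-\eta$, as in Proposition~\ref{prop:V=J} and \eqref{eq:c}). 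The defining relations of a Casimir Hopf algebra hold in $\Gr\KB$ because they already hold in $\B$ up to the Vassiliev--Goussarov filtration; this is where the topological Hopf-algebra relations of \cite{Habiro,Habiro2}, reduced modulo $\V^1$, together with the $4$T-type relation coming from the interplay of crossings enter. This yields a functor $G:\AB\to\Gr\KB$ mapping the free Casimir Hopf algebra to this structure.

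The final step is to check that $G$ and $\bZ$ are mutually inverse. The composite $\bZ\circ G:\AB\to\AB$ is a linear symmetric monoidal endofunctor fixing the generating Casimir Hopf algebra $(H,c)$, hence equals the identity by the uniqueness clause in the universal property (Theorem~\ref{AB-pres}). Conversely, $G\circ\bZ:\Gr\KB\to\Gr\KB$ fixes the generating morphisms of $\Gr\KB$, and since these generate $\Gr\KB$ as a linear symmetric monoidal category, $G\circ\bZ=\id$ as well. The delicate point throughout is confirming that $\Gr\KB$ genuinely satisfies \emph{all} the Casimir-Hopf-algebra relations in the associated graded — in particular the invariance/ad-invariance \eqref{e:inv} and the primitivity \eqref{e:add} of the induced $c$ — which must be extracted from the structure of $\B$ modulo $\V^1,\V^2$; this is where the bulk of the verification lies, and I would handle it by pushing the relations through the surjection $\Gr\KB\twoheadrightarrow\AB$ combined with the already-established relations in $\AB$ to pin down the kernel and force it to vanish.
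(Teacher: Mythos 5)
Your overall architecture is the paper's: reduce to $\bZ:\Gr\KB\to\AB$ by non-strictification, equip $\Gr\KB$ with a Casimir Hopf algebra structure whose Casimir $2$-tensor is the class of $\pm(\cc-\eta^{\ot 2})$ in $\V^1/\V^2$, invoke the freeness of $\AB$ (Theorem \ref{AB-pres}) to obtain a functor $\GG:\AB\to\Gr\KB$, and conclude from $\bZ\GG=\id_{\AB}$ (checked on the generating Casimir Hopf algebra, using \eqref{eq:degree_1}) together with the fact that the degree-$0$ Hopf generators and $\wtr_+=r_+-\eta$ generate $\Gr\KB$. Up to that point your proposal and the paper's proof agree.

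The gap lies in the step you yourself flag as "the bulk of the verification": proving that $\Gr\KB$ genuinely carries a Casimir Hopf algebra structure, i.e.\ that the class of $\cc-\eta^{\ot2}$ satisfies \eqref{e:add}, \eqref{e:sym} and \eqref{e:inv}, and that $\Delta$ becomes cocommutative. Your concrete plan for this --- "pushing the relations through the surjection $\Gr\KB\twoheadrightarrow\AB$ combined with the already-established relations in $\AB$" --- is circular: a surjection transports relations from source to target, not the other way, so relations holding in $\AB$ say nothing about $\Gr\KB$ unless $\bZ$ is already known to be injective, which is precisely what is to be proved. Worse, without these axioms the functor $\GG$ does not exist (Theorem \ref{AB-pres} cannot be invoked), so the whole injectivity argument never gets started. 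The paper verifies the axioms by exhibiting \emph{exact} topological identities among bottom tangles in $\B$ itself, namely $(\Delta\ot\id_1)\,\cc=(\id_2\ot\mu)(\id_1\ot\cc\ot\id_1)\,\cc$, $\psi\,\cc=(\ad\ot\id_1)(r_+\ot\cc)$ and $\cc\,\epsilon=(\ad\ot\ad)(\id_1\ot\psi\ot\id_1)(\Delta\ot\cc)$, which, reduced modulo $\V^2$ (and modulo $\V^1$ via crossing changes for cocommutativity), yield exactly primitivity, symmetry and ad-invariance of $\wt\cc$. Your earlier remark that the relations "already hold in $\B$ up to the Vassiliev--Goussarov filtration" points in the right direction, but these specific identities are the actual mathematical content of the proof, and your proposal never produces them; substituting the surjection argument for them would make the proof fail.
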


\begin{proof}
  It suffices to prove the latter assertion, since the former
  corresponds to the latter by non-strictification.

Let $H^\B=(1,\mu,\eta,\De,\ep,S)$ be the Hopf algebra in $\B$ defined
in Section \ref{sec:generators_B}.  It induces a Hopf algebra
$H^{\Gr\KB}=(1,\mu,\eta,\De,\ep,S)$ in $\Gr\KB$,  concentrated
in the degree $0$ part $\Gr^0\KB=\V^0/\V^1$.

Let us prove that $H^{\Gr\KB}$ has a Casimir Hopf algebra structure.
Since $\Delta$ and $\psi_{1,1}\Delta$ in $\B$ are related by some
crossing changes, we have $\Delta-\psi_{1,1}\De\in\V^1(1,2)$, i.e.,
$\De=\psi_{1,1}\De$ in $\Gr\KB$.  Thus $H^{\Gr\KB}$ is cocommutative.
Furthermore,
$$
\widetilde{\cc} \, := \cc-\eta^{\otimes 2}\ \in\V^1(0,2)
$$
gives a Casimir $2$-tensor for $H^{\Gr\KB}$.  Indeed, the identities in $\B$
\begin{eqnarray*}
  (\Delta \otimes \id_1)\,  \cc &=& (\id_2 \otimes \mu)\,   (\id_1 \otimes \cc \otimes \id_1 )\, \cc,\\
 \psi\,  \cc &=& (\ad \otimes \id_1)\,  (r_+\otimes \cc), \\
 \cc\,  \epsilon &=& (\ad\ot\ad)\, (\id_1 \ot \psi\ot \id_1)\, (\Delta\ot \cc)
\end{eqnarray*}
imply
\begin{gather*}
 (\Delta \otimes \id_1)\,  \widetilde{\cc}  - (\id_1 \otimes \eta \otimes \id_1)\,  \widetilde{\cc} - \eta \otimes \widetilde{\cc}
 \ = \ (\id_2 \otimes \mu)\,   (\id_1 \otimes \widetilde{\cc}  \otimes \id_1)\, \widetilde{\cc}\ \in\V^2(0,3),\\
  \psi\, \widetilde{\cc}   - \widetilde{\cc}\ =\ \psi\, \cc  - \cc \ = \ (\ad \otimes \id_1)\,  \big((r_+-\eta)\otimes \widetilde{\cc} \big) \ \in\V^2(0,2),\\
 \widetilde{\cc}\,  \epsilon\ =\ (\ad\ot\ad)\, (\id_1 \ot \psi\ot \id_1)\,  (\Delta\ot \widetilde{\cc}\, ),
\end{gather*}
respectively.
By Theorem \ref{AB-pres},  there is a unique symmetric monoidal functor
$$
\GG: \AB \longrightarrow \Gr\KB
$$
which maps the Casimir Hopf algebra
$(H^{\AB},c)$ in $\AB$ to the Casimir Hopf algebra
$(H^{\Gr\KB},-\wt{\cc})$ in $\Gr\KB$.

We prove that $\GG$ is full.  By Proposition \ref{prop:V=J}, $\Gr\KB$
is generated by its degree~$0$ part $\V^0/\V^1$ and its degree~$1$
part $\V^1/\V^2$.  We have $\V^0/\V^1=\GG(\AB_0)$ since $\V^0/\V^1$
(resp.\ $\AB_0$) is generated by the Hopf algebra $H^{\Gr\KB}$
(resp.\ $H^{\AB}$).  Thus it suffices to prove $\V^1/\V^2=
\calJ^1/\calJ^2\subset\GG(\AB_1)$.  As an ideal of $\KB/\J^2$, $\J$ is
generated by $\wtr_+:= r_+-\eta \in\calJ^1/\calJ^2$.  Hence it
suffices to check $\wtr_+\in\GG(\AB_1)$.  Since $\mu (r_+\otimes r_+)
= \mu \cc \in\B(0,1)$, we have
$$
\calJ^2\ni
\mu\,  ( \widetilde{r}_+\otimes  \widetilde{r}_+) = \mu\,  \cc - 2r_++\eta =\mu\, \wt{\cc} - 2\wtr_+.
$$
Therefore,
\begin{equation}    \label{eq:sigma(r)}
\GG(r) = \GG\big(  \frac{1}{2} \mu\,   c\big) =
  \frac{1}{2} \GG(\mu)\,   \GG(c) = - \frac{1}{2} \mu\,  \widetilde{\cc} = - \widetilde{r}_+.
\end{equation}

The linear symmetric monoidal functor $\bZ\GG:\AB\to\AB$
preserves $H^{\AB}$.  Moreover, \eqref{eq:degree_1} and
\eqref{eq:sigma(r)} {imply} $\bZ\GG(r)=r$.  Thus $\bZ\GG$ is the
identity on the generators of $\AB$; hence $\bZ\GG=\id_{\AB}$.
Therefore $\bZ$ is an isomorphism.
\end{proof}

We conclude this section with a stronger version of Theorem \ref{r20}
and two remarks about it.

\begin{theorem} \label{th:universality}
  The functor $\Zqphi:\hKBq\to\hAqp$ is an isomorphism of filtered
  linear braided (non-strict) monoidal categories.
\end{theorem}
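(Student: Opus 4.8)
The plan is to deduce Theorem \ref{th:universality} from Theorem \ref{r20} by a standard ``filtered isomorphism from graded isomorphism plus filtration-preservation'' argument. The functor $\Zqphi:\hKBq\to\hAqp$ is already known to be a filtered linear braided monoidal functor by Proposition \ref{r18}, and by construction it is the identity on objects. The content that remains is that $\Zqphi$ is bijective on each morphism space, compatibly with the completions. Since both source and target are complete with respect to their respective filtrations ($\V=\J$ on $\hKBq$ and $\hD$ on $\hAqp$), it suffices to show that the associated graded map is an isomorphism; completeness then upgrades a graded isomorphism to a filtered one.

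First I would recall the general principle: if $G:\calC\to\calD$ is a filtration-preserving linear functor between two filtered linear monoidal categories that are complete (i.e.\ $\calC=\wh\calC^\calF$ and $\calD=\wh\calD^\calG$), and if the induced graded functor $\Gr G:\Gr\calC\to\Gr\calD$ is an isomorphism on every morphism space, then $G$ itself is an isomorphism of filtered categories. This is because, for fixed objects $v,w$, the map $G:\calC(v,w)\to\calD(v,w)$ is a morphism of complete filtered vector spaces inducing an isomorphism on associated graded pieces; a routine inverse-limit argument (lifting preimages degree by degree and taking the limit, using completeness on the target to guarantee convergence and on the source to guarantee a well-defined inverse) shows $G(v,w)$ is bijective. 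I would state this as the key reduction and then verify its hypotheses in our situation.

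Next I would assemble the hypotheses. Theorem \ref{r20} gives precisely that $\Gr\Zqphi:\Gr\KBq\to\Gr\hAqp=\Aq$ is an isomorphism of graded linear symmetric monoidal categories, which in particular means it is a linear isomorphism on each morphism space $\Gr\KBq(v,w)\to\Aq(v,w)$. Proposition \ref{r18} gives filtration-preservation, $\Zqphi(\V^k)\subset\hD^k$, so that $\Gr\Zqphi$ is defined and the associated-graded map of $\Zqphi$ on $\hKBq$ is exactly $\Gr\Zqphi$. Completeness of $\hKBq=\widehat{\KBq}^\V$ and of $\hAqp$ (whose underlying filtered category is $\wAqD=\hAq$, complete with respect to $\hD$, by Section \ref{sec:degree-filtration}) provides the completeness needed for the abstract principle. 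Applying the principle then shows $\Zqphi$ is bijective on morphisms and hence an isomorphism of filtered linear categories; since $\Zqphi$ is moreover braided monoidal by Theorem \ref{r16} and its inverse automatically inherits the braided monoidal structure, it is an isomorphism of filtered linear braided monoidal categories.

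The main obstacle I anticipate is purely bookkeeping rather than conceptual: one must be careful that the filtration $\hD$ on $\hAqp$ is genuinely the filtration induced by the degree filtration $\calD$ on $\Aq$ under the identification $\hAq=\wAqD$, and that the braided (as opposed to symmetric) non-strict structure of $\hAqp$ does not interfere with the completeness argument, which takes place at the level of underlying filtered linear categories where $\hAqp$ and $\hAq$ coincide. I would make explicit that the associativity and braiding isomorphisms of $\hAqp$ lie in filtration degree $0$ (they are $\id$ plus higher-degree corrections, as noted in the proof of Proposition \ref{r24}), so passing to associated graded recovers the symmetric structure on both sides and the graded functor $\Gr\Zqphi$ really is the symmetric monoidal isomorphism supplied by Theorem \ref{r20}. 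Once these identifications are pinned down, the convergence of the inverse functor on completions is immediate from completeness, and no further computation is required.
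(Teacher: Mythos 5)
Your proposal is correct and follows essentially the same route as the paper: the paper's proof also deduces the theorem from Theorem \ref{r20} and Proposition \ref{r18}, showing by induction on $k$ that $\Zqphi:\widehat{\KBq}/\widehat{\calV}^{k+1}\to\hAqp/\widehat{\calD}^{k+1}$ is an isomorphism and then passing to the inverse limit, which is precisely your ``graded isomorphism plus completeness'' principle spelled out on the finite quotients. Your extra care about the braided structure and the identification $\Gr\hAqp=\Aq$ is consistent with what the paper establishes in Proposition \ref{r24} and Theorem \ref{r16}.
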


\begin{proof}
  Theorem \ref{r20} and an induction on $k$ shows that
  $\Zqphi:\widehat{\KBq}/\widehat{\calV}^{k+1}\to\hAqp/\widehat{\calD}^{k+1}$
  is an isomorphism for all $k\geq 0$.
  Hence $\Zqphi:\hKBq\to\hAqp$ is an isomorphism.
\end{proof}

\begin{remark}
The map $G: \AB(m,n) \to (\Gr\KB)(m,n)$, $m,n\ge0$, defined in the
proof of Theorem \ref{r20} can also be constructed as a direct sum of
maps
$$
G_d: \AB(m,n)_d \longrightarrow (\Gr\KB)(m,n)_d
$$ indexed by $d\in\bfF(n,m)$, using calculus of claspers instead of
the presentation of~$\AB$.  More precisely, the map $G_d$ is defined
by \emph{fixing} an $n$-component bottom tangle $\gamma_d$ in $V_m$ of
homotopy {class} $d$, and by realizing every $(m,n)$-Jacobi diagram $D$ of
homotopy {class} $d$ as a ``simple strict graph clasper'' $C_D$ on
$\gamma_d$ in the sense of \cite{Habiro_claspers}.  Then $G_d(D)$ is
defined as the alternating sum of clasper surgeries on the connected
components of $C_D$. See \cite[\S 8.2]{Habiro_claspers} for the
special case $m=0$.
\end{remark}

\begin{remark}
  Theorem \ref{th:universality} implies the universal property of $Z$ {and $\Zqphi$}
  among $\K$-valued Vassiliev--Goussarov invariants of bottom
  tangles in handlebodies.  For links in handlebodies (and, more
  generally, for links in thickened surfaces), similar results have
  been obtained in \cite{AMR,Lieberum}.
\end{remark}

%
%
\section{Relationship with the LMO functor}  \label{sec:LMO_functor}

In this section, we explain how the extended Kontsevich integral
relates to the LMO functor introduced in \cite{CHM}.

\subsection{Review of the LMO functor}  \label{sec:review-lmo-functor}

The LMO functor as defined in \cite{CHM}
\begin{gather*}
  \tiZ:\LCob_q\lto\tsA
\end{gather*}
is a functor from the category $\LCob_q$ of Lagrangian $q$-cobordisms
to (the degree-completion of) the category $\tsA$ of ``top-substantial
Jacobi diagrams''.  Here we consider the restriction of $\tiZ$ to the
category $\sLCob_q\cong\Bq$ of \emph{special Lagrangian
$q$-cobordisms}, which is the non-strictification of the strict
monoidal category $\sLCob\cong\B$ recalled in Section \ref{sec:sLCob}.  By
\cite[Corollary 5.4]{CHM}, it turns out that $\tiZ$ on $\sLCob$ takes
values in a subcategory $\sA$ of $\tsA$, which we call the category of
\emph{special (top-substantial) Jacobi diagrams}.  Thus, we here consider the
restricted version of the LMO functor:
\begin{gather*}
    \tiZ:\sLCob_q\lto\sA.
\end{gather*}

The category $\sA$ is defined as follows.  Set $\Ob(\sA)=\NZ$.
Given a finite set $U$, {a} \emph{$U$-labeled Jacobi diagram} is a
unitrivalent graph {with oriented trivalent vertices,}
{with} each univalent vertex  labeled by an element of $U$.
We identify two $U$-labeled Jacobi diagrams if there is a
homeomorphism from one to the other preserving the vertex-orientations
and the labelings.  Let $\A(U)$ denote the vector space generated
by $U$-labeled Jacobi diagrams modulo the AS and IHX
relations~\eqref{eq:AS_IHX}.  For $m,n\ge0$, let $\sA(m,n)$ be
the subspace of
$$
\A\big(\{1^+,\dots,m^+\} \cup \{1^-,\dots,n^-\}\big)
$$ spanned by {\emph{special}} Jacobi diagrams, which are those
diagrams with no connected component without labels in
$\{1^-,\dots,n^-\}$.  {(Recall that a \emph{top-substantial} Jacobi
diagram in {\cite{CHM} allows} such connected components that are not struts.  Thus, we have $\sA(m,n)\subset\tsA(m,n)$, where
$\tsA(m,n)$ is the space of top-substantial Jacobi diagrams.)}  The
composition $D'\circ D$ of two {special} Jacobi diagrams {$m\overset{D}{\lto}n\overset{D'}{\lto}p$ in $\sA$}
 is the sum of all possible ways of gluing some $i^-$-vertices of $D$ with some
$i^+$-vertices of $D'$ for all $i\in\{1,\dots,n\}$.  Define the
identity morphisms in ${\sA}$ by
$$
\id_m = \exp_\sqcup \left( \sum_{i=1}^m
 \! \! \begin{array}{c}
\phantom{.}\\[-10pt]
{\labellist \small \hair 2pt
\pinlabel {\scriptsize $i^-$} [l] at 37 18
\pinlabel {\scriptsize $i^+$} [l] at 37 170
\endlabellist
\includegraphics[scale=0.1]{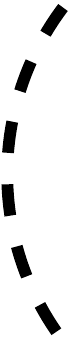}}
\end{array} \ \ \right) :m\lto m,
$$ where $\sqcup$ denotes the disjoint union of Jacobi diagrams.

The category $\sA$ has a strict monoidal structure such that $m\otimes
m'=m+m'$ for $m,m'\ge0$, and the tensor product $D
\otimes D'$ of two {special} Jacobi diagrams $D$ and $D'$ is the
disjoint union $D \sqcup D'$ with the appropriate re-numbering of the
colors of~$D'$.  The category $\sA$ is  graded, where
the \emph{degree} of a {special} Jacobi diagram {is} half the
total number of vertices.  The degree-completion of $\sA$ is also denoted by~$\sA$.

The \emph{LMO functor} is a functor $\widetilde{Z}: \sLCob_q \to \sA$
with the following properties:
\begin{itemize}
\item[(i)] We have $\tZ(w)=|w|$ for $w\in\Mag(\bu)$.
\item[(ii)] Let $T\in\Bq(\varnothing,w) \subset
  \T_q\big(\varnothing,w(+-)\big)$ with $w\in\Mag(\bullet)$, $|w|=n$
  and let $E_T\in\sLCob_q(\varnothing,w)$ be the cobordism
  corresponding to $T$.  Then we have $\tZ(E_T)=\chi^{-1}Z(T)$, where $Z(T)$ is the usual
  Kontsevich integral (as defined in Section \ref{sec:usual_Kontsevich})
  and
$$
\chi: \sA(0,n) \stackrel{\cong }{\longrightarrow} \A(X_n)
$$
is the diagrammatic analog of the PBW isomorphism (see \cite{BN2}).
Here, recall  $X_n=\!\figtotext{10}{8}{capleft}\!\!_{1} \cdots\!\!
\figtotext{10}{8}{capleft}\!\!_n$.
\item[(iii)] For {morphisms $T$ and $T'$ in $\LCob_q$,}
  we have $\widetilde{Z}(T \otimes T') =
\widetilde{Z}(T) \otimes \widetilde{Z}(T')$.
\end{itemize}

\subsection{From the extended Kontsevich integral to the LMO functor}  \label{sec:Z_to_LMO}

For $m,n\ge0$, define a linear map $\kappa: \AB(m,n) \to \sA(m,n)$ by
$$
\kappa\left(\!\! \centre{\labellist
\scriptsize\hair 2pt
 \pinlabel {$1$} at 236 20
 \pinlabel {$n$}  at 491 20
 \pinlabel {$1$} [bl] at 261 300
 \pinlabel {$m$} [bl] at 621 300
 \pinlabel {$\cdots$}  at 364 267
  \pinlabel {$\cdots$}  at 364 15
\endlabellist
\centering
\includegraphics[scale=0.16]{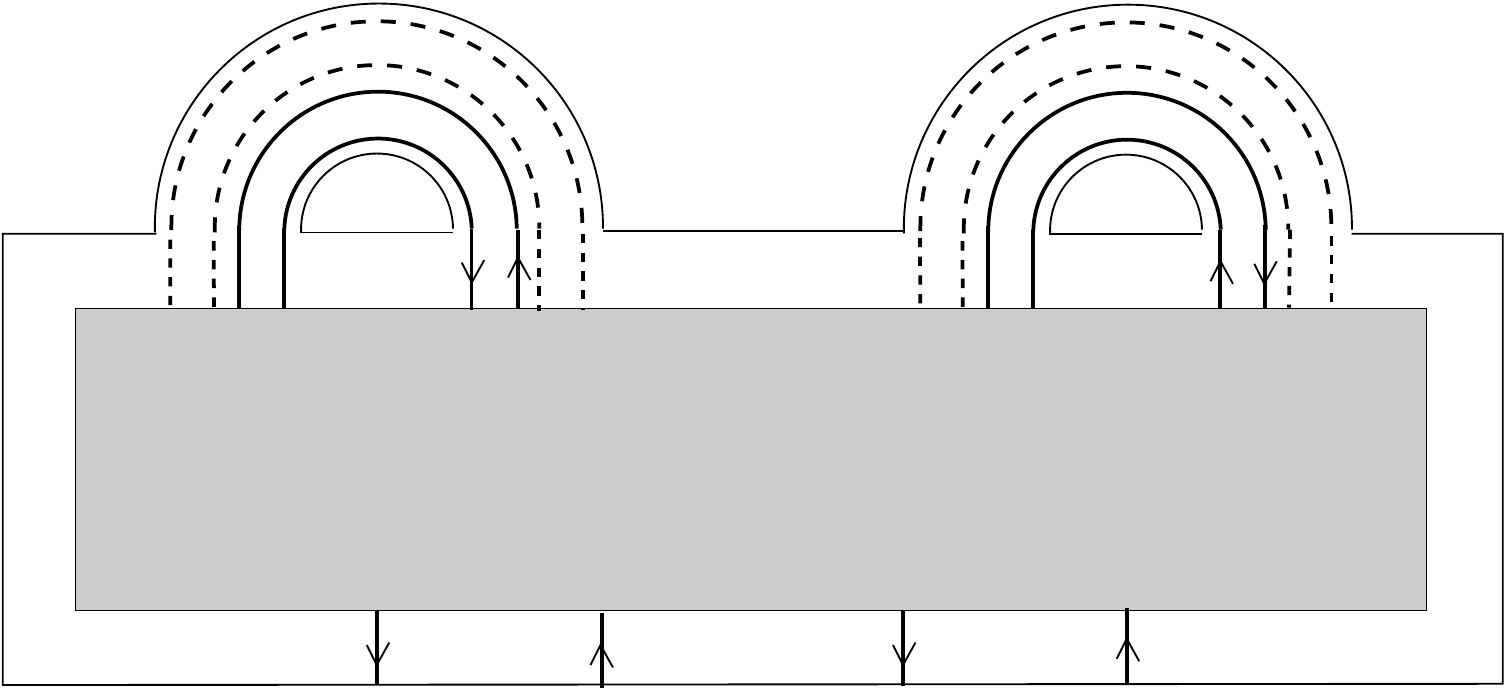}}\!\! \right)
:= \chi^{-1}\left(\quad \ \ \centre{\labellist
\scriptsize\hair 2pt
 \pinlabel {$1$}  at 201 16
 \pinlabel {$n$}  at 455 16
 \pinlabel {$\cdots$}  at 329 16
 \pinlabel {$\cdots$}  at 330 274
 \pinlabel {\tiny $\exp(1^+$} [r] at 16 220
 \pinlabel {\tiny $)$} [l] at 105 221
 \pinlabel {\tiny $\exp(m^+$} [r] at 376 221
 \pinlabel {\tiny $)$} [l] at 465 221
\endlabellist
\centering
\includegraphics[scale=0.25]{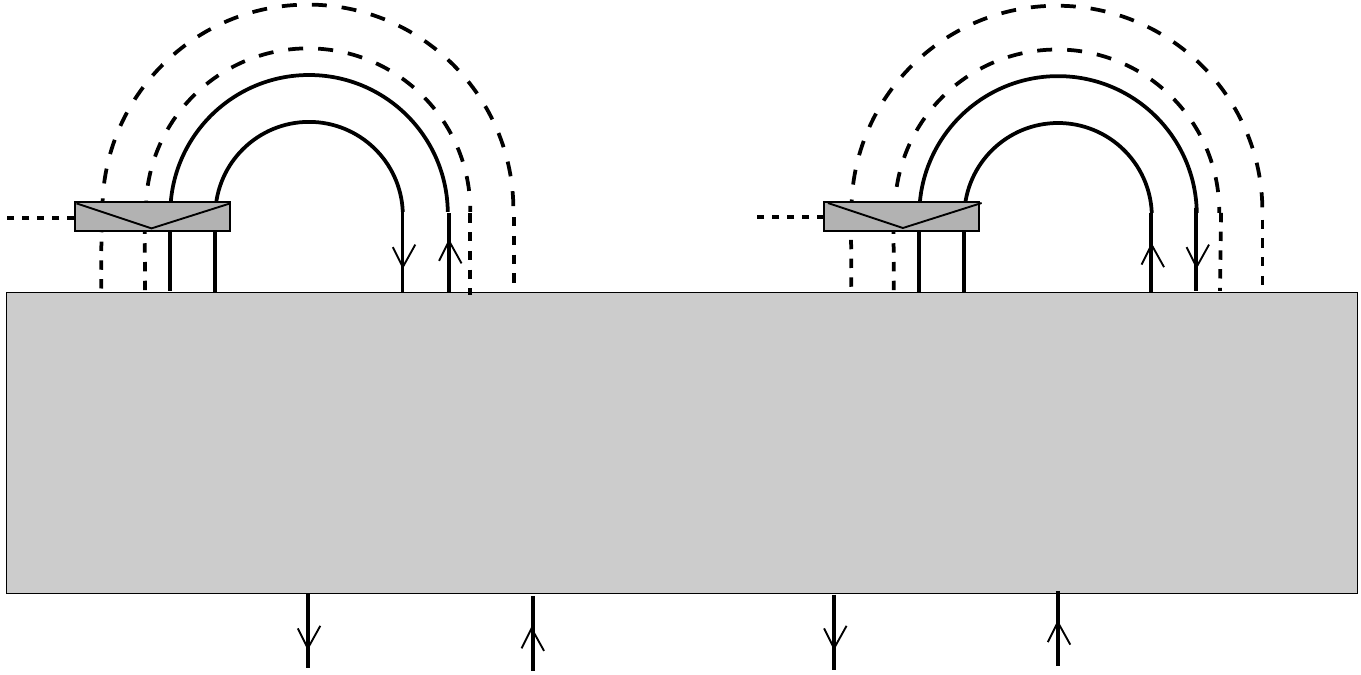}}\!\!\!\!\right),
$$ where an $F(x_1,\dots,x_m)$-colored Jacobi diagram on $X_n$, is
presented by a projection diagram in the square with handles.  By the
IHX and STU relations, $\kappa$ is well-defined. 
{Note that $\kappa$ is an {analog} of the ``hair map'' considered by Garoufalidis, Kricker and Rozansky in \cite{GK,GR}.}

\begin{proposition}
The maps $\kappa: \AB(m,n) \to \sA(m,n)$ {for $m,n\ge0$} define a
 monoidal functor $\kappa: \AB \to \sA$, which induces a monoidal
 functor $\kappa:\hA\to\sA$ by continuity.
\end{proposition}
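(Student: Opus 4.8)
The plan is to verify directly that $\kappa$ respects identities, tensor products and---most importantly---composition, and then to extend it to the degree-completion by a filtration argument. Well-definedness of each map $\kappa\colon\AB(m,n)\to\sA(m,n)$ has already been recorded (it follows from the IHX and STU relations), so it remains to check the functoriality axioms together with compatibility with the monoidal structures. Preservation of tensor products is immediate, since both the tensor product in $\AB$ (juxtaposition followed by relabelling) and that in $\sA$ (disjoint union with renumbering of colours) are realised by the same geometric juxtaposition, and the substitution $x_i\mapsto\exp(i^+)$ together with $\chi^{-1}$ is applied componentwise. For the identity I would simply compute $\kappa(\id_m)$: the $i$-th strand of $\id_m$ carries a single bead $x_i$, and applying the Magnus substitution $x_i\mapsto\exp(i^+)$ and the PBW isomorphism $\chi^{-1}$ turns it into $\exp_\sqcup$ of a single chord joining an $i^+$-vertex to an $i^-$-vertex; summing over $i$ recovers exactly the morphism $\id_m$ of $\sA$.

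The crux is the identity $\kappa(D'\circ D)=\kappa(D')\circ\kappa(D)$ for $D\colon m\to n$ and $D'\colon n\to p$. Here I would reduce to restricted diagrams and use square presentations, as in Lemma~\ref{ex:restricted}: composition in $\AB$ is computed by $\kappa$-cabling the strands of $D$ according to the $x_j$-coloured beads of $D'$ and then gluing. Under $\kappa$, the univalent vertices lying on the $j$-th strand of $D$ become the $j^-$-labelled legs of $\kappa(D)$ via $\chi^{-1}$, whereas each $x_j$-bead of $D'$ produces, via the Magnus substitution, a comb of $j^+$-labelled hairs in $\kappa(D')$. I would then show that the cabling-and-gluing defining $D'\circ D$ translates precisely into the symmetric gluing of the $j^-$-legs of $\kappa(D)$ with the $j^+$-legs of $\kappa(D')$ that defines $\kappa(D')\circ\kappa(D)$ in $\sA$.

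The main obstacle lies exactly in this translation, and I expect it to hinge on two standard compatibilities. First, the doubling, deleting and orientation-reversing operations used in $\kappa$-cabling must be matched with the coproduct behaviour of the exponential comb: group-likeness of $\exp(X_j)$, i.e.\ the fact that the Magnus expansion carries the group multiplication along a strand into an ordered product of $j^+$-hairs and the comultiplication into $\exp\otimes\exp$, is what converts ``split one strand into several parallel copies'' into ``distribute the $j^+$-hairs among the copies''. Second, composition in $\sA$ is a \emph{symmetric} sum over all gluings of $j^+$- with $j^-$-legs, while the cabling in $\AB$ respects the linear order of the beads along a strand; reconciling the two is precisely the role of the PBW isomorphism $\chi$, which intertwines the strand-ordered product on $\A(\Xn)\cong\AB(0,n)$ with the symmetrised product on $\sA(0,n)$. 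I would therefore isolate, as the technical heart of the argument, the identity expressing $\chi^{-1}$ of a cabled diagram in terms of gluings, and verify it by the usual sliding of legs together with the AS, IHX and STU relations.

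Finally, to pass to the degree-completion I would observe that $\kappa$ is filtration-preserving for the degree filtration: the lowest-degree term of $\kappa(D)$ is obtained by retaining the dashed part of $D$ (with its $j^-$-labels) and taking the constant term of every exponential comb, which has the same degree as $D$ and is special since every connected component of the dashed part meets $\Xn$; all remaining terms carry additional $j^+$-hairs and hence strictly higher degree. Consequently $\kappa$ does not decrease degree, the induced map on each degree-completed morphism space converges, and $\kappa$ extends uniquely by continuity to a monoidal functor $\kappa\colon\hA\to\sA$, compatibility with composition and tensor product being inherited from the uncompleted case.
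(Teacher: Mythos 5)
Your proposal is correct and follows essentially the same route as the paper: reduce to restricted diagrams with square presentations, invoke Lemma \ref{ex:restricted}, and observe that gluing the $\chi^{-1}$-symmetrized $j^-$-legs against the exponential combs of $j^+$-legs implements exactly the cabling $C_\varpi$ of the square presentation --- which is precisely the content of the paper's displayed chain of equalities. Your filtration argument for extending $\kappa$ to $\hA$ is in fact spelled out more explicitly than in the paper, which leaves the continuity step implicit.
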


\begin{proof}
Consider two restricted Jacobi diagrams $D$ and $D'$
with square presentations $S$ and $S'$, respectively:
$$
D' = \centre{\labellist
\tiny\hair 2pt
 \pinlabel {$1$} at 236 30
 \pinlabel {$p$}  at 491 29
  \pinlabel {$S'$}  at 360 130
 \pinlabel {$1$} [bl] at 261 300
 \pinlabel {$n$} [bl] at 621 300
 \pinlabel {$\cdots$}  at 364 267
  \pinlabel {$\cdots$}  at 364 15
\endlabellist
\centering
\includegraphics[scale=0.13]{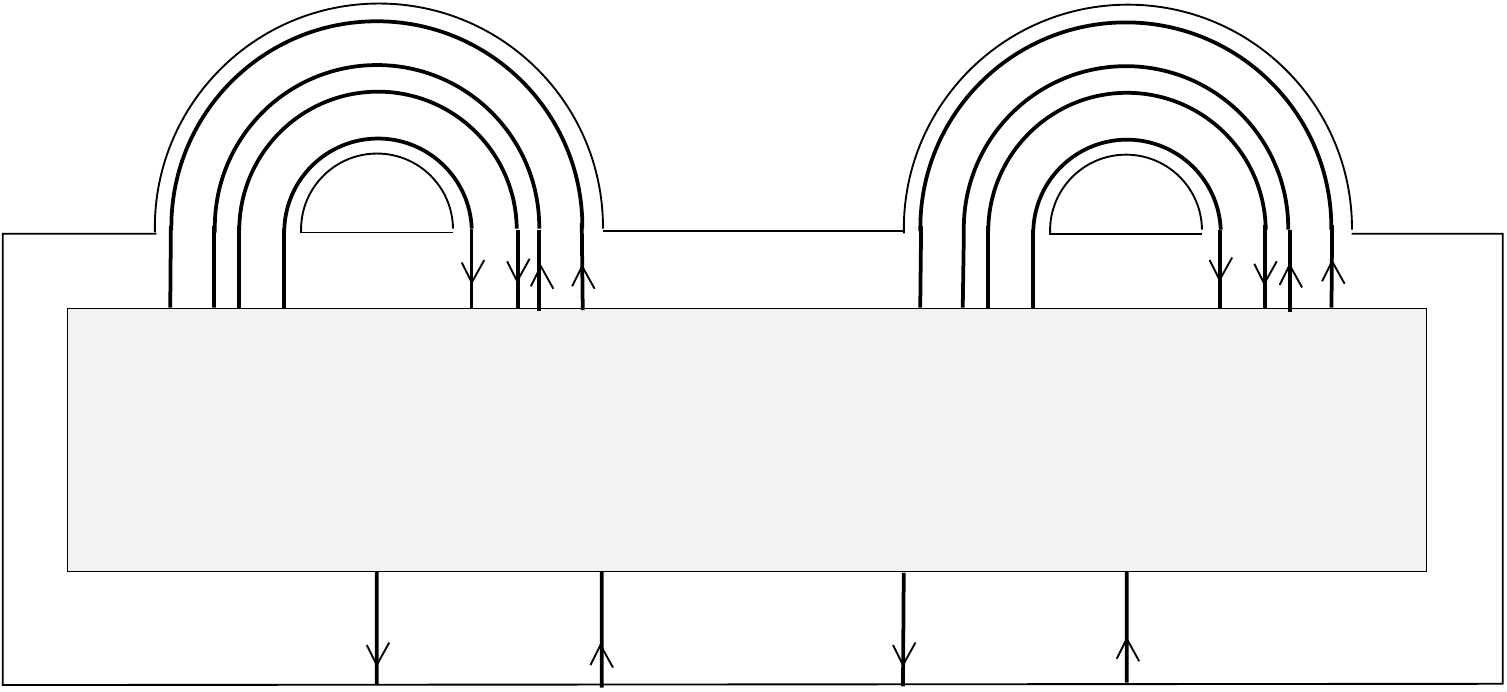}} \in\AB(n,p),
\ D = \centre{\labellist
\tiny\hair 2pt
 \pinlabel {$1$} at 236 30
 \pinlabel {$n$}  at 491 29
 \pinlabel {$1$} [bl] at 261 300
   \pinlabel {$S$}  at 360 130
 \pinlabel {$m$} [bl] at 621 300
 \pinlabel {$\cdots$}  at 364 267
  \pinlabel {$\cdots$}  at 364 15
\endlabellist
\centering
\includegraphics[scale=0.13]{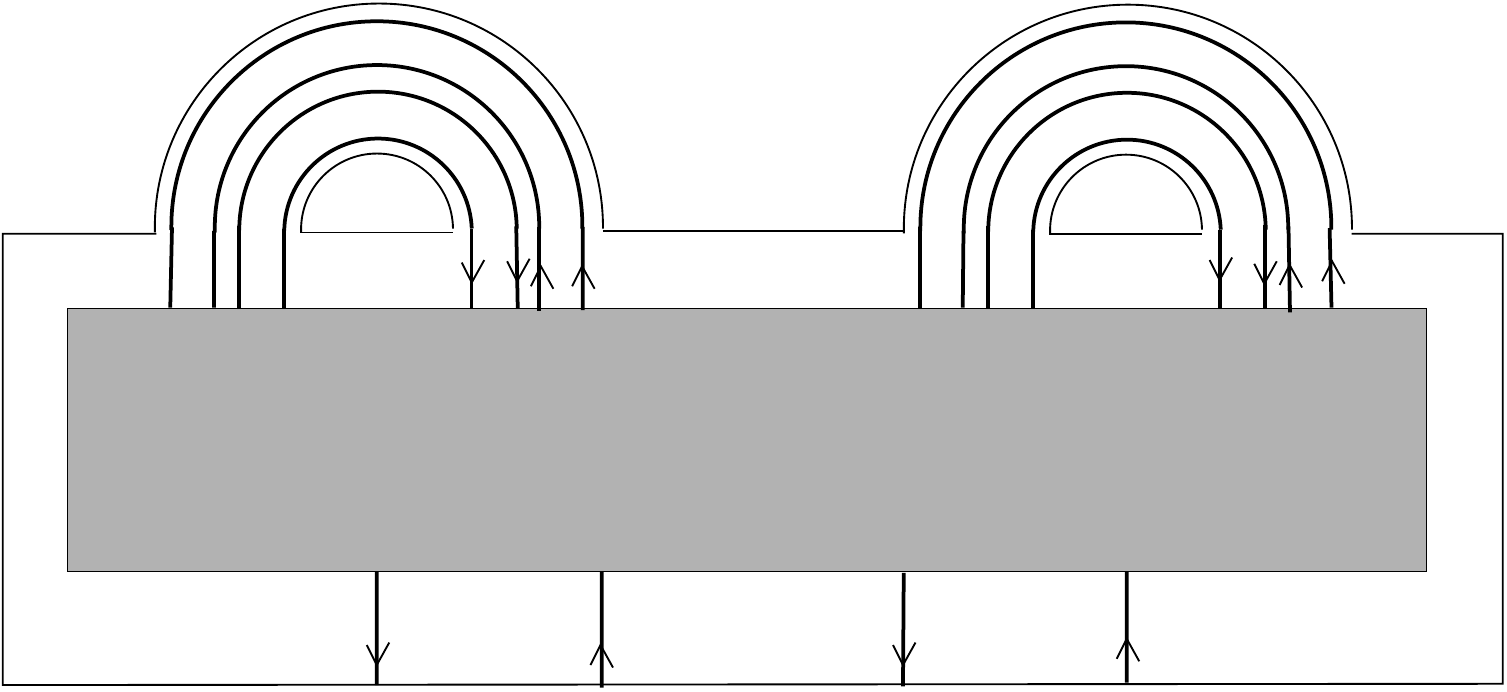}} \in\AB(m,n).
$$
In what follows, we express exponentials with square
brackets and, given two Jacobi diagrams $E$ and $E'$ labeled by the
finite sets $\{1^-,\dots,n^-\}$ and $\{1^+,\dots,n^+\}$, respectively,
let $\langle E', E \rangle$ denote the sum of all possible ways
of gluing some $i^-$-vertices of $E$ with some $i^+$-vertices of $E'$
for all $i\in\{1,\dots,n\}$.  Then $\kappa(D')\circ \kappa(D)$ is equal~to
\begin{eqnarray*}
&& \kappa \Big( \centre{\labellist
\tiny\hair 2pt
 \pinlabel {$1$} at 236 30
 \pinlabel {$p$}  at 491 29
 \pinlabel {$1$} [bl] at 261 300
 \pinlabel {$n$} [bl] at 621 300
 \pinlabel {$\cdots$}  at 364 267
  \pinlabel {$\cdots$}  at 364 15
   \pinlabel {$S'$}  at 360 130
\endlabellist
\centering
\includegraphics[scale=0.14]{diag2}}\Big)
\circ \kappa \Big(\centre{\labellist
\tiny\hair 2pt
 \pinlabel {$1$} at 236 30
 \pinlabel {$n$}  at 491 29
 \pinlabel {$1$} [bl] at 261 300
 \pinlabel {$m$} [bl] at 621 300
 \pinlabel {$\cdots$}  at 364 267
  \pinlabel {$\cdots$}  at 364 15
   \pinlabel {$S$}  at 360 130
\endlabellist
\centering
\includegraphics[scale=0.14]{diag1}}  \Big)\\
&=&
\Big\langle\ \chi^{-1}\Big( \quad \centre{\labellist
\tiny \hair 2pt
 \pinlabel {$1$}  at 201 16
 \pinlabel {$p$}  at 455 16
 \pinlabel {$\cdots$}  at 329 16
 \pinlabel {$\cdots$}  at 330 274
 \pinlabel {$[1^+$} [r] at 20 220
 \pinlabel {$]$} [l] at 105 221
 \pinlabel {$[n^+$} [r] at 380 221
 \pinlabel { $]$} [l] at 465 221
  \pinlabel {$S'$}  at 340 130
\endlabellist
\centering
\includegraphics[scale=0.15]{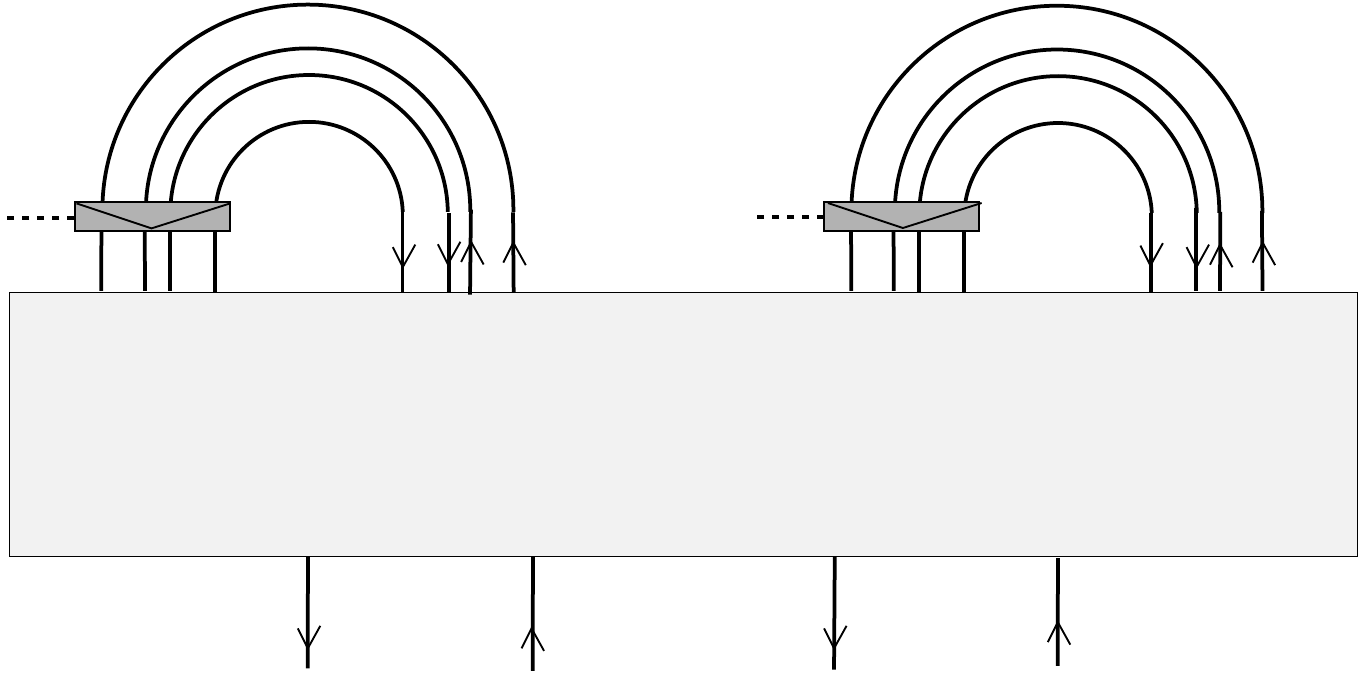}}\Big) \ , \ \chi^{-1}\Big( \quad \centre{\labellist
\tiny \hair 2pt
 \pinlabel {$1$}  at 201 16
 \pinlabel {$n$}  at 455 16
 \pinlabel {$\cdots$}  at 329 16
 \pinlabel {$\cdots$}  at 330 274
 \pinlabel {$[1^+$} [r] at 20 220
 \pinlabel {$]$} [l] at 105 221
 \pinlabel {$[m^+$} [r] at 380 221
 \pinlabel { $]$} [l] at 465 221
  \pinlabel {$S$}  at 340 130
\endlabellist
\centering
\includegraphics[scale=0.15]{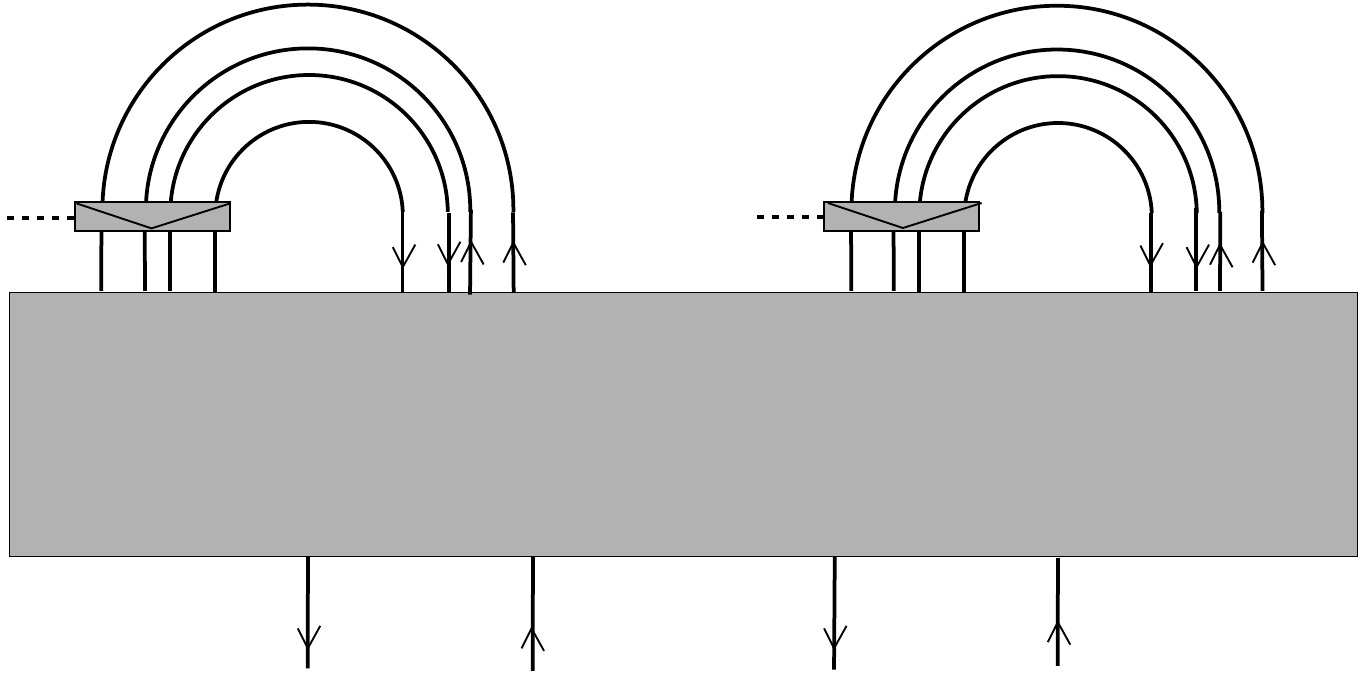}}\Big) \ \Big\rangle\\
&=&
\chi^{-1}\Bigg(
\Big\langle\  \quad \centre{\labellist
\tiny \hair 2pt
 \pinlabel {$1$}  at 201 16
 \pinlabel {$p$}  at 455 16
 \pinlabel {$\cdots$}  at 329 16
 \pinlabel {$\cdots$}  at 330 274
 \pinlabel {$[1^+$} [r] at 20 220
 \pinlabel {$]$} [l] at 105 221
 \pinlabel {$[n^+$} [r] at 380 221
 \pinlabel { $]$} [l] at 465 221
  \pinlabel {$S'$}  at 340 130
\endlabellist
\centering
\includegraphics[scale=0.15]{diag3}} \ , \ \chi^{-1}\Big( \quad \centre{\labellist
\tiny \hair 2pt
 \pinlabel {$1$}  at 201 16
 \pinlabel {$n$}  at 455 16
 \pinlabel {$\cdots$}  at 329 16
 \pinlabel {$\cdots$}  at 330 274
 \pinlabel {$[1^+$} [r] at 20 220
 \pinlabel {$]$} [l] at 105 221
 \pinlabel {$[m^+$} [r] at 380 221
 \pinlabel { $]$} [l] at 465 221
  \pinlabel {$S$}  at 340 130
\endlabellist
\centering
\includegraphics[scale=0.15]{diag4}}\Big) \ \Big\rangle \Bigg)\\
&=& \chi^{-1} \Bigg( \centre{\labellist
\tiny\hair 2pt
 \pinlabel {$\cdots$}  at 327 200
 \pinlabel {$S'$}  at 325 120
 \pinlabel {$\cdots$}  at 325 27
 \pinlabel {$1$}  at 197 27
 \pinlabel {$p$}  at 448 25
 \pinlabel {$1$}  at 143 202
 \pinlabel {$n$}  at 499 203
\endlabellist
\centering
\includegraphics[scale=0.15]{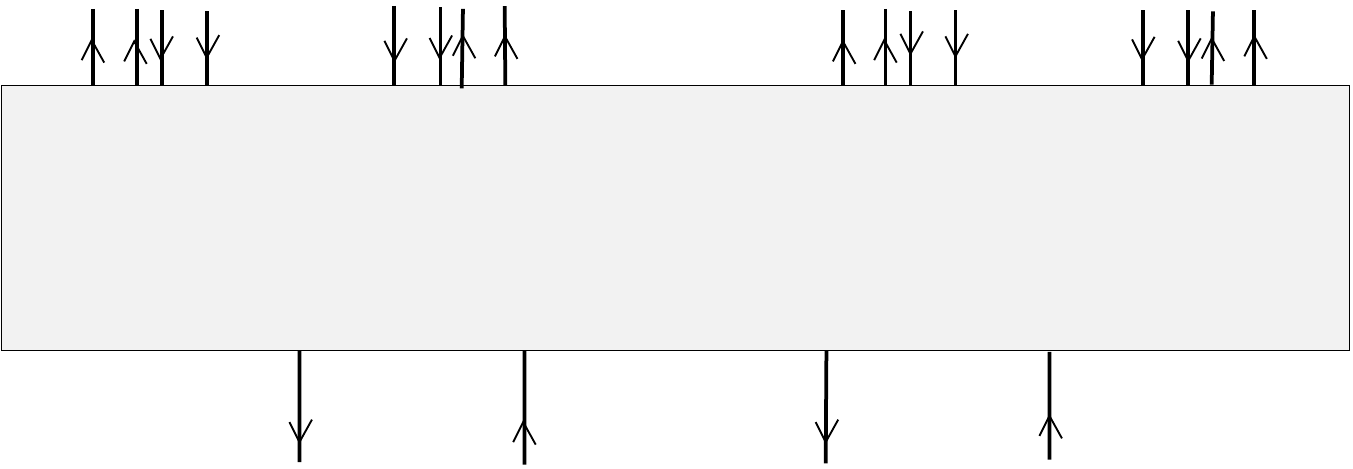}} \circ
C_\varpi\Big(\quad \centre{\labellist
\tiny \hair 2pt
 \pinlabel {$1$}  at 201 16
 \pinlabel {$n$}  at 455 16
 \pinlabel {$\cdots$}  at 329 16
 \pinlabel {$\cdots$}  at 330 274
 \pinlabel {$[1^+$} [r] at 20 220
 \pinlabel {$]$} [l] at 105 221
 \pinlabel {$[m^+$} [r] at 380 221
 \pinlabel { $]$} [l] at 465 221
  \pinlabel {$S$}  at 340 130
\endlabellist
\centering
\includegraphics[scale=0.15]{diag4}} \Big) \Bigg) \\
&=& \chi^{-1} \Big( \centre{\labellist
\tiny\hair 2pt
 \pinlabel {$\cdots$}  at 327 200
 \pinlabel {$S'$}  at 325 120
 \pinlabel {$\cdots$}  at 325 27
 \pinlabel {$1$}  at 197 27
 \pinlabel {$p$}  at 448 25
 \pinlabel {$1$}  at 143 202
 \pinlabel {$n$}  at 499 203
\endlabellist
\centering
\includegraphics[scale=0.15]{diag5}} \circ \quad
\centre{\labellist
\tiny \hair 2pt
 \pinlabel {$1$}  at 180 25
 \pinlabel {$n$}  at 475 25
 \pinlabel {$\cdots$}  at 329 16
 \pinlabel {$\cdots$}  at 330 274
 \pinlabel {$[1^+$} [r] at 20 220
 \pinlabel {$]$} [l] at 105 221
 \pinlabel {$[m^+$} [r] at 380 221
 \pinlabel { $]$} [l] at 465 221
  \pinlabel {$C_\varpi(S)$}  at 340 130
\endlabellist
\centering
\includegraphics[scale=0.15]{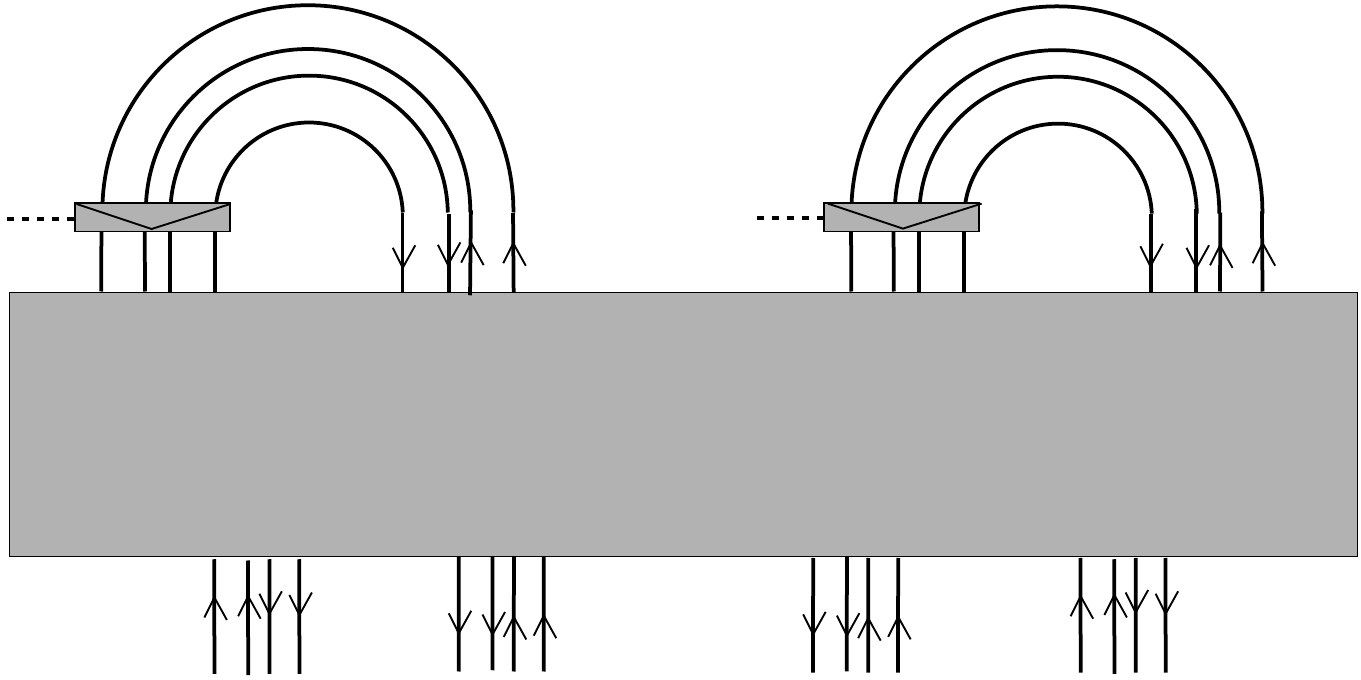}} \Big) \\
&=& \chi^{-1} \Big(
\quad \centre{\labellist
\tiny \hair 2pt
 \pinlabel {$1$}  at 201 16
 \pinlabel {$p$}  at 455 16
 \pinlabel {$\cdots$}  at 329 16
 \pinlabel {$\cdots$}  at 330 274
 \pinlabel {$[1^+$} [r] at 20 220
 \pinlabel {$]$} [l] at 105 221
 \pinlabel {$[m^+$} [r] at 380 221
 \pinlabel { $]$} [l] at 465 221
  \pinlabel {$S' \circ C_\varpi(S)$}  at 340 130
\endlabellist
\centering
\includegraphics[scale=0.15]{diag4}}  \Big)  \ = \ \kappa \Big(\centre{\labellist
\tiny\hair 2pt
 \pinlabel {$1$} at 236 30
 \pinlabel {$p$}  at 491 29
 \pinlabel {$1$} [bl] at 261 300
 \pinlabel {$m$} [bl] at 621 300
 \pinlabel {$\cdots$}  at 364 267
  \pinlabel {$\cdots$}  at 364 15
   \pinlabel {$S' \circ C_\varpi(S)$}  at 360 130
\endlabellist
\centering
\includegraphics[scale=0.13]{diag1}}  \Big),
\end{eqnarray*}
where the last four $\circ$ denote compositions in  $\AT$,
and $\varpi: \pi_0(S) \to \Mon(\pm)$ is an appropriate map.  We
deduce from Example~\ref{ex:restricted} that $\kappa(D')\circ
\kappa(D) = \kappa(D' \circ D)$.
We can easily check $\kappa(\id_m)=\id_m$ for $m\ge0$.
Thus we obtain a functor~$\kappa :\AB\to\sA$, which is obviously
monoidal.
\end{proof}

\begin{theorem} \label{th:Kontsevich_to_LMO}
The following square of functors commutes:
\begin{gather}
  \label{e38}
\xymatrix{
\Bq \ar[d]_-{\cong } \ar[r]^-Z & \hA \ar[d]^-\kappa \\
\sLCob_q \ar[r]_-{\widetilde{Z}} & \sA
}
\end{gather}
\end{theorem}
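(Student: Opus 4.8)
The plan is to prove that the two composite functors $\kappa\circ Z$ and $\widetilde{Z}\circ E$ coincide, where $E:\Bq\xrightarrow{\cong}\sLCob_q$ is the canonical isomorphism sending a bottom $q$-tangle $T$ to its exterior $E_T$. Both composites are tensor-preserving functors $\Bq\to\sA$ (using that $Z$ is a functor by Theorem \ref{th:extended_Kontsevich}, that $\kappa$ is a monoidal functor by the preceding proposition, and that $\widetilde{Z}$ and $E$ are functors), and both send an object $w$ to $|w|$. Consequently the set $\mathcal{G}$ of morphisms $f$ of $\Bq$ satisfying $\kappa Z(f)=\widetilde{Z}E(f)$ is closed under composition and under tensor product and contains all identities. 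It therefore suffices to show that $\mathcal{G}$ contains a generating set of $\Bq$ as a monoidal category, namely the morphisms $\psi^{\pm1},\mu,\Delta,S^{\pm1},\eta,\epsilon,r_\pm$ of \eqref{e64} together with the associativity isomorphisms $\alpha_{u,v,w}$ (see Section \ref{sec:generators_B}).

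First I would dispose of all morphisms $T:\varnothing\to w$ at once, which in particular covers the generators $\eta$ and $r_\pm$. The key observation is that the restriction of $\kappa$ to $\hA(0,n)=\A(X_n)$ is exactly $\chi^{-1}:\A(X_n)\to\sA(0,n)$, since for $m=0$ there are no handle colours to exponentiate in the definition of $\kappa$. On the other hand, for $T\in\Bq(\varnothing,w)\subset\T_q(\varnothing,w(+-))$ we have $Z(T)=Z^\T(T)$ by Theorem \ref{th:extended_Kontsevich}(ii), while property (ii) of the LMO functor (Section \ref{sec:review-lmo-functor}) gives $\widetilde{Z}(E_T)=\chi^{-1}Z^\T(T)$. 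Combining these yields $\kappa Z(T)=\chi^{-1}Z^\T(T)=\widetilde{Z}E(T)$, so every morphism out of $\varnothing$ lies in $\mathcal{G}$.

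It then remains to treat the generators $\mu,\Delta,S^{\pm1},\epsilon,\psi^{\pm1}$ and $\alpha_{u,v,w}$, which have non-empty source. For each of these I would compute $Z$ explicitly from Proposition \ref{prop:Z_generators} and then apply $\kappa$; since $\kappa$ amounts to the PBW isomorphism $\chi^{-1}$ together with the Magnus expansion $x_i\mapsto\exp(i^+)$ along the handle strands, it transforms the $\hA$-level formulas, built from $\varphi$, $R$, $\bfr$ and $\nu$, into diagrams in $\sA$. These are to be matched with the values of $\widetilde{Z}$ on the corresponding cobordisms, that is, with the images under the LMO functor of the Hopf-algebra generators and ribbon elements of $\sLCob$ recorded in \cite{CHM}. (For $\epsilon$ and the invertible generators it is enough to combine the degree-$0$ behaviour with functoriality.)

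The hard part will be this last matching on $\mu$ and $\Delta$, which carry the associator $\varphi$ and the $R$-matrix and are the only generators for which $\kappa$ genuinely mixes handle colours with the dashed graph. Here one must verify that the hair map converts \eqref{eq:Z(mu)} and \eqref{eq:Z(Delta)} precisely into the LMO expressions in $\sA$; the main technical inputs are the compatibility of $\chi$ with the gluing composition of $\sA$ (so that $\kappa$ can be computed componentwise from a square presentation, exactly as in the proof that $\kappa$ is a functor) and the explicit structure of $\varphi$, $R$ and $\bfr$. Once $\mu,\Delta$ and the remaining generators are confirmed to lie in $\mathcal{G}$, closure of $\mathcal{G}$ under $\circ$ and $\otimes$ forces $\mathcal{G}=\Bq$, and hence the square \eqref{e38} commutes.
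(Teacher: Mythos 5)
Your framework is sound as far as it goes: $\kappa\circ Z$ and $\tiZ\circ E$ are both tensor-preserving functors agreeing on objects, so equality on a generating set of $\Bq$ does imply equality everywhere, and your treatment of all morphisms $\vn\to w$ (hence of $\eta$ and $r_\pm$) is correct — it isolates exactly the two facts that drive the paper's argument, namely that $\kappa$ restricted to $\hA(0,n)=\A(X_n)$ is $\chi^{-1}$, and property (ii) of the LMO functor. (Note in passing that this route makes the theorem depend on the generation statement for $\B$, which in this paper is only quoted from \cite{Habiro} and deferred to \cite{Habiro2}; the paper's own proof needs no generating set at all.)

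The genuine gap is the step you yourself flag as ``the hard part''. For $\mu$, $\Delta$, $S^{\pm1}$, $\psi^{\pm1}$ and $\alpha_{u,v,w}$ you propose to match $\kappa$ applied to the formulas of Proposition \ref{prop:Z_generators} against LMO values ``recorded in \cite{CHM}''. No such closed-form values are available there: what \cite{CHM} records for generators are low-degree truncations, and formulas of the type \eqref{eq:Z(mu)}, \eqref{eq:Z(Delta)} are new to the present paper. What \cite{CHM} does provide in closed form is \cite[Lemma 5.5]{CHM}, which computes $\tiZ$ on \emph{any} special Lagrangian cobordism $E_T$ as $\chi^{-1}$ of the Kontsevich integral of a cube presentation of $T$ (with cabling anomalies), the handle strands being replaced by $\exp(i^+)$-labelled hair. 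You would be forced to invoke this lemma to produce the LMO side for each generator — but once you have it, the reduction to generators is superfluous: by Definition \ref{r25}, for an arbitrary $T:v\to w$ with cube presentation $U$, the morphism $Z(T)$ has square presentation $Z(U)\circ(a_{v_1}\otimes\id_{v_1^*}\otimes\cdots\otimes a_{v_m}\otimes\id_{v_m^*})$; applying $\kappa$ yields precisely the expression appearing in \cite[Lemma 5.5]{CHM}, and the theorem follows in one stroke for all morphisms simultaneously. That is the paper's proof. Finally, your parenthetical shortcut is false for the invertible generators: $Z(\psi^{\pm1})$, $Z(S^{\pm1})$ and $Z(\alpha_{u,v,w}^{\pm1})$ involve $R^{\pm1}$, $\bfr^{\pm1/2}$ and $\varphi^{\pm1}$ and are nonzero in every degree, so degree-$0$ information plus functoriality cannot determine them; functoriality only reduces $\psi^{-1}$, $S^{-1}$, $\alpha^{-1}$ to $\psi$, $S$, $\alpha$, which still require the full matching.
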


\begin{proof}
Let $T:v\to w$ in $\Bq$, $|v|=m$, $|w|=n$, and let
  $U:\dbl^v(v_1,\dots, v_m)\to w(+-)$ in $\Tq$ be {a cube presentation of $T$.}
Then $\kappa(Z(T))$ is equal~to
$$
\quad \ \ \labellist
\scriptsize\hair 2pt
 \pinlabel {$U$}  at 140 50
 \pinlabel {$Z(U)$}  at 503 50
 \pinlabel {$\kappa Z\Bigg($} [r] at 0 61
 \pinlabel {$\Bigg)$} [l] at 289 60
 \pinlabel { $=  \kappa\Bigg($}  at 340 59
 \pinlabel {$\cdots$}  at 143 8
 \pinlabel {$\cdots$}  at 143 140
 \pinlabel { ${\cdots}$} at 40 98
 \pinlabel {${\cdots}$}  at 193 98
 \pinlabel {$a_{v_1}$}  at 398 99
 \pinlabel {$a_{v_m}$}  at 550 100
 \pinlabel {$\cdots$}  at 507 140
 \pinlabel {$\cdots$}  at 507 10
 \pinlabel {$\Bigg)$} at 655 59
\endlabellist
\centering
\includegraphics[scale=0.53]{def_Z_B}
$$
$$
\qquad\qquad\qquad\qquad\qquad\qquad\qquad\qquad \qquad \quad
\labellist
\scriptsize\hair 2pt
 \pinlabel {$\cdots$}  at 148 9
 \pinlabel {$a_{v_1}$}  at 43 99
 \pinlabel {$a_{v_m}$}  at 195 100
 \pinlabel {$Z(U)$}  at 146 48
 \pinlabel {\tiny $\exp(1^+$} [r] at 8 118
 \pinlabel {\tiny $)$} [l] at 55 115
 \pinlabel {\tiny $\exp(m^+$} [r] at 163 118
 \pinlabel {\tiny $)$} [l] at 205 116
 \pinlabel {$\cdots$}  at 145 135
  \pinlabel {$\chi^{-1}\Bigg($} [r] at 10 53
    \pinlabel \small {$=$} [r] at -30 53
 \pinlabel {$\Bigg)$} [l] at 279 50
\endlabellist
\centering
\includegraphics[scale=0.55]{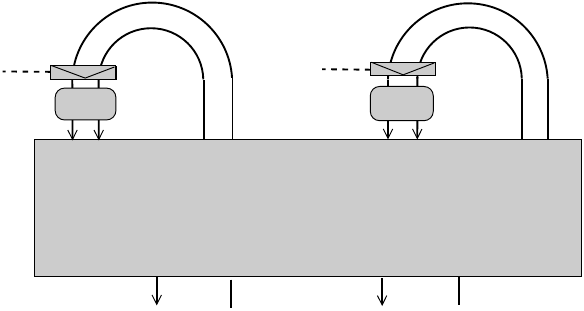}
$$
and the result directly follows from \cite[Lemma 5.5]{CHM}.
\end{proof}

Theorem \ref{th:Kontsevich_to_LMO} shows that the extended Kontsevich
integral $Z$ dominates the LMO functor~$\widetilde{Z}$.  However, the
converse might not hold since, as we will see in the next subsection,
the functor $\kappa$ is not faithful.
Some {other} remarks about the functor $\kappa$ follow.

\begin{remark}  
  \label{r38}
  Theorem \ref{r32}  {in  {Section \ref{sec:intro}}
  } is stated in a way slightly different from Theorem  \ref{th:Kontsevich_to_LMO}.  
  {In fact, the latter differs from the former {simply because} 
    we have restricted the source of $\tiZ$ to $\sLCob_q \subset \LCob_q $
  and its target to $\sA \subset \tsA$.}
\end{remark}

\begin{remark}
  \label{r40}
  Several interesting structures in $\AB$ {(and $\hA$)} are mapped by $\kappa$ into
  the categories $\sA$ and $\tsA$.  {For instance}, the symmetry $P_{m,n}:m+n\to n+m$
  in \eqref{e6} is mapped to  {a symmetry}
    \begin{gather*}
    \nc\ST{\begin{array}{c}\includegraphics[scale=0.08]{one-chord}\end{array}}
     P_{m,n}:= \exp_{\sqcup} \left(\sum_{i=1}^m \ST_{\!\!\!(n+i)^-}^{\!\! i^+} + \sum_{j=1}^n \ST_{\!\! j^-}^{\!\!\! (m+j)^+}\right)
     :m+n\lto n+m
  \end{gather*}
   for the strict monoidal {category  $\sA$ (resp$.$ $\tsA$)}.
  Similarly, the braided monoidal structure of
  $\hA_q^\varphi$ is mapped by $\kappa$ into {a braided monoidal
  structure on the non-strictification of   $\sA$ (resp$.$ $\tsA$)}.  The
  Casimir Hopf algebra in $\AB$ {(given by Proposition~\ref{r14})}
  and the ribbon quasi-Hopf algebra in   $\hA$  {(given by Theorem \ref{prop:r3})} 
  are mapped by $\kappa$ into such structures in $\sA${,} and hence in $\tsA$.
\end{remark}

\begin{remark}
  \label{r39}
  Recall from Section \ref{sec:coalgebras} that the categories $\AB$
  and $\hA$ are enriched over the category $\CC$ of cocommutative
  coalgebras.  It is not difficult to verify that the categories
  $\tsA$ and $\sA$ are enriched over $\CC$, with the coalgebra
  structure on the morphism spaces described in \cite{CHM}, where connected
  Jacobi diagrams are primitive as usual.  Then one can check that the
  {``hair functor''} $\kappa:\hA\to\sA$ is a {$\CC$}-functor, i.e.,
  the maps $\kappa:\hA(m,n)\to\sA(m,n)$ are coalgebra maps.  By applying
  the ``group-like part functor'' $\grp: \CC \to\mathbf{Set}$, we
  obtain a group-like version of $\kappa$:
  \begin{gather*}
    \kappa^\grp:\hA^\grp\lto \sA^\grp.
  \end{gather*}
\end{remark}

\subsection{Non-faithfulness of $\kappa$}   \label{sec:non-faithf-kappa}

Using Vogel's results \cite{Vogel}, Patureau-Mirand {has}  proved that
{the ``hair map'' in \cite{GK,GR}} is not injective \cite[Theorem
4]{Patureau-Mirand}.  The next proposition is proved by adapting his
arguments to our situation.

\begin{theorem} \label{prop:non-injectivity}
If $m,n\geq 1$, then  $\kappa: \AB(m,n) \to \sA(m,n)$ is not injective, 
{and, {therefore}, neither is $\kappa: \hA(m,n) \to \sA(m,n)$.} 
\end{theorem}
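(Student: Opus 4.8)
The plan is to reduce to the case $m=n=1$ and then exhibit a nonzero element of $\AB(1,1)$ annihilated by $\kappa$, following Patureau-Mirand. For the reduction I use that $\kappa\colon\AB\to\sA$ is monoidal (the preceding proposition). Granting an element $\xi\in\AB(1,1)$ with $\xi\neq0$ and $\kappa(\xi)=0$, fix arbitrary $m,n\ge1$ and choose a nonzero morphism $\zeta\in\AB(m-1,n-1)$ of degree $0$; such a $\zeta$ exists since $\AB_0(m-1,n-1)\cong\K\bfF^{\op}(m-1,n-1)$ is nonzero (it contains, e.g., the class of the trivial homomorphism $\free{n-1}\to\free{m-1}$). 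Then $\xi\otimes\zeta\in\AB(m,n)$ is a juxtaposition of nonzero diagrams supported on disjoint sets of strands and handle-colours, hence nonzero (the local relations defining $\AB$ do not mix the two factors), while $\kappa(\xi\otimes\zeta)=\kappa(\xi)\otimes\kappa(\zeta)=0$. This gives the non-injectivity of $\kappa$ on $\AB(m,n)$. The assertion for $\hA$ follows at once: $\xi\otimes\zeta$ is homogeneous of finite degree, so it is a nonzero element of $\hA(m,n)$ lying in the kernel of the completed map $\kappa\colon\hA(m,n)\to\sA(m,n)$.

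It remains to produce $\xi$, and here I first analyse $\kappa$ on $\AB(1,1)=\A(X_1,\free1)$, writing $x:=x_1$ so that $\free1=\langle x\rangle\cong\Z$. By construction $\kappa$ applies the \emph{hair substitution} — replacing each winding of $X_1$ around the unique handle (each bead $x^{\pm1}$) by the exponential $\exp_\sqcup(\pm 1^+)$ of hairs labelled $1^+$, exactly the diagrammatic Magnus expansion $x\mapsto\exp(X_1)$ from the introduction — followed by the PBW isomorphism $\chi^{-1}$, which merely reorganises the legs carried by the strand $X_1$ into symmetric legs labelled $1^-$. Since $\chi$ is bijective, $\ker\kappa$ on $\AB(1,1)$ is identified with the kernel of the hair substitution, that is, with the kernel of (a variant of) the classical ``hair map'' of \cite{GK,GR}, whose source is a space of Jacobi diagrams with legs coloured by $\Z$ and whose target is a space of hair-decorated diagrams.

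With this identification in hand, the existence of $\xi$ is precisely the content of Patureau-Mirand's non-injectivity result \cite[Theorem 4]{Patureau-Mirand}, whose proof rests on Vogel's construction \cite{Vogel} of a nonzero element in the relevant module of open Jacobi diagrams that is killed by every weight system associated with a simple Lie superalgebra. Concretely, I would take $\xi$ to be the image under $\chi$ of Patureau-Mirand's kernel element: a linear combination, of fixed degree, of $\free1$-coloured Jacobi diagrams on $X_1$ assembled from Vogel's element together with a winding around the handle. The vanishing $\kappa(\xi)=0$ is then formal, being a restatement that this element lies in the kernel of the hair substitution.

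The main obstacle is the nonvanishing $\xi\neq0$ in $\AB(1,1)$: this is exactly Vogel's deep nontriviality theorem, and it must be transported through our conventions — in particular through the isomorphism $\tau\colon W(1,1)\to\AB(1,1)$ of Lemma \ref{r7} (equivalently, through the isomorphism $u^{\mathrm{ch}}$ of Theorem \ref{th:chord_Jac} together with the embedding of Lemma \ref{lem:usual_to_colored}). The only genuinely new verification beyond \cite{Patureau-Mirand} is bookkeeping: checking that replacing ``legs coloured by $\Z$ on a skeleton'' by ``beads on the strand of a handlebody'', and inserting the PBW isomorphism $\chi$, affects neither the nonvanishing of $\xi$ nor the vanishing of $\kappa(\xi)$.
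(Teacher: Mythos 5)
Your overall plan --- reduce to $(m,n)=(1,1)$ by tensoring with the degree-$0$ diagram of trivial homotopy class (this is exactly the paper's $u\otimes\eta^{\otimes(n-1)}\epsilon^{\otimes(m-1)}$), and build a kernel element in $\AB(1,1)$ from Vogel's element together with one winding around the handle --- is the paper's. But the two verifications you set aside as ``bookkeeping'' are precisely where the content lies, and the mechanism you propose for them would fail. Patureau-Mirand's Theorem~4 concerns the hair map on Kricker's space of trivalent graphs with \emph{no} solid skeleton and with beads in (a localization of) $\Q[t^{\pm 1}]$, subject to linearity and holonomy relations; that space is not $\AB(1,1)=\A(X_1,\free{1})$, and no comparison map between the two that both preserves the relevant element and detects nonvanishing is available. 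Consequently neither his kernel element nor its nontriviality can simply be ``transported''; this is why the paper does not invoke his Theorem~4 as a black box, but only his Corollary~2 --- the existence of $r\in\Lambda\setminus\{0\}$ of degree $17$ satisfying \eqref{eq:r1} and \eqref{eq:r2} --- and then re-proves both required properties of the element $u$ inside $\AB(1,1)$. (The vanishing $\kappa(u)=0$ is indeed quick once \eqref{eq:r2} is in hand: after expanding the exponential of hairs, the constant terms cancel and every term with at least one hair dies by \eqref{eq:r2}.)

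The genuine gap is the nonvanishing. The maps you cite --- $\tau$ of Lemma \ref{r7}, $u^{\mathrm{ch}}$ of Theorem \ref{th:chord_Jac}, and the embedding of Lemma \ref{lem:usual_to_colored} --- cannot establish $\xi\neq 0$: the first two are internal re-presentations of $\AB(1,1)$ carrying no information from Patureau-Mirand's setting, and $\xi$ does \emph{not} lie in the image of the embedding $\A(X_1)\hookrightarrow\A(X_1,\free{1})$, since one of its two terms carries a bead. The missing idea is to use the retraction $p\colon\A(X_1,\free{1})\to\A(X_1)$ constructed in the \emph{proof} of Lemma \ref{lem:usual_to_colored}, which kills every diagram whose coloring has nontrivial homotopy class: $p$ annihilates the beaded term of $u$ and preserves (up to sign) the beadless one, so $u\neq0$ reduces to the nonvanishing of a beadless diagram on $X_1$. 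That nonvanishing is then computed through $\chi^{-1}$ using Vogel's vanishing $G(1)=0$ and finally detected by the nonvanishing of the closed diagram in \eqref{eq:r1}. Without this projection argument, ``Vogel's nontriviality transported through our conventions'' does not yield $\xi\neq0$; with it, your outline becomes the paper's proof.
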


\begin{proof}
 Let $G(n)$ be the subspace of $\A( \{1,\dots,n\})$ spanned by
connected Jacobi diagrams with exactly $n$ univalent vertices labeled
from $1$ to $n$. There is a natural action of the symmetric group
$\mathfrak{S}_n$ on $G(n)$, and we consider the subspace $\Lambda$ of
$G(3)$ consisting of those $x\in G(3)$ such that $\sigma \cdot x =
\hbox{sgn}(\sigma) x$ for all $\sigma \in\mathfrak{S}_3$.
According to Vogel \cite{Vogel}, the space $\Lambda$ admits a structure of
commutative algebra with non-trivial zero divisors.
Based on these results, Patureau-Mirand \cite[Corollary 2]{Patureau-Mirand} proved the existence of an element $r
\in\Lambda \setminus \{0\}$ of degree $17$ such that
\begin{eqnarray}
\label{eq:r1} \centre{\labellist
\scriptsize\hair 2pt
 \pinlabel {$r$}  at 18 76
\endlabellist
\centering
\includegraphics[scale=0.25]{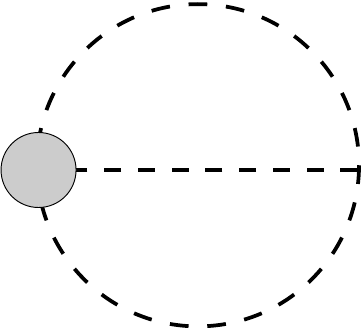}}
 & \neq  & 0 \in\A(\varnothing), \\
 \label{eq:r2} \centre{\labellist
\scriptsize\hair 2pt
 \pinlabel {$r$}  at 18 82
 \pinlabel {$1$} [t] at 18 3
 \pinlabel {$2$} [t] at 170 3
 \pinlabel {$3$} [l] at 231 47
\endlabellist
\centering
\includegraphics[scale=0.25]{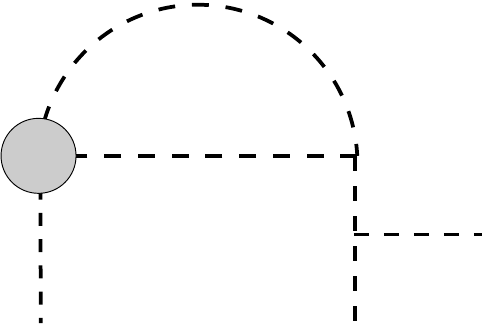}}
  & =  & 0 \in G(3).
\end{eqnarray}
Then we define
$$
u =
\centre{\labellist
\scriptsize\hair 2pt
 \pinlabel {$1$} [l] at 252 29
 \pinlabel {$x_1$} [l] at 216 133
 \pinlabel {$r$}  at 57 175
\endlabellist
\centering
\includegraphics[scale=0.25]{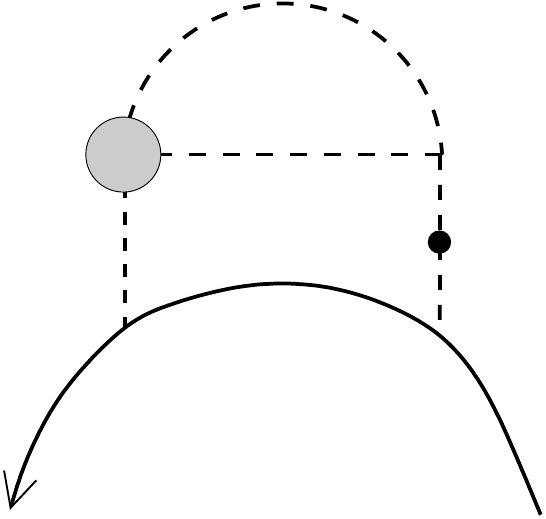}}
- \centre{\labellist
\scriptsize\hair 2pt
 \pinlabel {$1$} [l] at 252 29
 \pinlabel {$r$}  at 57 175
\endlabellist
\centering
\includegraphics[scale=0.25]{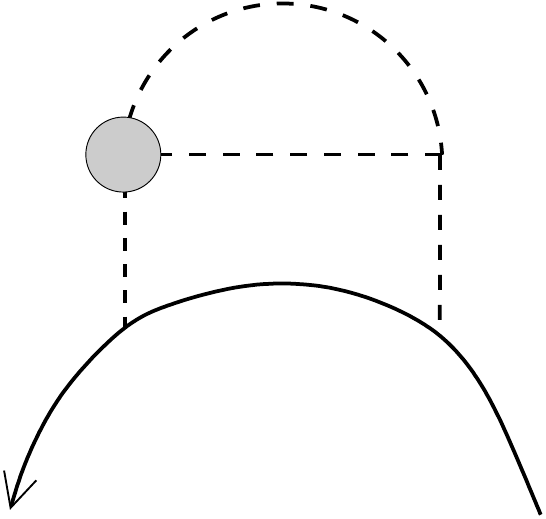}}  \in\A\big(X_1,F(x_1)\big) = \AB(1,1).
$$
By \eqref{eq:r2}, we have $\chi(\kappa(u)) =0
\in\A\big(X_1, \{1^+\}\big)$, and hence $\kappa(u)=0$.
More generally, if $m,n\geq 1$, then $\kappa: \AB(m,n) \to
\sA(m,n)$ vanishes on $u\otimes\eta^{{\ot (n-1)}}\epsilon^{{\ot (m-1)}}$.
Thus, to prove that it is not injective, it suffices to check $u\neq 0$.

Recall the projection $p:  \A\big(X_1,F(x_1)\big) \to  \A\big(X_1\big)$
introduced in the proof of Lemma~\ref{lem:usual_to_colored}. We have
\begin{eqnarray*}
\chi^{-1}\big(p(u)\big) &=& - \chi^{-1}\Bigg( \centre{\labellist
\scriptsize\hair 2pt
 \pinlabel {$1$} [l] at 252 29
 \pinlabel {$r$}  at 57 175
\endlabellist
\centering
\includegraphics[scale=0.25]{u_0}} \Bigg)
\ = \ - \centre{\labellist
\scriptsize \hair 2pt
 \pinlabel {$r$}  at 18 88
 \pinlabel {$1$} [t] at 19 3
 \pinlabel {$1$} [t] at 171 2
\endlabellist
\centering
\includegraphics[scale=0.25]{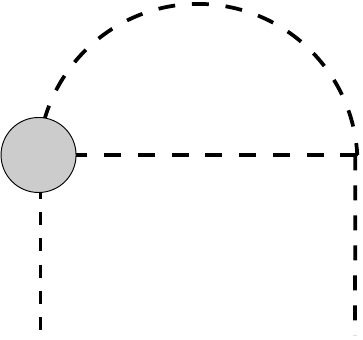}} \ \in\A(\{1\}),
\end{eqnarray*}
since $G(1)=0$ \cite[Proposition 4.3]{Vogel}.
By \eqref{eq:r1}, the right hand side is not zero, and hence  we have $u\neq 0$.
\end{proof}

\subsection{Jacobi diagrams colored by a cocommutative Hopf algebra}  \label{sec:Hopf_colored}

In order to give a Hopf-algebraic description of the kernel of $\kappa$
in the next subsection, we need to generalize some constructions of Section
\ref{sec:colored_Jd}.

Let $X$ be a compact oriented $1$-manifold,
and let $H$ be a cocommutative Hopf algebra with comultiplication $\Delta: H
\to H \otimes H$, counit $\epsilon: H \to \K$ and (involutive) antipode $S: H \to H$.

Recall from Section \ref{sec:colored_Jd} the notion of chord diagrams colored by a set.  Let
$\mathcal{D}^{\operatorname{ch}}(X,H)$ be the vector space
generated by $H$-colored chord diagrams on $X$, modulo the following
local relations:
\begin{equation} \label{eq:moves_ccd_bis}
\centre{\labellist
\scriptsize \hair 2pt
 \pinlabel {$x$} [t] at 26 219
 \pinlabel {$y$} [t] at 68 219
 \pinlabel {$xy$} [t] at 196 219
 \pinlabel {$1$} [t] at 414 220
 \pinlabel {$\leftrightarrow$}   at 126 226
  \pinlabel {$,$}   at 452 330
 \pinlabel {$,$}   at 307 226
 \pinlabel {$,$} [l] at 614 226
 \pinlabel {$\leftrightarrow$}   at 127 118
 \pinlabel {$,$}   at 311 118
 \pinlabel {$\leftrightarrow$}   at 128 7
 \pinlabel {$,$}   at 312 13
 \pinlabel {$,$} [l] at 614 117
 \pinlabel {$\leftrightarrow$}   at 487 225
 \pinlabel {$x$} [t] at 26 113
 \pinlabel {$y$} [t] at 69 113
 \pinlabel {$xy$} [t] at 192 112
 \pinlabel {$1$} [t] at 411 112
 \pinlabel {$\leftrightarrow$}   at 486 118
 \pinlabel {$x$} [t] at 49 5
 \pinlabel {$S(x)$} [t] at 192 5
  \pinlabel {$.$} [l] at 614 10
   \pinlabel {$\leftrightarrow \ k$}  at 140 334
 \pinlabel {$k  x +ly$} [t] at 51 332
 \pinlabel {$x$} [t] at 219 330
 \pinlabel {$y$} [t] at 379 331
 \pinlabel {$+\ l$}  at 303 335
  \pinlabel {$\leftrightarrow \ k$}  at 140 444
 \pinlabel {$k  x  +ly$} [t] at 51 442
 \pinlabel {$x$} [t] at 219 440
 \pinlabel {$y$} [t] at 379 441
 \pinlabel {$+\ l$}  at 303 445
  \pinlabel {$,$}   at 452 440
 \pinlabel {$x$} [b] at 379 16
 \pinlabel { ${\displaystyle \quad \leftrightarrow  \sum_{(x)}}$}  at 520 0
 \pinlabel {$x'$} [t] at 645 11
 \pinlabel {$x''$} [b] at 643 43
 \pinlabel {$,$}  at 695 9
\endlabellist
\centering
\includegraphics[scale=0.32]{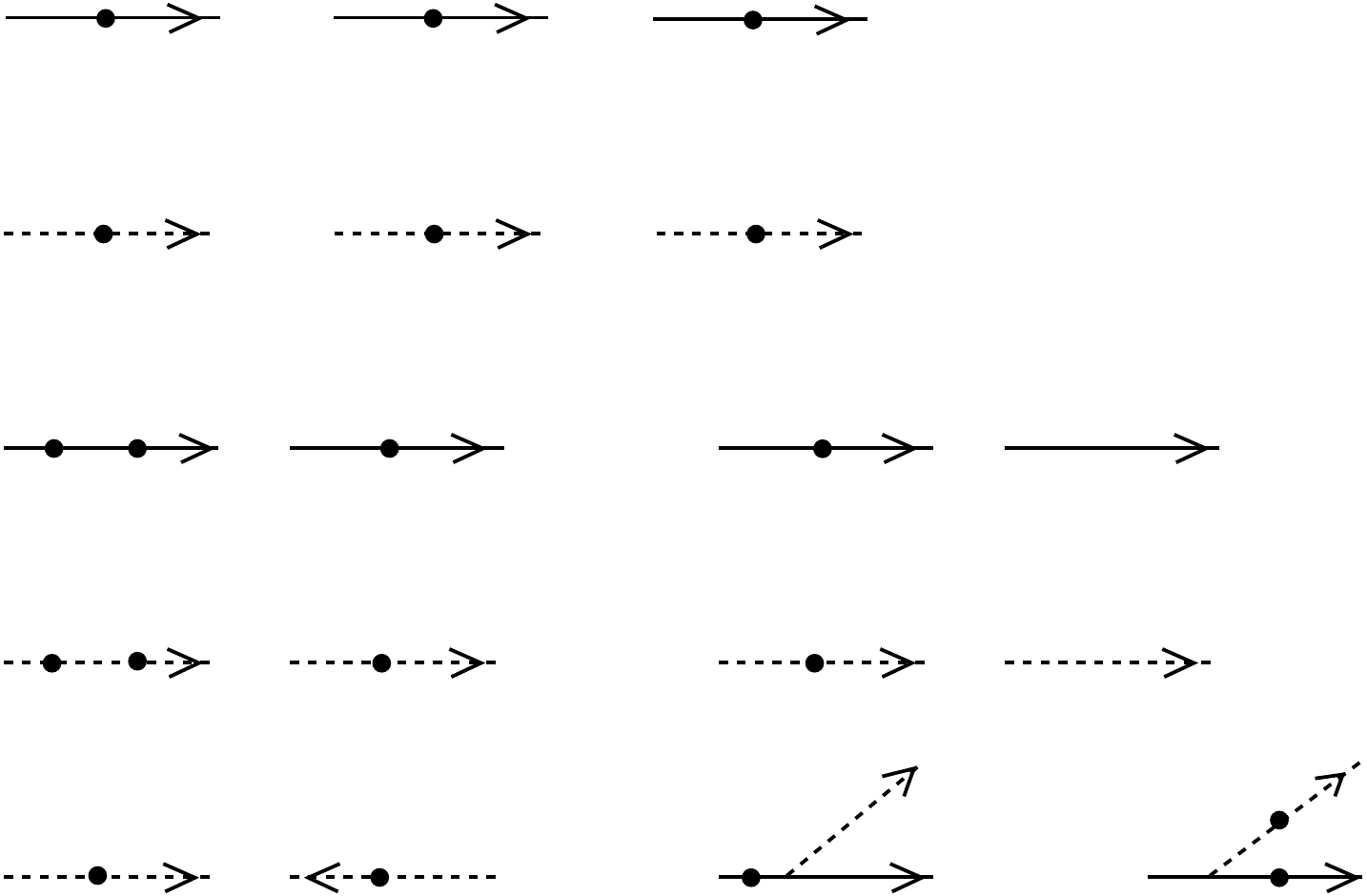}}
\end{equation}\\[0.2cm]
for all $x,y \in H$ and $k,l\in\K$, where $\Delta(x)=\sum_{(x)}
x'\otimes x''$ is written using Sweedler's notation.  Let
$\mathcal{R}^{\operatorname{ch}}(X,H)$ be the subspace of
$\mathcal{D}^{\operatorname{ch}}(X,H)$ generated by the 4T relations \eqref{eq:4T}, and
set
$$
\A^{\operatorname{ch}}(X,H) = \mathcal{D}^{\operatorname{ch}}(X,H)/\mathcal{R}^{\operatorname{ch}}(X,H).
$$ We still let $\A^{\operatorname{ch}}(X,H)$ denote the
degree-completion of this space, where the \emph{degree} of an
$H$-colored chord diagram on $X$ is the number of chords.

More generally, let  $\mathcal{D}^{\operatorname{Jac}}(X,H)$ be the vector space generated
by  $H$-colored Jacobi diagrams on $X$, modulo the local relations
\begin{equation} \label{eq:moves_cJd_bis}
\qquad \qquad
\labellist
\small \hair 2pt
 \pinlabel {${\displaystyle \leftrightarrow  \ \sum_{(x)}}$}   at 200 45
 \pinlabel {\small $x$} [b] at 49 62
 \pinlabel {\small $x'$} [b] at 349 84
 \pinlabel {\small $x''$} [t] at 345 32
 \pinlabel {   $\forall x\in H$,} [r] at -40 56
  \pinlabel {\quad and \eqref{eq:moves_ccd_bis}.} [l] at 399 58
\endlabellist
\centering
\includegraphics[scale=0.3]{coloring_Jd}
\end{equation}
Let $\mathcal{R}^{\operatorname{Jac}}(X,H)$ be the subspace of $\mathcal{D}^{\operatorname{Jac}}(X,H)$
generated by the STU relations~\eqref{eq:STU}, and set
$$
\A^{\operatorname{Jac}}(X,H) = \mathcal{D}^{\operatorname{Jac}}(X,H)/\mathcal{R}^{\operatorname{Jac}}(X,H).
$$ We still let $\A^{\operatorname{Jac}}(X,H)$ denote the
degree-completion of this space, where the \emph{degree} of an
$H$-colored Jacobi diagram on $X$ is half the total number of
vertices.

\begin{example}
Assume that $H=\K[\pi]$ is the group Hopf algebra of a group $\pi$.
Then $\A^{\operatorname{ch}}(X,H)$ and $\A^{\operatorname{Jac}}(X,H)$
are canonically isomorphic to the spaces
$\A^{\operatorname{ch}}(X,\pi)$ and $\A^{\operatorname{Jac}}(X,\pi)$,
respectively,  introduced in Section \ref{sec:colored_Jd}.
\end{example}

Let $I:= \ker(\epsilon:H \to \K)$ be the augmentation ideal of
$H$ and, for $k\geq 0$, let $F_k\mathcal{D}^{\operatorname{ch}}(X,H)$
be the subspace of $\mathcal{D}^{\operatorname{ch}}(X,H)$ spanned by
$H$-colored chord diagrams on~$X$ with (at least) $k$ beads
colored by elements of $I$.
Let $F_k\A^{\operatorname{ch}}(X,H)$ denote the image of
$F_k\mathcal{D}^{\operatorname{ch}}(X,H)$ in
$\A^{\operatorname{ch}}(X,H)$.  Thus we obtain a filtration
\begin{eqnarray*}
 \mathcal{A}^{\operatorname{ch}}(X,H) &= & F_0 \mathcal{A}^{\operatorname{ch}}(X,H)
\supset F_1 \mathcal{A}^{\operatorname{ch}}(X,H) \supset F_2 \mathcal{A}^{\operatorname{ch}}(X,H) \supset \cdots.
\end{eqnarray*}
The \emph{$I$-adic completion}
$$
\widehat{\mathcal{A}}^{\operatorname{ch}}(X,H) := \plim_k \frac{\mathcal{A}^{\operatorname{ch}}(X,H)}{ F_k \mathcal{A}^{\operatorname{ch}}(X,H)}
$$
of $\mathcal{A}^{\operatorname{ch}}(X,H)$ inherits a filtration from
$\mathcal{A}^{\operatorname{ch}}(X,H)$.  Let $\alpha:
\mathcal{A}^{\operatorname{ch}}(X,H) \to
\widehat{\mathcal{A}}^{\operatorname{ch}}(X,H)$ be the canonical map.
Applying the same definitions to Jacobi diagrams yields the space
$\widehat{\mathcal{A}}^{\operatorname{Jac}}(X,H)$.  According to the
next theorem, we can identify the filtered spaces
${\A}^{\operatorname{ch}}(X,H)$ and ${\A}^{\operatorname{Jac}}(X,H)$
(resp.\ $\widehat{\A}^{\operatorname{ch}}(X,H)$ and
$\widehat{\A}^{\operatorname{Jac}}(X,H)$) and simply denote them by
${\A}(X,H)$ (resp.\ $\widehat{\A}(X,H)$).

\begin{theorem}
  \label{r28}
The canonical map
$$
\phi: \A^{\operatorname{ch}}(X,H) \longrightarrow \A^{\operatorname{Jac}}(X,H)
$$ is an isomorphism of filtered space{s}.
 Furthermore, the AS and IHX
relations~\eqref{eq:AS_IHX} hold in $\A^{\operatorname{Jac}}(X,H)$.
\end{theorem}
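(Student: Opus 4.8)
The plan is to reduce the statement to the already-proven group-case result, Theorem \ref{th:chord_Jac}, by passing to the associated graded spaces of the $I$-adic filtration. The key observation is that the relations in \eqref{eq:moves_ccd_bis} and \eqref{eq:moves_cJd_bis} are precisely the group-case relations \eqref{eq:moves_ccd} together with the bilinearity-in-the-bead moves and the comultiplication-splitting moves. First I would set up the $I$-adic filtrations $F_k\A^{\operatorname{ch}}(X,H)$ and $F_k\A^{\operatorname{Jac}}(X,H)$ and observe that $\phi$ is filtration-preserving, since displacing a bead past a univalent vertex via \eqref{eq:moves_cJd_bis} only splits a color $x$ into $\sum_{(x)} x'\otimes x''$ and, when $x\in I$, one checks using $\Delta(x) = x\ot 1 + 1\ot x + (\text{terms in } I\ot I)$ that the number of $I$-colored beads does not decrease. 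Thus $\phi$ induces maps $\Gr_k\phi$ on the associated graded.

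The heart of the argument is to identify the associated graded of the $I$-adic filtration with a group-colored space. Concretely, I would use the fact that for a cocommutative Hopf algebra $H$ over a field of characteristic $0$, the associated graded $\Gr^I H$ with respect to the augmentation-ideal filtration is (by Milnor--Moore / the structure of the primitives) governed by the group-like and primitive elements, and in particular a bead colored by a group-like element behaves exactly like a bead colored by an element of a group $\pi$, while primitive colors correspond to beads in the ``restricted'' setting whose displacement is controlled by the same local relations as before. More precisely, the plan is to show that $\Gr_k\A^{\operatorname{ch}}(X,H)$ and $\Gr_k\A^{\operatorname{Jac}}(X,H)$ each decompose according to the number and placement of primitive beads, and on each graded piece the chord/Jacobi comparison reduces to the uncolored statement $\A^{\operatorname{ch}}(Y)\cong\A^{\operatorname{Jac}}(Y)$ of \cite[Theorem 6]{BN2} applied to an enlarged $1$-manifold $Y$ obtained from $X$ by recording the primitive beads as extra univalent vertices. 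Granting this, $\Gr\phi$ is an isomorphism in each degree, and since both spaces are complete and Hausdorff with respect to the $I$-adic filtration, a standard filtered-completion argument (isomorphism on associated graded plus completeness implies isomorphism) gives that $\phi$ itself is an isomorphism of filtered spaces.

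For the AS and IHX relations in $\A^{\operatorname{Jac}}(X,H)$, I would argue exactly as in the proof of Theorem \ref{th:chord_Jac}: using the displacement move \eqref{eq:moves_cJd_bis}, every trivalent vertex adjacent to a bead can be normalized, and then the reduction of AS and IHX to the STU relation is identical to the last two paragraphs of the proof of \cite[Theorem 6]{BN2}, since those arguments are purely local around trivalent vertices and are insensitive to the Hopf-algebra colors on distant edges.

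The main obstacle I anticipate is the bookkeeping in identifying $\Gr^I$ with a genuinely group-colored (or primitively-colored) theory, because unlike the group case $H=\K[\pi]$ where beads multiply by a fixed group law, here a single bead colored by $x\in H$ must be resolved via its coproduct into primitive and group-like contributions, and the bilinearity relations in \eqref{eq:moves_ccd_bis} mean one cannot simply enumerate colored diagrams as a basis. The cleanest route is probably to \emph{not} compute $\Gr^I$ explicitly but instead to produce an explicit inverse to $\phi$ on each filtration quotient by adapting the map $\psi$ constructed via the Claim in the proof of Theorem \ref{th:chord_Jac}: one builds $\psi_k$ resolving trivalent vertices, checks the analog of \eqref{eq:i_and_j} now with the local $4$T relation of \eqref{eq:moves_ccd_bis}, and verifies compatibility with the $I$-adic filtration. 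The delicate point in that verification is the ``exceptional case'' $v_i = v_j$, where — just as in Theorem \ref{th:chord_Jac} — one must invoke the colored $4$T identity, now in the form valid for arbitrary $H$-colored beads rather than group-colored ones.
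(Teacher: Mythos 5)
The decisive step of your main route is false: $\A^{\operatorname{ch}}(X,H)$ and $\A^{\operatorname{Jac}}(X,H)$ are \emph{not} complete, nor even Hausdorff, with respect to the $I$-adic filtration. They are degree-completions only; the paper distinguishes them from the $I$-adic completions $\widehat{\A}^{\operatorname{ch}}(X,H)$, $\widehat{\A}^{\operatorname{Jac}}(X,H)$ precisely because the canonical map $\alpha$ to the $I$-adic completion has nonzero kernel: by Theorem \ref{prop:non-injectivity} and Proposition \ref{alpha_kappa}, $\ker\alpha=\bigcap_k F_k\A(X_n,\K[\free{m}])\neq 0$ whenever $m,n\ge1$, and $\K[\free{m}]$ is a cocommutative Hopf algebra, so this happens within the scope of the theorem you are proving. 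For a filtration that is not separated, a filtration-preserving map inducing an isomorphism on the associated graded can perfectly well kill part of $\bigcap_k F_k$, so ``isomorphism on $\Gr$ plus completeness'' is simply unavailable here. (The Milnor--Moore identification of the associated graded with a group-colored theory is also left unproved, but the argument already breaks at the completion step.)

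Your fallback is the right idea---and is essentially what the paper does---but it must be run on the spaces themselves, not ``on each filtration quotient'': constructing inverses on the quotients $\A/F_k$ and passing to the limit would only invert the induced map between the $I$-adic completions, which, given $\ker\alpha\neq0$, cannot recover injectivity of $\phi$. Instead, one builds the maps $\psi_k$ on $H$-colored Jacobi diagrams with exactly $k$ trivalent vertices exactly as in the Claim in the proof of Theorem \ref{th:chord_Jac} (induction on $k$, with the exceptional case $v_i=v_j$ handled by the doubling/orientation-reversal naturality and the colored 4T relation, all of which go through verbatim for beads colored by a general cocommutative $H$); this yields a left inverse to $\phi$ defined on $\A^{\operatorname{Jac}}(X,H)$ itself, with no completion anywhere. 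Note also that ``isomorphism of filtered spaces'' requires the equality $\phi\bigl(F_k\A^{\operatorname{ch}}(X,H)\bigr)=F_k\A^{\operatorname{Jac}}(X,H)$, so that $\phi^{-1}$ preserves the filtration; your proposal only checks the easy containment. The paper obtains the equality from the STU relation, the identity displacing a bead $x$ on the dashed part into beads $S(x')$, $x''$ on the solid part, and the fact that each term of $\Delta(x)$, $x\in I$, has a tensor factor in $I$. Your treatment of the AS and IHX relations, on the other hand, is correct and coincides with the paper's.
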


\begin{proof}
Clearly, $\phi$ is a filtration-preserving linear map, i.e.,
 $\phi(F_k\A^{\operatorname{ch}}(X,H))$ is contained in
  $F_k\A^{\operatorname{Jac}}(X,H)$ for $k\ge0$.  We can check
  $$
  \phi(F_k\A^{\operatorname{ch}}(X,H))= F_k\A^{\operatorname{Jac}}(X,H)
  $$
  by using the STU relation, the
  identity
\begin{eqnarray*}
\centre{\labellist
\scriptsize\hair 2pt
 \pinlabel {${}^{x}\bullet$}  [r] at 146 48
\endlabellist
\centering
\includegraphics[scale=0.3]{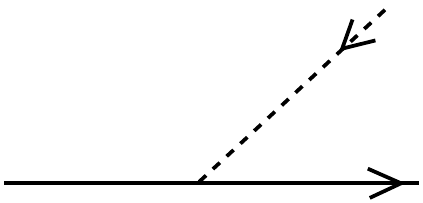}}
\ =\ \sum_{(x)}\centre{\labellist
\scriptsize\hair 2pt
 \pinlabel {$\stackrel{S(x')}{\bullet}$} [b] at 48 -3
 \pinlabel {}  at 140 51
 \pinlabel {$\stackrel{x''}{\bullet}$} [b] at 145 -3
\endlabellist
\centering
\includegraphics[scale=0.3]{down}}
\end{eqnarray*}
in $\A^{\operatorname{Jac}}(X,H)$ for $x\in H$,
and the inclusion $\Delta(I)\subset I \otimes \K +\K\otimes I$.

The proofs of the injectivity of $\phi$ and the AS
and IHX relations given in Theorem~\ref{th:chord_Jac} for a group
algebra $H=\K[\pi]$ work for a general~$H$.
\end{proof}

Every homomorphism $f:H \to H'$ of cocommutative Hopf algebras induces
a linear map $f_\ast: \A(X,H) \to \A(X,H')$ by applying $f$ to
all beads of an $H$-colored Jacobi diagram on $X$.  Thus we obtain a
 functor $\A(X,-)$ from  cocommutative Hopf algebras to vector spaces,
 which admits a ``continuous'' version as follows.

Let $\widehat{H} := \plim_k H/I^k$ be the $I$-adic completion of $H$,
which is a  cocommutative complete Hopf algebra.
The canonical map $H \to \widehat{H}$ will be omitted from our notations, although it may not be injective.
We can express every $\hat x\in\widehat{H}$ as
$$
\hat x=\sum_{k=0}^\infty x(k) \quad \hbox{where}\ x(k) \in I^k.
$$
For a set $S$, we write an $S$-colored Jacobi diagram $D$ on $X$ as
$$
D=D(s_1,\dots,s_r),
$$
where $s_1,\dots, s_r$ are the colors of the beads numbered  from $1$ to $r$, and $D(-,\dots,-)$ stands for
the corresponding Jacobi diagram on $X$ with ``uncolored'' beads.
Thus every $\widehat{H}$-colored Jacobi diagram on $X$
$$
D=D(\hat x_1,\dots,\hat x_r) \quad \hbox{where} \quad
\hat x_1=\sum_{k_1=0}^\infty x_1(k_1) , \dots,
\hat x_r =\sum_{k_r=0}^\infty x_r(k_r)
$$
defines  an element
$$
D(\hat x_1,\dots,\hat x_r) :=  \sum_{k_1,\dots,k_r=0}^\infty \alpha\big( D\big(x_1(k_1),\dots,x_r(k_r)\big)\big)
  \ \in\widehat{\A}(X,H).
$$
We can easily verify the following lemma.

\begin{lemma} \label{lem:change_colors}
Every homomorphism $f: \widehat{H} \to \widehat{H'}$ of complete Hopf
algebras, between the $I$-adic completions of cocommutative Hopf
algebras $H$ and $H'$, induces a unique filtration-preserving linear
map $f_* : \widehat{\mathcal{A}}(X,H)\to \widehat{\mathcal{A}}(X,H')$
such~that
\begin{equation}   \label{eq:f_*}
f_*\alpha \big(D(x_1,\dots,x_k)\big) = D\big( f( x_1),\dots,f(x_k)\big)
\end{equation}
for every $H$-colored Jacobi diagram $D(x_1,\dots,x_k)$ on $X$.
Moreover, we have $(f' f)_* = f'_* f_*$ for {all} such homomorphisms
$\widehat{H} \overset{f}\to \widehat{H'} \overset{f'}\to \widehat{H''}$.
\end{lemma}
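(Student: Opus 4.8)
The plan is to construct $f_*$ directly by the formula \eqref{eq:f_*} and then verify well-definedness, filtration-preservation, uniqueness, and functoriality in turn. First I would observe that since $\widehat{\A}(X,H)$ is the $I$-adic completion, a filtration-preserving linear map out of it is determined by its values on the (dense) image of $\alpha$, and each element $\alpha(D(x_1,\dots,x_k))$ with $x_1,\dots,x_k\in H$ generates $\A^{\operatorname{Jac}}(X,H)$ by Theorem \ref{r28}. So I would \emph{define} $f_*$ on these generators by \eqref{eq:f_*}, namely by replacing each bead color $x_i\in H$ with $f(x_i)\in \widehat{H'}$ and interpreting the resulting $\widehat{H'}$-colored Jacobi diagram as an element of $\widehat{\A}(X,H')$ via the summation convention set up just before the lemma.

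The first genuine point to check is that this assignment respects the defining relations of $\A^{\operatorname{Jac}}(X,H)$, i.e.\ the local moves \eqref{eq:moves_cJd_bis} (equivalently \eqref{eq:moves_ccd_bis}) together with the STU relation. Here I would use that $f:\widehat{H}\to\widehat{H'}$ is a homomorphism of complete Hopf algebras: multiplicativity of $f$ handles the bead-concatenation move $x,y\leftrightarrow xy$; the fact that $f(1)=1$ handles the trivial-bead move; compatibility of $f$ with the antipode ($fS=S'f$, which for cocommutative complete Hopf algebras follows from $f$ being a bialgebra map) handles the $S$-move; and compatibility with comultiplication ($(f\ot f)\Delta=\Delta' f$) handles the bead-splitting move in \eqref{eq:moves_cJd_bis}. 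Linearity in the bead colors takes care of the two $\K$-linear moves. The STU relation is preserved trivially since $f_*$ only changes bead labels and not the underlying graph-topology.

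The second point is convergence and filtration-preservation: I would check that $f(I)\subset \widehat{I'}$, where $I'=\ker(\epsilon')$, so that $f$ maps $I^k$ into $\widehat{I'^k}$ (using that $f$ is a Hopf-algebra homomorphism and hence sends augmentation ideal to augmentation ideal). This guarantees that a diagram with $\ge k$ beads in $I$ is sent into $F_k\widehat{\A}(X,H')$, so the infinite sums defining $f_*\alpha(D)$ converge $I$-adically and $f_*$ is filtration-preserving. Uniqueness is then immediate: any filtration-preserving linear map satisfying \eqref{eq:f_*} agrees with our construction on the dense subspace $\alpha\big(\A^{\operatorname{Jac}}(X,H)\big)$, hence everywhere by continuity.

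The main obstacle, though a mild one, is bookkeeping the multi-index summation: for $\widehat{H}$-colored diagrams the colors are themselves infinite series $\hat x_i=\sum_{k_i} x_i(k_i)$, and I must confirm that $f_*$ as defined on the $H$-level extends consistently to $\widehat{H}$-colored diagrams so that $f_*\big(D(\hat x_1,\dots,\hat x_r)\big)=D\big(f(\hat x_1),\dots,f(\hat x_r)\big)$ — i.e.\ that applying $f$ termwise and then summing agrees with the defining formula. This is where continuity of $f$ (it is automatically continuous for the $I$-adic topologies, being a filtered Hopf-algebra map) is used. Finally, functoriality $(f'f)_*=f'_*f_*$ follows by evaluating both sides on a generator $\alpha(D(x_1,\dots,x_k))$, where both reduce to $D\big(f'f(x_1),\dots,f'f(x_k)\big)$, and then extending by uniqueness/continuity.
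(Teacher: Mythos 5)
Your proof is correct and is precisely the routine verification that the paper leaves to the reader: the paper introduces this lemma with ``We can easily verify the following lemma'' and gives no argument, the intended one being exactly yours (apply $f$ to bead colors; match each local move of \eqref{eq:moves_ccd_bis}--\eqref{eq:moves_cJd_bis} to a Hopf-homomorphism axiom --- multiplicativity, unitality, $fS=Sf$, $(f\otimes f)\Delta=\Delta f$, linearity; observe that STU is untouched; use counit-compatibility, giving $f(\widehat{I})\subset\widehat{I'}$ and hence $f(I^k)\subset(\widehat{I'})^k$, for convergence and filtration-preservation; and get uniqueness and $(f'f)_*=f'_*f_*$ by continuity on the dense image of $\alpha$). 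The only point I would make explicit is that single diagrams span $\mathcal{A}(X,H)$ only up to \emph{degree}-completion, whereas the filtration on $\widehat{\mathcal{A}}(X,H)$ counts beads colored by $I$, so the uniqueness-by-density step tacitly uses that $f_*$ also preserves the degree of diagrams --- which indeed holds for your construction, since relabelling beads does not change the underlying graph, but is worth stating because ``filtration-preserving'' alone does not imply it.
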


\subsection{A Hopf-algebraic description of the kernel of $\kappa$} \label{sec:re-kappa}

In this subsection, we fix $m,n\geq 1$ and set
$$
\free{m} = F(x_1,\dots,x_m),\quad \quad
X_n =  \!\figtotext{10}{10}{capleft}\!\!_{1} \cdots\!\! \figtotext{10}{10}{capleft}\!\!_{n}.
$$  Recall that the degree-completion of $\A(X_n ,\K[\free{m}])$ is denoted
by the same notation $\A(X_n ,\K[\free{m}])$, and the $I$-adic completion
of $\A(X_n ,\K[\free{m}])$ is denoted by $\widehat{\A}(X_n ,\K[\free{m}])$.
We have seen in Section \ref{sec:Hopf_colored} that the canonical
homomorphism $\K[\free{m}] \to \widehat{\K[\free{m}]}$,
where $\widehat{\K[\free{m}]}$ is the $I$-adic completion of $\K[\free{m}]$,
has a diagrammatic counterpart $\alpha:\A(X_n ,\K[\free{m}]) \to \widehat{\A}(X_n ,\K[\free{m}])$.
{Theorem} \ref{prop:non-injectivity}
and Proposition \ref{alpha_kappa} below  imply that $\alpha$ is not injective,
in contrast with the well-known injectivity of $\K[\free{m}]\to\widehat{\K[\free{m}]}$.

\begin{proposition} \label{alpha_kappa}
There is a canonical isomorphism $\sA(m,n) \cong  \widehat{\A}( X_n, \K[\free{m}] )$
which makes the following diagram commute:
$$
\xymatrix{
\hA(m,n) \ar@{=}[d] \ar[r]^\kappa&
\sA(m,n)\ar[d]_{}^{\cong }\\
\A(X_n, \K[\free{m}]) \ar[r]^\alpha& \widehat{\A}(X_n,\K[\free{m}])
}
$$
In particular, the kernel of $\kappa$ coincides with the kernel of $\alpha$.
\end{proposition}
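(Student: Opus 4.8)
The plan is to establish the claimed isomorphism $\sA(m,n) \cong \widehat{\A}(X_n, \K[\free{m}])$ in a way that is manifestly compatible with the two maps $\kappa$ and $\alpha$ emanating from the common space $\hA(m,n) = \A(X_n, \K[\free{m}])$. The key observation is that both target spaces are built out of the same diagrammatic data, differing only in how the generators $x_1,\dots,x_m$ of $\free{m}$ are recorded: on the $\sA$ side each handle-label $i$ becomes a ``hairy'' exponential $\exp(i^+)$ via the map $\kappa$, whereas on the $\widehat{\A}$ side each $x_i$ is expanded through the canonical map $\K[\free{m}]\to\widehat{\K[\free m]}$, i.e.\ via its $I$-adic expansion. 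The heart of the matter is that these two bookkeeping devices agree under the ``Magnus-type'' identification that sends the group-like element $x_i$ to $\exp(X_i)$ (cf.\ the discussion of the hair map as a ``diagrammatic Magnus expansion'' in Section~\ref{sec:intro}).

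First I would construct the isomorphism $\sA(m,n)\cong\widehat{\A}(X_n,\K[\free m])$ explicitly. Recall that $\sA(m,n)$ is a subspace of $\A(\{1^+,\dots,m^+\}\cup\{1^-,\dots,n^-\})$ spanned by special Jacobi diagrams, and via the PBW-type isomorphism $\chi$ one may view the $\{1^-,\dots,n^-\}$-labels as univalent vertices attached to the $1$-manifold $X_n$. What remains on the $\{1^+,\dots,m^+\}$ side is, for each handle $i$, a symmetric collection of legs labeled $i^+$; by the AS and IHX relations such symmetric leg-collections at a handle are precisely what the $I$-adic filtration on $\K[\free m]$-colored diagrams records, where a single bead colored by $x_i$ corresponds to $\exp$ of a single $i^+$-leg. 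I would make this correspondence precise by using the universal-group-algebra structure: a bead colored by an arbitrary element $\hat x=\sum_k x(k)\in\widehat{\K[\free m]}$ expands into a formal series of legs, and by Theorem~\ref{r28} the resulting filtered space $\widehat{\A}(X_n,\K[\free m])$ matches $\sA(m,n)$ after applying $\chi^{-1}$ and grouping the $i^+$-legs symmetrically. The inverse identification reads off, from a special Jacobi diagram, the $\widehat{\K[\free m]}$-coloring whose $i$-th exponential expansion reproduces the $i^+$-legs.

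Next I would verify commutativity of the square. By the explicit definition of $\kappa$ in Section~\ref{sec:Z_to_LMO}, $\kappa$ replaces each strand $\figtotext{8}{8}{capleft}\!\!_i$ carrying the bead-colors of $\free m$ with the diagram obtained by inserting $\exp(i^+)$ at each handle $i$; under the identification $\chi$, this is exactly the operation of expanding each generator $x_i\in\K[\free m]$ into its image $\exp(X_i)$ in $\widehat{\K[\free m]}$ and reading off the associated hairy diagram. On the other hand $\alpha$ is by definition the diagrammatic realization of the canonical map $\K[\free m]\to\widehat{\K[\free m]}$ at the level of bead-colors, which by Lemma~\ref{lem:change_colors} (applied to $f$ the canonical completion homomorphism) induces precisely the same leg-expansion. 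Thus both composites $\hA(m,n)\to\widehat{\A}(X_n,\K[\free m])$ agree on each generating $\free m$-colored Jacobi diagram, and hence agree as linear maps.

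The main obstacle I anticipate is not the commutativity itself but pinning down the isomorphism $\sA(m,n)\cong\widehat{\A}(X_n,\K[\free m])$ cleanly, in particular checking that the ``grouping of $i^+$-legs into a completed exponential'' is a well-defined bijection respecting all relations on both sides. This requires care with the interplay between the PBW isomorphism $\chi$, the AS/IHX relations on the special side, and the $I$-adic filtration together with the local moves \eqref{eq:moves_cJd_bis} on the $\K[\free m]$-colored side; one must confirm that the inclusion $\Delta(I)\subset I\otimes\K+\K\otimes I$ used in Theorem~\ref{r28} guarantees the leg-expansion converges in the completed space and that no information is lost or added. Once this identification is set up correctly, the final statement---that $\ker\kappa=\ker\alpha$---is an immediate consequence of the commuting square together with the fact that the vertical map is an isomorphism.
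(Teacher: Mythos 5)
Your overall strategy coincides with the paper's: identify $\sA(m,n)$ (via the PBW map $\chi$) with completed colored diagrams by means of the Magnus-type substitution sending each group-like generator to an exponential, and then observe that commutativity of the square follows essentially from the definitions of $\kappa$ and $\alpha$ together with Lemma \ref{lem:change_colors}. The commutativity part of your argument is fine. The genuine gap is in the construction of the isomorphism itself, and in particular of its inverse, which is exactly where you yourself locate ``the main obstacle'' --- but you do not resolve it. Your description of the inverse (``reads off, from a special Jacobi diagram, the $\widehat{\K[\free{m}]}$-coloring whose $i$-th exponential expansion reproduces the $i^+$-legs'') presupposes that the $i^+$-labelled legs of a special diagram come organized into symmetric families attached at isolated points of the core, as the image of an exponential of a bead would be. This is false in general: in a $U_m$-labelled Jacobi diagram on $X_n$ (where $U_m=\{1^+,\dots,m^+\}$) the $i^+$-legs may sit at the tips of trees with internal trivalent vertices hanging off the rest of the diagram, and there is no canonical way to group such scattered legs into ``completed exponentials'' of beads colored by elements of $\K[\free{m}]$.

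The paper resolves this by interposing the tensor algebra $T(V_m)$: the complete Hopf algebra isomorphism $f:\widehat{\K[\free{m}]}\to\widehat{T(V_m)}$, $x_i\mapsto\exp(v_i)$, induces (by Lemma \ref{lem:change_colors}) an isomorphism $f_*$ of completed colored-diagram spaces, and one then compares $\widehat{\A}(X_n,T(V_m))$ with the space $\A(X_n,U_m)$ of $U_m$-labelled diagrams on $X_n$ via the leg-expansion map $\widehat{\lambda}$. The inverse $\widehat{\rho}$ of $\widehat{\lambda}$ is built by decomposing each $U_m$-labelled diagram uniquely as a core $D_0$ on $X_n$ together with rooted trees $D'_1,\dots,D'_r$ whose non-root leaves carry $U_m$-labels; each rooted tree determines a Lie word, i.e.\ a \emph{primitive} element $d'_i\in T(V_m)$, and one replaces the tree by a bead colored $S^{\varepsilon_i}(d'_i)$ at its root, with signs $\varepsilon_i$ depending on chosen edge orientations whose independence must be checked, together with compatibility with the STU relation on both sides. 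Note also that the resulting bead colors are Lie words, not exponentials, so they live naturally in $T(V_m)$ and are transported back to $\widehat{\K[\free{m}]}$ only through $f^{-1}$. Without this tree-pruning device --- or an equivalent one --- your identification is not well defined, so the proposal as it stands does not constitute a proof.
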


\begin{proof}
Set $U_m = \{1^+,\dots,m^+\}$.  We can merge the notions of ``$U_m$-labeled Jacobi
diagram'' and ``Jacobi diagram on $ X_n $''
into the notion of ``$U_m$-labeled Jacobi diagram on $X_n$'';
the degree of such a diagram is half the total number of vertices.
(Here we assume that each connected component of a $U_m$-labeled Jacobi
diagram on $X_n$ has at least one univalent vertex on $X_n$.)  Let
$\A(X_n,U_m)$  be  the (degree-completion of the)
vector space generated by $U_m$-labeled Jacobi diagrams on $X_n$ modulo the STU relation.

Let  $\chi:  \sA(m,n) \to \A(X_n,U_m)$ be the diagrammatic analog of the PBW isomorphism.
We will prove that the maps $\alpha$ and $\kappa$ fit into the following commutative diagram:
\begin{equation}   \label{eq:alpha_kappa}
\vcenter{\xymatrix{
\hA(m,n) \ar@{=}[d] \ar[r]^\kappa& \sA(m,n)\ar[r]_-{\cong }^\chi &  \A(X_n,U_m) \\
\A(X_n, \K[\free{m}]) \ar[r]^\alpha& \widehat{\A}(X_n, \K[\free{m}] )
\ar[r]^-{f_*}_-{\cong } &   \widehat{\mathcal{A}}(X_n,T({V_m}))
\ar[u]_-{\widehat{\lambda}}
}}
\end{equation}
Here $T(V_m)$ is the tensor algebra over the vector space $V_m$
with basis $\{v_1,\dots,v_m\}$, equipped
with the usual Hopf algebra structure.  Using Lemma
\ref{lem:change_colors}, the isomorphism $f_*$ is induced by the
complete Hopf algebra isomorphism $f: \widehat{\K[\free{m}]} \to
\widehat{T(V_m)}$ that maps each $x_i$ to $\exp(v_i)$.
The map $\widehat{\lambda}$  is induced by
a filtration-preserving linear map $\lambda: \mathcal{A}(X_n,T(V_m)) \to \A(X_n,U_m)$ defined below.

We can transform every $T(V_m)$-colored Jacobi diagram $D$ on
$X_n$  into a $U_m$-labeled Jacobi diagram on
$X_n$ by applying the following transformations to beads:\\[0.2cm]
$$
\forall i_1,\dots,i_r \in\{1,\dots,m\}, \quad
\centre{\labellist
\small\hair 2pt
 \pinlabel {$\leadsto$} at 294 152
 \pinlabel {$\leadsto$}  at 296 9
 \pinlabel {$\stackrel{\hbox{$v_{i_1} v_{i_2} \cdots v_{i_r}$}}{\bullet}$}  at 99 162
 \pinlabel {$\stackrel{\hbox{$v_{i_1} v_{i_2} \cdots v_{i_r}$}}{\bullet}$}  at 103 18
 \pinlabel {$\cdots$}  at 469 173
 \pinlabel {$\cdots$}  at 469 27
 \pinlabel {$i_1^+$} [b] at 397 189
 \pinlabel {$i_2^+$} [b] at 432 190
 \pinlabel {$i_r^+$} [b] at 505 190
 \pinlabel {$i_1^+$} [b] at 397 45
 \pinlabel {$i_2^+$} [b] at 434 45
 \pinlabel {$i_r^+$} [b] at 507 46
\endlabellist
\centering
\includegraphics[scale=0.3]{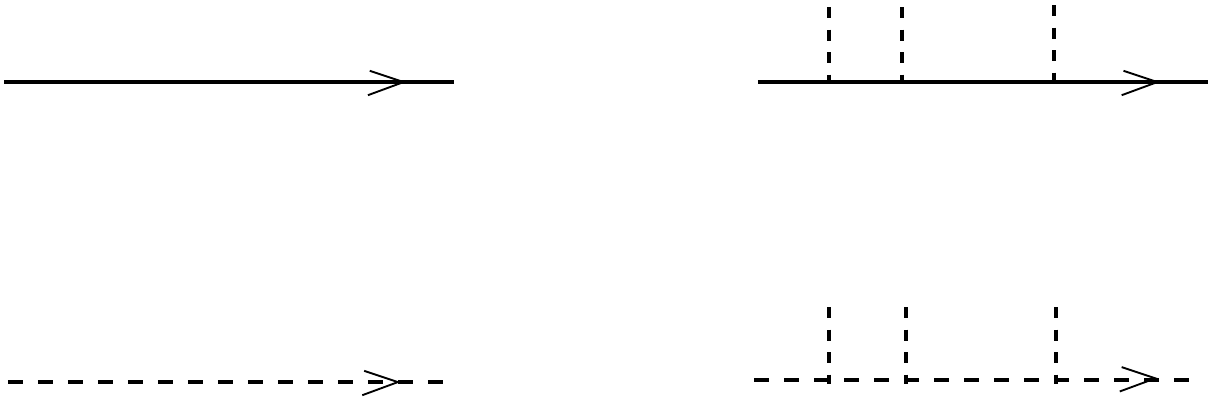}}
$$
By the STU (and AS, IHX) relations in $\A(X_n,U_m)$, the above procedure defines a linear map
$\mathcal{D}^{\operatorname{Jac}}(X_n,T(V_m)) \to \A(X_n,U_m)$,
which induces a linear map
$$
\lambda: \mathcal{A}(X_n,T(V_m)) \longrightarrow  \A(X_n,U_m).
$$
Obviously, $\lambda$ is filtration-preserving.  One
easily checks from the definitions that the resulting map $\hat
\lambda$ makes the diagram \eqref{eq:alpha_kappa} commute.

To prove the proposition, it suffices to show  that $\widehat{\lambda}$ is an isomorphism.
For this, we will  construct an inverse to $\widehat{\lambda}$.
Let $D$ be a $U_m$-labeled Jacobi diagram on $X_n$.
We can  decompose $D$ uniquely as
$$
D = D_0 \cup D'_1 \cup \cdots \cup D'_r,
$$ where $D_0$ is a Jacobi diagram on $X_n$,
we have $r$ distinguished points $\ast_1,\dots,\ast_r$ in the
  interiors of the edges of $X_n \cup D_0$,
each $D'_i$ is {a} $(U_m\cup \{\ast_i\})$-labeled tree-shaped Jacobi diagram
with exactly one univalent vertex labeled by $\ast_i$, and we have
$D'_i \cap D'_j=\varnothing$ for {all} $i\neq j$ and $D'_i \cap D_0 =\{ \ast_i\}$ for all $i$.
Note that each tree $D'_i$, rooted at
$\ast_i$, defines a Lie word with letters in $U_m$: hence, using the
correspondence $i^+\leftrightarrow v_i$ between $U_m$ and the basis
of $V_m$, each $D'_i$ defines a primitive element $d'_i\in T(V_m)$. Choose an orientation on each edge of $D_0$.
For each $i\in\{1,\dots, r\}$, set $\varepsilon_i = 0$ if $\ast_i$ belongs to $X_n$
or if $\ast_i$ belongs to an edge
of $D_0$ and the tree $D'_i$ is above this edge when its orientation goes from
left to right; set $\varepsilon_i =  1$ otherwise.  Let $S$ denote the
antipode of $T(V_m)$.  By considering the $T(V_m)$-colored Jacobi
diagram on $X_n$ that is obtained from $D_0$ by putting a bead colored
by $S^{\varepsilon_i}(d'_i)$ at $\ast_i$ for all $i\in\{1,\dots, r\}$,
we obtain an element
$$
\rho(D)  \in\mathcal{D}^{\operatorname{Jac}}(X_n,T(V_m))
$$ which does not depend on the above choice of edge-orientations.  It
is easy to verify that an STU relation in $\A(X_n,U_m)$ is mapped by
$\rho$ either to $0$ or to an STU relation in 
$\mathcal{D}^{\operatorname{Jac}}(X_n,T(V_m))$.  Hence we obtain a linear map
$$\rho:\A(X_n,U_m)\longrightarrow\A(X_n,T(V_m)),$$ with $\A(X_n,U_m)$ before
  completion, inducing a linear
map
\begin{gather*}
  \widehat{\rho}:\A(X_n,U_m)\longrightarrow \widehat{\A}(X_n,T(V_m)),
\end{gather*}
with $\A(X_n,U_m)$ after degree-completion.
{Obviously, we have $\widehat{\rho} \circ \widehat{\lambda} =\id $.
Using} the STU (and AS, IHX) relations in $\A(X_n,U_m)$, it is 
easy to check  $\widehat{\lambda} \circ \widehat{\rho} =\id$.
\end{proof}

\section{Perspectives}\label{sec:perspectives}

We plan to consider several developments of the functor $Z=Z^\B:\B_q
\to \hA$ in forthcoming works.  {For simplicity, the degree-completion
$\hA$ of $\AB$ will now be denoted as $\AB$.}
{Also, we will ignore parenthesization of objects in non-strict monoidal
  categories and write $\mathcal B$ for $\mathcal B_q$, for instance.}

\subsection{Incorporation of tangles}  \label{sec:incorp-tangl}

One can naturally construct a {braided strict monoidal} category $\B\T$
which contains both the categories $\B$ and $\T$ as braided
monoidal subcategories.  The objects of $\B\T$ are words in the
letters $+,-,\bullet$ and morphisms are bottom tangles in handlebodies
mixed with additional tangles.  Similarly, there is  $\LCob\T$  
containing both $\LCob$ and $\T$ as braided monoidal subcategories.
We can  extend the functors $Z: \B \to \AB$ and $\widetilde{Z}:
\LCob \to \tsA$ to $\B\T$ and $\LCob\T$, respectively, so that
the commutative square \eqref{e17} extends to
\begin{gather}
  \vcenter{\xymatrix{
      \B\T \ar[r]^{{Z}}\ar[d]_{{E}}&
      \AB^{\!\dagger}
      \ar[d]^{{\kappa}}\\
      \LCob\T \ar[r]^{{\tiZ}}&     \tsA^\dagger {.}
  }}
\end{gather}
Here {$\AB^{\!\dagger}$} is a linear symmetric monoidal category extending both
$\AB$ and {the linear version of $\mathcal{A}$ mentioned in Remark~\ref{r30}}, 
and, similarly, {$\tsA^\dagger$} 
extends both $\tsA$ and {this linear version of $\mathcal{A}$}.
We {remark} that the {extension of~$\tiZ$ to $\LCob\T$} also {generalizes} Nozaki's
extension of the LMO functor to Lagrangian cobordisms of punctured surfaces \cite{Nozaki}.

As a symmetric monoidal linear category, {$\AB^{\!\dagger}$} is free on a triple $(H,V,V^*)$ {consisting}
of a Casimir Hopf algebra $H$, a left $H$-module $V$ and its dual $V^*$.
The functor $Z$ induces an isomorphism of graded linear symmetric monoidal categories
 between the associated graded {of}  
 the Vassiliev--Goussarov filtration for $\B\T$ and {$\AB^{\!\dagger}$}{.}

{Let $m\geq 0$ be an integer and recall that $S\subset \partial V_m$ is the bottom square.
Consider the compact oriented surface $\Sigma_{m,1}:= {\partial V_m\setminus \operatorname{int}(S)}$ 
of genus $m$ with one boundary component.}
{The morphisms in $\B\T$ 
whose underlying bottom tangle in a handlebody is $\id_m\in \B(m, m)$ 
can be regarded as tangles in the thickened surface $\Sigma_{m,1} \times I$.}
{In particular},
by specializing the above functor $Z:\B\T \to {\AB^{\!\dagger}}$ {to that kind of morphisms,}  
we obtain 
\begin{itemize}
\item  expansions of the free group $\pi_1(\Sigma_{m,1})$, 
{which refine the  symplectic expansions
derived from the LMO functor  \cite{Massuyeau},}
\item representations of pure braid groups on $\Sigma_{m,1}$, and,
 more generally, representations of monoids of string-links in  $\Sigma_{m,1} \times I$.
\end{itemize}
We plan to study elsewhere these new representations.

\subsection{Handlebody groups and twist groups}  \label{sec:handl-groups-twist}

Fix an integer $m\ge0$. 
The automorphism group of the object $m$ in $\mathcal{H}\cong\B^\op$ is naturally identified with the \emph{handlebody group}
\begin{gather*}
  \mathcal{H}_{m,1}:= \Homeo(V_m,S)/\!\cong,
\end{gather*}
which is the group of isotopy classes rel $S$ of self-homeomorphisms of $V_m$ that restrict to~$\id_S$.  
Hence the functor $Z: \B\to \AB$  restricts  to a monoid homomorphism
\begin{equation} \label{Z_H}
  Z:\mathcal{H}_{m,1} \longrightarrow  A_m := \AB(m,m)^\op.
\end{equation}
It is well known that the  group $ \mathcal{H}_{m,1}$ naturally embeds
into the {\emph{mapping class group}} 
$$
\mathcal{M}_{m,1} :=  \Homeo(\Sigma_{m,1}, \partial \Sigma_{m,1})/ {\cong}
$$
of the surface $\Sigma_{m,1}= \partial V_m\setminus \operatorname{int}(S)$.
Since the LMO functor $\wt Z$ is injective\footnote{This follows easily
  from the injectivity of $\wt Z$ on the Torelli group \cite[Corollary  8.22]{CHM} 
  since the strut part of $\wt Z$ encodes the action {of the Lagrangian subgroup of $\mathcal{M}_{m,1}$ on $H_1(\Sigma_{m,1};\Z)$}.} 
  on the Lagrangian subgroup of	$\mathcal{M}_{m,1}$
  (i.e., the automorphism group of the object $m$ in  $\LCob$),
Theorem \ref{r32} implies that $Z: \mathcal{H}_{m,1} \to A_m$ is injective.
We plan to use this  homomorphism to  study  the {algebraic} structure of $\mathcal{H}_{m,1}$
and the inclusion of this group in the monoid  $\mathcal{H}(m,m)$.

In particular, we are interested in the \emph{twist group} $\T_{m,1}$ 
which is the kernel of the natural homomorphism $\mathcal{H}_{m,1}\to\Aut(F_m)$.  
Here $F_m:=\pi_1(V_m,S)$ is the fundamental group of $V_m$ based at the contractible subspace $S$.
Note that $\T_{m,1}$ is  the kernel of the
degree $0$ part of $Z:\mathcal{H}_{m,1} \to A_m$, since the latter gives the homotopy {class} of bottom tangles in handlebodies.
It is known that, as a subgroup of $\mathcal{M}_{m,1}$, the group $\T_{m,1}$  is generated by
{Dehn} twists along  boundaries of  properly embedded disks in $V_m \setminus S$ \cite{Luft}.  

The pair (handlebody group, twist group) can be regarded
as an analogue of  the pair (mapping class group, Torelli group).
We recall some of the features of the Johnson--Morita theory, which  consists in studying the group $\mathcal{M}_{m,1}$
via its action on the lower central series of the fundamental group $\pi$ of $\Sigma_{m,1}$
(see \cite{Morita} for a survey):
\begin{enumerate}
\item the \emph{Johnson filtration}
$J_0\mathcal{M}_{m,1}\supset J_1\mathcal{M}_{m,1} \supset \cdots \supset J_k\mathcal{M}_{m,1} \supset \cdots$
consists of the kernels of the actions of $\mathcal{M}_{m,1}$ on the successive nilpotent quotients of $\pi$
(so that $J_0\mathcal{M}_{m,1}={\mathcal{M}_{m,1}}$   and $J_1\mathcal{M}_{m,1}$ is the Torelli group);
\item for every  $k\geq 1$, the \emph{$k$-th Johnson homomorphism} $\tau_k$ maps  $J_k\mathcal{M}_{m,1}$ to an abelian group and encodes the action of $J_k\mathcal{M}_{m,1}$ on the $k$-th nilpotent quotient of $\pi$;
\item for every $k\geq 1$, the $k$-th Johnson homomorphism has a diagrammatic description and then corresponds 
to the leading term of the ``tree reduction'' of the LMO functor $\tiZ$ on $J_k\mathcal{M}_{m,1}$ \cite{CHM,HM1};
\item more generally, the action of $J_1\mathcal{M}_{m,1}$ on (the Malcev completion of) $\pi$ is encoded
in the full  ``tree reduction'' of the LMO functor $\tiZ$ by means of a  ``symplectic expansion'' \cite{Massuyeau}.
\end{enumerate}

There is an analogue of the Johnson--Morita theory  for the pair (handlebody group, twist group). 
This has been introduced in \cite[\S 10.1]{HM3} as an instance of a ``general theory'' of Johnson homomorphisms,
and will be studied with further details in a  forthcoming work.
In this approach, the group $\mathcal{H}_{m,1}$ is studied via its action on the lower central series
of the kernel of the homomorphism $\pi\to F_m$ induced by the inclusion $\Sigma_{m,1} \hookrightarrow V_m$. 
Then the analogue of (1) is a filtration of $\mathcal{H}_{m,1}$ whose first term is $\mathcal{T}_{m,1}$,
and the analogue of (2) consists of two sequences of homomorphisms $(\tau_k^0)_k$ and $(\tau_k^1)_k$ 
which happen to be equivalent one to the other.
There are also analogues of (3) and (4), which involve the ``tree reduction'' of $Z: \mathcal{H}_{m,1} \to A_m$
and the refinement of the ``symplectic expansion''  mentioned in Section \ref{sec:incorp-tangl}.

We expect the homomorphism $Z: \mathcal{H}_{m,1} \to A_m$ to be a powerful tool 
to study the associated graded of the lower central series of~$\T_{m,1}$
in relation with the associated graded of the Vassiliev--Goussarov filtration 
that has been identified in Section \ref{sec:universality-zq}.

\subsection{Extension of $Z$ to boundary Lagrangian cobordisms}

The reader may wonder whether one can extend the functor
$Z:{\sLCob} \to\AB$  
on {$\sLCob \cong \B$} 
to the category $\LCob$ of
Lagrangian cobordisms,
 with the target category still involving some homotopy classes of Jacobi
diagrams in handlebodies.  This does not hold,
but one can extend $Z$
to a functor ${}^b\!Z:\bLCob\to\bA$ which fits into the following
commutative diagram of monoidal categories and monoidal functors:
\begin{gather*}
  \xymatrix{
    \sLCob\ar[r]^{Z}\ar[d]_{}&
    \AB\ar[d]^{}\\
    \bLCob\ar[r]^{{}^b\!Z}\ar[d]_{}&
    \bA\ar[d]^{\kappa'}\\
    \LCob\ar[r]_{\tiZ}&
    \tsA.
  }
\end{gather*}
The category $\bLCob$ of {\em boundary Lagrangian cobordisms}, defined
below, is a braided monoidal subcategory of $\LCob$ which contains
$\sLCob$ as a braided monoidal subcategory.  The vertical arrows on
the left are inclusion functors.  Like $\sLCob$ and $\LCob$, the
objects of $\bLCob$ are {non-negative}
integers.  The morphisms from $m$
to $n$ in $\bLCob$ are {cobordisms}
$C=(C,c):m\to n$, in the sense of
Section~\ref{sec:sLCob}, such that the composite 
${\underline{C}}:=  V_n\circ C:m\to 0$
is a homology handlebody where the $m$ meridian
curves in $\partial {\underline{C}} \cong \partial V_m$ bound mutually disjoint,
connected, oriented surfaces $S_1,\dots,S_m$.  This notion may be
thought of as a cobordism version of {boundary links}.
{Note that $\bLCob(0,0)=\LCob(0,0)$ is essentially the monoid of homology $3$-spheres
(whereas $\sLCob(0,0)$ is trivial)}.

The target category $\bA$ of ${}^b\!Z$ is  {much} larger than $\AB$: there, Jacobi
diagrams in handlebodies may involve connected components with no
univalent vertex. {Note that $\bA(0,0)$ is the target of the LMO invariant of homology $3$-spheres
(whereas $\AB(0,0)$ is {$1$-dimensional}).}
The category $\bA$ includes $\AB$ as a symmetric
monoidal linear subcategory, {and}  the functor $\kappa':\bA\to\tsA$ is a
natural extension of the hair map $\kappa:\AB\to\tsA$.

We plan to construct the functor ${{}^b\!}Z$ as follows.  Every boundary
Lagrangian {cobordism}
$C:m\to n$ is obtained from a special Lagrangian cobordism ${C'}:m\to n$
by surgery along a framed link $L$ in {$C'$} such that
\begin{itemize}
\item each component of $L$ is null-homotopic in $V_n\circ {C'}=V_m$,
\item the linking matrix of $L$ is diagonal with diagonal entries
  $\pm1$, where linking numbers and framings of components of $L$ are
  defined in $V_n\circ {C'}=V_m$.
\end{itemize}
{The Kontsevich integral $Z(C'\cup L)\in \AB^{\!\dagger}(m,n)$ is as outlined in Section \ref{sec:incorp-tangl}. 
Then} the invariant ${{}^b\!}Z(C)$ is obtained from $Z(C'\cup L)$  
by applying an equivariant version of the Aarhus
integral developed by Garoufalidis and Kricker~\cite{GK} to each
component of {the surgery link} $L$.

{
We hope that ${}^b\!Z$ will be useful to study the LMO invariant of homology $3$-spheres
in relation with their fundamental groups. 
Indeed, it seems difficult to conduct such a study using the LMO functor $\tiZ$ instead of ${}^b\!Z$.
}

\def\cprime{$'$}
\providecommand{\bysame}{\leavevmode\hbox to3em{\hrulefill}\thinspace}
\providecommand{\MR}{\relax\ifhmode\unskip\space\fi MR }
\providecommand{\MRhref}[2]{  \href{http://www.ams.org/mathscinet-getitem?mr=#1}{#2} }
\providecommand{\href}[2]{#2}

\end{document}